\newtheorem{mainthm}{Theorem}
\newtheorem{maincor}{Corollary} 
\newtheorem{theorem}{Theorem}[section]
\newtheorem*{theorem*}{Theorem}
\newtheorem{corollary}[theorem]{Corollary}
\newtheorem{lemma}[theorem]{Lemma}
\newtheorem{proposition}[theorem]{Proposition}
\newtheorem*{proposition*}{Proposition}
\newtheorem*{conjecture*}{Conjecture}
\theoremstyle{definition}
\newtheorem{definition}[theorem]{Definition}
\newtheorem{remark}[theorem]{Remark}
\numberwithin{equation}{section}
\def\fa {\mathfrak{a}}
\def\fv {\mathfrak{v}}
\def\fb {\mathfrak{b}}
\def\fw {\mathfrak{w}}
\def\bN {\mathbb{N}}
\def\bR {\mathbb{R}}
\def\cE {\mathcal{E}}
\def\cF {\mathcal{F}}
\def\cM {\mathcal{M}}
\def\scrL{\mathscr{L}}
\def\scrU{\mathscr{U}}
\def\grad {{\nabla}}
\def\la {\langle}
\def\ra {\rangle}
\newcommand{\tx}[1]{\mathrm{#1}}
\newcommand{\wto}{\rightharpoonup}
\newcommand{\wt}[1]{\widetilde{#1}}
\newcommand{\bs}[1]{\boldsymbol{#1}}
\newcommand{\conj}[1]{\overline{#1}}
\newcommand{\sh}[1]{#1^\sharp}
\newcommand{\fl}[1]{#1^\flat}
\newcommand{\Id}{\operatorname{Id}}
\newcommand{\eee}{e}
\newcommand{\ud}{\mathrm{\,d}}
\newcommand{\vd}{\mathrm{d}}
\newcommand{\up}{\mathrm{p}}
\newcommand{\uk}{\mathrm{k}}
\newcommand{\vD}{\mathrm{D}}
\newcommand{\dd}[1]{{\frac{\vd}{\vd{#1}}}}
\newcommand{\lin}{\mathrm{L}}
\definecolor{deepgreen}{cmyk}{1,0,1,0.5}
\newcommand{\E}{\mathcal{E}}
\newcommand{\R}{\mathbb{R}}
\newcommand{\al}{\alpha}
\newcommand{\be}{\beta}
\newcommand{\de}{\delta}
\newcommand{\om}{\omega}
\newcommand{\lam}{\lambda}
\newcommand{\Om}{\Omega}
\newcommand{\p}{\partial}
\newcommand{\loc}{\operatorname{loc}}
\newcommand{\Rmnum}[1]{\expandafter\@slowromancap\romannumeral #1@}
\newcommand{\ti}{\widetilde}
\newcommand{\abs}[1]{\left\lvert{#1}\right\rvert}
\newcommand{\ds}{\displaystyle}
\newcommand{\EQ}[1]{\begin{equation}\begin{split} #1 \end{split}\end{equation}}
\newcommand{\pmat}[1]{\begin{pmatrix} #1 \end{pmatrix}}
\newcommand{\Del}[1]{}
\newcommand{\mand}{{\ \ \text{and} \ \  }}
\newcommand{\mif}{{\ \ \text{if} \ \ }}
\newcommand{\mas}{{\ \ \text{as} \ \ }}
\newcommand{\eps}{\epsilon}
\newcommand{\bfd}{{\bf d}}
\newcommand{\calB}{\mathcal B}
\newcommand{\calE}{\mathcal E}
\newcommand{\calM}{\mathcal M}
\newcommand{\calN}{\mathcal N}
\newcommand{\calT}{\mathcal T}
\newcommand{\calX}{\mathcal X}
\newcommand{\qedno}{
\begin{flushright}
$\boxslash$
\end{flushright}
}
\title[Classification of kink clusters]
{Classification of kink clusters \\ for scalar fields in dimension 1+1}\author{Jacek Jendrej}
\author{Andrew Lawrie}
\keywords{kink; multisoliton; nonlinear wave}
\subjclass[2010]{35L71 (primary), 35B40, 37K40}
\thanks{J. Jendrej is supported by ANR-18-CE40-0028 project ``ESSED'' and ERC Starting Grant ``INSOLIT'' 101117126.  A. Lawrie is supported by NSF grant DMS-2247290.}
\begin{document}

\begin{abstract}
We consider a real scalar field equation in dimension $1+1$
with an even, positive self-interaction potential having two
non-degenerate zeros (vacua) $1$ and $-1$.
Such a model
admits non-trivial static solutions called kinks and antikinks.
We define a kink $n$-cluster to be a solution approaching, for large positive times, a superposition of $n$ alternating
kinks and antikinks whose velocities converge to $0$.
They can be equivalently characterized as the solutions of minimal possible energy
containing $n-1$ transitions between the vacua,
or as the solutions whose kinetic energy decays to $0$ in large time.

Our first main result is a determination of the main-order asymptotic behavior of any kink $n$-cluster. The proof relies on a reduction,
using appropriately chosen modulation parameters, to an $n$-body problem with attractive exponential interactions.
We then construct a kink $n$-cluster for any prescribed initial positions of the
kinks and antikinks, provided that their mutual distances are sufficiently large.
Next, we prove that the set of all the kink $n$-clusters is an $n$-dimensional topological manifold,
and we show how it can be parametrized by the positions of the kinks in the configuration.
The proof relies on energy estimates and the contraction mapping principle, using the Lyapunov-Schmidt reduction technique.
Finally, we show that kink clusters are universal profiles for the formation/collapse of multikink configurations.
In this sense, they can be interpreted as forming the stable/unstable manifold of the multikink state given by a superposition of $n$ infinitely separated alternating kinks and antikinks.  

\end{abstract}

\maketitle

\setcounter{tocdepth}{1}
\tableofcontents


\section{Introduction}
\label{sec:intro}
\subsection{Setting of the problem}
\label{ssec:setting}
We study scalar field equations in dimension $1+1$, which are associated to the Lagrangian action
\begin{equation}
\label{eq:lagrange}
\scrL(\phi) = \int_{-\infty}^{\infty}\int_{-\infty}^{\infty} \Big(\frac 12(\partial_t \phi)^2 - \frac 12 (\partial_x\phi)^2 - U(\phi)\Big)\, \ud  x \ud t,
\end{equation}
where the self-interaction potential $U: \bR \to [0, +\infty)$ is a given smooth function.
The unknown field $\phi = \phi(t, x)$ is assumed to be real-valued.
The resulting Euler-Lagrange equation is
\begin{equation}
\label{eq:csf-2nd}
\partial_t^2 \phi(t, x) - \partial_x^2 \phi(t, x) + U'(\phi(t, x)) = 0, \qquad (t, x) \in \bR\times \bR,\ \phi(t, x) \in \bR.
\end{equation}
We assume that
\begin{itemize}
\item
$U$ is an even function,
\item $U(\phi) > 0$ for all $\phi \in (-1, 1)$,
\item  $U(-1) = U(1) = 0$ and $U''(1) = U''(-1) = 1$.
\end{itemize}
The~zeros of $U$ are called the \emph{vacua}.
Linearization of \eqref{eq:csf-2nd} around each of the vacua
$1$ and $-1$, $\phi = \pm 1 + g$, yields the free linear Klein-Gordon equation of mass $1$:
\begin{equation}
\label{eq:free-kg}
\partial_t^2 g_{\lin}(t, x) - \partial_x^2 g_{\lin}(t, x) + g_{\lin}(t, x) = 0.
\end{equation}

Two well-known examples of~\eqref{eq:csf-2nd} satisfying the hypotheses above are the \emph{sine-Gordon equation}
\begin{equation}
\label{eq:sg} 
\partial_t^2 \phi(t, x) - \partial_x^2 \phi(t, x) -\frac{1}{\pi}\sin(\pi\phi(t, x)) = 0,
\end{equation}
where we have taken $U(\phi) = \frac{1}{\pi^2}\big(1+ \cos(\pi\phi)\big)$, and the \emph{$\phi^4$ model}
\begin{equation}
\label{eq:phi4-m} 
\partial_t^2 \phi(t, x) - \partial_x^2 \phi(t, x) - \frac 12\phi(t, x) + \frac 12\phi(t, x)^3 = 0,
\end{equation}
for which $U(\phi) = \frac{1}{8}(1- \phi^2)^2$.

The equation \eqref{eq:csf-2nd} can be rewritten as a system of first order in $t$:
\begin{equation}
\label{eq:csf-1st}
\partial_t \begin{pmatrix} \phi(t, x) \\ \dot\phi(t, x)\end{pmatrix} = \begin{pmatrix} \dot\phi(t, x) \\ \partial_x^2 \phi(t, x) - U'(\phi(t, x))\end{pmatrix}.
\end{equation}
We denote $\bs \phi_0 = (\phi_0, \dot \phi_0)^\tx T$ an element of the phase space
(in the sequel, we omit the transpose in the notation).
The potential energy $E_p$, the kinetic energy $E_k$ and the total energy $E$
of a state are given by
\begin{align}
E_{\up}(\phi_0) &= \int_{-\infty}^{+\infty}\Big(\frac 12 (\partial_x\phi_0(x))^2 + U(\phi_0(x))\Big)\ud x, \\
E_{ \uk}(\dot \phi_0) &= \int_{-\infty}^{+\infty}\frac 12( \dot\phi_0(x))^2 \ud x, \\
E(\bs\phi_0) &= \int_{-\infty}^{+\infty}\Big(\frac 12(\dot \phi_0(x))^2+\frac 12 (\partial_x\phi_0(x))^2 + U(\phi_0(x))\Big)\ud x.
\end{align}
Denoting $\bs \phi(t, x) \coloneqq (\phi(t, x), \dot \phi(t, x))$, the system \eqref{eq:csf-1st} can be reformulated in the Hamiltonian form as
\begin{equation}
\label{eq:csf}
\partial_t \bs \phi(t) = \bs J\vD E(\bs \phi(t)),
\end{equation}
where $\bs J \coloneqq \begin{pmatrix}0 & 1 \\ {-1} & 0 \end{pmatrix}$ is the standard symplectic form
and $\vD$ is the Fr\'echet derivative for the $L^2\times L^2$ inner product.
In particular, $E$ is a conserved quantity, and we denote $E(\bs \phi)$ the energy of a solution $\bs \phi$ of \eqref{eq:csf}.

The set of finite energy states $\bs \phi_0 = (\phi_0, \dot \phi_0)$ contains the following affine spaces:
\begin{equation}
\begin{aligned}
\cE_{1, 1} &\coloneqq \{(\phi_0, \dot \phi_0): E(\phi_0, \dot \phi_0) < \infty\ \text{and}\ \lim_{x \to -\infty}\phi_0(x) = 1, \lim_{x \to \infty}\phi_0(x) = 1\}, \\
\cE_{-1, -1} &\coloneqq \{(\phi_0, \dot \phi_0): E(\phi_0, \dot \phi_0) < \infty\ \text{and}\ \lim_{x \to -\infty}\phi_0(x) = -1, \lim_{x \to \infty}\phi_0(x) = -1\}, \\
\cE_{1, -1} &\coloneqq \{(\phi_0, \dot \phi_0): E(\phi_0, \dot \phi_0) < \infty\ \text{and}\ \lim_{x \to -\infty}\phi_0(x) = 1, \lim_{x \to \infty}\phi_0(x) = -1\}, \\
\cE_{-1, 1} &\coloneqq \{(\phi_0, \dot \phi_0): E(\phi_0, \dot \phi_0) < \infty\ \text{and}\ \lim_{x \to -\infty}\phi_0(x) = -1, \lim_{x \to \infty}\phi_0(x) = 1\}.
\end{aligned}
\end{equation}
In the case of the $\phi^4$ model, these are all the finite-energy states,
but in general there can be other states of finite energy,
for example if $U$ has other vacua than $1$ and $-1$. 
Here, the states which we consider will always belong to one of the four affine spaces listed above, which are all parallel to the space 
\EQ{
\calE \coloneqq  H^1 \times L^2. 
}

Equation \eqref{eq:csf} admits static solutions. They are the critical points
of the potential energy. The trivial ones are the vacuum fields $\phi(t, x) = \pm 1$.
The solution $\phi(t, x) = 1$ (resp. $\phi(t, x) = -1$) has zero energy and is the ground state in $\cE_{1, 1}$
(resp. $\cE_{-1, -1}$).

There are also non-constant static solutions $\phi(t, x)$
connecting the two vacua, that is
\begin{equation}
\label{eq:static-nn'}
\lim_{x \to -\infty}\phi(t, x) = \mp 1, \quad \lim_{x \to \infty}\phi(t, x) = \pm 1.
\end{equation}
One can describe all these solutions.
There exists a unique increasing odd function $H: \bR \to (-1, 1)$ such that
all the solutions of \eqref{eq:static-nn'} are given by $\phi(t, x) = \pm H(x - a)$ for some $a \in \bR$.
The basic properties of $H$ are given in Section~\ref{sec:static}.
Its translates are called the \emph{kinks} and are the ground states in $\cE_{-1, 1}$.
The translates of the function $-H$ are called the \emph{antikinks} and are the ground states in $\cE_{1, -1}$.



The condition~\eqref{eq:static-nn'} defines a \emph{topological class}, since for any continuous path of finite energy states, either all or none of them satisfy~\eqref{eq:static-nn'}. In general, minimizers of the energy in a topological class that does not contain vacua are called topological solitons. Topological solitons were introduced in the physics literature by Skyrme as candidates for particles in classical field theories; see~\cite{Skyrme, MS}. Kinks and antikinks are one dimensional examples, and in higher dimensions examples include vortices, harmonic maps,  monopoles, Skyrmions, and instantons. In this context, it is natural to investigate to what extent multikinks (defined below) can be effectively described as a system of interacting point particles. In other words, can their dynamical behavior be captured by means of a small number of parameters, for instance their positions and momenta. 

The Lagrangian~\eqref{eq:lagrange} is invariant under the Poincar\'e group, from which we see that applying a Lorentz boost to the stationary kink
$\bs\phi(t) = (H(x - a), 0)$, one obtains a traveling kink.
Given $(a, v) \in \bR \times (-1, 1)$, we denote
\begin{equation}
\label{eq:Hav}
H(a, v; x) \coloneqq  H(\gamma_v(x - a)), \qquad \gamma_v \coloneqq  (1 - v^2)^{-\frac 12}
\end{equation}
and
\begin{equation}
\label{eq:bsHav}
\begin{aligned}
\bs H(a, v; x) &\coloneqq  \big( H(a, v; x), -v\partial_x H(a, v; x) \big) \\
&= \big(H(\gamma_v(x - a)), -v\gamma_v\partial_x H(\gamma_v(x - a))\big).
\end{aligned}
\end{equation}
Then $\bs \phi(t) = \bs H(a + vt, v)$ is a solution of~\eqref{eq:csf},
in other words
\begin{equation}
\label{eq:Hv-id}
-v\partial_x \bs H(a, v) = v\partial_a \bs H(a, v) = \bs J\vD E(\bs H(a, v)).
\end{equation}
The \emph{momentum} of a state $\bs \phi_0 = (\phi_0, \dot\phi_0)$
is given by
\begin{equation}
P(\bs\phi_0) = -\int_{-\infty}^\infty \dot \phi_0(x)\partial_x \phi_0(x) \ud x = -\frac 12 \la \partial_x \bs \phi_0, \bs J\bs \phi_0\ra. 
\end{equation}
It is well defined for every state of finite energy and is a conserved quantity. Using \eqref{eq:equipartition},
it is easy to compute  that
\begin{align}
\label{eq:energy-Hv}
E(\bs H(a, v)) &= \gamma_v M, \\
\label{eq:momentum-Hv}
P(\bs H(a, v)) &= \gamma_v Mv.
\end{align}
A traveling kink is thus similar to a relativistic point particle
of mass $M$ and velocity $v$, and the formulas above
are the usual formulas of Special Relativity.


\subsection{The notion of a kink cluster} 

By the variational characterization of $H$ and its translates as the ground states in $\cE_{-1, 1}$,
one can view them as the transitions between the two vacua $-1$ and $1$
having the minimal possible energy $E = E_p(H)$.
Given a natural number $n$, we are interested in solutions of \eqref{eq:csf-2nd} containing,
asymptotically as $t \to \infty$, $n$ such transitions.
Since energy $E_p(H)$ is needed for each transition, we necessarily have $E(\bs \phi) \geq nE_p(H)$. We call \emph{kink clusters} the solutions for which equality holds.

\begin{definition}[Kink $n$-cluster]
\label{def:cluster}
Let $n \in \{0, 1, \ldots\}$.
We say that a solution $\bs\phi$ of \eqref{eq:csf} is a \emph{kink $n$-cluster} if $E(\bs\phi) \leq nE_p(H)$ and there exist real-valued functions
$x_0(t) \leq x_1(t) \leq \ldots \leq x_n(t)$ such that
 $$\lim_{t\to\infty}\phi(t, x_k(t)) = (-1)^k\qquad\text{for all }k \in \{0, 1, \ldots, n\}.$$
\end{definition}
Note that the kink $0$-clusters are the constant solutions $\phi \equiv 1$ and the kink $1$-clusters are the antikinks.
We say that $\phi$ is a kink cluster if it is a kink $n$-cluster for some $n \in \{0, 1, \ldots\}$.

From the heuristic discussion above, the shape of each transition
in a kink cluster has to be close to optimal,
that is close to a kink or an antikink.
Before we give a precise statement of this fact,
we introduce the so-called \emph{multikink configurations}.

 Let $\vec a = (a_1, \ldots, a_n) \in \bR^n$ be the positions of the transitions
and $\vec v = (v_1, \ldots, v_n) \in (-1, 1)^n$ the Lorentz parameters
(following \cite[Chapter 5]{MS}, we use the letter
$a$ for the translation parameter; it should not be confused
with ``acceleration'' which will be given no symbol in this paper).
We always assume $a_1 \leq a_2 \leq \ldots \leq a_n$.
It will be convenient to abbreviate $\bs H_k \coloneqq  \bs H(a_k, v_k)$,
$\partial_a \bs H_k \coloneqq  \partial_a \bs H(a_k, v_k)$, $\partial_v \bs H_k \coloneqq  \partial_v \bs H(a_k, v_k)$
for $k \in \{1, \ldots, n\}$. We also denote $\bs 1 \coloneqq  (1, 0)$ and
\begin{align}
\label{eq:Ha-def}
\bs H(\vec a, \vec v; x) \coloneqq  \bs 1 + \sum_{k=1}^{n} (-1)^k (\bs H_k(x) + \bs 1).
\end{align}
For example, if $n = 0$, then $\vec a, \vec v$ are empty vectors and $\bs H(\vec a, \vec v)$ is the vacuum $\bs 1$.
If $n = 1$, $\vec a = (a_1)$ and $\vec v = (v_1)$, then $\bs H(\vec a, \vec v) = -\bs H(a_1, v_1)$ is an antikink.
If $n = 2$, $\vec a = (a_1, a_2)$ with $a_2 - a_1 \gg 1$ and $\vec v = (v_1, v_2)$, then
$\bs H(\vec a, \vec v) = \bs 1 - \bs H(a_1, v_1) + \bs H(a_2, v_2)$ has the shape of an antikink near $x = a_1$,
and of a kink near $x = a_2$. These are the \emph{kink-antikink pairs}, which we studied with Kowalczyk in \cite{JKL1}.

Employing the term introduced by Martel and Rapha\"el in \cite{MaRa18},
we  study multikinks in the regime of \emph{strong interactions}, so we will always assume that Lorentz parameters $\vec v$ are small, and the positions of the kinks $\vec a$ are well-separated. We weigh these requirements as follows. 
\begin{definition}[Weight of modulation parameters]
For all $(\vec a, \vec v) \in \bR^n\times \bR^n$, we set
\begin{equation}
\label{eq:rhoa-def}
\rho(\vec a, \vec v) \coloneqq  \sum_{k=1}^{n-1} \eee^{-(a_{k+1} - a_k)} + \sum_{k=1}^n v_k^2.
\end{equation}
\end{definition}

With these notations we define a distance to the family of $n$-kink configurations. Given $\bs \phi \in \E_{1, (-1)^n}$, let 
\EQ{ \label{eq:bfd-def} 
\bfd( \bs \phi) \coloneqq  \inf_{(\vec a, \vec v)} \Big( \| \bs \phi - \bs H(\vec a, \vec v) \|_{\E}^2 + \rho(\vec a, \vec v) \Big).
}

\begin{proposition}
\label{prop:close-to-H}
A solution $\bs\phi$ of \eqref{eq:csf} is a kink $n$-cluster if and only if 
\begin{equation}
\label{eq:d-conv-0}
\lim_{t \to \infty} \bfd( \bs \phi(t)) = 0.
\end{equation} 
\end{proposition}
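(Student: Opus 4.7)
The plan is to prove the two implications of Proposition~\ref{prop:close-to-H} in turn.

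For $(\Leftarrow)$, suppose $\bfd(\bs\phi(t))\to 0$. Selecting parameters $(\vec a(t), \vec v(t))$ that nearly attain the infimum in \eqref{eq:bfd-def} yields $\|\bs\phi(t) - \bs H(\vec a(t), \vec v(t))\|_\E \to 0$ and $\rho(\vec a(t), \vec v(t))\to 0$. The latter forces $v_k(t)\to 0$ and $a_{k+1}(t)-a_k(t)\to\infty$. The $1$D embedding $H^1(\bR)\hookrightarrow L^\infty(\bR)$ upgrades convergence in $\E$ to uniform convergence $\|\phi(t)-H(\vec a(t),\vec v(t);\cdot)\|_{L^\infty}\to 0$. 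Using the asymptotics $H(\pm\infty)=\pm 1$ together with the growing separations, a direct computation of $H(\vec a(t),\vec v(t);\cdot)$ at the midpoints $x_k(t) \coloneqq (a_k(t)+a_{k+1}(t))/2$ for $1\le k\le n-1$, and at $x_0(t) \coloneqq a_1(t)-1$, $x_n(t) \coloneqq a_n(t)+1$, produces $\phi(t, x_k(t))\to (-1)^k$ for every $k$. Finally, continuity of $E$ on $\E$ combined with the asymptotic $E(\bs H(\vec a,\vec v)) = nE_p(H) + O(\rho)$ (from \eqref{eq:energy-Hv} and exponentially small kink interactions) gives $E(\bs\phi) = nE_p(H)$, in particular satisfying $E(\bs\phi) \leq nE_p(H)$.

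For $(\Rightarrow)$, let $\bs\phi$ be a kink $n$-cluster. I first establish
\[
E_k(\dot\phi(t))\to 0 \mand E_p(\phi(t))\to nE_p(H)\qquad\text{as }t\to\infty.
\]
The key input is the BPS-type transition bound
\[
\int_\alpha^\beta\Big(\tfrac 12 \phi_x^2 + U(\phi)\Big)\,\ud x \;\geq\; \Big|\int_{\phi(\alpha)}^{\phi(\beta)}\sqrt{2U(s)}\,\ud s\Big|,
\]
together with the identity $E_p(H) = \int_{-1}^1\sqrt{2U(s)}\,\ud s$. For $t$ so large that $\phi(t,x_k(t))$ is near $(-1)^k$, applying this bound on each interval $(x_{k-1}(t),x_k(t))$ and summing yields $E_p(\phi(t))\geq nE_p(H)-o(1)$. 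Combined with conservation $E = E_p + E_k \leq nE_p(H)$ and $E_k \geq 0$, both limits follow. Next, I extract modulation parameters by a concentration--compactness argument: along any sequence $t_m\to\infty$, the profile $\phi(t_m)$ is nearly minimizing for the BPS bound on each transition subinterval, and since equality is attained only by translates of $\pm H$, a standard profile-decomposition argument produces $\vec a(t_m)$ with $a_{k+1}(t_m)-a_k(t_m)\to\infty$ and $\|\phi(t_m)-H(\vec a(t_m),\vec 0;\cdot)\|_{H^1}\to 0$. Coupled with $\|\dot\phi(t_m)\|_{L^2}^2 = 2E_k(\dot\phi(t_m))\to 0$, setting $\vec v(t_m) \coloneqq \vec 0$ gives $\bfd(\bs\phi(t_m))\to 0$. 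Since this holds for every sequence $t_m\to\infty$, $\bfd(\bs\phi(t))\to 0$ as $t\to\infty$.

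The principal obstacle is the profile-decomposition step in the second part of $(\Rightarrow)$: converting near-minimality of $E_p$ with $n$ vacuum transitions into a quantitative approximation by a well-separated sum of $n$ kinks. I expect this to rely on a coercivity/compactness lemma (likely established in an earlier section of the paper) resting on the BPS bound together with the uniqueness, up to translation, of $H$ as its minimizer in $\cE_{-1,1}$.
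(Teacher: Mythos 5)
Your overall strategy coincides with the paper's: the $(\Leftarrow)$ direction via the Sobolev embedding and the energy expansion of $\bs H(\vec a,\vec v)$ (Lemma~\ref{lem:interactions}), and the $(\Rightarrow)$ direction via the Bogomolny bound forcing $E_k\to 0$ and each transition to be energy-minimizing, followed by a compactness statement for minimizing sequences. The lemma you defer to is exactly the paper's Lemma~\ref{lem:min-conv}; the paper normalizes the translation parameters by taking $a_k(t)$ to be a zero of $\phi(t,\cdot)$ in $(x_{k-1}(t),x_k(t))$, which is what makes part (iv) of that lemma directly applicable, but your ``near-minimizer must be close to a translate of $\pm H$'' formulation is the same idea.

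One concrete step in your $(\Leftarrow)$ argument fails as written: the choices $x_0(t)=a_1(t)-1$ and $x_n(t)=a_n(t)+1$ do not work. At a \emph{bounded} distance from a kink center the profile is not close to a vacuum; indeed $H(\vec a(t),\vec v(t);a_1(t)-1)\to 1-(H(-1)+1)=-H(-1)\in(0,1)$, so $\phi(t,x_0(t))$ does not tend to $1$. One must take $x_0(t)=a_1(t)-r(t)$ and $x_n(t)=a_n(t)+r(t)$ with $r(t)\to\infty$ (the midpoints you use for $1\le k\le n-1$ are fine precisely because the separations diverge). This is a one-line fix, but as stated the verification of Definition~\ref{def:cluster} for $k=0$ and $k=n$ is incorrect.
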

In other words, kink clusters can be equivalently defined as solutions approaching,
as $t \to \infty$, a superposition of a finite number of alternating kinks and antikinks,
whose mutual distances tend to $\infty$ and which travel with speeds converging to $0$.
In contrast to multikink solutions consisting of Lorentz-boosted kinks (traveling
with asymptotically non-zero speed) constructed in \cite{CJ2},
the dynamics of kink clusters are driven solely by interactions between the kinks and antikinks -- this is the regime of \emph{strong interactions}.
Proposition~\ref{prop:close-to-H} is proved in Section~\ref{ssec:close-to-H}.
It implies in particular that the energy of a kink $n$-cluster equals $nE_p(H)$.

In Section~\ref{ssec:asym-stat}, we provide another characterization of kink clusters, namely as \emph{asymptotically
static solutions}, by which we mean solutions whose kinetic energy
converges to $0$ as $t \to \infty$. In celestial mechanics such solutions are referred to as the~\emph{parabolic motions} of the system, see for instance \cite[Section 2.4]{AKN-Celestial} and~\cite{MaVe09, Saari2}. 
For simplicity, we restrict our attention
to the $\phi^4$ self-interaction potential $U(\phi) \coloneqq \frac 18(1-\phi^2)^2$.
\begin{proposition}
\label{prop:asym-stat}
Let $U(\phi) \coloneqq \frac 18(1-\phi^2)^2$.
A solution $\bs\phi$ of \eqref{eq:csf} satisfies $\lim_{t\to \infty}\|\partial_t \phi(t)\|_{L^2}^2 = 0$ if and only if $\bs \phi$ or
$-\bs\phi$ is a kink cluster.
\end{proposition}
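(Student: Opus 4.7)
The proof has two directions. The backward direction (kink cluster implies $\|\partial_t \phi(t)\|_{L^2} \to 0$) is immediate from Proposition~\ref{prop:close-to-H}: if $\bs\phi$ is a kink $n$-cluster then $\bfd(\bs\phi(t)) \to 0$, so there exist modulation parameters with $\rho(\vec a(t), \vec v(t)) \to 0$ and $\|\bs\phi(t) - \bs H(\vec a(t), \vec v(t))\|_\cE \to 0$. The velocity component of $\bs H(\vec a, \vec v)$ is a finite sum of terms $v_k \gamma_{v_k}\partial_x H(\gamma_{v_k}(\cdot - a_k))$, each with $L^2$-norm controlled by $|v_k|$, so its total $L^2$-norm is $\lesssim \rho^{1/2} \to 0$. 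The triangle inequality gives $\|\partial_t\phi(t)\|_{L^2} \to 0$, and the case where $-\bs\phi$ is the kink cluster yields the same conclusion.

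The forward direction is the substantial one. Suppose $\|\partial_t\phi(t)\|_{L^2} \to 0$; by energy conservation $E_p(\phi(t)) \to E \coloneqq E(\bs\phi)$. After replacing $\bs\phi$ with $-\bs\phi$ if necessary, one may assume $\phi(t,-\infty) = 1$; the endpoint $\phi(t,+\infty) \in \{\pm 1\}$ is likewise constant in $t$. By Proposition~\ref{prop:close-to-H} it suffices to prove $\bfd(\bs\phi(t)) \to 0$. We argue by contradiction: extract $t_n \to \infty$ with $\bfd(\bs\phi(t_n)) \geq \epsilon_0 > 0$ and apply a variational compactness/profile decomposition to $\phi(t_n)$, which yields, along a subsequence,
\begin{equation*}
\phi(t_n, x) = 1 + \sum_{k=1}^{n}(-1)^k\bigl(H(x - a_k^n) + 1\bigr) + w_n(x), \qquad a_{k+1}^n - a_k^n \to \infty, \quad w_n \to 0 \text{ weakly in } H^1,
\end{equation*}
where the integer $n$ and the alternating sign structure are dictated by the boundary conditions of $\bs\phi$ together with the energy budget $E$.

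The rigidity step --- showing that $w_n \to 0$ strongly and that $E = n E_p(H)$ --- is where the specific $\phi^4$ structure enters. The Bogomolny identity
\begin{equation*}
E_p(\phi) = \int \tfrac12 \bigl(\partial_x \phi \pm \tfrac12(1-\phi^2)\bigr)^2 \ud x \mp \tfrac12 \bigl[F(\phi)\bigr]_{-\infty}^{+\infty}, \quad F(\phi) \coloneqq \phi - \tfrac{\phi^3}{3},
\end{equation*}
applied piecewise on intervals of monotonicity of $\phi(t_n,\cdot)$, produces a sharp lower bound $E_p(\phi(t_n)) \geq \sum_j \tfrac12 |F(y_{j+1}^n) - F(y_j^n)|$ in terms of successive extrema $y_j^n$. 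Saturation of this bound as $t_n \to \infty$ forces the interior extrema to converge to $\pm 1$ (so $E = n E_p(H)$) and forces the Bogomolny defect $\int \tfrac12(\partial_x \phi(t_n) \mp \tfrac12(1-\phi(t_n)^2))^2 \ud x$ to vanish on each monotonicity interval; together with the ODE characterization $\partial_x H = \tfrac12(1 - H^2)$ of the kink, this yields $w_n \to 0$ strongly in $H^1$. Combined with $\|\partial_t\phi(t_n)\|_{L^2} \to 0$ and the increasing separations, one obtains $\|\bs\phi(t_n) - \bs H(\vec a^n, \vec 0)\|_\cE \to 0$ and $\rho(\vec a^n, \vec 0) \to 0$, hence $\bfd(\bs\phi(t_n)) \to 0$, contradicting the assumption. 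The main obstacle is precisely this rigidity: the linearization of \eqref{eq:csf-2nd} around a vacuum is the free Klein-Gordon equation, which has no dispersive decay in the $H^1 \times L^2$ norm, so one cannot appeal to scattering to kill the residual; the Bogomolny identity, which holds only for the $\phi^4$ potential among standard double-well models, is what provides the required rigidity, and this explains the restriction to $U(\phi) = \tfrac18(1-\phi^2)^2$ in the statement.
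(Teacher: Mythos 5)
There is a genuine gap in the forward direction, at exactly the step you call the ``rigidity step.'' You write that ``saturation of this bound as $t_n \to \infty$ forces the interior extrema to converge to $\pm 1$ \dots and forces the Bogomolny defect to vanish,'' but nothing in your argument forces saturation. The Bogomolny inequality \eqref{eq:bogom} only gives a \emph{lower} bound $E_p(\phi(t_n)) \geq \sum_j \frac12|F(y^n_{j+1}) - F(y^n_j)|$; the hypothesis $\|\partial_t\phi(t)\|_{L^2}\to 0$ gives $E_p(\phi(t))\to E(\bs\phi)$ but provides no a priori reason that $E(\bs\phi)$ equals the minimal energy compatible with the transition structure. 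A priori the solution could carry excess energy in a residual $w_n$ that converges weakly but not strongly to $0$ in $H^1$ (a non-dispersing ``radiation'' piece), and asymptotic saturation of the Bogomolny bound is precisely equivalent to the conclusion you are trying to prove. The same circularity affects your profile decomposition: a bounded $H^1$ sequence decomposes into \emph{arbitrary} profiles, and identifying the profiles as kinks already requires either energy minimality or a dynamical input. Relatedly, you misplace the role of the $\phi^4$ hypothesis: the Bogomolny trick \eqref{eq:bogom} holds for every admissible $U$ (the paper states it in that generality); what $\phi^4$ buys is the classification of \emph{all} finite-energy static states as $\pm\bs 1$ and $\pm\bs H(\cdot-a)$.

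The paper closes the gap with two dynamical ingredients that your proposal lacks. First (Lemmas~\ref{lem:kink-or-antikink} and \ref{lem:asym-stat-Linf-conv}), weak limits of $\bs\phi(t_m,\cdot+a_m)$ under the flow inherit $\partial_t\equiv 0$ from the hypothesis, hence are static states, hence vacua or (anti)kinks by the $\phi^4$ classification; this yields convergence to a multikink in $L^\infty$ (not yet in energy) with alternating signs. Second, if the energy-norm residual $\bs g_m$ did not vanish, the local stability Lemma~\ref{lem:mkink-stab} shows it would evolve on the unit time interval $[t_m-1,t_m+1]$ like a free Klein--Gordon wave $\bs g_{m,\lin}$ with $\sup_s\|\partial_t g_{m,\lin}(s)\|_{L^2}\to 0$ yet $\inf_s\|g_{m,\lin}(s)\|_{H^1}$ bounded below, which contradicts the space-time identity
\begin{equation}
\int_{\bR} \big(\partial_t g\, g\big)\Big|_{s=-1}^{s=1}\ud x = \int_{-1}^{1}\int_{\bR}\big((\partial_t g)^2 - (\partial_x g)^2 - g^2\big)\ud x\ud s
\end{equation}
for the free flow. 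It is this second step --- using the hypothesis on a whole time interval, not at a single time --- that rules out leftover energy; a purely variational argument at fixed times $t_n$ cannot.
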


Given a solution $\bs \phi(t)$ of \eqref{eq:csf} that is close to an $n$-kink configuration at time $t$, i.e., with $\bfd( \bs \phi(t))$ sufficiently small, we can decompose it as
\EQ{
\bs \phi(t) = \bs H(\vec a(t), \vec v(t)) + \bs h(t) 
}
with $\|\bs h(t) \|_{\E}^2 + \rho(\vec a(t), \vec v(t)) \lesssim \bfd(\bs \phi(t))$. 
This decomposition is clearly not unique, but we will make a specific choice of \emph{uniquely determined parameters} enforcing a convenient choice of orthogonality conditions, given by the following standard lemma, which is proved in Lemma~\ref{lem:static-mod} in Section~\ref{sec:mod}. 
We denote
\begin{equation} \label{eq:alpha-beta-def}
\bs\alpha(a, v) \coloneqq  \bs J\partial_a \bs H(a, v), \qquad \bs\beta(a, v) \coloneqq  \bs J\partial_v \bs H(a, v),
\end{equation}
and use the shorthand $\bs \alpha_k \coloneqq  \bs\alpha(a_k, v_k)$ and $\bs\beta_k \coloneqq  \bs\beta(a_k, v_k)$.

\begin{lemma} [Modulation parameters] \label{lem:mod-av-intro} 
There exist $\eta_0, \eta_1, C_0>0$ with the following properties. For all $\bs \phi_0 \in \E_{1, (-1)^n}$  such that 
\EQ{
\bfd( \bs \phi_0) < \eta_0,
}
 there exist unique $(\vec a, \vec v) = (\vec a( \bs \phi_0), \vec v(\bs \phi_0)) \in \R^n \times \R^n$  with $a_1 \le a_2 \le \dots \le a_n$ such that 
\EQ{
 \big\| \bs \phi_0 - \bs H( \vec a, \vec v) \big\|_{\E}^2 + \rho(\vec a, \vec v) < \eta_1, 
}
and 
\EQ{ \label{eq:ortho-av-d} 
\la \bs \phi_0 - \bs H(\vec a , \vec v), \, \bs \alpha(a_k, v_k) \ra = 
\la \bs \phi_0 - \bs H(\vec a, \vec v) ,\, \bs \beta(a_k, v_k) \ra &= 0, 
}
for all $k  = 1, \dots, n$. The pair $(\vec a, \vec v)$ satisfies the estimates
\EQ{ \label{eq:av-d} 
 \big\| \bs \phi_0 - \bs H( \vec a, \vec v) \big\|_{\E}^2 &+ \rho(\vec a, \vec v) < C_0  \bfd( \bs \phi_0) .
} 
Finally, the map $ \E_{1, (-1)^n} \owns \bs \phi_0 \mapsto  (\vec a( \bs \phi_0), \vec v(\bs \phi_0)) \in \R^n \times \R^n$ is of class $C^1$. 

\end{lemma}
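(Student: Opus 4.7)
The plan is to apply the implicit function theorem to the map $F:\R^n\times\R^n\times\E_{1,(-1)^n}\to\R^{2n}$ defined by
\[
F(\vec a,\vec v;\bs\phi_0)\coloneqq\big(\la\bs\phi_0-\bs H(\vec a,\vec v),\bs\alpha_k\ra,\ \la\bs\phi_0-\bs H(\vec a,\vec v),\bs\beta_k\ra\big)_{k=1}^n,
\]
whose zero set is exactly the orthogonality system \eqref{eq:ortho-av-d}. By the definition of $\bfd$, given $\bs\phi_0$ with $\bfd(\bs\phi_0)<\eta_0$ we can pick a reference $(\vec a^*,\vec v^*)$ with $a_1^*\le\cdots\le a_n^*$ satisfying $\|\bs\phi_0-\bs H(\vec a^*,\vec v^*)\|_\E^2+\rho(\vec a^*,\vec v^*)<2\eta_0$.

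The crux is the invertibility of the Jacobian $D_{(\vec a,\vec v)}F$ at $(\vec a^*,\vec v^*,\bs H(\vec a^*,\vec v^*))$, with a bound uniform in $(\vec a^*,\vec v^*)$ as $\rho(\vec a^*,\vec v^*)\to 0$. Since $\partial_{a_j}\bs H(\vec a,\vec v)=(-1)^j\partial_a\bs H_j$, $\partial_{v_j}\bs H(\vec a,\vec v)=(-1)^j\partial_v\bs H_j$, and $\bs\phi_0-\bs H$ vanishes at the reference point, each entry of the Jacobian reduces to a pairing such as $-(-1)^j\la\partial_a\bs H_j,\bs J\partial_a\bs H_k\ra$. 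The diagonal entries of the $\alpha$-$a$ and $\beta$-$v$ blocks vanish identically by antisymmetry of $\bs J$, while the off-diagonal entries are $O(e^{-|a_j^*-a_k^*|/2})=O(\rho^{1/2})$ by exponential localization of $\bs H_k$ near $a_k$. The cross blocks $\alpha$-$v$ and $\beta$-$a$ are diagonally dominant: at $\vec v^*=0$ their diagonal entries equal $\pm(-1)^k\|H'\|_{L^2}^2=\pm(-1)^k M$, and the off-diagonal and $v$-corrections are again $O(\rho^{1/2})$. Hence $D_{(\vec a,\vec v)}F$ has the off-diagonal block form
\[
\begin{pmatrix} O(\rho^{1/2}) & N \\ P & O(\rho^{1/2}) \end{pmatrix},
\]
which is invertible with uniformly bounded inverse once $\rho(\vec a^*,\vec v^*)$ is small enough.

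A quantitative implicit function theorem (equivalently, a Banach contraction for $(\vec a,\vec v)\mapsto(\vec a,\vec v)-[D_{(\vec a,\vec v)}F]^{-1}F(\vec a,\vec v;\bs\phi_0)$) then produces, for $\eta_0$ small, a unique $(\vec a,\vec v)$ in a ball of radius $O(\eta_0^{1/2})$ about $(\vec a^*,\vec v^*)$ solving $F=0$. Combining this radius with the triangle inequality and the bound on $\|\bs\phi_0-\bs H(\vec a^*,\vec v^*)\|_\E^2+\rho(\vec a^*,\vec v^*)$ gives \eqref{eq:av-d}. The ordering $a_1\le\cdots\le a_n$ is preserved because $a_{k+1}^*-a_k^*\gtrsim\log(1/\eta_0)$ dominates the $O(\eta_0^{1/2})$ displacement from $\vec a^*$.

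For global uniqueness under the size bound \eqref{eq:av-d}, any other admissible $(\vec a',\vec v')$ places $\bs H(\vec a',\vec v')$ within $\eta_1^{1/2}$ of $\bs\phi_0$ in $\E$, hence within $2\eta_1^{1/2}$ of $\bs H(\vec a,\vec v)$; since $(\vec a,\vec v)\mapsto\bs H(\vec a,\vec v)$ is bi-Lipschitz in the regime $\rho\ll 1$ (by the same spectral analysis of the translation/boost generators), $(\vec a',\vec v')$ must lie in the local ball about $(\vec a^*,\vec v^*)$, and local uniqueness forces equality. The $C^1$-regularity of $\bs\phi_0\mapsto(\vec a,\vec v)$ is then the standard output of the implicit function theorem, since $F$ is smooth and affine in $\bs\phi_0$. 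The main obstacle is the Jacobian computation: tracking how the antisymmetry of $\bs J$ eliminates the symplectic self-pairings, how the exponential localization controls cross-kink terms uniformly in $n$, and how the weights in $\rho$ are exactly the right ones to absorb all error terms -- this is where modulation theory must be executed carefully in the multi-kink setting.
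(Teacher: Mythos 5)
Your proposal is correct and follows essentially the same route as the paper's proof (Lemma~\ref{lem:static-mod}): a Newton-type contraction for the orthogonality map, invertibility of the $2n\times 2n$ Jacobian via the vanishing of the symplectic self-pairings $\la \bs J\partial_a\bs H_k,\partial_a\bs H_k\ra$, the diagonal entries $\pm\gamma_{v_k}^3M$ of the cross blocks, and exponential smallness of the inter-kink terms, with global uniqueness reduced to the separation of parameters by the map $(\vec a,\vec v)\mapsto\bs H(\vec a,\vec v)$. The one step to tighten is the derivation of \eqref{eq:av-d}: locating the fixed point in a ball of radius $O(\eta_0^{1/2})$ about the reference only yields the qualitative $\eta_1$-bound, not the bound by $C_0\,\bfd(\bs\phi_0)$; you must additionally note that the fixed point lies at distance $O(\sqrt{\bfd(\bs\phi_0)})$ from $(\vec a^*,\vec v^*)$, which follows from the fixed-point identity since the first Newton step has size $O(|F(\vec a^*,\vec v^*;\bs\phi_0)|)\lesssim\|\bs\phi_0-\bs H(\vec a^*,\vec v^*)\|_{\E}\lesssim\sqrt{\bfd(\bs\phi_0)}$ and the contraction constant absorbs the rest.
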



We will refer  to   $a_k(\bs \phi(t))$ as the \emph{position} of the $k$th kink and $v_k( \bs \phi(t))$ as its \emph{velocity} at time $t$.

\subsection{Main results}
\label{ssec:results}
 By Proposition~\ref{prop:close-to-H} and Lemma~\ref{lem:mod-av-intro}, for any kink $n$-cluster  $\bs \phi$, the modulation parameters $(\vec a(\bs \phi(t)), \vec v( \bs \phi(t)))$ are defined for all sufficiently large times. The first main result, which is is crucial to the other conclusions in this section, is the determination
of the leading order dynamics of the positions and velocities of the kinks. 


Define the number $\kappa>0$ by
\EQ{ \label{eq:kappa-def} 
\kappa\coloneqq  \exp\Big( \int_0^1 \big( \frac{1}{ \sqrt{2U(y)}} - \frac{1}{1 - y} \Big) \, \ud y  \Big).
}

 \begin{mainthm}[Asymptotic dynamics of kink $n$-clusters]  \label{thm:asymptotics}
Let $\kappa > 0$ be given by~\eqref{eq:kappa-def} 
and $M \coloneqq E_p(H)$.
Let $\bs\phi$ be a kink $n$-cluster and let $(\vec a(t), \vec v(t)) = (\vec a( \bs \phi(t)), \vec v( \bs \phi(t))$ be the positions and velocities of the kinks given by Lemma~\ref{lem:mod-av-intro}. Then, 
\begin{equation} \label{eq:JL9} 
\begin{aligned}
\lim_{t\to\infty}\bigg(&\max_{1\leq k < n}\bigg|\big(a_{k+1}(t) - a_k(t)\big) - \Big(2\log(\kappa t) - \log\frac{Mk(n-k)}{2}\Big)\bigg| \\
+ &\max_{1\leq k\leq n}|tv_k(t) + (n+1 - 2k)|
 + t\| \bs \phi(t) - \bs H( \vec a(t), \vec v(t))\|_{H^1 \times L^2}\bigg) = 0.
\end{aligned}
\end{equation}
\end{mainthm}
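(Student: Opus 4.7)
The plan is to reduce the PDE dynamics of a kink $n$-cluster, via modulation theory, to a Toda-type $n$-body ODE system with attractive exponential interactions, and then to identify the unique parabolic asymptotic of that reduced system.

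\textbf{Modulation equations.} Since $\bfd(\bs\phi(t)) \to 0$ by Proposition~\ref{prop:close-to-H}, Lemma~\ref{lem:mod-av-intro} yields a decomposition $\bs\phi(t) = \bs H(\vec a(t), \vec v(t)) + \bs h(t)$ for all sufficiently large $t$, with $\bs h$ orthogonal to $\bs\alpha_k, \bs\beta_k$ in the sense of \eqref{eq:ortho-av-d}. Differentiating the orthogonality conditions in $t$ and using $\p_t \bs\phi = \bs J DE(\bs\phi)$ together with \eqref{eq:Hv-id}, I would derive modulation equations of the schematic form
\begin{align*}
\dot a_k &= v_k + O\bigl(\rho + \|\bs h\|_{\E}^2\bigr), \\
M\dot v_k &= 2\kappa^2 \bigl(e^{-(a_{k+1}-a_k)} - e^{-(a_k-a_{k-1})}\bigr) + O\bigl(\rho^{3/2} + \|\bs h\|_{\E}^2\bigr),
\end{align*}
with boundary interactions understood to vanish. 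The exact coefficient $2\kappa^2$ and the exponential form emerge from the two-term asymptotic $H(x) = 1 - 2\kappa e^{-x} + O(e^{-2x})$ as $x \to +\infty$, itself a consequence of the ODE $H' = \sqrt{2U(H)}$ combined with the definition \eqref{eq:kappa-def}.

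\textbf{Error control.} From Lemma~\ref{lem:mod-av-intro} we have only $\|\bs h(t)\|_\E \lesssim \sqrt{\bfd(\bs\phi(t))} \to 0$. Using the modulation system together with conservation of energy (noting $E(\bs\phi) = nE_p(H)$ for a kink $n$-cluster) and coercivity of the energy Hessian on the symplectic complement of the multikink manifold, I would bootstrap to the bounds $\rho(\vec a(t), \vec v(t)) \lesssim 1/t^2$ and $\|\bs h(t)\|_\E \lesssim 1/t$. The sharper estimate $\|\bs h(t)\|_\E = o(1/t)$ required by \eqref{eq:JL9} needs a further refinement, which I would obtain from a virial-type monotonicity functional adapted to the strong-interaction regime, extracting decay of $\bs h$ strictly faster than $\sqrt\rho$.

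\textbf{Parabolic asymptotic of the reduced system.} Once the errors are negligible, $(\vec a, \vec v)$ solves, to leading order, the Toda-like Hamiltonian system
\[
M \ddot a_k = 2\kappa^2\bigl(e^{-(a_{k+1}-a_k)} - e^{-(a_k-a_{k-1})}\bigr).
\]
For a kink $n$-cluster, both the total momentum $\sum_k M v_k$ and the reduced energy $\tfrac{M}{2}\sum_k v_k^2 - 2\kappa^2 \sum_k e^{-(a_{k+1}-a_k)}$ are conserved and vanish at infinity, hence vanish identically. Inserting the ansatz $a_k(t) = -(n+1-2k)\log t + \mu_k + o(1)$, $v_k(t) = -(n+1-2k)/t + o(1/t)$ and matching the $1/t^2$-coefficients in the ODE gives
\[
M(n+1-2k) = 2\kappa^2\bigl(e^{-(\mu_{k+1}-\mu_k)} - e^{-(\mu_k - \mu_{k-1})}\bigr),
\]
which telescopes (with vanishing boundary terms) to $e^{-(\mu_{k+1}-\mu_k)} = Mk(n-k)/(2\kappa^2)$, i.e.\ $\mu_{k+1} - \mu_k = 2\log\kappa - \log(Mk(n-k)/2)$, matching \eqref{eq:JL9}. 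A contraction-mapping argument in a norm weighted by $1/t^2$ upgrades this formal matching to a rigorous asymptotic for the modulation variables, and combined with the error control of Step~2 this yields the full conclusion.

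\textbf{Main obstacle.} The central technical difficulty is the refined error bound $\|\bs h\|_\E = o(1/t)$ in Step~2: since energetic coercivity alone yields only $\|\bs h\|_\E \lesssim \sqrt\rho \sim 1/t$, one must extract a strict gain over this natural scale, presumably via a Morawetz/virial estimate tailored to the multikink configuration in the asymptotically static regime. A secondary subtlety is identifying the \emph{precise} interaction coefficient $2\kappa^2$ in the modulation equations, which requires the two-term tail expansion of $H$ at the vacua rather than merely its leading exponential decay.
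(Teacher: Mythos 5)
Your high-level outline — modulation to a Toda-type ODE, identification of the parabolic asymptotic, refined error control — matches the paper's strategy. But there are genuine gaps in each of the three technical steps, and the first one is fatal as stated.

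\textbf{The force equation.} You claim $M\dot v_k = 2\kappa^2(e^{-y_k}-e^{-y_{k-1}}) + O(\rho^{3/2} + \|\bs h\|_\E^2)$. But a direct computation of $\dot v_k$ from the modulation relations contains the nonlinear remainder $\langle \bs J(\vD E(\bs H+\bs h)-\vD E(\bs H)-\vD^2E(\bs H)\bs h),\bs\alpha_k\rangle$, which is only $O(\|\bs h\|_\E^2)$, and by coercivity $\|\bs h\|_\E^2\lesssim\rho$. So the error in $M\dot v_k$ is $O(\rho)$, i.e.\ the \emph{same} size as the force $F_k\sim\rho$ — the equation you wrote down has no content. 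This is the central obstruction the paper resolves by introducing the \emph{localized momenta} $p_k(t) = -\langle (-1)^k\partial_x H_k + \chi_k\partial_x g,\dot\phi\rangle$ (Definition~\ref{def:p}): these satisfy $|Mv_k - p_k|\lesssim\rho$ but, crucially, $|p_k' - F_k(\vec a)| \lesssim \rho/(-\log\rho)$ (Lemma~\ref{lem:ref-mod}), a genuinely sub-$\rho$ error, because the troublesome quadratic-in-$\bs h$ terms in $p_k'$ assemble into localized flux terms that integrate by parts. Without some device of this kind you never obtain a usable force law. (Incidentally, the tail expansion is $H(x)=1-\kappa e^{-x}+O(e^{-2x})$, not $1-2\kappa e^{-x}$; the factor $2\kappa^2$ in the force arises only after the convolution integral, cf.\ \eqref{eq:reduced-force}.)

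\textbf{The ODE analysis.} Your plan to insert a formal ansatz $a_k(t)=-(n+1-2k)\log t+\mu_k+o(1)$ and then ``close by contraction in a $1/t^2$-weighted norm'' does not address the actual difficulty, which is showing that \emph{every} parabolic motion of the Toda-type system (and, worse, of its $O(\rho/\log(1/\rho))$ perturbation) selects this profile. The two conservation laws you invoke (total momentum and reduced energy, both vanishing) do not pin down the asymptotic for $n>2$: you have $2n$ phase space dimensions and only two constraints. The paper proves the selection in several stages — a monotonicity functional $\beta(t)=\vec q\cdot e^{-\vec y}-c_0 e^{-\frac32 y_{\min}}$ forcing two-sided bounds on $y_{\min}-2\log t$ (Lemma~\ref{lem:ymin-asym}), a kinetic-energy lower bound $\limsup \frac12 A^2 t^2\widetilde\rho(t)\geq \vec 1\cdot\vec\sigma$ (Lemma~\ref{lem:U-lbound}), a sequential convergence step, and a no-return argument combined with a Tauberian lemma. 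Your proposal has no substitute for any of this. (The paper even remarks that Hénon's $n$ integrals of the Toda system \emph{would} give a shortcut, but it deliberately avoids integrability to keep the argument robust.)

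\textbf{The $o(1/t)$ error bound.} You guessed this requires a Morawetz/virial estimate, but it does not. Once the ODE analysis delivers $4\kappa^2\sum_k e^{-y_k(t)} = M t^{-2}\sum_k(n+1-2k)^2 + o(t^{-2})$ and $M|\vec v(t)|^2 = M t^{-2}\sum_k(n+1-2k)^2+o(t^{-2})$, the refined coercivity estimate \eqref{eq:a'-est},
\[
\|\dot g(t)\|_{L^2}^2 + \tfrac{\nu}{2}\|g(t)\|_{H^1}^2 + M|\vec v(t)|^2\leq 4\kappa^2\sum_k e^{-y_k(t)}+C_0\rho(t)^{3/2},
\]
gives $\|\bs g(t)\|_\E^2 = o(t^{-2})$ directly, because the two $O(t^{-2})$ terms cancel to $o(t^{-2})$. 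There is no need for a separate virial functional; the gain over $\sqrt\rho$ comes from the constraint $E(\bs\phi)=nM$ together with the precise leading-order asymptotics you have just established.
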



Our next result concerns the problem of existence of kink $n$-clusters.
We prove that, for any choice of $n$ points on the line sufficiently distant from each other,
there exists a kink $n$-cluster such that the initial positions of the (anti)kinks
are given by the $n$ chosen points.
The result is inspired by the work
of Maderna and Venturelli~\cite{MaVe09} on the Newtonian $n$-body problem.
\begin{mainthm}[Existence of kink $n$-clusters]
\label{thm:any-position}
There exist $C_0, L_0 > 0$ such that the following is true.
If $L \geq L_0$ and $\vec a_0 = (a_{0, 1}, \dots, a_{0, n})   \in \bR^n$ satisfies $a_{0, k+1} - a_{0, k} \geq L$ for all
$k \in \{1, \ldots, n-1\}$, then there exists
a solution $\bs \phi(t)$ of \eqref{eq:csf} satisfying:
\begin{itemize}
\item $\bfd(\bs \phi(t)) \leq C_0/(e^L + t^2)$ for all $t \geq 0$; in particular, $\bs \phi$ is a kink $n$-cluster;
\item  $\vec a( \bs \phi(0)) = \vec a_0$, where $\vec a(\bs \phi(0))$ is given by Lemma~\ref{lem:mod-av-intro}.
\end{itemize}
\end{mainthm}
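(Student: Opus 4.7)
The plan is to construct $\bs\phi$ by a compactness argument from a sequence of solutions defined on shrinking final-time windows, combined with a topological shooting step that pins the initial positions to $\vec a_0$. The scheme is in the spirit of the Maderna--Venturelli treatment of parabolic motions in the Newtonian $n$-body problem, adapted to the PDE \eqref{eq:csf} using the modulation framework of Lemma~\ref{lem:mod-av-intro} and the energy/virial estimates developed for Theorem~\ref{thm:asymptotics}.

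First I would fix a reference trajectory $(\vec a_*(t), \vec v_*(t))$ for the leading-order attractive Toda-type $n$-body ODE that arises from the modulation equations, with data $\vec a_*(0) = \vec a_0$ and $\vec v_*(0) = 0$; by comparison with the explicit parabolic asymptotics of Theorem~\ref{thm:asymptotics}, this trajectory is defined for all $t \ge 0$ with $\rho(\vec a_*(t), \vec v_*(t)) \lesssim 1/(e^L + t^2)$. For each $T_n \to \infty$ and each shift parameter $\vec\sigma$ in a fixed closed ball $B_{r_0} \subset \R^n$, let $\bs\psi_n^{\vec\sigma}$ be the solution of \eqref{eq:csf} on $[0, T_n]$ defined by evolving backward from the final condition
\begin{equation*}
\bs\psi_n^{\vec\sigma}(T_n) = \bs H\bigl(\vec a_*(T_n) + \vec\sigma,\, \vec v_*(T_n)\bigr).
\end{equation*}
The crux of the argument is then the uniform bootstrap bound
\begin{equation*}
\bfd(\bs\psi_n^{\vec\sigma}(t)) \leq \frac{C_0}{e^L + t^2}, \qquad t \in [0, T_n],
\end{equation*}
with $C_0$ independent of $T_n$ and of $\vec\sigma \in B_{r_0}$. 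This is obtained by decomposing $\bs\psi_n^{\vec\sigma}(t) = \bs H(\vec a(t), \vec v(t)) + \bs h(t)$ via Lemma~\ref{lem:mod-av-intro}, deriving the coupled modulation ODEs for $(\vec a, \vec v)$ and the remainder equation for $\bs h$, and closing a backward-in-time Gronwall argument with a virial-corrected energy that is coercive on the symplectic complement of the directions $\bs\alpha_k, \bs\beta_k$ --- essentially the same functional that underpins the asymptotic analysis in Theorem~\ref{thm:asymptotics}.

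Next comes the shooting. The map $\Phi_n: B_{r_0} \to \R^n$, $\vec\sigma \mapsto \vec a(\bs\psi_n^{\vec\sigma}(0)) - \vec a_0$, is continuous by local well-posedness of \eqref{eq:csf} in $\calE$ together with the $C^1$ regularity of the modulation map of Lemma~\ref{lem:mod-av-intro}. Linearizing the backward flow along the reference trajectory, one checks that on the finite-dimensional ``position'' subspace $\Phi_n$ is a near-identity perturbation for $T_n$ large, so $\Phi_n(\vec\sigma) = \vec\sigma + o_{T_n \to \infty}(1)$ uniformly on $B_{r_0}$. Brouwer's fixed-point theorem applied to $\vec\sigma \mapsto \vec\sigma - \Phi_n(\vec\sigma)$ then produces $\vec\sigma_n \in B_{r_0}$ with $\Phi_n(\vec\sigma_n) = 0$. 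Finally, the uniform $\calE$-bound on compact subintervals of $[0, \infty)$ lets me extract a subsequential weak limit $\bs\phi$ of $\{\bs\psi_n^{\vec\sigma_n}\}$; by continuous dependence for \eqref{eq:csf} and continuity of the modulation map, $\bs\phi$ is a solution on $[0, \infty)$ satisfying $\bfd(\bs\phi(t)) \leq C_0/(e^L + t^2)$ with $\vec a(\bs\phi(0)) = \vec a_0$, hence a kink $n$-cluster by Proposition~\ref{prop:close-to-H}.

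The main obstacle is the uniform backward-in-time bootstrap described above. Running time backward from $T_n$, the attractive exponential interactions between adjacent (anti)kinks pull them together, producing an instability in the finite-dimensional modulation dynamics; simultaneously, the absence of a spectral gap in the remainder equation for $\bs h$ forces the use of a virial correction to obtain coercivity of the Lyapunov functional. Achieving the \emph{sharp} polynomial decay $1/(e^L + t^2)$ --- needed both to match the expected size $\bfd \lesssim e^{-L}$ dictated by the gap $L$ at $t=0$ and to pass cleanly to the limit $T_n \to \infty$ --- is the principal technical difficulty and closely mirrors the estimates driving Theorem~\ref{thm:asymptotics}.
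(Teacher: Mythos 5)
Your overall architecture (solve backward from exact multikink data at a large final time, use a topological fixed-point argument to hit $\vec a_0$, then pass to a weak limit) is the same as the paper's, but two of your central claims fail, and they fail for the same reason. The backward-in-time dynamics of the modulation parameters around the parabolic reference trajectory is \emph{unstable}: linearizing the attractive $n$-body system \eqref{eq:attractive-toda} around the explicit solution \eqref{eq:explicit-parabolic} gives, mode by mode, the Euler equation $b''=\ell(\ell+1)t^{-2}b$ with solutions $t^{-\ell}$ and $t^{\ell+1}$ ($\ell=0,\dots,n-1$). A perturbation $(\delta\vec a,\delta\vec v)(T_n)=(\vec\sigma,0)$ therefore grows backward like $|\vec\sigma|\,(T_n/t)^{\ell}$ in the $\ell$-th mode. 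Consequently (i) the uniform bootstrap $\bfd(\bs\psi_n^{\vec\sigma}(t))\leq C_0/(e^L+t^2)$ cannot hold for all $\vec\sigma$ in a \emph{fixed} ball $B_{r_0}$ and all $t\in[0,T_n]$ --- for generic $\vec\sigma\neq 0$ the separations deviate by $r_0(T_n/t)^{n-1}\to\infty$ and the configuration collapses before reaching $t=0$; and (ii) the shooting map $\Phi_n$ is not a near-identity perturbation but an expanding map with expansion factors $\sim(T_n/t_0)^{\ell}$, so your Brouwer step as stated does not apply. A secondary gap: taking $\vec v_T=\vec v_*(T_n)$ does not normalize the energy, whereas the coercivity bound \eqref{eq:a'-est} (and hence the decay $\|\bs g\|_\cE^2+|\vec v|^2\lesssim\widetilde\rho$ that drives everything) is used in the regime $E(\bs\phi)=nM$; the paper devotes Step~2 of Lemma~\ref{lem:data-at-T} to choosing $\vec v_T$ so that $E(\bs H(\vec a_T,\vec v_T))=nM$ exactly.

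The paper circumvents the instability without any linearization. It parametrizes the final data by the separations $\vec y_T\in[L_1,L_2]^{n-1}$ over a \emph{large} box, introduces an exit time $T_1$ (the first time backward at which $\widetilde\rho$ exceeds the threshold $2ne^{-L_1}$), and proves the monotonicity formula $\widetilde\rho\,'(t)\leq-c_0\widetilde\rho(t)^{3/2}$ on $[T_1,T]$ via the functional $\beta$. This single differential inequality simultaneously (a) yields the sharp decay $\widetilde\rho(t)\lesssim(e^L+t^2)^{-1}$, (b) controls $T-T_1$ when a separation starts at $L_1$, and (c) supplies the opposite boundary inequalities on the faces $y_{T,k}=L_1$ and $y_{T,k}=L_2$ needed for the Poincar\'e--Miranda theorem applied to $\vec y_T\mapsto\vec y(T_1)$. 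The solution produced then automatically satisfies $T_1=0$, so no collapse occurs. If you want to keep your linearization-based shooting, you would at minimum have to shrink the ball of shifts like $(e^{L/2}/T_n)^{n-1}$ while verifying that the expanding map still covers a fixed neighborhood of $\vec a_0$, and you would still need a nonlinear argument (not Gronwall on a fixed ball) to justify the bootstrap along the selected trajectory; the exit-time/monotonicity route is what makes the argument close.
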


\begin{remark}
In the case of the sine-Gordon equation \eqref{eq:sg}, which is completely integrable,
it is in principle possible to obtain explicit kink clusters.
For $n \in \{2, 3\}$, such examples of kink clusters were given in \cite{Tomasz}. Nevertheless, Theorems~\ref{thm:asymptotics}
and \ref{thm:any-position} are new even for the sine-Gordon equation.
\end{remark}

We expect that for given widely separated initial positions $\vec a_0$, there is actually a \emph{unique}
choice of $\vec v_0$ and $\bs g_0$ (small respectively in $\bR^n$ and $\cE$) which leads to a kink $n$-cluster.
This is clearly true for $n = 1$, since in this case we necessarily have $\vec v_0 = \bs g_0 = 0$. For $n = 2$,
uniqueness of $\bs g_0$ can be obtained as
a consequence of our work with Kowalczyk~\cite{JKL1}.
 Our next result shows that uniqueness does hold for certain initial position vectors $\vec a_0$. Indeed, we prove that kink $n$-clusters can be parameterized uniquely (and continuously) by  \emph{only the positions} of the kinks, at least for initial configurations that are sufficiently close to the leading order asymptotics from Theorem~\ref{thm:asymptotics}. 

\begin{mainthm}[Classification of kink $n$-clusters] \label{thm:main} There exist $\tau_0, \eta_0 >0$ such that for each $t_0 \ge \tau_0$ and each $\vec a_0 \in \bR^n$ with 
\EQ{ \label{eq:a0-initial} 
\max_{1\leq k < n}\bigg|\big(a_{k+1, 0} - a_{k, 0}\big) - \Big(2\log(\kappa t_0) - \log\frac{Mk(n-k)}{2}\Big)\bigg| < \eta_0,
}
there exists a \emph{unique} 
 solution  $\bs \phi(\vec a_0, t_0; t)$ of~\eqref{eq:csf} satisfying: 
\begin{itemize} 
\item $\bfd(\bs \phi(\vec a_0, t_0;  t)) < \eta_0$ for all $t \ge t_0$ and $\lim_{t \to \infty} \bfd( \bs \phi(\vec a_0, t_0; t)) = 0$;
\item  $\vec a( \bs \phi(a_0, t_0; t_0) = \vec a_0$.
\end{itemize} 
\end{mainthm}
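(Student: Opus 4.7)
The plan is to prove Theorem~\ref{thm:main} by a Lyapunov--Schmidt reduction combined with a Banach fixed-point argument, carried out perturbatively around the explicit leading-order trajectory $(\vec a^\infty(t), \vec v^\infty(t))$ determined by Theorem~\ref{thm:asymptotics}, where $a^\infty_{k+1}(t) - a^\infty_k(t) = 2\log(\kappa t) - \log(Mk(n-k)/2)$ and $v^\infty_k(t) = -(n+1-2k)/t$. Using Lemma~\ref{lem:mod-av-intro}, any admissible candidate decomposes as $\bs\phi(t) = \bs H(\vec a(t), \vec v(t)) + \bs h(t)$ with $\bs h(t)$ orthogonal to $\bs\alpha_k(t), \bs\beta_k(t)$, and we work with the deviations $\vec A(t) \coloneqq \vec a(t) - \vec a^\infty(t)$, $\vec V(t) \coloneqq \vec v(t) - \vec v^\infty(t)$ and the remainder $\bs h(t)$. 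The modulation equations comprise a $2n$-dimensional Toda-like ODE system for $(\vec a, \vec v)$ whose dominant forcing is the attractive exponential $\sim e^{-(a_{k+1}-a_k)} \sim t^{-2}$, coupled to a Klein--Gordon-type equation for $\bs h$ sourced by multikink interactions and by $(\vec A, \vec V)$.

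First I would analyze the linearization of the modulation ODE about the asymptotic profile. After the change of variable $s = \log t$, the non-autonomous Hamiltonian system becomes asymptotically autonomous with limiting matrix equal (up to rescaling) to the classical open-Toda indicial operator. Its characteristic exponents come in pairs dictated by the symplectic structure, and one finds exactly $n$ decaying and $n$ growing modes in the variable $s$. Consequently, the set of solutions of the linearized ODE that tend to zero as $t \to \infty$ forms an $n$-dimensional invariant manifold, and for $\tau_0$ large the map from this manifold to the prescribed position vector $\vec A(t_0)$ is a small perturbation of the identity, hence invertible; this yields the desired parametrization of the decaying ODE solutions by $\vec A(t_0)$.

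Next I would set up the fixed-point problem on a Banach space $\cX$ of trajectories $(\vec A, \vec V, \bs h)$ on $[t_0, \infty)$ with a weighted sup norm of the form
\[
\|(\vec A, \vec V, \bs h)\|_{\cX} \coloneqq \sup_{t \ge t_0} \big( |\vec A(t)| + t |\vec V(t)| + t \|\bs h(t)\|_{\E} \big),
\]
tuned to the decay rates implicit in Theorem~\ref{thm:asymptotics}. Define an operator $\mathcal{T}: \cX \to \cX$ via Duhamel: the PDE for $\bs h$ is integrated with zero condition at $t = +\infty$; the ODE for $(\vec A, \vec V)$ is split according to the linear stable/unstable decomposition of the preceding step, integrating the stable component forward from the prescribed data $\vec A(t_0) = \vec a_0 - \vec a^\infty(t_0)$ (and the induced stable velocity) and the unstable component backward from zero at $t = +\infty$. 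The smallness assumption~\eqref{eq:a0-initial} together with $t_0 \ge \tau_0$ makes the data small enough that $\mathcal{T}$ is a contraction on a closed ball in $\cX$; its unique fixed point yields both existence and uniqueness of the kink $n$-cluster, and the conditions $\bfd(\bs\phi(t)) < \eta_0$ and $\lim_{t\to\infty}\bfd(\bs\phi(t)) = 0$ then follow directly from the definition of the $\cX$ norm.

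The principal obstacle is controlling the PDE remainder $\bs h$ at the sharp rate $\|\bs h(t)\|_\E \lesssim t^{-1}$. The linearization of the scalar field equation about a multikink has continuous spectrum reaching the origin and a threshold resonance at each (anti)kink, so standard dispersive estimates are insufficient. One must deploy a virial-weighted energy estimate adapted to the expanding cluster geometry---of the type underlying the proof of Theorem~\ref{thm:asymptotics}---and use the orthogonality conditions from Lemma~\ref{lem:mod-av-intro} to recover coercivity on the symplectic complement of the modulation directions. A secondary subtlety is that the stable/unstable splitting must be performed for the genuinely non-autonomous linearized ODE rather than its autonomous limit; this is handled by incorporating the suppression of unstable modes directly into the fixed-point iteration on $\cX$, so that the projector onto decaying solutions is constructed simultaneously with the solution itself.
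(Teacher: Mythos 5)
Your proposal follows essentially the same route as the paper: a Lyapunov--Schmidt reduction in which the projected (remainder) equation is solved by contraction in weighted-in-time norms with data imposed at $t=+\infty$, and the bifurcation ODE for the modulation parameters is linearized about the explicit parabolic Toda trajectory and split into its $n$ decaying and $n$ growing modes (the paper carries this out explicitly by diagonalizing the matrix $\scrU$ with the Legendre vectors and solving the resulting Euler equations, rather than via an asymptotically autonomous dichotomy in $s=\log t$). The one step you gloss over is upgrading uniqueness from the fixed-point class to \emph{all} solutions satisfying the hypotheses of the theorem; the paper obtains this by feeding the decay rates of Theorem~\ref{thm:asymptotics}, together with the bound $|\vec a\,'(t)-\vec v(t)|+|\vec v\,'(t)|\lesssim t^{-2}$ valid for every kink $n$-cluster, back into the contraction argument.
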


\begin{remark} By Theorem~\ref{thm:asymptotics}, the trajectories $(\vec a(t), \vec v(t))$ associated to every kink $n$-cluster $\bs \phi$ eventually satisfy~\eqref{eq:a0-initial}, meaning that for all $t_0$ sufficiently large (depending on $\bs \phi$), $\vec a_0 = \vec a( \bs \phi(t_0))$ satisfies~\eqref{eq:a0-initial}. 
\end{remark}

As a by product of the proof of Theorem~\ref{thm:main}, we obtain the following refined information about the dynamics of $\bs \phi( \vec a_0, t_0; t)$.

\begin{maincor}[Refined dynamics]\label{cor:estimates} Let $\tau_0, \eta_0$ be as in Theorem~\ref{thm:main}, $t_0 \ge \tau_0$, let $\vec a_0$ be as in~\eqref{eq:a0-initial}, let $\bs \phi( \vec a_0, t_0)$ be the kink $n$-cluster given by Theorem~\ref{thm:main}, and let  $(\vec a(t), \vec v(t)) = ( \vec a( \bs \phi(\vec a_0, t_0; t)), \vec v( \bs \phi(\vec a_0, t_0; t))$. Then
\EQ{ \label{eq:h-decay-thm} 
\lim_{t \to \infty} \Big( t^\gamma \| \bs \phi(\vec a_0, t_0; t) - \bs H(\vec a(t), \vec v(t)) \|_{\E} \Big)  =0
}
for each $\gamma<2$. 
In addition to \eqref{eq:JL9}, the parameters $(\vec a(t), \vec v(t))$  satisfy 
\EQ{ \label{eq:traj'-thm} 
 \lim_{t \to \infty} t^2 \Big(| \vec a\, '(t)  - \vec v(t)| + | \vec v\,'(t)|\Big) < \infty.
}
\end{maincor}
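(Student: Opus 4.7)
The plan is to extract both estimates directly from the fixed-point argument that produces $\bs\phi(\vec a_0, t_0; t)$ in Theorem~\ref{thm:main}. The contraction is naturally set in a Banach space whose norm encodes weighted bounds such as $\sup_{t \ge t_0} t^2 \|\bs h(t)\|_{\E}$ and $\sup_{t \ge t_0} t^2\big(|\vec a\,'(t)-\vec v(t)| + |\vec v\,'(t)|\big)$, where $\bs h = \bs\phi - \bs H(\vec a,\vec v)$. Membership of the constructed solution in this space already yields quantitative versions of~\eqref{eq:h-decay-thm} and~\eqref{eq:traj'-thm} with $O(\cdot)$ in place of $o(\cdot)$. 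The $o(1/t^\gamma)$ sharpening for every $\gamma<2$ in~\eqref{eq:h-decay-thm} is then obtained via a final bootstrap on dyadic time intervals pushed to infinity.

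First I would record the modulation ODEs. Differentiating the orthogonality conditions~\eqref{eq:ortho-av-d} along a solution of~\eqref{eq:csf} and using the Hamiltonian identity~\eqref{eq:Hv-id} yields a schematic system
\EQ{
a_k'(t) - v_k(t) = \mathcal{R}^a_k(\vec a, \vec v, \bs h), \qquad M v_k'(t) = F_k(\vec a) + \mathcal{R}^v_k(\vec a, \vec v, \bs h),
}
where $F_k(\vec a)$ is the leading nearest-neighbor exponential attraction coming from the ansatz~\eqref{eq:Ha-def}, and the remainders $\mathcal{R}^a_k, \mathcal{R}^v_k$ are controlled by $|\vec v|^2 + \|\bs h\|_{\E}^2$ plus strictly subleading interaction terms. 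By Theorem~\ref{thm:asymptotics}, $F_k(\vec a(t))/M$ is of size $1/t^2$, which fixes the natural scale of~\eqref{eq:traj'-thm} once $\|\bs h\|_{\E}$ is controlled at that order.

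Next I would close an energy estimate for $\bs h$. Using coercivity of the linearized Hamiltonian around $\bs H(\vec a,\vec v)$ under the orthogonality conditions~\eqref{eq:ortho-av-d}, which is a standard fact in the single-kink case that lifts to the multikink regime since $\rho(\vec a,\vec v)$ is small, one obtains a Lyapunov functional $\mathcal{I}(\bs h) \sim \|\bs h\|_{\E}^2$ satisfying
\EQ{
\Big| \tfrac{\vd}{\vd t}\mathcal{I}(\bs h(t)) \Big| \lesssim \big(|\vec v(t)|+\sqrt{\rho(\vec a(t),\vec v(t))}\big)\|\bs h(t)\|_{\E}^2 + \|\bs h(t)\|_{\E}\,\|\mathcal{F}(t)\|_{\E},
}
where the forcing $\mathcal{F} = \partial_t\bs H(\vec a,\vec v) - \bs J \vD E(\bs H(\vec a,\vec v))$ measures the failure of $\bs H(\vec a,\vec v)$ to solve~\eqref{eq:csf} and is of size $\rho \sim 1/t^2$. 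Integrating backwards from $t=\infty$ using $\bs h(t)\to 0$ together with $\vec v(t)=O(1/t)$ gives, via Gronwall, $\|\bs h(t)\|_{\E} = O(1/t^2)$, and reinserting this into the modulation system yields~\eqref{eq:traj'-thm}. To upgrade $O(1/t^2)$ to $o(1/t^\gamma)$ for any $\gamma<2$, I would redo the same energy inequality on each interval $[T,\infty)$: the multiplicative Gronwall factor remains bounded since $\int_T^\infty |\vec v|\,\vd t \lesssim \log$-type contribution vanishes in the relative sense, while the forcing, pointwise $o(1/T^2)$ uniformly on the interval as $T\to\infty$, propagates an $o(1)$ relative improvement over the crude $1/T^2$ bound.

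The main obstacle I anticipate is the multikink coercivity: the linearized energy around $\bs H(\vec a,\vec v)$ has a $2n$-dimensional approximate kernel spanned by $\partial_a\bs H_k$ and $\partial_v\bs H_k$, and showing that the orthogonality conditions~\eqref{eq:ortho-av-d} restore a positive lower bound uniformly as $\rho(\vec a,\vec v)\to 0$ is a delicate spectral computation requiring the separation of scales between the single-kink gap and the inter-kink interaction. However, this coercivity is precisely the one already needed to run the Lyapunov--Schmidt reduction in the proof of Theorem~\ref{thm:main}, so the corollary should ultimately be bookkeeping of the already-established decay constants through the Gronwall step plus the dyadic bootstrap.
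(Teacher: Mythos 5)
Your high-level framing --- that the corollary is read off from the weighted norms already controlled in the construction of $\bs \phi(\vec a_0, t_0)$ --- matches the paper, which indeed deduces \eqref{eq:h-decay-thm} directly from the $\|\bs h\|_{N_\gamma(\cE)} \le C_\gamma$ bounds of Lemma~\ref{lem:kg-nonlin-eq-inf} and \eqref{eq:traj'-thm} from \eqref{eq:av-4.1-2} in Lemma~\ref{lem:dyn-modulation}. But the \emph{mechanism} you propose for establishing the decay would not close, and two of your intermediate claims are actually false.

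First, the backward-in-time Gronwall argument from a Lyapunov functional cannot produce $\|\bs h(t)\|_\cE = O(1/t^2)$. The forcing $\mathcal F(t) = \bs J\vD E(\bs H(\vec a(t),\vec v(t)))$ has $\|\mathcal F(t)\|_\cE \sim \sqrt{\log t}/t^2$ by \eqref{eq:DEH} (the $\sqrt{\log t}$ loss comes from Lemma~\ref{lem:exp-cross-term} applied to tails with equal decay rates, with $y_{\min}(t) \sim 2\log t$). Integrating such a forcing backward from $t=\infty$ loses a power of $t$: $\int_t^\infty \|\mathcal F(s)\|_\cE\,\ud s \sim \sqrt{\log t}/t$, so the best one can extract from a Duhamel/Gronwall argument alone is $\|\bs h\|_\cE = O(1/t^{1-\epsilon})$, one full power short of what is needed. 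The paper avoids this by using the \emph{time-frozen elliptic ansatz} $\bs h_S(t)$ of Lemma~\ref{lem:1st-Picard}, which solves the static equation $\vD^2 E(\bs H(\vec a(t),\vec v(t)))\bs h_S = -\vD E(\bs H(\vec a(t),\vec v(t))) + \text{multipliers}$ and hence inherits the forcing's size $\sqrt{\log t}/t^2$ directly, \emph{without} any time integration. One then writes $\bs h = \bs h_S + \bs h_R$; the remainder $\bs h_R$ has forcing $\p_t\bs h_S$, which is one power of $t$ smaller, and can be treated by the energy estimate (further split and refined via the Martel trick of Lemma~\ref{lem:kg-lin-eq-trick}). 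This decomposition is the essential step your argument is missing.

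Second, even as a target, $O(1/t^2)$ is too strong. Because $\|\bs h_S(t)\|_\cE \simeq \|\mathcal F(t)\|_\cE \gtrsim \sqrt{\log t}/t^2$, the error $\bs h$ cannot decay faster than $\sqrt{\log t}/t^2$. This is precisely why the statement is $\lim_t t^\gamma\|\bs h(t)\|_\cE = 0$ \emph{for each $\gamma<2$} and not $O(1/t^2)$; your proposed dyadic bootstrap cannot erase this structural logarithmic loss. Finally, a technical point: your differential inequality has the coefficient $|\vec v| \sim 1/t$ multiplying $\|\bs h\|_\cE^2$, which is not integrable, so Gronwall would produce an unbounded factor. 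The paper replaces this with $|\vec v|\,y_{\min}^{-1}$ (plus integrable terms) via the localized-momentum correction built into the modified energy functional in Lemma~\ref{lem:en-est-lin}, and then exploits the freedom to take $t_0$ large so the integrated coefficient is $\lesssim \epsilon\log$.
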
 


The kink $n$-clusters can be interpreted as forming the stable manifold of the asymptotic state given by infinitely separated multikink configurations. (Note that the unstable manifold can be obtained by applying the time reversal symmetry.) This topological manifold is $n$-dimensional and can be parameterized by the positions of the kinks. We formulate this as follows.


Let $\calM_n \subset \E_{1, (-1)^n}$ be the set of all the possible initial conditions leading to a kink $n$-cluster. In other words, $\bs \phi_0 \in \E_{1, (-1)^n}$ belongs to $\calM_n$ if and only if the corresponding solution $\bs \phi(t)$ of~\eqref{eq:csf} with initial data $\bs \phi(0) = \bs \phi_0$ is a kink $n$-cluster. Clearly, $\calM_n$ is an invariant set.


We fix $\eta_0, \tau_0$ as in Theorem~\ref{thm:main} and define an open subset  $\calT(\eta_0) \subset \R^n$ by 
\begin{multline}  \label{eq:T0-def} 
\calT(\eta_0) \coloneqq  \Big\{ \vec a \in \R^n \mid \exists \, t_0 \ge \tau_0 \, \, \textrm{such that} \\ \max_{1 \le k < n}\bigg|\big(a_{k+1, 0} - a_{k, 0}\big) - \Big(2\log(\kappa t_0) - \log\frac{Mk(n-k)}{2}\Big)\bigg| < \eta_0 \Big\} .
\end{multline} 

By Theorem~\ref{thm:main}, to each $\vec a_0 \in \calT(\eta_0)$ and each $t_0 \ge \tau_0$ such that~\eqref{eq:a0-initial} is satisfied we can associate an element of $\calM_n$ via the mapping 
\EQ{ \label{eq:homeo} \bs \Phi: \calT(\eta_0)  \to \calM_n; \quad 
\calT(\eta_0) \ni \vec a_0 \mapsto  \bs \Phi( \vec a_0) = \bs \phi(\vec a_0, t_0; t_0) \in \calM_n.
}

%

\begin{remark} \label{rem:tt-homeo}
Observe that for a fixed $\vec a_0 \in \R^n$, once~\eqref{eq:a0-initial} is satisfied for some $t_0 \ge \tau_0$ and $\eta_0$ as in Theorem~\ref{thm:main}, it will also be satisfied for nearby $\ti t_0$. However, it will be evident from the proof that the trajectories $\bs \phi( \vec a_0, t_0; t)$ and $\bs \phi(\vec a_0, \ti t_0; t)$ are time translations of each other, meaning, for example,  if $t_0 < \ti t_0$ we have $\bs \phi( \vec a_0, t_0; t) = \bs \phi(\vec a_0, \ti t_0; t+ (\ti t_0 - t_0))$. 
 It follows that the map  $\Phi$ is independent of the choice of $t_0 \ge \tau_0$ such that~\eqref{eq:a0-initial} holds. Indeed, for a fixed $\vec a_0 \in \calT(\eta_0)$, different choices $t_0, \ti t_0 \ge \tau_0$ such that~\eqref{eq:a0-initial} holds are associated to the same element of the phase space, i.e., $\bs \phi(\vec a_0, t_0; t_0) = \bs \phi( \vec a_0, \ti t_0; \ti t_0)$.  In other words, the mapping~\eqref{eq:homeo} associates to each element $\vec a_0 \in \calT(\eta_0)$  a unique element of $\calM_n$, which we can denote more succinctly by $\bs \phi( \vec a_0)$. 
\end{remark} 

%

\begin{maincor}[Kink $n$-clusters as an invariant topological manifold] \label{cor:stable}The set $\calM_n$ is an $n$-dimensional topological manifold. The mapping~\eqref{eq:homeo} is a homeomorphism onto its image $\bs \Phi( \calT(\eta_0))$, which we denote by $\bs \Phi( \calT(\eta_0))=:\calM_{n, \loc} \subset \calM_n$. Moreover, for each trajectory $\bs \phi(t) \subset \calM_n$ there exists a time $t_0>0$ such that $\bs \phi(t) \in \calM_{n, \loc}$ for all $t \ge t_0$. 


\end{maincor}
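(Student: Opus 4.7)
I would combine uniqueness from Theorem~\ref{thm:main}, the asymptotics of Theorem~\ref{thm:asymptotics}, and time-reversibility of the flow $S_t$ of~\eqref{eq:csf} on $\calE_{1,(-1)^n}$. Well-definedness of $\bs\Phi$ is Remark~\ref{rem:tt-homeo}. For injectivity, note that the normalization $\vec a(\bs\phi(\vec a_0,t_0;t_0)) = \vec a_0$ in Theorem~\ref{thm:main} makes the modulation parameter map $\vec a(\cdot)$ from Lemma~\ref{lem:mod-av-intro} a left inverse of $\bs\Phi$ on $\calT(\eta_0)$; consequently $\bs\Phi^{-1}\colon \calM_{n,\loc}\to\calT(\eta_0)$ is $\bs\phi_0 \mapsto \vec a(\bs\phi_0)$, which is continuous (indeed $C^1$) by Lemma~\ref{lem:mod-av-intro}. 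Continuity of $\bs\Phi$ itself should be a byproduct of the contraction/Lyapunov-Schmidt construction used to prove Theorem~\ref{thm:main}: this construction produces $\bs\phi(\vec a_0,t_0;\,\cdot\,)$ as the fixed point of a mapping that depends Lipschitz-continuously on $\vec a_0$, and the value $\bs\Phi(\vec a_0) = \bs\phi(\vec a_0, t_0; t_0)$ is independent of the admissible $t_0$ by the time-translation identity in Remark~\ref{rem:tt-homeo}.

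\textbf{Every trajectory eventually lies in $\calM_{n,\loc}$.} Let $\bs\phi\in\calM_n$. Theorem~\ref{thm:asymptotics} and Proposition~\ref{prop:close-to-H} supply $t_*\geq \tau_0$ such that, for every $t\geq t_*$, one has $\bfd(\bs\phi(s))<\eta_0$ for all $s\geq t$ and $(\vec a(\bs\phi(t)),t)$ satisfies~\eqref{eq:a0-initial}. The time-shifted trajectory $s\mapsto \bs\phi(s+t)$ and the Theorem~\ref{thm:main} kink cluster $\bs\phi(\vec a(\bs\phi(t)),t;\,\cdot\,)$ then both satisfy the hypotheses of the uniqueness statement at initial time $t$, so they coincide; evaluating at $t$ gives $\bs\phi(t) = \bs\Phi(\vec a(\bs\phi(t))) \in \calM_{n,\loc}$.

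\textbf{Manifold structure.} The key auxiliary claim is that $\calM_{n,\loc}$ is open in $\calM_n$ (equipped with the subspace topology from $\calE_{1,(-1)^n}$). Granted this, for each $\bs\phi_0\in\calM_n$ one uses Step~2 to pick $T\geq \tau_0$ with $S_T\bs\phi_0\in\calM_{n,\loc}$ and an open neighborhood $\calU \subset \calM_{n,\loc}$ of $S_T\bs\phi_0$; the preimage $S_{-T}(\calU)$ is then an open neighborhood of $\bs\phi_0$ in $\calM_n$, homeomorphic to the open set $\bs\Phi^{-1}(\calU)\subset\calT(\eta_0)\subset\R^n$ via $\bs\psi_0\mapsto \vec a(S_T\bs\psi_0)$. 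Transition maps between two such charts have the form $\vec a\mapsto \vec a(S_{T_1-T_2}\bs\Phi(\vec a))$ and are continuous by composition. To establish openness of $\calM_{n,\loc}$: given $\bs\phi_0\in\calM_{n,\loc}$ and $\bs\psi_0\in\calM_n$ close to $\bs\phi_0$, continuity of $\bfd$ and of $\vec a(\cdot)$ gives $\bfd(\bs\psi_0)<\eta_0$ and $\vec a(\bs\psi_0)\in\calT(\eta_0)$; a bootstrap argument using $\bfd(\bs\psi(s))\to 0$ to reach a time at which Step~2 places $\bs\psi(s)\in\calM_{n,\loc}$, and then propagating backwards in time via backward uniqueness of~\eqref{eq:csf} together with Remark~\ref{rem:tt-homeo}, forces $\bs\psi_0 = \bs\Phi(\vec a(\bs\psi_0))\in\calM_{n,\loc}$.

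The main obstacle is twofold. First, the continuity of $\bs\Phi$ demands that the fixed-point scheme in the proof of Theorem~\ref{thm:main} be set up in a Banach space whose topology captures continuous (ideally Lipschitz) dependence of the kink cluster on $\vec a_0$; this should emerge naturally from the existing framework. Second, openness of $\calM_{n,\loc}$ in $\calM_n$ requires the bootstrap described above, blending Step~2 with backward uniqueness and Remark~\ref{rem:tt-homeo}. Once these two ingredients are in place, the $n$-dimensional topological manifold structure on $\calM_n$ follows by routine patching via the flow.
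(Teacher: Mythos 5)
Your proposal follows essentially the same route as the paper: continuity of $\bs\Phi$ comes from the Lipschitz dependence of the fixed point on $\vec a_0$ (this is Corollary~\ref{cor:hav-cont} in the paper), the inverse is the modulation map of Lemma~\ref{lem:mod-av-intro}, every trajectory eventually enters $\calM_{n,\loc}$ by Theorem~\ref{thm:asymptotics} combined with the uniqueness in Theorem~\ref{thm:main}, and charts are obtained by pulling back neighborhoods in $\calM_{n,\loc}$ through the (homeomorphic) flow. Your discussion of the openness of $\calM_{n,\loc}$ in $\calM_n$ is in fact more explicit than the paper's (which simply invokes local well-posedness); to close your bootstrap cleanly one should observe that along a kink cluster $\bfd$ is controlled by the essentially monotone quantity $\wt\rho$, so smallness of $\bfd(\bs\psi_0)$ propagates to all later times and the uniqueness statement of Theorem~\ref{thm:main} then applies directly at the initial time.
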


\begin{remark} 
We believe that $\calM_n$ is a \emph{smooth} $n$-dimensional invariant manifold, but we do not pursue its regularity here. We  believe the submanifold $\calM_{n} \cap \{ \bs \phi \in \E \mid \bfd( \bs \phi) < \eps\}$ (note that this set contains $\calM_{n, \loc}$) can also be parameterized uniquely by the positions of the kinks via the map $\bs \phi \mapsto \vec a(\bs \phi)$, (provided $\eps>0$ is sufficiently small so that this map can be defined via Lemma~\ref{lem:mod-av-intro}). To prove this we would need to rule out self intersections, i.e., show there are no elements  $\bs \phi, \sh{\bs \phi}$ of $\calM_{n} \cap \{ \bs \phi \in \E \mid \bfd( \bs \phi) < \eps\}$ for which $\vec a(\bs \phi) = \vec a( \sh{\bs \phi})$ but $\vec v( \bs \phi) \neq \vec v( \sh{\bs \phi})$, which we do not accomplish here.  Rather we show this non-self intersection property holds only within $\calM_{n, \loc}$, i.e., for kink $n$-clusters that are close enough to their asymptotic configurations as in~\eqref{eq:a0-initial}. 
\end{remark}

The manifold $\calM_{n}$ and its submanifold $\calM_{n, \loc}$ play natural roles in the sense of dynamical systems. If we extend the phase space by a state $\bs H^\infty_n$ corresponding to the limit of $\bs H(\vec a, \vec v)$ as $\rho(\vec a, \vec v) \to 0$, the function $\bfd(\bs \phi)$ gives a distance to $\bs H^{\infty}_n$ and the trajectories $ \bs \phi(t)  \in \calM_{n}$ satisfy  $\lim_{t \to \infty}\bfd ( \bs \phi(t)) = 0$, in other words $\calM_n$ is the \emph{stable manifold} of $\bs H^\infty_n$. Since every kink $n$-cluster enters and then never leaves the submanifold $\calM_{n, \loc}$ after some finite time, $\calM_{n, \loc}$ forms the \emph{local stable manifold} of $\bs H^\infty_n$.
The (local) unstable manifold can be obtained by applying the time reversal symmetry.

A typical feature of the stable/unstable manifold of a stationary state is that it governs the behavior of trajectories that enter/exit its small neighborhood,
in the sense that such a trajectory, while still/already far away from the stationary state, must be close to its stable/unstable manifold.
(For hyperbolic stationary states, this property is a consequence
of the Hartman-Grobman theorem.)
In the next theorem, we show that the manifold of kink clusters discussed above
plays an analogous role for the formation/collapse of multikink configuration.
\begin{mainthm}[Universal profiles of multikink formation] 
\label{thm:unstable}
Let $\eta > 0$ be sufficiently small and let $\bs\phi_m$ be a sequence of solutions of \eqref{eq:csf} defined on time intervals $[0, T_m]$
satisfying the following assumptions:
\begin{enumerate}[(i)]
\item $\lim_{m\to \infty}\bfd(\bs\phi_m(T_m)) = 0$,
\item $\bfd(\bs \phi_m(t)) \leq \eta$ for all $t \in [0, T_m]$,
\item $\bfd(\bs \phi_m(0)) = \eta$.
\end{enumerate}
Then, after extraction of a subsequence, there exist $0 = n^{(0)} < n^{(1)} < \ldots < n^{(\ell)} = n$,
finite energy states $\bs P_0^{(1)}, \ldots, \bs P_0^{(\ell)}$ and sequences of real numbers $(X_m^{(1)})_m, \ldots, (X_m^{(\ell)})_m$ such that
\begin{enumerate}[(i)]
\item for all $j \in \{1, \ldots, \ell\}$, the solution $\bs P^{(j)}$ of \eqref{eq:csf}
for the initial data $\bs P^{(j)}(0) = \bs P_0^{(j)}$ is a cluster of $n^{(j)} - n^{(j-1)}$ kinks,
\item for all $j\in \{1, \ldots, \ell-1\}$, $\lim_{m\to \infty}\big( X_m^{(j+1)} - X_m^{(j)}\big) = \infty$,
\item $
\lim_{m\to \infty} \Big\|\bs \phi_m(0) - \Big(\bs 1 + \sum_{j = 1}^\ell(-1)^{n^{(j-1)}} \big(\bs P_0^{(j)}(\cdot - X_m^{(j)}) - \bs 1\big)\Big)\Big\|_\cE = 0.
$
\end{enumerate}
\end{mainthm}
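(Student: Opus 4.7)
The plan is to run a profile decomposition on the initial data $\bs\phi_m(0)$, identify a cluster structure, and use the decoupling of the flow between well-separated kink groups together with Theorem~\ref{thm:main} to identify each limiting profile as a kink cluster.

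\textbf{Step 1: Modulation and partitioning at $t=0$.} By hypothesis (ii) and Lemma~\ref{lem:mod-av-intro} (after shrinking $\eta$ below $\eta_0$), we decompose
\begin{equation*}
\bs\phi_m(t) = \bs H(\vec a_m(t), \vec v_m(t)) + \bs h_m(t), \qquad t \in [0, T_m],
\end{equation*}
with $\|\bs h_m(t)\|_\E^2 + \rho(\vec a_m(t),\vec v_m(t)) \lesssim \bfd(\bs\phi_m(t)) \leq \eta$. After passing to a subsequence, we may assume every gap $a_{m,k+1}(0) - a_{m,k}(0)$ admits a limit in $(0,+\infty]$. Declaring adjacent indices clustered precisely when the limit is finite produces a partition $0 = n^{(0)} < \ldots < n^{(\ell)} = n$. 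Set $X_m^{(j)} \coloneqq a_{m, n^{(j-1)}+1}(0)$; then $X_m^{(j+1)} - X_m^{(j)} \to \infty$, which is conclusion (ii) of the theorem. A further extraction ensures that both the relative positions $a_{m,k}(0) - X_m^{(j(k))}$ and the velocities $v_{m,k}(0)$ converge.

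\textbf{Step 2: Definition of the profiles.} The $\E$-bounded errors $\bs h_m(0, \cdot + X_m^{(j)})$ admit weak limits $\bs h_0^{(j)}$ along a further subsequence. Set
\begin{equation*}
\bs P_0^{(j)} \coloneqq \bs H_\ast^{(j)} + \bs h_0^{(j)},
\end{equation*}
where $\bs H_\ast^{(j)}$ is the $(n^{(j)}-n^{(j-1)})$-kink configuration with the limiting positions and velocities and leading sign $(-1)^{n^{(j-1)}}$. Conclusion (iii) follows by splitting the sum in \eqref{eq:Ha-def} cluster by cluster: the bumps associated to cluster $i \neq j$ recede to $\pm\infty$ after the shift by $X_m^{(j)}$ and contribute only to the vacuum background, while the $j$-th cluster bumps converge strongly in $\E$ by convergence of their modulation parameters. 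Exhaustion of the $\E$-mass of $\bs h_m(0)$ by the $\ell$ profiles (no residual translates escaping to infinity) upgrades each weak convergence to strong convergence in $\E$ locally, which is enough for (iii).

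\textbf{Step 3: Each $\bs P^{(j)}$ is a kink cluster.} By $\E$-well-posedness and the exponential decay of the kink tails, for every fixed $T>0$,
\begin{equation*}
\sup_{t \in [0,T]}\bigl\|\bs\phi_m(t, \cdot + X_m^{(j)}) - \bs P^{(j)}(t)\bigr\|_\E \to 0 \quad (m \to \infty),
\end{equation*}
since the cross-cluster forcing on $[0,T]$ is of order $\exp\bigl(-c(X_m^{(j+1)}-X_m^{(j)})\bigr)$. This transfers the modulation decomposition of $\bs\phi_m$ to $\bs P^{(j)}$ cluster by cluster on bounded time intervals. Combining this with (i) and the fact that the cluster-$j$ restriction of $\bfd$ is bounded above by $\bfd$ plus decoupling errors, a diagonal argument yields times $t_m^{(j)} \to \infty$ such that the modulation parameters of $\bs P^{(j)}(t_m^{(j)})$ are well-defined and satisfy the asymptotic condition \eqref{eq:a0-initial}. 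Theorem~\ref{thm:main} then identifies $\bs P^{(j)}$ as the unique kink $(n^{(j)}-n^{(j-1)})$-cluster with those parameters, giving conclusion (i).

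The main obstacle is the quantitative decoupling needed in Step 3: propagating the inter-cluster smallness up to times comparable to $T_m \to \infty$, so that the diagonal selection can be carried out. The key mechanism is the exponential smallness of inter-cluster interactions, which in combination with the trapping hypothesis (ii) prevents any cluster from collapsing or otherwise perturbing its neighbors; once the parameters of $\bs P^{(j)}$ are brought into the regime of Theorem~\ref{thm:main}, the uniqueness statement there closes the argument.
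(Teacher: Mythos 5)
Your Steps 1--2 (partitioning the kinks at $t=0$ by whether the gaps $a_{m,k+1}(0)-a_{m,k}(0)$ stay bounded, and defining the profiles as weak limits after recentering) match the paper's strategy. However, there are two genuine gaps in the remainder.

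First, the mechanism by which hypothesis (i) is used is missing, and the substitute you propose is circular. The paper's proof hinges on a \emph{uniform-in-$m$, global-in-$t$} estimate $\rho(\vec a_m(t)) \leq C(\eta^{-1}+t^2)^{-1}$ for all $t\in[0,T_m]$, obtained from the monotone functional $\beta_m(t)=\vec q_m(t)\cdot e^{-\vec y_m(t)}-c_0 e^{-\frac32 y_{m,\min}(t)}$ of Lemma~\ref{lem:ymin-asym}: since $\beta_m$ is decreasing and hypothesis (i) forces $\beta_m(T_m)\to 0$, one gets $\beta_m(t)\geq -\epsilon_m$ on all of $[0,T_m]$ and hence the differential inequality $\dd{t}(\wt\rho_m^{-1/2})\gtrsim 1-o(1)$. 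This bound survives the weak limit (via finite speed of propagation, Lemma~\ref{lem:loc-vac-stab} and Proposition~\ref{prop:cauchy}~(v)) and yields $\bfd(\bs P^{(j)}(t))\lesssim(\eta^{-1}+t^2)^{-1}$ for every $t$, so $\bs P^{(j)}$ is a cluster directly by Proposition~\ref{prop:close-to-H}. Your route instead tries to locate times $t_m^{(j)}\to\infty$ at which $\bs P^{(j)}$ satisfies \eqref{eq:a0-initial} and then invoke Theorem~\ref{thm:main}. This cannot work: the convergence $\bs\phi_m(\cdot+X_m^{(j)})\to\bs P^{(j)}$ is only controlled on compact time intervals, so information at $t=T_m\to\infty$ does not transfer to the fixed solution $\bs P^{(j)}$ without precisely the uniform estimate above; and even granting good positions at one large time, the uniqueness in Theorem~\ref{thm:main} is only \emph{within} the class of solutions already known to satisfy $\bfd(\bs\phi(t))<\eta_0$ for all later times and $\bfd\to 0$ --- it cannot certify that a given solution belongs to that class. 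Using it to prove $\bs P^{(j)}$ is a cluster assumes the conclusion.

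Second, conclusion (iii) is a \emph{global} strong $\cE$-convergence statement, and your justification (``exhaustion of the $\cE$-mass \ldots no residual translates escaping to infinity'') is an assertion of exactly what must be proved; ``strong convergence locally'' is not enough. The paper closes this by an energy argument: each profile is a cluster of $n^{(j)}-n^{(j-1)}$ kinks, hence carries energy exactly $(n^{(j)}-n^{(j-1)})M$, so $\sum_j E(\bs P^{(j)})=nM=\lim_m E(\bs\phi_m)$; the coercivity of $\vD^2E$ near multikinks (Lemma~\ref{lem:fatou}, the ``Fatou property'') then upgrades weak convergence plus energy convergence to norm convergence. You need some version of this strict-convexity step; without it the weak limits could lose energy to a dispersive remainder.
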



The analogy with stable manifolds is most easily understood in the case $n =2$, as Theorem~\ref{thm:unstable} characterizes trajectories in the phase space that enter a small neighborhood of the asymptotic state $\bs H^\infty_2$, by affirming that a such a trajectory,
while still far away from $\bs H^\infty_2$,
must be close to the manifold $\cM_2$.
In our setting, nonlinear soliton interactions play an analogous role
as exponential (in)stability in the hyperbolic case. 

The analogy described above carries over to $n>2$, but is slightly more complicated, since at the ``exit'' time $t =0$ the solution $\bs\phi_m(0)$ is close to a superposition of well-separated kink clusters, rather than to a single one. Intuitively, for $n > 2$ it can happen
that only some of the neighboring kinks ``collapse'', while
the distances between other neighboring kinks remain large.

\subsection{Structure of the paper and main ideas}
In Section~\ref{sec:static}, we recall the basic properties of the stationary solutions
and compute the first non-trivial term in the asymptotic expansion
of the potential energy of a multikink configuration, $E(\bs H(\vec a, \vec v))$,
as the distances between the kinks tend to infinity.

Section~\ref{sec:cauchy} is devoted to a brief presentation of the well-posedness theory
of the equation \eqref{eq:csf}.

In Section~\ref{sec:mod}, we implement the \emph{modulation method},
also called the ``method of collective coordinates'' in the physics literature. Every kink $n$-cluster asymptotically decomposes into the form
\EQ{
\bs \phi(t) = \bs H( \vec a(t), \vec v(t)) + \bs g(t)
}
where $(\vec a(t), \vec v(t))$ are trajectories related to  $\bs h(t)$ by the orthogonality conditions~\eqref{eq:ortho-av-d} and satisfies 
\EQ{
\bfd( \bs \phi(t)) \to 0 \mas t \to \infty,
}
by Propositon~\ref{prop:close-to-H}. 
The idea is to rewrite \eqref{eq:csf} as a coupled system of equations for
the \emph{modulation parameters} $\vec a(t), \vec v(t)$ and the \emph{remainder} $\bs g(t)$. Indeed, using Lemma~\ref{lem:mod-av-intro} we see that~\eqref{eq:csf} can be converted to a system for $(\vec a(t), \vec v(t), \bs g(t)) \in C^1([t_0; \infty); \R^{2n}) \times  C([t_0, \infty); \E)$ 
\EQ{ \label{eq:hav-system} 
\p_t \bs g(t) &=  \bs  J \vD E(\bs H(\vec a(t), \vec v(t)) + \bs g(t)) -\sum_{ k =1}^n (-1)^k \big( a_k'(t) \p_{a_k} \bs H_k(t) + v_k'(t) \p_{v_k} \bs H_k(t)\big)\\
0& = \la \bs g(t), \, \bs \alpha_k(t) \ra 
 = \la \bs g(t) ,\, \bs \beta_k(t)  \ra , \quad  \, \forall k \in \{1, \dots, n\}.
}
 To obtain the equations for the trajectories $(\vec a(t), \vec v(t))$ we can pair the equation above with $\bs \al_k(t)$ and $ \bs \beta_k(t)$ while using the identities 
\EQ{ \label{eq:av-eq-intro}
a_k' \la \bs g, \,  \p_{a} \bs \alpha(a_k, v_k) \ra + v_k ' \la \bs g, \, \p_v \bs \alpha(a_k, v_k) \ra  &=\la  \p_t \bs g, \, \bs \alpha(a_k, v_k) \ra \\
  a_k' \la \bs g, \,  \p_{a} \bs \beta(a_k, v_k) \ra + v_k ' \la \bs g, \, \p_v \bs \beta(a_k, v_k) \ra  &=\la  \p_t \bs g, \, \bs \beta(a_k, v_k) \ra,
}
which are obtained by differentiation in time of the second line in the coupled system~\eqref{eq:hav-system}.

 Rather than write down the nonlinear  system for $(\vec a(t), \vec v(t))$ explicitly here, we instead build intuition for it as follows. Let us drop for a moment the Lorentz parameters from the ansatz, and introduce the notation $ \bs H( \vec a) \coloneqq  \bs H( \vec a, \vec 0)$. If we consider an isolated system of $n$ points on the real line, located at $a_1, a_2, \ldots, a_n$, whose masses are equal to $M > 0$ and the total potential energy of the system is given by $E_{\up}(H(a_1, \ldots, a_n))$,
then the principles of Newtonian mechanics assert that the positions of the masses evolve according to the system of differential equations
\begin{equation}
\label{eq:newton-ode}
a_k'(t) = M^{-1}p_k(t), \qquad p_k'(t) = F_k(a_1(t), \ldots, a_n(t)) = -\partial_{a_k}E_{\up}(H(a_1, \ldots, a_n)).
\end{equation}
The main conclusion of Section~\ref{sec:mod} is Lemma~\ref{lem:ref-mod},
which states that the modulation parameters of a kink cluster
\emph{approximately} satisfy this system of ODEs with $M \vec v(t) = \vec p(t)$.
This step relies on appropriately defined \emph{localized momenta},
a method first used in \cite{J-18p-gkdv} in a similar context,
and inspired by \cite[Proposition 4.3]{RaSz11}.

After replacing $E_{\up}(H(\vec a))$ by its leading term computed in Section~\ref{sec:static},
the system \eqref{eq:newton-ode} becomes an $n$-body problem with \emph{attractive exponential
nearest-neighbour interactions}
\begin{equation}
\label{eq:attractive-toda}
a_k'(t) = M^{-1}p_k(t), \qquad p_k'(t) = 2\kappa^2\big(e^{-(a_{k+1}(t) - a_k(t))} - e^{-(a_k(t) - a_{k-1}(t))}\big),
\end{equation}
where by convention $a_0(t) \coloneqq -\infty$ and $a_{n+1}(t) \coloneqq \infty$.
In the case of repulsive interactions, we would have the well-known \emph{Toda system} introduced in \cite{Toda70}.
Section~\ref{sec:n-body} is devoted to the study of the long-time behaviour of
solutions of this $n$-body problem. The solutions corresponding to kink clusters
are the ones for which the distances between the masses tend to $\infty$
and their momenta tend to $0$ as $t \to \infty$.
We recall that such solutions are called \emph{parabolic motions}.
One can check that \eqref{eq:attractive-toda} has an exact solution
(defined up to translation, the mass center being chosen arbitrarily)
\begin{equation}
\label{eq:explicit-parabolic}
a_{k+1}(t) - a_k(t) = 2\log(\kappa t) - \log\frac{Mk(n-k)}{2}, \qquad p_k(t) = {-}M\frac{n+1-2k}{t}.
\end{equation}
In Section~\ref{sec:n-body}, we prove that the leading order
of the asymptotic behavior of $a_{k+1}(t) - a_k(t)$ and $v_k(t)$ for any parabolic motion
coincides with \eqref{eq:explicit-parabolic} with $ M \vec v(t) = \vec p(t)$.
Our analysis is sufficiently robust to be valid also in the presence of
the error terms obtained from Lemma~\ref{lem:ref-mod},
which leads to a proof of Theorem~\ref{thm:asymptotics}.

\begin{remark}
Note that \eqref{eq:d-conv-0} is equivalent to
\begin{equation}
\lim_{t\to\infty} \inf_{\vec a} \Big( \| \bs \phi - \bs H(\vec a, \vec 0) \|_{\E}^2 + \rho(\vec a, \vec 0) \Big) = 0.
\end{equation}
In other words, as long as we are only interested in the (qualitative) smallness of $\bfd(\bs\phi)$,
we can assume without loss of generality that $\vec v = 0$. For this reason, the expressions $\bs H(\vec a, \vec 0)$ and $\rho(\vec a, \vec 0)$ will appear many times.
For the sake of brevity, throughout the paper we will denote
\begin{equation}
\label{eq:notation-v-0}
H(\vec a) \coloneqq H(\vec a, \vec 0), \qquad \bs H(\vec a) \coloneqq (H(\vec a), 0) = \bs H(\vec a, \vec 0), \qquad \rho(\vec a) \coloneqq \rho(\vec a, \vec 0),
\end{equation}
and similar doubling of notation will be introduced for a few other objects.

We are aware that this simultaneous use of the ``relativistic'' multikink configurations $H(\vec a(t), \vec v(t))$ and their ``non-relativistic'' version $H(\vec a(t))$ is an unpleasant feature of the paper,
but it also reflects a fundamental conceptual problem. Namely, on one hand we deal with a relativistic equation \eqref{eq:csf-2nd}, which favors the use of the relativistic version. On the other hand, the $n$-body problem with instantaneously acting forces, like \eqref{eq:attractive-toda}, is a purely non-relativistic concept, hence in many parts of the paper the non-relativistic version is better suited.
\end{remark}
\begin{remark}
Let us mention that H\'enon \cite{Henon74} found $n$ independent conserved quantities
for the Toda system (both in the repulsive and in the attractive case).
For parabolic motions, all these quantities are equal to $0$,
which allows to reduce the problem to a system of $n$ equations of 1st order.
Probably, this approach could lead to some simplifications in determining
the asymptotic behavior of the parabolic motions of the system \eqref{eq:attractive-toda},
and perhaps also of the approximate system satisfied by the modulation parameters.
Our arguments do not explicitly rely on the conservation laws related to the complete
integrability of the Toda system, and we expect that part of the analysis
will be applicable also in the cases where the modulation equations are not related
to any completely integrable system of ODEs.
\end{remark}
Theorem~\ref{thm:any-position} is proved in Section~\ref{sec:any-position}.
The overall proof scheme is taken from Martel~\cite{Martel05},
see also the earlier work of Merle~\cite{Merle90}, and contains two steps:
\begin{itemize}
\item for any $T > 0$, prove existence of a solution $\bs \phi$ satisfying
the conclusions of Theorem~\ref{thm:any-position}, but only on the finite time interval $t \in [0, T]$,
\item take a sequence $T_m \to \infty$ and consider a weak limit
of the solutions $\bs \phi_m$ obtained in the first step with $T = T_m$.
\end{itemize}

The first step relies on a novel application of the Poincar\'e-Miranda theorem,
which is essentially a version of Brouwer's fixed point theorem.
We choose data close to a multikink configuration at time $t = T$ and control
how it evolves backwards in time.
It could happen that the multikink collapses before reaching the time $t = 0$.
For this reason, we introduce an appropriately defined ``exit time'' $T_1$.
The mapping which assigns the positions of the (anti)kinks at time $T_1$
to their positions at time $T$ turns out to be continuous and, for topological reasons,
surjective in the sense required by Theorem~\ref{thm:any-position}.

In the second step, it is crucial to dispose of some \emph{uniform} estimate on the sequence $\bs\phi_m$. In our case, the relevant inequality is $\bfd(\bs\phi_m(t)) \lesssim (\eee^L + t^2)^{-1}$ with a universal constant.
The existence of such a uniform bound is related to what we would call the ``ejection property''
of the system. Intuitively, once $\bfd(\bs\phi(t))$ starts to grow,
it has to continue growing at a definite rate until the multikink configuration collapses.

In Section~\ref{sec:profiles}, we give a proof of Theorem~\ref{thm:unstable}.
The identification of the clusters presents no difficulty: the positions of any two consecutive (anti)kinks at time $t = 0$, after taking a subsequence in $m$, either remain at a bounded distance
or separate with their distance growing to infinity as $m\to\infty$.
This dichotomy determines whether
they fall into the same cluster or to distinct ones.
The next step is to again make use of the ejection property in order to obtain
bounds on $\bfd(\bs\phi(t))$ independent of $m$, for any $t \geq 0$.
By standard localization techniques involving the finite speed of propagation,
these bounds are inherited by each of the clusters.
We mention that the proof of strong convergence in Theorem~\ref{thm:unstable} (iii)
is based on a novel application of the well-known principle from the Calculus of Variations
affirming that, for a strictly convex functional $\cF$, if $\bs g_m \wto \bs g$
and $\cF(\bs g_m) \to \cF(\bs g)$, then $\bs g_m \to \bs g$.

The proof of Theorem~\ref{thm:main}, which spans Sections~\ref{sec:cauchy-fin}--\ref{sec:proofs}, is a generalization, refinement, and simplification of the method introduced by the authors with Kowalczyk in~\cite{JKL1}, where we studied the case of kink-antikink pairs, i.e., $n=2$, and is based on a Lyapunov-Schmidt reduction implemented in our context in the following way.

Fix $\vec a_0$ as in~\eqref{eq:a0-initial} with $t_0$ large, and consider the set of all $C^1$ trajectories $(\vec a(t), \vec v(t))$ with $\vec a(t_0) = \vec a_0$ that satisfy 
\EQ{
\label{eq:traj-intro}
|\vec a\,'(t) - \vec v(t)| + |\vec v\,'(t)| + |\vec v(t)|^2 + \sum_{ k =1}^{n-1}\exp({-(a_{k+1}(t) - a_k(t))}) \lesssim t^{-2}.
}
For each given choice of trajectories $(\vec a(t), \vec v(t))$ satisfying~\eqref{eq:traj-intro} we solve the \emph{projected equation} for the triple $(\bs h(t), \vec \lambda(t), \vec \mu(t))$ 
\EQ{ \label{eq:proj-intro} 
 \partial_t \bs h(t) &= \bs J\vD E(\bs H(\vec a(t), \vec v(t))+\bs h(t)) 
 + \sum_{k=1}^n\big(\lambda_k(t) \partial_{a}\bs H_k(t)+ \mu_k(t)\partial_{v}\bs H_k(t)\big) \\
 0& = \la \bs h(t), \, \bs \alpha_k(t) \ra 
 = \la \bs h(t) ,\, \bs \beta_k(t)  \ra , \quad  \, \forall k \in \{1, \dots, n\}.
}
The pair of multipliers $(\vec \lam(t), \vec \mu(t))$ are introduced to ensure the orthogonality conditions in the second line above. 
In Section~\ref{sec:cauchy-inf} we show (in Lemma~\ref{lem:kg-nonlin-eq-inf}) that to each pair of $C^1$ trajectories $(\vec a(t), \vec v(t))$ as in~\eqref{eq:traj-intro} we can associate a unique solution $(\vec \lambda(t), \vec \mu(t), \bs h(t))$ of~\eqref{eq:proj-intro} satisfying 
\EQ{  \label{eq:h-est-intro} 
\sup_{t \ge t_0} t \| \bs h(t) \|_{\E} < \delta \mand  \sup_{t \ge t_0} t^{2-} \| \bs h(t) \|_{\E}< \infty, 
}
for some sufficiently small $\delta>0$, and 
\begin{multline} \label{eq:param-est-intro} 
 \sup_{t \ge t_0} t^{3-} \Big(  \big| \lambda_k(t) + (-1)^k v_k(t) \big| \\+ \big| \mu_k(t) + (-1)^k 2 \kappa^2 \big( e^{-(a_{k+1}(t) - a_k(t))} - e^{-(a_{k}(t) - a_{k-1}(t))} \big)\big| \Big) < \infty. 
\end{multline} 
In light of~\eqref{eq:hav-system} the goal is now to show that there is a unique pair of trajectories $(\vec a(t), \vec v(t))$ as in~\eqref{eq:traj-intro} that satisfy 
\EQ{
 a_k(t) = - (-1)^k \lambda_k(t) , \quad  v_k(t) = - (-1)^k \mu_k(t) 
}
for each $k = 1, \dots, n$. In the terminology of Lyapunov-Schmidt reduction, this means solving the~\emph{bifurcation equations}, using as input the estimates~\eqref{eq:h-est-intro} and~\eqref{eq:param-est-intro}. This is accomplished in Section~\ref{sec:bifurcation} (see Proposition~\ref{prop:sol-bif}). 

This analysis associates to each $\vec a_0$ as in~\eqref{eq:a0-initial} a unique $n$-kink cluster with error $\bs h(t)$ small in the sense of~\eqref{eq:h-est-intro} and trajectories as in~\eqref{eq:traj-intro}. Coupling this qualified uniqueness statement with the estimates in Theorem~\ref{thm:asymptotics} we see that uniqueness extends to all kink $n$-clusters.  This argument is done carefully in Section~\ref{sec:proofs} along with the construction of the stable manifold $\calM_n$ and local stable manifold $\calM_{n, \loc}$. 

\subsection{Further discussion and related results}

There are a few related contexts in which to consider the results in Section~\ref{ssec:results}. First, from the point of view of mathematical physics, they describe the particle-like nature of solitons  -- the dynamics of kink $n$-clusters are captured by the $2n$ parameters giving the positions and momenta of the kinks and the evolutions of these  parameters are in turn governed by an approximate $n$-body law of motion (the attractive Toda system). Second, in  dispersive PDEs the problem of Soliton Resolution predicts that every finite energy solution to~\eqref{eq:csf} eventually decouples into a multi-kink configuration and a radiative wave. Within this framework, one can distinguish two types of nonlinear interactions to be studied separately: soliton-radiation interaction and soliton-soliton interaction. We focus on the latter, restricting to the strongly-interacting regime. Lastly, viewing the results under the lens of dynamical systems, this work develops an invariant manifold theory of multi-solitons.

The particle-like character of solitons is a well-known phenomenon, see \cite[Chapter 1]{MS}
for a historical account.
The question of justification that the positions of solitons satisfy
an approximate $n$-body law of motion was considered for instance in \cite{Stuart,GuSi06,DuMa,OvSi}.

In their work on blow-up for nonlinear waves, Merle and Zaag~\cite{MeZa12-AJM}
obtained a system of ODEs with exponential terms like in \eqref{eq:attractive-toda},
but which was a \emph{gradient flow} and not an $n$-body problem.
The dynamical behavior of solutions of this system was described by C\^ote and Zaag \cite{CoteZaag}.

In relation with our proof of Theorem~\ref{thm:any-position},
we note that Brouwer's theorem was previously used in constructions of multi-solitons,
but for a rather different purpose, namely in order to avoid the growth of linear unstable modes, see \cite{CMM11, CoteMunoz}.

Let us stress again that the main object of our study are
solutions approaching multi-soliton configurations in the strong energy norm, in other words we address the question of interaction of solitons \emph{in the absence of radiation}. The first construction of a two-soliton solution with trajectories having asymptotically vanishing velocities was obtained by Krieger, Martel and Rapha\"el \cite{KrMaRa09},
see also \cite{MaRa18, Vinh17, J-19-twobub, J-17-twobubnls, J-18p-gkdv, JL2-20p} for other constructions
and \cite{WadOhk} for related computations in the completely integrable setting.
Allowing for a radiation term seems to be currently out of reach for the classical scalar fields considered here,
the question of the asymptotic stability of the kink being
still unresolved, see for example~\cite{DelortMasmoudi, KMM, GermainPusateri, HN2, CLL, LuhrmannSchlag} for recent results
on this and related problems. 

A detailed understanding of soliton interactions, particularly the description of an ``ejection'' mechanism for solutions that leave a neighborhood of multi-solitons was used in our work on the Soliton Resolution problem for energy critical waves~\cite{JL1-18, JL5, JL6, JL7}. 
Determining universal profiles of soliton collapse is crucial to 
 several earlier works on dispersive equations, which have been important  inspirations. 
We mention the study of center-stable manifolds of ground states for various nonlinear wave equations, see for instance the works of Nakanishi and Schlag \cite{NaSc11-1,NaSc11-2}, and Krieger, Nakanishi, and Schlag \cite{KrNaSc15},
as well as the earlier work by Duyckaerts and Merle \cite{DM08}. We also mention the classification results for solutions near the ground state soliton for the gKdV equation by Martel, Merle, and Rapha\"el~\cite{MMR14-1, MMR15-2, MMR15-3}, particularly their description of the ``exit'' regime for solutions that leave a neighborhood of the soliton by remaining close to the minimal mass blow-up solution.  A distinction between the present work and these earlier landmarks  is that here we deal with multi-solitons.

Finally, we emphasize that our definition of kink clusters
concerns \emph{only one time direction}, and our study
does not address the question of the behaviour of kink clusters as $t \to {-}\infty$, which goes by the name of the \emph{kink collision problem}.
We refer to
\cite{KevrekidisEtc}
for an overview,
and to \cite{Abdon22p2, Abdon22p3} for recent rigorous results
in the case of the $\phi^6$ model.

%



\subsection{Notation}
\label{ssec:notation}
Even if $v(x)$ is a function of one variable $x$, we often write $\partial_x v(x)$
instead of $v'(x)$ to denote the derivative. The prime notation is only used
for the time derivative of a function of one variable $t$
and for the derivative of the potential $U$.

If $\vec a, \vec b \in \bR^n$, then $\vec a\cdot \vec b \coloneqq \sum_{k=1}^n a_k b_k$.
If $u$ and $v$ are (real-valued) functions, then $\la u, v\ra \coloneqq \int_{-\infty}^\infty u(x)v(x)\ud x$.

Boldface is used for pairs of values
(which will usually be a pair of functions
forming an element of the phase space).
A small arrow above a letter indicates a vector
with any finite number of components
(which will usually be the number of kinks
or the number of kinks diminished by 1).
If $\bs u = (u, \dot u) \in L^2(\bR)\times L^2(\bR)$ and $\bs v = (v, \dot v) \in L^2(\bR)\times L^2(\bR)$, we write
$\la \bs u, \bs v\ra \coloneqq \la u, v\ra + \la \dot u, \dot v\ra$.

We will have to manipulate finite sequences and sums. In order to make the formulas reasonably compact,
we need to introduce appropriate notation, some of which is not completely standard.
If $\vec w = (w_j)_j$ is a vector and $f: \bR \to \bR$ is a function, we denote $f(\vec w)$ the vector with components $f(w_j)$,
for example $\eee^{\vec w}$ will denote the vector $(\eee^{w_j})_j$, $\vec w\,^2$ the vector $(w_j^2)_j$ and $\log\vec w$ the vector $(\log w_j)_j$.
If $\vec v = (v_j)_j$ is another vector, we denote $\vec w\vec v \coloneqq (w_jv_j)_j$.
We also denote $w_{\min} \coloneqq \min_j w_j$.

When we write $\simeq$, $\lesssim$ or $\gtrsim$, it should be understood that the constant is allowed to depend only on $n$.
Our use of the symbol $\sh f \sim f$ is non-standard and indicates that $\sh f - f$ is a negligible
quantity (the meaning of ``negligible'' will be specified in each case),
without requiring that $\sh f / f$ be close to $1$.

The energy space is denoted $\cE \coloneqq H^1(\bR) \times L^2(\bR)$.
We also use local energies and energy norms defined as follows.
For $-\infty \leq x_0 < x_0' \leq \infty$ and $\phi_0: [x_0, x_0']\to \bR$, we denote
\begin{align}
E_p(\phi_0; x_0, x_0') &\coloneqq \int_{x_0}^{x_0'}\Big(\frac 12(\partial_x \phi_0(x))^2 + U(\phi_0)\Big)\ud x, \\
E(\bs \phi_0; x_0, x_0') &\coloneqq \int_{x_0}^{x_0'}\Big(\frac 12 (\dot \phi_0(x))^2 + \frac 12(\partial_x \phi_0(x))^2 + U(\phi_0)\Big)\ud x, \\
\|\phi_0\|_{H^1(x_0, x_0')}^2 &\coloneqq \int_{x_0}^{x_0'}\big((\partial_x \phi_0(x))^2 + \phi_0(x)^2\big)\ud x, \\
\|\bs \phi_0\|_{\cE(x_0, x_0')}^2 &\coloneqq \int_{x_0}^{x_0'}\big((\dot \phi_0(x))^2 + (\partial_x \phi_0(x))^2 + \phi_0(x)^2\big)\ud x.
\end{align}

The open ball of center $c$ and radius $r$ in a normed space $A$ is denoted $B_A(c, r)$.
We denote $\vD$ and $\vD^2$ the first and second Fr\'echet derivatives of a functional.

We write $x_+ \coloneqq \max(0, x)$.

We take $\chi: \bR \to [0, 1]$ to be a decreasing $C^\infty$ function
such that $\chi(x) = 1$ for $x \leq \frac 13$
and $\chi(x) = 0$ for $x \geq \frac 23$.

Proofs end with the sign $\boxvoid$. Statements given without proof end with the sign $\boxslash$.

\section{Kinks and interactions between them}
\label{sec:static}
\subsection{Stationary solutions}
A stationary field $\phi(t, x) = \psi(x)$ is a solution of \eqref{eq:csf} if and only if
\begin{equation}
\label{eq:psi4}
\partial_x^2\psi(x) = U'(\psi(x)),\qquad\text{for all }x\in \bR.
\end{equation}
We seek solutions of \eqref{eq:psi4} having finite potential energy $E_p(\psi)$.
Since $U(\psi) \geq 0$ for $\psi \in \bR$, the condition $E_p(\psi) < \infty$ implies
\begin{align}
\label{eq:psi4-H1}
&\int_{-\infty}^{+\infty}\frac 12 (\partial_x \psi(x))^2 \ud x < \infty, \\
\label{eq:psi4-U}
&\int_{-\infty}^{+\infty}U(\psi(x)) \ud x < \infty.
\end{align}
From \eqref{eq:psi4-H1} we have $\psi \in C(\bR)$,
so \eqref{eq:psi4} and $U \in C^\infty(\bR)$ yield $\psi \in C^\infty(\bR)$.
Multiplying \eqref{eq:psi4} by $\partial_x \psi$ we get
\begin{equation}
\partial_x\Big(\frac 12 (\partial_x \psi)^2 - U(\psi)\Big)
= \partial_x \psi\big(\partial_x^2 \psi - U'(\psi)\big) = 0,
\end{equation}
so $\frac 12 (\partial_x \psi(x))^2 - U(\psi(x)) = k$ is a constant.
But then \eqref{eq:psi4-H1} and \eqref{eq:psi4-U} imply $k = 0$.
We obtain first-order autonomous equations, called the Bogomolny equations,
\begin{equation}
\label{eq:bogom-eq}
\partial_x\psi(x) = \sqrt{2U(\psi(x))}\quad\text{or}\quad \partial_x\psi(x) = -\sqrt{2U(\psi(x))},\quad\text{for all }x \in \bR,
\end{equation}
which can be integrated in the standard way, see for instance \cite[Section 2]{JKL1}.
We conclude that:
\begin{itemize}
\item the only stationary solution of \eqref{eq:csf} belonging to $\cE_{1, 1}$ is the vacuum
state $\bs 1 \coloneqq (1, 0)$ and the only stationary solution of \eqref{eq:csf} belonging to $\cE_{-1, -1}$ is the vacuum
state $-\bs 1 \coloneqq (-1, 0)$,
\item the only stationary solutions of \eqref{eq:csf} belonging to $\cE_{-1, 1}$
are the translates of $(H, 0)$, where the function $H$ is defined by
\begin{equation}
\label{eq:H-def}
H(x) = G^{-1}(x), \qquad\text{with}\ \ G(\psi) \coloneqq \int_{0}^\psi \frac{\ud y}{\sqrt{2U(y)}}\ \ \text{for all }\psi \in ({-}1, 1),
\end{equation}
and the solutions of \eqref{eq:psi4} belonging to $\cE_{1, -1}$
are the translates of $(-H, 0)$.

\end{itemize}

The asymptotic behaviour of $H(x)$ for $|x|$ large is essential for our analysis.
We have the following result, which is a refinement of \cite[Proposition 2.1]{JKL1}.
\begin{proposition}
\label{prop:prop-H}
The function $H(x)$ defined by \eqref{eq:H-def} is odd, of class $C^\infty(\bR)$,
and there exist constants $\kappa > 0$ and $C > 0$ such that for all $x \geq 0$
\begin{equation}
\label{eq:H-asym-p}
\begin{aligned}
&\big|H(x)-1+ \kappa \eee^{- x}-\frac 16\kappa^2 U'''(1)\eee^{-2x}\big| \\
+ &\big|\partial_x H(x) -  \kappa \eee^{- x} + \frac 13\kappa^2 U'''(1)\eee^{-2x}\big| \\
+ &\big|\partial_x^2 H(x) + \kappa \eee^{- x} - \frac 23\kappa^2 U'''(1)\eee^{-2x}\big| \leq C\eee^{-3 x},
\end{aligned}
\end{equation}
and for all $x \leq 0$
\EQ{
\label{eq:H-asym-m}
&\big|H(x) +1- \kappa \eee^{ x}+\frac 16\kappa^2 U'''(1)\eee^{2x}\big| \\
+ &\big|\partial_x H(x) -  \kappa \eee^{ x} + \frac 13\kappa^2 U'''(1)\eee^{2x}\big| \\
+ &\big|\partial_x^2 H(x) - \kappa \eee^{ x} + \frac 23\kappa^2 U'''(1)\eee^{2x}\big| \leq C\eee^{3 x},
}

\end{proposition}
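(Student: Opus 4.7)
The plan is to dispose of oddness and smoothness first, prove the expansion for $x \geq 0$ by inverting the integral representation $G(\psi) = \int_0^\psi \frac{\ud y}{\sqrt{2U(y)}}$, and then deduce the expansion for $x \leq 0$ from the symmetry. Oddness is immediate: since $U$ is even, the integrand is even, $G$ is odd, and so is $H = G^{-1}$. For smoothness, $H$ maps $\R$ into $(-1,1)$ and satisfies the Bogomolny equation $\partial_x H = \sqrt{2U(H)}$; since $U$ is $C^\infty$ and strictly positive on $(-1,1)$, a bootstrap on this ODE gives $H \in C^\infty(\R)$. Because $H(-x) = -H(x)$ implies $\partial_x H(-x) = \partial_x H(x)$ and $\partial_x^2 H(-x) = -\partial_x^2 H(x)$, the expansion for $x \leq 0$ follows immediately from the expansion for $x \geq 0$, so the real work is only the latter.

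Next, I would isolate the singular behaviour of $G$ at $\psi = 1$. Using $U(1) = 0$, $U'(1) = 0$, $U''(1) = 1$, Taylor expansion gives
\begin{equation}
\sqrt{2U(y)} = (1-y)\Bigl[1 - \tfrac{U'''(1)}{6}(1-y) + O((1-y)^2)\Bigr], \qquad y \to 1^-,
\end{equation}
so the difference $\tfrac{1}{\sqrt{2U(y)}} - \tfrac{1}{1-y}$ extends smoothly to $y = 1$, with value $\tfrac{U'''(1)}{6}$. Writing $A(\psi) := \int_0^\psi\bigl(\tfrac{1}{\sqrt{2U(y)}} - \tfrac{1}{1-y}\bigr)\ud y$ and $B(\psi) := A(\psi) - \log\kappa$, one has $G(\psi) = -\log(1-\psi) + \log\kappa + B(\psi)$ with $B$ smooth on $(-1,1]$, $B(1) = 0$, and $B(\psi) = -\tfrac{U'''(1)}{6}(1-\psi) + O((1-\psi)^2)$ near $\psi = 1$. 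Inverting yields the key identity
\begin{equation} \label{eq:key-id-plan}
1 - H(x) = \kappa\,\eee^{-x}\exp\bigl(B(H(x))\bigr).
\end{equation}

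From \eqref{eq:key-id-plan} I would run a two-step bootstrap. Setting $\phi(x) := 1 - H(x)$, since $\phi(x) \to 0$ the identity gives the crude bound $\phi(x) \leq 2\kappa\eee^{-x}$ for $x$ large, hence $\phi(x) = \kappa\eee^{-x} + O(\eee^{-2x})$. Substituting this back into $\exp(B(1-\phi)) = 1 - \tfrac{U'''(1)}{6}\phi + O(\phi^2)$ produces the two-term expansion of $H$ with remainder $O(\eee^{-3x})$. To handle $\partial_x H$, I would use the Bogomolny equation together with $\sqrt{2U(1-\phi)} = \phi - \tfrac{U'''(1)}{6}\phi^2 + O(\phi^3)$ and substitute the expansion of $\phi$; for $\partial_x^2 H$, I would use the second-order ODE $\partial_x^2 H = U'(H)$ together with $U'(1-\phi) = -\phi + \tfrac{U'''(1)}{2}\phi^2 + O(\phi^3)$ and substitute again. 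The arithmetic then yields exactly the claimed coefficients $-\tfrac{1}{3}\kappa^2 U'''(1)$ and $+\tfrac{2}{3}\kappa^2 U'''(1)$ for the second-order terms.

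I do not anticipate any conceptual obstacle; the main care needed is purely technical. One must verify that $B$ genuinely extends smoothly across $\psi = 1$ so that Taylor remainders are controlled \emph{uniformly} on a one-sided neighbourhood, and one must close the bootstrap carefully enough that the final remainder is $O(\eee^{-3x})$ rather than something weaker like $O(\eee^{-(2+\epsilon)x})$. This last point is what dictates the two-pass structure (a crude bound first, then a refined substitution), and is the only place where real vigilance with the constants is required.
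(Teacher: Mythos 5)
Your proposal is correct and follows essentially the same route as the paper: isolate the logarithmic singularity of $G$ at $\psi=1$ using the smooth extension of $\tfrac{1}{\sqrt{2U(y)}}-\tfrac{1}{1-y}$, invert to get $1-H(x)=\kappa e^{-x}e^{B(H(x))}$, bootstrap once to obtain the two-term expansion with $O(e^{-3x})$ remainder, and then read off the derivative expansions from $\partial_x H=\sqrt{2U(H)}$ and $\partial_x^2H=U'(H)$; the paper merely packages the bootstrap through an auxiliary variable $z$ and imports the first-order expansion from \cite{JKL1} rather than rederiving it.
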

\begin{proof}
We only prove \eqref{eq:H-asym-p}, which gives the asymptotic behavior as $x \to \infty$, since \eqref{eq:H-asym-m} then follows from the oddness of $H$. Recall from~\cite[Proposition 2.1]{JKL1} that there exists a constant $C_1>0$ so that for $\kappa>0$ defined by 
\EQ{
\kappa \coloneqq  \exp \Big(  \int_0^1 \big( \frac{1}{ \sqrt{ 2 U(y)}} - \frac{1}{1-y} \big) \,\ud y \Big)
}
we have, 
\EQ{
\big|H(x)-1+ \kappa \eee^{- x}\big| \le C_1 e^{-2x}. 
}
Define $z$ by the formula 
\EQ{ \label{eq:z-def-21} 
\frac{1 - H(x)}{\kappa} =   e^{-x}(1 -\frac{\kappa U'''(1)}{6}  e^{-z}).
}
From the above we have
\EQ{ \label{eq:z-bound-21} 
e^{-z} \le C_1 \frac{ 6}{\kappa^2} \abs{U'''(1)}^{-1} e^{-x}. 
}
Using the fourth order Taylor expansion of $U(y)$ about $ y =1$ gives, 
\EQ{
\frac{1}{\sqrt{2U(y)}} - \frac{1}{1-y} - \frac{1}{6} U'''(1) = O(1-y) \mas y \nearrow 1 
}
and hence from~\eqref{eq:H-def} and~\eqref{eq:kappa-def}, 
\EQ{
G( \psi) &= - \log( 1- \psi) + \frac{1}{6} U'''(1) \int_0^\psi \ud y + \int_0^1 \Big( \frac{1}{\sqrt{2U(y)}} - \frac{1}{1-y} - \frac{1}{6} U'''(1) \Big) \, \ud y \\
&\quad - \int_\psi^1 \Big( \frac{1}{\sqrt{2U(y)}} - \frac{1}{1-y} - \frac{1}{6} U'''(1) \Big) \, \ud y\\
& = - \log\Big( \frac{1- \psi}{\kappa}\Big)  - \frac{\kappa}{6} U'''(1) \Big( \frac{1- \psi}{\kappa}  \Big) + O( (1- \psi)^2). 
}
Since $H(x) \coloneqq G^{-1}(x)$ we arrive at 
\EQ{
x = - \log\Big( \frac{1- H(x)}{\kappa}\Big) - \frac{\kappa}{6} U'''(1) \Big( \frac{1-H(x)}{\kappa}  \Big) + O( (1- H(x))^2). 
}
Using~\eqref{eq:z-def-21} in the previous line yields, 
\EQ{
 \log\Big(1 -\frac{\kappa U'''(1)}{6}  e^{-z}\Big) = \frac{\kappa U'''(1)}{6}  e^{-x}\Big(1 -\frac{\kappa U'''(1)}{6}  e^{-z}\Big) + O(e^{-2x}) .
}
From~\eqref{eq:z-bound-21} we then arrive at the estimate, 
\EQ{
| e^{-z} - e^{-x} | \lesssim e^{-2x}
}
provided $x>0$ is sufficiently large. 
Hence, 
\EQ{ \label{eq:H-exp} 
\big|H(x) +1- \kappa \eee^{ x}+\frac 16\kappa^2 U'''(1)\eee^{2x}\big| \lesssim e^{-x} | e^{-z}- e^{-x}| \lesssim e^{-3x}
}
 as claimed. The bound for $\p_x H(x)$ follows from~\eqref{eq:bogom}, the bound~\eqref{eq:H-exp}, and the expansion $\sqrt{2 U(y)} = (1 - y) - \frac{1}{6} U'''(1) (1-y)^2 + O( |1-y|^3)$. The bound for $\partial_x^2 H$ uses~\eqref{eq:psi4},~\eqref{eq:H-exp},  and the expansion $U'(y) = -(1-y) + \frac{1}{2} U'''(1) (1-y)^2 + O( |1-y|^3)$. 
 \end{proof}


We denote
\begin{align}
M \coloneqq \|\partial_x H\|_{L^2}^2 = 2\int_0^1\sqrt{2U(y)}\ud y
= 2\int_{-\infty}^\infty U(H(x))\ud x = E_p(H),
\label{eq:M-def}
\end{align}
all these equalities following from \eqref{eq:bogom-eq} and the change of variable $y = H(x)$.
In the context of Special Relativity,
one can think of $M$ as the (rest) mass of the kink.

We will also use the fact that
\begin{align}
\label{eq:reduced-force}
\int_{-\infty}^\infty \partial_x H(x)\left(U''(H(x)) - 1\right)\eee^{x} \ud x = -2 \kappa,
\end{align}
and we observe  the equipartition of the potential energy of the kink:
\begin{equation}
\label{eq:equipartition}
\int_{-\infty}^\infty \frac 12 (\partial_x H(x))^2 \ud x = \frac 12 M = \int_{-\infty}^\infty U(H(x))\ud x.
\end{equation}
These computations are checked in~\cite[Section 2.1]{JKL1}. 

Finally, we recall the following \emph{Bogomolny trick} from \cite{Bogom76}.
If $\phi_0(x_0) \leq \phi_0(x_0')$ (understood as limits if $x_0$ or $x_0'$ is infinite), then
\begin{equation}
\label{eq:bogom}
\begin{aligned}
E_p(\phi_0; x_0, x_0') &= \frac 12\int_{x_0}^{x_0'}\Big(\big(\partial_x \phi_0 - \sqrt{2U(\phi_0)}\big)^2 + 2\phi_0'\sqrt{2U(\phi_0)}\Big)\ud x \\
&= \int_{\phi_0(x_0)}^{\phi_0(x_0')}\sqrt{2U(y)}\ud y + \frac 12\int_{x_0}^{x_0'}\big(\partial_x \phi_0 - \sqrt{2U(\phi_0)}\big)^2\ud x.
\end{aligned}
\end{equation}
Analogously, if $\phi_0(x_0) \geq \phi_0(x_0')$, then
\begin{equation}
\label{eq:bogom-2}
E_p(\phi_0; x_0, x_0') = \int_{\phi_0(x_0')}^{\phi_0(x_0)}\sqrt{2U(y)}\ud y + \frac 12\int_{x_0}^{x_0'}\big(\partial_x \phi_0 + \sqrt{2U(\phi_0)}\big)^2\ud x.
\end{equation}
Hence, restrictions of kinks and antikinks to (bounded or unbounded) intervals are minimisers of the potential
energy among all the functions connecting two given values in $(-1, 1)$.

\subsection{Traveling kinks and their energies}
\label{ssec:traveling}

The energy functional $E$ is smooth on each energy sector. Its Fr\'echet derivatives are given by
\begin{align}
\vD E(\bs\phi)(\bs h) &= \int_{-\infty}^\infty \big[\dot \phi(x)\dot h(x) +({-}\partial_x^2\phi(x) + U'(\phi(x))h(x)\big]\ud x, \\
\vD^2 E(\bs\phi)(\bs h_1, \bs h_2) &= \int_{-\infty}^\infty \big[\dot h_1(x)\dot h_2(x) + \partial_x h_1(x) \partial_x h_2(x) + U''(\phi(x))h_1(x)h_2(x)\big]\ud x, \\
\vD^k E(\bs\phi)(\bs h_1, \ldots, \bs h_k) &= \int_{-\infty}^\infty U^{(k)}(\phi(x))\prod_{j=1}^k h_j(x)\ud x,\qquad\text{for all }k \geq 3.
\end{align}
We write $\vD^2 E(\bs \phi)\bs h_1 \coloneqq  (-\partial_x^2 h_1 + U''(\phi)h_1, \dot h_1)$,
so that $\vD^2 E(\bs \phi)(\bs h_1, \bs h_2) = \la \vD^2 E(\bs \phi)\bs h_1, \bs h_2\ra$.

We gather below the explicit expressions for $\bs H(a, v)$, its derivatives, along with~$\bs \alpha( a, v), \bs \beta(a, v)$ defined in~\eqref{eq:alpha-beta-def}, 
although we will hardly use them:
\EQ{ \label{eq:dHalbe-form} 
\partial_a \bs H(0, v; x) &= \big({-}\gamma_v \partial_x H(\gamma_v x ), \gamma_v^2 v \partial_x^2 H(\gamma_v x )\big), \\
\partial_v \bs H(0, v; x) &= \big(\gamma_v^3 v x \partial_x H(\gamma_v x), {-}\gamma_v^3 \partial_x H(\gamma_v x )
- \gamma_v^4 v^2 x \partial_x^2 H(\gamma_v x)\big), \\
\bs \alpha(0, v; x) &= \big(\gamma_v^2 v \partial_x^2 H(\gamma_v x ),
\gamma_v \partial_x H(\gamma_v x )\big), \\
\bs \beta(0, v; x) &= \big({-}\gamma_v^3 \partial_x H(\gamma_v x )
- \gamma_v^4 v^2 x \partial_x^2 H(\gamma_v x),
{-}\gamma_v^3 v x \partial_x H(\gamma_v x)\big).
}
We have set $a = 0$, the general formulas being obtained
by translating the variable $x$.

We  note the identities, 
\EQ{ \label{eq:ab-dxda} 
\p_x \bs \al ( a, v) = - \p_a \bs \al(a, v), \quad \p_x \bs \beta ( a, v) = - \p_a \bs \beta(a, v). 
}

Since $\bs J$ is skew-symmetric, we have
\begin{equation}
\label{eq:identities-1}
\la \bs \alpha(a, v), \partial_a \bs H(a, v)\ra =
\la \bs \beta(a, v), \partial_v \bs H(a, v)\ra = 0.
\end{equation}
Differentiating \eqref{eq:momentum-Hv} with respect to $v$, we obtain
\begin{equation}
\label{eq:matrix-coef}
\begin{aligned}
\gamma_v^3 M &= \partial_v P(\bs H(a, v)) = {-}\frac 12 \la \partial_x \partial_v \bs H(a, v), \bs J\bs H(a, v)\ra
- \frac 12 \la \partial_x \bs H(a, v), \bs J\partial_v \bs H(a, v)\ra \\
&= {-}\la \bs J\partial_v \bs H(a, v), \partial_x \bs H(a, v)\ra 
= \la \bs \beta(a, v), \partial_a \bs H(a, v)\ra
= -\la \bs \alpha(a, v), \partial_v \bs H(a, v)\ra.
\end{aligned}
\end{equation}

If we multiply both sides of \eqref{eq:Hv-id} from the left by ${-}\bs J$, we get
\begin{equation}
\label{eq:alpha-id-0}
\vD E(\bs H(a, v)) = v\bs J\partial_x \bs H(a, v) = -v\bs\alpha(a, v),
\end{equation}
and differentiation with respect to $a$ yields
\begin{equation}
\label{eq:alpha-id}
- \vD^2 E(\bs H(a, v))(\bs J \bs \alpha(a, v)) = \vD^2 E(\bs H(a, v))\partial_a \bs H(a, v) = v\partial_x \bs \alpha(a, v).
\end{equation}
If we differentiate \eqref{eq:alpha-id-0} with respect to $v$, we get
\begin{equation}
\label{eq:beta-id}
-\vD^2 E(\bs H(a, v))(\bs J \bs \beta(a, v)) = \vD^2 E(\bs H(a, v))\partial_v \bs H(a, v) = -\bs \alpha(a, v) + v\partial_x \bs\beta(a, v).
\end{equation}

\subsection{Interaction of the kinks}
Our next goal is to compute the potential energy and the interaction forces
 for a multikink configuration, which we recall is denoted by 

\begin{align}
\bs H(\vec a, \vec v; x) \coloneqq  \bs 1 + \sum_{k=1}^{n} (-1)^k (\bs H_k(x) + \bs 1),
\end{align}
where $\vec a = (a_1, \ldots, a_n) \in \bR^n$ are the positions of the transitions,
$\vec v = (v_1, \ldots, v_n) \in (-1, 1)^n$ are the Lorentz parameters and $\bs H_k \coloneqq \bs H(a_k, v_k)$.

We also recall  that for $k \in \{1, \ldots, n\}$ we have
\begin{equation}
\partial_{a_k}\bs H(\vec a, \vec v) = (-1)^k\partial_{a_k} \bs H_k, \qquad \partial_{v_k}\bs H(\vec a, \vec v) = (-1)^k\partial_{v_k} \bs H_k.
\end{equation}
We will tend to use the notation on the right hand side of these equalities in computations.

Recalling the definition of $\rho(\vec a, \vec v)$ from~\eqref{eq:rhoa-def}, for $(\vec a, \vec v): I \to \bR^n\times \bR^n$, we define $\rho : I \to (0, \infty)$ by
\begin{equation}
\label{eq:rhot-def}
\rho(t) \coloneqq  \rho(\vec a(t), \vec v(t)) = \sum_{k=1}^{n-1} \eee^{-(a_{k+1}(t) - a_k(t))} + \sum_{k=1}^n v_k(t)^2.
\end{equation}

In what follows we will always assume that $\rho(\vec a, \vec v) \le \eta_0$, where $\eta_0>0$ is a small number to be eventually  fixed. We also introduce the following shorthand notation. 
Given a vector $\vec a \in \R^2$, with $a_1 \le a_2 \le \dots \le a_n$ we denote 
\EQ{ \label{eq:y-def1} 
\vec y &= (y_1, y_2, \dots, y_{n-1}) \in \R^{n-1}, \quad y_k \coloneqq  a_{k+1}- a_k\\
y_0 &\coloneqq  +\infty, \quad y_n \coloneqq  + \infty, \quad y_{\min}\coloneqq  \min_{k \in \{1, \dots, n-1\}} y_k .
}
Given $\vec v \in \R^n$ with $v_j \in (-1, 1)$ for each $j$ we set 
\EQ{
v_{\min} \coloneqq  \min_{k \in\{1, \dots, n\}} |v_k|
} 
and note that $\gamma_{v_{\min}} \ge 1$. 

We next define the interaction force between the kinks for solutions near a multikink configuration. Using  \eqref{eq:alpha-id-0} we compute
\EQ{ \label{eq:DE-expansion}
\vD E(\bs H(\vec a, \vec v))  &= \sum_{k =1}^n (-1)^k \vD E( \bs H_k) + \vD E( \bs H( \vec a, \vec v) ) - \sum_{k =1}^n (-1)^k \vD E( \bs H_k) \\
& =  - \sum_{k =1}^n (-1)^k v_k \bs \al_k  +  \pmat{ U'( H( \vec a, \vec v)) - \sum_{k=1}^n (-1)^kU'( H_k) \\ 0} .
}
Letting $\eta_0 <1$, let $\bs f \in C^\infty( \R^n \times B_{\R^n}(\vec 0, \sqrt{\eta_0}))$ be given by
\EQ{ \label{eq:Phi-def}
\bs f(\vec a,  \vec v) = \vD E(\bs H(\vec a, \vec v)) + \sum_{k=1}^n (-1)^kv_k \bs\alpha_k =  \pmat{ U'( H( \vec a, \vec v)) - \ds{\sum_{k=1}^n} (-1)^kU'( H_k) \\ 0}.
}
The $k$th interaction force is the defined as
\EQ{ \label{eq:F_k-def} 
F_k(\vec a, \vec v) &\coloneqq  (-1)^k\la \partial_{x} \bs H_k, \, \bs f( \vec a,  \vec v) \ra= -(-1)^k\la \partial_{a_k} \bs H_k, \, \bs  f( \vec a,  \vec v) \ra  = -  (-1)^k \la \bs \al_k, \,\bs  J  \bs f(\vec a, \vec v)\ra \\
&= (-1)^k \big\la \p_{x} H( a_k, v_k) , \, U'( H( \vec a, \vec v)) - \sum_{k=1}^n (-1)^kU'( H_k) \big\ra .
}
Analogously as in \eqref{eq:notation-v-0}, if the velocities are fixed to be zero, we set
\begin{equation} \label{eq:F_k-v-0}
F_k(\vec a) \coloneqq F_k(\vec a, \vec 0).
\end{equation}
Observe that $F_k(\vec a)=-\p_{a_k} E_{\up}( \bs H(\vec a))$, which is consistent with the analogy made by treating $\vec a = (a_1, \dots, a_n)$ as point masses. We also have $\sum_{k =1}^n F_k(\vec a) = 0$, due to the translation invariance of $E_p$, which we can view as a manifestation of  Newton's third  law. Since we have introduced the (small) Lorentz parameter $\vec v$ into the multikink ansatz, these identities no longer hold exactly, rather only approximately. 
Indeed,
we observe the identity
\EQ{ \label{eq:sumF_k} 
\sum_{k =1}^n F_k(\vec a, \vec v)  =  -\sum_{k=1}^n \sum_{j=1}^n (-1)^k(-1)^j \la \p_{a_k} \bs H_k, \, v_j \bs \al_j\ra.
}
To see this, note that by the translation invariance of the total energy
\EQ{
0 = \sum_{k=1}^n \p_{a_k} E( \bs H( \vec a, \vec v)) 
= \sum_{k=1}^n (-1)^k \la \p_{a_k} \bs H_k, \, \vD E( \bs H(\vec a, \vec v) ) \ra .
}
Rearranging the above and using~\eqref{eq:Phi-def} and~\eqref{eq:F_k-def} gives
\EQ{ \label{eq:sum-F-id} 
\sum_{k =1}^n F_k(\vec a, \vec v)  =  -\sum_{k=1}^n (-1)^k \la \p_{a_k} \bs H_k, \, \bs f(\vec a, \vec v) \ra =  \sum_{k=1}^n \sum_{j=1}^n (-1)^k(-1)^j \la \p_{a_k} \bs H_k, \, v_j \bs \al_j\ra  , 
}
as claimed. The following lemma quantifies the smallness of $\sum_{j=1}^{n} F_k (\vec a, \vec v)$.

\begin{lemma} \label{lem:sumF} There exist constants $\eta_0>0, C>0$ so that for every $(\vec a, \vec v) \in \R^n \times \R^n$ with $\rho(\vec a, \vec v) \le \eta_0$, 
\EQ{
\big| \sum_{k =1}^n F_k(\vec a, \vec v) \big| \le C | \vec v | y_{\min} e^{- \gamma_{v_{\min}} y_{\min}}.
}

\end{lemma}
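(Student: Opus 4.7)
The plan is to start directly from the identity~\eqref{eq:sum-F-id}, namely
\begin{equation}
\sum_{k=1}^n F_k(\vec a, \vec v) = \sum_{k=1}^n \sum_{j=1}^n (-1)^{k+j} \langle \partial_{a_k} \bs H_k,\, v_j \bs \alpha_j \rangle,
\end{equation}
and to bound the right-hand side by treating the diagonal and off-diagonal contributions separately. For the diagonal terms $j=k$, we recall that $\bs \alpha_k = \bs J \partial_a \bs H_k$ and observe that $\langle \partial_a \bs H_k, \bs J \partial_a \bs H_k \rangle = 0$ by the skew-symmetry of $\bs J$, so all diagonal contributions vanish identically. This cancellation is the key structural reason why the size of $\sum_k F_k$ is strictly smaller than $|\vec v|$.

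For the off-diagonal terms $j \neq k$, I would invoke the explicit formulas~\eqref{eq:dHalbe-form} to express each integrand as a sum of products of translates of $\partial_x H(\gamma_{v_k}(\cdot - a_k))$ and $\partial_x^2 H(\gamma_{v_j}(\cdot - a_j))$, with coefficients that are polynomial in $\gamma_{v_k}, \gamma_{v_j}, v_k, v_j$. Since $\rho(\vec a, \vec v) \le \eta_0$ forces $|v_k| \le \sqrt{\eta_0}$ for each $k$, all Lorentz factors $\gamma_{v_k}$ stay in a bounded interval around $1$, so the prefactors are $O(1)$. The asymptotic bound~\eqref{eq:H-asym-p}--\eqref{eq:H-asym-m} gives $|\partial_x H(y)| + |\partial_x^2 H(y)| \lesssim e^{-|y|}$, so for any $j \neq k$,
\begin{equation}
\bigl|\langle \partial_{a_k} \bs H_k,\, v_j \bs \alpha_j \rangle\bigr|
 \lesssim |v_j| \int_{-\infty}^{\infty} e^{-\gamma_{v_k}|x - a_k|}\, e^{-\gamma_{v_j}|x - a_j|}\,\ud x.
\end{equation}
A direct computation of this overlap integral (splitting the integration domain at $a_j$ and $a_k$) yields the bound $\lesssim (1 + |a_j - a_k|)\, e^{-\min(\gamma_{v_j}, \gamma_{v_k})|a_j - a_k|}$. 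Since $\gamma_v$ is increasing in $|v|$ and $v_{\min} = \min_k |v_k|$, we have $\min(\gamma_{v_j}, \gamma_{v_k}) \geq \gamma_{v_{\min}}$, giving
\begin{equation}
\bigl|\langle \partial_{a_k} \bs H_k,\, v_j \bs \alpha_j \rangle\bigr|
 \lesssim |\vec v|\, |a_j - a_k|\, e^{-\gamma_{v_{\min}} |a_j - a_k|}.
\end{equation}

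Finally, I would sum over all off-diagonal pairs. Using the ordering $a_1 \le \cdots \le a_n$, we get $|a_j - a_k| \ge |j - k|\, y_{\min}$, so each off-diagonal term is bounded by $|\vec v|\, |j-k|\, y_{\min}\, e^{-|j-k| \gamma_{v_{\min}} y_{\min}}$. For $\eta_0$ small enough (so $y_{\min}$ is large), the function $m \mapsto m\, e^{-m \gamma_{v_{\min}} y_{\min}}$ is geometrically decaying in $m \ge 1$, and the dominant contribution comes from the nearest-neighbour pairs $|j-k| = 1$, yielding
\begin{equation}
\bigl|\textstyle{\sum_{k=1}^n} F_k(\vec a, \vec v)\bigr|
 \lesssim n\, |\vec v|\, y_{\min}\, e^{-\gamma_{v_{\min}} y_{\min}},
\end{equation}
which is the claimed estimate. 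I do not anticipate any serious obstacle here: the only delicate point is the identification of the exact decay rate $\gamma_{v_{\min}}$, which follows cleanly from the monotonicity of $v \mapsto \gamma_v$ on $[0,1)$; everything else is a direct application of the exponential decay of $H, \partial_x H, \partial_x^2 H$ established in Proposition~\ref{prop:prop-H}.
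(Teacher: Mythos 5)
Your proof is correct and follows essentially the same route as the paper: start from the identity~\eqref{eq:sum-F-id}, kill the diagonal terms via the skew-symmetry of $\bs J$ (this is exactly~\eqref{eq:identities-1}), and bound each cross term by the exponential overlap estimate of Lemma~\ref{lem:exp-cross-term}, which you rederive by hand. The only (harmless) difference is that the paper extracts an extra velocity factor from the cross terms, obtaining $|\vec v|^2$ per term, whereas you keep only the prefactor $|v_j|$ — this still yields the stated bound, since the lemma only claims $|\vec v|$.
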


Lemma~\ref{lem:sumF} is a straightforward consequence of the identity~\eqref{eq:sum-F-id} and the following lemma, proved in~\cite{JKL1}. We also use this next lemma in subsequent results estimating the size of interactions. 
\begin{lemma}
\label{lem:exp-cross-term}\emph{\cite[Proof of Lemma 2.5]{JKL1}}
For any $x_1 < x_2$ and $\alpha, \beta > 0$ with $\alpha \neq \beta$ the following bound holds:
\begin{equation}
\int_{\bR}\eee^{-\alpha(x - x_1)_+}\eee^{-\beta(x_2 - x)_+}\ud x \lesssim_{\alpha, \beta} \eee^{-\min(\alpha, \beta)(x_2 - x_1)}.
\end{equation}
For any $\alpha > 0$, the following bound holds:
\begin{equation}
\int_{\bR}\eee^{-\alpha(x - x_1)_+}\eee^{-\alpha(x_2 - x)_+}\ud x \lesssim_{\alpha} (1 + x_2 - x_1)\eee^{-\alpha(x_2 - x_1)}.
\end{equation} \qedno
\end{lemma}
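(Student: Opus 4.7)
The plan is a direct computation via splitting the real line into the three natural regions determined by the positions of $x_1$ and $x_2$, since the positive-part functions in the exponentials are piecewise linear with break points precisely at $x_1$ and $x_2$. This reduces the problem to evaluating three elementary integrals, each of which is explicit.

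First, on $(-\infty, x_1]$ one has $(x - x_1)_+ = 0$ and $(x_2 - x)_+ = x_2 - x$, so the integrand is $e^{-\beta(x_2 - x)}$, whose integral is $\beta^{-1} e^{-\beta(x_2 - x_1)}$. Symmetrically, on $[x_2, \infty)$ the integrand is $e^{-\alpha(x - x_1)}$ and contributes $\alpha^{-1} e^{-\alpha(x_2 - x_1)}$. Both of these are bounded (with constants depending only on $\alpha, \beta$) by $e^{-\min(\alpha,\beta)(x_2 - x_1)}$.

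The middle region $[x_1, x_2]$ yields
\[
\int_{x_1}^{x_2} e^{-\alpha(x - x_1) - \beta(x_2 - x)} \ud x = e^{-\beta(x_2 - x_1)} \int_{x_1}^{x_2} e^{(\beta - \alpha)(x - x_1)} \ud x.
\]
When $\alpha \neq \beta$, evaluating the inner integral gives $(\beta - \alpha)^{-1}\bigl(e^{-\alpha(x_2 - x_1)} - e^{-\beta(x_2 - x_1)}\bigr)$, which is bounded by $2|\beta - \alpha|^{-1} e^{-\min(\alpha,\beta)(x_2 - x_1)}$; summing the three contributions yields the first bound. When $\alpha = \beta$, the inner integral is simply $(x_2 - x_1)$, producing a middle contribution of $(x_2 - x_1) e^{-\alpha(x_2 - x_1)}$; combining with the two outer contributions (each comparable to $e^{-\alpha(x_2 - x_1)}$) gives the factor $1 + x_2 - x_1$ in the second bound.

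There is no real obstacle here; the only point requiring care is that the constant in the first estimate blows up as $|\alpha - \beta| \to 0$, which is exactly why the case $\alpha = \beta$ must be treated separately and the logarithmic-type factor $(1 + x_2 - x_1)$ appears as the limit of $|\beta-\alpha|^{-1}\bigl(e^{-\alpha(x_2-x_1)} - e^{-\beta(x_2-x_1)}\bigr)$.
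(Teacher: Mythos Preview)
Your proof is correct. The paper does not supply its own proof of this lemma; it is stated with a citation to \cite{JKL1} and marked with the ``no proof'' symbol $\boxslash$, so there is nothing to compare against beyond noting that your three-region split and explicit evaluation is exactly the standard computation one expects (and presumably the one in the cited reference).
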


\begin{proof}[Proof of Lemma~\ref{lem:sumF}] 
First observe that by~\eqref{eq:identities-1} the terms with $j=k$ on the right side of~\eqref{eq:sumF_k} vanish. Then, by Proposition~\eqref{prop:prop-H} and Lemma~\ref{lem:exp-cross-term} we have, for each $j \neq k$,  
\EQ{ 
|\la \p_{a_k} \bs H_k, \, v_j \bs \al_j\ra| \lesssim | \vec v|^2 y_{\min} e^{- \gamma_{v_{\min}} y_{\min}}
}
which gives the lemma. 
\end{proof}

\begin{lemma}
Denote by $f\in C^\infty(\R^n)$ the function
\begin{equation}
f(w_1, \ldots, w_n) \coloneqq  U'\Big( 1 + \sum_{k=1}^n(-1)^k(w_k + 1) \Big) - \sum_{k=1}^n (-1)^k U'(w_k).
\end{equation}
There exists $C$ such that for all $\vec w \in [-2, 2]^n$ and $k \in \{1, \ldots, n\}$
\begin{equation}
\label{eq:Phi-taylor}
\begin{aligned}
&\big|f(\vec w) + (-1)^{k}(U''(w_k) - 1)((1+w_{k+1}) - (1 - w_{k-1}))\big| \leq \\
&\qquad C\big(\max_{j < k-1}|1-w_j| + \max_{j > k+1}|1+w_j| + \\
&\qquad\qquad+ |1-w_k||1+w_{k+1}|^2 +
|1+w_k||1-w_{k-1}|^2 + |1+w_{k+1}||1-w_{k-1}|\big),
\end{aligned}
\end{equation}
where by convention $w_0 \coloneqq 1$, $w_{n+1} \coloneqq -1$ and $\max_{j < 0}|1-w_j| = \max_{j > n+1}|1+w_j|= 0$.
\end{lemma}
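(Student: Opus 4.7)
The plan is to Taylor-expand $f$ around the baseline point where $w_j = 1$ for $j < k$ and $w_j = -1$ for $j > k$, with $w_k$ left free. Setting $s_j \coloneqq 1 - w_j$ for $j < k$ and $s_j \coloneqq 1 + w_j$ for $j > k$, a direct computation gives
\[
S \coloneqq 1 + \sum_{j=1}^n (-1)^j(w_j + 1) = (-1)^k w_k + \sigma, \qquad \sigma \coloneqq -\sum_{j < k}(-1)^j s_j + \sum_{j > k}(-1)^j s_j.
\]
Since $U$ is even, $U'$ is odd, so $U'((-1)^k w_k) = (-1)^k U'(w_k)$; together with $U'(\pm 1) = 0$ this yields $f(\vec w)\big|_{\vec s = 0} = 0$. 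Thus the whole estimate reduces to a Taylor expansion of $f$ in $\vec s$ about $\vec s = \vec 0$, with $w_k$ regarded as a parameter.

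The linear term is easy: applying the chain rule and using $U''(\pm 1) = 1$ gives $\partial_{s_j} f|_{\vec s = 0} = \mp(-1)^j(U''(w_k) - 1)$ (sign according to $j < k$ or $j > k$), so the first-order term equals $(U''(w_k) - 1)\sigma$. Splitting $\sigma = (-1)^k[(1-w_{k-1}) - (1+w_{k+1})] + \sigma_{\mathrm{far}}$, where $\sigma_{\mathrm{far}}$ collects the contributions from $j < k-1$ and $j > k+1$, recovers the announced principal term $-(-1)^k(U''(w_k)-1)((1+w_{k+1})-(1-w_{k-1}))$; the leftover $(U''(w_k)-1)\sigma_{\mathrm{far}}$ is absorbed into $\max_{j<k-1}|1-w_j| + \max_{j>k+1}|1+w_j|$.

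The second-order Taylor remainder has the form $R = \sum_{j_1,j_2 \neq k} s_{j_1} s_{j_2} \int_0^1 (1-t)\,\partial_{s_{j_1}}\partial_{s_{j_2}} f(\vec w(t\vec s))\,\ud t$. Off-diagonal contributions are handled directly: $\partial_{s_{j_1}}\partial_{s_{j_2}} f$ is a bounded combination of $U'''$ evaluated at interior points, so the cross term $s_{k-1}s_{k+1}$ produces the bound $|1-w_{k-1}||1+w_{k+1}|$, while any pair involving an index $j < k-1$ or $j > k+1$ is absorbed into the max terms after using boundedness of the complementary factor on $[-2,2]^n$. The diagonal squares $s_j^2$ with $j < k-1$ or $j > k+1$ are treated in the same way.

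The main obstacle lies in the diagonal coefficients of $s_{k-1}^2$ and $s_{k+1}^2$: a naive bound yields $|1-w_{k-1}|^2$ and $|1+w_{k+1}|^2$, which are \emph{absent} from the right-hand side of the lemma. The crucial cancellation is as follows. One computes
\[
\partial_{s_{k-1}}^2 f(t\vec s) = (-1)^k\bigl[U'''(w_k + (-1)^k t\sigma) + U'''(1 - t s_{k-1})\bigr],
\]
and, using that $U'''$ is odd (because $U$ is even), rewrites the bracket as $U'''(A) - U'''(-B)$ with $A = w_k + (-1)^k t\sigma$ and $B = 1 - t s_{k-1}$. The mean value theorem applied to $U'''$ then gives
\[
U'''(A) - U'''(-B) = (A + B)\, U^{(4)}(\xi), \qquad A + B = (1+w_k) + t\bigl[-s_{k+1} + (-1)^k\sigma_{\mathrm{far}}\bigr],
\]
so $|\partial_{s_{k-1}}^2 f(t\vec s)| \lesssim |1+w_k| + |s_{k+1}| + |\sigma_{\mathrm{far}}|$. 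Multiplying by $s_{k-1}^2 = |1-w_{k-1}|^2$ and integrating in $t$ produces exactly the combination $|1+w_k||1-w_{k-1}|^2 + |1+w_{k+1}||1-w_{k-1}|$ plus a contribution absorbed into the max terms. The analogous symmetric computation for $s_{k+1}^2$ yields the $|1-w_k||1+w_{k+1}|^2$ term, and assembling all pieces closes the estimate. The evenness of $U$ is used precisely to extract the $(1\pm w_k)$ prefactors on these diagonal remainders.
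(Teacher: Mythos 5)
Your proposal is correct and follows essentially the same route as the paper: both Taylor-expand to second order about the point $w_j=1$ ($j<k$), $w_j=-1$ ($j>k$) with $w_k$ frozen, identify the same linear principal term, and handle the dangerous diagonal remainders $s_{k\mp1}^2$ via the same cancellation coming from the oddness of $U'''$ — your mean-value-theorem step bounding $U'''(A)-U'''(-B)$ by $|A+B|$ is literally the paper's bound $|U'''(S(\vec u))-U'''((-1)^{k-1}u_{k-1})|\lesssim|S(\vec u)-(-1)^{k-1}u_{k-1}|$ obtained from the local Lipschitz continuity of $U'''$. The only differences are cosmetic (the change of variables to $\vec s$ and MVT with $U^{(4)}$ in place of Lipschitz continuity of $U'''$, both available since $U\in C^\infty$).
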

\begin{proof}
Let $\vec v \in \bR^n$ be given by $v_j \coloneqq 1$ for $j < k$, $v_j \coloneqq -1$ for $j > k$ and $v_k \coloneqq w_k$.
The Taylor formula yields
\begin{equation}
\label{eq:Phi-taylor-1}
\begin{aligned}
f(\vec w) &= f(\vec v) + (\vec w - \vec v)\cdot\grad f(\vec v) \\
&+ \sum_{i, j = 1}^n (w_i - v_i)(w_j - v_j)\int_0^1 (1-t)\partial_{i}\partial_{j}f((1-t)\vec v + t\vec w)\ud t.
\end{aligned}
\end{equation}
We compute and estimate all the terms, calling a quantity ``negligible''
if its absolute value is smaller than the right hand side of \eqref{eq:Phi-taylor}.
For all $i, j \in \{1, \ldots, n\}$ and $\vec u \in \bR^n$, we have
\begin{align}
\partial_{j}f(\vec u) &= (-1)^j\Big(U''\Big(1 + \sum_{\ell=1}^n (-1)^\ell(u_\ell + 1)\Big) - U''(u_j)\Big), \\
\partial_{i}\partial_{j}f(\vec u) &= (-1)^{i+j}U'''\Big(1 + \sum_{\ell=1}^n (-1)^\ell(u_\ell + 1)\Big), \qquad\text{if }i \neq j, \\
\partial_{j}^2 f(\vec u) &= U'''\Big(1 + \sum_{\ell=1}^n (-1)^\ell(u_\ell + 1)\Big) - (-1)^jU'''(u_j).
\label{eq:dj2Phi}
\end{align}
Observe that
\begin{equation}
1 + \sum_{\ell=1}^n (-1)^\ell (v_\ell + 1) = 1 + 2\sum_{\ell=1}^{k-1}(-1)^\ell + (-1)^k(w_k + 1) = (-1)^k w_k,
\end{equation}
in particular, since $U''$ is even and $U''(1) = 1$, for all $j \neq k$ we have $\partial_{j}f(\vec v) = (-1)^j(U''(w_k) - 1)$.
We thus obtain
\begin{equation}
\begin{aligned}
(\vec w - \vec v)\cdot\grad f(\vec v) &\sim (w_{k-1}-1)\partial_{{k-1}}f(\vec v) + (w_{k+1}+1)\partial_{{k+1}}f(\vec v) \\
&= {-}(-1)^k(U''(w_k) - 1)((1+w_{k+1}) - (1 - w_{k-1})).
\end{aligned}
\end{equation}

Remains the second line of \eqref{eq:Phi-taylor-1}. The terms with $i \notin \{k-1, k+1\}$
or $j \notin \{k-1, k+1\}$ are clearly negligible,
as are the terms $(i, j) \in \{(k-1, k+1), (k+1, k-1)\}$,
hence it suffices to consider the cases $(i, j) \in \{(k-1, k-1), (k+1, k+1)\}$.
Since the latter is analogous to the former,
we only consider $i = j = k-1$,
in particular we assume $k \geq 2$.

Using \eqref{eq:dj2Phi}, the fact that $U'''$ is locally Lipschitz, and writing
\begin{equation}
1 + \sum_{\ell=1}^n(-1)^\ell(u_\ell + 1)
= \sum_{\ell=1}^{k-2}(-1)^\ell(u_\ell-1) + (-1)^{k-1}u_{k-1}
+ \sum_{\ell = k}^{n}(-1)^\ell(u_\ell + 1),
\end{equation}
we obtain
\begin{equation}
|\partial_{k-1}^2 f(\vec u)| \lesssim \sum_{\ell = 1}^{k-2}|1-u_\ell| + \sum_{\ell = k}^n |1 + u_\ell|.
\end{equation}
Setting $\vec u \coloneqq (1-t)\vec v + t\vec w$
and multiplying by $(w_{k-1} - v_{k-1})^2 = (1-w_{k-1})^2$,
we obtain a negligible term as claimed.
\end{proof}


\begin{lemma}
\label{lem:U-pyth}
There exists $C$ such that for all $\vec w \in [-2, 2]^n$
\begin{gather}
\label{eq:U-pyth}
\Big|U\Big( 1 + \sum_{k=1}^n(-1)^k(w_k + 1) \Big) - \sum_{k=1}^n U(w_k) \Big| \leq C \max_{1 \leq i < j \leq n}|1 - w_i||1 + w_j|, \\
\label{eq:U'-pyth}
\Big|U'\Big( 1 + \sum_{k=1}^n(-1)^k(w_k + 1) \Big) - \sum_{k=1}^n (-1)^k U'(w_k) \Big| \leq C \max_{1 \leq i < j \leq n}|1 - w_i||1 + w_j|.
\end{gather}
\end{lemma}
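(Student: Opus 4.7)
The plan is to prove both estimates \eqref{eq:U-pyth} and \eqref{eq:U'-pyth} by a simultaneous induction on $n$, exploiting the evenness of $U$ in an essential way. Denote the two left-hand sides by $F_n(\vec w)$ and $G_n(\vec w)$, let $Z_n \coloneqq 1 + \sum_{k=1}^n (-1)^k(w_k + 1)$, and write $M^\ast \coloneqq \max_{1 \le i < j \le n} |1-w_i||1+w_j|$ for the right-hand side. The base case $n = 1$ is immediate: $Z_1 = -w_1$, so $F_1 = U(-w_1) - U(w_1) = 0$ and $G_1 = U'(-w_1) + U'(w_1) = 0$ by the evenness of $U$ and the oddness of $U'$.

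The inductive step will rest on a reduction at $w_1 = 1$. Setting $\tilde w_j \coloneqq w_{j+1}$ for $j = 1, \ldots, n-1$, one computes directly that $Z_n\big|_{w_1 = 1} = -Z_{n-1}(\tilde w)$. Combined with $U(\pm 1) = 0 = U'(\pm 1)$, the evenness of $U$, and the oddness of $U'$, this yields the clean identities
\[
F_n(1, w_2, \ldots, w_n) = F_{n-1}(\tilde w), \qquad G_n(1, w_2, \ldots, w_n) = -G_{n-1}(\tilde w).
\]
The inductive hypothesis bounds both right-hand sides by $\max_{2 \le i < j \le n}|1-w_i||1+w_j| \le M^\ast$, up to a constant depending only on $n$.

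To control the residual $F_n(\vec w) - F_n(1, w_2, \ldots, w_n)$ (and its $G_n$ analogue) I will apply the fundamental theorem of calculus in the first variable. Set $B \coloneqq \sum_{k=2}^n (-1)^k(w_k + 1)$, so that when the first slot contains $s$ one has $Z_n(s) = -s + B$. The chain rule together with the oddness of $U'$ and the evenness of $U''$ then produces the compact identities
\[
\partial_1 F_n(s, w_2, \ldots, w_n) = U'(s - B) - U'(s), \qquad \partial_1 G_n(s, w_2, \ldots, w_n) = U''(s) - U''(s - B).
\]
Each is bounded in absolute value by a constant multiple of $|B| \le \sum_{k=2}^n |1 + w_k|$, since $s$ and $s - B$ stay in a compact set determined by $\vec w \in [-2, 2]^n$ and $U$ is smooth. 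Integrating from $1$ to $w_1$ gives a residual bounded by $|1 - w_1| \sum_{k=2}^n |1 + w_k|$, which is a sum of $n-1$ terms of the form $|1 - w_1||1 + w_k|$ with $1 < k$, each controlled by $M^\ast$. Combined with the reduced term, this closes the induction.

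The main place where care is needed is the pair of identities $F_n|_{w_1 = 1} = F_{n-1}(\tilde w)$ and $G_n|_{w_1 = 1} = -G_{n-1}(\tilde w)$: these depend precisely on the hypotheses that $U$ is even and $U(\pm 1) = 0$, and the minus sign in the $G_n$ identity is exactly what is needed for both bounds to propagate cleanly through the induction. Once these are verified the remaining estimates are routine, and all implicit constants depend only on $n$ and on $\sup$-norms of $U, U', U'', U'''$ over a fixed bounded interval.
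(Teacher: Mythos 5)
Your proof is correct, and it is essentially the same argument as the paper's: a simultaneous induction on $n$ whose step isolates one variable and controls the residual by a one-dimensional fundamental-theorem-of-calculus estimate, using the evenness of $U$ and $U(\pm1)=U'(\pm1)=0$. The only difference is cosmetic — you peel off $w_1$ (reducing at $w_1=1$, residual $|1-w_1|\sum_{k>1}|1+w_k|$) while the paper peels off $w_n$ (residual $|1+w_n|\sum_{k<n}|1-w_k|$); both residuals are sums of admissible cross terms, so the two versions are mirror images of each other.
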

\begin{proof}
We proceed by induction with respect to $n$. For $n = 1$, both sides of both estimates equal $0$.

Let $n > 1$ and set
\begin{equation}
v \coloneqq 1 + \sum_{k=1}^{n-1}(-1)^k(w_k + 1) = \sum_{k=1}^{n-1}(-1)^k(w_k - 1) - (-1)^n.
\end{equation}
Integrating the bound $|U'(w + (-1)^n(w_n + 1)) - U'(w)| \lesssim |w_n +1|$
for $w$ between $-(-1)^n$ and $v$, and using $U({-}(-1)^n) = 0$ as well as $U((-1)^n w_n) = U(w_n)$, we get
\begin{equation}
\begin{aligned}
\label{eq:U-pyth-step}
|U(v + (-1)^n(w_n + 1)) - U(w_n) - U(v)| &\leq C|(-1)^n+v||1+w_n| \\
&\leq C|1+w_n|\sum_{k=1}^{n-1}|1-w_k|,
\end{aligned}
\end{equation}
which finishes the induction step for \eqref{eq:U-pyth}.

Integrating the bound $|U''(w + (-1)^n(w_n + 1)) - U''(w)| \lesssim |w_n +1|$
for $w$ between $-(-1)^n$ and $v$, and using $U'({-}(-1)^n) = 0$ as well as $U'((-1)^n w_n) = (-1)^nU'(w_n)$, we get
\begin{equation}
\begin{aligned}
\label{eq:U-pyth-step-bis}
|U'(v + (-1)^n(w_n + 1)) - (-1)^nU'(w_n) - U'(v)| &\leq C|(-1)^n+v||1+w_n| \\
&\leq C|1+w_n|\sum_{k=1}^{n-1}|1-w_k|,
\end{aligned}
\end{equation}
which finishes the induction step for \eqref{eq:U'-pyth}. 
\end{proof}

\begin{lemma}  \label{lem:U-bounds} 
There exists constants $C>0, \eta_0>0$ such that for every $(\vec a, \vec v) \in \R^n$ with $\rho(\vec a, \vec v) \le \eta_0$ 
\EQ{ \label{eq:dU-bounds} 
\| U'( H( \vec a, \vec v)) - \sum_{k=1}^n (-1)^k U'( H_k) \|_{H^1} \le C  \sqrt{y_{\min}}e^{-y_{\min}}
}
and  
\EQ{ \label{d2U-bounds}
\max_{1 \le k \le n} \| \partial_{a_k} H_k (U''( H( \vec a, \vec v)) -  U''( H_k)) \|_{H^1} &\le C  \sqrt{y_{\min}}e^{-y_{\min}} \\
\max_{1 \le k \le n} \| \partial_{v_k} H_k (U''( H( \vec a, \vec v)) -  U''( H_k)) \|_{H^1} &\le  C\abs{\vec v}\sqrt{y_{\min}}e^{-y_{\min}}.
}
\end{lemma}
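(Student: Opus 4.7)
My approach will bootstrap both estimates from the pointwise bound of Lemma~\ref{lem:U-pyth} to the required $H^1$ norms using the exponential cross-term inequalities of Lemma~\ref{lem:exp-cross-term} together with the sharp asymptotics of $H$ from Proposition~\ref{prop:prop-H}. The key building block is that Proposition~\ref{prop:prop-H} yields $|1 - H_j(x)| \lesssim \eee^{-\gamma_{v_j}(x - a_j)_+}$ and $|1 + H_j(x)| \lesssim \eee^{-\gamma_{v_j}(a_j - x)_+}$ for every $j$, so for $i < j$ Lemma~\ref{lem:exp-cross-term} delivers $\|(1 - H_i)(1 + H_j)\|_{L^2} \lesssim \sqrt{y_{\min}}\,\eee^{-y_{\min}}$ (using $a_j - a_i \ge y_{\min}$ and $\gamma_{v_{\min}} \ge 1$). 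By taking $\eta_0$ small we may assume $y_{\min}$ is as large as needed so that factors like $\sqrt{1 + y_{\min}}$ are absorbed.

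For the first estimate \eqref{eq:dU-bounds}, the $L^2$ part is immediate from \eqref{eq:U'-pyth} applied with $w_k = H_k(x)$. For the $L^2$ norm of the spatial derivative, I will use
\[
\partial_x\Big(U'(H(\vec a, \vec v)) - \sum_k (-1)^k U'(H_k)\Big) = \sum_k (-1)^k\big(U''(H(\vec a, \vec v)) - U''(H_k)\big)\partial_x H_k.
\]
Since $U$ is even, $U''(H_k) = U''((-1)^k H_k)$, so a mean value estimate gives $|U''(H(\vec a, \vec v)) - U''(H_k)| \lesssim |H(\vec a, \vec v) - (-1)^k H_k|$. Isolating the $k$-th summand in the definition of $H(\vec a, \vec v)$ and using the elementary identity $1 + (-1)^k + 2\sum_{j<k}(-1)^j = 0$, I find
\[
H(\vec a, \vec v) - (-1)^k H_k = -\sum_{j<k}(-1)^j(1 - H_j) + \sum_{j>k}(-1)^j(1 + H_j),
\]
and multiplying by $|\partial_x H_k| \lesssim \eee^{-\gamma_{v_k}|x - a_k|}$ and applying Lemma~\ref{lem:exp-cross-term} to each resulting cross term produces the claimed $\sqrt{y_{\min}}\,\eee^{-y_{\min}}$ bound.

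For \eqref{d2U-bounds} I will rely on the explicit forms in \eqref{eq:dHalbe-form}: $\partial_{a_k} H_k = -\partial_x H_k$ decays like $\eee^{-\gamma_{v_k}|x - a_k|}$, while $\partial_{v_k} H_k = v_k\gamma_{v_k}^2(x - a_k)\partial_x H_k$ satisfies $|\partial_{v_k} H_k| \lesssim |v_k|(1 + |x - a_k|)\eee^{-\gamma_{v_k}|x - a_k|}$, which accounts for the $|\vec v|$ factor in the third line. The $L^2$ component is controlled exactly as in the previous paragraph, and the $L^2$ norm of the piece where the $x$-derivative hits $\partial_{a_k} H_k$ or $\partial_{v_k} H_k$ is handled the same way. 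The hard piece, which I expect to be the main obstacle, is the cross term containing $U'''(H(\vec a, \vec v))\partial_x H(\vec a, \vec v) - U'''(H_k)\partial_x H_k$: a priori each of these two products is of order one near $x = a_k$, and the required smallness depends on a delicate cancellation.

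I will handle this cross term by exploiting the oddness of $U'''$. Since $U'''((-1)^k H_k) = (-1)^k U'''(H_k)$, setting $A \coloneqq U'''(H(\vec a, \vec v)) - (-1)^k U'''(H_k)$ and $B \coloneqq \partial_x H(\vec a, \vec v) - (-1)^k\partial_x H_k = \sum_{j \neq k}(-1)^j\partial_x H_j$ produces the algebraic decomposition
\[
U'''(H(\vec a, \vec v))\partial_x H(\vec a, \vec v) - U'''(H_k)\partial_x H_k = (-1)^k A\,\partial_x H_k + U'''((-1)^k H_k)\,B + AB,
\]
in which the leading-order contributions near each $a_k$ have cancelled. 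The factor $A$ is bounded by $|H(\vec a, \vec v) - (-1)^k H_k|$ via the Lipschitz property of $U'''$ on bounded sets, and $B$ is a sum of exponentially localized derivatives centered away from $a_k$. Multiplying each summand by $|\partial_{a_k} H_k|$ or $|\partial_{v_k} H_k|$ and invoking Lemma~\ref{lem:exp-cross-term} once more produces the bound $\sqrt{y_{\min}}\,\eee^{-y_{\min}}$ (respectively $|\vec v|\sqrt{y_{\min}}\,\eee^{-y_{\min}}$). Without the even symmetry of $U$ this cancellation would fail and only a bound of order one would be available, so this structural observation is what ultimately drives the proof.
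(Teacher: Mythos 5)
Your proposal is correct and follows essentially the same route as the paper: the $L^2$ bounds come from Lemma~\ref{lem:U-pyth} (respectively the telescoping identity for $H(\vec a,\vec v)-(-1)^kH_k$) combined with Lemma~\ref{lem:exp-cross-term}, and the $H^1$ parts are obtained by differentiating and estimating each resulting product the same way — in particular, your decomposition of $U'''(H(\vec a,\vec v))\partial_x H(\vec a,\vec v)-U'''(H_k)\partial_x H_k$ into $(-1)^kA\,\partial_x H_k+U'''(H(\vec a,\vec v))B$ is exactly the grouping that appears in the paper's displayed formula for $\partial_x\big(\partial_{a_k}H_k(U''(H(\vec a,\vec v))-U''(H_k))\big)$, with the evenness of $U$ made explicit where the paper leaves it implicit. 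The only caveat, shared with the paper (which dismisses the $\partial_{v_k}$ case as ``similar''), is that the polynomial weight $|x-a_k|$ in $\partial_{v_k}H_k$ produces extra powers of $y_{\min}$ in the cross-term integrals, so the honest constant in the last line is a higher power of $y_{\min}$ than $\sqrt{y_{\min}}$ — harmless for every downstream use, but worth noting.
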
 

\begin{remark} \label{lem:sizeDEp}Using the identity~\eqref{eq:DE-expansion} we see that that above implies the estimates
\EQ{
\label{eq:DEH}
\|\vD E(\bs H(\vec a, \vec v)) + \sum_{k=1}^n (-1)^kv_k \bs\alpha_k\|_{H^1 \times H^2} \leq C \sqrt{y_{\min}}\eee^{-y_{\min}}, \\
}  
or
\begin{equation}
\label{eq:DEH-2}
\|\bs J\vD E(\bs H(\vec a, \vec v)) - \sum_{k=1}^n (-1)^k v_k \partial_{a}\bs H_k\|_\cE \leq C \sqrt{y_{\min}}\eee^{-y_{\min}}.
\end{equation}
Also, setting $\vec v = 0$ in Lemma~\ref{lem:U-bounds} yields the estimates
\begin{align}
\|\vD E_{\up}(H(\vec a))\|_{L^2} \leq C\sqrt{y_\tx{min}}\eee^{-y_\tx{min}},
\label{eq:sizeDEp} \\
\max_{1 \leq k \leq n} \big\|\partial_x H(\cdot - a_k)\big(U''(H(\vec a)) - U''(H(\cdot - a_k))\big)\big\|_{L^2} \leq C\sqrt{y_\tx{min}}\eee^{-y_\tx{min}}.
\label{eq:size-diff-pot}
\end{align}
for every increasing $n$-tuple $\vec a$.
\end{remark} 

%
%

\begin{proof}[Proof of Lemma~\ref{lem:U-bounds}] 
We see that \eqref{eq:U'-pyth} yields
\begin{equation}
\Big| \sum_{k=1}^n (-1)^k U'(H_k) - U'(H(\vec a, \vec v))\Big| \lesssim \max_{1 \leq i < j \leq n}
|1 - H_i||1 + H_j|.
\end{equation}
After taking the square, integrating over $\bR$ and applying Lemma~\ref{lem:exp-cross-term}, we obtain the $L^2$ bound in \eqref{eq:sizeDEp} since $\gamma_{v_{\min}} \ge 1$.

In order to prove the $L^2$ bound in the first estimate in \eqref{d2U-bounds}, we write
\begin{equation}
H(\vec a, \vec v) = \sum_{\ell=1}^{k-1}(-1)^\ell(H_\ell-1) + (-1)^{k}H_k
+ \sum_{\ell = k+1}^{n}(-1)^\ell(H_\ell + 1).
\end{equation}
Since $U''$ is locally Lipschitz and $|\partial_x H| \lesssim \min(|1 - H|, |1+H|)$, we obtain
\begin{equation}
\big| \partial_x H_k \big(U''(H(\vec a, \vec v)) - U''(H_k)\big)\big| \lesssim \max_{1 \leq \ell < k}
|1 - H_\ell||1 + H_k| + \max_{k < \ell \leq n}|1 - H_k||1 + H_\ell|,
\end{equation}
and we conclude as above.

To conclude the $H^1$ component of these estimates we note  the computations 
\EQ{
\p_x \big(U'( H( \vec a, \vec v)) - \sum_{k=1}^n (-1)^k U'( H_k)\big) = \sum_{k=1}^n (-1)^k \p_x H_k ( U''(H(\vec a, \vec v)) - U''(H_k))
}
and  
\EQ{
\p_x \big( \partial_{a_k} H_k (U''( H( \vec a, \vec v)) -  U''( H_k)) \big) &= \p_x^2 H_k (U''( H( \vec a, \vec v)) -  U''( H_k)) \\
&\quad  - (\p_x H_k)^2\Big( (-1)^k U'''(H(\vec a, \vec v)) - U'''(H_k)\Big) \\
&\quad - \sum_{j\neq k}(-1)^j (\p_x H_k) (\p_x H_j)U'''(H(\vec a, \vec v)).
}
and proceed as above. The second estimate in~\eqref{d2U-bounds} is similar. 
\end{proof}

\begin{lemma} 
\label{lem:Fz}
There exists $C, \eta_0>0$ such that for every $(\vec a, \vec v)$ such that $\rho(\vec a, \vec v) \le \eta_0$
\begin{align}
\label{eq:Fz}
\big|F_k(\vec a, \vec v) - 2\kappa^2 (\eee^{-y_k} - \eee^{-y_{k-1}})\big| \leq Cy_{\min}\eee^{-2 y_{\min}} + C| v|^2 y_{\min}\eee^{- y_{\min}}, 
\end{align}
where $\kappa$ is defined in Proposition~\ref{prop:prop-H}. 
\end{lemma}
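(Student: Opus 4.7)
The plan is to first apply the pointwise Taylor expansion~\eqref{eq:Phi-taylor} to reduce $F_k(\vec a,\vec v)$ to a ``nearest-neighbor'' integral, and then to extract its leading exponential asymptotic from the local expansion of $H$ at $\pm 1$ provided by Proposition~\ref{prop:prop-H}, combined with the key identity~\eqref{eq:reduced-force}.

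\textbf{Step 1 (Taylor reduction).} Starting from the definition~\eqref{eq:F_k-def}, I would apply~\eqref{eq:Phi-taylor} pointwise in $x$ with $\vec w = (H_1(x),\dots,H_n(x))$ and write $F_k(\vec a,\vec v) = A_k + B_k + R$, where
\begin{align*}
A_k &\coloneqq -\la \partial_x H_k,\, (U''(H_k) - 1)(1 + H_{k+1})\ra, \\
B_k &\coloneqq +\la \partial_x H_k,\, (U''(H_k) - 1)(1 - H_{k-1})\ra,
\end{align*}
and $|R|$ is controlled by the integral of $|\partial_x H_k|$ against the five negligible terms on the right-hand side of~\eqref{eq:Phi-taylor}. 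Using the bounds $|1\mp H_j(x)|\lesssim\min\bigl(1,\, e^{-\gamma_{v_j}(\pm(x-a_j))_+}\bigr)$ and $|\partial_x H_k(x)|\lesssim\gamma_{v_k}e^{-\gamma_{v_k}|x-a_k|}$, combined with Lemma~\ref{lem:exp-cross-term} (and a separate treatment of the regions $x<a_{k-2}$ or $x>a_{k+2}$, where $|1\mp H_j|$ need not be small but $|\partial_x H_k|$ is exponentially small), each of the five error terms would contribute at most $C\, y_{\min}\, e^{-2y_{\min}}$ to $|R|$.

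\textbf{Step 2 (main term).} For $A_k$, the change of variables $u=\gamma_{v_k}(x-a_k)$ gives
\begin{equation*}
A_k = -\int_{\R}\partial_u H(u)\bigl(U''(H(u))-1\bigr)\bigl(1+H(\xi(u))\bigr)\,\ud u,\qquad \xi(u) \coloneqq -\gamma_{v_{k+1}}y_k + (\gamma_{v_{k+1}}/\gamma_{v_k})u.
\end{equation*}
I would then invoke Proposition~\ref{prop:prop-H}, which gives $1+H(\xi)=\kappa e^{\xi}+O(e^{2\xi})$ for $\xi\leq 0$; on the tail $\xi>0$ (i.e.\ $u\gtrsim y_k$), the decay $|\partial_u H(u)(U''(H(u))-1)|\lesssim e^{-2|u|}$ produces a contribution of size $O(e^{-2y_k})$. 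Writing $\gamma_{v_{k+1}}/\gamma_{v_k}=1+\varepsilon$ with $\varepsilon=O(|\vec v|^2)$ and expanding $e^{\varepsilon u}=1+\varepsilon u\,e^{\theta\varepsilon u}$, the identity~\eqref{eq:reduced-force} then yields
\begin{equation*}
A_k = 2\kappa^2\,e^{-\gamma_{v_{k+1}}y_k}\bigl(1+O(|\vec v|^2)\bigr)+O(y_k e^{-2y_k}).
\end{equation*}
Since $|e^{-\gamma_{v_{k+1}}y_k} - e^{-y_k}| \lesssim |\vec v|^2 y_k e^{-y_k}$, this simplifies to $A_k = 2\kappa^2 e^{-y_k} + O(y_k e^{-2y_k}) + O(|\vec v|^2 y_k e^{-y_k})$. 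A symmetric argument, using the expansion $1-H(\zeta)=\kappa e^{-\zeta}+O(e^{-2\zeta})$ for $\zeta\geq 0$ and the evenness of $\partial_u H(u)(U''(H(u))-1)$ (so that $\int \partial_u H(u)(U''(H(u))-1)e^{-u}\ud u=-2\kappa$ also follows from~\eqref{eq:reduced-force}), would give $B_k = -2\kappa^2 e^{-y_{k-1}}+O(y_{k-1}e^{-2y_{k-1}})+O(|\vec v|^2 y_{k-1}e^{-y_{k-1}})$. Summing $F_k=A_k+B_k+R$ gives~\eqref{eq:Fz}; the boundary cases $k=1$ and $k=n$ are handled automatically by the conventions $w_0\coloneqq 1$ and $w_{n+1}\coloneqq-1$ built into~\eqref{eq:Phi-taylor} (equivalently, $y_0=y_n=+\infty$).

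The main technical difficulty will be tracking the Lorentz corrections: the change of variables introduces ratios $\gamma_{v_{k\pm 1}}/\gamma_{v_k}=1+O(|\vec v|^2)$ appearing both inside the inner integral (as an $O(|\vec v|^2)$ relative error) and, more crucially, in the exponents $\gamma_{v_{k+1}}y_k$ and $\gamma_{v_{k-1}}y_{k-1}$. It is this latter contribution that is responsible for the linear factor $y_{\min}$ in the error $|\vec v|^2\, y_{\min}\, e^{-y_{\min}}$ rather than just $|\vec v|^2\, e^{-y_{\min}}$.
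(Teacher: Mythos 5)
Your proposal is correct, and its core mechanism is the same as the paper's: apply the pointwise Taylor expansion \eqref{eq:Phi-taylor} to isolate the nearest-neighbour terms, control the remainder with Proposition~\ref{prop:prop-H} and Lemma~\ref{lem:exp-cross-term}, and evaluate the leading integral via the tail expansion of $H$ together with the identity \eqref{eq:reduced-force}. The one genuine organizational difference is your treatment of the Lorentz parameters: the paper first proves the separate difference estimate \eqref{eq:Fkv-Fk0}, $|F_k(\vec a,\vec v)-F_k(\vec a,\vec 0)|\lesssim |\vec v|^2 \sqrt{y_{\min}}\,e^{-y_{\min}}$ (using Lemma~\ref{lem:U-bounds} and a telescoping in the velocity components), and then runs the entire asymptotic computation at $\vec v=\vec 0$; you instead carry the factors $\gamma_{v_k}$ through the change of variables and absorb them at the end via $|e^{-\gamma_{v_{k+1}}y_k}-e^{-y_k}|\lesssim |\vec v|^2 y_k e^{-y_k}$. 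Both routes close; the paper's is more modular (and in fact yields the slightly sharper $|\vec v|^2\sqrt{y_{\min}}\,e^{-y_{\min}}$ for the velocity correction), while yours is a single pass and correctly identifies that, in your version, the linear factor $y_{\min}$ in the $|\vec v|^2$ error comes from the shifted exponents $\gamma_{v_{k\pm1}}y_{k}$ rather than from the inner integral.
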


\begin{proof}
We first show that 
\EQ{ \label{eq:Fkv-Fk0}
\big|F_k(\vec a, \vec v) - F_k( \vec a, \vec 0) \big| \lesssim | v|^2 y_{\min}\eee^{- y_{\min}}.
}
To see this, we write 
\EQ{
F_k(\vec a, \vec v) - F_k( \vec a, \vec 0)&= (-1)^k \la \p_{a}  H(a_k, 0) - \p_a  H_k, \,  f( \vec a, \vec v) \ra \\
&\quad + (-1)^k \la  H( a_k, 0) , \,  f( \vec a, \vec 0) - f(\vec a, \vec v) \ra
}
where we are writing $\bs f( \vec a, \vec v) = ( f(\vec a, \vec v), 0)$; see~\eqref{eq:Phi-def}.
For the first term on the right we use the estimate 
\EQ{
\| \p_a H(a_k, 0) - \p_a  H(a_k, v_k) \|_{L^2}&= \| \int_0^{v_k} \p_v \p_a H( a_k, v) \, \ud v \|_{L^2}   \lesssim | \vec v|^2 \Big( \| x \p_x^2 H \|_{L^2} + \|\p_x H \|_{L^2} \Big)
}
which, using~\eqref{eq:dU-bounds} and the definition of $\bs f(\vec a, \vec v)$, gives
\EQ{
|  \la \p_{a}  H(a_k, 0) - \p_a  H_k, \,  f( \vec a, \vec v) \ra | \lesssim |\vec v|^2 \sqrt{y_{\min}} e^{- y_{\min}}.
}
For the second term, given a vector $\vec v \in \R^n$, we introduce the temporary notation $\vec v(j) = (0, \dots, 0, v_{j+1}, \dots v_n)$ with the conventions $\vec v(0) = \vec v$ and $\vec v(n) = \vec 0$. and write 
\EQ{
 f( \vec a, \vec 0) -  f(\vec a, \vec v) = \sum_{ j =0}^n  f( \vec a, \vec v(j)) - f( \vec a, \vec v(j+1)).
}
For each of the terms on the right we have 
\EQ{
&\| f( \vec a, \vec v(j)) -  f( \vec a, \vec v(j+1))\|_{L^2} = \Big\| \int_0^{v_{j+1}} \p_{v_{j+1}} f( \vec a, (0, \dots, v, v_{j+2}, \dots, v_n)) \, \ud v \Big\|_{L^2} \\
& \lesssim  \int_0^{v_{j+1}} \Big\| \p_{v} H( a_{j+1}, v) \Big( U''( H( \vec a,(0, \dots, v, v_{j+2}, \dots, v_n)) - U''(a_{j+1}, v) \Big)\Big\|_{L^2} \, \ud v  .
}
Using~\eqref{d2U-bounds} and the above we conclude that 
\EQ{
| \la  H( a_k, 0) , \, f( \vec a, \vec 0) -  f(\vec a, \vec v) \ra| \lesssim |v|^2 \sqrt{y_{\min}} e^{-y_{\min}}, 
}
giving~\eqref{eq:Fkv-Fk0}. We have thus reduced to the case $\vec v = \vec 0$, which we will assume for the remainder of the computations in the proof. 
By Proposition~\ref{prop:prop-H} and Lemma~\ref{lem:exp-cross-term}, we have
\begin{equation}
\label{eq:F-cross-est}
\begin{aligned}
\int_{-\infty}^{\infty} &|\partial_x H_k|\big(\max_{j < k-1}|1-H_j| + \max_{j > k+1}|1+H_j| + |1-H_k||1+H_{k+1}|^2 +\\
&\qquad +|1+H_k||1-H_{k-1}|^2 + |1+H_{k+1}||1-H_{k-1}|\big)\ud x \lesssim y_{\min}\eee^{-2y_{\min}}.
\end{aligned}
\end{equation}
Thus, \eqref{eq:F_k-def} and \eqref{eq:Phi-taylor} yield
\begin{equation}
\label{eq:F-cross-est-1}
\Big|F_k(\vec a, \vec 0) + \int_{-\infty}^{\infty}\partial_x H_k(U''(H_k) - 1)((1 + H_{k+1}) - (1 - H_{k-1}))\ud x \Big| \lesssim y_{\min}\eee^{-2y_{\min}}.
\end{equation}
Applying again Proposition~\ref{prop:prop-H}, we have
\begin{equation}
|1 + H_{k+1} - \kappa \eee^{x - a_{k+1}}| \lesssim
\begin{cases}
\eee^{-2(a_{k+1} - x)}\qquad&\text{if }x \leq a_{k+1} \\
\eee^{x - a_{k+1}}\qquad&\text{if }x \geq a_{k+1},
\end{cases}
\end{equation}
thus, taking into account that $|\partial_x H_k(x)| + |U''(H_k(x)) - 1| \lesssim \eee^{-|x - a_k|}$,
\begin{equation}
\begin{aligned}
&\bigg|\int_{-\infty}^\infty \partial_x H_k(U''(H_k) - 1)(1+H_{k+1})\ud x - 
\kappa\int_{-\infty}^\infty \partial_x H_k(U''(H_k) - 1)\eee^{x - a_{k+1}}\ud x\bigg| \lesssim \\
&\qquad\lesssim \int_{-\infty}^{a_{k+1}} \eee^{-2|x - a_k|}\eee^{-2(a_{k+1} - x)}\ud x + \int_{a_{k+1}}^\infty \eee^{-2(x-a_k)}\eee^{x - a_{k+1}}\ud x.
\end{aligned}
\end{equation}
The first integral on the right hand side is $\lesssim y_{\min}\eee^{-2y_{\min}}$ by Lemma~\ref{lem:exp-cross-term},
and the second equals $\eee^{-2(a_{k+1} - a_k)}$.
Hence, \eqref{eq:reduced-force} yields
\begin{equation}
\bigg|\int_{-\infty}^\infty \partial_x H_k(U''(H_k) - 1)(1+H_{k+1})\ud x + 2\kappa^2 \eee^{-(a_{k+1} - a_k)}\bigg| \lesssim y_{\min}\eee^{-2y_{\min}}.
\end{equation}
Similarly, 
\begin{equation}
\bigg|\int_{-\infty}^\infty \partial_x H_k(U''(H_k) - 1)(1-H_{k-1})\ud x + 2\kappa^2 \eee^{-(a_{k} - a_{k-1})}\bigg| \lesssim y_{\min}\eee^{-2y_{\min}}.
\end{equation}
These two bounds, together with \eqref{eq:F-cross-est-1}, yield \eqref{eq:Fz}.
\end{proof}
\begin{remark}
We see from Lemma~\ref{lem:Fz} that the interaction between consecutive kink and antikink is attractive.
\end{remark}

We turn to computing the first nontrivial term in the expansion of the energy of a multi-kink configuration. 
\begin{lemma}
\label{lem:interactions}  
There exists $C > 0, \eta_0>0$ such that for every $(\vec a, \vec v)$ with $\rho(\vec a, \vec v) \le \eta_0$
\begin{align}
\Big| E( \bs H(\vec a, \vec v)) - Mn - \frac{1}{2} M \sum_{k =1}^n v_k^2  +2\kappa^2 \sum_{k=1}^{n-1} \eee^{-y_k} \Big|  
  \lesssim y_{\min}\eee^{- 2y_{\min}} + y_{\min}\abs{v}^4.
\label{eq:EpHX-a}
\end{align}
\end{lemma}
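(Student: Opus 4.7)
The plan is to split $E(\bs H(\vec a, \vec v)) = E_{\mathrm k}(\dot H) + E_{\mathrm p}(H)$, first reduce to the case $\vec v = \vec 0$, then compute $E_{\mathrm p}(H(\vec a))$ by a careful localized expansion on each subinterval.

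\textbf{Step 1 (reduction to $\vec v = \vec 0$).} I will show
\[
E(\bs H(\vec a, \vec v)) = E_{\mathrm p}(H(\vec a)) + \tfrac{M}{2}\sum_{k=1}^n v_k^2 + O\bigl(y_{\min}|v|^4 + y_{\min} e^{-2y_{\min}}\bigr).
\]
The diagonal contributions are explicit: $\int (\dot H_k)^2\,\ud x = v_k^2 \gamma_{v_k} M$ by change of variables, while the diagonal gradient-plus-potential of a single kink equals $\tfrac{M}{2}(\gamma_{v_k} + \gamma_{v_k}^{-1}) = M + O(v_k^4)$. The cross terms among distinct kinks (kinetic, gradient, and potential) are each bounded by a product of two exponentials overlapping at distance $\ge y_{\min}$, giving a factor $\lesssim (1+y_{\min}) e^{-y_{\min}}$ via Proposition~\ref{prop:prop-H} and Lemma~\ref{lem:exp-cross-term}; moreover each differs from its $\vec v = \vec 0$ value by at most $O(|v|^2)$ since $\gamma_{v_k} - 1 = O(v_k^2)$. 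By AM-GM, $|v|^2 (1+y_{\min}) e^{-y_{\min}} \lesssim y_{\min}(|v|^4 + e^{-2y_{\min}})$, absorbed into the error.

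\textbf{Step 2 (localized expansion).} Partition $\R = \bigsqcup_{k=1}^n I_k$ with $I_k \coloneqq [\xi_{k-1}, \xi_k]$, $\xi_k \coloneqq (a_k+a_{k+1})/2$ for $1\le k\le n-1$, $\xi_0 = -\infty$, $\xi_n = \infty$. On $I_k$ write $H(\vec a) = (-1)^k H_k + R_k$ where $H_k = H(x-a_k)$ and
\[
R_k \coloneqq \sum_{j<k}(-1)^j(H_j-1) + \sum_{j>k}(-1)^j(H_j+1)
\]
is dominated by the two neighboring tails $H_{k\pm 1}$. Using the evenness of $U$ (so $U((-1)^k H_k + R_k) = U(H_k + (-1)^k R_k)$), Taylor expansion, and the kink equation $\partial_x^2 H_k = U'(H_k)$, the energy density decomposes as
\[
\tfrac{1}{2}(\partial_x H(\vec a))^2 + U(H(\vec a)) = \bigl[\tfrac{1}{2}(\partial_x H_k)^2 + U(H_k)\bigr] + (-1)^k \partial_x\bigl(\partial_x H_k\cdot R_k\bigr) + \tfrac{1}{2}\bigl[(\partial_x R_k)^2 + U''(H_k) R_k^2\bigr] + O(R_k^3).
\]

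\textbf{Step 3 (summing the three pieces).} Integrating over $I_k$ and summing:

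\emph{Diagonal.} Using the Bogomolny identity $(\partial_x H_k)^2 = 2U(H_k)$, the $k$-th integral equals $M - \int_{\R\setminus I_k}(\partial_x H_k)^2\,\ud x$. By Proposition~\ref{prop:prop-H} each tail contributes $\tfrac{\kappa^2}{2}e^{-y_{k-1}}$ or $\tfrac{\kappa^2}{2}e^{-y_k}$ plus admissible error, giving a total of $nM - \kappa^2 \sum_{k=1}^{n-1} e^{-y_k}$.

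\emph{Boundary interfaces.} By the fundamental theorem of calculus, the contribution at each interior endpoint $\xi_k$ reorganizes as $(-1)^k[\partial_x H_k(\xi_k) R_k(\xi_k) + \partial_x H_{k+1}(\xi_k) R_{k+1}(\xi_k)]$. Proposition~\ref{prop:prop-H} gives $\partial_x H_k(\xi_k) \sim \kappa e^{-y_k/2}$ and $R_k(\xi_k) \sim -(-1)^k \kappa e^{-y_k/2}$ (the $H_{k+1}$-tail dominates), so each interface contributes $-2\kappa^2 e^{-y_k}$; total $-2\kappa^2 \sum_{k=1}^{n-1} e^{-y_k}$.

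\emph{Quadratic.} On $I_k$ the leading expressions are $R_k \sim -(-1)^k[(H_{k-1}-1) + (H_{k+1}+1)]$ and $\partial_x R_k \sim -(-1)^k[\partial_x H_{k-1} + \partial_x H_{k+1}]$. The key algebraic cancellation
\[
(\partial_x R_k)^2 + R_k^2 \sim 2\kappa^2\bigl[e^{-2(x-a_{k-1})} + e^{-2(a_{k+1}-x)}\bigr]
\]
(the cross terms $\pm 2 e^{-(y_{k-1}+y_k)}$ cancel), together with $U''(H_k) \sim 1$ in the regions where $R_k$ is appreciable, integrates to a total of $\kappa^2 \sum_{k=1}^{n-1} e^{-y_k}$.

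Summing the three pieces yields $E_{\mathrm p}(H(\vec a)) = nM + (-\kappa^2 - 2\kappa^2 + \kappa^2)\sum e^{-y_k} + O(y_{\min}e^{-2y_{\min}}) = nM - 2\kappa^2 \sum e^{-y_k} + O(y_{\min}e^{-2y_{\min}})$, which combined with Step 1 proves the lemma.

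\textbf{Main obstacle.} The chief technical difficulty is sign-tracking the $(-1)^k$ factors through the three pieces so that the $\kappa^2$-coefficients combine exactly to $-2\kappa^2$ (rather than $-\kappa^2$ or $-3\kappa^2$). A secondary challenge is controlling the remainder at the announced level $O(y_{\min} e^{-2y_{\min}})$ rather than the naive $O(e^{-3y_{\min}/2})$: the $R_k^3$ Taylor remainder and the subleading $O(e^{-2y})$ corrections from Proposition~\ref{prop:prop-H} must cancel among the three contributions, which forces the use of the full three-term expansion $H(y) = 1 - \kappa e^{-y} + \tfrac{\kappa^2 U'''(1)}{6} e^{-2y} + O(e^{-3y})$ (and the analogous expansion for $\partial_x H$) together with careful cross-cancellation between the interface and quadratic pieces.
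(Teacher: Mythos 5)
Your plan is a genuinely different route from the paper's. The paper first shows $E(\bs H(\vec a,\vec v))=M\sum_k\gamma_{v_k}+O(y_{\min}e^{-y_{\min}})$ and then recovers the interaction term by integrating $\dd s E(\bs H(\vec a(s),\vec v))$ along the separating path $\vec a(s)=(a_1+s,2s+a_2,\ldots)$ from $s=0$ to $\infty$, using the force expansion of Lemma~\ref{lem:Fz}; you instead expand the energy density directly on a partition into cells. Your leading-order bookkeeping is correct: the diagonal, interface and quadratic pieces do contribute $-\kappa^2$, $-2\kappa^2$ and $+\kappa^2$ times $\sum_k e^{-y_k}$, summing to $-2\kappa^2$.

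The gap is the error bound: as written, Steps 2--3 only give an error of order $e^{-3y_{\min}/2}$, not the claimed $y_{\min}e^{-2y_{\min}}$, because several terms of size exactly $c\,U'''(1)\kappa^3e^{-3y_k/2}$ with $c\neq 0$ are silently dropped. Concretely, at each interface (I checked the two-kink case; the $(-1)^k$ structure makes it generic): replacing $U''(H_k)$ by $1$ in the quadratic piece discards $\tfrac12\int(U''(H_k)-1)R_k^2$, where $U''(H_k)-1\sim-U'''(1)\kappa e^{-|x-a_k|}$ meets $R_k^2\sim\kappa^2e^{-2(a_{k+1}-x)}$ and integrates to $-\tfrac12 U'''(1)\kappa^3e^{-3y_k/2}$; the ``$O(R_k^3)$'' Taylor remainder contributes $-\tfrac1{18}U'''(1)\kappa^3e^{-3y_k/2}$, not $O(e^{-2y_{\min}})$; and the $e^{-2x}$ corrections of Proposition~\ref{prop:prop-H} feed $+\tfrac29$, $+\tfrac12$ and $-\tfrac16$ times $U'''(1)\kappa^3e^{-3y_k/2}$ into the diagonal tail, the interface value $\partial_xH_k(\xi_k)R_k(\xi_k)$, and the quadratic main term respectively. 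These five coefficients do sum to zero ($\tfrac29+\tfrac12-\tfrac16-\tfrac12-\tfrac1{18}=0$), so your approach is salvageable, but exhibiting this cancellation \emph{is} the error estimate; your ``main obstacle'' paragraph names the problem without solving it, and the main text of Steps 2--3 asserts the $O(y_{\min}e^{-2y_{\min}})$ bound where it does not yet hold. This is exactly what the paper's force-integration trick avoids: in $F_k$ every error term in the Taylor expansion \eqref{eq:Phi-taylor} is paired against $\partial_xH_k\lesssim e^{-|x-a_k|}$, producing integrands like $e^{-2|x-a_k|}e^{-2(a_{k+1}-x)}$ whose integrals are already $O(y_{\min}e^{-2y_{\min}})$, so no $e^{-3y/2}$ terms ever arise.
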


\begin{proof}
First we compute the kinetic energy. 
\EQ{
E_{\uk}( \dot { H}(\vec a, \vec v)) &= E_{\uk}\big( \sum_{k =1}^n (-1)^k (-v_k \gamma_{v_k} \p_x H( \gamma_{v_k}( x - a_k)) )\big) \\
&= \frac{1}{2} \int_{-\infty}^\infty \Big( \sum_{k =1}^n (-1)^k (-v_k \gamma_{v_k} \p_x H( \gamma_{v_k}( x - a_k)) )\Big)^2 \, \ud x.
}
By Lemma~\ref{lem:exp-cross-term}, the cross terms (when $j \neq k$) satisfy 
\EQ{
 \Big| v_j v_k  \gamma_{v_j} \gamma_{v_k} \int_{-\infty}^\infty  \p_x H( \gamma_{v_k}( x - a_k))  \p_x H( \gamma_{v_j}( x - a_j)) \, \ud x \Big|  \lesssim |v|^2 y_{\min} e^{- \gamma_{v_{\min}} y_{\min}} ,
}
which is negligible since $\gamma_{v_{\min}} \ge 1$. The terms when $j =k$ then satisfy 
\EQ{
\frac{1}{2} v_k^2 \gamma_{v_k}^2 \int_{-\infty}^\infty  \Big(\p_x H( \gamma_{v_k}( x - a_k))\Big)^2 \, \ud x =\frac{1}{2} v_k^2 \gamma_{v_k}   M 
}
from which we see that the kinetic energy satisfies 
\EQ{
\Big| E_{\uk}( \dot { H}(\vec a, \vec v)) - \frac{1}{2} M \sum_{k=1}^n v_k^2 \gamma_{v_k}| \lesssim |v|^2 y_{\min} e^{- \gamma_{v_{\min}} y_{\min}} .
}
From \eqref{eq:U-pyth} and Lemma~\ref{lem:exp-cross-term}, we have
\begin{equation}
\int_{-\infty}^\infty \Big| U(H(\vec a, \vec v)) - \sum_{k=1}^n U(H_k)\Big|\ud x \lesssim y_{\min}\eee^{- \gamma_{v_{\min}} y_{\min}}.
\end{equation}
Invoking again Lemma~\ref{lem:exp-cross-term}, we also have
\begin{multline}
\label{eq:EpHX-a-1}
\int_{-\infty}^\infty \Big| \frac 12(\partial_x H(\vec a, \vec v))^2 - \frac 12\sum_{k=1}^n (\partial_x H( a_k, v_k))^2\Big|\ud x \\\leq \int_{-\infty}^\infty\sum_{1 \leq i < j \leq n}|\partial_x H_i||\partial_x H_j|\ud x \lesssim y_{\min}\eee^{- \gamma_{v_{\min}} y_{\min}}.
\end{multline}
Combining these two bounds, with the identity 
\EQ{
E_{\up}( H( a,v)) =  \gamma_v M - \frac{1}{2} v^2 \gamma_v M ,
}
we obtain
\begin{equation}
\big|E_{\up}(H(\vec a, \vec v)) -  M \sum_{k =1}^n \gamma_{v_k} + \frac{1}{2} M \sum_{k =1}^n v_k^2 \gamma_k\big| \lesssim y_{\min}\eee^{- \gamma_{v_{\min}}y_{\min}}.
\end{equation}
Combining with the kinetic energy arrive at the estimate
\EQ{
\Big| E( \bs H(\vec a, \vec v)) - M \sum_{k =1}^n \gamma_{v_k} \Big|   \lesssim y_{\min}\eee^{- \gamma_{v_{\min}}y_{\min}} .
}
For all $s \geq 0$, set $\vec a(s) \coloneqq (a_1 + s, a_2 + 2s, \ldots, a_n + ns)$.
Applying the last estimate with $\vec a(s)$ instead of $\vec a$, we get
\begin{equation}
\lim_{s \to \infty}E(\bs H(\vec a(s), \vec v)) =  M \sum_{k =1}^n \gamma_{v_k}.
\end{equation}
By the Chain Rule, \eqref{eq:F_k-def}, \eqref{eq:Fz}, and noting the orthogonality $0=\la \bs \al(a, v), \, \p_a \bs H(a, v)\ra$ we have
\EQ{
\dd s E(\bs H(\vec a(s), \vec v)) &= \la \vD E( \bs H(\vec a(s), \vec v)) , \, \sum_{k =1}^n (-1)^k k \p_{a_k} \bs H(a_k, v_k) \ra  \\
& = \sum_{k =1}^n (-1)^k k \la \bs f( \vec a(s), \vec v) , \,  \p_{a} \bs H(a_k + ks, v_k) \ra  \\
&\quad - \sum_{k =1}^n  \sum_{j =1}^n (-1)^k (-1)^j k v_j \la \bs \al( a_j + j s), \,  \p_{a} \bs H( a_k + k s, v_k) \ra  \\
& =-  \sum_{k =1}^n k F_k( \vec a(s), \vec v) + O\big( | \vec v|^2 (y_{\min}+s)\eee^{-(y_{\min}+s)}\big) \\
& = 2\kappa^2 \sum_{k=1}^{n-1} \eee^{-y_k-s} + O\big( | \vec v|^2 (y_{\min}+s)\eee^{-(y_{\min}+s)}\big) + O\big((y_{\min}+s)\eee^{-2(y_{\min}+s)}\big).
}
An integration in $s$ yields 
\EQ{ 
\Big| E( \bs H(\vec a, \vec v)) - M \sum_{k =1}^n \gamma_{v_k}  +2\kappa^2 \sum_{k=1}^{n-1} \eee^{-y_k} \Big|  
  \lesssim y_{\min}\eee^{- 2y_{\min}} + y_{\min}\abs{v}^4.
}
Lastly, noting the small $v$ estimate $\gamma_v = 1 + \frac{1}{2} v^2 + O( v^4)$, we obtain 
\EQ{
\Big| E( \bs H(\vec a, \vec v)) - Mn - \frac{1}{2} M \sum_{k =1}^n v_k^2  +2\kappa^2 \sum_{k=1}^{n-1} \eee^{-y_k} \Big|  
  \lesssim y_{\min}\eee^{- 2y_{\min}} + y_{\min}\abs{v}^4,
}
as claimed. 
\end{proof}

\subsection{Taylor expansions}
We recall a few estimates based on the Taylor expansion of the nonlinearity from~\cite{JL9}. 
\begin{lemma} There exist $C, \eta > 0$ such that for all $-1-\eta \leq w, \sh w \leq 1+\eta$ and $|g| + |\sh g| \leq \eta$
the following bounds hold:
\begin{gather}
\label{eq:U-taylor}
\big|U(w + g) - U(w) - U'(w)g\big| \leq Cg^2, \\
\label{eq:U-taylor-2}
\big|U(w + g) - U(w) - U'(w)g - U''(w)g^2/2\big| \leq Cg^3, \\
\label{eq:U'-taylor}
\big|U'(w + g) - U'(w) - U''(w)g\big| \leq Cg^2, \\
\label{eq:U''-taylor}
\big|U''(w + g) - U''(w) - U'''(w)g\big| \leq Cg^2,
\end{gather}
\begin{equation}
\label{eq:U'-taylor-diff}
\begin{aligned}
\big|\big(U'(\sh w + \sh g) - U'(\sh w) - U''(\sh w)\sh g\big) -
\big(U'(w + g) - U'(w) - U''(w)g\big)\big| \\
\leq C\big(|\sh g| + |g|\big)\big(\big|\sh g - g\big| + |\sh w - w|\big(|\sh g| + |g|)\big).
\end{aligned}
\end{equation}
\end{lemma}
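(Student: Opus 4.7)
The plan is to prove all five bounds via Taylor's theorem with integral remainder. Under the stated hypotheses, $w + sg$ and $\sh w + s\sh g$ lie in the compact interval $[-1-2\eta, 1+2\eta]$ for all $s \in [0, 1]$, so since $U \in C^\infty(\bR)$, each of $U, U', U'', U''', U^{(4)}$ is uniformly bounded on the relevant domain. The first four bounds are then routine: for example,
\begin{equation*}
U(w+g) - U(w) - U'(w)g = \int_0^1 (1-s)\,U''(w+sg)\,g^2\,\ud s
\end{equation*}
immediately yields \eqref{eq:U-taylor}, and \eqref{eq:U-taylor-2}, \eqref{eq:U'-taylor}, \eqref{eq:U''-taylor} follow from the analogous integral-remainder identities combined with the pointwise bound on the corresponding derivative of $U$.

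The substance of the lemma lies in \eqref{eq:U'-taylor-diff}. I would introduce the shorthand
\begin{equation*}
F(w, g) \coloneqq U'(w+g) - U'(w) - U''(w)\,g
\end{equation*}
and split
\begin{equation*}
F(\sh w, \sh g) - F(w, g) = \bigl[F(\sh w, \sh g) - F(\sh w, g)\bigr] + \bigl[F(\sh w, g) - F(w, g)\bigr].
\end{equation*}
Direct computation gives $\partial_g F(w, g) = U''(w+g) - U''(w)$, which is bounded by $C|g|$ on the compact region. Integrating from $g$ to $\sh g$,
\begin{equation*}
\bigl|F(\sh w, \sh g) - F(\sh w, g)\bigr| \lesssim (|g| + |\sh g|)\,|\sh g - g|,
\end{equation*}
which accounts for the first term on the right-hand side of \eqref{eq:U'-taylor-diff}.

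For the second bracket, one has $\partial_w F(w, g) = U''(w+g) - U''(w) - U'''(w)\,g$, and this is \emph{exactly} the quantity already controlled by \eqref{eq:U''-taylor}, giving $|\partial_w F(v, g)| \leq C g^2 \leq C(|g| + |\sh g|)^2$. Integrating in $v$ from $w$ to $\sh w$ yields
\begin{equation*}
\bigl|F(\sh w, g) - F(w, g)\bigr| \lesssim |\sh w - w|\,(|g| + |\sh g|)^2,
\end{equation*}
and adding the two estimates produces \eqref{eq:U'-taylor-diff}. I do not expect any serious obstacle: all five bounds are standard multivariable Taylor-remainder estimates on a compact domain, and the only non-routine observation is that $\partial_w F$ coincides with the Taylor remainder already controlled by \eqref{eq:U''-taylor}, which lets \eqref{eq:U'-taylor-diff} be derived without recomputation.
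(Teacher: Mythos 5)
Your proof is correct. For the first four bounds you and the paper do the same thing (the paper simply declares them clear). For \eqref{eq:U'-taylor-diff} your route is genuinely a bit different: the paper writes both remainders in the integral form $\int_0^g s\,U'''(w+g-s)\,\ud s$ and compares the two integrals in two steps (first replacing the integrand using the local Lipschitz continuity of $U'''$, which produces a term $(\sh g)^2(|\sh w - w| + |\sh g - g|)$, then changing the limit of integration from $\sh g$ to $g$, which requires the small case analysis on the signs of $g$ and $\sh g$ to get $\frac 12(|\sh g|+|g|)|\sh g - g|$). You instead split the increment of $F(w,g) \coloneqq U'(w+g)-U'(w)-U''(w)g$ into a $g$-increment at fixed $\sh w$ and a $w$-increment at fixed $g$, and the key observation — that $\partial_w F(w,g) = U''(w+g)-U''(w)-U'''(w)g$ is exactly the remainder already controlled by \eqref{eq:U''-taylor} — lets you bound the second piece by $|\sh w - w|\,g^2$ with no further computation. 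The two decompositions produce slightly different intermediate terms but both land inside the stated right-hand side; yours is arguably cleaner because it recycles \eqref{eq:U''-taylor} and avoids the sign case analysis (which in your version is absorbed into the elementary estimate $\bigl|\int_g^{\sh g}|s|\,\ud s\bigr| \le (|g|+|\sh g|)|\sh g-g|$, still needed for the first bracket but handled in one line). One cosmetic point: $\partial_g F(w,s) = U''(w+s)-U''(w)$ is bounded by $C|s|$ at the integration variable $s$, not by $C|g|$; your stated conclusion is nevertheless the right one.
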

\begin{proof}
Bounds \eqref{eq:U-taylor}, \eqref{eq:U-taylor-2}, \eqref{eq:U'-taylor} and \eqref{eq:U''-taylor} are clear,
so we are left with \eqref{eq:U'-taylor-diff}.
The Taylor formula yields
\begin{align}
U'(w + g) - U'(w) - U''(w)g &= \int_0^g s U'''(w+g-s)\ud s, \\
U'(\sh w + \sh g) - U'(\sh w) - U''(\sh w)\sh g &= \int_0^{\sh g} s U'''(\sh w+\sh g-s)\ud s.
\end{align}
We observe that, since $U'''$ is locally Lipschitz,
\begin{equation}
\begin{aligned}
\bigg|\int_0^{\sh g} s U'''(\sh w+\sh g-s)\ud s - \int_0^{\sh g} s U'''(w+g-s)\ud s\bigg| &\lesssim \bigg|\int_0^{\sh g} s|\sh w + \sh g - w - g|\ud s\bigg| \\
&\lesssim (\sh g)^2(|\sh w - w| + |\sh g - g|).
\end{aligned}
\end{equation}
We also have
\begin{equation}
\bigg|\int_0^{\sh g} s U'''(w+g-s)\ud s - \int_0^{g} s U'''(w+g-s)\ud s\bigg| = \bigg|\int_g^{\sh g}s U'''(w + g - s)\ud s\bigg|
\lesssim \bigg|\int_g^{\sh g}|s|\ud s\bigg|.
\end{equation}
If $g$ and $\sh g$ have the same sign, then the last integral equals $\frac 12 |(\sh g)^2 - g^2| = \frac 12 (|\sh g| + |g|)|\sh g - g|$.
If $g$ and $\sh g$ have opposite signs, then we obtain $\frac 12 ((\sh g)^2 + g^2) \leq \frac 12 (|\sh g| + |g|)^2 = \frac 12 (|\sh g| + |g|)|\sh g - g|$. This proves \eqref{eq:U'-taylor-diff}.
\end{proof}

\subsection{Schr\"odinger operator with multiple potentials}
\label{ssec:schrod}
We define
\begin{equation}
L \coloneqq \vD^2 E_p(H) = -\partial_x^2 + U''(H) = -\partial_x^2 + 1 + (U''(H) - 1).
\end{equation}
Differentiating $\partial_x^2 H(x - a) = U'(H(x-a))$ with respect to $a$ we obtain
\begin{equation}
\label{eq:Lc-ker}
\big({-}\partial_x^2 + U''(H(\cdot - a))\big)\partial_x H(\cdot - a) = 0,
\end{equation}
in particular for $a = 0$ we have $L(\partial_x H) = 0$.
Since $\partial_x H$ is a positive function, $0$ is a simple eigenvalue of $L$,
which leads to the following coercivity estimate
(see \cite[Lemma 2.3]{JKL1} for the exact same formulation, as well as \cite{HPW82} for a similar result).
\begin{lemma}
There exist $\nu, C > 0$ such that for all $h \in H^1(\bR)$ the following inequality holds:
\begin{equation}
\label{eq:vLv-coer}
\la h, Lh\ra \geq \nu \|h\|_{H^1}^2 -C \la \partial_x H, h\ra^2.
\end{equation}\vspace{-1cm}\qedno
\end{lemma}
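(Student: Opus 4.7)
The proof proceeds in three steps: nonnegativity with identification of the kernel, a spectral gap on the orthogonal complement, and an upgrade from $L^2$ to $H^1$ coercivity.

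\textbf{Step 1: nonnegativity and kernel.} Since $\partial_x H$ is strictly positive and satisfies $L(\partial_x H) = 0$ by \eqref{eq:Lc-ker}, the standard ground-state factorization applies. Writing any $h \in H^1(\bR)$ as $h = (\partial_x H) g$ with $g \in H^1_{\mathrm{loc}}$, one computes directly
\begin{equation}
L h = -\frac{1}{\partial_x H}\, \partial_x\!\big((\partial_x H)^2 \,\partial_x g\big),
\end{equation}
and integration by parts gives $\la h, L h\ra = \int (\partial_x H)^2 (\partial_x g)^2 \ud x \geq 0$, with equality if and only if $g$ is constant, i.e.\ $h \in \operatorname{span}(\partial_x H)$. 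Hence $L \geq 0$ with kernel exactly $\operatorname{span}(\partial_x H)$.

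\textbf{Step 2: spectral gap on the orthogonal complement.} By Proposition~\ref{prop:prop-H}, the potential $U''(H(x)) - 1 \to 0$ exponentially as $|x|\to\infty$, so it is a relatively compact perturbation of $-\partial_x^2 + 1$. Therefore the essential spectrum of $L$ equals $[1, \infty)$, and any spectrum in $[0,1)$ consists of finitely many eigenvalues of finite multiplicity. Combined with Step~1 and the fact that $\partial_x H > 0$ is (by oscillation theory) the ground state, the eigenvalue $0$ is simple and isolated, so there exists $\nu_0 > 0$ such that
\begin{equation}
\la h, Lh\ra \geq \nu_0 \|h\|_{L^2}^2 \quad \text{whenever } \la h, \partial_x H\ra = 0.
\end{equation}
If one prefers to avoid spectral theory, the same bound can be proved by contradiction: a minimizing sequence $h_m$ with $\|h_m\|_{L^2} = 1$, $\la h_m, \partial_x H\ra = 0$, $\la h_m, L h_m\ra \to 0$ is bounded in $H^1$; extracting a weak limit $h_\infty$ and using the exponential decay of $U''(H)-1$ to rule out mass escaping to infinity forces $h_\infty$ to realize the infimum, hence $L h_\infty = 0$ and $h_\infty \in \operatorname{span}(\partial_x H)$, contradicting the orthogonality condition together with $\|h_\infty\|_{L^2} = 1$.

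\textbf{Step 3: upgrade to $H^1$ and the general estimate.} Since $U''(H)$ is bounded, expanding the quadratic form yields
\begin{equation}
\la h, Lh\ra = \|\partial_x h\|_{L^2}^2 + \|h\|_{L^2}^2 + \int (U''(H)-1) h^2 \ud x \geq \|h\|_{H^1}^2 - C_1 \|h\|_{L^2}^2,
\end{equation}
for some $C_1 > 0$. Combining with Step~2 (applied to the projection $h^\perp \coloneqq h - c\,\partial_x H$ with $c = \la h, \partial_x H\ra/\|\partial_x H\|_{L^2}^2$), we obtain $\la h^\perp, L h^\perp\ra \geq \nu \|h^\perp\|_{H^1}^2$ for some $\nu>0$. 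Since $L(\partial_x H) = 0$ gives $\la h, Lh\ra = \la h^\perp, L h^\perp\ra$, and $\|h\|_{H^1}^2 \leq 2\|h^\perp\|_{H^1}^2 + 2 c^2 \|\partial_x H\|_{H^1}^2$, the estimate \eqref{eq:vLv-coer} follows after absorbing constants.

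The main (and only genuine) obstacle is Step~2: establishing the spectral gap. In the $\phi^4$ or sine-Gordon case the second eigenvalue is known explicitly, but for a general potential $U$ satisfying the standing hypotheses the argument must be carried out abstractly via Weyl's theorem on the essential spectrum together with simplicity of the ground state. Steps~1 and~3 are routine algebraic manipulations.
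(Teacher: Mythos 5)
Your proof is correct. Note that the paper itself gives no proof of this lemma (it is one of the statements terminated by $\boxslash$, with a pointer to \cite[Lemma 2.3]{JKL1} and \cite{HPW82}), so there is no in-paper argument to compare against; your three steps (ground-state factorization to identify $\ker L = \operatorname{span}(\partial_x H)$, Weyl's theorem plus simplicity of the positive ground state to get an $L^2$ spectral gap on $\{\partial_x H\}^\perp$, and the standard interpolation $\la h,Lh\ra \geq \theta(\|h\|_{H^1}^2 - C_1\|h\|_{L^2}^2) + (1-\theta)\nu_0\|h\|_{L^2}^2$ to upgrade to $H^1$) constitute exactly the standard route to such coercivity estimates. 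Two small points you should make explicit if writing this out in full: the identity $\la h, Lh\ra = \int (\partial_x H)^2(\partial_x g)^2\,\ud x$ needs a density argument (prove it for compactly supported smooth $h$ and extend by continuity of the quadratic form on $H^1$, since $g = h/\partial_x H$ may grow exponentially and the boundary terms at infinity are not obviously zero), and the spectral gap does not actually require finiteness of the discrete spectrum below $1$ — only that it cannot accumulate at $0$, which follows since eigenvalues below the essential spectrum can accumulate only at its bottom.
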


For any increasing $n$-tuple $\vec a$, and $\vec v \in B_{\R^n}(0, 1)$, 
we denote $L(\vec a, \vec v) \coloneqq \vD^2 E_p(H(\vec a, \vec v))$,
which is the Schr\"odinger operator on $L^2(\bR)$ given by
\begin{equation}
\label{eq:schrod-op}
(L(\vec a, \vec v)h)(x) \coloneqq {-}\partial_x^2 h(x) + U''(H(\vec a, \vec v; x))h(x).
\end{equation}
\begin{lemma}
\label{lem:D2H}
There exist $y_0, \nu, C > 0$ such that the following holds.
Let $\vec a \in \bR^n$ satisfy $a_{k+1} - a_k \geq y_0$ for all $k \in \{1, \ldots, n-1\}$
and let $h \in H^1(\bR)$. Then
\begin{equation}
\label{eq:D2H-coer}
\begin{aligned}
\la h, L(\vec a, \vec 0) h\ra \geq \nu \|h\|_{H^1}^2 -C\sum_{k=1}^n \la\partial_x H_k, h\ra^2.
\end{aligned}
\end{equation}
\end{lemma}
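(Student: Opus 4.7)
The plan is to prove the estimate by a standard IMS-type localization, reducing it to the single-kink coercivity~\eqref{eq:vLv-coer}. Choose a smooth partition of unity $\{\chi_k\}_{k=1}^n$ subordinate to the intervals $I_k \coloneqq \bigl(\tfrac{a_{k-1}+a_k}{2}, \tfrac{a_k+a_{k+1}}{2}\bigr)$ (with $a_0 \coloneqq -\infty$, $a_{n+1} \coloneqq +\infty$) satisfying $\sum_{k=1}^n \chi_k(x)^2 = 1$, $\chi_k(a_k) = 1$, and $|\chi_k'(x)| \lesssim y_\tx{min}^{-1}$. The IMS localization formula gives
\begin{equation*}
\la h, L(\vec a, \vec 0) h \ra \;=\; \sum_{k=1}^n \la \chi_k h,\, L(\vec a, \vec 0)(\chi_k h) \ra \;-\; \sum_{k=1}^n \|h\,\partial_x\chi_k\|_{L^2}^2.
\end{equation*}
The second sum is bounded by $C y_\tx{min}^{-2}\|h\|_{L^2}^2$, which will be absorbed by choosing $y_0$ large.

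Next I would replace $U''(H(\vec a, \vec 0))$ by $U''(H_k)$ on the support of $\chi_k$. Since $U''$ is locally Lipschitz, $U''$ is even, and on $I_k$ one has $(-1)^k H(\vec a, \vec 0) = H_k + O\!\left( e^{-y_\tx{min}/2}\right)$ pointwise by Proposition~\ref{prop:prop-H} (the nearest neighbouring kinks are at distance at least $y_\tx{min}/2$ from any point of $I_k$), we obtain
\begin{equation*}
\bigl| \la \chi_k h, L(\vec a, \vec 0)(\chi_k h)\ra - \la \chi_k h, L_k (\chi_k h)\ra \bigr| \;\lesssim\; e^{-y_\tx{min}/2} \|\chi_k h\|_{L^2}^2,
\end{equation*}
where $L_k \coloneqq -\partial_x^2 + U''(H_k)$ is the single-kink linearized operator (whose kernel is spanned by $\partial_x H_k$, by the translation invariance~\eqref{eq:Lc-ker}). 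Applying the single-kink coercivity~\eqref{eq:vLv-coer} (translated by $a_k$) to each $\chi_k h$ yields
\begin{equation*}
\la \chi_k h, L_k(\chi_k h)\ra \;\geq\; \nu \|\chi_k h\|_{H^1}^2 - C\la \partial_x H_k, \chi_k h\ra^2.
\end{equation*}

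To conclude, I would sum over $k$ and deal with the remaining two small errors. First, $\sum_k \|\chi_k h\|_{H^1}^2 \geq \|h\|_{H^1}^2 - C y_\tx{min}^{-2}\|h\|_{L^2}^2$ by another application of IMS, which for $y_0$ large is at least $\tfrac{1}{2}\|h\|_{H^1}^2$ after possibly shrinking $\nu$. Second, writing $\la \partial_x H_k, \chi_k h\ra = \la \partial_x H_k, h\ra - \la(1-\chi_k)\partial_x H_k, h\ra$ and using that $\|(1-\chi_k)\partial_x H_k\|_{L^2} \lesssim e^{-y_\tx{min}/2}$ by Proposition~\ref{prop:prop-H}, we get
\begin{equation*}
\la \partial_x H_k, \chi_k h\ra^2 \;\leq\; 2\la \partial_x H_k, h\ra^2 + C e^{-y_\tx{min}}\|h\|_{L^2}^2.
\end{equation*}
Combining everything and absorbing the $O(e^{-y_\tx{min}/2} + y_\tx{min}^{-2})$ error terms into the $\|h\|_{H^1}^2$ coefficient by taking $y_0$ large (depending on $n$ and $\nu$) produces the desired inequality~\eqref{eq:D2H-coer} with a slightly smaller $\nu$.

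The only delicate point is the simultaneous control of both the localization error from $\partial_x \chi_k$ and the potential-replacement error: both are small as $y_\tx{min}\to\infty$, so the argument reduces to verifying that the constants in the small-error estimates depend only on $n$, which is automatic since the partition of unity can be chosen in a translation-invariant way depending only on the differences $a_{k+1}-a_k$. No other conceptual obstacle arises; the scheme is robust and follows the pattern used for single-kink problems in the cited references.
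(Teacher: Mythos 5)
Your proof is correct and follows essentially the same strategy as the paper: localize with a partition of unity subordinate to neighborhoods of the kinks, compare with the single-kink operator, and apply the one-kink coercivity \eqref{eq:vLv-coer}. The only difference is cosmetic: you normalize $\sum_k\chi_k^2=1$ and use the IMS identity, which absorbs the cross terms into the $\|h\,\partial_x\chi_k\|_{L^2}^2$ error, whereas the paper takes $\sum_k\chi_k=1$ and handles the cross terms $\la h_i,L h_j\ra$ by a separate positivity argument for $U''(H(\vec a))$ on the overlaps.
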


\begin{proposition} 
\label{prop:coer} 
There exist $\nu, C, \eta_0 > 0$ such that for all $(\vec a, \vec v)$ satisfying
$\rho(\vec a, \vec v) \leq \eta_0$ and $\bs h_0 \in \cE$ the following inequality holds:
\begin{equation}
\label{eq:coer}
\la \bs h_0, D^2 E(\bs H(\vec a, \vec v))\bs h_0\ra \geq \nu \|\bs h_0\|_\cE^2 -
C\sum_{k=1}^n\la \bs\beta_k, \bs h_0\ra^2.
\end{equation} 
\end{proposition}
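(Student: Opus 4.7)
The plan is to start from the expression
\[
\vD^2 E(\bs H(\vec a, \vec v))(\bs h_0, \bs h_0) = \|\dot h_0\|_{L^2}^2 + \la h_0, L(\vec a, \vec v) h_0\ra,
\]
where $L(\vec a, \vec v) = -\partial_x^2 + U''(H(\vec a, \vec v))$, and to reduce to the non-relativistic coercivity estimate of Lemma~\ref{lem:D2H}, which controls $\la h_0, L(\vec a, \vec 0) h_0\ra$ modulo the scalar products $\la \partial_x H(\cdot - a_k), h_0\ra$.

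First I would treat the Schr\"odinger operator as a small perturbation. Writing $W \coloneqq U''(H(\vec a, \vec v)) - U''(H(\vec a, \vec 0))$ and using $|H(\gamma_v(x-a)) - H(x-a)| \lesssim v^2 \, \|x \partial_x H\|_{L^\infty} \lesssim v^2$ together with the local Lipschitz property of $U''$, one obtains $\|W\|_{L^\infty} \lesssim |\vec v|^2 \leq \rho(\vec a, \vec v)$. Hence
\[
\la h_0, L(\vec a, \vec v) h_0\ra \geq \la h_0, L(\vec a, \vec 0) h_0\ra - C\rho(\vec a, \vec v)\|h_0\|_{L^2}^2,
\]
and combining with Lemma~\ref{lem:D2H} (whose hypothesis $a_{k+1} - a_k \geq y_0$ holds provided $\eta_0$ is small enough) yields
\[
\la h_0, L(\vec a, \vec v) h_0\ra \geq \tfrac{\nu}{2} \|h_0\|_{H^1}^2 - C\sum_{k=1}^n \la \partial_x H(\cdot - a_k), h_0\ra^2
\]
for $\rho(\vec a, \vec v) \leq \eta_0$ small. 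Adding $\|\dot h_0\|_{L^2}^2$ gives a coercive lower bound in $\|\bs h_0\|_\cE^2$ modulo the \emph{wrong} orthogonality functionals $\la \partial_x H(\cdot - a_k), h_0\ra$.

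Second, I would translate these into the functionals $\la \bs \beta_k, \bs h_0\ra$ using the explicit expressions in \eqref{eq:dHalbe-form}. At $v_k = 0$ one has $\bs \beta(a_k, 0) = (-\partial_x H(\cdot - a_k), 0)$, so $\la \bs \beta(a_k, 0), \bs h_0\ra = -\la \partial_x H(\cdot - a_k), h_0\ra$. The Taylor expansion of \eqref{eq:dHalbe-form} in $v$ gives
\[
\bs \beta_k = \bigl(-\partial_x H(\cdot - a_k), 0\bigr) + O(|v_k|) \text{ in } L^2\times L^2,
\]
so by Cauchy--Schwarz
\[
\la \partial_x H(\cdot - a_k), h_0\ra^2 \leq 2\la \bs \beta_k, \bs h_0\ra^2 + C v_k^2 \|\bs h_0\|_\cE^2.
\]
Summing in $k$ and absorbing the $C\rho(\vec a, \vec v)\|\bs h_0\|_\cE^2$ error into the coercive term (again, by taking $\eta_0$ sufficiently small) completes the proof.

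The proof is essentially a two-step perturbation argument, and the only subtlety is checking that both perturbations — of the operator $L$ and of the orthogonality directions — are controlled by $\rho(\vec a, \vec v)$, which is exactly the quantity on which the smallness hypothesis $\rho \leq \eta_0$ bears. The main work has already been done in Lemma~\ref{lem:D2H}, so there is no serious obstacle; the uniformity in $\vec a$ (over configurations with well-separated kinks) is inherited directly from that lemma, and uniformity in $\vec v$ (small) follows from the explicit Lorentz-boost formulas.
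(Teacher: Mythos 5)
Your proof is correct and follows essentially the same route as the paper's: both reduce to the $\vec v=\vec 0$ coercivity of Lemma~\ref{lem:D2H} by showing that the potential difference $U''(H(\vec a,\vec v))-U''(H(\vec a))$ is small in $L^\infty$ and that $\bs\beta_k$ differs from $(-\partial_x H(\cdot-a_k),0)$ by $O(|v_k|)$ in $L^2\times L^2$. The only cosmetic difference is that you bound the potential difference directly and quantitatively by $|\vec v|^2$ via the scaling identity for $H(\gamma_v\,\cdot)$, whereas the paper localizes with a partition of unity and settles for an $o(1)$ bound; both suffice.
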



We first prove Lemma~\ref{lem:D2H} and then deduce Proposition~\ref{prop:coer} as a quick consequence.  

\begin{proof}[Proof of Lemma~\ref{lem:D2H}] To simplify notation, by analogy with \eqref{eq:notation-v-0} we write $L(\vec a) \coloneqq L(\vec a, \vec 0)$. 
We adapt \cite[Proof of Lemma 2.4]{JKL1}.
We set
\begin{equation}
\begin{aligned}
\chi_1(x) &\coloneqq \chi\Big(\frac{x - a_1}{a_2 - a_1}\Big), \\
\chi_k(x) &\coloneqq \chi\Big(\frac{x - a_k}{a_{k+1} - a_k}\Big)
- \chi\Big(\frac{x - a_{k-1}}{a_k - a_{k-1}}\Big),\qquad\text{for }k \in \{2, \ldots, n-1\}, \\
\chi_n(x) &\coloneqq 1 - \chi\Big(\frac{x - a_{n-1}}{a_n - a_{n-1}}\Big)
\end{aligned}
\end{equation}
and we let
\begin{equation}
h_k \coloneqq \chi_k h, \qquad\text{for }k \in \{1, \ldots, n\}.
\end{equation}
We have $h = \sum_{k=1}^n h_k$, hence
\begin{equation}
\la h, L(\vec a) h\ra = \sum_{k=1}^n \la h_k, L(\vec a) h_k\ra + 2\sum_{1 \leq i < j \leq n}\la h_i, L(\vec a) h_j\ra,
\end{equation}
so it suffices to prove that
\begin{gather}
\label{eq:coer-1}
\la h_k, L(\vec a) h_k\ra \geq \nu\|h_k\|_{H^1}^2 -  C\la\partial_x H_k, h_k\ra^2
-{o(1)\|h\|_{H^1}^2}, \\
\label{eq:coer-2}
\la h_i, L(\vec a) h_j\ra {\geq {-}o(1)\|h\|_{H^1}^2} \qquad \text{whenever }i \neq j, \\
\label{eq:coer-3}
\big|\la\partial_x H_k, h_k\ra^2 - \la \partial_x H_k, h\ra^2\big| {\leq o(1)\|h\|_{H^1}^2},
\end{gather}
{where $\nu>0$ is the constant in \eqref{eq:vLv-coer} and $o(1)\to 0$ as $y_0\to \infty$.}

We first prove \eqref{eq:coer-1}.
Without loss of generality we can assume $a_k = 0$. We then have
\begin{equation}
L(\vec a) = L + V, \qquad V \coloneqq U''(H(\vec a)) - U''(H),
\end{equation}
thus
\begin{equation}
\la h_k, L(\vec a) h_k\ra = \la h_k, L h_k\ra + \la h_k, Vh_k\ra \geq \nu \|h_k\|_{H^1}^2 - C\la \partial_x H, h_k\ra^2
+ \int_{-\infty}^\infty \chi_k^2 V h_k^2\ud x.
\end{equation}
We only need to check that $\|\chi_k^2 V\|_{L^\infty} \ll 1$. If $x \geq \frac 23 a_{k+1}$
or $x \leq \frac 23 a_{k-1}$, then $\chi_k(x) = 0$.
If $\frac 23 a_{k-1} \leq x \leq \frac 23 a_{k+1}$, then $|1 - H_j(x)| \ll 1$
for all $j < k$ and $|1 + H_j(x)| \ll 1$ for all $j > k$, hence
\begin{equation}
\big|H(\vec a; x) - (-1)^k H_k(x)\big| = \Big|\sum_{j=1}^{k-1}(-1)^j(H_j(x) - 1) + \sum_{j=k+1}^n(-1)^j(H_j(x) + 1)\Big| \ll 1,
\end{equation}
which implies $|V(x)| \ll 1$.

Next, we show \eqref{eq:coer-2}.
Observe that
\begin{equation}
\label{eq:chiichij}
\chi_i\chi_jU''(H(\vec a; x)) \geq 0, \qquad \text{for all }x \in \bR.
\end{equation}
Indeed, if $j \neq i+1$, then $\chi_i(x) \chi_j(x) = 0$ for all $x$.
If $j = i+1$, then $\chi_i(x) \chi_j(x) \neq 0$ only if $\frac 23 a_i + \frac 13 a_{i+1} \leq x \leq \frac 13 a_i + \frac 23 a_{i+1}$, which implies $|1 - H_\ell(x)| \ll 1$ for all $\ell \leq i$
and $|1 + H_\ell(x)| \ll 1$ for all $\ell > i$, hence $|H(\vec a; x) - (-1)^i| \ll 1$,
in particular $U''(H(\vec a; x)) > 0$.
Using \eqref{eq:chiichij} and the fact that $\|\partial_x \chi_k\|_{L^\infty} \ll 1$
for all $k$, we obtain
\[
\begin{aligned}
\la h_i, L(\vec a) h_j\ra &= \int_{-\infty}^\infty \Big(\partial_x(\chi_i h)\partial_x(\chi_j h)
+U''(H(\vec a)) \chi_i\chi_j h^2\Big)\ud x\\
&\geq \int_{-\infty}^\infty\chi_i\chi_j (\partial_x h)^2\ud x - o(1)\|h\|_{H^1}^2\geq {-}o(1)\|h\|_{H^1}^2.
\end{aligned}
\]

Finally, we have
\begin{equation}
\begin{aligned}
\big|\la\partial_x H_k, h_k\ra^2 - \la \partial_x H_k, h\ra^2\big| &\leq (\|h_k\|_{L^2} + \|h\|_{L^2})
\big|\la\partial_x H_k, h_k - h\ra\big| \\ &\leq 2\|h\|_{L^2}^2\sum_{j \neq k}\|\chi_j\partial_x H_k\|_{L^2} \leq o(1)\|h\|_{L^2}^2,
\end{aligned}
\end{equation}
hence \eqref{eq:coer-3} follows.
\end{proof}

\begin{proof}[Proof of Proposition~\ref{prop:coer}]
Set 
\EQ{
V( \vec a, \vec v) \coloneqq  U''( H( \vec a, \vec v)) - U''( H( \vec a)). 
}
In view of the definitions of $\bs \al_k, \bs \beta_k$ (see~\eqref{eq:dHalbe-form}) and Lemma~\ref{lem:D2H} it suffices to show that for  $| \vec v |$ sufficiently small we have 
\EQ{ \label{eq:V-bound-coer} 
|\la V( \vec a, \vec v) h, \, h \ra| \le o(1) \| h \|_{H^1}^2.  
}
Using the same partition of unity $\chi_k$ from the proof of Lemma~\ref{lem:D2H} we write $V(\vec a, \vec v) = \sum_{k=1}^n V_k(\vec a, \vec v)$ where $V_k(\vec a, \vec v; x) = \chi_k(x) V(\vec a, \vec v; x)$. Similarly to the proof of Lemma~\ref{lem:D2H} we have $\| V_k(\vec a, \vec v) - (U''( H(  a_k, v_k)) - U''( H( \cdot - a_k))) \|_{L^\infty} \ll 1$ for each $k$. The bound~\eqref{eq:V-bound-coer} then follows from noting that $\|U''( H(  a_k, v_k)) - U''( H( \cdot - a_k) \|_{L^\infty} \ll 1$ for each $k$, provided $|v|$ is taken sufficiently small. 
\end{proof}

To each pair $(\vec a, \vec v) \in \R^n \times \R^n$ we associate the subspace $\Pi(\vec a, \vec v) \subset \E$ defined by
\EQ{
\Pi(\vec a, \vec v)\coloneqq  \{ \bs h \in \E \mid  0 = \la \bs \al_k, \bs h \ra = \la \bs \be_k, \bs h \ra , \quad \forall k \in \{1, \dots, n\}\}. 
}
In other words, $\Pi(\vec a, \vec v)$ is the orthogonal complement  in  $\E$ of the subspace $\calN(\vec a, \vec v)\coloneqq \textrm{span} \{\bs \al_1, \dots, \bs\al_n, \bs \be_1, \dots \bs \beta_n\}$. 
We note that $\Pi(\vec a, \vec v)$ is a Hilbert space, and its continuous dual $\Pi(\vec a, \vec v)^*$ can be identified with the quotient space $\E  \slash  \calN(\vec a, \vec v)$. We denote the elements of $\E  \slash  \calN(\vec a, \vec v)$ by $[\bs f]$.  

For each $\ell, k \in \{1, \dots, n\}$ we define a $2\times 2$ matrix
\EQ{
\calM_{\ell, k} \coloneqq  \pmat{ \la  \p_{v_{\ell}} \bs H_\ell, \, \bs \al_k, \ra & \la \p_{v_\ell} \bs H_{\ell}, \, \bs \beta_k \ra \\ \la  \p_{a_{\ell}} \bs H_\ell, \, \bs \al_k, \ra & \la \p_{a_\ell} \bs H_{\ell}, \, \bs \beta_k \ra},
}
and the $2n \times 2n$ matrix $\calM(\vec a, \vec v)$ by the formulas
\EQ{ \label{eq:Mav-def} 
m_{2\ell-1, 2k-1}  &\coloneqq  \la \bs \p_{v_{\ell}} \bs H_{\ell} , \, \bs \alpha_k\ra \\
m_{2\ell -1, 2k}  & \coloneqq  \la \bs \p_{v_{\ell}} \bs H_{\ell} , \, \bs \be_k\ra\\
m_{2\ell, 2k-1} & \coloneqq  \la \bs \p_{a_{\ell}} \bs H_{\ell} , \, \bs \al_k\ra\\
m_{2 \ell, 2k} & \coloneqq \la \bs \p_{a_{\ell}} \bs H_{\ell} , \, \bs \be_k\ra,
}
and we note the identities
\EQ{ \label{eq:Mav-diag}
m_{2k-1, 2k-1} &= \la \bs \p_{v_{k}} \bs H_{k} , \, \bs \alpha_k\ra= -\gamma_{v_k}^3 M, \\
 m_{2k, 2k} &=  \la \bs \p_{a_{k}} \bs H_{k} , \, \bs \be_k\ra= \gamma_{v_k}^3 M, \\
   m_{2k -1, 2k} &= \la \bs \p_{v_{k}} \bs H_{k} , \, \bs \be_k\ra = 0 \\
    m_{2k, 2k-1} &= \la \bs \p_{a_{k}} \bs H_{k} , \, \bs \al_k\ra= 0.
}
and for $\ell < k$ the estimates, 
\EQ{ \label{eq:Mav-cross} 
|m_{2\ell-1, 2k-1}| \le C  y_{\min} e^{-y_{\min}} .
}
Given vectors $\vec \lambda, \vec \mu \in \R^n$, we use the non-standard notation $(\vec \lam, \vec \mu)^{\intercal}$ to denote the column vector $(\lam_1, \mu_1, \lam_2, \mu_2, \dots, \lam_n, \mu_n)^{\intercal} \in \R^{2n}$, and then $M(\vec a, \vec v)$ acts on $(\vec \lam, \vec\mu)^{\intercal}$ by matrix multiplication.  

Because of the formulas~\eqref{eq:Mav-diag} and the estimates~\eqref{eq:Mav-cross}, the matrix $M(\vec a, \vec v)$ is diagonally dominant and hence invertible provided that $\rho(\vec a, \vec v)$ is sufficiently small. An immediate consequence is that for small enough $\rho(\vec a, \vec v)$
\EQ{ \label{eq:Nab-H} 
\calN(\vec a, \vec v)  = \textrm{span}\{ \bs \p_{a_1} \bs H_1, \dots, \partial_{a_n} \bs H_n, \partial_{v_1} \bs H_1, \dots, \partial_{v_n} \bs H_n\},
}
meaning that to each $(\vec \lam, \vec \mu)^{\intercal} \in  \R^{2n}$ there is a unique $(\vec c, \vec d)^{\intercal} \in \R^{2n}$ such that 
\EQ{ \label{eq:ab-HH} 
\sum_{k =1}^n ( \lam_k \bs \al_k + \mu_k \bs \beta_k) = \sum_{k =1}^n ( c_k \partial_{v_k} \bs H_k + d_k \partial_{a_k} \bs H_k). 
}

\begin{lemma}
\label{lem:static-ls}
There exist $C, \eta_0 > 0$ such that the following holds.
Let $(\vec a, \vec v) \in \bR^n \times \bR^n$ satisfy $\rho(\vec a, \vec v) \leq \eta_0$
and let $\bs f \in \cE$.
\begin{enumerate}[(i)]
\item\label{it:static-ls-i}
There exists unique $(\bs h, \vec \lambda, \vec\mu) \in \Pi(\vec a, \vec v) \times \bR^n \times \bR^n$ such that
\begin{equation}
\label{eq:static-ls}
\vD^2 E(\bs H(\vec a, \vec v))\bs h = \bs f + \sum_{k=1}^n (\lambda_k \bs \alpha_k + \mu_k \bs \beta_k).
\end{equation}
It satisfies the estimates
\EQ{ \label{eq:h-est-lm} 
\| \bs h \|_{\E} \le C \| \bs f \|_{\E}, 
}
and for each $k \in \{1, \dots, n\}$
\EQ{ \label{eq:lam-mu-est-lm} 
 \big| M \gamma_{v_k}^3 \lam_k - \la \p_{v_k} \bs H_{k} , \, \bs f\ra \big|  &\le C  \Big( (\sqrt{y_{\min}} e^{- y_{\min}}+ | \vec v|) \| \bs h \|_{\E}  +  y_{\min} e^{-y_{\min}} \| \bs f \|_{\E} \Big)  \\
 \big| M \gamma_{v_k}^3 \mu_k + \la \p_{a_k} \bs H_{k} , \, \bs f\ra \big|  &\le C \Big( (\sqrt{y_{\min}} e^{- y_{\min}}+ | \vec v|) \| \bs h \|_{\E}  +  y_{\min} e^{-y_{\min}} \| \bs f \|_{\E} \Big) 
}
\item\label{it:static-ls-ii}
For any $(\vec \ell, \vec m) \in \bR^n \times \bR^n$ there exists unique $(\bs h, \vec \lambda, \vec\mu) \in \cE \times \bR^n \times \bR^n$ such that
$\la {\bs \alpha}_k, \bs h\ra = \ell_k$ and $\la {\bs \beta}_k, \bs h\ra = m_k$ for all $k \in \{1, \ldots, n\}$, and
\begin{equation}
\label{eq:static-ls-ii}
\vD^2 E(\bs H(\vec a, \vec v))\bs h = \bs f + \sum_{k=1}^n (\lambda_k \bs \alpha_k + \mu_k \bs \beta_k ).
\end{equation}
It satisfies estimates
\EQ{ \label{eq:h-est-lm-ii} 
\| \bs h \|_{\E} \le C \Big( \| \bs f \|_{\E} + \Big( \sum_{k =1}^n (\ell_k^2 +m_k^2)\Big)^{\frac{1}{2}} \Big) 
}
and for each $k \in \{1, \dots, n\}$
\EQ{ \label{eq:lam-mu-est-lm-ii} 
|\gamma_{v_k}^3 M \lam_k - \la \partial_{v_k}\bs H_k, \bs f\ra - \ell_k| &\le C (\sqrt{y_{\min}} e^{- y_{\min}}+ | \vec v|) \| \bs h \|_{\E}  \\
&\quad +  C y_{\min} e^{-y_{\min}} \Big( \| \bs f \|_{\E} + \Big( \sum_{k =1}^n (\ell_k^2 +m_k^2)\Big)^{\frac{1}{2}} \Big)   \\
| \gamma_{v_k}^3 M \mu_k + \la \partial_{a_k}\bs H_k, \bs f\ra| & \le C (\sqrt{y_{\min}} e^{- y_{\min}}+ | \vec v|) \| \bs h \|_{\E}  \\
&\quad +  C y_{\min} e^{-y_{\min}} \Big( \| \bs f \|_{\E} + \Big( \sum_{k =1}^n (\ell_k^2 +m_k^2)\Big)^{\frac{1}{2}} \Big). 
}
\end{enumerate}
\end{lemma}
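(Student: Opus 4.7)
The plan is to treat Lemma~\ref{lem:static-ls} as a Lax--Milgram / Lyapunov--Schmidt reduction: Proposition~\ref{prop:coer} gives coercivity of $\vD^2 E(\bs H(\vec a, \vec v))$ after projecting off the two-parameter-per-kink cokernel generated by the $\bs\alpha_k, \bs\beta_k$. For part (i) I would first observe that, by Proposition~\ref{prop:coer}, the symmetric bilinear form $B(\bs h_1,\bs h_2)\coloneqq\la \vD^2 E(\bs H(\vec a, \vec v))\bs h_1,\bs h_2\ra$ is coercive on $\Pi(\vec a, \vec v)\times \Pi(\vec a, \vec v)$ (indeed, every $\bs h\in\Pi(\vec a, \vec v)$ kills the defect term $\la \bs\beta_k,\bs h\ra$). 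Lax--Milgram then produces a unique $\bs h\in\Pi(\vec a, \vec v)$ with $B(\bs h,\bs g)=\la\bs f,\bs g\ra$ for all $\bs g\in\Pi(\vec a, \vec v)$ and $\|\bs h\|_\cE\lesssim\|\bs f\|_\cE$, yielding~\eqref{eq:h-est-lm}.

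Next I would identify the multipliers. The residual $\bs r\coloneqq \vD^2 E(\bs H(\vec a, \vec v))\bs h-\bs f$ annihilates $\Pi(\vec a, \vec v)$, so $\bs r\in\calN(\vec a, \vec v)$; since the invertibility of $\calM(\vec a, \vec v)$ (ensured by~\eqref{eq:Mav-diag}--\eqref{eq:Mav-cross} for $\rho(\vec a, \vec v)$ small) shows that $\{\bs\alpha_k,\bs\beta_k\}$ is a basis of $\calN(\vec a, \vec v)$, we obtain unique $\vec\lambda,\vec\mu$ with $\bs r=\sum_k(\lambda_k\bs\alpha_k+\mu_k\bs\beta_k)$. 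To get the sharpened bounds~\eqref{eq:lam-mu-est-lm}, I would test the equation against $\partial_{v_\ell}\bs H_\ell$ and $\partial_{a_\ell}\bs H_\ell$, obtaining a $2n\times 2n$ linear system of the form $\calM(\vec a,\vec v)(\vec\lambda,\vec\mu)^\intercal=\vec R$ with right-hand side
\[
R_{2\ell-1}=\la\bs h,\vD^2 E(\bs H(\vec a,\vec v))\partial_{v_\ell}\bs H_\ell\ra-\la\bs f,\partial_{v_\ell}\bs H_\ell\ra,
\]
and similarly for $R_{2\ell}$. Writing $\vD^2 E(\bs H(\vec a,\vec v))=L_\ell+(V(\vec a,\vec v)-V_\ell)$ with $L_\ell\coloneqq\vD^2 E(\bs H_\ell)$ and invoking~\eqref{eq:alpha-id}--\eqref{eq:beta-id}, the principal terms $L_\ell\partial_a\bs H_\ell=v_\ell\partial_x\bs\alpha_\ell$ and $L_\ell\partial_v\bs H_\ell=-\bs\alpha_\ell+v_\ell\partial_x\bs\beta_\ell$ integrate to $O(|\vec v|\,\|\bs h\|_\cE)$ once the orthogonality $\bs h\in\Pi(\vec a,\vec v)$ is used; the difference $\vD^2 E(\bs H(\vec a,\vec v))-L_\ell$, which is multiplication by $U''(H(\vec a,\vec v))-U''(H_\ell)$, is controlled by~\eqref{d2U-bounds} and contributes $O(\sqrt{y_{\min}}\eee^{-y_{\min}}\|\bs h\|_\cE)$. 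Since $\calM(\vec a,\vec v)$ has diagonal entries $\pm\gamma_{v_k}^3 M$ and off-diagonal entries of size $y_{\min}\eee^{-y_{\min}}$, its inverse gives $|\vec\lambda|+|\vec\mu|\lesssim\|\bs f\|_\cE$ in a first pass, and then plugging this crude bound back into the off-diagonal contributions recovers~\eqref{eq:lam-mu-est-lm}.

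For part (ii) I would reduce to part (i) by constructing an explicit finite-dimensional piece carrying the prescribed pairings. Using~\eqref{eq:Nab-H} and the invertibility of $\calM(\vec a,\vec v)^\intercal$, there is a unique
\[
\bs h_0=\sum_{k=1}^n\big(c_k\partial_{v_k}\bs H_k+d_k\partial_{a_k}\bs H_k\big)
\]
with $\la\bs\alpha_k,\bs h_0\ra=\ell_k$ and $\la\bs\beta_k,\bs h_0\ra=m_k$, and moreover $\|\bs h_0\|_\cE\lesssim(\sum_k(\ell_k^2+m_k^2))^{1/2}$ with leading-order coefficients $c_k=-\ell_k/(M\gamma_{v_k}^3)$, $d_k=m_k/(M\gamma_{v_k}^3)$ up to the small off-diagonal corrections. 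Writing $\bs h=\bs h_0+\bs h_1$ with $\bs h_1\in\Pi(\vec a,\vec v)$, the problem for $\bs h_1$ becomes~\eqref{eq:static-ls} with right-hand side $\bs f-\vD^2 E(\bs H(\vec a,\vec v))\bs h_0$, which by Lemma~\ref{lem:U-bounds} and the single-kink identities~\eqref{eq:alpha-id}--\eqref{eq:beta-id} satisfies $\|\bs f-\vD^2 E(\bs H(\vec a,\vec v))\bs h_0\|_\cE\lesssim\|\bs f\|_\cE+(\sum_k(\ell_k^2+m_k^2))^{1/2}$. Part (i) then produces $\bs h_1$, $\vec\lambda$, $\vec\mu$; the extra contribution to the pairings from testing $\vD^2 E(\bs H(\vec a,\vec v))\bs h_0$ against $\partial_{v_\ell}\bs H_\ell$ generates the $+\ell_k$ summand in~\eqref{eq:lam-mu-est-lm-ii}, together with a cross-term correction of size $y_{\min}\eee^{-y_{\min}}$.

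The main technical obstacle is the accounting in the matrix $\calM(\vec a,\vec v)$: isolating the diagonal $\pm M\gamma_{v_k}^3$ contribution from the off-diagonal $y_{\min}\eee^{-y_{\min}}$ corrections while simultaneously separating the two distinct error sources (the $|\vec v|$ from the $L_k$ identities and the $\sqrt{y_{\min}}\eee^{-y_{\min}}$ from $L-L_k$), without allowing any of the error terms to absorb the principal $-\la\partial_{v_k}\bs H_k,\bs f\ra$ / $\la\partial_{a_k}\bs H_k,\bs f\ra$. Inverting $\calM$ by a single Neumann iteration is what keeps the final estimates in~\eqref{eq:lam-mu-est-lm} and~\eqref{eq:lam-mu-est-lm-ii} linear in $\|\bs h\|_\cE$ and $\|\bs f\|_\cE$.
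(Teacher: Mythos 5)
Your proposal is correct and follows essentially the same route as the paper: Lax--Milgram on $\Pi(\vec a,\vec v)$ via the coercivity of Proposition~\ref{prop:coer}, identification of the multipliers from the residual lying in $\calN(\vec a,\vec v)$, the refined bounds \eqref{eq:lam-mu-est-lm} by pairing with $\partial_{v_k}\bs H_k$, $\partial_{a_k}\bs H_k$ and using \eqref{eq:alpha-id}--\eqref{eq:beta-id}, \eqref{d2U-bounds}, and the diagonally dominant matrix $\calM(\vec a,\vec v)$ with a crude first pass $|\vec\lambda|+|\vec\mu|\lesssim\|\bs f\|_\cE$ re-inserted into the off-diagonal terms; part (ii) is reduced to part (i) by subtracting the finite-dimensional piece $\sum_k(c_k\partial_{v_k}\bs H_k+d_k\partial_{a_k}\bs H_k)$ exactly as in the paper. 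The only cosmetic difference is that the paper explicitly peels off the $-\sum_k c_k\bs\alpha_k$ term from $\vD^2E(\bs H(\vec a,\vec v))\bs h_0$ and shifts $\lambda_k\mapsto\lambda_k+c_k$, whereas you absorb it through the multipliers produced by part (i); this changes nothing.
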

\begin{remark}
Part \ref{it:static-ls-i} of Lemma~\ref{lem:static-ls} is a special case of part \ref{it:static-ls-ii}, but we state it separately for later reference.
\end{remark}
\begin{proof}[Proof of Lemma~\ref{lem:static-ls}]

Let $\eta_0>0$ be as in Proposition~\ref{prop:coer} and fix $(\vec a, \vec v) \in \R^n \times \R^n$ so that $\rho(\vec a, \vec v) \le \eta_0$. Let $B:\Pi( \vec a, \vec b) \times \Pi( \vec a, \vec b) \to \R$ denote the bilinear form
\EQ{
B( \bs h, \bs g) \coloneqq  \la \vD^2 E(\bs H(\vec a, \vec v))\bs h, \, \bs g\ra. 
}
Integration by parts yields the bound $|B( \bs h, \bs g)| \lesssim \| \bs h\|_{\E} \| \bs g \|_{\E}$ and Proposition~\ref{prop:coer} gives that $B$ is strongly coercive on $\Pi(\vec a, \vec v)$, i.e., $B(\bs h, \bs h) \ge  \nu \| \bs h \|_{\E}^2$ for all $\bs h \in \Pi(\vec a, \vec v)$. Fix $\bs f \in \E$  and let $[\bs f]$ denote the corresponding equivalence class of elements in $ \E \slash \calN(\vec a, \vec v)$. By the Lax-Milgram theorem~\cite{Lax} there is a unique $\bs h \in \Pi(\vec a, \vec v)$ such that 
\EQ{
B( \bs h, \bs g) = \la \bs f , \, \bs g\ra
}
for all $ \bs g \in \Pi(\vec a, \vec b)$, and moreover, $\bs h$ satisfies the estimate, 
\EQ{ \label{eq:h-est-lm-i} 
\| \bs h \|_{\E} \le \frac{1}{\nu}   \inf_{\vec \lambda, \vec \mu} \big\|  \bs f + \sum_{k =1}^n ( \lam_k \bs \al_k + \mu_k \bs \beta_k) \big\|_{\E} .
}
Since $\la \vD^2 E(\bs H(\vec a, \vec v))\bs h - \bs f, \, \bs g\ra = 0$ for all $\bs g \in \Pi(\vec a, \vec v)$ we can find $(\vec \lam, \vec \mu)  \in \times \R^n \times \R^n$ such that, 
\EQ{
\vD^2 E(\bs H(\vec a, \vec v))\bs h = \bs f + \sum_{k=1}^n (\lambda_k \bs \alpha_k + \mu_k \bs \beta_k).
}
To prove the triplet $( \bs h, \vec \lam, \vec \mu)$ is unique in $\Pi(\vec a, \vec v) \times \R^n \times \R^n$ it suffices to show that $(\vec \lam, \vec \mu) = ( \vec 0, \vec 0)$ when $\bs h \in \Pi( \vec a, \vec v) = \bs 0$ and $\bs f  = \bs 0$. To see this, we take the inner product of the \eqref{eq:static-ls}
with $\partial_{a_k}\bs H_k$ and $\partial_{v_k}\bs H_k$,
using \eqref{eq:matrix-coef}, \eqref{eq:alpha-id} and \eqref{eq:beta-id} to obtain the system, 
\EQ{ \label{eq:lam-mu-sys-lm} 
\la v_k\partial_x \bs\beta_k, \bs h\ra &+ \la (\vD^2 E( \bs H(\vec a, \vec v)) - \vD^2 E(\bs H_k))\partial_{v_k} \bs H_k , \, \bs h\ra  = \la \partial_{v_k}\bs H_k, \bs f\ra \\
& \quad + \sum_{\ell = 1}^n \big(\lam_{\ell} \la \bs \al_\ell, \partial_{v_k} \bs H_k\ra + \mu_{\ell} \la \bs \beta_\ell, \partial_{v_k} \bs H_k \ra \big), \\
\la v_k\partial_x \bs\alpha_k, \bs h\ra &+ (\vD^2 E( \bs H(\vec a, \vec v)) - \vD^2 E(\bs H_k))\partial_{a_k} \bs H_k , \, \bs h\ra= \la \partial_{a_k}\bs H_k, \bs f\ra
 \\
 &\quad + \sum_{\ell = 1}^n \big(\lam_{\ell} \la \bs \al_\ell, \partial_{a_k} \bs H_k\ra + \mu_{\ell} \la \bs \beta_\ell, \partial_{a_k} \bs H_k \ra \big)
}
which, using~\eqref{eq:Mav-def}, and the invertibility of $\calM(\vec a, \vec v)$ yields, 
\EQ{ \label{eq:lam-mu-sys-ell} 
 ( \vec \lam, \vec \mu)^{\intercal}  = \calM(\vec a, \vec v)^{-1} \pmat{ -\la  \partial_{v_1} \bs H_1, \bs f \ra + \la v_1\partial_x \bs\beta_1, \bs h\ra +  \la (U''( H(\vec a, \vec v)) - U''(H_1)) \partial_{v_1} H_1, \, h\ra  \\ -\la  \partial_{a_1} \bs H_1, \bs f \ra + \la v_1\partial_x \bs\alpha_1, \bs h\ra +  \la (U''( H(\vec a, \vec v)) - U''(H_1)) \partial_{a_1} H_1, \, h\ra  \\  \dots \\ \dots \\ -\la  \partial_{v_n} \bs H_n, \bs f \ra + \la v_n\partial_x \bs\beta_n, \bs h\ra  +  \la (U''( H(\vec a, \vec v)) - U''(H_n)) \partial_{v_n} H_n, \, h\ra \\ -\la  \partial_{a_n} \bs H_n, \bs f \ra + \la v_n\partial_x \bs\alpha_n, \bs h\ra +  \la (U''( H(\vec a, \vec v)) - U''(H_n)) \partial_{a_n} H_n, \, h\ra },
} 
which shows that  $(\vec \lam, \vec \mu)$ are determined uniquely by $\bs h, \bs f$ and equal to $(\vec 0, \vec 0)$ when $\bs h = \bs 0$ and $\bs f = \bs 0$. From~\eqref{eq:lam-mu-sys-ell} and~\eqref{eq:h-est-lm} we obtain the preliminary estimates, 
\EQ{ \label{eq:lam-mu-prelim} 
|\vec \lam| + | \vec \mu| \lesssim  \| \bs f \|_{\E} .
}
For the more refined estimates in~\eqref{eq:lam-mu-est-lm}  we record the expressions 
\EQ{ \label{eq:lam-mu-exp} 
\gamma_{v_k}^3 M \lam_k - \la \partial_{v_k}\bs H_k, \bs f\ra &= - \la v_k\partial_x \bs\beta_k, \bs h\ra  - \la (U''( H(\vec a, \vec v)) - U''(H_k)) \partial_{v_k} H_k, \, h\ra \\
&\quad +  \sum_{\ell  \neq k} \big(\lam_{\ell} \la \bs \al_\ell, \partial_{v_k} \bs H_k\ra + \mu_{\ell} \la \bs \beta_\ell, \partial_{v_k} \bs H_k \ra \big) \\
\gamma_{v_k}^3 M \mu_k + \la \partial_{a_k}\bs H_k, \bs f\ra &= \la v_k\partial_x \bs\alpha_k, \bs h\ra +  \la (U''( H(\vec a, \vec v)) - U''(H_k)) \partial_{a_k} H_k, \, h\ra \\
&\quad  - \sum_{\ell  \neq k} \big(\lam_{\ell} \la \bs \al_\ell, \partial_{a_k} \bs H_k\ra + \mu_{\ell} \la \bs \beta_\ell, \partial_{a_k} \bs H_k \ra \big),
}
which follow directly from~\eqref{eq:lam-mu-sys-lm} using~\eqref{eq:matrix-coef}. Using~\eqref{eq:lam-mu-prelim} together with Lemma~\ref{lem:exp-cross-term} to estimate the cross terms gives ~\eqref{eq:lam-mu-est-lm}. 

For part~\ref{it:static-ls-ii} we note that by~\eqref{eq:ab-HH} any $\bs h \in \E$ admits a decomposition
\EQ{ \label{eq:h-decomp-lm} 
\bs h = \bs h^{\perp} + \sum_{k =1}^n ( c_k \partial_{v_k} \bs H_k + d_k \partial_{a_k} \bs H_k)
}
for some $\bs h^{\perp} \in \Pi(\vec a, \vec v)$ and some $(\vec c, \vec d) \in \R^{n} \times \R^n$. Fixing $(\vec c, \vec d)^{\intercal} \in \R^{2n}$ by the formula
\EQ{ \label{eq:cd-def} 
(\vec c, \vec d)^{\intercal}\coloneqq  \calM(\vec a, \vec v)^{-1} ( \vec \ell, \vec m)^{\intercal} 
}
we see that part~\ref{it:static-ls-ii} is equivalent to finding unique $(\bs h^{\perp}, \vec \lam, \vec \mu) \in \Pi(\vec a, \vec v) \times \R^n, \R^n$ such that
\EQ{
\vD^2 E( \bs H(\vec a, \vec v)) \bs h^{\perp} =   \bs{\ti f} +  \sum_{k=1}^n \big((\lambda_k + c_k) \bs \alpha_k + \mu_k \bs \beta_k \big)
}
for 
\EQ{
\bs{\ti f}&\coloneqq  
\bs f-\sum_{k =1}^n  \big( c_k v_k \partial_{x} \bs \beta_k + d_k v_k  \p_x  \bs \alpha_k\big) \\
&\quad -  \sum_{k =1}^n \pmat{ \big( U''( H(\vec a, \vec v)) - U''(H_k)\big)( c_k \partial_{v_k}  H_k + d_k \partial_{a_k}  H_k) \\ 0}
} 
as we note the formula
\EQ{
\vD^2 E( \bs H(\vec a, \vec v))&\Big(\sum_{k =1}^n ( c_k \partial_{v_k} \bs H_k + d_k \partial_{a_k} \bs H_k)\Big)  = \sum_{k =1}^n \vD^2 E( \bs H_k)( c_k \partial_{v_k} \bs H_k + d_k \partial_{a_k} \bs H_k) \\
& \quad  + \sum_{k =1}^n \Big(( \vD^2 E( \bs H(\vec a, \vec v)) - \vD^2E( \bs H_k)( c_k \partial_{v_k} \bs H_k + d_k \partial_{a_k} \bs H_k)\Big)\\
& = - \sum_{k =1}^n c_k \bs \al_k + \sum_{k =1}^n  \big( c_k v_k \partial_{x} \bs \beta_k + d_k v_k  \p_x  \bs \alpha_k\big)\\
&\quad +  \sum_{k =1}^n \pmat{ \big( U''( H(\vec a, \vec v)) - U''(H_k)\big)( c_k \partial_{v_k}  H_k + d_k \partial_{a_k}  H_k) \\ 0}.
}
The existence and uniqueness of $(\bs h^{\perp}, \vec \lam, \vec \mu)$ follows by an application of part~\ref{it:static-ls-i} and then $\bs h$ is given by the formula~\eqref{eq:h-decomp-lm} with $(\vec c, \vec d)$ uniquely determined  by~\eqref{eq:cd-def}. 
Using~\eqref{eq:coer} we obtain the following estimates for $\bs h$, 
\EQ{
\| \bs h \|_{\E} \lesssim \| \bs f \|_{\E} + \sum_{k =1}^n ( | \lambda_k|| \ell_k| + | \mu_k| | m_k|) + \Big( \sum_{k =1}^n (\ell_k^2 +m_k^2)\Big)^{\frac{1}{2}} 
}
and arguing similarly as in part~\ref{it:static-ls-i} we obtain the preliminary estimate 
\EQ{
|\vec \lam| + | \vec \mu|  \lesssim  \| \bs f \|_{\E} + \| \bs h \|_{\E}.
}
Using this in the previous line gives, 
\EQ{
\| \bs h \|_{\E} \lesssim \| \bs f \|_{\E} + \Big( \sum_{k =1}^n (\ell_k^2 +m_k^2)\Big)^{\frac{1}{2}} 
}
and thus
\EQ{ \label{eq:lam-mu-est-prelim-ii} 
|\vec \lam| + | \vec \mu|  \lesssim  \| \bs f \|_{\E} +  \Big( \sum_{k =1}^n (\ell_k^2 +m_k^2)\Big)^{\frac{1}{2}} .
}
We refine the estimates on $\vec \lam$ and $\vec \mu$, by taking the inner product of~\eqref{eq:static-ls-ii} with $\p_{v_k} \bs H$ and $\p_{a_k} \bs H$, and using ~\eqref{eq:alpha-id} and~\eqref{eq:beta-id} to obtain, 
\EQ{
\gamma_{v_k}^3 M \lam_k - \la \partial_{v_k}\bs H_k, \bs f\ra - \ell_k &= - \la v_k\partial_x \bs\beta_k, \bs h\ra  - \la (U''( H(\vec a, \vec v)) - U''(H_k)) \partial_{v_k} H_k, \, h\ra \\
&\quad +  \sum_{\ell  \neq k} \big(\lam_{\ell} \la \bs \al_\ell, \partial_{v_k} \bs H_k\ra + \mu_{\ell} \la \bs \beta_\ell, \partial_{v_k} \bs H_k \ra \big) \\
\gamma_{v_k}^3 M \mu_k + \la \partial_{a_k}\bs H_k, \bs f\ra &= \la v_k\partial_x \bs\alpha_k, \bs h\ra +  \la (U''( H(\vec a, \vec v)) - U''(H_k)) \partial_{a_k} H_k, \, h\ra \\
&\quad  - \sum_{\ell  \neq k} \big(\lam_{\ell} \la \bs \al_\ell, \partial_{a_k} \bs H_k\ra + \mu_{\ell} \la \bs \beta_\ell, \partial_{a_k} \bs H_k \ra \big),
}
from which the estimates~\eqref{eq:lam-mu-est-lm-ii} follow from the same argument used to prove~\eqref{eq:lam-mu-est-lm}. Note above the presence of the term $\ell_k$ on the left which is due to the expression~\eqref{eq:beta-id}. 
\end{proof}
\begin{lemma}\label{lem:diff-param}  There exists a constant $C>0$ with the following property. Let $\eta_0 > 0$ be given by Lemma~\ref{lem:static-ls} and let $\Om \subset \{ (\vec a, \vec v) \in \R^n \times \R^n  \mid \rho(\vec a, \vec v) \le \eta_0\}$ be an open region. Suppose that $\Om \ni (\vec a, \vec v) \mapsto \bs f( \vec a,  \vec v) \in \E$ is (twice) differentiable with respect to $a_k, v_k$ for each $k \in \{1, \dots, n\}$, i.e.,  $\partial_{a_k} \bs f, \p_{v_k} \bs f \in \E$ for each $k$ (same for all the second partials). For each $(\vec a, \vec v) \in \Omega$ let  $(\bs h(\vec a, \vec v), \vec \lam(\vec a, \vec v), \vec \mu(\vec a, \vec v)) \in \Pi(\vec a, \vec v) \times \R^n \times R^n$ be the unique solution to~\eqref{eq:static-ls} given by Lemma~\ref{lem:static-ls}. Then the mapping $\Omega \ni (\vec a, \vec v) \mapsto (\bs h(\vec a, \vec v), \vec \lam(\vec a, \vec v), \vec \mu(\vec a, \vec v))$ is (twice) differentiable with respect to $a_k, v_k$ for each $k$, and satisfies the estimates
\EQ{ \label{eq:h-first} 
\|\partial_{a_k}\bs h\|_\cE + \|\partial_{v_k}\bs h\|_\cE &\leq C \big( \| \partial_{a_k} \bs f \|_{\E} + \| \partial_{v_k} \bs f \|_{\E} + \| \bs f \|_{\E} \big), \\
| \partial_{a_k}\lambda_j| + | \partial_{a_k}\mu_j| &\le C\big( \| \partial_{a_k} \bs f \|_{\E}  + \| \bs f \|_{\E} \big), \\
| \partial_{v_k}\lambda_j| + | \partial_{v_k}\mu_j| &\le C\big( \| \partial_{v_k} \bs f \|_{\E}  + \| \bs f \|_{\E} \big), 
}
and in the case of twice differentiability 
\begin{multline}  \label{eq:h-second} 
\|\partial_{a_j} \partial_{a_k}\bs h\|_\cE + \|\partial_{a_j} \partial_{v_k}\bs h\|_\cE + \|\partial_{v_j} \partial_{v_k}\bs h\|_\cE \\
\leq C \big(  \| \partial_{a_j} \partial_{a_k} \bs f \|_{\E} +  \| \partial_{a_j} \partial_{v_k} \bs f \|_{\E}+  \| \partial_{v_j} \partial_{v_k} \bs f \|_{\E}  + \|  \partial_{a_j} \bs f \|_{\E} + \| \partial_{v_j} \bs f \|_{\E} +\|  \partial_{a_k} \bs f \|_{\E} + \| \partial_{v_k} \bs f \|_{\E} + \| \bs f \|_{\E} \big),
\end{multline} 
for each $j, k$. 
Let  $(\vec a, \vec v), (\sh{\vec a}, \sh{\vec v}) \in  \Om$, and denote by $(\bs h, \vec \lam, \vec \mu)$ the solution to~\eqref{eq:static-ls} associated to  $(\vec a, \vec v)$ and let $(\sh{\bs h}, \sh{\vec \lam},  \sh{\vec \mu})$ the solution to~\eqref{eq:static-ls} associated to  $(\sh{\vec a}, \sh{\vec v})$. Writing $\bs f  = \bs f(\vec a, \vec v)$ and $\sh{\bs f} = \bs f(\sh{\vec a}, \sh{\vec v})$, and $\sh {\bs {H}}_k = \bs H(\sh{\vec a}, \sh{\vec v})$, we have
\EQ{ \label{eq:h-diff-est} 
\| \sh{\bs h} -{\bs h} \|_{\E} \le C \Big(( \| \sh{\bs f}\|_{\E} + \| \bs f \|_{\E} )( | \sh{\vec a} -  \vec a| + | \sh{\vec v} - \vec v|)   + \| \sh{\bs f }- \bs f \|_{\E}  \Big), 
}
\begin{multline} \label{eq:lam-diff} 
\sup_{1 \le k \le n} \big| (  \sh \lam_k -  \lam_k ) - \big((\gamma_{\sh v_k}^{3} M)^{-1}\la \partial_{v_k}\sh{\bs H}_k, \sh{\bs f}\ra- (\gamma_{v_k}^3 M)^{-1}\la \partial_{v_k}\bs H_k, \bs f\ra\big)\big|  \\
\le C | \sh{\vec a} - \vec a|( | \sh{\vec v} | + | \vec v| + y_{\min} e^{-y_{\min}})( \| \bs f \|_{\E} + \| \sh{ \bs f }\|_{\E})  \\+ 
C  | \sh{\vec v} - \vec v| ( \| \bs f \|_{\E} + \| \sh{ \bs f }\|_{\E}) +C \| \sh{\bs f} - \bs f \|_{\E} (  | \sh{\vec v} | + | \vec v| + y_{\min} e^{-y_{\min}}),
\end{multline}
and, 
\begin{multline} \label{eq:mu-diff}
\sup_{1 \le k \le n} | (\sh \mu_k - \mu_k) + \big((\gamma_{\sh v_k}^{3} M)^{-1}\la \partial_{a_k}\sh{\bs H}_k, \sh{\bs f}\ra- (\gamma_{v_k}^3 M)^{-1}\la \partial_{a_k}\bs H_k, \bs f\ra\big)  )  | \\
  \le C | \sh{\vec a} - \vec a|( | \sh{\vec v} | + | \vec v| + y_{\min} e^{-y_{\min}})( \| \bs f \|_{\E} + \| \sh{ \bs f }\|_{\E})  \\+ 
C  | \sh{\vec v} - \vec v| ( \| \bs f \|_{\E} + \| \sh{ \bs f }\|_{\E}) +C \| \sh{\bs f} - \bs f \|_{\E} (  | \sh{\vec v} | + | \vec v| + y_{\min} e^{-y_{\min}}). 
\end{multline}
Finally, for each $k \in \{1, \dots, n\}$ 
\EQ{\label{eq:dh-diff} 
\| \p_{a_k} \sh{\bs h} - \p_{a_k} \bs h \|_{\E} &\le C( \| \sh{\bs f}\|_{\E} + \| \bs f \|_{\E}  + \| \p_{a_k} \sh{\bs f}\|_{\E} + \| \p_{a_k}\bs f \|_{\E} )( | \sh{\vec a} -  \vec a| + | \sh{\vec v} - \vec v|)   \\
&\quad + \| \sh{\bs f }- \bs f \|_{\E}  + \| \p_{a_k} \sh{\bs f }- \p_{a_k} \bs f \|_{\E}, \\
\| \p_{v_k} \sh{\bs h} - \p_{v_k} \bs h \|_{\E} &\le C( \| \sh{\bs f}\|_{\E} + \| \bs f \|_{\E}  + \| \p_{v_k} \sh{\bs f}\|_{\E} + \| \p_{v_k}\bs f \|_{\E} )( | \sh{\vec a} -  \vec a| + | \sh{\vec v} - \vec v|)   \\
&\quad + \| \sh{\bs f }- \bs f \|_{\E}  + \| \p_{v_k} \sh{\bs f }- \p_{v_k} \bs f \|_{\E}. 
}
\end{lemma}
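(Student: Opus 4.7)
The strategy is to reduce every claim to repeated applications of Lemma~\ref{lem:static-ls}(ii), whose estimates serve as the workhorse throughout.

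\emph{Differentiability.} Consider the map
\begin{align*}
F(\bs h, \vec\lambda, \vec\mu; \vec a, \vec v) \coloneqq \Bigl(&\vD^2 E(\bs H(\vec a, \vec v))\bs h - \bs f(\vec a, \vec v) - \sum_{k=1}^n (\lambda_k\bs\alpha_k + \mu_k\bs\beta_k), \\
&(\la\bs\alpha_k, \bs h\ra)_k, \ (\la\bs\beta_k, \bs h\ra)_k\Bigr),
\end{align*}
from $\cE \times \bR^{2n} \times \Omega$ into $\cE \times \bR^{2n}$, whose zeros encode~\eqref{eq:static-ls} together with the orthogonality constraint $\bs h \in \Pi(\vec a, \vec v)$. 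The map $F$ inherits (twice) continuous differentiability from $\bs f$, since $(\vec a, \vec v) \mapsto \bs H(\vec a, \vec v), \bs\alpha_k, \bs\beta_k$ are smooth into $\cE$. Its partial Fr\'echet derivative $D_{(\bs h, \vec\lambda, \vec\mu)}F$ is precisely the bounded operator whose invertibility is asserted by Lemma~\ref{lem:static-ls}(ii). The Implicit Function Theorem therefore yields (twice) continuous differentiability of $(\vec a, \vec v) \mapsto (\bs h, \vec\lambda, \vec\mu)$. To obtain the quantitative bounds~\eqref{eq:h-first}, I differentiate~\eqref{eq:static-ls} and the orthogonality conditions in $a_k$, using~\eqref{eq:ab-dxda} to identify $\partial_{a_k}\bs\alpha_j = -\delta_{jk}\partial_x\bs\alpha_k$ (similarly for $\bs\beta$). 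This shows that $(\partial_{a_k}\bs h, \partial_{a_k}\vec\lambda, \partial_{a_k}\vec\mu)$ solves~\eqref{eq:static-ls-ii} with source $\partial_{a_k}\bs f - (\vD^3 E)(\bs H(\vec a, \vec v))[\partial_{a_k}\bs H, \bs h] + \lambda_k\partial_x\bs\alpha_k + \mu_k\partial_x\bs\beta_k$ and with inputs $\ell_j = \delta_{jk}\la\partial_x\bs\alpha_k, \bs h\ra$, $m_j = \delta_{jk}\la\partial_x\bs\beta_k, \bs h\ra$; Lemma~\ref{lem:static-ls}(ii) combined with~\eqref{eq:h-est-lm} and~\eqref{eq:lam-mu-prelim} then delivers~\eqref{eq:h-first}. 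Differentiating this intermediate system once more and applying Lemma~\ref{lem:static-ls}(ii) a third time yields~\eqref{eq:h-second}.

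\emph{Difference estimates.} Subtracting the equations satisfied respectively by $(\bs h, \vec\lambda, \vec\mu)$ and $(\sh{\bs h}, \sh{\vec\lambda}, \sh{\vec\mu})$ gives
\[
\vD^2 E(\bs H(\vec a, \vec v))(\sh{\bs h} - \bs h) = \sh{\bs f} - \bs f + \sum_k\bigl((\sh\lambda_k - \lambda_k)\bs\alpha_k + (\sh\mu_k - \mu_k)\bs\beta_k\bigr) + R,
\]
where the remainder $R$ collects $(\vD^2 E(\bs H(\vec a, \vec v)) - \vD^2 E(\bs H(\sh{\vec a}, \sh{\vec v})))\sh{\bs h}$ together with $\sum_k\sh\lambda_k(\bs\alpha_k - \sh{\bs\alpha}_k) + \sum_k\sh\mu_k(\bs\beta_k - \sh{\bs\beta}_k)$. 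The identities $\la\bs\alpha_k, \sh{\bs h} - \bs h\ra = \la\bs\alpha_k - \sh{\bs\alpha}_k, \sh{\bs h}\ra$ and its $\bs\beta$ analogue provide the orthogonality inputs $(\vec\ell, \vec m)$. Using that $(\vec a, \vec v) \mapsto \bs H(\vec a, \vec v), \bs\alpha_k, \bs\beta_k$ are smooth into $\cE$ and that both multipliers obey~\eqref{eq:lam-mu-prelim}, every piece of $R$ and $(\vec\ell, \vec m)$ is controlled by $(|\sh{\vec a} - \vec a| + |\sh{\vec v} - \vec v|)(\|\bs f\|_\cE + \|\sh{\bs f}\|_\cE)$; Lemma~\ref{lem:static-ls}(ii) then yields~\eqref{eq:h-diff-est}. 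An entirely analogous subtraction applied to the systems solved by $\partial_{a_k}\bs h$ and $\partial_{v_k}\bs h$ derived above produces~\eqref{eq:dh-diff}, the extra $\partial_{a_k}\sh{\bs f} - \partial_{a_k}\bs f$ (resp.\ $\partial_{v_k}$) terms being responsible for the corresponding norms on the right-hand side.

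\emph{Refined multiplier differences — the main obstacle.} The estimates~\eqref{eq:lam-diff}--\eqref{eq:mu-diff} demand more careful bookkeeping. The starting point is the explicit formulas~\eqref{eq:lam-mu-exp}, which identify $(\gamma_{v_k}^3 M)^{-1}\la\partial_{v_k}\bs H_k, \bs f\ra$ and $-(\gamma_{v_k}^3 M)^{-1}\la\partial_{a_k}\bs H_k, \bs f\ra$ as the main contributions to $\lambda_k$ and $\mu_k$, with remainders built from $v_k\la\partial_x\bs\beta_k, \bs h\ra$, $\la(U''(H(\vec a, \vec v)) - U''(H_k))\partial_{v_k}H_k, h\ra$, and off-diagonal pairings involving $\bs\alpha_\ell, \bs\beta_\ell$ with $\ell \neq k$. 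Proposition~\ref{prop:prop-H}, Lemma~\ref{lem:exp-cross-term}, and~\eqref{eq:lam-mu-prelim} bound each such remainder by $(|\vec v| + y_{\min}e^{-y_{\min}})\|\bs h\|_\cE$. Writing $\sh\lambda_k - \lambda_k$ and $\sh\mu_k - \mu_k$ as differences of two instances of~\eqref{eq:lam-mu-exp}, peeling off the stated principal term, and distributing the residual onto $|\sh{\vec a} - \vec a|$, $|\sh{\vec v} - \vec v|$ and $\|\sh{\bs f} - \bs f\|_\cE$ while absorbing $\|\sh{\bs h} - \bs h\|_\cE$ through~\eqref{eq:h-diff-est}, one obtains~\eqref{eq:lam-diff}--\eqref{eq:mu-diff}. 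This term-by-term tracking of residuals, with the principal contribution carefully isolated, is the only genuine difficulty in the proof.
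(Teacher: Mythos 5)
Your proposal is correct in substance, and for everything except the differentiability claim it follows the paper's route exactly: the difference estimates \eqref{eq:h-diff-est} and \eqref{eq:dh-diff} come from subtracting the two systems, packaging the commutator terms and the non-zero orthogonality inputs $\la\bs\alpha_k-\sh{\bs\alpha}_k,\sh{\bs h}\ra$, $\la\bs\beta_k-\sh{\bs\beta}_k,\sh{\bs h}\ra$ as data for Lemma~\ref{lem:static-ls}~\ref{it:static-ls-ii}; and \eqref{eq:lam-diff}--\eqref{eq:mu-diff} come from differencing the two instances of \eqref{eq:lam-mu-exp} term by term, absorbing $\|\sh{\bs h}-\bs h\|_\cE$ via \eqref{eq:h-diff-est}. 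Where you diverge is differentiability: you invoke the Implicit Function Theorem for the map $F$, whereas the paper constructs the candidate derivative $(\delta\bs h,\delta\vec\lambda,\delta\vec\mu)$ explicitly as the solution of \eqref{eq:delta-h} via Lemma~\ref{lem:static-ls}~\ref{it:static-ls-ii} and then shows by hand that the difference quotients converge to it, using the already-established continuity in the parameters. Your IFT shortcut is legitimate and arguably cleaner, but two caveats are worth recording: the codomain of $F$ is not $\cE\times\bR^{2n}$ as written but $(H^{-1}\times L^2)\times\bR^{2n}$ (or one must phrase everything weakly, since $\vD^2E(\bs H)$ loses a derivative), and the IFT requires joint $C^1$ regularity of $F$, hence of $(\vec a,\vec v)\mapsto\bs f$, which is slightly stronger than the lemma's hypothesis of mere partial differentiability; the paper's difference-quotient argument works under the weaker hypothesis. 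What the direct approach buys in addition is that the intermediate objects it produces are exactly the systems you then need for \eqref{eq:h-first}, \eqref{eq:h-second} and \eqref{eq:dh-diff}, so nothing is wasted. One harmless slip: in your source term for the $\partial_{a_k}$-system, $\lambda_k\partial_x\bs\alpha_k+\mu_k\partial_x\bs\beta_k$ should be $\lambda_k\partial_{a_k}\bs\alpha_k+\mu_k\partial_{a_k}\bs\beta_k=-\lambda_k\partial_x\bs\alpha_k-\mu_k\partial_x\bs\beta_k$ by \eqref{eq:ab-dxda}; the sign does not affect the norm bounds.
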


\begin{proof}
 We first consider the difference estimates for solutions associated to different $(\vec a, \vec v)$.  Let  $(\vec a, \vec v), (\sh{\vec a}, \sh{\vec v}) \in  \Om$, let $(\bs h, \vec \lam, \vec \mu)$ denote the solution to~\eqref{eq:static-ls} associated to  $(\vec a, \vec v)$, and let $(\sh{\bs h}, \sh{\vec \lam},  \sh{\vec \mu})$ the solution to~\eqref{eq:static-ls} associated to  $(\sh{\vec a}, \sh{\vec v})$. We write $\sh{\bs f}= \bs f(\sh{\vec a}, \sh{\vec v})$. Define
\EQ{
\fl{\bs h} &\coloneqq  \sh{\bs h} - \bs h, \quad 
\fl{\vec{\lam}} \coloneqq \sh{\vec  \lambda} - \vec \lambda , \quad 
\fl{\vec \mu}  \coloneqq  \sh{\vec \mu} - \vec \mu .
}
By Lemma~\ref{lem:static-ls}, $(\fl{\bs h}, \fl{\vec \lam}, \fl{\vec \mu} )$
uniquely solves 
\EQ{ \label{eq:fl-h-eqn} 
\vD^2 E( \bs H(\vec a, \vec v)) \fl{\bs h} = \fl{\bs f} + \sum_{k =1}^n \fl \lam_k \bs \al_k + \fl \mu_k \bs \beta_k,
}
where 
\EQ{
\fl{\bs f} &= \big( \vD^2 E ( \bs H(\vec a, \vec v)) - \vD^2 E (\bs H( \sh{\vec a}, \sh{\vec v}))\big) \sh{ \bs h}  +  \big(\sh{\bs f} - \bs f\big) \\
&\quad  + \sum_{j=1}^n \Big(\sh \lambda_j\big(\sh{\bs \al}_j - \bs{ \al}_j \big) + \sh \mu_j\big(  \sh{\bs \be}_j - \bs {\be}_j\big)\Big), 
}
and 
\EQ{
\la \fl{\bs h} , \, \bs \al_k \ra   &= \la \sh{\bs h}, \bs \al_k - \sh{\bs \al}_k\ra = \fl \ell_k \\
\la \fl{\bs h} , \, \bs \beta_k \ra   &= \la \sh{\bs h}, \bs \beta_k - \sh{\bs \beta}_k\ra = \fl m_k
}
and where we have used the ad hoc notation $\sh{\bs{ \al}}_j = \bs J\p_{ a_j} \bs H(\sh a_j, \sh v_j)$ and $\sh{\bs{ \be}}_j = \bs J\p_{ v_j} \bs H(\sh a_j, \sh v_j)$.


The estimate~\eqref{eq:h-est-lm-ii} and ~\eqref{eq:lam-mu-est-prelim-ii} give
\EQ{ \label{eq:fl-est-static} 
\| \fl{\bs h} \|_{\E} + | \fl{\vec \lam} | + | \fl{\vec \mu}| \lesssim \| \fl{\bs f} \|_{\E} + \Big( \sum_{k =1}^n (\fl \ell_k)^2 + (\fl \mu_k)^2 \Big)^{\frac{1}{2}}
}
from which~\eqref{eq:h-diff-est} follows.
Using~\eqref{eq:lam-mu-exp} we can take  the difference of the equations for $\sh\lam_k$ and $\lam_k$ to obtain
\EQ{ 
(  \sh \lam_k -  \lam_k ) &- \big((\gamma_{\sh v_k}^{3} M)^{-1}\la \partial_{\sh v_k}\sh{\bs H}_k, \sh{\bs f}\ra- (\gamma_{v_k}^3 M)^{-1}\la \partial_{v_k}\bs H_k, \bs f\ra\big) \\
&=\Big((\gamma_{\sh v_k}^{3} M)^{-1} - (\gamma_{v_k}^3 M)^{-1}\Big) \Big(  - \la v_k\partial_x \bs\beta_k, \bs h\ra  - \la (U''( H(\vec a, \vec v)) - U''(H_k)) \partial_{v_k} H_k, \, h\ra \\
&\quad +  \sum_{\ell  \neq k} \big(\lam_{\ell} \la \bs \al_\ell, \partial_{v_k} \bs H_k\ra + \mu_{\ell} \la \bs \beta_\ell, \partial_{v_k} \bs H_k \ra \big)\Big) \\
&\quad + (\gamma_{\sh v_k}^{3} M)^{-1} \Bigg[  \Big(  - \la \sh v_k\partial_x\sh{ \bs\beta}_k, \sh{\bs h}\ra  - \la (U''( H(\sh{\vec a}, \sh{\vec v})) - U''(\sh H_k)) \partial_{v_k} \sh{H}_k, \, \sh h\ra \\
&\quad +  \sum_{\ell  \neq k} \big(\sh \lam_{\ell} \la \sh{\bs \al}_\ell, \partial_{v_k} \sh{\bs H}_k\ra + \sh{\mu}_{\ell} \la \sh{ \bs \beta}_\ell, \partial_{v_k} \sh{\bs H}_k \ra \big)\Big)\\
 &\quad - \Big(  - \la v_k\partial_x \bs\beta_k, \bs h\ra  - \la (U''( H(\vec a, \vec v)) - U''(H_k)) \partial_{v_k} H_k, \, h\ra \\
&\quad +  \sum_{\ell  \neq k} \big(\lam_{\ell} \la \bs \al_\ell, \partial_{v_k} \bs H_k\ra + \mu_{\ell} \la \bs \beta_\ell, \partial_{v_k} \bs H_k \ra \big)\Big)\Bigg].
}
We call terms negligible if they are bounded by the right-hand-side of~\eqref{eq:lam-diff}. Since $\rho(\vec a, \vec v), \rho(\sh{ \vec a}, \sh{\vec v}) \le \de_0 \ll 1$ we have, 
\EQ{
|(\gamma_{\sh v_k}^{3} M)^{-1} - (\gamma_{v_k}^3 M)^{-1}| \lesssim | \sh{\vec v} - \vec v| ( |\sh{v}| + | \vec v |),
}
so the terms on the second and third line are negligible. We estimate the differences arising from the remaining terms as follows, 
\EQ{
|  \la v_k\partial_x \bs\beta_k, \bs h\ra - \la \sh v_k\partial_x\sh{ \bs\beta}_k, \sh{\bs h}\ra| &\lesssim \Big( | \sh {\vec v} - \vec v | + | \sh{\vec a} - \vec a |\big(( |\sh{\vec v}|  + | \vec v|)\big)\Big)\Big( \| \sh{ \bs h} \|_{\E} + \| \bs h \|_{\E} \Big)  \\
&\quad + ( |\sh{\vec v}|  + | \vec v|)\| \fl{\bs h} \|_{\E} 
}
and
\begin{multline} 
| \la (U''( H(\vec a, \vec v)) - U''(H_k)) \partial_{v_k} H_k, \, h\ra - \la (U''( H(\sh{\vec a}, \sh{\vec v})) - U''(\sh H_k)) \partial_{v_k} \sh{H}_k, \, \sh h\ra |  \\
\lesssim  \sqrt{y_{\min}} e^{-y_{\min}}\Big(  \|\fl{\bs{ h}} \|_{\E} +   ( \| \sh{ \bs h} \|_{\E} + \| \bs h \|_{\E} )( | \sh {\vec v} - \vec v | + | \sh{\vec a} - \vec a |)\Big)
\end{multline} 
and for all $\ell \neq k$, 
\EQ{
 |\big(\lam_{\ell} \la \bs \al_\ell, \partial_{v_k} \bs H_k\ra - \lam_{\ell} \la \bs \al_\ell, \partial_{v_k} \bs H_k\ra|&\lesssim  y_{\min}e^{-y_{\min}}\Big( | \fl{\vec \lam}|  + ( \| \sh{ \bs f} \|_{\E} + \| \bs f \|_{\E})( ( | \sh {\vec v} - \vec v | + | \sh{\vec a} - \vec a |)\Big)\\
 |\big(\mu_{\ell} \la \bs \be_\ell, \partial_{v_k} \bs H_k\ra - \mu_{\ell} \la \bs \beta_\ell, \partial_{v_k} \bs H_k\ra|&\lesssim  y_{\min}e^{-y_{\min}}\Big( | \fl{\vec \lam}|  + ( \| \sh{ \bs f} \|_{\E} + \| \bs f \|_{\E})( ( | \sh {\vec v} - \vec v | + | \sh{\vec a} - \vec a |)\Big).
}
An application of the estimates~\eqref{eq:fl-est-static} to the previous three displayed equations completes the proof of~\eqref{eq:lam-diff}. 
An analogous argument yields the estimates~\eqref{eq:mu-diff} for $\sh{\vec \mu} - \vec \mu$.

We next turn to the differentiability of $(\bs h(\vec a, \vec v), \vec \lam(\vec a, \vec v), \vec \mu(\vec a,\vec v))$ with respect to parameters, noting that continuity with respect to parameters follows from the difference estimates proved above.   Let $(\delta\bs h, \delta\vec\lambda, \delta\vec\mu)$ be the solution of
\EQ{
\label{eq:delta-h}
\vD^2 E(\bs H(\vec a, \vec v))(\delta \bs h) &=  - (U'''( H(\vec a, \vec v))\partial_{a_k}H(\vec a, \vec v)h(\vec a, \vec v), 0) + \partial_{a_k}\bs f(\vec a, \vec v) \\
&\quad + \lambda_k(\vec a, \vec v) \partial_{a_k}\bs\alpha_k+\mu_k(\vec a, \vec v) \partial_{a_k}\bs\beta_k+\sum_{j=1}^n(\delta\lambda_j) \bs\alpha_j+(\delta\mu_j) \bs\beta_j
}
such that $\la \delta \bs h, \bs \alpha_j\ra = -\la \bs h(\vec a, \vec v), \partial_{a_k}\bs\alpha_j\ra$ and $\la \delta \bs h, \bs \beta_j\ra = -\la \bs h(\vec a, \vec v), \partial_{a_k}\bs\beta_j\ra$ for all $j$ (note that $\p_{a_k} \bs \al_j = \bs 0$ for $j \neq k$). This solution exists and is unique, by Lemma~\ref{lem:static-ls} \ref{it:static-ls-ii}.
We claim that
\begin{equation} \label{eq:dh-dlam-dmu} 
(\partial_{a_k} \bs h(\vec a, \vec v), \partial_{a_k} \vec \lambda(\vec a, \vec v), \partial_{a_k}\vec \mu(\vec a, \vec v))
= (\delta \bs h, \delta \vec\lambda, \delta \vec\mu).
\end{equation}
We introduce the shorthand notation $ \vec a(\eps) = (a_1, \dots, a_k + \eps, \dots a_n)$ and define
\EQ{
\bs q_\eps( \vec a, \vec v; x) &\coloneqq  \frac{ \bs h(\vec a(\eps), \vec v; x)- \bs h( \vec a, \vec v; x)  }{ \eps} - \de \bs h( x) \\
\vec \lambda_\eps( \vec a, \vec v) &\coloneqq  \frac{  \vec \lambda(\vec a(\eps), \vec v)-\vec \lambda(\vec a, \vec v) }{\eps} - \de \vec \lambda\\
\vec \mu_\eps(\vec a, \vec v) & \coloneqq  \frac{ \vec \mu(\vec a(\eps), \vec v) -\vec \mu(\vec a, \vec v) }{\eps} - \de \vec \mu
}
It suffices now to show that
\EQ{ \label{eq:q-eps-lim} 
\lim_{\eps \to 0}(\bs q_\eps( \vec a, \vec v), \vec \lambda_\eps( \vec a,  \vec v), \vec \mu_\eps( \vec a, \vec v)) = (\bs 0, \vec 0 , \vec 0).
}
Introducing the ad hoc notation $\bs \al_{k, \eps} = \bs J \p_{a} \bs H(a_k + \eps, v_k)$ and $\bs \be_{k, \eps} = \bs J \p_{v} \bs H(a_k + \eps, v_k)$, we see that $(\bs q_\eps( \vec a, \vec v), \vec \lam_\eps( \vec a, \vec v), \vec \mu_\eps( \vec a,  \vec v))$ uniquely solves~\eqref{eq:static-ls-ii} with 
\EQ{
\bs f_\eps &= \Big( \frac{  \bs f( \vec a(\eps), \vec v)-\bs f( \vec a, \vec v) }{\eps} - \p_{a_k} \bs f(\vec a, \vec v) \Big) - (U'''(  H(\vec a, \vec v)) \p_{a_k}  H(\vec a, \vec v), 0)\big( \bs h(\vec a, \vec v)  - \bs h(\vec a(\eps),\vec v) \big)\\
&\quad + \Big( \frac{   U''(H(\vec a(\eps), \vec v))-U''(H(\vec a, \vec v)) }{\eps} - U'''(H(\vec a, \vec v)) \p_{a_k} H(\vec a, \vec v), 0 \Big)  \bs h(\vec a(\eps), \vec v)\\
&\quad +   \lam_k(\vec a(\eps), \vec v) \Big(   \frac{ \bs \al_{k, \eps}- \bs \al_k }{\eps}-  \p_{a_k} \bs \al_k\Big)  +  \mu_k(\vec a(\eps), \vec v) \Big(   \frac{ \bs \be_{k, \eps} - \bs \be_k }{\eps}-  \p_{a_k} \bs \be_k\Big)\\ 
&\quad -  \p_{a_k} \bs \al_k \big( \lam_k(\vec a, \vec v) - \lam_k(\vec a(\eps), \vec v)\big) - \p_{a_k} \bs \be_k \big( \mu_k(\vec a, \vec v) - \mu_k(\vec a(\eps), \vec v)\big)  
}
and 
\EQ{
\la \bs q_\eps( \vec a, \vec v), \, \bs \al_k\ra &= -\la \bs h( \vec a(\eps), \vec v) , \, \frac{ \bs \al_{k, \eps} - \bs \al_k}{\eps} - \p_{a_k}\bs \al_k \ra   + \la \bs h(\vec a, \vec v) - \bs h(\vec a(\eps), \vec v), \, \p_{a_k} \bs \al_k\ra \\
\la \bs q_\eps( \vec a, \vec v), \, \bs \al_j\ra &= 0 \mif j \neq k \\
\la \bs q_\eps( \vec a, \vec v), \, \bs \be_k\ra &= -\la \bs h( \vec a(\eps), \vec v) , \, \frac{ \bs \be_{k, \eps} - \bs \be_k}{\eps} - \p_{a_k}\bs \be_k \ra   + \la \bs h(\vec a, \vec v) - \bs h(\vec a(\eps), \vec v), \, \p_{a_k} \bs \be_k\ra \\
\la \bs q_\eps( \vec a, \vec v), \, \bs \be_j\ra &= 0 \mif j \neq k.
}
Using the continuity of $(\bs h(\vec a, \vec v), \vec \lam(\vec a, \vec v), \mu(\vec a, \vec v))$ with respect to the parameters, the differentiability of $\bs f(\vec a, \vec v)$, the smoothness of $U$,  $\bs H(\vec a, \vec v)$, $\bs \al_k$ and $\bs \be_k$, we see that $\lim_{\eps \to 0} \| \bs f_\eps\|_{\E} = 0$. Using the continuity of $\bs h(\vec a, \vec v)$ with respect to parameters and the smoothness of $\bs \al_k, \bs \be_k$, we see that in addition
\EQ{
\lim_{  \eps  \to 0} \la \bs q_\eps( \vec a, \vec v), \, \bs \al_k\ra = \lim_{  \eps  \to 0} \la \bs q_\eps( \vec a, \vec v), \, \bs \be_k\ra = 0.
}
Then~\eqref{eq:q-eps-lim} follows from~\eqref{eq:h-est-lm-ii} and~\eqref{eq:lam-mu-est-prelim-ii}. Having established~\eqref{eq:dh-dlam-dmu} the estimates for $(\p_{a_k} \bs h,  \p_{a_k} \vec \lam, \p_{a_k} \vec \mu)$ now follow from~\eqref{eq:delta-h} and Lemma~\ref{lem:static-ls}~\ref{it:static-ls-ii}. 

The argument for $(\partial_{v_k}\bs h, \partial_{v_k}\vec \lambda, \partial_{v_k}\vec \mu)$ is identical. The estimates~\eqref{eq:h-second} for all the mixed second partials is similar.

Lastly, we consider the difference estimates for $\p_{a_k} \bs h$ and $\p_{v_k} \bs h$. Let $\p_{a_k} \fl{ \bs h} = \p_{a_k} \sh{\bs h} - \p_{a_k} \bs h$,  and $( \partial_{a_k} \fl{\vec \lam}, \partial_{a_k} \fl{ \vec \mu})$, which we can view as solving
\EQ{
\vD^2 E(\bs H(\vec a, \vec v))( \partial_{a_k} \fl{ \bs h}) &= (\vD^2 E(\bs H(\vec a, \vec v)) - \vD^2 E(\bs H(\sh{\vec a}, \sh{\vec v}))) \p_{a_k} \sh{ \bs h} \\
&\quad  + (U'''( H(\vec a, \vec v))\partial_{a_k}H(\vec a, \vec v)h, 0) - (U'''( H(\sh{\vec a}, \sh{\vec v}))\partial_{a_k}H(\sh{\vec a}, \sh{\vec v})\sh h, 0)\\
&\quad+  \partial_{a_k} \sh{ \bs f}  - \partial_{a_k}\bs f \\
&\quad + \sh \lambda_k \partial_{a_k}\sh{\bs\alpha_k}- \lambda_k \partial_{a_k}\bs\alpha_k +\sh \mu_k \partial_{a_k}\sh{\bs\beta_k}- \mu_k \partial_{a_k}\bs\beta_k \\
&\quad + \sum_{j =1}^n  \partial_{a_k} \sh \lam_j ( \sh{\bs \al_j} - { \bs \al_j}) + \partial_{a_k} \sh \mu_j ( \sh{ \bs \beta}_j  - \bs \beta_j) \\
&\quad  + \sum_{j=1}^n(\partial_{a_k} \fl \lam_j)  \bs\alpha_j+ (\p_{a_k} \fl \mu_j ) \bs\beta_j
}
with 
\EQ{
\la \partial_{a_k} \fl{ \bs h}, \bs \al_j\ra &= \la \bs h , \, \p_{a_k} \bs \al_j\ra - \la \sh{ \bs h}, \, \p_{a_k} \sh {\bs \al}_j\ra  + \la \p_{a_k} \sh {\bs h} , \, \sh{ \bs \al}_j - \bs \al_j \ra \\
\la \partial_{a_k} \fl{ \bs h}, \bs \be_j\ra &= \la \bs h , \, \p_{a_k} \bs \be_j\ra - \la \sh{ \bs h}, \, \p_{a_k} \sh {\bs \be}_j\ra  + \la \p_{a_k} \sh {\bs h} , \, \sh{ \bs \be}_j - \bs \be_j \ra 
} 
and the estimates for $\p_{a_k} \fl{ \bs h}$ in ~\eqref{eq:dh-diff} follow from Lemma~\ref{lem:static-ls} part~\ref{it:static-ls-ii}. The estimates for $\p_{v_k} \sh{ \bs h} - \p_{v_k} { \bs h}$ follow from the same argument. 
\end{proof}

\section{Preliminaries on the Cauchy problem}
\label{sec:cauchy}
Since the local well-posedness theory for \eqref{eq:csf-2nd} presents no serious difficulties,
we only briefly resume it, leaving some details to the Reader.
\begin{definition}
If $I \subset \bR$ is an open interval, $\iota_-, \iota_+ \in \{-1, 1\}$ and $\bs\phi = (\phi, \dot \phi): I \times \bR \to \bR\times\bR$, then we say that $\bs\phi$ is a solution of \eqref{eq:csf} in the energy sector $\cE_{\iota_-, \iota_+}$ if $\bs \phi \in C(I; \cE_{\iota_-, \iota_+})$
and \eqref{eq:csf} holds in the sense of distributions.
\end{definition}

Choose any $(\xi, 0) \in \cE_{\iota_-, \iota_+}$ such that $\xi \in C^\infty$ and $\partial_x \xi$ is of compact support,
and write $(\phi, \dot\phi) = (\xi + \psi, \dot\psi)$. Then $\bs \phi \in C(I; \cE_{\iota_-, \iota_+})$ is equivalent to $\bs\psi \in C(I; \cE)$
and \eqref{eq:csf} becomes
\begin{equation}
\label{eq:cauchy-psi}
\dd t\begin{pmatrix} \psi \\ \dot\psi \end{pmatrix} = \begin{pmatrix}\dot \psi \\ \partial_x^2 \psi - \psi - \big(U'(\xi + \psi) - U'(\xi) - \psi\big)
+ \big(\partial_x^2 \xi - U'(\xi)\big)\end{pmatrix},
\end{equation}
which is the linear Klein-Gordon equation for $\psi$ with the forcing term ${-}\big(U'(\xi + \psi) - U'(\xi) - \psi\big) + \big(\partial_x^2 \xi - U'(\xi)\big)
\in C(I; \cE)$. By the uniqueness of weak solutions of linear wave equations, see \cite{Friedrichs54}, we have that a weak solution of \eqref{eq:cauchy-psi}
is in fact a strong solution given by the Duhamel formula.

If $\bs\phi_{1, 0}, \bs \phi_{2, 0}, \ldots$ and $\bs\phi_0$
are states of bounded energy, then we write
$\bs \phi_{m, 0} \to \bs\phi_0$ if $\bs \phi_{m, 0} - \bs\phi_0 \to 0$ in $\cE$, and
$\bs \phi_{m, 0} \wto \bs\phi_0$ if $\dot \phi_{m, 0} \wto \dot \phi_0$
in $L^2(\bR)$ and $\phi_{m, 0} - \phi_0 \to 0$ in $L^\infty_\tx{loc}(\bR)$. Observe that if $\bs\phi_{m, 0} - \bs \phi_0$ is bounded in $\cE$,
then $\bs \phi_{m, 0} \wto \bs\phi_0$ is equivalent to the weak convergence to $0$ in the Hilbert space $\cE$ of the sequence $\bs \phi_{m, 0} - \bs \phi_0$. However, our notion of weak convergence
is more general, in particular the topological class is not necessarily
preserved under weak limits as defined above.
A similar notion of weak convergence was used in Jia and Kenig \cite{JK}.
\begin{proposition}
\label{prop:cauchy}
\begin{enumerate}[(i)]
\item \label{it:cauchy-en}
For all $\iota_-, \iota_+ \in \{-1, 1\}$ and $\bs\phi_0 \in \cE_{\iota_-, \iota_+}$ and $t_0 \in \bR$,
there exists a unique solution $\bs\phi: \bR \to \cE_{\iota_-, \iota_+}$ of \eqref{eq:csf} such that $\bs\phi(t_0) = \bs\phi_0$.
The energy $E(\bs \phi(t))$ does not depend on $t$.
\item \label{it:cauchy-H2}
If $\partial_x \phi_0 \in H^1(\bR)$ and $\dot\phi_0 \in H^1(\bR)$, then $\partial_x \phi \in C(\bR; H^1(\bR)) \cap C^1(\bR; L^2(\bR))$, $\dot\phi \in C(\bR; H^1(\bR)) \cap C^1(\bR; L^2(\bR))$
and \eqref{eq:csf-1st} holds in the strong sense in $H^1(\bR) \times L^2(\bR)$.
\item \label{it:cauchy-strong}
Let $\bs \phi_{1,0}, \bs \phi_{2,0}, \ldots \in \cE_{\iota_-, \iota_+}$ and $\bs\phi_{m, 0} \to \bs\phi_0 \in \cE_{\iota_-, \iota_+}$.
If $(\bs \phi_m)_{m=1}^\infty$ and $\bs\phi$ are the solutions of \eqref{eq:csf} such that $\bs \phi_m(t_0) = \bs\phi_{m,0}$ and $\bs\phi(t_0) = \bs\phi_0$,
then $\bs \phi_m(t) \to \bs \phi(t)$ for all $t \in \bR$, the convergence being uniform on every bounded time interval.
\item \label{it:cauchy-speed}
If $\wt{\bs\phi}_0 \vert_{[x_1, x_2]} = {\bs\phi}_0 \vert_{[x_1, x_2]}$, then $\wt{\bs\phi}(t) \vert_{[x_1 + |t - t_0|, x_2 - |t - t_0|]} = {\bs\phi}(t) \vert_{[x_1 + |t - t_0|, x_2 - |t - t_0|]}$ for all $t \in \big[t_0 - \frac 12(x_2 - x_1), t_0 + \frac 12(x_2 - x_1)\big]$.
\item \label{it:cauchy-weak}
Let $\bs \phi_{1,0}, \bs \phi_{2,0}, \ldots \in \cE_{\iota_-, \iota_+}$ and $\bs\phi_{m, 0} \wto \bs\phi_0 \in \cE_{\iota_-, \iota_+}$.
If $(\bs \phi_m)_{m=1}^\infty$ and $\bs\phi$ are the solutions of \eqref{eq:csf} such that $\bs \phi_m(t_0) = \bs\phi_{m,0}$ and $\bs\phi(t_0) = \bs\phi_0$,
then $\bs \phi_m(t) \wto \bs \phi(t)$ for all $t \in \bR$.
\end{enumerate}
\end{proposition}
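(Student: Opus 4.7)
The plan is to reduce the entire proposition to the classical Cauchy theory of a semilinear Klein-Gordon equation on $\cE=H^1(\bR)\times L^2(\bR)$. Given $\bs\phi_0\in \cE_{\iota_-,\iota_+}$, following the excerpt I fix $\bs\xi=(\xi,0)\in \cE_{\iota_-,\iota_+}$ with $\partial_x\xi$ compactly supported and work with $\bs\psi\coloneqq \bs\phi-\bs\xi\in \cE$, which satisfies \eqref{eq:cauchy-psi} with source $N(\psi)\coloneqq -(U'(\xi+\psi)-U'(\xi)-\psi)+(\partial_x^2\xi-U'(\xi))$. Because $U\in C^\infty$ and $H^1(\bR)\hookrightarrow L^\infty(\bR)$ in one space dimension, $N$ sends bounded sets of $H^1$ into $L^2$ Lipschitzly. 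A standard contraction mapping applied to the Duhamel formula for the linear Klein-Gordon propagator of mass one then yields a unique local solution in $C([t_0-T,t_0+T];\cE)$ with $T$ depending only on $\|\bs\psi_0\|_\cE$; the same contraction, applied to $(\partial_x\phi,\dot\phi)$, gives the persistence of $H^2$ regularity claimed in \ref{it:cauchy-H2}, after which \eqref{eq:csf-1st} holds as a strong identity in $H^1\times L^2$. Strong continuous dependence \ref{it:cauchy-strong} on the local interval comes from a Gronwall estimate for the difference of two Duhamel iterates.

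For conservation of energy and global existence \ref{it:cauchy-en}, I will first verify $\vd E/\vd t=0$ for $H^2$ data by pairing \eqref{eq:csf-2nd} with $\dot\phi$ and integrating in $x$, using the decay of $\partial_x\phi,\dot\phi$ at $\pm\infty$ coming from $\bs\phi\in \cE_{\iota_-,\iota_+}$; approximation of $\bs\phi_0$ by $H^2$ data combined with \ref{it:cauchy-strong} extends conservation to arbitrary finite-energy data. Since $U\ge 0$, the conserved energy controls $\|\partial_x\phi\|_{L^2}$ and $\|\dot\phi\|_{L^2}$, hence $\|\partial_x\psi\|_{L^2}+\|\dot\psi\|_{L^2}$, while the $L^2$ norm of $\psi$ itself grows at most exponentially via $\vd/\vd t\,\|\psi\|_{L^2}^2\le\|\psi\|_{L^2}^2+\|\dot\psi\|_{L^2}^2$. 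These bounds preclude finite-time blow-up, so iterated local existence produces a global solution. Item \ref{it:cauchy-speed} is the standard finite speed of propagation argument: writing $w\coloneqq\wt\phi-\phi$ and multiplying $\partial_t^2 w-\partial_x^2 w=-(U'(\wt\phi)-U'(\phi))$ by $\partial_t w$, integration by parts on a truncated backward light cone shows that the lateral flux has a favorable sign, and a Gronwall bound for the local energy of $w$ (using $|U'(\wt\phi)-U'(\phi)|\lesssim|w|$ from the $L^\infty$ bounds on $\wt\phi,\phi$) together with the vanishing of $w(t_0)$, $\partial_t w(t_0)$ on $[x_1,x_2]$ forces $w\equiv 0$ inside the cone.

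The main difficulty is the weak stability \ref{it:cauchy-weak}, which will combine \ref{it:cauchy-speed} with local compactness. Fix $t\in\bR$ and $R>0$. By \ref{it:cauchy-speed}, $\bs\phi_m(t)\rst_{[-R,R]}$ depends only on $\bs\phi_{m,0}\rst_{I}$ with $I\coloneqq[-R-|t-t_0|,R+|t-t_0|]$; since $\bs\phi_{m,0}-\bs\phi_0$ is bounded in $\cE$ and uniformly $L^\infty$-bounded on $I$, the local energy estimate used in \ref{it:cauchy-strong} gives uniform bounds on $\bs\phi_m$ on $[t_0,t]\times[-R,R]$. After extracting a subsequence, $\bs\phi_m(t)\wto \wt{\bs\phi}(t)$ in our sense, and by the Rellich compactness of $H^1\hookrightarrow L^\infty_\tx{loc}$, $\phi_m\to\wt\phi$ strongly in $L^\infty_\tx{loc}$ on the relevant space-time cylinder. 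The crucial step, which is the real obstacle, is passing to the limit in the nonlinearity $U'(\phi_m)$ inside the Duhamel formula localized to $[t_0,t]\times[-R,R]$: strong $L^\infty_\tx{loc}$ convergence of $\phi_m$ and continuity of $U'$ upgrade to strong $L^2_\tx{loc}$ convergence of $U'(\phi_m)$, so the limit $\wt{\bs\phi}$ solves \eqref{eq:csf} with initial data $\bs\phi_0$; uniqueness from \ref{it:cauchy-en} identifies $\wt{\bs\phi}=\bs\phi$, and a standard subsequence argument then promotes this to weak convergence of the full sequence at every $t$. The delicate point, and the one that genuinely cannot be treated by purely functional-analytic means, is the passage to the limit in the nonlinear term, which relies essentially on the strong local compactness afforded by the one-dimensional Sobolev embedding.
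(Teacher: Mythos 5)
For parts \ref{it:cauchy-en}--\ref{it:cauchy-speed} your plan is essentially the paper's: the authors simply invoke Picard iteration and energy estimates for \ref{it:cauchy-en}--\ref{it:cauchy-strong} (citing the literature), and obtain \ref{it:cauchy-speed} by observing that each Picard iterate inherits the unit propagation speed of the linear Klein--Gordon flow; your flux computation on backward light cones is an equally standard route to the same conclusion.

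The genuine gap is in \ref{it:cauchy-weak}, at the very first step, where you write ``since $\bs\phi_{m,0}-\bs\phi_0$ is bounded in $\cE$ and uniformly $L^\infty$-bounded on $I$.'' Neither assertion is a hypothesis. The notion $\bs\phi_{m,0}\wto\bs\phi_0$ used here only requires $\dot\phi_{m,0}\wto\dot\phi_0$ weakly in $L^2$ and $\phi_{m,0}-\phi_0\to 0$ in $L^\infty_{\tx{loc}}$; the paper explicitly remarks that this is strictly more general than weak convergence of $\bs\phi_{m,0}-\bs\phi_0$ in the Hilbert space $\cE$, and in particular it carries no uniform bound on $\|\partial_x\phi_{m,0}\|_{L^2(I)}$ or on $\|\phi_{m,0}-\phi_0\|_{L^2}$. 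Without such bounds your local energy estimate on the cylinder $[t_0,t]\times[-R,R]$, and with it the Rellich compactness step and the passage to the limit in the nonlinearity, does not get off the ground. The paper devotes the first step of its proof of \ref{it:cauchy-weak} precisely to this reduction: it replaces $\phi_{m,0}$ by a function $\wt\phi_{m,0}$ equal to the vacuum outside an interval $[x_m,x_m']$ on which $\phi_{m,0}-\phi_0$ is uniformly small (the cutoff \eqref{eq:weak-conv-cutoff}), checks that the modified data satisfy $\|\wt\phi_{m,0}-\phi_0\|_{L^\infty}\to 0$ with $\wt\phi_{m,0}-\phi_0$ bounded in $H^1$, and then uses \ref{it:cauchy-speed} to see that it suffices to prove the claim for the modified solutions. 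A secondary, more easily repaired, omission: to pass to the limit in the Duhamel formula you need one subsequence of $\bs\phi_m$ converging at \emph{all} times of $[t_0,t]$ simultaneously, not merely at the endpoint $t$. The paper gets this by splitting $\bs\psi_m$ into the free evolution of the data (which converges weakly by continuity of the linear flow) plus an inhomogeneous part $\bs g_m$ with $\bs g_m(t_0)=0$, bounded in $C^1$ of every bounded interval with values in $\cE$, to which Arzel\`a--Ascoli applies in the metrizable weak topology of bounded balls; your fixed-$t$ extraction would need a diagonal argument of this kind spelled out.
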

\begin{proof}
Statements \ref{it:cauchy-en}, \ref{it:cauchy-H2} and \ref{it:cauchy-strong} follow from energy estimates and Picard iteration, see for example \cite{GiVe85} or \cite[Section X.13]{ReedSimon}
for similar results.
Since the linear Klein-Gordon equation has propagation speed equal to 1, each Picard iteration satisfies the finite propagation speed property, and \ref{it:cauchy-speed} follows by passing to the limit.
We skip the details.

We sketch a proof of \ref{it:cauchy-weak}.

In the first step, we argue that it can be assumed without loss of generality that $\|\bs \phi_{m, 0} - \bs \phi_0\|_\cE$ is bounded and $\|\phi_{m, 0} - \phi_0\|_{L^\infty} \to 0$.
To this end, let $x_m \to -\infty$ and $x_m' \to \infty$ be such that
\begin{equation}
\lim_{m\to\infty}\sup_{x \in [x_m, x_m']}|\phi_{m, 0}(x) - \phi_0(x)| = 0.
\end{equation}
We define a new sequence $\wt \phi_{m, 0}: \bR \to \bR$
by the formula
\begin{equation}
\label{eq:weak-conv-cutoff}
\wt\phi_{m, 0}(x) \coloneqq \begin{cases}
\iota_- & \text{for all }x \leq x_m - 1, \\
(x_m-x)\iota_- + (1-x_m+x)\phi_{m, 0}(x_m)& \text{for all }x \in [x_m - 1, x_m], \\
\phi_{m, 0}(x) &\text{for all }x \in [x_m, x_m'], \\
(x - x_m')\iota_+ + (1-x+x_m')\phi_{m, 0}(x_m') & \text{for all }x \in [x_m', x_m' + 1], \\
\iota_+ & \text{for all }x \geq x_m' + 1.
\end{cases}
\end{equation}
Taking into account that
\begin{equation}
\lim_{m\to\infty}\sup_{x \leq x_m}\big(|\phi_0(x) - \iota_-|
+\sup_{x \geq x_m'}|\phi_0(x) - \iota_+|\big) = 0,
\end{equation}
we have that $\|\wt\phi_{m, 0} - \phi_0\|_{L^\infty} \to 0$.
In particular, if we take $R \gg 1$, then
\begin{equation}
\limsup_{m\to\infty}\sup_{x \leq -R}|\wt\phi_{m, 0}(x) - \iota_-| \ll 1,
\end{equation}
thus
\begin{equation}
\limsup_{m\to\infty}\int_{-\infty}^{-R}|\wt\phi_{m, 0}(x) - \iota_-|^2\ud x \lesssim \limsup_{m\to\infty}\int_{-\infty}^{-R}U(\wt\phi_{m, 0}(x))\ud x < \infty,
\end{equation}
and similarly for $x \geq R$. Hence, $\limsup_{m\to\infty}\|\wt \phi_{m, 0} - \phi_0\|_{H^1} < \infty$.

Let $\wt{\bs\phi}_m$ be the solution of \eqref{eq:csf} such that
$\wt{\bs \phi}_m(t_0) = (\wt \phi_{m, 0}, \dot \phi_{m, 0})$.
By property \ref{it:cauchy-speed}, it suffices to prove that
$\wt{\bs\phi}_m(t) \wto \bs\phi(t)$ for all $t$,
which finishes the first step. In the sequel, we write $\phi_{m, 0}$
instead of $\wt\phi_{m, 0}$.

We choose $\xi$ as in \eqref{eq:cauchy-psi} and write $\bs \phi_m = (\xi, 0) + \bs\psi_m$, $\bs\phi = (\xi, 0) + \bs\psi$.
It suffices to prove that, for every $t\in \bR$, any subsequence of $\bs\psi_m(t)$ has a subsequence weakly converging to $\bs\psi(t)$.

Let $\bs h_m$ be the solution of \eqref{eq:free-kg} with initial data $\bs h_m(t_0) = \bs\psi_m(t_0)$,
$\bs h$ the solution of the same problem with initial data $\bs h(t_0) = \bs\psi(t_0)$,
and $\bs g_m$ the solution of
\begin{equation}
\label{eq:cauchy-gm}
\dd t\begin{pmatrix} g_m \\ \dot g_m \end{pmatrix} = \begin{pmatrix}\dot g_m \\ \partial_x^2 g_m - g_m - \big(U'(\xi + \psi_m) - U'(\xi) - \psi_m\big)
+ \big(\partial_x^2 \xi - U'(\xi)\big)\end{pmatrix},
\end{equation}
with the initial data $\bs g_m(t_0) = 0$.
We thus have $\bs\psi_m = \bs h_m + \bs g_m$ for all $m$.

By the continuity of the free Klein-Gordon flow, we have $\bs h_m(t) \wto \bs h(t)$ for all $t \in \bR$.
By the energy estimates, $\bs g_m$ is bounded in $C^1(I; \cE)$ for any bounded open interval $I$.
Since the weak topology on bounded balls of $\cE$ is metrizable, the Arzel\`a-Ascoli theorem yields $\bs g \in C(\bR; \cE)$
and a subsequence of $(\bs g_m)_m$, which we still denote $(\bs g_m)_m$, such that $\bs g_m(t) \wto \bs g(t)$ for every $t \in \bR$.
Let $\wt{\bs \psi} \coloneqq \bs h + \bs g$, so that $\bs\psi_m(t) \wto \wt{\bs\psi}(t)$ for all $t \in \bR$.
In particular, $\psi_m(t, x) \to \wt\psi(t, x)$ for all $(t, x)$, hence by the dominated convergence theorem
\begin{equation}
U'(\xi + \psi_m) - U'(\xi) - \psi_m \to U'(\xi + \wt\psi) - U'(\xi) - \wt\psi
\end{equation}
in the sense of distributions. We can thus pass to the distributional limit in \eqref{eq:cauchy-psi} and conclude that $\wt{\bs \psi}$
is a solution of \eqref{eq:csf} with initial data $\wt{\bs \psi}(t_0) = \bs\psi(t_0)$.
By the uniqueness of weak solutions, $\wt{\bs\psi} = \bs\psi$.
\end{proof}
Finally, we have local stability of vacuum solutions.
\begin{lemma}
\label{lem:loc-vac-stab}
There exist $\eta_0, C_0 > 0$ having the following property.
If $\bs\iota \in \{\bs 1, -\bs 1\}$, $t_0 \in \bR$, $-\infty \leq x_1 + 1 < x_2 - 1 \leq \infty$, $E(\bs\phi_0) < 0$, $\|\bs \phi_0 - \bs\iota\|_{\cE(x_1, x_2)} \leq \eta_0$ and $\bs\phi$ is a solution of \eqref{eq:csf} such that $\bs\phi(0) = \bs\phi_0$, then for all $t \in \big[t_0 - \frac 12(x_2 - x_1)+1, t_0 + \frac 12(x_2 - x_1)-1\big]$
\begin{equation}
\label{eq:loc-vac-stab}
\|\bs \phi(t) - \bs \iota\|_{\cE(x_1 + |t - t_0|, x_2 - |t - t_0|)} \leq C_0 \|\bs \phi_0 - \bs\iota\|_{\cE(x_1, x_2)}.
\end{equation}
\end{lemma}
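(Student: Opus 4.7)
The plan will be to apply the standard backward-light-cone energy estimate combined with a bootstrap argument to control the nonlinear correction near the vacuum. By the symmetries of \eqref{eq:csf} (time reversal and $\phi \mapsto -\phi$), it suffices to treat $t \geq t_0$ and $\bs\iota = \bs 1$. I set $h \coloneqq \phi - 1$, so that $(h, \dot h) = \bs\phi - \bs 1$ satisfies $\partial_t^2 h - \partial_x^2 h + U'(1+h) = 0$, with $U'(1+h) = h + O(h^2)$ for $|h|$ bounded, since $U(1) = U'(1) = 0$ and $U''(1) = 1$. For $s \in [t_0, t]$ I introduce the shrinking interval $I_s \coloneqq [x_1 + (s - t_0),\, x_2 - (s - t_0)]$ and the local energy
\begin{equation}
\Es(s) \coloneqq \int_{I_s} \Big(\tfrac{1}{2}\dot h(s,x)^2 + \tfrac{1}{2}(\partial_x h(s,x))^2 + U(1 + h(s,x))\Big)\, \ud x.
\end{equation}
After approximating $\bs\phi_0$ by smoother data via parts \ref{it:cauchy-H2}--\ref{it:cauchy-strong} of Proposition~\ref{prop:cauchy}, one may assume $\bs h$ is regular enough to differentiate under the integral sign. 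The identity $\partial_s\big(\tfrac{1}{2}\dot h^2 + \tfrac{1}{2}(\partial_x h)^2 + U(1+h)\big) = \partial_x(\dot h\, \partial_x h)$, together with the Leibniz rule for the moving endpoints $x_1'(s)=1$, $x_2'(s)=-1$, will give
\begin{equation}
\Es'(s) = -\tfrac{1}{2}(\dot h - \partial_x h)^2(s,x_2(s)) - \tfrac{1}{2}(\dot h + \partial_x h)^2(s,x_1(s)) - U(1 + h(s,x_1(s))) - U(1 + h(s,x_2(s))) \leq 0,
\end{equation}
which is the usual light-cone monotonicity; when an endpoint is infinite the corresponding boundary contribution simply does not appear. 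Hence $\Es(s) \leq \Es(t_0)$ on $[t_0, t]$.

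The main technical step will be a bootstrap ensuring that $\|h(s,\cdot)\|_{L^\infty(I_s)}$ remains small throughout $[t_0, t]$, so that $U(1+h)$ is pointwise comparable to $h^2$. The hypothesis $|t - t_0| \leq \tfrac{1}{2}(x_2 - x_1) - 1$ forces $|I_s| \geq 2$ for every $s \in [t_0, t]$, so the 1D Sobolev embedding $H^1(I_s) \hookrightarrow L^\infty(I_s)$ has a constant uniform in $s$. Together with $\|\bs\phi_0 - \bs 1\|_{\cE(x_1, x_2)} \leq \eta_0$ and the Taylor bound $U(1+h) \leq C h^2$ for $|h| \leq \tfrac{1}{2}$, this yields $\Es(t_0) \leq C\|\bs\phi_0 - \bs 1\|_{\cE(x_1,x_2)}^2$. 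Let $T^*$ be the supremum of $\tau \in [t_0, t]$ with $\|h(s,\cdot)\|_{L^\infty(I_s)} \leq \tfrac{1}{2}$ for all $s \in [t_0, \tau]$. For $s \in [t_0, T^*]$, the reverse inequality $U(1+h) \geq c h^2$ combined with monotonicity of $\Es$ will give
\begin{equation}
\|h(s,\cdot)\|_{H^1(I_s)}^2 \leq C' \Es(s) \leq C' \Es(t_0) \leq C'' \eta_0^2,
\end{equation}
and Sobolev embedding then yields $\|h(s,\cdot)\|_{L^\infty(I_s)} \leq C''' \eta_0 < \tfrac{1}{2}$ for $\eta_0$ small enough. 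Continuity will force $T^* = t$, and applying the same chain of inequalities at $s = t$ produces the desired bound $\|\bs\phi(t) - \bs 1\|_{\cE(I_t)}^2 \leq C_0\|\bs\phi_0 - \bs 1\|_{\cE(x_1, x_2)}^2$.

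The only genuinely delicate point is the bootstrap itself: $\Es$ controls only the potential well $\int U(1+h)$, which provides $L^2$ control on $h$ solely where $|h|$ is already pointwise small. Pointwise smallness must therefore be propagated forward in time before the coercivity $U(1+h) \gtrsim h^2$ can be exploited, which is precisely why the length condition $|I_s| \geq 2$ is imposed: it prevents the Sobolev embedding constant from degenerating as $|I_s| \to 0$.
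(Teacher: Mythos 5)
Your proof is correct and follows essentially the same route as the paper's own (sketched) argument: monotonicity of the backward-light-cone energy, the equivalence $U(1+h)\simeq h^2$ for $|h|$ small, and a continuity/bootstrap argument using the $H^1(I)\hookrightarrow L^\infty(I)$ embedding with constant uniform over intervals of length $\geq 2$ (which is exactly why the hypothesis keeps $|I_s|\geq 2$). Your write-up simply fills in the details the paper leaves implicit, such as the explicit boundary terms in $\Es'(s)$ and the definition of the bootstrap time $T^*$.
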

\begin{proof}[Sketch of a proof]
To fix ideas, assume $\iota = 1$ and $t > t_0 = 0$.
The positivity of $U$ and Green's formula in space-time imply that the function
\begin{equation}
\big[0, (x_2 - x_1)/2 - 1\big] \owns t \mapsto \int_{x_1 + t}^{x_2 - t}\Big( \frac 12(\dot \phi(t))^2 + \frac 12 (\partial_x \phi(t))^2 + U(\phi(t))\Big)\ud x
\end{equation}
is decreasing.
Set $\bs\psi(t) \coloneqq \bs\phi(t) - \bs 1$.
For $|\psi|$ small, we have $U(1 + \psi) \simeq \psi^2$.
By a continuity argument and the fact that $\|\psi\|_{L^\infty(I)}\lesssim \|\psi\|_{H^1(I)}$
on any interval $I$ of length $\geq 2$, we obtain
$\|\psi(t)\|_{L^\infty(x_1 + t, x_2 - t)} \lesssim \eta_0$ and \eqref{eq:loc-vac-stab}.
\end{proof}

\section{Estimates on the modulation parameters}
\label{sec:mod}
Our present goal is to reduce the motion of a kink cluster to a system of ordinary
differential equations with sufficiently small error terms.
\subsection{Basic modulation}
\label{ssec:basic-mod}
We consider a solution of \eqref{eq:csf} which is close, on some open time interval $I$, to multikink configurations.
It is natural to express the solution as the sum of a multikink and a small error,
which can be done in multiple ways. A unique choice of such a decomposition is obtained
by imposing specific \emph{orthogonality conditions}. In this section, we prove Lemma~\ref{lem:mod-av-intro}, which we state again, while including an additional estimate. 

If $(\vec a, \vec v) \in C^1(I; \bR^n \times \R^n)$ and, using the notation \eqref{eq:Ha-def}, we decompose
\begin{equation}
\label{eq:g-def}
\bs \phi(t) = \bs H(\vec a(t), \vec v(t)) + \bs g(t),
\end{equation}
then the Chain Rule implies that $\bs \phi$ solves \eqref{eq:csf} if and only if $\bs g$ solves
\begin{equation}
\label{eq:ls-g-eq}
\begin{aligned}
\partial_t \bs g(t) &= \bs J\vD E\big(\bs H(\vec a(t), \vec v(t)) + \bs g(t)\big)
- {\vec a \,}'(t)\cdot \partial_{\vec a}\bs H(\vec a(t), \vec v(t)) - {\vec v \,}'(t)\cdot \partial_{\vec v}\bs H(\vec a(t), \vec v(t)) 
\end{aligned}
\end{equation}

We impose the orthogonality conditions, which we rewrite as
\begin{equation}
\label{eq:g-orth}
\la  \bs \alpha_k(t), \bs g(t)\ra =\la  \bs \beta_k(t), \bs g(t)\ra = 0, \qquad\text{for all }k \in \{1, \ldots, n\}\text{ and }t \in I.
\end{equation}
Indeed, we prove the following   ``static'' modulation lemma.
%

\begin{lemma} [Modulation parameters] \label{lem:static-mod}
There exist $\eta_0, \eta_1, C_0>0$ with the following properties. For all $\bs \phi_0 \in \E_{1, (-1)^n}$  such that 
\EQ{
\bfd( \bs \phi_0) < \eta_0,
}
 there exist unique $(\vec a, \vec v) = (\vec a( \bs \phi_0), \vec v(\bs \phi_0)) \in \R^n \times \R^n$  with $a_1 \le a_2 \le \dots \le a_n$ such that 
\EQ{\label{eq:dist-bd-eta1}
 \big\| \bs \phi_0 - \bs H( \vec a, \vec v) \big\|_{\E}^2 + \rho(\vec a, \vec v) < \eta_1, 
}
and 
\EQ{ \label{eq:static-orth}
\la \bs \phi_0 - \bs H(\vec a , \vec v), \, \bs \alpha(a_k, v_k) \ra = 
\la \bs \phi_0 - \bs H(\vec a, \vec v) ,\, \bs \beta(a_k, v_k) \ra = 0, 
}
for all $k  = 1, \dots, n$. Moreover, the pair $(\vec a, \vec v)$ satisfy the estimates
\EQ{ \label{eq:dist-bd-C0}
 \big\| \bs \phi_0- \bs H( \vec a, \vec v) \big\|_{\E}^2 &+ \rho(\vec a, \vec v) < C_0  \bfd( \bs \phi_0) .
} 
and there exists $\nu>0, C>0$ such that 
\begin{multline} \label{eq:g-coer-stat}
\frac 12 \| \dot \phi_0- \dot H( \vec a, \vec v)\|_{L^2}^2 + \frac{\nu}{4}\|\phi_0-  H( \vec a, \vec v)\|_{H^1}^2 + \frac{M}{2} \sum_{k =1}^n v_k^2  \\
 \le 2\kappa^2 \sum_{k=1}^{n-1}\eee^{-y_k} + Cy_{\min}\eee^{-2y_{\min}} + Cy_{\min}\abs{v}^4+ E(\bs \phi) - nM.
\end{multline} 
Finally, the map $\E_{1, (-1)^n} \owns \bs \phi_0 \mapsto  (\vec a( \bs \phi_0), \vec v(\bs \phi_0)) \in \R^n \times \R^n$ is $C^1$. 

\end{lemma}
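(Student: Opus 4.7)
The strategy is to apply the implicit function theorem to the map
\[
\Psi: \cE_{1,(-1)^n} \times \bR^n \times \bR^n \To \bR^{2n}, \qquad
\Psi(\bs\phi_0, \vec a, \vec v)_k = \bigl(\la \bs\phi_0 - \bs H(\vec a,\vec v), \bs\alpha_k\ra,\, \la \bs\phi_0 - \bs H(\vec a,\vec v), \bs\beta_k\ra\bigr),
\]
for $k=1,\dots,n$. The orthogonality conditions \eqref{eq:static-orth} are precisely $\Psi(\bs\phi_0, \vec a, \vec v) = 0$. The key observation is that the Jacobian $D_{(\vec a, \vec v)} \Psi$ evaluated at $\bs\phi_0 = \bs H(\vec a, \vec v)$ is $-\calM(\vec a, \vec v)$ (see~\eqref{eq:Mav-def}), which by~\eqref{eq:Mav-diag}--\eqref{eq:Mav-cross} is diagonally dominant whenever $\rho(\vec a, \vec v)$ is sufficiently small, hence uniformly invertible. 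A general point $\bs\phi_0$ contributes extra terms of the form $\la \bs\phi_0 - \bs H(\vec a, \vec v), \partial_{a_k}\bs\alpha_k\ra$ etc., but these are controlled by $\|\bs\phi_0 - \bs H(\vec a,\vec v)\|_{\cE}$, so for $\eta_0$ and $\eta_1$ small the total Jacobian remains invertible with uniform bounds.

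First I would fix $\eta_1 > 0$ so small that whenever $\|\bs\phi_0 - \bs H(\vec a^*, \vec v^*)\|_{\cE}^2 + \rho(\vec a^*, \vec v^*) < \eta_1$, a quantitative version of the implicit function theorem applied to $\Psi(\bs\phi_0, \cdot, \cdot)$ around $(\vec a^*, \vec v^*)$ produces a unique root $(\vec a, \vec v)$ in a ball around $(\vec a^*, \vec v^*)$ whose size is commensurable with $\eta_1$, together with the Lipschitz bound $|(\vec a, \vec v) - (\vec a^*, \vec v^*)| \lesssim \|\bs\phi_0 - \bs H(\vec a^*, \vec v^*)\|_{\cE}$. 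Then I would choose $\eta_0 < \eta_1 /(4 C_0)$. Given $\bs\phi_0$ with $\bfd(\bs\phi_0) < \eta_0$, by the infimum definition one picks $(\vec a^*, \vec v^*)$ with $\|\bs\phi_0 - \bs H(\vec a^*, \vec v^*)\|_{\cE}^2 + \rho(\vec a^*, \vec v^*) < 2\eta_0$, feeds it into the IFT, and obtains $(\vec a, \vec v)$ satisfying \eqref{eq:static-orth} and \eqref{eq:dist-bd-eta1}. The bound \eqref{eq:dist-bd-C0} follows by combining the Lipschitz estimate with the fact that $\bs H$ is Lipschitz in its parameters, and then optimizing over $(\vec a^*, \vec v^*)$. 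For uniqueness in the entire $\eta_1$-ball, I would argue by connectedness: if $(\vec a, \vec v)$ and $(\sh{\vec a}, \sh{\vec v})$ both satisfy \eqref{eq:static-orth} and \eqref{eq:dist-bd-eta1}, the straight-line segment joining them stays in the region of IFT invertibility, forcing them to coincide. The $C^1$ regularity of $\bs\phi_0 \mapsto (\vec a(\bs\phi_0), \vec v(\bs\phi_0))$ follows directly from the $C^1$ version of the IFT, as $\Psi$ is $C^1$ in all its arguments.

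To establish \eqref{eq:g-coer-stat}, I would Taylor expand the (conserved) total energy around $\bs H(\vec a, \vec v)$:
\[
E(\bs\phi_0) = E(\bs H(\vec a, \vec v)) + \la \vD E(\bs H(\vec a, \vec v)), \bs g\ra + \tfrac{1}{2}\vD^2 E(\bs H(\vec a, \vec v))(\bs g, \bs g) + R,
\]
where $\bs g \coloneqq \bs\phi_0 - \bs H(\vec a, \vec v)$ and $|R| \lesssim \|\bs g\|_{\cE}^3$ by the smoothness of $U$ and Sobolev embedding. Lemma~\ref{lem:interactions} gives the expansion of $E(\bs H(\vec a, \vec v))$ that produces the explicit $Mn$, $\frac{1}{2}M\sum v_k^2$, and $-2\kappa^2\sum e^{-y_k}$ terms together with an error of the right size. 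For the linear term, \eqref{eq:DE-expansion} gives $\vD E(\bs H(\vec a, \vec v)) = -\sum (-1)^k v_k \bs\alpha_k + \bs f(\vec a, \vec v)$; the first part vanishes against $\bs g$ by orthogonality \eqref{eq:static-orth}, and the remaining piece is controlled using $\|\bs f\|_{\cE} \lesssim \sqrt{y_{\min}}e^{-y_{\min}}$ from Lemma~\ref{lem:U-bounds}, yielding a negligible contribution after Young's inequality. The quadratic form is bounded below by $\nu\|\bs g\|_{\cE}^2$ thanks to Proposition~\ref{prop:coer} and the orthogonality to $\bs\beta_k$. Rearranging gives \eqref{eq:g-coer-stat}, after absorbing the cubic error $|R|$ into the $\frac{\nu}{4}\|\bs g\|_{H^1}^2$ term using that $\|\bs g\|_{\cE}$ is small.

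The main technical point is getting \emph{uniform} estimates in the IFT as the spatial configuration $\vec a$ drifts toward infinitely separated kinks. This is handled precisely by the diagonal dominance estimates for $\calM(\vec a, \vec v)$ established in~\eqref{eq:Mav-diag}--\eqref{eq:Mav-cross}, which show that its condition number stays bounded as long as $\rho(\vec a, \vec v)$ is small, independently of how large $y_{\min}$ becomes. Once this uniform invertibility is in hand, the remaining steps are routine.
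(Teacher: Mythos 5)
Your overall strategy coincides with the paper's: the paper implements the quantitative implicit function theorem as a contraction mapping (a Newton-type map built from the frozen Jacobian at a near-minimizer $(\vec b, \vec w)$ of the infimum in $\bfd(\bs\phi_0)$), with the same diagonal-dominance input for $\calM(\vec a,\vec v)$, the same derivation of \eqref{eq:dist-bd-C0} from the resulting Lipschitz bound, the same $C^1$ dependence via the parametric contraction/IFT, and an essentially identical Taylor expansion of the energy for \eqref{eq:g-coer-stat}. Those parts of your proposal are sound.

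The gap is in the uniqueness of $(\vec a,\vec v)$ within the \emph{full} $\eta_1$-ball \eqref{eq:dist-bd-eta1}. Your connectedness argument assumes that the straight-line segment joining two admissible tuples $(\vec a,\vec v)$ and $(\sh{\vec a},\sh{\vec v})$ ``stays in the region of IFT invertibility'', but that is only guaranteed if the segment is short, i.e.\ if the two tuples are already known to be close in $\bR^{2n}$. Condition \eqref{eq:dist-bd-eta1} does not give this directly: it only says that both configurations $\bs H(\vec a,\vec v)$ and $\bs H(\sh{\vec a},\sh{\vec v})$ are $\cE$-close to $\bs\phi_0$ with small $\rho$. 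If the parameters were far apart, intermediate points $\bs H(\vec a_s,\vec v_s)$ on the segment would sit at $O(1)$ distance from $\bs\phi_0$ in $\cE$, and the Jacobian terms of the form $\la \bs\phi_0 - \bs H(\vec a_s,\vec v_s), \partial_{a_j}\bs\alpha_k\ra$ are then no longer perturbative, so invertibility along the segment is not available. What is needed, and what the paper supplies as a separate final step by a compactness/contradiction argument (see the last paragraph of its proof), is the separation property: if $\|\bs H(\vec a,\vec v)-\bs H(\sh{\vec a},\sh{\vec v})\|_\cE$ is small and both $\rho(\vec a,\vec v)$ and $\rho(\sh{\vec a},\sh{\vec v})$ are small, then $|\vec a-\sh{\vec a}|+|\vec v-\sh{\vec v}|$ is small. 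Once that is established, both tuples lie in the contraction ball around $(\vec b,\vec w)$ and uniqueness follows from the fixed point (your segment argument would then also go through); but the segment argument cannot substitute for this separation lemma.
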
 


\begin{proof} 
  Let $\eta  \coloneqq  \bfd( \bs \phi)$. We can choose a pair $(\vec b, \vec w)$ so that 
\EQ{
\eta \le  \big\| \bs \phi - \bs H( \vec b, \vec w) \big\|_{\E}^2 + \sum_{k =1}^{n-1}  e^{ - (b_{k+1} - b_k)}   
  + \sum_{k =1}^n | w_k |^2  \le 4 \eta . 
}
for each $k \in \{1, \dots, n\}$ we define the functions 
\EQ{
A_{2k-1}( \vec a, \vec v; \bs h) &= \la   \bs  \alpha(a_k , v_k) , \, \bs h + \bs H(\vec b, \vec w) - \bs H(\vec a, \vec v) \ra  \\
B_{2k} (\vec a, \vec v; \bs h) & = \la \bs \beta(a_k, v_k), \, \bs h + \bs H(\vec b, \vec w) - \bs H(\vec a, \vec v) \ra.
}
and for $k = 1, \dots, n$ define $ \vec \Gamma = ( \Gamma_1, \dots, \Gamma_{2n}): C^1( \R^n \times \R^n \times \E ; \R^n \times \R^n)$ by 
\EQ{
\Gamma_{\ell}( \vec a, \vec v; \bs h)\coloneqq   \begin{cases} A_{2k-1}( \vec a, \vec v; \bs h) \mif  \ell = 2k-1  \\ B_{2k} (\vec a, \vec v; \bs h) \mif \ell = 2k \end{cases} 
}
Using the usual shorthand $\bs \al_k , \bs \beta_k, \bs H_k$  we compute 
\EQ{ \label{eq:A-diff} 
\p_{a_j} A_{2k-1}( \vec a, \vec v; \bs h) &= \begin{cases} \la \p_{a} \bs \al_k  , \, \bs h + \bs H( \vec b, \vec w) - \bs H( \vec a, \vec v)\ra  \mif j = k \\ 
 -(-1)^j \la \bs \al_k , \,  \p_a \bs H_j\ra  \mif j \neq k\end{cases} \\
 \p_{v_j} A_{2k-1}( \vec a, \vec v; \bs h) &= \begin{cases} \la \p_{v} \bs \al_k  , \, \bs h + \bs H( \vec b, \vec w) - \bs H( \vec a, \vec v)\ra - (-1)^k \la \bs \al_k , \, \p_v \bs H_k \ra  \mif j = k \\ 
 -(-1)^j  \la \bs \al_k , \, \p_v \bs H_j\ra  \mif j \neq k\end{cases} 
 }
 and 
 \EQ{ \label{eq:B-diff} 
 \p_{a_j} B_{2k}( \vec a, \vec v; \bs h) &= \begin{cases} \la \p_{a} \bs \beta_k  , \, \bs h + \bs H( \vec b, \vec w) - \bs H( \vec a, \vec v)\ra - (-1)^k \la \bs \be_k , \, \p_a \bs H_k \ra \mif j = k \\ 
 - (-1)^j\la \bs \beta_k , \,  \p_a \bs H_j\ra \mif j \neq k\end{cases} \\
 \p_{v_j} B_{2k}( \vec a, \vec v; \bs h) &= \begin{cases} \la \p_{v} \bs \beta_k  , \, \bs h + \bs H( \vec b, \vec w) - \bs H( \vec a, \vec v)\ra   \mif j = k \\ 
 -(-1)^j  \la \bs \beta_k , \, \p_v \bs H_j\ra  \mif j \neq k\end{cases} 
 }
 We see that $\p_{(\vec a, \vec v)} \vec \Gamma( \vec b, \vec w; \bs h)$ is a $2n \times 2n$ invertible matrix with uniformly bounded inverse as long as $\bs h$, $| \vec a - \vec b|$ and $| \vec v - \vec w|$ are small enough. 
 
Let $\bs h_0 \coloneqq  \bs \phi - \bs H (\vec b, \vec w)$. Let $C_1>0$ be a constant to be fixed below and consider, the mapping $\vec \Phi: B_{\R^n}( \vec b, C_1\sqrt{ \eta_0}) \times B_{\R^n} (\vec w, C_1 \sqrt{\eta_0}) \to \R^n \times \R^n$ defined by 
 \EQ{
 \vec \Phi(\vec a, \vec v) : = (\vec a, \vec v) - \big[\p_{(\vec a, \vec v)} \vec \Gamma( \vec b, \vec w; \bs h_0)\big]^{-1}  \vec \Gamma(\vec a, \vec v; \bs h_0) . 
 }
 We set up a contraction mapping to find a fixed point of $\vec \Phi$. 
 Observe that 
 \EQ{ \label{eq:eta-away1} 
 |  \vec \Phi(\vec b, \vec w; \bs h) - (\vec b, \vec w)| \lesssim \| \bs h_0 \|_{\E}  = \| \bs \phi - \bs H(\vec b, \vec w) \|_{\E} \le 2\sqrt{ \eta} 
 }
 and hence $C_1$ can be chosen so that 
 \EQ{
 |  \vec \Phi(\vec b, \vec w; \bs h) - (\vec b, \vec w)|  \le \frac{1}{3} C_1 \sqrt{\eta} \le \frac{1}{3} C_1 \sqrt{\eta_0} . 
 }
 By the Fundamental Theorem of Calculus, 
 \EQ{
& \vec \Phi( \vec{\sh a}, \vec{\sh v} ) - \vec \Phi( \vec a, \vec v)\\
&  = \Big[ \Id -  \big[\p_{(\vec a, \vec v)} \vec \Gamma( (\vec b, \vec w); \bs h_0)\big]^{-1}\int_0^1 \p_{(\vec a, \vec v)} \vec \Gamma( s(\vec{\sh a}, \vec{ \sh v}) + (1-s)(\vec a, \vec v);  \bs h_0) \, \ud s \Big] \Big( (\vec{\sh a}, \vec {\sh v}) - (\vec a, \vec v) \Big) 
 }
 From~\eqref{eq:A-diff},~\eqref{eq:B-diff} we see that $\p_{(\vec a, \vec v)} \vec \Gamma((\vec a, \vec v) ; \bs h_0)$ is Lipshitz with respect to $(\vec a, \vec v)$ and hence, 
 \EQ{ \label{eq:contract-prop} 
 | \p_{(\vec a, \vec v)} \vec \Gamma( s(\vec{\sh a}, \vec{ \sh v}) + (1-s)(\vec a, \vec v);  \bs h_0) -  \p_{(\vec a, \vec v)} \vec \Gamma( (\vec b, \vec w);  \bs h_0)| \lesssim C_1 \sqrt{\eta_0}
 }
 and thus, 
 \EQ{
 | \vec \Phi( \vec{\sh a}, \vec{\sh v} ) - \vec \Phi( \vec a, \vec v)| \lesssim C_1 \sqrt{\eta_0} | (\vec{\sh a}, \vec{\sh v}) - (\vec a, \vec v)| \le \frac{1}{3} C_1 | (\vec{\sh a}, \vec{\sh v}) - (\vec a, \vec v)|
 }
 provided $\eta_0$ is chosen small enough. Hence $\vec \Phi$ is a strict contraction on the ball $B_{\R^n}(\vec b, C_1 \sqrt{\eta_0}) \times B_{\R^n}(\vec w, C_1 \sqrt{\eta_0})$. 
 
 Let $(\vec a, \vec v) = (\vec a( \bs \phi), \vec v(\bs \phi)$ be the unique fixed point of $\vec \Phi$ inside the ball  $B_{\R^n}(\vec b, C_1 \sqrt{\eta_0}) \times B_{\R^n}(\vec w, C_1 \sqrt{\eta_0})$. The orthogonality conditions~\eqref{eq:static-orth} hold for this choice of $(\vec a, \vec v)$ because of the definitions of $\vec \Gamma$ and $\vec \Phi$.  Next, using that $(\vec a, \vec v)$ is a fixed point of $\vec \Phi$, ~\eqref{eq:eta-away1}, and~\eqref{eq:contract-prop} we see that 
 \EQ{
 | (\vec a, \vec v) - (\vec b, \vec w)| = | \vec \Phi(\vec a, \vec v)  -\vec \Phi(b, w) + \vec \Phi(b, w) - (\vec b, \vec w)|  \le \frac{1}{3} | (\vec a, \vec v) - (\vec b, \vec w)| + \frac{1}{3} C_1 \sqrt{\eta}
 }
 and hence $ | (\vec a, \vec v) - (\vec b, \vec w)|  \lesssim \sqrt{\eta} = \sqrt{\bfd( \bs \phi)}$. The estimates~\eqref{eq:dist-bd-C0} readily follow. 
 
 
 We now prove \eqref{eq:g-coer-stat}. Set $\bs g \coloneqq \bs \phi - \bs H(\vec a, \vec v)$.
We have the Taylor expansion
\begin{multline} 
E(\bs H(\vec a, \vec v) + \bs g) = E(\bs H(\vec a, \vec v )) + \la \vD E(\bs H(\vec a, \vec v)), \bs g\ra + \frac 12 \la \vD^2 E(\bs H(\vec a, \vec v))\bs g, \bs g\ra \\
+ \int_{-\infty}^\infty \Big(U(H(\vec a, \vec v) + g) - U(H(\vec a, \vec v)) - U'(H(\vec a, \vec v))g - \frac 12 U''(H(\vec a, \vec v))g^2\Big)\ud x.
\end{multline} 
Lemma~\ref{lem:D2H} and \eqref{eq:static-orth} yield
\begin{equation}
\la \bs g, \vD^2 E(\bs H(\vec a, \vec v))\bs g\ra  = \|\dot g\|_{L^2}^2 + \la g, \vD^2 E_p(H(\vec a, \vec v)) g\ra \geq \|\dot g\|_{L^2}^2 + \nu\|g\|_{H^1}^2.
\end{equation}
The Sobolev embedding implies that the second line has absolute value $\lesssim \|g\|_{H^1}^3$, thus $ \leq \frac \nu 8 \|g\|_{H^1}^2$ if $\eta_0$ is small enough.
By~\eqref{eq:DEH} and~\eqref{eq:static-orth}, we also have
\begin{equation}
|\la \vD E(\bs H(\vec a, \vec v)), \bs g\ra|  \lesssim \sqrt{y_{\min}}\eee^{-y_{\min}}\|g\|_{L^2},
\end{equation}
thus $|\la \vD E(\bs H(\vec a, \vec v)), \bs g\ra| \leq \frac{\nu}{8}\|g\|_{H^1}^2 + Cy_{\min}\eee^{-2y_{\min}}$. 
We arrive at the estimate
\begin{equation}
\label{eq:g-coer-pres}
\frac 12 \|\dot g\|_{L^2}^2 + \frac{\nu}{4}\|g\|_{H^1}^2 \leq E(\bs H(\vec a, \vec v) + \bs g) - E(\bs H(\vec a, \vec v ))+ Cy_{\min}\eee^{-2y_{\min}}.
\end{equation}
Lemma~\ref{lem:interactions} then implies
\begin{multline} 
\frac 12 \|\dot g\|_{L^2}^2 + \frac{\nu}{4}\|g\|_{H^1}^2 + \frac{M}{2} \sum_{k =1}^n v_k^2  \\
 \le 2\kappa^2 \sum_{k=1}^{n-1}\eee^{-y_k} + Cy_{\min}\eee^{-2y_{\min}} + y_{\min}\abs{v}^4+ E(\bs H(\vec a) + \bs g) - nM,
\end{multline} 
in particular \eqref{eq:g-coer-stat}.

Continuous differentiability of $\bs \phi_0 \mapsto (\vec a(\bs \phi_0), \vec v( \bs \phi_0))$ follows from $\vec \Phi \in C^1$, see \cite[Chapter 2, Theorem 2.2]{chow-hale}.

It remains to prove that, if $\eta_1$ is small enough,
then there is no $(\vec a, \vec v) \notin B_{\bR^{2n}}((\vec b, \vec w), C_1\sqrt{\eta_0})$ satisfying \eqref{eq:dist-bd-eta1}. Suppose the contrary. Then there exist $\bs \phi_{0, m}$, $(\vec b_m, \vec w_m)$ and
$(\vec a_m, v_m)$ such that
\begin{gather}
\|\bs \phi_{0, m} - \bs H(\vec b_m, \vec w_m)\|_\cE^2 + \rho(\vec b_m, \vec w_m) \leq 2\bfd(\bs\phi_{0, m})\qquad\text{for all }m, \\
\lim_{m\to\infty} \big(\|\bs\phi_{0, m} - \bs H(\vec a_m, \vec v_m)\|_\cE^2 + \rho(\vec a_m, \vec v_m)\big) = 0, \\
|\vec a_m - \vec b_m| + | \vec v_m - \vec w_m|\geq C_1\sqrt{\eta_0} \qquad\text{for all }m.
\end{gather}
In particular, the first two conditions above yield
\begin{equation}
\lim_{m\to\infty} \big(\|\bs H(\vec a_m, \vec v_m) - \bs H(\vec b_m, \vec w_m)\|_\cE^2 + \rho(\vec a_m, \vec v_m)+ \rho(\vec b_m, \vec w_m)\big) = 0,
\end{equation}
which contradicts the third condition. For the details of this last point, see for example~\cite[proof of Lemma 2.27]{JL6} for a similar argument. 
\end{proof}

\begin{lemma}
\label{lem:basic-mod}
There exist $\eta_0, C_0, C_1 > 0$ such that the following is true.
Let $I \subset \bR$ be an open interval and let $\bs \phi: I \to \cE_{1, (-1)^n}$ be a solution of \eqref{eq:csf} such that
\begin{equation}
\bfd(\bs \phi(t)) \leq \eta_0, \qquad\text{for all }t \in I.
\end{equation}
Then there exist $\vec a \in C^1(I; \bR^n)$ and $\bs g \in C(I; \cE)$
satisfying \eqref{eq:g-def} and \eqref{eq:g-orth}.
In addition, 
for all $t \in I$ such that
\begin{equation}
\label{eq:delta-geq-ass}
\bfd(\bs\phi(t)) \geq C_1(E(\bs \phi) - nM)
\end{equation}
the following bounds hold:
\begin{gather}
\label{eq:g-coer}
\|\bs g(t)\|_{\cE}^2 \leq C_0 \rho(t) \leq C_0^2 \bfd(\bs\phi(t)), \\
\label{eq:ak'-est}
\big|a_k'(t) - v_k(t)\big | + \big|v_k'(t)\big| \leq C_0\rho(t).
\end{gather}
Finally, if $\bs \phi_m$ is a sequence of solutions of \eqref{eq:csf} such that
\begin{equation}
\label{eq:phi-conv-en}
\lim_{m \to \infty} \|\bs \phi_m - \bs \phi\|_{C(I; \cE)} = 0,
\end{equation}
then the corresponding $\vec a_m$, $\vec v_m$ and $\bs g_m$ satisfy
\begin{equation}
\label{eq:mod-basic-conv}
\lim_{m \to \infty}\big(\|\vec a_m - \vec a\|_{C^1(I; \bR^n)}
+ \|\vec v_m - \vec v\|_{C^1(I; \bR^n)}
+ \|\bs g_m - \bs g\|_{C(I; \cE)}\big) = 0.
\end{equation}
\end{lemma}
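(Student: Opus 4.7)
The pointwise existence of $(\vec a(t),\vec v(t),\bs g(t))$ together with the orthogonality \eqref{eq:g-orth} is immediate by applying the static modulation Lemma~\ref{lem:static-mod} at each $t\in I$; continuity in $t$ follows from $\bs\phi\in C(I;\cE_{1,(-1)^n})$ and the $C^1$ character of the static map $\bs\phi_0\mapsto(\vec a,\vec v)$. To upgrade to $C^1$ in $t$, I would differentiate the two families of orthogonality relations in time and use $\partial_t\bs\phi=\bs J\vD E(\bs\phi)$ to produce a $2n\times 2n$ linear system for $(\vec a\,',\vec v\,')$ whose coefficient matrix (by \eqref{eq:matrix-coef} and the identities \eqref{eq:identities-1}) coincides up to $O(y_{\min}e^{-y_{\min}})$ with $\calM(\vec a,\vec v)$ and is thus diagonally dominant, hence invertible, once $\rho$ is small. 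Since $\la\bs\alpha_k,\bs J\vD E(\bs\phi)\ra=\la\partial_a\bs H_k,\vD E(\bs\phi)\ra$ integrates by parts into a continuous functional of $\bs\phi\in\cE$, the right-hand side depends continuously on $(\vec a,\vec v,\bs\phi)$; a brief mollification using Proposition~\ref{prop:cauchy}~\ref{it:cauchy-H2}--\ref{it:cauchy-strong} justifies that the formula produced this way is indeed the derivative.

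The coercivity bound \eqref{eq:g-coer} is an almost direct consequence of \eqref{eq:g-coer-stat} in Lemma~\ref{lem:static-mod}. For $\eta_0$ small the correction terms $Cy_{\min}e^{-2y_{\min}}+Cy_{\min}|v|^4$ are strictly controlled by $\tfrac14$ of $\rho(t)=\sum e^{-y_k}+\sum v_k^2$, because $ye^{-y}\to 0$ and $y|v|^2\to 0$ as $\eta_0\to 0$. Under \eqref{eq:delta-geq-ass} one has $E(\bs\phi)-nM\le C_1^{-1}\bfd(\bs\phi(t))\le C_0C_1^{-1}\rho(t)$ thanks to \eqref{eq:dist-bd-C0}; choosing $C_1$ large enough absorbs this into the $\rho$ term on the left, delivering $\|\bs g\|_\cE^2+\rho(t)\lesssim\rho(t)$, and then $\rho(t)\lesssim\bfd(\bs\phi(t))\lesssim\|\bs g\|_\cE^2+\rho(t)\lesssim\rho(t)$ yields \eqref{eq:g-coer}.

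The heart of the argument is the modulation system \eqref{eq:ak'-est}. Differentiating $\la\bs\alpha_k(t),\bs g(t)\ra=0$ and $\la\bs\beta_k(t),\bs g(t)\ra=0$, using $\partial_t\bs\phi=\bs J\vD E(\bs\phi)$, and Taylor expanding $\vD E(\bs H+\bs g)=\vD E(\bs H)+\vD^2 E(\bs H)\bs g+R(\bs g)$ with $\|R(\bs g)\|_\cE\lesssim\|\bs g\|_\cE^2$, I would extract the diagonal contributions:
\begin{itemize}
\item the $\bs\alpha_k$-equation, via \eqref{eq:matrix-coef} and \eqref{eq:alpha-id}, produces the leading term $(-1)^k v_k'\gamma_{v_k}^3 M$;
\item the $\bs\beta_k$-equation, via \eqref{eq:matrix-coef} and \eqref{eq:beta-id}, produces $(-1)^k(a_k'-v_k)\gamma_{v_k}^3 M$, where the potentially dangerous term $-\la\bs\alpha_k,\bs g\ra$ from \eqref{eq:beta-id} is killed by orthogonality.
\end{itemize}
The remainders split into four families, each $\lesssim\rho$: off-diagonal pairings $\la\bs\alpha_k,\partial_{a,v}\bs H_j\ra$ for $j\neq k$ bounded by $y_{\min}e^{-y_{\min}}$ via Lemma~\ref{lem:exp-cross-term}; the bulk error $\vD E(\bs H)+\sum_k(-1)^k v_k\bs\alpha_k$ of size $\sqrt{y_{\min}}e^{-y_{\min}}$ by Lemma~\ref{lem:U-bounds}; the potential defect $(\vD^2 E(\bs H)-\vD^2 E(\bs H_k))\partial_{a,v}\bs H_k$ of the same size by \eqref{d2U-bounds}; and the quadratic nonlinear term which is $\lesssim\|\bs g\|_\cE^2\lesssim\rho$ by the coercivity step. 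Collecting everything yields $|a_k'-v_k|+|v_k'|\lesssim\rho+(|\vec a\,'|+|\vec v\,'|)\sqrt\rho$, and the last term is absorbed for $\eta_0$ small. Finally, \eqref{eq:mod-basic-conv} is obtained by passing to the limit in the static modulation map (continuous in $\cE$) and in the formula for $(\vec a\,',\vec v\,')$ from Step~1, which depends continuously on $(\vec a,\vec v,\bs\phi)$.

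The main obstacle I anticipate is the bookkeeping in the third paragraph: one must handle every off-diagonal coupling in the $2n\times 2n$ system and ensure that, after substituting the bootstrap bound $\|\bs g\|_\cE\lesssim\sqrt\rho$, every error is genuinely $O(\rho)$ and not $O(\sqrt\rho)$; the delicate cancellations come from \eqref{eq:identities-1}, \eqref{eq:matrix-coef}, \eqref{eq:beta-id}, and—crucially—the orthogonality $\la\bs\alpha_k,\bs g\ra=0$ used to eliminate the $-\bs\alpha_k$ piece in $\vD^2 E(\bs H_k)\partial_v\bs H_k$.
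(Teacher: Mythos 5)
Your proposal is correct and matches the paper's proof in all essentials: pointwise application of the static modulation lemma plus a regularization/approximation argument for the $C^1$ character of $(\vec a,\vec v)$, coercivity from \eqref{eq:g-coer-stat} with absorption under \eqref{eq:delta-geq-ass}, and the diagonally dominant $2n\times 2n$ system obtained by pairing the equation with $\bs\alpha_k,\bs\beta_k$, including the crucial cancellation of the $-\bs\alpha_k$ term from \eqref{eq:beta-id} by the orthogonality $\la\bs\alpha_k,\bs g\ra=0$. The only blemish is citing \eqref{eq:dist-bd-C0} for ``$\bfd(\bs\phi(t))\le C_0\rho(t)$'' (that estimate gives the reverse inequality $\rho\le C_0\bfd$); the absorption step should instead invoke $\bfd(\bs\phi(t))\le\|\bs g(t)\|_\cE^2+\rho(t)$, which holds by definition of $\bfd$ as an infimum and which your closing chain of inequalities in fact relies on.
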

\begin{remark}
In Sections~\ref{sec:n-body} and \ref{sec:any-position},
we will always have $E(\bs \phi) = nM$,
hence \eqref{eq:delta-geq-ass} will be automatically satisfied.
In Section~\ref{sec:profiles}, an additional argument will be necessary in order to ensure \eqref{eq:delta-geq-ass} on time intervals of interest, see the beginning of the proof of Theorem~\ref{thm:unstable}.
\end{remark}
\begin{proof}[Proof of Lemma~\ref{lem:basic-mod}]
\textbf{Step 1.} (Modulation equations.)
For every $t \in I$, let $\vec a(t) \coloneqq \vec a(\bs\phi(t))$ and $\vec v(t) \coloneqq \vec v(\bs\phi(t))$ be given by Lemma~\ref{lem:static-mod}, and let $\bs g(t)$ be given by \eqref{eq:g-def}.
The maps $t \mapsto \bs\phi(t) \in \cE_{1, (-1)^n}$, $\cE_{1, (-1)^n} \owns \bs\phi_0 \mapsto \vec a(\bs\phi_0)$ and $\cE_{1, (-1)^n} \owns \bs\phi_0 \mapsto \vec v(\bs\phi_0)$ are continuous, hence $\vec a \in C(I; \bR^n)$ and $\vec v \in C(I; \bR^n)$.
Moreover, \eqref{eq:phi-conv-en} implies
\begin{equation}
\label{eq:mod-basic-conv-0}
\lim_{m\to\infty}\big(\|\vec a_m - \vec a\|_{C(I; \bR^n)} +\|\vec v_m - \vec v\|_{C(I; \bR^n)} + \|\bs g_m - \bs g\|_{C(I; \cE)}\big) = 0.
\end{equation}

We claim that $\vec a \in C^1(I; \bR^n)$, $\vec v \in C^1(I; \bR^n)$ and that for any $k \in \{1, \ldots, n\}$ we have 
 \EQ{\label{eq:ak-lin-syst}
& (a_k'- v_k)\Big(( (-1)^k\gamma_{v_k}^3 M -  \la \bs g, \,  \p_{a} \bs \beta_k \ra\Big) - v_k ' \la \bs g, \, \p_v \bs \beta_k \ra + \sum_{ j \neq k} (-1)^j \big( a_j'  \la \p_{a} \bs H_j, \,\bs \beta_k \ra  + v_j'  \la \p_{v} \bs H_j , \, \bs \beta_k \ra \big)  \\
& =- \sum_{j \neq k} v_j \la \bs \alpha_j, \, \p_v \bs H_k \ra   + \la U'( H(\vec a, \vec v)) - \sum_{j =1}^n (-1)^j U'(H_j), \, \p_v H_k \ra +  \la      g, \,  \big(U''( H(\vec a, \vec v))  - U''(  H_k)\big)\p_v  H_k \ra \\
& \quad + \la \bs J \big( \vD E(\bs H(\vec a, \vec v) + \bs g) - \vD E(\bs H(\vec a, \vec v) ) - \vD^2 E(\bs H(\vec a, \vec v))  \bs g \big), \, \bs \beta_k\ra .
}
and
\EQ{
\label{eq:vk-lin-syst}
 &(a_k' - v_k) \la \bs g, \,  \p_{a} \bs \alpha_k \ra  - v_k ' \Big( (-1)^k \gamma_{v_k}^3 M  - \la \bs g, \, \p_v \bs \alpha_k \ra \Big) + \sum_{ j \neq k} (-1)^j \big( a_j'  \la \p_{a} \bs H_j, \,\bs \al_k \ra  + v_j'  \la \p_{v} \bs H_j , \, \bs \al_k \ra \big) \\
 &= - (-1)^k F_k( \vec a, \vec v) + \la    \big(U''(H(\vec a, \vec v)) - U''( H_k)  \big)  \p_a  H_k , \, g \ra   -\sum_{j \neq k} (-1)^j v_j  \la \bs J  \bs \al_j, \, \bs \al_k \ra  \\
 &\quad + \la \bs J \big( \vD E(\bs H(\vec a, \vec v) + \bs g) - \vD E(\bs H(\vec a, \vec v) ) - \vD^2 E(\bs H(\vec a, \vec v))  \bs g \big), \, \bs \al_k\ra .
 }

In order to justify~\eqref{eq:ak-lin-syst} and \eqref{eq:vk-lin-syst}, first assume that $\bs \phi \in C^1(I; \cE_{1, (-1)^n})$
and \eqref{eq:csf} holds in the strong sense. Since the map $\bs\phi_0 \mapsto \vec a(\bs\phi_0)$
is of class $C^1$, we obtain $\vec a\in C^1(I; \bR^n)$ and $\vec v\in C^1(I; \bR^n)$.
We pair \eqref{eq:ls-g-eq} with $\bs \al_k(t)$ and $\bs \beta_k(t)$  to obtain
\EQ{ \label{eq:gav-paired} 
\la \p_t \bs g, \bs \al_k\ra &=  \la  \bs  J \vD E(\bs H(\vec a, \vec v) + \bs g), \, \bs \al_k \ra  -\sum_{ j =1}^n (-1)^j \big( a_j'  \la \p_{a} \bs H_j, \,\bs \al_k \ra  + v_j'  \la \p_{v} \bs H_j , \, \bs \al_k \ra \big) \\
\la \p_t \bs g, \bs \beta_k\ra &=  \la  \bs  J \vD E(\bs H(\vec a, \vec v) + \bs g), \, \bs \beta_k \ra  -\sum_{ j =1}^n (-1)^j \big( a_j'  \la \p_{a} \bs H_j, \,\bs \beta_k \ra  + v_j'  \la \p_{v} \bs H_j , \, \bs \beta_k \ra \big)
}
We expand the first term on the right of the first equation as 
\EQ{
 \la  \bs  J \vD E(\bs H(\vec a, \vec v) &+ \bs g), \, \bs \al_k \ra  = \la  \bs  J \vD^2 E(\bs H(\vec a, \vec v))  \bs g, \, \bs \al_k \ra +  \la \bs J \vD E(\bs H(\vec a, \vec v)), \, \bs \al_k \ra\\
 & \quad +\la \bs J \big( \vD E(\bs H(\vec a, \vec v) + \bs g) - \vD E(\bs H(\vec a, \vec v) ) - \vD^2 E(\bs H(\vec a, \vec v))  \bs g \big), \, \bs \al_k\ra 
}
For the first term on the right we use~\eqref{eq:alpha-id} to write 
\EQ{
\la  \bs  J \vD^2 E(\bs H(\vec a, \vec v))  \bs g, \, \bs \al_k \ra  &=   \la    \big(\vD^2 E(\bs H(\vec a, \vec v)) - \vD^2 E (\bs H_k))\big)  \bs g, \, \p_a \bs H_k \ra  + v_k \la \p_x \bs \al_k, \, \bs g \ra \\
& =   \la    \big(U''(H(\vec a, \vec v)) - U''( H_k)  \big)  \p_a  H_k , \, g \ra  + v_k \la \p_x \bs \al_k, \, \bs g \ra
}
For the second term we have 
\EQ{
 \la \bs J \vD E(\bs H(\vec a, \vec v)), \, \bs \al_k \ra &=  -\sum_{j \neq k} (-1)^j v_j  \la \bs J  \bs \al_j, \, \bs \al_k \ra +  \la  U'( H(\vec a, \vec v)) - \sum_{j =1}^n(-1)^j U'( H_j)  , \,  \p_a  H_k \ra \\
 & = -\sum_{j \neq k} (-1)^j v_j  \la \bs J  \bs \al_j, \, \bs \al_k \ra  - (-1)^k F_k( \vec a, \vec v)
}
For the left-hand side of~\eqref{eq:gav-paired} we use the relations 
\EQ{ \label{eq:diff-ortho-g} 
-a_k' \la \bs g, \,  \p_{a} \bs \alpha_k \ra - v_k ' \la \bs g, \, \p_v \bs \alpha_k \ra  &=\la  \p_t \bs g, \, \bs \alpha_k \ra \\
 - a_k' \la \bs g, \,  \p_{a} \bs \beta_k \ra - v_k ' \la \bs g, \, \p_v \bs \beta_k \ra  &=\la  \p_t \bs g, \, \bs \beta_k \ra,
}
 which come from differentiating in the time the orthogonality conditions~\eqref{eq:g-orth}. After rearranging, we obtain~\eqref{eq:vk-lin-syst}. 
 
Next,  we expand 
 \EQ{
  \la & \bs  J \vD E(\bs H(\vec a, \vec v) + \bs g), \, \bs \beta_k \ra  =\la  \bs  J \vD^2 E(\bs H(\vec a, \vec v))  \bs g, \, \bs \beta_k \ra +  \la \bs J \vD E(\bs H(\vec a, \vec v)), \, \bs \beta_k \ra\\
 & \quad +\la \bs J \big( \vD E(\bs H(\vec a, \vec v) + \bs g) - \vD E(\bs H(\vec a, \vec v) ) - \vD^2 E(\bs H(\vec a, \vec v))  \bs g \big), \, \bs \beta_k\ra 
}
We apply~\eqref{eq:beta-id} for the first term which, using~\eqref{eq:g-orth} becomes 
\EQ{
\la  \bs  J \vD^2 E(\bs H(\vec a, \vec v))  \bs g, \, \bs \beta_k \ra &= \la  \bs  J \vD^2 E(\bs H_k) \bs g, \, \bs \beta_k \ra + \la      g, \,  \big(U''( H(\vec a, \vec v))  - U''(  H_k)\big)\p_v  H_k \ra\\
& = -v_k \la \p_a  \bs \beta_k,\, \bs g\ra + \la      g, \,  \big(U''( H(\vec a, \vec v))  - U''(  H_k)\big)\p_v  H_k \ra
}
For the second term we have 
\EQ{
\la \bs J \vD E(\bs H(\vec a, \vec v)), \, \bs \beta_k \ra &= (-1)^k \gamma_{v_k}^3 M v_k - \sum_{j \neq k} v_j \la \bs \alpha_j, \, \p_v \bs H_k \ra  \\
& \quad + \la U'( H(\vec a, \vec v)) - \sum_{j =1}^n (-1)^j U'(H_j), \, \p_v H_k \ra 
}
Using again~\eqref{eq:diff-ortho-g} for the left hand-side of~\eqref{eq:gav-paired} and rearranging we obtain~\eqref{eq:ak-lin-syst}. 


Consider now the general case (without the additional regularity assumptions).
By Proposition~\ref{prop:cauchy}, there exists a sequence $\bs\phi_m$ satisfying
\eqref{eq:phi-conv-en} and the additional regularity assumption stated above,
hence the corresponding parameters $(\vec a_m, \vec v_m)$ satisfy \eqref{eq:mod-basic-conv-0}.
They also satisfy \eqref{eq:ak-lin-syst} and~\eqref{eq:vk-lin-syst} with $(\vec a, \vec v)$ replaced by $(\vec a_m, \vec v_m)$,
which is a linear system for $\dd t\vec a_m(t)$ and $\dd t\vec v_m(t)$.
The non-diagonal terms of its matrix are
$ \ll \sqrt{\rho(\vec a_m, \vec v_m)}$; see~\eqref{eq:Mav-cross}.
It follows from \eqref{eq:g-coer} that the diagonal terms differ from $(-1)^k M$ by a quantity of order at most $\sqrt{\rho(\vec a_m, \vec v_m)}$; see~\eqref{eq:Mav-diag} and note that $\|\bs g_m \|_{\E} \lesssim \sqrt{ \rho(\vec a_m, \vec v_m)}$.
In particular, the matrix is uniformly non-degenerate, hence we have uniform convergence
of $\dd t\vec a_m$ and $\dd t\vec v_m$, which implies that $\vec a \in C^1$, $\vec v \in C^1$ and \eqref{eq:ak-lin-syst} holds.

\textbf{Step 2.} (Coercivity.)
If we choose $C_1 \coloneqq 2C_0$, where $C_1$ is the constant in \eqref{eq:delta-geq-ass}
and $C_0$ the constant in \eqref{eq:g-coer-stat}, then we have
\begin{equation}
\begin{aligned}
\|\bs g(t)\|_\cE^2 &\leq C_0\rho(\vec a(t), \vec v(t)) + \frac 12C_1\big(E(\bs \phi) - nM\big) \\
&\leq C_0 \rho(\vec a(t), \vec v(t)) + \frac 12 \bfd(\bs \phi(t)) \\
&\leq C_0 \rho(\vec a(t), \vec v(t)) + \frac 12 \big(\rho(\vec a(t)) + \|\bs g(t)\|_\cE^2\big),
\end{aligned}
\end{equation}
the last inequality resulting from the definition of $\bfd$.
This proves the first inequality in \eqref{eq:g-coer}, up to adjusting $C_0$.
The second inequality in \eqref{eq:g-coer} follows from \eqref{eq:dist-bd-C0}.

By standard estimates on the inverse of a diagonally dominant matrix, \eqref{eq:ak-lin-syst},~\eqref{eq:vk-lin-syst}
and \eqref{eq:g-coer} yield \eqref{eq:ak'-est}.

\textbf{Step 3.} (Continuous dependence of first derivatives.)
We have already observed that \eqref{eq:phi-conv-en} implies \eqref{eq:mod-basic-conv-0}.
It is clear that \eqref{eq:phi-conv-en} implies uniform convergence of the coefficients
of the system \eqref{eq:ak-lin-syst} and~\eqref{eq:vk-lin-syst}, hence uniform convergence of $\dd t\vec a_m$ and $\dd t \vec v_m$,
and we conlude that \eqref{eq:phi-conv-en} implies \eqref{eq:mod-basic-conv}.
\end{proof}
\begin{remark}
We refer to
\cite[Proposition 3]{GuSi06}, \cite[Proposition 3.1]{MeZa12} or \cite[Lemma 3.3]{J-18-nonexist} for results and proofs similar to Lemmas~\ref{lem:static-mod} and \ref{lem:basic-mod}.
\end{remark}
\begin{remark} 
\label{lem:impr-coer}
In the setting of Lemma~\ref{lem:basic-mod},
assume in addition that $E(\bs \phi) = nM$. Then, a direct consequence  of the estimate ~\eqref{eq:g-coer-stat} is the bound 
\begin{equation}
\label{eq:a'-est}
\|\dot g(t)\|_{L^2}^2 + \frac \nu2\|g(t)\|_{H^1}^2 + M|\vec v(t)|^2
\leq 4\kappa^2 \sum_{ k=1}^{n-1} e^{-( a_{k+1}(t) - a_k(t))} + C_0\rho(t)^{\frac 32},
\end{equation}
where $\nu > 0$ is the constant in Lemma~\ref{lem:D2H}.
This more precise coercivity bound  
will be useful in Section~\ref{sec:n-body}. 
\end{remark}

\subsection{Refined modulation}
\label{ssec:ref-mod}
In order to proceed with the analysis of the dynamics of the modulations parameters,
we follow an idea used in a similar context in \cite{J-18p-gkdv} and introduce \emph{localized momenta}, see also \cite[Proposition 4.3]{RaSz11}.

Recall that $\chi \in C^\infty$ is a decreasing function such that $\chi(x) = 1$
for all $x \leq \frac 13$ and $\chi(x) = 0$ for all $x \geq \frac 23$.
\begin{definition}[Localized momenta]
\label{def:p}
Let $I$, $\bs \phi$, $\vec a$, $\vec v$ and $\bs g$ be as in Lemma~\ref{lem:basic-mod}.
We set
\begin{equation}
\begin{aligned}
\chi_1(t, x) &\coloneqq \chi\Big(\frac{x - a_1(t)}{a_2(t) - a_1(t)}\Big), \\
\chi_k(t, x) &\coloneqq \chi\Big(\frac{x - a_k(t)}{a_{k+1}(t) - a_k(t)}\Big)
- \chi\Big(\frac{x - a_{k-1}(t)}{a_k(t) - a_{k-1}(t)}\Big),\qquad\text{for }k \in \{2, \ldots, n-1\}, \\
\chi_n(t, x) &\coloneqq 1 - \chi\Big(\frac{x - a_{n-1}(t)}{a_n(t) - a_{n-1}(t)}\Big).
\end{aligned}
\end{equation}
We define $\vec p = (p_1, \ldots, p_n): I \to \bR^n$ by
\begin{equation}
\label{eq:p-def}
p_k(t) \coloneqq -\la (-1)^k\partial_x H_k(t) + \chi_k(t)\partial_x g(t), \dot \phi(t)\ra.
\end{equation}
\end{definition}
\begin{lemma}
\label{lem:ref-mod}
There exists $C_0$ such that, under the assumptions of Lemma~\ref{lem:basic-mod},
$\vec p \in C^1(I; \bR^n)$ and the following bounds hold for all $t \in I$ and $k \in \{1, \ldots, n\}$:
\begin{align}
\label{eq:ak'} |Mv_k(t) - p_k(t)| &\leq C_0 \rho(t), \\
\label{eq:pk'} |p_k'(t) - F_k(\vec a(t))| &\leq \frac{C_0\rho(t)}{-\log \rho(t)},
\end{align}
where $M$ and $F_k$ are defined by \eqref{eq:M-def} and \eqref{eq:F_k-v-0}.
\end{lemma}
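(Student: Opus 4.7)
The plan is to compute $p_k$ and $p_k'$ directly from the definition \eqref{eq:p-def}, isolating the leading-order terms and estimating the errors using the orthogonality conditions \eqref{eq:g-orth} on $\bs g$, the cross-term estimate of Lemma~\ref{lem:exp-cross-term}, and the modulation estimates of Lemma~\ref{lem:basic-mod}. The $C^1$-regularity of $\vec p$ is first established by approximating $\bs \phi$ by the more regular solutions of Proposition~\ref{prop:cauchy}~\ref{it:cauchy-H2}, for which $\p_t \dot\phi = \p_x^2 \phi - U'(\phi) \in C(I; L^2)$ makes the pairing $\la Y_k, \p_t \dot\phi\ra$ meaningful in the differentiated expression; the general case follows by density via Proposition~\ref{prop:cauchy}~\ref{it:cauchy-strong} and Lemma~\ref{lem:basic-mod}.

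For \eqref{eq:ak'}, I would expand $p_k$ using $\dot\phi = \dot H(\vec a, \vec v) + \dot g$ and the relativistic identity $\dot H(a_j, v_j) = -v_j \p_x H_j$. The diagonal pairing $(j=k)$ in $-\la (-1)^k \p_x H_k, \dot H(\vec a, \vec v)\ra$ yields $v_k \|\p_x H_k\|_{L^2}^2 = v_k \gamma_{v_k} M = Mv_k + O(v_k^3)$. The off-diagonal terms $(j \neq k)$ are bounded by $|\vec v|\, y_{\min}\, e^{-y_{\min}}$ via Lemma~\ref{lem:exp-cross-term}. The pairing $\la \p_x H_k, \dot g\ra$ is controlled by the orthogonality $\la \bs g, \bs\alpha_k\ra = 0$ with $\bs\alpha_k = (v_k \p_x^2 H_k, \p_x H_k)$, giving $\la \p_x H_k, \dot g\ra = -v_k \la g, \p_x^2 H_k\ra = O(|v_k|\,\|g\|_{L^2})$. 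The remaining pairings involving $\chi_k \p_x g$ are $O(\|\bs g\|_\cE(|\vec v| + \|\bs g\|_\cE))$ by Cauchy--Schwarz. All errors sum to $O(\rho)$.

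For \eqref{eq:pk'}, writing $Y_k \coloneqq (-1)^k \p_x H_k + \chi_k \p_x g$ and differentiating,
\[
p_k'(t) = -\la \p_t Y_k, \dot\phi\ra + \la Y_k, \vD E_p(\phi)\ra, \qquad \vD E_p(\phi) = -\p_x^2 \phi + U'(\phi).
\]
The leading contribution $(-1)^k \la \p_x H_k, \vD E_p(H(\vec a, \vec v))\ra$ equals $F_k(\vec a, \vec v) + O(|\vec v|^2 y_{\min} e^{-y_{\min}})$ by the definition \eqref{eq:F_k-def} and the static identity $\p_x^2 H_j = \gamma_{v_j}^2 U'(H_j)$, and Lemma~\ref{lem:Fz} gives $F_k(\vec a, \vec v) = F_k(\vec a) + O(y_{\min} e^{-2 y_{\min}} + |\vec v|^2 y_{\min} e^{-y_{\min}})$. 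Since $-\log\rho \leq y_{\min}$ by definition of $\rho$, these corrections are $\lesssim \rho/(-\log\rho)$. The remaining contributions are controlled as follows. Flux terms involving $\p_x \chi_k$ or $\p_t \chi_k$ gain the required factor $1/y_{\min}$ from $\|\p_x \chi_k\|_{L^\infty} \lesssim 1/y_{\min}$ and $\|\p_t \chi_k\|_{L^\infty} \lesssim \sqrt\rho/y_{\min}$ (via \eqref{eq:ak'-est}); linear-in-$g$ terms arising from $\la Y_k, \vD^2 E_p(H(\vec a, \vec v)) g\ra$ are controlled via integration by parts, the estimate \eqref{eq:size-diff-pot}, and the near-kernel identity $L(a_k, v_k)\p_x H_k = (1 - \gamma_{v_k}^2) U''(H_k)\p_x H_k = O(v_k^2)$; terms from $-\la \p_t Y_k, \dot\phi\ra$ are expanded via $\p_t H_k = -a_k' \p_x H_k + v_k' \p_v H(a_k, v_k)$ (using $\p_a H = -\p_x H$) and controlled via \eqref{eq:ak'-est} and \eqref{eq:g-orth}.

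The main obstacle is the detailed bookkeeping required to verify that all contributions beyond $F_k(\vec a)$, after being combined from $-\la \p_t Y_k, \dot\phi\ra$ and $\la Y_k, \vD E_p(\phi)\ra$ via integration by parts and the orthogonality conditions \eqref{eq:g-orth}, collapse to $O(\rho/(-\log\rho))$. In particular, the quadratic-in-$g$ remainders from the Taylor expansion of $U'$ at $H(\vec a, \vec v)$ are superficially of size $\rho$ (matching $F_k(\vec a)$ itself in the regime $\rho \sim e^{-y_{\min}}$), and the specific choice of the localized momentum in \eqref{eq:p-def} -- employing $\chi_k \p_x g$ rather than $\chi_k \p_x \phi$ -- is engineered so that such terms either are exponentially suppressed by the kink separation or reduce to flux-type integrals with the $1/y_{\min}$ gain.
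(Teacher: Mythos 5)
Your proposal follows essentially the same route as the paper's proof: the bound \eqref{eq:ak'} is obtained by expanding $p_k$ with $\dot\phi = \dot H(\vec a,\vec v)+\dot g$ and converting $\la \partial_x H_k,\dot g\ra$ via the orthogonality $\la\bs\alpha_k,\bs g\ra=0$, the $C^1$ regularity comes from approximation by regular solutions, and \eqref{eq:pk'} is obtained by differentiating \eqref{eq:p-def}, isolating $F_k$, and exploiting the $y_{\min}^{-1}$ gain from $\partial_x\chi_k$, $\partial_t\chi_k$. In particular you correctly identify the one genuinely delicate point — that the quadratic-in-$g$ remainders, which are a priori only $O(\rho)$, must be recombined (after replacing $(-1)^k\partial_x H_k$ by $\chi_k\partial_x H(\vec a,\vec v)$) into the total derivative $\int\chi_k\,\partial_x\big(U(H+g)-U(H)-U'(H)g\big)\,\ud x$ so that integration by parts transfers the derivative onto $\chi_k$ — which is exactly the mechanism the paper uses.
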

\begin{remark}
If we think of $M$, $p_k$ and $F_k$ as the (rest) mass of the kink,
its momentum and the force acting on it, then \eqref{eq:ak'} and \eqref{eq:pk'}
yield (approximate) Newton's second law for the kink motion.
\end{remark}
\begin{proof}[Proof of Lemma~\ref{lem:ref-mod}]
We start with the bound~\eqref{eq:ak'}. To see this, we pair $\dot \phi(t)$ with $ \dot \alpha_k(t)$ giving 
\EQ{
 \la \dot \phi, \, \dot  \alpha_k \ra = \la \dot H( \vec a, \vec v) , \,  \dot \alpha_k \ra  + \la \dot g,\, \dot \alpha_k \ra 
 & =  \la \dot H( \vec a, \vec v) , \,  \dot \alpha_k \ra  - \la g,\, \alpha_k \ra,
}
due to the orthogonality $ \la \bs g , \, \bs \alpha_k \ra = 0$. We expand the right-hand side 
\EQ{
 \la \dot H( \vec a, \vec v) , \,  \dot \alpha_k \ra  - \la g,\, \alpha_k \ra&=(-1)^k\la \dot H_k , \,  \dot \alpha_k \ra + O( | \vec v| y_{\min} e^{-y_{\min}} + |v| \| \bs g \|_{\E})\\
 & = -(-1)^k v_k \gamma_{v_k} M +O( | \vec v| y_{\min} e^{-y_{\min}} + |v| \| \bs g \|_{\E}),
}
where the first line uses Lemma~\ref{lem:exp-cross-term} and the second the definition of $\dot H_k$ and $\dot \alpha_k$.  Using $ \gamma_{v_k} = 1+ O( |\vec v|^2)$, ~\eqref{eq:g-coer}, and 
\EQ{
|  \la \dot \phi, \, \dot  \alpha_k \ra - \la \dot \phi, \, \dot \alpha(a_k, 0) \ra| \lesssim \rho(\vec a, \vec v),
}
we arrive at the expression 
\EQ{
M v_k = -(-1)^k \la \dot \phi, \, \p_x H( \cdot - a) \ra + O( \rho(\vec a, \vec v)),
}
and~\eqref{eq:ak'} follows from the above, ~\eqref{eq:p-def}, and~\eqref{eq:g-coer}. 

It is clear that $\vec p \in C(I; \bR^n)$.
We claim that $\vec p \in C^1(I; \bR^n)$ and for any
$k \in \{1, \ldots, n\}$ we have
\EQ{\label{eq:pk'-id}
&p_k' =  (-1)^k a_k' \la(1- \chi_k) \p_x^2 H_k, \, \dot \phi \ra + (-1)^k v_k'\la \p_v \partial_x H_k, \dot g\ra  \\
&\quad + \sum_{j \neq k} (-1)^j \Big \la \Big((a_j' - v_j) \p_x\p_a H_j + v_j' \p_x \p_v H_j \Big), \,\dot \phi\Big\ra + \frac{1}{2}v_k^2 \la \p_x\chi_k \p_x H_k ,\,  \p_x H_k \ra  \\
&\quad + \frac12 \la \p_x \chi_k\dot g, \,  \dot g  \ra  + \frac{1}{2} \la \p_x \chi \p_x g, \p_x g \ra +  \la \p_x \chi_k \dot g, \, \dot H(\vec a, \vec v) \ra- \la \partial_t \chi_k\partial_x g, \,\dot \phi\ra\\
&\quad + (-1)^k v_k \la \chi_k \p_x^2 H_k, \, \dot H( \vec a, \vec v) - \dot H_k \ra +\la \chi_k\dot g, \,  \p_x(\dot H(\vec a, \vec v)- \dot H_k \ra) \\
&\quad  -(-1)^k ( 1- \gamma_{v_{k}}^2)\big \la U''( H_k) \p_x H_k, \, g \big \ra  - (-1)^k  \big \la (U''(H( \vec a, \vec v) - U''(H_k)) \p_x H_k, \, g \big \ra \\
& \quad  +  \Big \la (-1)^k \partial_x H_k, \,\sum_{j =1}^n (-1)^j v_j \p_a \dot H_j \Big \ra+ \Big \la  \chi_k\partial_x g, \,\sum_{j =1}^n (-1)^j v_j \p_a \dot H_j \Big \ra \\
&\quad +\Big \la  \chi_k\partial_x g,\,\Big( U'(H(\vec a, \vec v) + g) - \sum_{j=1}^n (-1)^j U'(H_j) \Big) \Big \ra \\
&\quad + \Big \la (-1)^k\partial_x H_k, \,\Big( U'(H(\vec a, \vec v) + g) -  \sum_{j=1}^n (-1)^j U'(H_j) - U''(H(\vec a, \vec v)) g\Big)\Big \ra  .
}


In order to justify \eqref{eq:pk'-id}, first assume that
$\partial_x g, \dot g \in C(I; H^1(\bR)) \cap C^1(I; L^2(\bR))$, 
that first component of \eqref{eq:ls-g-eq}, which is
\EQ{
\p_t g =  \dot g - \sum_{j =1}^n (-1)^j \Big((a_j' - v_j) \p_a H_j + v_j' \p_v H_j \Big)
}
 holds in the strong sense in $H^1(\bR)$,
and that the second component of \eqref{eq:ls-g-eq}, which can be expanded as 
\EQ{\label{eq:g-2nd}
\p_t \dot g &= \p_x^2 g - U''(H(\vec a, \vec v)) g  - \Big( U'(H(\vec a, \vec v) + g) - U'(H(\vec a, \vec v)) - U''(H(\vec a, \vec v)) g\Big) \\
&\quad - \Big( U' (H(\vec a, \vec v)) - \sum_{j=1}^n (-1)^j U'(H_j) \Big) - \sum_{j =1}^n (-1)^j ( a_j' -v_j) \p_a \dot H_j + v_j' \p_v \dot H_j)
}
holds in the strong sense in $L^2(\bR)$. Using the above we also record the expression
\EQ{
\p_t \dot \phi &= \p_x^2 g - U''(H(\vec a, \vec v)) g  - \Big( U'(H(\vec a, \vec v) + g) - U'(H(\vec a, \vec v)) - U''(H(\vec a, \vec v)) g\Big) \\
&\quad - \Big( U' (H(\vec a, \vec v)) - \sum_{j=1}^n (-1)^j U'(H_j) \Big) + \sum_{j =1}^n (-1)^j v_j \p_a \dot H_j .
}
From \eqref{eq:p-def} and the Leibniz rule, we obtain
\EQ{
\label{eq:pk'-1}
p_k' &= (-1)^k a_k'\la \partial_x^2 H_k, \dot \phi\ra +(-1)^k v_k'\la \p_v \partial_x H_k, \dot g\ra - \la \partial_t \chi_k\partial_x g, \,\dot \phi\ra \\
&- \Big\la \chi_k\partial_x\Big(\dot g - \sum_{j =1}^n (-1)^j \Big((a_j' - v_j) \p_a H_j + v_j' \p_v H_j \Big), \,\dot \phi\Big\ra \\
& - \Big \la (-1)^k\partial_x H_k, \, \p_x^2 g - U''(H(\vec a, \vec v)) g \Big\ra -  \Big \la \chi_k\partial_x g,\, \p_x^2 g \Big\ra \\
&+ \Big \la (-1)^k\partial_x H_k, \,\Big( U'(H(\vec a, \vec v) + g) -  \sum_{j=1}^n (-1)^j U'(H_j) - U''(H(\vec a, \vec v)) g\Big)\Big \ra  \\
&+\Big \la  \chi_k\partial_x g,\,\Big( U'(H(\vec a, \vec v) + g) - \sum_{j=1}^n (-1)^j U'(H_j) \Big) \Big \ra  \\
& +  \Big \la (-1)^k \partial_x H_k, \,\sum_{j =1}^n (-1)^j v_j \p_a \dot H_j \Big \ra+ \Big \la  \chi_k\partial_x g, \,\sum_{j =1}^n (-1)^j v_j \p_a \dot H_j \Big \ra .
}
In the second line we expand a few of the terms as follows. First, we integrate by parts to obtain
\EQ{
- \la \chi_k\partial_x\dot g, \,  \dot \phi  \ra &= -(-1)^kv_k\la \chi_k\dot g, \, \p_x^2 H_k\ra + \la \chi_k\partial_x\dot g, \, \p_x( \dot H(\vec a, \vec v)- \dot H_k) \ra + \frac12 \la \p_x\chi_k\dot g, \,  \dot g  \ra \\
& \quad  + \la \p_x \chi_k \dot g, \, \dot H(\vec a, \vec v) \ra .
}
Then note that 
\EQ{
(-1)^k(a_k' - v_k) \la \chi_k\partial_x \p_a H_k, \, \dot \phi \ra &= - (-1)^k a_k' \la \chi_k \p_x^2 H_k, \, \dot \phi \ra +(-1)^k v_k \la \chi_k \p_x^2 H_k ,\,  \dot g \ra \\
&+ \frac{1}{2}v_k^2 \la \p_x\chi_k \p_x H_k ,\,  \p_x H_k \ra +  (-1)^k v_k \la \chi_k \p_x^2 H_k, \, \dot H( \vec a, \vec v) - \dot H_k \ra ,
}
where in the last line we used that $\la \p_x^2 H_k, \, \p_x H_k \ra = 0$.  
Thus the first line added to the second line reduces to 
\begin{multline} 
 (-1)^k a_k' \la(1- \chi_k) \p_x^2 H_k, \, \dot \phi \ra + (-1)^k v_k'\la \p_v \partial_x H_k, \dot g\ra - \la \partial_t \chi_k\partial_x g, \,\dot \phi\ra +  \la \p_x \chi_k \dot g, \, \dot H(\vec a, \vec v) \ra \\
+\la \chi_k\dot g, \, \p_x( \dot H(\vec a, \vec v)- \dot H_k) \ra + \frac12 \la \p_x \chi_k\dot g, \,  \dot g  \ra + v_k^2 \la (1-\chi_k) \p_x^2 H_k ,\,  \p_x H_k \ra \\
+  (-1)^k v_k \la \chi_k \p_x^2 H_k, \, \dot H( \vec a, \vec v) - \dot H_k \ra  + \sum_{j \neq k} (-1)^j \Big((a_j' - v_j) \p_x\p_a H_j + v_j' \p_x \p_v H_j \Big), \,\dot \phi\Big\ra .
\end{multline} 
Next, consider the third line of~\eqref{eq:pk'-1}. We use \eqref{eq:Lc-ker} to obtain
\EQ{
 - \Big \la (-1)^k\partial_x H_k,\, \p_x^2 g - U''(H(\vec a, \vec v)) g \Big\ra &= -(-1)^k ( 1- \gamma_{v_{k}}^2)\big \la U''( H_k) \p_x H_k, \, g \big \ra \\
 &\quad - (-1)^k  \big \la (U''(H( \vec a, \vec v) - U''(H_k)) \p_x H_k, \, g \big \ra 
}
and \begin{equation}
\begin{aligned}
\la \chi_k(t)\partial_x g(t), \partial_x^2 g(t)\ra &= -\int_{-\infty}^\infty \frac 12 \partial_x \chi_k(t)(\partial_x g(t))^2\ud x.
\end{aligned}
\end{equation}
Inserting these relations into \eqref{eq:pk'-1} and rearranging
the terms, we obtain \eqref{eq:pk'-id}.

Consider now the general case (without the additional regularity assumptions). By Proposition~\ref{prop:cauchy},
there exists a sequence of solutions $\bs\phi_m$ satisfying \eqref{eq:phi-conv-en} and the additional regularity assumptions stated above.
Let $\vec p_m$ be the corresponding localized momentum.
By \eqref{eq:mod-basic-conv} and \eqref{eq:pk'-id},
the sequence $\big(\vec p_m\big)_m$ converges in $C^1$.

It remains to prove \eqref{eq:pk'}. In the computation below, we call a term ``negligible''
if its absolute value is smaller than the right hand side of \eqref{eq:pk'}.

By Proposition~\ref{prop:prop-H} and the definition of $\chi_k$, we have
\begin{equation}
\begin{aligned}
\int_{-\infty}^\infty \big((1 - \chi_k(t))\partial_x^2 H_k(t)\big)^2\ud x &\leq
2\int_{\frac 13 y_{\min}(t)}^{\infty}(\partial_x^2 H)^2\ud x \lesssim \eee^{-\frac 23 y_{\min}(t)} \lesssim \rho(t)^\frac 23.
\end{aligned}
\end{equation}
By a similar computation, for all $j \neq k$ we have
\begin{equation}
\begin{aligned}
\int_{-\infty}^\infty \big(\chi_k(t)\partial_x^2 H_j(t)\big)^2\ud x \lesssim \rho(t)^\frac 23.
\end{aligned}
\end{equation}
Applying \eqref{eq:g-coer}, \eqref{eq:ak'-est}, the bound~$\| \dot \phi(t) \|_{L^2} \lesssim \sqrt{\rho(t)}$ and the Cauchy-Schwarz inequality,
we obtain that the first line and first term of the second line of \eqref{eq:pk'-id} are at most of order $\rho(t)^\frac 43$,
hence negligible. Next, we observe that $\|\partial_x \chi_k\|_{L^\infty} \lesssim y_{\min}^{-1}
\lesssim ({-}\log \rho(t))^{-1}$. 
And, the Chain Rule and \eqref{eq:ak'-est} yield $\|\partial_t \chi_k\|_{L^\infty}
\lesssim ({-}\log \rho(t))^{-1}\sqrt{\rho(t)}$, hence the last term of the second line along with the third line of~\eqref{eq:pk'-id} is 
 negligible as well. The fourth line is seen to be negligible after an application of Cauchy-Schwarz and Lemma~\ref{lem:exp-cross-term}. Similarly, the fifth line is negligible. 

By Lemma~\ref{lem:sizeDEp} and the Cauchy-Schwarz inequality, the fourth line of \eqref{eq:pk'-id}
differs by a term of order $\sqrt{y_{\min}(t)}\eee^{-\frac 32 y_{\min}(t)} \ll \rho(t)({-}\log \rho(t))^{-1}$ from
\begin{equation}
\label{eq:pk'-4-alt}
\int_{-\infty}^\infty \chi_k(t) \partial_x g(t)\big(U'(H(\vec a(t), \vec v(t)) + g(t)) - U'(H(\vec a(t), \vec v(t)))\big)\ud x.
\end{equation}
We now transform the last integral of the right hand side of \eqref{eq:pk'-id}. By \eqref{eq:size-diff-pot}, we have
\begin{equation}
\int_{-\infty}^\infty \partial_x H_k(t) U''(H_k(t)) g(t) \ud x \sim \int_{-\infty}^\infty \partial_x H_k(t) U''(H(\vec a(t), \vec v(t))) g(t) \ud x.
\end{equation}
Recalling the definition of $F_k(\vec a, \vec v)$, see \eqref{eq:F_k-def}, we thus obtain
that the last integral of the right hand side of \eqref{eq:pk'-id} differs by a negligible term from
\begin{equation}
\label{eq:line-56}
\begin{aligned}
F_k(\vec a(t), \vec v(t)) + (-1)^k\int_{-\infty}^\infty \partial_x H_k(t)\big(&U'(H(\vec a(t), \vec v(t)) + g(t))
 \\ &- U'(H(\vec a(t), \vec v(t))) - U''(H(\vec a(t), \vec v(t)))g(t)\big)\ud x.
 \end{aligned}
\end{equation}
Applying the Taylor formula pointwise and using the fact that
$$\|\chi_k(t)\partial_x H(\vec a(t), \vec v(t)) - (-1)^k \partial_x H_k(t)\|_{L^\infty} \lesssim \eee^{-\frac 13 y_{\min}(t)},$$
we see that up to negligible terms, in \eqref{eq:line-56} we can replace
$(-1)^k\partial_x H_k(t)$ by $\chi_k(t)\partial_x H(\vec a(t), \vec v(t))$.
Recalling \eqref{eq:pk'-4-alt}, we thus obtain
\begin{equation}
\begin{aligned}
&p_k'(t) \sim F_k(\vec a(t), \vec v(t)) \\&+ \int_{-\infty}^\infty \chi_k(t) \partial_x \big(U(H(\vec a(t), \vec v(t)) + g(t)) - U(H(\vec a(t), \vec v(t))) - U'(H(\vec a(t), \vec v(t)))g(t)\big)\ud x,
\end{aligned}
\end{equation}
and an integration by parts shows that the second line is negligible. Finally, we recall from~\eqref{eq:F_k-v-0} and~\eqref{eq:Fkv-Fk0} that $F_k(\vec a,  \vec v)$ and $F_k(\vec a)$ differ by a negligible term. 
\end{proof}
\section{Characterizations of kink clusters}
\label{sec:equiv-def}
\subsection{Kink clusters approach multikink configurations}
\label{ssec:close-to-H}
We now give a proof of Proposition~\ref{prop:close-to-H}.
The essential ingredient is the following lemma
yielding the strong convergence of minimising sequences,
similar to the results in \cite[Appendix A]{Abdon22p1}.
\begin{lemma}
\label{lem:min-conv}
\begin{enumerate}[(i)]
\item\label{it:min-conv-i}
If the sequences $x_m \in \bR$ and $\phi_m: ({-}\infty, x_m] \to \bR$ satisfy
\begin{equation}
\begin{gathered}
\lim_{x \to -\infty} \phi_m(x) = -1\quad\text{for all }m, \qquad
\lim_{m\to\infty} E_p(\phi_m; -\infty, x_m) = 0,
\end{gathered}
\end{equation}
then $\lim_{m\to\infty}\|\phi_m + 1\|_{H^1({-}\infty, x_m)} = 0$.
\item\label{it:min-conv-ii}
If the sequences $x_m' \in \bR$ and $\phi_m: [x_m', \infty) \to \bR$ satisfy
\begin{equation}
\begin{gathered}
\lim_{x \to \infty} \phi_m(x) = 1\quad\text{for all }m, \qquad
\lim_{m\to\infty} E_p(\phi_m; x_m', \infty) = 0,
\end{gathered}
\end{equation}
then $\lim_{m\to\infty}\|\phi_m - 1\|_{H^1(x_m', \infty)} = 0$.
\item\label{it:min-conv-iii}
If the sequence $\phi_m : \bR \to \bR$ satisfies
\begin{equation}
\begin{gathered}
\phi_m(0) = 0, \quad\lim_{x \to -\infty} \phi_m(x) = -1, \quad \lim_{x \to \infty} \phi_m(x) = 1\quad\text{for all }m, \\ \lim_{m \to \infty}E_p(\phi_m) = M,
\end{gathered}
\end{equation}
then $\lim_{m\to \infty}\|\phi_m - H\|_{H^1} = 0$.
\item\label{it:min-conv-iv}
If the sequences $x_m < 0$, $x_m' > 0$ and $\phi_m : [x_m, x_m'] \to \bR$ satisfy
\begin{equation}
\begin{gathered}
\phi_m(0) = 0, \quad \lim_{m \to -\infty} \phi_m(x_m) = -1, \quad \lim_{m \to \infty}\phi_m(x_m') = 1\quad\text{for all }m, \\ \lim_{m \to \infty}E_p(\phi_m; x_m, x_m') = M,
\end{gathered}
\end{equation}
then $\lim_{m \to \infty} x_m = -\infty$, $\lim_{m\to \infty}x_m' = \infty$ and
$\lim_{m \to \infty}\|\phi_m - H\|_{H^1(x_m, x_m')} = 0$.
\end{enumerate}
\end{lemma}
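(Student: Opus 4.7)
The plan is to reduce all four statements to the Bogomolny identities \eqref{eq:bogom}--\eqref{eq:bogom-2}, the quadratic behaviour $U(y) \gtrsim (y\mp 1)^2$ near each vacuum (coming from $U''(\pm 1) = 1$), and the auxiliary primitive $\tilde G(y) \coloneqq \int_{-1}^y \sqrt{2U(s)}\,\mathrm{d}s$, which is continuous and strictly monotone in a neighbourhood of each vacuum.

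For \ref{it:min-conv-i}, the $\dot H^1$ part is immediate from $\|\partial_x\phi_m\|_{L^2(-\infty, x_m)}^2 \leq 2 E_p(\phi_m;-\infty, x_m) \to 0$. For the $L^2$ part, the pointwise bound $|\partial_x \phi_m|\sqrt{2U(\phi_m)} \leq \tfrac12(\partial_x\phi_m)^2 + U(\phi_m)$, integrated from $-\infty$ to $x$, yields $|\tilde G(\phi_m(x))| \leq E_p(\phi_m;-\infty, x_m)$ uniformly in $x \leq x_m$. Since $\tilde G(-1) = 0$ and $\tilde G$ is strictly monotone near $-1$, this forces $\phi_m \to -1$ uniformly on $(-\infty, x_m]$; for $m$ large, $\phi_m$ is trapped in a neighbourhood where $U(y) \geq c(y+1)^2$, so $\|\phi_m + 1\|_{L^2}^2 \leq c^{-1}\int U(\phi_m) \leq c^{-1}E_p \to 0$. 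Part \ref{it:min-conv-ii} is symmetric via \eqref{eq:bogom-2}.

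For \ref{it:min-conv-iii}, the full-line Bogomolny identity gives
\[
E_p(\phi_m) = M + \tfrac12 \|r_m\|_{L^2}^2, \qquad r_m \coloneqq \partial_x\phi_m - \sqrt{2U(\phi_m)},
\]
so $r_m \to 0$ in $L^2$. The uniform $\dot H^1$ bound plus $\phi_m(0) = 0$, Morrey's embedding $\dot H^1(\bR) \hookrightarrow C^{1/2}_\tx{loc}$, and Arzel\`a--Ascoli produce a subsequence with $\phi_m \to \phi_\infty$ locally uniformly and $\partial_x\phi_m \rightharpoonup \partial_x\phi_\infty$ weakly in $L^2$. Passing to the limit in $r_m$ gives $\partial_x\phi_\infty = \sqrt{2U(\phi_\infty)}$, so $\phi_\infty$ is non-decreasing with $E_p(\phi_\infty) \leq M$ (Fatou) and is trapped by the first zero of $U$ it meets on either side; the only such zeros accessible from $\phi_\infty(0) = 0$ by this ODE are $\pm 1$ at $x = \pm \infty$ respectively (by the asymptotic rate $\partial_x\phi_\infty \sim |1 - \phi_\infty|$ together with the hypothesis $U > 0$ on $(-1,1)$), so ODE uniqueness forces $\phi_\infty = H$. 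To upgrade to $H^1$ convergence, the factorisation
\[
\|\partial_x\phi_m\|_{L^2}^2 - \|\sqrt{2U(\phi_m)}\|_{L^2}^2 = \langle r_m,\, \partial_x\phi_m + \sqrt{2U(\phi_m)}\rangle \to 0,
\]
combined with $\|\partial_x\phi_m\|_{L^2}^2 + 2\int U(\phi_m) = 2E_p(\phi_m) \to 2M$, yields $\|\partial_x\phi_m\|_{L^2}^2 \to M = \|\partial_x H\|_{L^2}^2$; weak convergence plus norm convergence then gives strong $\dot H^1$ convergence. For the $L^2$ convergence of $\phi_m - H$, the same $\tilde G$-trapping applied on $(-\infty, -R]$ and $[R, \infty)$ (the tail energy $E_p(\phi_m; R, \infty) \to E_p(H; R, \infty) = O(\mathrm{e}^{-2R})$ by local strong convergence and Proposition~\ref{prop:prop-H}) shows that $\phi_m$ lies uniformly near $\mp 1$ on these tails, giving $\|\phi_m - H\|_{L^2(|x|>R)} \to 0$ as $m\to\infty$ followed by $R\to\infty$. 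The main technical burden is precisely this tail control; uniqueness of the limit $H$ then removes the subsequence.

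For \ref{it:min-conv-iv}, I first show $x_m \to -\infty$ and $x_m' \to \infty$. If $x_m$ were bounded along a subsequence, extracting a locally uniform limit $\phi_\infty$ on $[x_\infty, 0]$ exactly as above would produce a non-decreasing ODE solution of $\partial_x\phi_\infty = \sqrt{2U(\phi_\infty)}$ with $\phi_\infty(x_\infty) = -1$ and $\phi_\infty(0) = 0$, contradicting the fact that the vacuum $-1$ is reached only at $x = -\infty$. Extending $\phi_m$ to $\bR$ by affine bridges of length $1$ joining $\phi_m(x_m)$ to $-1$ and $\phi_m(x_m')$ to $1$, then continuing by the constants $\pm 1$, adds bridging energy $O((\phi_m(x_m)+1)^2 + (\phi_m(x_m')-1)^2) \to 0$; the extended sequence then satisfies the hypotheses of \ref{it:min-conv-iii}, and restricting the resulting $H^1$ convergence to $[x_m, x_m']$ concludes.
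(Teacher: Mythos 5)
Your proof is correct and follows essentially the same route as the paper's: the Bogomolny identities \eqref{eq:bogom}--\eqref{eq:bogom-2} to trap $\phi_m$ near the vacua and reduce the $L^2$ bounds to the quadratic lower bound $U(y)\gtrsim (y\pm 1)^2$, compactness plus the first-order ODE to identify the local limit as $H$ in part \ref{it:min-conv-iii}, tail-energy control in the spirit of parts \ref{it:min-conv-i}--\ref{it:min-conv-ii}, and the affine extension to reduce part \ref{it:min-conv-iv} to part \ref{it:min-conv-iii}. The only variations are minor: you obtain strong $\dot H^1$ convergence in \ref{it:min-conv-iii} via the equipartition/weak-plus-norm-convergence trick where the paper instead passes to strong local convergence (writing $\partial_x\phi_m = r_m + \sqrt{2U(\phi_m)}$) and a diagonal sequence $R_m \to \infty$, and your preliminary compactness argument for $x_m \to -\infty$ in \ref{it:min-conv-iv} is redundant once the extension argument is in place, since (as in the paper) it follows from the uniform convergence of the extended sequence to $H$.
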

\begin{proof}
It follows from \eqref{eq:bogom} and \eqref{eq:bogom-2} that
\begin{equation}
\lim_{m\to \infty}\sup_{x \leq x_m}\int_{-1}^{\phi_m(x)}\sqrt{2U(y)}\ud y = 0,
\end{equation}
hence
\begin{equation}
\lim_{m\to \infty}\sup_{x \leq x_m}|\phi_m(x) + 1| = 0.
\end{equation}
Since $U''(-1) = 1$, for all $m$ large enough we have
\begin{equation}
U(\phi_m(x)) \geq \frac 14(\phi_m(x) + 1)^2\qquad\text{for all }x \leq x_m,
\end{equation}
which proves part \ref{it:min-conv-i}.

Part \ref{it:min-conv-ii} follows from \ref{it:min-conv-i} by symmetry.

In part \ref{it:min-conv-iii},
it suffices to prove that the conclusion holds for a subsequence of any subsequence.
We can thus assume that $\partial_x \phi_m \wto \partial_x \phi_0$ in $L^2(\bR)$
and $\phi_m \to \phi_0$ uniformly on every bounded interval.

From \eqref{eq:bogom}, we have
\begin{equation}
\label{eq:bogom-conv}
\lim_{m\to \infty} \int_{-\infty}^\infty\big(\partial_x \phi_m - \sqrt{2U(\phi_m)}\big)^2\ud x = 0.
\end{equation}
Restricting to bounded intervals and passing to the limit, we obtain $\partial_x \phi_0(x) = \sqrt{2U(\phi_0(x))}$ for all $x \in \bR$, hence $\phi_0 = H$.
Using again \eqref{eq:bogom-conv} restricted to bounded intervals,
we obtain $\lim_{m\to \infty}\|\phi_m - H\|_{H^1(-R, R)} = 0$ for every $R > 0$,
in particular $\lim_{m \to \infty} |E_p(\phi_m; -R, R) - E_p(H; -R, R)| = 0$.
Hence, there exists a sequence $R_m \to \infty$ such that
\begin{equation}
\label{eq:Rm-conv}
\lim_{m\to \infty}\big( \|\phi_m - H\|_{H^1(-R_m, R_m)} + |E_p(\phi_m; -R_m, R_m) - E_p(H; -R_m, R_m)|\big) = 0.
\end{equation}
Since $E_p(H; -R_m, R_m) \to M$ and $E_p(\phi_m) \to M$, we obtain
\begin{equation}
\lim_{m\to \infty}\big( E_p(\phi_m; -\infty, -R_m) + E_p(\phi_m; R_m, \infty)\big) = 0.
\end{equation}
Applying parts \ref{it:min-conv-i} and \ref{it:min-conv-ii}, we obtain
\begin{equation}
\lim_{m\to\infty}\big(\|\phi_m +1\|_{H^1(-\infty, -R_m)} + \|\phi_m - 1\|_{H^1(R_m, \infty)}\big) = 0.
\end{equation}
It is clear that $\lim_{m\to\infty}\big(\|H +1\|_{H^1(-\infty, -R_m)} + \|H - 1\|_{H^1(R_m, \infty)}\big) = 0$, thus \eqref{eq:Rm-conv} yields the conclusion.

In order to prove part \ref{it:min-conv-iv}, we define a new sequence $\wt \phi_m: \bR \to \bR$
by the formula, similar to \eqref{eq:weak-conv-cutoff},
\begin{equation}
\wt\phi_m(x) \coloneqq \begin{cases}
-1 & \text{for all }x \leq x_m - 1, \\
-(x_m-x) + (1-x_m+x)\phi_m(x_m)& \text{for all }x \in [x_m - 1, x_m], \\
\phi_m(x) &\text{for all }x \in [x_m, x_m'], \\
(x - x_m') + (1-x+x_m')\phi_m(x_m') & \text{for all }x \in [x_m', x_m' + 1], \\
1 & \text{for all }x \geq x_m' + 1.
\end{cases}
\end{equation}
Then $\wt \phi_m$ satisfies the assumptions of part \ref{it:min-conv-iii}
and we obtain $\lim_{m\to \infty} \|\wt \phi_m - H\|_{H^1} = 0$.
In particular, $\wt\phi_m \to H$ uniformly, hence $H(x_m)\to -1$ and $H(x_m') \to 1$,
implying $x_m \to -\infty$ and $x_m' \to \infty$.
\end{proof}
\begin{proof}[Proof of Proposition~\ref{prop:close-to-H}]
If $\bs \phi$ is a solution of \eqref{eq:csf} satisfying $\lim_{t \to \infty} \bfd( \bs \phi(t)) = 0$,
then Lemma~\ref{lem:interactions} yields $E(\bs \phi) = nM$.
Moreover, if $x_0(t), x_1(t), \ldots, x_n(t)$ are any functions such that
$$
\lim_{t\to\infty}\big(a_k(t) - x_{k-1}(t)\big) = \lim_{t\to\infty}\big(x_k(t) - a_{k}(t)\big) = \infty \qquad\text{for all }k \in \{1, \ldots, n\}
$$
then we have $\lim_{t\to\infty}\phi(t, x_k(t)) = (-1)^k$ for all $k \in \{0, 1, \ldots, n\}$.

In the opposite direction, assume that $\bs \phi$ is a kink cluster according to Definition~\ref{def:cluster}.
For all $k \in \{1, \ldots, n\}$ and $t$ sufficiently large, let $a_k(t) \in (x_{k-1}(t), x_k(t))$
be such that $\phi(t, a_k(t)) = 0$. From \eqref{eq:bogom} and \eqref{eq:bogom-2}, we obtain
\begin{equation}
\liminf_{t \to \infty} E_p(\phi(t); x_{k-1}(t), x_k(t)) \geq M, \qquad\text{for all }k \in \{1, \ldots, n\}.
\end{equation}
Thus, the condition $E(\bs \phi) \leq nM$ implies
\begin{gather}
\lim_{t \to \infty} \|\partial_t \phi(t)\|_{L^2} = 0, \\
\lim_{t \to \infty} E_p(\phi(t); x_{k-1}(t), x_k(t)) = M, \qquad\text{for all }k \in \{1, \ldots, n\}, \\
\lim_{t \to \infty} \big(E_p(\phi(t); -\infty, x_0(t)) + E_p(\phi(t); x_n(t), \infty)\big) = 0.
\end{gather}
Applying Lemma~\ref{lem:min-conv}, we obtain
\begin{gather}
\lim_{t \to \infty}(a_k(t) - x_{k-1}(t)) = \lim_{t \to \infty}(x_k(t) - a_k(t)) = \infty \qquad\text{for all }k \in \{1, \ldots, n\}
\end{gather}
and
\begin{equation}
\begin{aligned}
&\lim_{t\to \infty}\Big(\|\phi(t)\|_{H^1(-\infty, x_0(t))} + \|\phi(t)\|_{H^1(x_n(t),\infty)} \\
&\qquad+ \sum_{k=1}^n \|\phi(t) - (-1)^k H(\cdot - a_k(t))\|_{H^1(x_{k-1}(t), x_k(t))}\Big) = 0,
\end{aligned}
\end{equation}
thus
\begin{equation}
\lim_{t\to \infty}\big(\|\bs\phi(t) - \bs H(\vec a(t))\|_\cE^2 + \rho(\vec a(t))\big) = 0.\qedhere
\end{equation}
In particular, fixing $\vec v = 0$ in \eqref{eq:bfd-def}, we obtain $\lim_{t\to\infty}\bfd(\bs\phi(t)) = 0$.

\end{proof}
\subsection{Asymptotically static solutions are kink clusters}
\label{ssec:asym-stat}
The present section is devoted to a proof of Proposition~\ref{prop:asym-stat}, which is inspired by some of the arguments in \cite{Cote15, JK}.
Recall that we restrict our attention to the case $U(\phi) = \frac 18(1-\phi^2)^2$,
so that $\pm \bs 1$ and $\pm \bs H(\cdot -a )$ are the only static states of \eqref{eq:csf}.
\begin{lemma}
\label{lem:kink-or-antikink}
Let $\bs\phi$ be a finite-energy solution of \eqref{eq:csf} such that $\lim_{t\to\infty}\|\partial_t \phi(t)\|_{L^2}^2 = 0$.
If $t_m \to \infty$ and $(a_m)_m$ is a sequence of real numbers such that
$\bs \phi(t_m, \cdot + a_m) \wto \bs \phi_0$ as $m \to \infty$,
then $\bs \phi_0$ is a static state.
\end{lemma}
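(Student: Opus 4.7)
The plan is to time- and space-translate the solution, pass to the weak limit in the nonlinear Cauchy problem, and use the pointwise-in-time vanishing of the kinetic energy to force the limit to be time-independent.

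First, set $\bs\phi_m(t,x) \coloneqq \bs\phi(t_m + t, x + a_m)$. By the translation invariance of \eqref{eq:csf}, each $\bs\phi_m$ is a finite-energy solution with the same total energy as $\bs\phi$, and by hypothesis $\bs\phi_m(0) \wto \bs\phi_0$. Let $\bs\phi_0^\infty$ denote the solution of \eqref{eq:csf} with initial data $\bs\phi_0^\infty(0) = \bs\phi_0$. The weak continuity of the flow, Proposition~\ref{prop:cauchy}\ref{it:cauchy-weak}, then gives $\bs\phi_m(t) \wto \bs\phi_0^\infty(t)$ for every $t \in \bR$.

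Next, fix any $t \in \bR$. Since $t_m + t \to \infty$, the hypothesis yields
\begin{equation}
\|\dot\phi_m(t)\|_{L^2}^2 = \|\partial_t\phi(t_m + t)\|_{L^2}^2 \longrightarrow 0 \quad\text{as } m \to \infty,
\end{equation}
so $\dot\phi_m(t) \to 0$ \emph{strongly} in $L^2$. On the other hand, by the weak notion of convergence from Proposition~\ref{prop:cauchy}\ref{it:cauchy-weak}, we also have $\dot\phi_m(t) \wto \dot\phi_0^\infty(t)$ in $L^2$. Uniqueness of limits forces $\dot\phi_0^\infty(t) \equiv 0$, hence $\phi_0^\infty(t, x) = \phi_0(x)$ is independent of $t$. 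Inserting this into the distributional form of \eqref{eq:csf-2nd} reduces it to the static equation $\partial_x^2 \phi_0 = U'(\phi_0)$. Finite energy of $\bs\phi_0$ is inherited from the uniform bound $E(\bs\phi_m)=E(\bs\phi)$ by weak lower semicontinuity of $\|\partial_x\phi_0\|_{L^2}^2$ and $\|\dot\phi_0\|_{L^2}^2 = 0$, together with Fatou's lemma applied to the pointwise convergence $U(\phi_m(0,\cdot))\to U(\phi_0)$ (a consequence of the $L^\infty_\tx{loc}$ convergence that is built into $\wto$). By the classification of finite-energy stationary solutions in Section~\ref{sec:static}, $\bs\phi_0 \in \{\pm \bs 1\} \cup \{\pm\bs H(\cdot - a) : a \in \bR\}$, i.e., it is a static state.

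The main technical obstacle I anticipate is a potential sector mismatch: the translated sequence $\bs\phi_m(0)$ lies in $\cE_{1,(-1)^n}$, but if $a_m \to \pm\infty$ the weak limit $\bs\phi_0$ may fall in a different sector such as $\cE_{\pm 1, \pm 1}$, so the hypothesis $\bs\phi_0 \in \cE_{\iota_-, \iota_+}$ of Proposition~\ref{prop:cauchy}\ref{it:cauchy-weak} must be reached via a preliminary reduction analogous to Step~1 of its proof (modifying the sequence outside an expanding interval to match $\bs\phi_0$ at infinity without affecting the local behavior). Once this cosmetic step is absorbed, the argument above is essentially immediate.
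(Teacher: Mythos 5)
Your proof is correct and follows essentially the same route as the paper: translate, invoke the weak continuity of the flow from Proposition~\ref{prop:cauchy}~\ref{it:cauchy-weak}, and conclude from $\|\partial_t\phi(t_m+t)\|_{L^2}\to 0$ that the limiting solution has vanishing time derivative at every time, hence is static (the paper phrases this via weak lower semicontinuity of the $L^2$ norm rather than uniqueness of weak limits, which is the same observation). The extra identification of $\bs\phi_0$ via the classification of stationary solutions is not needed for the stated conclusion but is harmless.
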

\begin{proof}
Let $\wt{\bs \phi}$ be the solution of \eqref{eq:csf} for the initial data
$\wt{\bs\phi}(0) = \bs\phi_0$.
By Proposition~\ref{prop:cauchy} \ref{it:cauchy-weak}, for all $s \in [0, 1]$
we have $\partial_t \phi(t_m + s, \cdot + a_m) \wto \partial_t \wt\phi(s)$
in $L^2(\bR)$, thus for all $s \in [0, 1]$ we have
\begin{equation}
\|\partial_t \wt\phi(s)\|_{L^2} \leq \liminf_{m\to\infty}\|\partial_t \phi(t_m + s, \cdot + a_m)\|_{L^2} = \liminf_{m\to\infty}\|\partial_t \phi(t_m + s)\|_{L^2} = 0,
\end{equation}
implying that $\bs\phi_0$ is a static state.
\end{proof}

\begin{lemma}
\label{lem:asym-stat-Linf-conv}
Let $\bs\phi$ be a finite-energy solution of \eqref{eq:csf} such that
$\lim_{t\to\infty}\|\partial_t \phi(t)\|_{L^2}^2 = 0$
and let $t_m \to \infty$.
After extraction of a subsequence, there exist
$n \in \{0, 1, \ldots\}$, $\iota \in \{{-}1, 1\}$ and
$\vec a_{m} \in \bR^n$ for $m \in \{1, 2, \ldots\}$ such that
\begin{equation}
\label{eq:asym-stat-Linf-conv}
\lim_{m\to\infty}\big(\|\phi(t_m) - \iota H(\vec a_m)\|_{L^\infty}^2
+ \rho(\vec a_m)\big) = 0.
\end{equation}
\end{lemma}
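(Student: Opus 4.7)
The plan is to combine energy conservation with the Bogomolny inequalities \eqref{eq:bogom}--\eqref{eq:bogom-2} and the identification in Lemma~\ref{lem:kink-or-antikink}, and then apply Lemma~\ref{lem:min-conv}(iv) piecewise. First, $\|\partial_t \phi(t)\|_{L^2}^2 \to 0$ and conservation of $E$ give $E_p(\phi(t_m)) \to E_0 \coloneqq E(\bs\phi) < \infty$. The topological class of $\bs\phi$ fixes $\iota \coloneqq \lim_{x\to-\infty}\phi(t_m, x) \in \{-1,+1\}$ independently of $m$, and from finite energy together with the boundary conditions one checks that $(\phi(t_m))_m$ is uniformly bounded in $L^\infty(\bR)$.

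Next I would count ``full transitions'' at time $t_m$. For a fixed small $\epsilon > 0$, call an interval a full transition if $\phi(t_m)$ enters it $\epsilon$-close to one vacuum and exits $\epsilon$-close to the other. By Bogomolny, each such interval carries potential energy at least $M_\epsilon \coloneqq \int_{-1+\epsilon}^{1-\epsilon}\sqrt{2U(y)}\,\ud y$, with $M_\epsilon \to M$ as $\epsilon\to 0$. Hence, for $\epsilon$ small enough, the number of full transitions at time $t_m$ is uniformly bounded by $N \coloneqq \lfloor E_0/M\rfloor + 1$. Pass to a subsequence on which this count is a fixed integer $n \in \{0, \ldots, N\}$, and pick zeros $a_{m,1} < \cdots < a_{m,n}$ of $\phi(t_m)$, one inside each transition zone.

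Now I would prove the separation $a_{m,k+1} - a_{m,k} \to \infty$ by contradiction: if some subsequence had $a_{m,k+1} - a_{m,k} \to L < \infty$, the local $H^1$-compactness coming from the uniform energy bound would yield a weak/local uniform limit of $\phi(t_m,\cdot + a_{m,k})$ which, by Lemma~\ref{lem:kink-or-antikink}, is a static solution $\phi_\infty$; but $\phi_\infty(0) = \phi_\infty(L) = 0$, and none of the static states $\pm 1$ or $\pm H(\cdot - a)$ has two zeros. Setting $b_{m,0} \coloneqq -\infty$, $b_{m,n} \coloneqq \infty$, $b_{m,k} \coloneqq (a_{m,k} + a_{m,k+1})/2$, on each $I_{m,k} \coloneqq [b_{m,k-1}, b_{m,k}]$ -- after translation by $a_{m,k}$ and a possible sign flip (consecutive vacuum regions must agree, forcing alternation) -- the hypotheses of Lemma~\ref{lem:min-conv}(iv) are met, and the lemma produces $H^1$-convergence on $I_{m,k}$ to the appropriate $\pm H(\cdot - a_{m,k})$. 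The Sobolev embedding $H^1(I_{m,k}) \hookrightarrow L^\infty(I_{m,k})$ promotes this to $L^\infty$-convergence; pasting pieces together and noting that the off-center kinks in $\iota H(\vec a_m)$ are exponentially close to $\pm 1$ on $I_{m,k}$ by Proposition~\ref{prop:prop-H} yields $\|\phi(t_m) - \iota H(\vec a_m)\|_{L^\infty} \to 0$ along with $\rho(\vec a_m) \to 0$.

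The main obstacle is verifying the energy hypothesis of Lemma~\ref{lem:min-conv}(iv), namely $E_p(\phi(t_m); I_{m,k}) \to M$. Bogomolny gives the lower bound $E_p(\phi(t_m); I_{m,k}) \geq M + o(1)$, and combined with $E_p(\phi(t_m)) \to E_0$ this pins down the energy on each piece provided $E_0 = nM$. Establishing $E_0 = nM$ requires ruling out ``excess energy'' carried by small non-transitioning oscillations -- a vanishing argument in the spirit of Lions: any such surviving energy would, after translation, produce by Lemma~\ref{lem:kink-or-antikink} a nontrivial static profile beyond the $n$ already accounted for, contradicting the maximality of $n$ once $\epsilon$ is sent to $0$ along a diagonal subsequence. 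Made rigorous, this forces the residual $\phi(t_m) - \iota H(\vec a_m)$ to vanish in $H^1$, which also delivers the desired $L^\infty$-convergence via Sobolev embedding.
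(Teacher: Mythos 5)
Your overall architecture (count transitions via Bogomolny, place a zero in each, prove separation by contradiction using Lemma~\ref{lem:kink-or-antikink}, deduce alternation of signs) matches the spirit of the paper's argument, but the way you close the proof has a genuine gap. You reduce the conclusion to Lemma~\ref{lem:min-conv}(iv), which requires $E_p(\phi(t_m); I_{m,k}) \to M$ on each piece, i.e.\ $E(\bs\phi) = nM$. That identity is not available at this stage and cannot be obtained by the concentration-compactness/``vanishing'' argument you sketch. Lemma~\ref{lem:kink-or-antikink} only identifies the weak limits of $\bs\phi(t_m,\cdot+x_m)$ along sequences where such a (nonconstant) limit exists; a residual $\bs\phi(t_m)-\iota\bs H(\vec a_m)$ consisting of dispersing radiation has no nontrivial weak limit after any translation, yet can carry a fixed amount of $H^1$ norm and hence of potential energy. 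Ruling this out is precisely the hard part of Proposition~\ref{prop:asym-stat}, which is proved \emph{after} this lemma using the free Klein--Gordon virial-type identity (and the local stability Lemma~\ref{lem:mkink-stab}); invoking $E_0=nM$ here would be circular. This is also why the lemma is deliberately stated with $L^\infty$ convergence only: $H^1$ convergence of the residual is false to assert at this point of the argument.

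The paper's proof sidesteps energy quantization entirely. It takes $n$ to be the \emph{maximal} number of mutually separating translation sequences along which $\bs\phi(t_m,\cdot+a_{m,k})$ has a non-vacuum weak limit (each such limit being $\pm\bs H$ by Lemma~\ref{lem:kink-or-antikink}), shows the signs alternate because two equal consecutive limits would force an intermediate zero and hence an $(n+1)$-st nonconstant limit, and then proves the $L^\infty$ estimate directly: if $|\phi(t_m,x_m)-H(\vec a_m;x_m)|$ stayed bounded away from $0$ along some $x_m$, then $|x_m-a_{m,k}|\to\infty$ for all $k$ and $\bs\phi(t_m,\cdot+x_m)$ would have yet another nonconstant weak limit, again contradicting maximality. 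Only local uniform convergence of the first component is used, never $H^1$ control of the residual. To repair your proof you would need to replace the appeal to Lemma~\ref{lem:min-conv}(iv) by this kind of maximality argument (or otherwise weaken your target from $H^1$ to $L^\infty$), since the energy identity you rely on is exactly what the lemma is a stepping stone toward proving.
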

\begin{proof}
Let $n$ be the maximal natural number such that there exist
a subsequence of $t_m$, and sequences $a_{m,1}, \ldots, a_{m,n}$
satisfying
$\lim_{m\to\infty}(a_{m,k+1} - a_{m,k}) = \infty$ for all $k \in \{1, \ldots, n-1\}$ and
\begin{equation}
\bs\phi(t_m, \cdot + a_{m, k}) \wto \bs\phi_k\neq \pm\bs 1\qquad\text{for all }k.
\end{equation}
By Lemma~\ref{lem:kink-or-antikink}, each $\bs \phi_k$ is a kink or an antikink. Upon adjusting the sequence $a_{m, k}$, we can assume
$\bs \phi_k = \pm \bs H$ for all $k$, and by an appropriate choice of $\iota$
we can reduce to $\bs\phi_1 = {-}\bs H$.

We claim that $\bs\phi_k = (-1)^k \bs H$ for all $k \in \{1, \ldots, n\}$.
This is true for $k = 1$. Suppose $k$ is such that $\bs \phi_k = \bs \phi_{k+1}$. Then there exists a sequence $x_m$ such that
$\lim_{m\to\infty}(x_m - a_{m, k}) = \lim_{m\to\infty}(a_{m, k+1} - x_m) = \infty$ and $\phi(t_m, x_m) = 0$ for all $m$.
Extracting a subsequence, we can assume that $\phi(t_m, \cdot + x_m)$
converges locally uniformly. The limit cannot be a vacuum,
contradicting the maximality of $n$ and finishing the proof of the claim.

Suppose there exists a sequence $x_m$ such that
\begin{equation}
\limsup_{m\to \infty} |\phi(t_m, x_m) - H(\vec a_m; x_m)| > 0.
\end{equation}
We then have $|a_{m, k} - x_m| \to \infty$,
thus $\bs \phi(t_m, \cdot + x_m)$ has (after extraction of a subsequence)
a non-constant weak limit, contradicting the maximality of $n$.
\end{proof}

The last auxiliary result which we need is the following
description of local in time stability of multikink configurations.
\begin{lemma}\label{lem:mkink-stab}
Let $n \in \{0, 1, \ldots\}$ and $\vec a_m \in \bR^n$ for $m \in \{1, 2, \ldots\}$ be such that $\lim_{m\to \infty}\rho(\vec a_m) = 0$.
Let $\bs g_{m, 0} \in \cE$ for $m \in \{1, 2, \ldots\}$
and let $\bs\phi_m: [-1, 1] \to \cE_{1, (-1)^n}$
be the solution of \eqref{eq:csf} for the initial data $\bs\phi_m(0) = \bs H(\vec a_m) + \bs g_{m, 0}$.
\begin{enumerate}[(i)]
\item \label{it:strong-mkink-stab}
If $\lim_{m\to\infty}\|\bs g_{m, 0}\|_\cE = 0$, then
\begin{equation}
\label{eq:loc-stab-1}
\lim_{m\to\infty}\sup_{t \in [-1, 1]}\|\bs\phi_m(t) - \bs H(\vec a_m)\|_{\cE} = 0.
\end{equation}
\item \label{it:weak-mkink-stab}
If $\limsup_{m\to\infty}\|g_{m, 0}\|_{H^1} < \infty$,
$\lim_{m\to\infty}\|\dot g_{m, 0}\|_{L^2} = 0$
and $\lim_{m\to\infty}\|g_{m, 0}\|_{L^\infty} = 0$, then
\begin{equation}
\label{eq:loc-stab-3}
\lim_{m\to\infty}\sup_{t \in [-1, 1]}\|\bs\phi_m(t) - \bs H(\vec a_m)
-\bs g_{m, \lin}(t)\|_{\cE} = 0,
\end{equation}
where $\bs g_{m, \lin}(t)$ is the solution of the free linear Klein-Gordon
equation for the initial data $\bs g_{m, \lin}(0) = \bs g_{m, 0}$.
\end{enumerate}
\end{lemma}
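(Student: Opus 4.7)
The plan is to derive energy estimates for the remainder on the compact time interval $[-1,1]$ via a Gronwall argument.

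For part \ref{it:strong-mkink-stab}, set $\bs g_m(t) := \bs\phi_m(t) - \bs H(\vec a_m)$, so that $\bs g_m(0) = \bs g_{m,0}$ and, since $\bs H(\vec a_m)$ is time-independent,
\[
\partial_t \bs g_m = \bs J\vD E(\bs H(\vec a_m)) + \bs J\vD^2 E(\bs H(\vec a_m))\bs g_m + \bs J \bs N_m(\bs g_m),
\]
where $\bs N_m(\bs g) := \vD E(\bs H(\vec a_m)+\bs g) - \vD E(\bs H(\vec a_m)) - \vD^2 E(\bs H(\vec a_m))\bs g$ is quadratic in $\bs g$. A direct computation of $\frac{d}{dt}\|\bs g_m\|_\cE^2$, using $\partial_t g_m = \dot g_m$ and $\partial_t \dot g_m = \p_x^2 g_m + \p_x^2 H(\vec a_m) - U'(H(\vec a_m)+g_m)$, followed by integration by parts to cancel the highest-order cross-term $2\la\dot g_m, \p_x^2 g_m\ra$ against $2\la\p_x g_m,\p_x \dot g_m\ra$, yields
\[
\Big|\dd{t}\|\bs g_m(t)\|_\cE^2\Big| \lesssim (1+\|g_m\|_{L^\infty})\|\bs g_m\|_\cE^2 + \|\vD E_p(H(\vec a_m))\|_{L^2}\|\bs g_m\|_\cE.
\]
By Remark~\ref{lem:sizeDEp}, $\|\vD E_p(H(\vec a_m))\|_{L^2} \lesssim \sqrt{y_{\min}(\vec a_m)}\eee^{-y_{\min}(\vec a_m)} \to 0$ (since $\rho(\vec a_m)\to 0$ forces $y_{\min}(\vec a_m)\to\infty$), and the Sobolev embedding $H^1(\bR)\hookrightarrow L^\infty(\bR)$ controls $\|g_m\|_{L^\infty}$ by $\|\bs g_m\|_\cE$. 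A continuity argument combined with Gronwall's inequality on $[-1,1]$, together with $\|\bs g_m(0)\|_\cE\to 0$, yields \eqref{eq:loc-stab-1}.

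For part \ref{it:weak-mkink-stab}, let $\bs r_m := \bs\phi_m - \bs H(\vec a_m) - \bs g_{m,\lin}$, so that $\bs r_m(0)=\bs 0$. Writing $\bs L := \bs J\vD^2 E(\bs 1)$ for the free Klein-Gordon generator (recall $U''(\pm 1)=1$), we find
\[
\partial_t \bs r_m = \bs L \bs r_m + \bs S_m,
\]
with
\[
\bs S_m := \bs J\vD E(\bs H(\vec a_m)) + \bs J\big[\vD^2 E(\bs H(\vec a_m)) - \vD^2 E(\bs 1)\big](\bs g_{m,\lin}+\bs r_m) + \bs J\bs N_m(\bs g_{m,\lin}+\bs r_m).
\]
Since $e^{t\bs L}$ is an isometry on $\cE$ (the energy $\frac{1}{2}\int(\dot g^2 + (\p_x g)^2 + g^2)$ of the free Klein-Gordon equation is conserved), we obtain $\frac{d}{dt}\|\bs r_m\|_\cE^2 = 2\la\bs r_m,\bs S_m\ra_\cE \le 2\|\bs r_m\|_\cE\|\bs S_m\|_\cE$. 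The first summand of $\bs S_m$ tends to zero in $\cE$ by Remark~\ref{lem:sizeDEp}. The second summand has $\cE$-norm bounded by $\|(U''(H(\vec a_m))-1)(g_{m,\lin}+r_m)\|_{L^2}$; since $U''(H(\vec a_m))-1$ is exponentially localized near the $n$ positions $a_{m,k}$, this reduces to controlling $\|g_{m,\lin}\|_{L^2(B_k)}+\|r_m\|_{L^2(B_k)}$ on unit-sized neighborhoods $B_k$ of each kink. The $r_m$-contribution is $\lesssim\|\bs r_m\|_\cE$ and is absorbed by Gronwall. The nonlinear term is $O(\|g_{m,\lin}+r_m\|_{L^\infty}\|g_{m,\lin}+r_m\|_{L^2})$, again reducing to local control of $g_{m,\lin}$ near each kink.

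The main obstacle is therefore to show that $\sup_{t\in[-1,1]}\|g_{m,\lin}(t)\|_{L^\infty(B_k)}\to 0$ as $m\to\infty$, which is where the hypothesis $\|g_{m,0}\|_{L^\infty}\to 0$ is used critically. By finite speed of propagation for the free Klein-Gordon equation, the values of $g_{m,\lin}(t,\cdot)$ on $B_k$ for $|t|\le 1$ depend only on the initial data on a slightly enlarged interval $B_k^*$ of fixed length. On $B_k^*$ we have $\|g_{m,0}\|_{L^\infty(B_k^*)}\to 0$, $\|\dot g_{m,0}\|_{L^2(B_k^*)}\to 0$, and $\|g_{m,0}\|_{H^1(B_k^*)}$ bounded. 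A local energy argument for the free Klein-Gordon equation on $B_k^*$, analogous in spirit to the proof of Lemma~\ref{lem:loc-vac-stab} (using positivity-type identities and the equipartition on narrow light cones), combined with Sobolev embedding in the trapped region, propagates the smallness of the local $L^\infty$ norm to times $t\in[-1,1]$. Inserting this bound, together with a standard continuity argument to keep $\|\bs r_m\|_\cE$ small, closes the Gronwall inequality and yields \eqref{eq:loc-stab-3}.
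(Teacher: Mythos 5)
Your overall architecture coincides with the paper's: the same remainder $\bs r_m=\bs\phi_m-\bs H(\vec a_m)-\bs g_{m,\lin}$ with zero initial data, the same three source terms (the static error $\vD E_p(H(\vec a_m))$, the localized potential perturbation $(U''(H(\vec a_m))-1)g_m$, and the quadratic nonlinearity), and the same Gronwall closure. You have also correctly isolated the crux of the matter, namely that one must show $\sup_{t\in[-1,1]}\|g_{m,\lin}(t)\|_{L^\infty}\to 0$. (Your direct Gronwall treatment of part (i) is a harmless variant; the paper instead deduces (i) from (ii) using $\|\bs g_{m,\lin}(t)\|_\cE=\|\bs g_{m,0}\|_\cE$.)

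However, your justification of that crucial step has a genuine gap. A local energy identity in the spirit of Lemma~\ref{lem:loc-vac-stab} controls the \emph{full} local energy $\int(\dot g^2+(\partial_x g)^2+g^2)$ at time $t$ on a shrinking cone by the full local energy of the data on its base, and the latter contains $\|\partial_x g_{m,0}\|_{L^2(B_k^*)}^2$, which under your hypotheses is only \emph{bounded}, not small. Sobolev embedding then yields only boundedness of $\|g_{m,\lin}(t)\|_{L^\infty}$, which does not close the Gronwall argument: with a merely bounded $\epsilon_m$ one gets $\|\bs r_m(t)\|_\cE=O(1)$, not $o(1)$, and \eqref{eq:loc-stab-3} does not follow. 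The interpolation refinement $\|g\|_{L^\infty(I)}^2\lesssim\|g\|_{L^2(I)}\|g\|_{H^1(I)}$ does not rescue this either, because the local $L^2$ norm at time $t>0$ is controlled only through $\|\dot g_{m,\lin}(s)\|_{L^2}$, and the free flow converts the (merely bounded) $\partial_x g_{m,0}$ into $\dot g_{m,\lin}(s)$ for $s\neq 0$. The correct mechanism is the explicit representation of the one-dimensional Klein--Gordon propagator (equivalently, d'Alembert plus Duhamel): $g_{m,\lin}(t,x)$ is a combination of the pointwise values $g_{m,0}(x\pm t)$, controlled by $\|g_{m,0}\|_{L^\infty}$, and integrals of $g_{m,0}$ and $\dot g_{m,0}$ against bounded kernels over intervals of length $\le 2|t|\le 2$, controlled by $\|g_{m,0}\|_{L^\infty}$ and, via Cauchy--Schwarz, by $\|\dot g_{m,0}\|_{L^2}$. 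This uses exactly the three hypotheses of part (ii) and yields the required uniform smallness; the bounded $H^1$ norm enters only in the other estimates.
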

\begin{proof}
Part \ref{it:strong-mkink-stab} follows from part \ref{it:weak-mkink-stab} and the fact that $\|\bs g_{m, \lin}(t)\|_\cE = \|\bs g_{m, 0}\|_\cE$ for all $t$.





In order to prove \eqref{eq:loc-stab-3}, we set $\bs g_m(t)\coloneqq  \bs\phi_m(t) - \bs H(\vec a_m)$ and $\bs h_m(t)\coloneqq \bs g_m(t)  -\bs g_{m, \lin}(t)$. Then $\bs h_m(t) = (h_m(t), \partial_t h_m(t))$ solves the equation
$\partial_t^2 h_m - \partial_x^2 h_m + h_m = f_m$, where
\begin{equation}
\begin{aligned}
\label{eq:fm-gronwall}
f_m(t) \coloneqq &- \big( U'(H(\vec a_m) + g_m(t)) - U'(H(\vec a_m)) - U''(H(\vec a_m)) g_m(t) \big)  \\
& - \Big( U'(H(\vec a_m)) - \sum_{k =1}^{n} (-1)^k U'( H( \cdot - a_{k, m})) \Big) \\
& - \big( U''(H(\vec a_m)) -1\big) g_m(t).
\end{aligned}
\end{equation}
Since $\bs h_m(0) = 0$, the standard energy estimate yields
\begin{equation}
\|\bs h_m(t)\|_\cE \leq \bigg|\int_0^t \|f_m(s)\|_{L^2}\ud s \bigg|\qquad\text{for all }t \in [-1, 1].
\end{equation}
By Gronwall's inequality, it suffices to verify that
\begin{equation}
\label{eq:fm-gronwall-bd}
\|f_m(t)\|_{L^2} \leq C\|\bs h_m(t)\|_{\cE} + \epsilon_m,\qquad\text{
with }\lim_{m\to\infty}\epsilon_m = 0.
\end{equation}
The first line of \eqref{eq:fm-gronwall} satisfies
\begin{align} 
&\| U'(H(\vec a_m) + g_m(t)) - U'(H(\vec a_m)) - U''(H(\vec a_m)) g_m(t) \|_{L^2} \\
&\qquad\lesssim \min(\|g_m(t)\|_{L^2}, \| g_m(t)^2 \|_{L^2}).
\end{align}
If $\|\bs h_m(t)\|_{\cE} \geq 1$, then
$$\|g_m(t)\|_{L^2} \leq \|h_m(t)\|_{L^2} + \|g_{m,\lin}(t)\|_{L^2} \leq \|\bs h_m(t)\|_\cE + \|\bs g_{m, 0}\|_\cE \lesssim \|\bs h_m(t)\|_\cE.$$
If $\|\bs h_m(t)\|_\cE \leq 1$, then
\begin{align}
\|g_m(t)^2\|_{L^2} &\leq \| g_m(t) \|_{L^\infty} \| g_m(t) \|_{L^2} \\
&\leq
(\|\bs h_m(t)\|_\cE + \|g_{m,\lin}(t)\|_{L^\infty})
(\|\bs h_m(t)\|_\cE + \|\bs g_{m,0}\|_\cE) \\
& \lesssim \|\bs h_m(t)\|_\cE + \|g_{m,\lin}(t)\|_{L^\infty}.
\end{align} 
The term on the second line of \eqref{eq:fm-gronwall} is equal to $-\textrm{D} E_p( H(\vec a_m))$, thus, using \eqref{eq:sizeDEp}, satisfies
\begin{align} 
\Big\| U'(H(\vec a_m)) - \sum_{k =1}^{n} (-1)^k U'( H( \cdot - a_{k, m})) \Big\|_{L^2} \lesssim \sqrt{{-}\log \rho(\vec a_m)} \rho(\vec a_m). 
\end{align} 
Finally, the third line of \eqref{eq:fm-gronwall} satisfies
\begin{align} 
\big\| \big( U''(H(\vec a_m)) -1\big) g_m(t) \big\|_{L^2} \leq \|U''(H(\vec a_m)) -1 \|_{L^2} \|g_m(t) \|_{L^{\infty}} \lesssim \|g_m(t) \|_{L^{\infty}}.
\end{align}
Gathering the estimates above and using the fact that,
by explicit formulas for the free Klein-Gordon equation,
we have
$\lim_{m\to\infty}\sup_{t\in[-1, 1]}\|g_{m,\lin}(t)\|_{L^\infty} = 0$, 
we obtain \eqref{eq:fm-gronwall-bd}.
\end{proof}
\begin{proof}[Proof of Proposition~\ref{prop:asym-stat}]
Without loss of generality, assume
\begin{equation}
\label{eq:asym-stat-class}
\lim_{x \to -\infty}\phi(t, x) = 1\qquad\text{for all }t.
\end{equation}
Suppose that either $n \coloneqq M^{-1}E(\bs \phi) \notin \bN$, or
there exists a sequence $t_m \to \infty$ such that
\begin{equation}
\label{eq:asym-stat-contr}
\liminf_{m \to \infty}\bfd(\bs \phi(t_m)) > 0,
\end{equation}
where $\bfd$ defined by \eqref{eq:bfd-def} is the distance
to the set of $n$-kink configurations.
In both cases, for any $n' \in \bN$ and $\vec a_m \in \bR^{n'}$ we have
\begin{equation}
\label{eq:asym-stat-g-lbd}
\liminf_{m \to \infty}\big(\|\bs \phi(t_m) - \bs H(\vec a_m)\|_\cE^2
+ \rho(\vec a_m)\big) > 0.
\end{equation}
By Lemma~\ref{lem:asym-stat-Linf-conv}, after extracting a subsequence,
we can assume that \eqref{eq:asym-stat-Linf-conv} holds
for some $n' \in \bN$ and $\vec a_m \in \bR^{n'}$.
We have $\iota = 1$ due to \eqref{eq:asym-stat-class}.
We decompose
\begin{equation}
\bs \phi(t_m + s) = \bs H(\vec a_m) + \bs g_m(s), \qquad s \in [-1, 1].
\end{equation}
We claim that
\begin{equation}
\label{eq:g-large-en}
\liminf_{m\to\infty}\inf_{s \in [-1, 1]}\|\bs g_m(s)\|_\cE > 0.
\end{equation}
Otherwise, by Lemma~\ref{lem:mkink-stab} \ref{it:strong-mkink-stab},
we would have $\lim_{m\to\infty}\|\bs g_m(0)\|_\cE = 0$,
contradicting \eqref{eq:asym-stat-g-lbd}.

Let $\bs g_{m, \lin}: [-1, 1] \to \cE$ be the solution of the free
linear Klein-Gordon equation for the initial data $\bs g_{m, \lin}(0)
= \bs g_m(0)$. By Lemma~\ref{lem:mkink-stab} \ref{it:weak-mkink-stab},
we have
\begin{equation}
\lim_{m\to\infty}\sup_{s\in [-1, 1]}\|\bs g_m(s) - \bs g_{m, \lin}(s)\|_\cE = 0,
\end{equation}
thus
\begin{gather}
\lim_{m\to\infty}\sup_{s\in[-1, 1]}\|\partial_t g_{m, \lin}(s)\|_{L^2} = 0, \\
\liminf_{m\to\infty}\inf_{s\in[-1, 1]}\|g_{m, \lin}(s)\|_{H^1} > 0,
\end{gather}
the last inequality following from \eqref{eq:g-large-en}.
It is well-known that such behaviour is impossible
for the free Klein-Gordon equation.
For instance, it contradicts the identity
\begin{equation}
\begin{aligned}
&\int_{-\infty}^\infty \big(\partial_t g_{m, \lin}(1, x)g_{m, \lin}(1, x) - \partial_t g_{m, \lin}(-1, x)g_{m, \lin}(-1, x)\big)\ud x \\
&\qquad =\int_{-1}^1 \int_{-\infty}^\infty \big((\partial_t g_{m, \lin}(t, x))^2 - (\partial_x g_{m, \lin}(t, x))^2 - g_{m, \lin}(t, x)^2\big)\ud x\ud t,
\end{aligned}
\end{equation}
obtained by multiplying the equation by $g_{m, \lin}$ and integrating in space-time.
Hence, \eqref{eq:asym-stat-contr} is impossible.
\end{proof}

\section{Main order asymptotic behavior of kink clusters}
\label{sec:n-body}
The current section is devoted to the analysis of the approximate $n$-body problem
associated with a kink cluster motion, obtained in Lemma~\ref{lem:ref-mod}.
\subsection{Linear algebra results}
\label{ssec:lin-alg}
We gather here a few auxiliary results needed later.
\begin{definition}
\label{def:laplacian}
For any $n \in \{1, 2, \ldots\}$, the (positive) discrete
Dirichlet Laplacian $\Delta^{(n)} = (\Delta_{jk}^{(n)})_{j, k=1}^{n-1} \in \bR^{(n-1)\times (n-1)}$ is defined by
$\Delta^{(n)}_{kk} = 2$ for $k = 1, \ldots, n-1$,
$\Delta^{(n)}_{k, k+1} = \Delta^{(n)}_{k+1, k} = -1$
for $k = 1, \ldots, n-2$ and $\Delta^{(n)}_{j, k} = 0$ for $|j - k| \geq 2$.
\end{definition}
We denote
\begin{equation}
\label{eq:1-and-sig-def}
\vec 1 \coloneqq (1, \ldots, 1) \in \bR^{n-1}, \qquad
\vec \sigma = (\sigma_1, \ldots, \sigma_{n-1}), \ \sigma_k \coloneqq \frac{k(n-k)}{2}.
\end{equation}
We have
\begin{equation}
\Delta^{(n)}\vec\sigma = \Big( k(n-k) - \frac 12((k-1)(n-k+1) + (k+1)(n-k-1)) \Big)_{k = 1}^{n-1} = \vec 1.
\end{equation}
We also denote
\begin{align}
\Pi &\coloneqq \{\vec z\in \bR^{n-1}: \vec \sigma\cdot \vec z = 0\}, \\
\mu_0 &\coloneqq \frac{1}{\vec \sigma\cdot \vec 1}
= \frac{12}{(n+1)n(n-1)}, \\
P_\sigma \vec y &\coloneqq \vec y - \frac{\vec \sigma\cdot\vec y}{\|\vec \sigma\|^2}\vec \sigma \quad\text{(the orthogonal projection of $\vec y \in \bR^{n-1}$ on $\Pi$)}, \\
P_1 \vec y &\coloneqq \vec y - \mu_0(\vec\sigma\cdot\vec y)\vec 1\quad\text{(the projection of $\vec y \in \bR^{n-1}$ on $\Pi$ along the direction $\vec 1$)}.
\end{align}
Observe that
\begin{equation}
P_1 \Delta^{(n)}\vec y = \Delta^{(n)} \vec y - \mu_0 (\vec\sigma\cdot (\Delta^{(n)}\vec y))\vec 1 = \Delta^{(n)}\vec y - \mu_0 (\vec 1\cdot \vec y) \vec 1.
\end{equation}
\begin{lemma}
\label{lem:matrix-D}
The matrix $\Delta^{(n)}$ is positive definite. There exists $\mu_1 > 0$ such that for all $\vec y \in \bR^{n-1}$
\begin{equation}
\label{eq:matrix-D}
\vec y\cdot (P_1 \Delta^{(n)}\vec y) = \vec y\cdot(\Delta^{(n)}\vec y) - \mu_0(\vec 1\cdot\vec y)^2 \geq \mu_1 | P_\sigma \vec y |^2.
\end{equation}
\end{lemma}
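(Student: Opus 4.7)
My plan decomposes the statement into three steps: the displayed identity, positive definiteness of $\Delta^{(n)}$, and the coercivity bound.

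The identity $\vec y\cdot(P_1 \Delta^{(n)}\vec y) = \vec y\cdot(\Delta^{(n)}\vec y) - \mu_0(\vec 1\cdot\vec y)^2$ reads off directly from the formula $P_1\Delta^{(n)}\vec y = \Delta^{(n)}\vec y - \mu_0(\vec 1\cdot\vec y)\vec 1$ noted just before the statement, which itself uses symmetry of $\Delta^{(n)}$ together with $\Delta^{(n)}\vec\sigma = \vec 1$. Positive definiteness of $\Delta^{(n)}$ follows from the summation-by-parts identity
\begin{equation*}
\vec y\cdot\Delta^{(n)}\vec y = \sum_{k=0}^{n-1}(y_{k+1}-y_k)^2 \qquad (y_0\coloneqq y_n\coloneqq 0),
\end{equation*}
which vanishes only when $\vec y = 0$.

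The heart of the argument is the coercivity estimate. I would introduce the symmetric linear operator $A:\bR^{n-1}\to\bR^{n-1}$ defined by $A\vec y \coloneqq \Delta^{(n)}\vec y - \mu_0(\vec 1\cdot\vec y)\vec 1$, so that $\vec y\cdot A\vec y$ equals the quantity to be bounded below. Using $\Delta^{(n)}\vec\sigma = \vec 1$ together with the defining identity $\mu_0(\vec 1\cdot\vec\sigma)=1$, one checks $A\vec\sigma = \vec 1 - \vec 1 = 0$, so $\spn\{\vec\sigma\}\subseteq\ker A$. The main observation is that $A$ is in fact positive semidefinite with kernel exactly $\spn\{\vec\sigma\}$; this follows from Cauchy--Schwarz in the inner product induced by $\Delta^{(n)}$ applied to the pair $\vec\sigma$ and $\vec y$:
\begin{equation*}
(\vec 1\cdot\vec y)^2 = (\vec\sigma\cdot\Delta^{(n)}\vec y)^2 \leq (\vec\sigma\cdot\Delta^{(n)}\vec\sigma)(\vec y\cdot\Delta^{(n)}\vec y) = \mu_0^{-1}\,\vec y\cdot\Delta^{(n)}\vec y,
\end{equation*}
with equality iff $\vec y$ is proportional to $\vec\sigma$ in the $\Delta^{(n)}$-inner product, i.e.\ iff $\vec y \in \spn\{\vec\sigma\}$.

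Finally, since $A$ is symmetric, it preserves $\Pi$ (indeed $\vec\sigma\cdot A\vec z = A\vec\sigma\cdot\vec z = 0$ for any $\vec z$), so $A|_\Pi$ is symmetric, positive definite, and admits a smallest eigenvalue $\mu_1 > 0$. For arbitrary $\vec y \in \bR^{n-1}$, decomposing $\vec y = P_\sigma\vec y + c\vec\sigma$ and using $A\vec\sigma = 0$ together with the symmetry of $A$ yields
\begin{equation*}
\vec y\cdot A\vec y = P_\sigma\vec y\cdot A\,P_\sigma\vec y \geq \mu_1\,|P_\sigma\vec y|^2,
\end{equation*}
which is the required inequality. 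I do not expect any real obstacle here; the argument is pure finite-dimensional linear algebra, and the only non-mechanical step is the Cauchy--Schwarz identification of $\ker A$ with $\spn\{\vec\sigma\}$.
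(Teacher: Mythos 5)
Your proof is correct, but it proceeds by a different mechanism than the paper's. The paper passes to the auxiliary matrix $\wt\Delta_{jk} = 2\delta_{jk} + \mu_0 - \Delta^{(n)}_{jk}$, observes that all of its entries and all entries of $\vec\sigma$ are strictly positive and that $\wt\Delta\vec\sigma = 2\vec\sigma$, and invokes the Perron--Frobenius theorem to conclude that $2$ is the largest eigenvalue of $\wt\Delta$ and is simple; the resulting spectral gap on $\Pi = \vec\sigma^{\perp}$ supplies $\mu_1$. You instead work directly with the symmetric operator $A = \Delta^{(n)} - \mu_0\,\vec 1\,\vec 1^{\intercal}$ and establish $A\geq 0$ with $\ker A = \spn\{\vec\sigma\}$ via Cauchy--Schwarz in the $\Delta^{(n)}$-inner product, using $\vec\sigma\cdot\Delta^{(n)}\vec y = \vec 1\cdot\vec y$ and $\vec\sigma\cdot\Delta^{(n)}\vec\sigma = \mu_0^{-1}$; the constant $\mu_1$ is then the smallest eigenvalue of $A\vert_\Pi$. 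Your route is more elementary (no Perron--Frobenius) and more general: it applies to any symmetric positive definite $\Delta$ with $\Delta\vec\sigma = \vec 1$, whereas the paper's trick exploits the specific sign structure of the tridiagonal Laplacian to make $\wt\Delta$ entrywise positive. You also give an independent proof of the positive definiteness of $\Delta^{(n)}$ (summation by parts with Dirichlet boundary values), which the paper instead deduces a posteriori from \eqref{eq:matrix-D}; since your Cauchy--Schwarz equality case relies on positive definiteness, proving it first is the correct order and avoids any circularity.
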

\begin{proof}
Since $\vec \sigma$ and $\vec 1$ are not orthogonal, it suffices to prove \eqref{eq:matrix-D}.

Consider the matrix $\wt \Delta = (\wt \Delta_{jk})$ given by $\wt \Delta_{jk} \coloneqq 2\delta_{jk} + \mu_0 - \Delta_{jk}$.
The desired inequality is equivalent to
\begin{equation}
\label{eq:perron}
\vec y \cdot (\wt \Delta\vec y) \leq 2|\vec y|^2 - \mu_1 | P_\sigma \vec y |^2.
\end{equation}
We have
\begin{equation}
\wt \Delta\vec \sigma = 2\vec\sigma + \mu_0(\vec 1\cdot \vec \sigma)\vec 1 - \Delta^{(n)}\vec \sigma = 2\vec\sigma.
\end{equation}
The matrix $\wt \Delta$ and the vector $\vec \sigma$ have strictly positive entries.
By the Perron-Frobenius theorem, the largest eigenvalue of $\wt \Delta$ equals $2$ and is simple, which implies \eqref{eq:perron}.
\end{proof}

\begin{lemma}
\label{lem:zcr} \begin{enumerate}[(i)]
\item \label{it:zcr-1}
The function
\begin{equation}
\Pi \owns \vec z \mapsto \vec 1\cdot \eee^{-\vec z} \in (0, \infty)
\end{equation}
has a unique critical point $\vec z_\tx{cr}$, which is its global minimum.
\item\label{it:zcr-2}
There exist $C > 0$ and $\delta_0 > 0$ such that for all $\vec z \in \Pi$ and $\delta \leq \delta_0$
\begin{equation}
|P_\sigma \eee^{-\vec z}| \leq \delta\vec 1 \cdot\eee^{-\vec z}\quad\Rightarrow\quad|\vec z - \vec z_\tx{cr}| \leq C\delta.
\end{equation}
\item\label{it:zcr-3}
There exists $C > 0$ such that for all $\vec z \in \Pi$
\begin{equation}
C^{-1} |P_\sigma \eee^{-\vec z}|^2 \leq \vec 1\cdot\eee^{-\vec z}\big(\vec 1\cdot \eee^{-\vec z} - \vec 1\cdot \eee^{-\vec z_\tx{cr}}\big) \leq C |P_\sigma \eee^{-\vec z}|^2.
\end{equation}
\end{enumerate}
\end{lemma}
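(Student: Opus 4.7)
For part \ref{it:zcr-1}, I observe that $f(\vec z) \coloneqq \vec 1 \cdot \eee^{-\vec z}$ has Hessian $\operatorname{diag}(\eee^{-z_k})$, which is positive definite on $\R^{n-1}$; its restriction to the affine hyperplane $\Pi$ is therefore strictly convex. A Lagrange-multiplier computation shows that any critical point $\vec z$ of $f|_\Pi$ satisfies $\eee^{-z_k} = \mu\sigma_k$ for some $\mu > 0$, and the constraint $\vec\sigma\cdot\vec z = 0$ pins down $\mu$ uniquely. For coercivity on $\Pi$: since $\sigma_k = k(n-k)/2 > 0$ for all $k$, if $|\vec z_m| \to \infty$ on $\Pi$ with no component tending to $-\infty$, then $\vec\sigma\cdot\vec z_m \to +\infty$, contradicting the constraint; hence some $z_{k,m} \to -\infty$ and $f(\vec z_m) \to +\infty$. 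Strict convexity plus coercivity identify the unique critical point with the global minimum.

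For part \ref{it:zcr-2}, I normalize by setting $\vec p \coloneqq \eee^{-\vec z}/(\vec 1\cdot \eee^{-\vec z})$, which lies in the open probability simplex and satisfies $\vec p^{\tx{cr}} = \mu_0\vec\sigma$ at the critical point. The hypothesis becomes $|P_\sigma \vec p| \leq \delta$. Decomposing $\vec p = \lambda\vec\sigma + \vec r$ with $\vec r \coloneqq P_\sigma \vec p$, the normalization $\vec 1\cdot\vec p = 1$ forces $\lambda = \mu_0(1 - \vec 1\cdot\vec r)$, so $|\vec p - \vec p^{\tx{cr}}| \lesssim \delta$. For $\delta_0$ sufficiently small the entries of $\vec p$ stay uniformly bounded below, so the recovery map $\vec p \mapsto \vec z$ given by $z_k = -\log p_k + \mu_0(\vec\sigma\cdot\log\vec p)$ (the additive constant being forced by $\vec\sigma\cdot\vec z = 0$) is Lipschitz, yielding $|\vec z - \vec z_{\tx{cr}}| \lesssim \delta$.

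Part \ref{it:zcr-3} is the crux of the lemma. Abbreviating $g(\vec z) \coloneqq \vec 1\cdot \eee^{-\vec z}$ and $g^* \coloneqq g(\vec z_{\tx{cr}})$, the target inequality reads $|P_\sigma \eee^{-\vec z}|^2 \asymp g(g-g^*)$. I would prove it by combining three regimes. \emph{Near $\vec z_{\tx{cr}}$:} substituting $\vec z = \vec z_{\tx{cr}}+\vec w$ with $\vec\sigma\cdot\vec w = 0$ and using $\eee^{-z_k}=\mu\sigma_k \eee^{-w_k}$, a Taylor expansion yields $g-g^* = \tfrac\mu2 \sum_k \sigma_k w_k^2 + O(|\vec w|^3)$ (the linear term vanishes thanks to $\vec\sigma\cdot\vec w = 0$) and $|P_\sigma \eee^{-\vec z}|^2 = \mu^2\bigl(\sum_k\sigma_k^2 w_k^2 - (\sum_k\sigma_k^2 w_k)^2/\|\vec\sigma\|^2\bigr) + O(|\vec w|^3)$; both expressions are positive-definite quadratic forms on $\Pi$ (the second by Cauchy--Schwarz with the equality case excluded by $\vec\sigma\cdot\vec w = 0$), so both are $\asymp |\vec w|^2$, establishing the equivalence locally. \emph{At infinity on $\Pi$:} extracting a subsequence along which $\vec p_m \coloneqq \eee^{-\vec z_m}/g(\vec z_m)$ converges in the closed simplex to some $\vec p_\infty$, the divergence $|\vec z_m| \to \infty$ rules out $\vec p_\infty = \vec p^{\tx{cr}}$ (otherwise the identity $\log g_m = -\mu_0\,\vec\sigma\cdot\log\vec p_m$ would force $g_m$ and hence $\vec z_m$ to stay bounded), so $|P_\sigma \vec p_\infty|^2 > 0$; combined with $(g-g^*)/g \to 1$, the ratio $|P_\sigma \eee^{-\vec z_m}|^2/(g_m(g_m-g^*)) = |P_\sigma \vec p_m|^2/(1 - g^*/g_m)$ stays bounded above and below by positive constants. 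On compact subsets of $\Pi \setminus \{\vec z_{\tx{cr}}\}$ both quantities are continuous and strictly positive by part \ref{it:zcr-1}, so their ratio is bounded there as well.

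The main technical hurdle will be gluing the three regime analyses into uniform constants valid on all of $\Pi$; I would handle this by contradiction, arguing that a violating sequence $\vec z_m \in \Pi$ would, after extraction, either converge inside $\Pi \setminus \{\vec z_{\tx{cr}}\}$, accumulate at $\vec z_{\tx{cr}}$, or escape to infinity, each case contradicting the corresponding regime analysis. Note also that the degenerate cases $n \in \{1,2\}$ collapse trivially since then $\Pi = \{\vec 0\}$.
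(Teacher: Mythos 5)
Your proposal is correct. Parts \ref{it:zcr-1} and \ref{it:zcr-2} follow essentially the paper's own route: strict convexity plus coercivity on $\Pi$ and the Lagrange condition $\eee^{-\vec z_\tx{cr}}=\lambda_\tx{cr}\vec\sigma$ for \ref{it:zcr-1}, and for \ref{it:zcr-2} your normalization to the probability simplex is just a rescaled version of the paper's decomposition $\eee^{-\vec z}=\lambda\vec\sigma+\vec u$ followed by taking logarithms; your recovery formula $z_k=-\log p_k+\mu_0(\vec\sigma\cdot\log\vec p)$ is exactly the paper's identity in disguise. Where you genuinely diverge is part \ref{it:zcr-3}. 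The paper dichotomizes on $|\vec z-\vec z_\tx{cr}|\le 1$ versus $\ge 1$ and gets all the lower bounds on $|P_\sigma\eee^{-\vec z}|$ for free by re-invoking part \ref{it:zcr-2} (applied with $\delta=|P_\sigma\eee^{-\vec z}|/(\vec 1\cdot\eee^{-\vec z})$), combined with $g-g^*\simeq|\vec z-\vec z_\tx{cr}|^2$ near the minimum and $g-g^*\simeq g$ far from it. You instead give a self-contained three-regime compactness argument: an explicit second-order expansion identifying both sides as comparable positive-definite quadratic forms on $\Pi$ near $\vec z_\tx{cr}$ (your Cauchy--Schwarz degeneracy check, equality only along $\vec 1\notin\Pi$, is the right one), a compactification in the closed simplex at infinity, continuity and strict positivity on intermediate compacta, and a contradiction argument to glue the constants. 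Both are valid; the paper's version is shorter because \ref{it:zcr-2} does double duty, while yours never uses \ref{it:zcr-2} in \ref{it:zcr-3} and makes the local quadratic behavior explicit, at the cost of a longer gluing step. No gaps.
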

\begin{proof}
The function $\vec z \mapsto  \vec 1\cdot \eee^{-\vec z}$,
defined on the hyperplane $\Pi$, is strictly convex and tends to $\infty$ as $|\vec z| \to \infty$, thus it has no critical points other than its global minimum.
By the Lagrange multiplier method, this unique critical point $\vec z_\tx{cr}$ is determined by
\begin{equation}
\label{eq:zcr-lagr}
\eee^{-\vec z_\tx{cr}} = \lambda_\tx{cr}\vec \sigma \quad\Leftrightarrow\quad \vec z_\tx{cr} = -\log(\lambda_\tx{cr}\vec \sigma)
= -\log(\lambda_\tx{cr})\vec 1 - \log\vec\sigma,
\end{equation}
where $\lambda_\tx{cr} \coloneqq \exp(-\mu_0\vec\sigma\cdot\log\vec\sigma)$ is the unique value for which $\vec z_\tx{cr} \in \Pi$.

We can rewrite the condition $|P_\sigma \eee^{-\vec z}| \leq\delta \vec 1\cdot\eee^{-\vec z}$ as follows:
\begin{equation}
\eee^{-\vec z} = \lambda \vec \sigma + \vec u, \qquad \lambda \in \bR,\ \vec u \in \Pi,\ |\vec u| \leq \delta \vec 1\cdot\eee^{-\vec z}.
\end{equation}
We thus have $\lambda = |\vec\sigma|^{-2} \vec\sigma\cdot \eee^{-\vec z}$, which implies $\lambda \simeq \vec 1\cdot \eee^{-\vec z}$, so
\begin{equation}
|{-}\vec z - \log(\lambda\vec \sigma)| = |\log(\lambda \vec \sigma + \vec u) - \log(\lambda\vec \sigma)| \lesssim \frac{|\vec u|}{\lambda} \lesssim \delta.
\end{equation}
By \eqref{eq:zcr-lagr} and the triangle inequality, we obtain
\begin{equation}
|\vec z - \vec z_\tx{cr} + (\log\lambda-\log\lambda_\tx{cr})\vec 1| \lesssim \delta.
\end{equation}
Taking the inner product with $\vec\sigma$, we get $|\log\lambda-\log\lambda_\tx{cr}| \lesssim \delta$, which proves \ref{it:zcr-2}.

Finally, we prove \ref{it:zcr-3}.
If $|\vec z - \vec z_\tx{cr}| \leq 1$, then $\vec 1\cdot \eee^{-\vec z} \simeq 1$
and $\vec 1\cdot \eee^{-\vec z} - \vec 1\cdot\eee^{-\vec z_\tx{cr}} \simeq |\vec z - \vec z_\tx{cr}|^2$, thus it suffices to verify that $|P_\sigma \eee^{-\vec z}| \simeq |z - z_\tx{cr}|$.
The inequality $\lesssim$ follows from $P_\sigma \eee^{-\vec z_\tx{cr}} = 0$
and the mean value theorem. The inequality $\gtrsim$ follows from \ref{it:zcr-2}.
Suppose now that $|\vec z - \vec z_\tx{cr}| \geq 1$. We then have
$\vec 1\cdot\eee^{-\vec z} - \vec 1 \cdot\eee^{-\vec z_\tx{cr}} \simeq \vec 1\cdot\eee^{-\vec z}$
and, invoking again \ref{it:zcr-2}, $|P_\sigma \eee^{-\vec z}| \simeq \vec 1\cdot\eee^{-\vec z}$.
\end{proof}

\subsection{Analysis of the $n$-body problem}
\label{ssec:analysis-mod}
Let $\vec y(t) = (y_1(t), \ldots, y_{n-1}(t))$ be defined by
\begin{equation}
\label{eq:y-def}
y_k(t) \coloneqq a_{k+1}(t) - a_k(t), \qquad k \in \{1, \ldots, n-1\}
\end{equation}
and let
\begin{equation}
y_{\min}(t) \coloneqq \min_{1\leq k<n} y_k(t).
\end{equation}
We denote
\begin{equation}
\label{eq:rho-tilde}
\wt \rho(t) \coloneqq \rho(\vec a(t)) = \sum_{k=1}^{n-1}\eee^{-y_k(t)},
\end{equation}
which (up to a constant) is the potential energy of the ODE system \eqref{eq:attractive-toda}.
From \eqref{eq:a'-est}, we have
\begin{equation}
\label{eq:g-by-rhotilde}
\|\bs g(t)\|_\cE^2 + |\vec v(t)|^2  \lesssim \wt \rho(t) \simeq \eee^{-y_{\min}(t)}.
\end{equation}
Let $\vec q(t) = (q_1(t), \ldots, q_{n-1}(t))$
be defined by
\begin{equation}
\label{eq:q-def}
q_k(t) \coloneqq M^{-1}\big(p_{k+1}(t) - p_{k}(t)\big).
\end{equation}
By \eqref{eq:ak'-est} and.\eqref{eq:ak'}, we have
\begin{equation}
\label{eq:y'-ineq}
|{\vec y \,}'(t) - \vec q(t)| \lesssim \eee^{-y_{\min}(t)}.
\end{equation}
By \eqref{eq:ak'} and \eqref{eq:g-by-rhotilde}, we have
\begin{equation}
\label{eq:q-ineq}
|\vec q(t)| \lesssim \eee^{-\frac 12 y_{\min}(t)}.
\end{equation}
Let
\begin{equation}
\label{eq:A-def}
A\coloneqq \frac{\kappa\sqrt 2}{\sqrt M}.
\end{equation}
By Lemma~\ref{lem:Fz} and Lemma~\ref{lem:ref-mod}, we have
\begin{equation}
\label{eq:q'-ineq}
{\vec q\,}'(t) = -A^2\Delta^{(n)}\eee^{-\vec y(t)} + O(y_{\min}(t)^{-1} \eee^{-y_{\min}(t)}),
\end{equation}
where $\Delta^{(n)}$ is the matrix of the (positive) discrete Dirichlet Laplacian, see Definition~\ref{def:laplacian},
and we denote $\eee^{-\vec y(t)} \coloneqq (\eee^{-y_1(t)}, \ldots, \eee^{-y_{n-1}(t)})$
as explained in Section~\ref{ssec:notation}.

%
\begin{lemma}
\label{lem:ymin-asym}
If $n > 1$ and $\bs\phi$ is a kink $n$-cluster, then
\begin{equation}
\sup_{t \geq 0}|y_{\min}(t) - 2\log t| < \infty.
\end{equation}
\end{lemma}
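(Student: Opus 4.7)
The plan is to prove separately the upper bound $y_{\min}(t) \le 2\log t + O(1)$ and the lower bound $y_{\min}(t) \ge 2\log t - O(1)$.

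For the upper bound, I would use that $\bs\phi$ is a kink $n$-cluster and hence $E(\bs\phi) = nM$, so the refined coercivity estimate \eqref{eq:a'-est} yields $M|\vec v(t)|^2 \le 4\kappa^2 \sum_k e^{-y_k(t)} + O(\wt\rho(t)^{3/2}) \lesssim e^{-y_{\min}(t)}$. Combined with \eqref{eq:ak'} and \eqref{eq:y'-ineq}, this gives $|\vec y\,'(t)| \lesssim e^{-y_{\min}(t)/2}$. Since the upper Dini derivative of $y_{\min}$ is bounded by $|\vec y\,'(t)|$, we obtain $\tfrac{d}{dt}\bigl(e^{y_{\min}(t)/2}\bigr) \lesssim 1$, which integrates to the upper bound.

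For the lower bound, I would analyze the ``scale'' variable obtained via the decomposition $\vec y(t) = s(t)\vec 1 + \vec z(t)$ with $\vec z(t) \in \Pi = \{\vec z \in \bR^{n-1} : \vec\sigma \cdot \vec z = 0\}$ and $s(t) = \mu_0\,\vec\sigma \cdot \vec y(t)$, where $\vec\sigma, \mu_0$ are from \eqref{eq:1-and-sig-def}. Using $\Delta^{(n)}\vec\sigma = \vec 1$, the approximate equations \eqref{eq:y'-ineq}--\eqref{eq:q'-ineq} yield
\[
s''(t) = -\mu_0 A^2 e^{-s(t)} \bigl(\vec 1 \cdot e^{-\vec z(t)}\bigr) + O\bigl(y_{\min}(t)^{-1} e^{-y_{\min}(t)}\bigr).
\]
If the shape $\vec z(t)$ stays bounded, then $\vec 1 \cdot e^{-\vec z(t)}$ is bounded above and below, so $s$ obeys an approximate scalar attractive ODE whose parabolic motion satisfies $s(t) = 2\log t + O(1)$, and consequently $y_{\min}(t) = s(t) + \min_k z_k(t) = 2\log t + O(1)$. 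The boundedness of $\vec z(t)$ is where Lemmas~\ref{lem:matrix-D} and \ref{lem:zcr} enter: $\vec z_{\tx{cr}}$ from \eqref{eq:zcr-lagr} is an equilibrium of the shape equation $\vec z''(t) \simeq -A^2 e^{-s(t)} P_1 \Delta^{(n)} e^{-\vec z(t)}$, and energy conservation $E(\bs\phi) = nM$ together with Lemma~\ref{lem:interactions} forces $\vec 1 \cdot e^{-\vec z(t)}$ to remain close to its minimum $\vec 1 \cdot e^{-\vec z_{\tx{cr}}}$; Lemma~\ref{lem:zcr}(iii) then converts closeness of $\vec 1 \cdot e^{-\vec z}$ to its minimum into smallness of $|P_\sigma e^{-\vec z}|$, and Lemma~\ref{lem:zcr}(ii) into a bound on $|\vec z - \vec z_{\tx{cr}}|$, with Lemma~\ref{lem:matrix-D} providing the needed coercivity at the linearized level.

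The main obstacle will be controlling $\vec z(t)$ rigorously in the presence of the modulation errors of size $y_{\min}^{-1} e^{-y_{\min}}$, which only become smaller than the main dynamical terms once $y_{\min}$ is itself large. A bootstrap is therefore needed: one first establishes a rough preliminary lower bound such as $y_{\min}(t) \gtrsim \log t$ (for example, from crude integral estimates on the momenta $\vec p$ using $p_k(t) \to 0$ together with the ODE $p_k'(t) \simeq 2\kappa^2(e^{-y_k(t)} - e^{-y_{k-1}(t)})$), and only then closes the sharper shape analysis which yields the precise $2\log t$ asymptotics.
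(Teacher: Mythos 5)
Your upper bound is exactly the paper's argument (integrate $|y_{\min}'|\lesssim e^{-y_{\min}/2}$, i.e. $\frac{d}{dt}e^{y_{\min}/2}\lesssim 1$) and is fine. The lower bound is where the proposal has a genuine gap, in fact two. First, the claim that energy conservation together with Lemma~\ref{lem:interactions} forces $\vec 1\cdot e^{-\vec z(t)}$ to stay near its constrained minimum $\vec 1\cdot e^{-\vec z_{\tx{cr}}}$ is unjustified. What $E(\bs\phi)=nM$ plus Lemma~\ref{lem:interactions} and coercivity actually yield is \eqref{eq:a'-est}: an upper bound of the kinetic quantities $\|\dot g\|_{L^2}^2+|\vec v|^2$ by the interaction energy $\sum_k e^{-y_k}$. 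This says nothing about the shape $\vec z=P_1\vec y$ being close to the critical point of $\vec z\mapsto \vec 1\cdot e^{-\vec z}$ on $\Pi$. In the paper, the convergence $\vec z(t)\to\vec z_{\tx{cr}}$ is a hard dynamical statement (Lemma~\ref{lem:y-bound-seq} plus the no-return proposition of Section~\ref{ssec:analysis-mod}), and its proof \emph{uses} both bounds of Lemma~\ref{lem:ymin-asym} as input (Lemma~\ref{lem:U-lbound} starts precisely from the lower bound on the mutual distances). So the route you sketch is circular as stated. Second, the fallback bootstrap — a rough preliminary bound $y_{\min}(t)\gtrsim\log t$ from ``crude integral estimates on $\vec p$'' — has no mechanism behind it: $p_k\to 0$ and $p_k'\simeq F_k$ give $p_k(t)=-\int_t^\infty F_k\,ds$, but nothing in that identity prevents the configuration from recurrently re-collapsing.

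The missing idea is a monotonicity (ejection) argument that requires no shape control at all. The paper introduces $\beta(t):=\vec q(t)\cdot e^{-\vec y(t)}-c_0 e^{-\frac32 y_{\min}(t)}$ and shows $\beta'(t)\le 0$, using the positive-definiteness of $\Delta^{(n)}$ (Lemma~\ref{lem:matrix-D}) to get $e^{-\vec y}\cdot\Delta^{(n)}e^{-\vec y}\ge c_1 e^{-2y_{\min}}$, which dominates both the correction term and the $O(y_{\min}^{-1}e^{-2y_{\min}})$ modulation errors. Since $y_{\min}\to\infty$ and $\vec q\to 0$ force $\beta(t)\to 0$, monotonicity gives $\beta(t)\ge 0$, i.e. $\vec q\cdot e^{-\vec y}\ge c_0 e^{-\frac32 y_{\min}}$, and hence $\widetilde\rho\,'(t)\le -c_0\widetilde\rho(t)^{3/2}$, which integrates to $\widetilde\rho(t)\lesssim t^{-2}$, the desired lower bound. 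The decomposition $\vec y=r\vec 1+\vec z$ and Lemma~\ref{lem:zcr} only enter afterwards, for the refined asymptotics of Theorem~\ref{thm:asymptotics}; you should not try to use them to prove Lemma~\ref{lem:ymin-asym} itself.
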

\begin{proof}
We first prove that $\sup_{t \geq T_0} y_{\min}(t) - 2\log t < \infty$.
By \eqref{eq:g-coer}, we have $|y_k'(t)| \lesssim \eee^{-\frac 12 y_{\min}(t)}$ for all $k \in \{1, \ldots, n\}$.
Therefore, $y_{\min}$ is a locally Lipschitz function, satisfying
\begin{equation}
\label{eq:ymin'}
|y_{\min}'(t)| \lesssim \eee^{-\frac 12 y_{\min}(t)},\qquad\text{for almost all }t \geq 0.
\end{equation}
By the Chain Rule, $|(\eee^{\frac 12 y_{\min}(t)})'| \lesssim 1$ almost everywhere.
After integration, we obtain $y_{\min}(t) - 2\log t \lesssim 1$.

Now we prove that $\sup_{t \geq 0} y_{\min}(t) - 2\log t > -\infty$.
Let $c_0 > 0$ be small enough and consider the function
\begin{equation}
\beta(t) \coloneqq \vec q(t)\cdot \eee^{-\vec y(t)}- c_0\eee^{-\frac 32 y_{\min}(t)} = \sum_{k=1}^{n-1} q_k(t) \eee^{-y_k(t)} - c_0\eee^{-\frac 32 y_{\min}(t)}.
\end{equation}
Using \eqref{eq:y'-ineq} and \eqref{eq:q'-ineq} and $|q_k(t)| \lesssim \eee^{-\frac 12 y_{\min}(t)} \ll y_{\min}^{-1}$, we obtain
\begin{equation}
\begin{aligned}
\beta'(t) = &-\sum_{k=1}^{n-1} q_k(t)^2 \eee^{-y_k(t)}- A^2 \eee^{-\vec y(t)}\cdot (\Delta^{(n)}\eee^{-\vec y(t)}) \\
&+ \frac 32c_0y_{\min}'(t)\eee^{-\frac 32 y_{\min}(t)}
+ O(y_{\min}(t)^{-1}\eee^{-2y_{\min}(t)}).
\end{aligned}
\end{equation}
By Lemma~\ref{lem:matrix-D}, there is $c_1 > 0$ such that for all $t$ we have
$ \eee^{-\vec y(t)}\cdot(\Delta^{(n)}\eee^{-\vec y(t)}) \geq c_1\eee^{-2y_{\min}(t)}$.
If we recall \eqref{eq:ymin'}, we obtain that $\beta$ is a decreasing function
if $c_0$ is sufficiently small. By assumption, $\lim_{t\to\infty}y_{\min}(t) = \infty$, which, together with \eqref{eq:q-ineq}, implies $\lim_{t\to\infty}\beta(t) = 0$,
hence $\beta(t) \geq 0$ for all $t \geq T_0$, in other words
\begin{equation}
\label{eq:1stderlbound}
 \vec q(t)\cdot \eee^{-\vec y(t)} \geq c_0\eee^{-\frac 32 y_{\min}(t)}.
\end{equation}
Consider now the function $\widetilde\rho(t)$ defined by \eqref{eq:rho-tilde}.
From \eqref{eq:y'-ineq} and \eqref{eq:1stderlbound} we get, perhaps modifying $c_0$,
\begin{equation}
\label{eq:rho'-diff-ineq}
\widetilde\rho\,'(t) = -\vec q(t)\cdot \eee^{-\vec y(t)}
+ O(\eee^{-2y_{\min}}) \leq -c_0 \widetilde\rho(t)^\frac 32 \ \Leftrightarrow \ \dd t\big((\widetilde\rho(t))^{-\frac 12}\big) \geq \frac{c_0}{2},
\end{equation}
so that, after integrating in time, $\widetilde\rho(t) \lesssim t^{-2}$, yielding the conclusion.
\end{proof}
We have thus obtained upper and lower bounds on the minimal weighted distance between neighboring kinks.
Our next goal is to obtain an upper bound on the \emph{maximal} distance.
From the lower bound, by integrating the modulation inequalities we get $|\vec y(t)| \lesssim \log t$.
However, because of exponential interactions, our analysis requires an estimate up to an \emph{additive} constant.
\begin{remark}
At this point, by means of energy estimates, we could improve bounds on the error to $\|\bs g(t)\|_\cE \leq t^{-2 + \epsilon}$ for $t \gg 1$.
However, such information does not seem to trivialise the analysis of the ODE,
which is not immediate even in the absence of \emph{any} error terms.
\end{remark}
\begin{lemma}
\label{lem:U-lbound}
If $\bs\phi$ is a kink $n$-cluster and $\widetilde\rho$ is defined by \eqref{eq:rho-tilde}, then
\begin{equation}
\label{eq:rho-lbound}
\limsup_{t\to \infty}\,\frac 12A^2t^2 \widetilde\rho(t) \geq \vec 1\cdot \vec \sigma.
\end{equation}
\end{lemma}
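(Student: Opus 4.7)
The plan is to prove the lemma by contradiction, using a Lyapunov-type functional built from the modulation parameters. The key observation is that the eigenrelation $\Delta^{(n)}\vec\sigma = \vec 1$ (combined with the symmetry of $\Delta^{(n)}$) makes the projection of the ODE $\vec q\,'(t) = -A^2 \Delta^{(n)} e^{-\vec y(t)} + O(y_{\min}^{-1}e^{-y_{\min}})$ onto $\vec\sigma$ reduce to $-A^2\widetilde\rho(t)$ at leading order. This will let me convert an assumed upper bound on $\widetilde\rho(t)$ into a growth deficit for $\vec\sigma\cdot\vec y(t)$, which is incompatible with the lower bound $y_{\min}(t)\ge 2\log t - C$ provided by Lemma~\ref{lem:ymin-asym}.

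Concretely, I will introduce
\[
Y(t) := \vec\sigma\cdot\vec y(t) - 2(\vec 1\cdot\vec\sigma)\log t, \qquad R(t):=\int_t^{\infty}\widetilde\rho(s)\,ds.
\]
Since each $y_k(t) \ge y_{\min}(t) \ge 2\log t - C$ by Lemma~\ref{lem:ymin-asym}, one immediately gets $\vec\sigma\cdot\vec y(t)\ge 2(\vec 1\cdot\vec\sigma)\log t - C(\vec 1\cdot\vec\sigma)$, so $Y(t) \ge -C(\vec 1\cdot\vec\sigma)$ for all $t$ large. Using $\Delta^{(n)}\vec\sigma = \vec 1$ together with \eqref{eq:q'-ineq} and the fact that $y_{\min}(t)\simeq 2\log t$, one obtains $(\vec\sigma\cdot\vec q)'(t) = -A^2\widetilde\rho(t) + O(1/(t^2\log t))$. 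Since $|\vec q(T)|\lesssim e^{-y_{\min}(T)/2}\lesssim 1/T$ by \eqref{eq:q-ineq}, integrating from $t$ to $\infty$ gives
\[
\vec\sigma\cdot\vec q(t) = A^2 R(t) + O\bigl(1/(t\log t)\bigr).
\]
Combining this with $\vec y\,'(t) = \vec q(t) + O(e^{-y_{\min}(t)})$ from \eqref{eq:y'-ineq}, I deduce
\[
Y'(t) = A^2 R(t) - \frac{2\vec 1\cdot\vec\sigma}{t} + O\bigl(1/(t\log t)\bigr).
\]

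Now suppose for contradiction that $\limsup_{t\to\infty}\tfrac12 A^2 t^2\widetilde\rho(t) < \vec 1\cdot\vec\sigma$. Then there exists $\delta>0$ such that $\widetilde\rho(t)\le(2\vec 1\cdot\vec\sigma-2\delta)/(A^2 t^2)$ for all $t$ large enough, and integrating yields $R(t)\le(2\vec 1\cdot\vec\sigma-2\delta)/(A^2 t)$. Plugging this into the formula for $Y'(t)$ and using $A^2 R^*= 2\vec 1\cdot\vec\sigma$ with $R^*:=2\vec 1\cdot\vec\sigma/A^2$, the leading $2\vec 1\cdot\vec\sigma/t$ terms cancel and one is left with
\[
Y'(t) \le -\frac{2\delta}{t} + O\bigl(1/(t\log t)\bigr) \le -\frac{\delta}{t}
\]
for $t$ sufficiently large. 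Integrating over $[T_1,T]$ gives $Y(T) \le Y(T_1) - \delta\log(T/T_1)\to-\infty$, contradicting the lower bound on $Y$. Hence $\limsup\tfrac12 A^2 t^2\widetilde\rho(t)\ge \vec 1\cdot\vec\sigma$.

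The main technical step is verifying the integrated identity $\vec\sigma\cdot\vec q(t) = A^2 R(t) + O(1/(t\log t))$, which relies on the sharp error bound $O(y_{\min}^{-1}e^{-y_{\min}})$ in \eqref{eq:q'-ineq}. The rest of the argument is routine and amounts to bookkeeping of error terms that are all controlled by $y_{\min}(t)\simeq 2\log t$.
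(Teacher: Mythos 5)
Your proof is correct, and it takes a genuinely different route from the paper's. The paper argues through the momenta: from $y_{\min}(t)\geq 2\log t - C$ it derives $|a_{n+1-k}(t)-a_k(t)|\geq 2|n+1-2k|\log t - C$, integrates, uses the integrability of $|M^{-1}\vec p - \vec a\,'|$, and applies Cauchy--Schwarz to get $\limsup_{t\to\infty} t^2|\vec p(t)|^2 \gtrsim \sum_k(n+1-2k)^2$; it then converts this \emph{kinetic}-energy lower bound into the \emph{potential}-energy bound \eqref{eq:rho-lbound} via the refined coercivity estimate \eqref{eq:a'-est}, which is where the constraint $E(\bs\phi)=nM$ enters. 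You instead project the modulation ODEs onto the Perron direction $\vec\sigma$ and exploit the exact identity $\vec\sigma\cdot(\Delta^{(n)}\eee^{-\vec y}) = (\Delta^{(n)}\vec\sigma)\cdot\eee^{-\vec y} = \vec 1\cdot\eee^{-\vec y} = \widetilde\rho$, so that a double integration of \eqref{eq:q'-ineq} (using $\vec q(T)\to 0$ from \eqref{eq:q-ineq} and the two-sided control $y_{\min}(t)=2\log t+O(1)$ to tame the $O(y_{\min}^{-1}\eee^{-y_{\min}})=O(1/(t^2\log t))$ error) yields $\vec\sigma\cdot\vec y\,'(t) = A^2\int_t^\infty\widetilde\rho + O(1/(t\log t))$; the contradiction hypothesis then forces $\vec\sigma\cdot\vec y(t) - 2(\vec 1\cdot\vec\sigma)\log t \to -\infty$, violating the componentwise bound $y_k(t)\geq y_{\min}(t)\geq 2\log t - C$. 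All the ingredients you invoke (\eqref{eq:y'-ineq}, \eqref{eq:q-ineq}, \eqref{eq:q'-ineq}, Lemma~\ref{lem:ymin-asym}, and $\Delta^{(n)}\vec\sigma=\vec 1$) are available at this point, and the error bookkeeping checks out — in particular $O(1/(t\log t))$ is eventually dominated by $\delta/t$, so the final Gronwall step is sound. What each approach buys: the paper's argument makes the physical mechanism transparent (equipartition between kinetic energy and exponential interaction energy), whereas yours is a purely dynamical ODE argument that bypasses \eqref{eq:a'-est} at this stage and is structurally closer to the subsequent no-return analysis in the paper, which likewise decomposes along $\vec 1$ and $\vec\sigma$.
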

\begin{proof}
The idea is to estimate from below the kinetic energy.

From the lower bound on the distance between the kinks, there exists $C$ such that for all $k \in \{1, \ldots, n \}$ and $t \geq T_0$ we have
\begin{equation}
|a_{n+1 - k}(t) - a_k(t)| \geq 2|n+1-2k|\log t - C,
\end{equation}
in particular
\begin{equation}
\label{eq:U-lbound-1}
\int_{T_0}^t \big(|a_{n+1 - k}'(s)| + |a_k'(s)|\big)\ud s \geq 2|n+1-2k|\log t - C.
\end{equation}
By Lemma~\ref{lem:ref-mod} and Lemma~\ref{lem:ymin-asym}, we have
\begin{equation}
\int_{T_0}^\infty |M^{-1}\vec p(s) - {\vec a \,}'(s)|\ud s < \infty,
\end{equation}
thus \eqref{eq:U-lbound-1} yields
\begin{equation}
M^{-1}\int_{T_0}^t (|p_k(s)| + |p_{n+1-k}(s)|)\ud s \geq 2|n+1-2k|\log t - C.
\end{equation}
Multiplying by $|n+1 - 2k|$ and taking the sum in $k$, we obtain
\begin{equation}
M^{-1}\sum_{k=1}^n \int_{T_0}^t |(n+1-2k)p_k(s)| \geq \bigg(\sum_{k=1}^n(n+1 - 2k)^2\bigg)\log t - C,
\end{equation}
hence, as $t \to \infty$, the quantity
\begin{equation}
\int_{T_0}^t\Big(s\sum_{k=1}^n|(n+1-2k)p_k(s)| - M\sum_{k=1}^n(n+1 - 2k)^2\Big)\frac{\vd s}{s}
\end{equation}
is bounded from below, which implies in particular
\begin{equation}
\limsup_{t\to\infty}\,t\sum_{k=1}^n |(n+1-2k)p_k(t)| \geq M\sum_{k=1}^n(n+1 - 2k)^2.
\end{equation}
The Cauchy-Schwarz inequality yields
\begin{equation}
\label{eq:p3-est}
\begin{aligned}
&\frac{1}{M^2}\limsup_{t\to \infty}\, t^2 \sum_{k=1}^n |p_k(t)|^2\sum_{k=1}^n(n+1-2k)^2 \geq \\
&\frac{1}{M^2}\bigg(\limsup_{t\to\infty}\,t\sum_{k=1}^n |(n+1-2k)p_k(t)|\bigg)^2
\geq \Big( \sum_{k=1}^n(n+1-2k)^2 \Big)^2,
\end{aligned}
\end{equation}
thus
\begin{equation}
\label{eq:p2-est}
\frac{1}{4M^2}\limsup_{t\to \infty}\, t^2 \sum_{k=1}^n |p_k(t)|^2
\geq \frac 14\sum_{k=1}^n(n+1 - 2k)^2 = \frac{(n-1)n(n+1)}{12} = \vec 1\cdot \vec \sigma.
\end{equation}
By \eqref{eq:a'-est} and \eqref{eq:ak'}, we have
\begin{equation}
\frac{1}{4M^2}|\vec p(t)|^2 \leq \big(\kappa^2M^{-1} + o(1)\big) \widetilde\rho(t) = \Big(\frac 12 A^2 + o(1)\Big)\widetilde\rho(t),
\end{equation}
thus \eqref{eq:p2-est} implies \eqref{eq:rho-lbound}.
\end{proof}

\begin{lemma}
\label{lem:y-bound-seq}
If $\bs\phi$ is a kink $n$-cluster, then there exists an increasing sequence $(t_m)_{m=1}^\infty$,
$\lim_{m\to\infty} t_m = \infty$, such that
\begin{equation}
\label{eq:y-bound-seq}
\lim_{m\to\infty}\big|\vec y(t_m) - \big(2\log(At_m)\vec 1 - \log(2\vec\sigma)\big)\big| = 0.
\end{equation}
\end{lemma}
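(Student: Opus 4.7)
The plan is to combine Lemma~\ref{lem:U-lbound} with Lemma~\ref{lem:zcr} and a Lagrange--Jacobi-type identity derived from the modulation equations~\eqref{eq:y'-ineq}--\eqref{eq:q'-ineq}. I first decompose $\vec y(t) = c(t)\vec 1 + \vec z(t)$ with $c(t) := \mu_0 \vec\sigma\cdot\vec y(t)$ and $\vec z(t)\in \Pi$, so that $\wt\rho(t) = e^{-c(t)}G(\vec z(t))$ with $G(\vec z) := \vec 1\cdot e^{-\vec z}$. A direct computation shows that the target profile $\vec y^*_t := 2\log(At)\vec 1 - \log(2\vec\sigma)$ corresponds to $\vec z = \vec z_\tx{cr}$ and $c = 2\log t + \log(\lambda_\tx{cr}A^2/2)$, for which $t^2\wt\rho = L^* := 2\vec 1\cdot\vec\sigma/A^2$, matching exactly the lower bound of Lemma~\ref{lem:U-lbound}. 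So it suffices to find $t_m\to\infty$ along which $c(t_m) - 2\log t_m \to \log(\lambda_\tx{cr}A^2/2)$ and $\vec z(t_m)\to \vec z_\tx{cr}$.

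The central dynamical ingredient is the identity
\[
  (\vec\sigma\cdot\vec y)''(t) = -A^2 \wt\rho(t) + O\big((\log t)^{-1}\,t^{-2}\big),
\]
obtained from~\eqref{eq:y'-ineq}--\eqref{eq:q'-ineq} together with the symmetry of $\Delta^{(n)}$ and the relation $\Delta^{(n)}\vec\sigma = \vec 1$. Since $\wt\rho(t)\simeq t^{-2}$ by Lemma~\ref{lem:ymin-asym}, integrating twice yields $c(t) = 2\log t + O(1)$ for all large $t$. Combined with $y_k(t)\ge y_\tx{min}(t)\ge 2\log t - C$ and the constraint $\vec\sigma\cdot\vec z = 0$, this forces $\vec z(t)$ to remain bounded in $\Pi$. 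Selecting $t_m$ with $t_m^2\wt\rho(t_m)\to\bar L := \limsup t^2\wt\rho$ and passing to a subsequence, I may assume $\vec z(t_m)\to\vec z_\infty\in\Pi$ and $c(t_m) - 2\log t_m\to c_\infty\in\bR$, so $\bar L = e^{-c_\infty}G(\vec z_\infty) \ge e^{-c_\infty}G_*$ with $G_* := \lambda_\tx{cr}\vec 1\cdot\vec\sigma$.

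The crux is the identification $\vec z_\infty = \vec z_\tx{cr}$ (equivalently $G(\vec z_\infty) = G_*$). My plan is to refine the choice of $t_m$ so as to force asymptotic equality in the two inequalities used in the proof of Lemma~\ref{lem:U-lbound}: the Cauchy--Schwarz bound $\big(\sum_k |(n+1-2k)p_k|\big)^2 \le |\vec p|^2\sum_k(n+1-2k)^2$, together with the energy-type upper bound $|\vec p(t)|^2 \le 2M^2 A^2 \wt\rho(t) + O(\wt\rho^{3/2})$ coming from Remark~\ref{lem:impr-coer}. Tightness in Cauchy--Schwarz at $t_m$ forces $p_k(t_m) = -M(n+1-2k)/t_m + o(1/t_m)$, hence $\vec q(t_m) = 2\vec 1/t_m + o(1/t_m)$. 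Integrating $\vec q\,'(s) = -A^2\Delta^{(n)}e^{-\vec y(s)} + o(s^{-2})$ from $t_m$ to $+\infty$ (using $\vec q(s)\to 0$) and rescaling $s = r t_m$, $r\ge 1$, identifies $\Delta^{(n)}\vec\tau_\infty = 2\vec 1$ for $\vec\tau_\infty := \lim_{m\to\infty} A^2 t_m^2 e^{-\vec y(t_m)}$; inverting via $\Delta^{(n)}\vec\sigma = \vec 1$ yields $\vec\tau_\infty = 2\vec\sigma$, which is equivalent to $\vec y(t_m) - 2\log(At_m)\vec 1 \to -\log(2\vec\sigma)$, the desired conclusion (and as a byproduct $\bar L = L^*$).

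The hard part will be transferring the sharp momentum information at $t_m$ to position information on $\vec y(t_m)$ at the same instants. Naively the integration step requires more than subsequential convergence of the rescaled profile $A^2 t^2 e^{-\vec y(t)}$, so some compactness argument is needed: either a diagonal extraction so that $\vec\tau(r t_m)$ converges for a.e.~$r\ge 1$ with dominated convergence identifying the integral, or a more variational selection of $t_m$ within intervals along which the rescaled ODE is already close to its steady state. I expect this step to be the main technical hurdle, since the sharp constants rely on integrating the modulation equations over semi-infinite intervals with errors controlled by $(\log t)^{-1}$, which is only marginally summable.
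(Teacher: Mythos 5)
Your overall skeleton — extract $t_m$ realizing $\bar L:=\limsup_{t\to\infty}t^2\wt\rho(t)$, reduce the conclusion to $t_m^2P_\sigma \eee^{-\vec y(t_m)}\to 0$ via Lemma~\ref{lem:zcr}/Lemma~\ref{lem:matrix-D}, and use Lemma~\ref{lem:U-lbound} to pin the constant — matches the paper's. But the step you call the crux is circular. You propose to ``force asymptotic equality'' in the Cauchy--Schwarz and coercivity inequalities underlying Lemma~\ref{lem:U-lbound}, deducing $p_k(t_m)=-M(n+1-2k)/t_m+o(1/t_m)$. Asymptotic saturation of that chain of inequalities is \emph{equivalent} to $\bar L=L^*$, which is precisely part of the conclusion (you even note it falls out ``as a byproduct''); a priori only $\bar L\ge L^*$ is known, and if the inequality were strict nothing is saturated anywhere. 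Moreover, the three limsups in that chain concern different quantities ($t^2\wt\rho$, $t^2|\vec p|^2$, $t\sum_k|(n+1-2k)p_k|$) and are a priori attained along different sequences, so no ``refined choice of $t_m$'' can be exhibited without further argument; and even granting tightness of Cauchy--Schwarz one only controls $|p_k|$, not the signs. The second gap you flag yourself is also fatal as written: integrating $\vec q\,'(s)=-A^2\Delta^{(n)}\eee^{-\vec y(s)}+o(s^{-2})$ over $[t_m,\infty)$ identifies $2\vec 1=\int_1^\infty r^{-2}\Delta^{(n)}\vec\tau_\infty(r)\ud r$ for a subsequential limit profile $\vec\tau_\infty(r)$ of $A^2(rt_m)^2\eee^{-\vec y(rt_m)}$, which determines $\vec\tau_\infty(1)$ only if the profile is constant in $r$ — i.e., only if one already has the continuous-time convergence that is proved afterwards (the no-return proposition). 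Subsequential compactness (Arzel\`a--Ascoli in $r$) does not remove this.

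The paper avoids both problems by working purely locally in time. It Taylor-expands $\vec y$ and $\wt\rho$ on $[(1-\epsilon)t_m,(1+\epsilon)t_m]$ using \eqref{eq:y'-ineq}--\eqref{eq:q'-ineq}, and exploits that $t_m$ asymptotically maximizes $t^2\wt\rho(t)$: the first-order coefficient in $\epsilon$ must vanish in the limit, giving $2t_m^2\wt\rho(t_m)-t_m^3\vec q_m\cdot\eee^{-\vec y_m}\to 0$, and the second-order coefficient must be $\le 0$ in the limit. Cauchy--Schwarz is then applied with weights $\eee^{-\vec y_m}$ (comparing $\vec q_m\cdot\eee^{-\vec y_m}$, $(\vec q_m)^2\cdot\eee^{-\vec y_m}$ and $\vec 1\cdot\eee^{-\vec y_m}$), and Lemma~\ref{lem:matrix-D} converts the resulting inequality into $\lim t_m^2\wt\rho(t_m)=2A^{-2}\mu_0^{-1}$ together with $t_m^2P_\sigma\eee^{-\vec y_m}\to 0$; this simultaneously proves $\bar L=L^*$ and \eqref{eq:y-bound-seq} without ever needing momentum asymptotics componentwise or any information on $[t_m,\infty)$. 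If you want to salvage your route, you would need to replace the global integration by such a local second-order optimality argument; as it stands the proposal does not close.
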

\begin{proof}
Set $\conj\ell \coloneqq \limsup_{t\to \infty} \frac 12 A^2 t^2 \widetilde\rho(t)$.
Lemma~\ref{lem:U-lbound} yields $\conj\ell \geq \vec 1\cdot \vec \sigma$
and Lemma~\ref{lem:ymin-asym} yields $\conj\ell < \infty$.
Let $t_m$ be an increasing sequence such that $\lim_{m\to \infty} t_m = \infty$ and
\begin{equation}
\lim_{m\to \infty} \frac 12A^2 t_m^2 \widetilde\rho(t_m) = \conj\ell.
\end{equation}
We will prove that $(t_m)$ satisfies \eqref{eq:y-bound-seq}, in particular $\conj\ell = \vec 1\cdot\vec \sigma$.
In order to make the formulas shorter, we denote $\vec y_m \coloneqq \vec y(t_m)$ and $\vec q_m \coloneqq \vec q(t_m)$.

Let $0 < \epsilon \ll 1$. The idea is to deduce \eqref{eq:y-bound-seq}
by analysing the evolution of $\vec y(t)$ for $(1-\epsilon)t_m \leq t \leq (1+\epsilon)t_m$.
In the computation which follows, the asymptotic notation $o$, $O$, $\ll$ etc.
is used for claims about the asymptotic behaviour of various quantities for $m$ large enough and $\epsilon$ small enough.

Lemma~\ref{lem:ymin-asym} yields $\widetilde\rho(t) \lesssim t^{-2}$, hence $|{\vec y \,}'(t)| \lesssim t^{-1}$. We thus obtain
\begin{equation}
\label{eq:y-bound-eps}
|\vec y(t) - \vec y_m| \lesssim \epsilon,\qquad \text{for all }(1-\epsilon)t_m \leq t \leq (1+\epsilon)t_m,
\end{equation}
which implies
\begin{equation}
\label{eq:ey-bound-eps}
|\eee^{-\vec y(t)} - \eee^{-\vec y_m}| \lesssim \epsilon t_m^{-2},\qquad \text{for all }(1-\epsilon)t_m \leq t \leq (1+\epsilon)t_m.
\end{equation}
Using this bound and integrating \eqref{eq:q'-ineq} in time, we get
\begin{equation}
\vec q(t) = \vec q_m - (t-t_m)A^2 \Delta^{(n)} \eee^{-\vec y_m} + o(\epsilon t_m^{-1}),\qquad \text{for all }(1-\epsilon)t_m \leq t \leq (1+\epsilon)t_m.
\end{equation}
Integrating \eqref{eq:y'-ineq}, we get
\begin{equation}
\begin{aligned}
\vec y(t) &= \vec y_m + \int_{t_m}^t \vec q(\tau)\ud \tau + O(\epsilon t_m^{-1}) \\
&= \vec y_m + \int_{t_m}^t \big(\vec q_m - (\tau-t_m)A^2 \Delta^{(n)} \eee^{-\vec y_m}\big)\ud\tau + o(\epsilon^2) + O(\epsilon t_m^{-1}) \\
&= \vec y_m + (t - t_m)\vec q_m - \frac 12 (t - t_m)^2 A^2 \Delta^{(n)} \eee^{-\vec y_m} + o(\epsilon^2) + O(\epsilon t_m^{-1}).
\end{aligned}
\end{equation}
By taking $m$ large enough, we can have $t_m^{-1} \ll \epsilon$. The bound above allows us to compute the asymptotic expansion
of $t^2 \widetilde\rho(t)$ up to order $o(\epsilon^2)$. Recalling from Section~\ref{ssec:notation} our notation for component-wise operations on vectors, we can write
\begin{equation}
\eee^{-\vec y(t)} = \eee^{-\vec y_m}\Big(1 - (t-t_m)\vec q_m+ \frac 12(t-t_m)^2 \big((\vec q_m)^2 + A^2 \Delta^{(n)} \eee^{-\vec y_m}\big)\Big) + o(\epsilon^2 t_m^{-2}),
\end{equation}
thus
\begin{equation}
\begin{aligned}
\widetilde\rho(t) &= \widetilde\rho(t_m) - (t - t_m) \vec q_m\cdot\eee^{-\vec y_m} \\
&+ \frac 12 (t-t_m)^2\big( (\vec q_m)^2\cdot \eee^{-\vec y_m}
+ A^2 \eee^{-\vec y_m}\cdot \Delta^{(n)} \eee^{-\vec y_m}\big) + o(\epsilon^2 t_m^{-2}).
\end{aligned}
\end{equation}
In particular, we have
\begin{equation}
\begin{aligned}
&((1\pm\epsilon)t_m)^2 \widetilde\rho((1+\epsilon)t_m) = t_m^2 \widetilde\rho(t_m) \pm \epsilon\big(2t_m^2 \widetilde\rho(t_m) - t_m^3\vec q_m\cdot \eee^{-\vec y_m}\big) \\
&\quad+\epsilon^2 \Big(t_m^2 \widetilde\rho(t_m) - 2t_m^3 \vec q_m\cdot \eee^{-\vec y_m} +\frac 12 t_m^4\big( (\vec q_m)^2\cdot \eee^{-\vec y_m}
+ A^2 \eee^{-\vec y_m}\cdot \Delta^{(n)} \eee^{-\vec y_m}\big)\Big) + o(\epsilon^2).
\end{aligned}
\end{equation}
From this estimate and the definition of the sequence $(t_m)$ we deduce that
\begin{equation}
\label{eq:1st-ord-eps}
\lim_{m\to \infty} \big(2t_m^2 \widetilde\rho(t_m) - t_m^3 \vec q_m\cdot\eee^{-\vec y_m}\big) = 0
\end{equation}
and
\begin{equation}
\label{eq:2nd-ord-eps}
\limsup_{m\to\infty} \Big(t_m^2 \widetilde\rho(t_m) - 2t_m^3 \vec q_m\cdot\eee^{-\vec y_m} +\frac 12 t_m^4\big((\vec q_m)^2\cdot \eee^{-\vec y_m}
+ A^2\eee^{-\vec y_m}\cdot \Delta^{(n)} \eee^{-\vec y_m}\big)\Big) \leq 0.
\end{equation}
By the Cauchy-Schwarz inequality,
\begin{equation}
\big(t_m^3\vec q_m\cdot \eee^{-\vec y_m}\big)^2 \leq t_m^2\vec 1\cdot\eee^{-\vec y_m} t_m^4 (\vec q_m)^2\cdot \eee^{-\vec y_m}
= t_m^2 \widetilde\rho(t_m) t_m^4 (\vec q_m)^2\cdot \eee^{-\vec y_m},
\end{equation}
thus \eqref{eq:1st-ord-eps} yields
\begin{equation}
\label{eq:ord-eps-3}
\liminf_{m\to\infty} t_m^4  (\vec q_m)^2\cdot \eee^{-\vec y_m} \geq 4\lim_{m\to\infty}t_m^2 \widetilde\rho(t_m).
\end{equation}
Injecting this estimate to \eqref{eq:2nd-ord-eps} and using again \eqref{eq:1st-ord-eps}, we obtain
\begin{equation}
\limsup_{m\to\infty} \big(t_m^4 A^2 \eee^{-\vec y_m}\cdot \Delta^{(n)}\eee^{-\vec y_m}\big) \leq 2\lim_{m\to\infty}t_m^2 \widetilde\rho(t_m),
\end{equation}
hence, by Lemma~\ref{lem:matrix-D},
\begin{equation}
\limsup_{m\to\infty}\big(A^2 \mu_0 (t_m^2\widetilde\rho(t_m))^2 + A^2 \mu_1|t_m^2 P_\sigma \eee^{-\vec y_m}|^2\big) \leq 2\lim_{m\to\infty}t_m^2 \widetilde\rho(t_m).
\end{equation}
Recalling that $\lim_{m\to\infty} t_m^2 \widetilde\rho(t_m) \geq 2A^{-2}\mu_0^{-1}$, we obtain that in the last inequality
there is in fact equality and, additionally, $\lim_{m\to\infty}|t_m^2 P_\sigma \eee^{-\vec y_m}| = 0$.
In other words, $\eee^{-\vec y_m} = \lambda_m\vec\sigma + o(t_m^{-2})$. The coefficient $\lambda_m$ is determined by
\begin{equation}
2(At_m)^{-2}\mu_0^{-1} + o(t_m^{-2}) = \widetilde\rho(t_m) = \vec 1\cdot \eee^{-\vec y_m} = \lambda_m \vec 1\cdot \vec \sigma + o(t_m^{-2}),
\end{equation}
thus $\lambda_m = 2(At_m)^{-2} + o(t_m^{-2})$, so $\eee^{-\vec y_m} = 2(At_m)^{-2}\vec\sigma + o(t_m^{-2})$,
which yields \eqref{eq:y-bound-seq} after taking logarithms.
\end{proof}
\begin{remark}
From the proof, one can see that $\lim_{m\to \infty}|t_m\vec q(t_m) - 2\times \vec 1| = 0$.
Below, we conclude the modulation analysis without using this information.
\end{remark}

We decompose
\begin{equation}
\label{eq:yq-syst}
\begin{gathered}
\vec y(t) = r(t)\vec 1 + \vec z(t),\qquad \vec z(t) \coloneqq P_1 \vec y(t), \\
\vec q(t) = b(t)\vec 1 + \vec w(t), \qquad \vec w(t) \coloneqq P_1 \vec q(t).
\end{gathered}
\end{equation}
We thus have
\begin{equation}
\label{eq:y'q'-syst}
\begin{gathered}
{\vec y \,}'(t) = r'(t)\vec 1 + {\vec z \,}'(t),\qquad {\vec z \,}'(t) = P_1\vec y'(t), \\
{\vec q \,}'(t) = b'(t)\vec 1 + {\vec w \,}'(t), \qquad {\vec w \,}'(t) = P_1 {\vec q \,}'(t).
\end{gathered}
\end{equation}
We denote $z_{\min}(t) \coloneqq \min_{1 \leq k \leq n-1}z_j(t)$. We note that $z_{\min}(t) \leq 0$ and $y_{\min}(t) = r(t) + z_{\min}(t)$.
Observe also that $|z_{\min}(t)| \simeq |\vec z(t)|$, since both quantities are norms on the hyperplane $\Pi$, and that $\eee^{-z_{\min}(t)} \simeq \vec 1\cdot \eee^{-\vec z(t)}$. Hence,
Lemma~\ref{lem:zcr} \ref{it:zcr-3} yields
\begin{equation}
\label{eq:Psigexp-equiv}
\begin{aligned}
\eee^{-r(t)}|P_\sigma \eee^{-\vec z(t)}|^2 &\simeq \eee^{z_{\min}(t) - y_{\min}(t)}\vec 1\cdot\eee^{-\vec z(t)}(\vec 1\cdot \eee^{-\vec z(t)} - \vec 1 \cdot \eee^{-\vec z_\tx{cr}}) \\
&\simeq \eee^{-y_{\min}(t)}(\vec 1\cdot \eee^{-\vec z(t)} - \vec 1 \cdot \eee^{-\vec z_\tx{cr}}).
\end{aligned}
\end{equation}
Taking the inner product of \eqref{eq:yq-syst} and \eqref{eq:y'q'-syst} with $\vec\sigma$, and using \eqref{eq:y'-ineq}--\eqref{eq:q'-ineq},
we obtain
\begin{equation}
\label{eq:rrho-syst}
\begin{aligned}
|r'(t) - b(t)| &\lesssim \eee^{-r(t)}\eee^{-z_{\min}(t)}, \\
|b'(t) + \mu_0 A^2 \eee^{-r(t)}  \vec 1\cdot \eee^{-\vec z(t)}| &\lesssim y_{\min}(t)^{-1}\eee^{-r(t)}\eee^{-z_{\min}(t)}.
\end{aligned}
\end{equation}
From this, again using \eqref{eq:y'-ineq}--\eqref{eq:q'-ineq}, we deduce
\begin{equation}
\label{eq:zw-syst}
\begin{aligned}
|{\vec z \,}'(t) - \vec w(t)| &\lesssim \eee^{-r(t)}\eee^{-z_{\min}(t)}, \\
|{\vec w \,}'(t) + \eee^{-r(t)}A^2P_1 \Delta^{(n)}\eee^{-\vec z(t)}| &\lesssim y_{\min}(t)^{-1}\eee^{-r(t)}\eee^{-z_{\min}(t)}.
\end{aligned}
\end{equation}
In Lemma~\ref{lem:y-bound-seq}, we proved that $\liminf_{t\to\infty}|\vec z(t) - \vec z_\tx{cr}| = 0$.
Our next goal is to improve this information to continuous time convergence, in other words we prove a no-return lemma.
\begin{proposition}
If $\bs\phi$ is a kink $n$-cluster, then
\begin{equation}
\label{eq:mod-conclusion}
\lim_{t \to \infty}\big(\big|\vec y(t) - \big(2\log(At)\vec 1 - \log(2\vec\sigma)\big)\big| + \big| t\vec q(t) - 2\times \vec 1 \big| \big) = 0.
\end{equation}
\end{proposition}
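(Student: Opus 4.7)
The plan is to upgrade the subsequential convergence from Lemma~\ref{lem:y-bound-seq} to the full statement~\eqref{eq:mod-conclusion} via an almost-conserved energy for the $n$-body Toda approximation and a monotonicity argument after the radial-angular decomposition~\eqref{eq:yq-syst}.

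\textbf{Step 1 (First integral).} I would first introduce
\begin{equation*}
\cI(t) \coloneqq \tfrac{1}{2}\,\vec q(t)\cdot(\Delta^{(n)})^{-1}\vec q(t) - A^2\,\vec 1\cdot \eee^{-\vec y(t)}.
\end{equation*}
Using $\Delta^{(n)}\vec\sigma=\vec 1$, the symmetry of $\Delta^{(n)}$, and the modulation bounds~\eqref{eq:y'-ineq}--\eqref{eq:q'-ineq}, the two leading contributions cancel, giving
\begin{equation*}
|\cI\,'(t)|\lesssim |\vec q(t)|\,y_{\min}(t)^{-1}\eee^{-y_{\min}(t)}+\eee^{-2y_{\min}(t)}\lesssim t^{-3}(\log t)^{-1},
\end{equation*}
after Lemma~\ref{lem:ymin-asym} and~\eqref{eq:q-ineq}. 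This is integrable, so $\cI(t)$ has a limit. A mild strengthening of the proof of Lemma~\ref{lem:y-bound-seq} yields $t_m\vec q(t_m)\to 2\times\vec 1$ (as already remarked after that lemma); substituting one finds $\cI(t_m)\to 0$, hence $\cI(t)\to 0$.

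\textbf{Step 2 (Splitting and approximate monotonicity).} Using $(\Delta^{(n)})^{-1}\vec 1=\vec\sigma$ and $\vec\sigma\cdot\vec w=0$, the first integral splits as $\cI=\cI_{\tx{rad}}+\cI_{\tx{ang}}$, where
\begin{align*}
\cI_{\tx{rad}}(t) &= \tfrac{b(t)^2}{2\mu_0}-\tfrac{A^2\lambda_{\tx{cr}}}{\mu_0}\eee^{-r(t)},\\
\cI_{\tx{ang}}(t) &= \tfrac{1}{2}\vec w(t)\cdot(\Delta^{(n)})^{-1}\vec w(t) - A^2 \eee^{-r(t)}\bigl(\vec 1\cdot \eee^{-\vec z(t)}-\vec 1\cdot \eee^{-\vec z_{\tx{cr}}}\bigr),
\end{align*}
invoking $\vec 1\cdot \eee^{-\vec z_{\tx{cr}}}=\lambda_{\tx{cr}}/\mu_0$ from~\eqref{eq:zcr-lagr}. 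A direct cancellation computation from~\eqref{eq:rrho-syst}--\eqref{eq:zw-syst} yields
\begin{equation*}
\cI_{\tx{ang}}'(t) = b(t)\,A^2 \eee^{-r(t)}\bigl(\vec 1\cdot \eee^{-\vec z(t)}-\vec 1\cdot \eee^{-\vec z_{\tx{cr}}}\bigr)+O\bigl(t^{-3}(\log t)^{-1}\bigr).
\end{equation*}
The bracketed factor is non-negative by Lemma~\ref{lem:zcr}~\ref{it:zcr-1}, and $b(t)>0$ for $t$ sufficiently large, since $tb(t_m)\to 2$ from Step~1 and $|b\,'|\lesssim t^{-2}$ by~\eqref{eq:rrho-syst}. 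Integrating the error, $\cI_{\tx{ang}}$ is monotone non-decreasing up to a term tending to $0$.

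\textbf{Step 3 (Convergence and assembly).} At the sequence $t_m$, $\vec z(t_m)\to\vec z_{\tx{cr}}$ and $\vec w(t_m)\to 0$ give $\cI_{\tx{ang}}(t_m)\to 0$; squeezing via the approximate monotonicity of Step~2 yields $\cI_{\tx{ang}}(t)\to 0$. Integrating~\eqref{eq:zw-syst} with $\eee^{-r(t)}\lesssim t^{-2}$ gives $|\vec w\,'|\lesssim t^{-2}$, so $\vec w(t)$ has a limit, which must be the subsequential limit $0$. Therefore $A^2 \eee^{-r(t)}(\vec 1\cdot \eee^{-\vec z(t)}-\vec 1\cdot \eee^{-\vec z_{\tx{cr}}})\to 0$. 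From $\cI_{\tx{rad}}=\cI-\cI_{\tx{ang}}\to 0$ one gets $b(t)^2\sim 2A^2\lambda_{\tx{cr}}\eee^{-r(t)}$; combined with $r\,'\approx b$ this reduces to $r\,'\approx A\sqrt{2\lambda_{\tx{cr}}}\,\eee^{-r/2}$, whose explicit integration gives $r(t)=2\log(A\sqrt{\lambda_{\tx{cr}}/2}\,t)+o(1)$ and $b(t)=2/t+o(t^{-1})$. In particular $t^2 \eee^{-r(t)}$ is bounded below by a positive constant, upgrading the convergence above to $\vec 1\cdot \eee^{-\vec z(t)}\to \vec 1\cdot \eee^{-\vec z_{\tx{cr}}}$; Lemma~\ref{lem:zcr}~\ref{it:zcr-3} and~\ref{it:zcr-2} then give $\vec z(t)\to \vec z_{\tx{cr}}$. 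Reassembling via $\vec z_{\tx{cr}}=-(\log\lambda_{\tx{cr}})\vec 1-\log\vec\sigma$ from~\eqref{eq:zcr-lagr} produces~\eqref{eq:mod-conclusion}.

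\textbf{Main obstacle.} The delicate piece is Step~2: the error term in $\cI_{\tx{ang}}'$ is a priori of the same order $t^{-3}(\log t)^{-1}$ as the signed main term once $\vec z$ is close to $\vec z_{\tx{cr}}$, so a pointwise sign statement is not available and one must work only with the integrated (almost-monotone) version. The argument is essentially a contrapositive: if $|\vec z(t)-\vec z_{\tx{cr}}|\gtrsim\delta$ held on an interval of length $\asymp t$, Lemma~\ref{lem:zcr}~\ref{it:zcr-2} and~\ref{it:zcr-3} would force the main term in $\cI_{\tx{ang}}'$ to dominate the error and produce a macroscopic increase of $\cI_{\tx{ang}}$ on that interval, incompatible with $\cI_{\tx{ang}}(t_m)\to 0$ along the subsequence. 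Turning this heuristic into a quantitative estimate, while simultaneously maintaining the lower bound $b(t)>0$, is where the main technical work lies.
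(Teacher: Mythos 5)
Your Step 1 and the algebra of Step 2 are sound — indeed Step 1 gives more than you state: since $\cI'(t)=O(t^{-3}(\log t)^{-1})$ is integrable and both terms of $\cI$ tend to $0$, writing $\cI(t)=-\int_t^\infty \cI'(s)\,ds$ yields $t^2\cI(t)\to 0$ directly (the statement ``$\cI(t)\to 0$'' alone is vacuous, as each term is already $O(t^{-2})$), and the cancellation giving $\cI_{\mathrm{ang}}'=A^2b\,e^{-r}\xi+O(t^{-3}(\log t)^{-1})$ with $\xi\coloneqq \vec 1\cdot e^{-\vec z}-\vec 1\cdot e^{-\vec z_{\mathrm{cr}}}\ge 0$ checks out. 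The genuine gap is in Steps 2--3. Almost-monotonicity of $\cI_{\mathrm{ang}}$ plus $\cI_{\mathrm{ang}}(t_m)=o(t_m^{-2})$ gives, for $t\in[t_m,t_{m+1}]$, the useful upper bound $\cI_{\mathrm{ang}}(t)\le \cI_{\mathrm{ang}}(t_{m+1})+O(t^{-2}(\log t)^{-1})=o(t^{-2})$, but only the lower bound $\cI_{\mathrm{ang}}(t)\ge \cI_{\mathrm{ang}}(t_m)-O(t_m^{-2}(\log t_m)^{-1})$, which is worthless when $t\gg t_m$ because the sequence $(t_m)$ merely realizes a $\limsup$ and may be arbitrarily sparse. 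So you obtain $\limsup t^2\cI_{\mathrm{ang}}\le 0$ and hence $\liminf t^2\cI_{\mathrm{rad}}\ge 0$, but not the reverse inequalities — and your Step 3 needs exactly the reverse: $\limsup t^2\cI_{\mathrm{rad}}\le 0$ to get $b^2-2A^2\lambda_{\mathrm{cr}}e^{-r}=o(t^{-2})$, and the separate smallness $t^2e^{-r}\xi\to0$ and $t|\vec w|\to 0$ to conclude. The one-sided bound is perfectly consistent with a slow excursion in which $\xi$ grows to order $1$ while $\vec w$ stays small (then $\cI_{\mathrm{ang}}$ is very negative and slowly climbs back before $t_{m+1}$); nothing in your scheme excludes this. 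Your ``Main obstacle'' contrapositive does not close it: it presupposes $b\gtrsim t^{-1}$ and $e^{-r}\gtrsim t^{-2}$ on the excursion (the latter is equivalent to boundedness of $\vec z$, part of what is being proved), and the claimed incompatibility with $\cI_{\mathrm{ang}}(t_m)\to 0$ would require a lower bound on $\cI_{\mathrm{ang}}$ at the start of the excursion, which is precisely what is missing. Even granting all of this, you only derive $\vec w(t)\to 0$, whereas \eqref{eq:mod-conclusion} requires $t|\vec w(t)|\to 0$.

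The paper closes exactly this gap with a different monotone quantity: not the angular energy but the virial-type functional $\zeta=\vec w\cdot e^{-\vec z}-c_0t^{-1}\xi$ (note $\vec w\cdot e^{-\vec z}\approx -\xi'$), whose key property $\dd t\big(\vec w\cdot e^{-\vec z}\big)\lesssim -e^{-y_{\min}}\xi$ follows from the coercivity of $P_1\Delta^{(n)}$ in Lemma~\ref{lem:matrix-D} together with \eqref{eq:Psigexp-equiv}. This yields a genuine no-return statement for $\xi$ in continuous time, which is the convexity input your energy splitting never uses; it is then followed by a Tauberian argument to upgrade $\vec w\to 0$ to $t|\vec w|\to 0$. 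If you want to keep an energy-based scheme you must inject that convexity, e.g.\ through a differential inequality for $\xi$ itself rather than for $\cI_{\mathrm{ang}}$.
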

\begin{proof}
\textbf{Step 1.}
Consider the functions
\begin{equation}
\xi(t) \coloneqq \vec 1\cdot \eee^{-\vec z(t)} - \vec 1\cdot \eee^{-\vec z_\tx{cr}}, \qquad \zeta(t) \coloneqq \vec w(t)\cdot \eee^{-\vec z(t)} - c_0 t^{-1}\xi(t),
\end{equation}
where $c_0 > 0$ will be chosen below.
We know that $\liminf_{t\to\infty} \xi(t) = 0$, and we will improve this to $\lim_{t\to\infty}\xi(t) = 0$.

We claim that for every $\epsilon > 0$ there exist $c_0 = c_0(\epsilon) > 0$ and $t_0 = t_0(\epsilon)$ such that for all $t \geq t_0$
\begin{equation}
\xi(t) \geq \epsilon\ \text{and}\ \zeta(t) \leq 0 \ \Rightarrow \ \zeta'(t) \leq 0.
\end{equation}
Indeed, applying successively \eqref{eq:zw-syst}, \eqref{eq:matrix-D} and
\eqref{eq:Psigexp-equiv}, we get
\begin{equation}
\begin{aligned}
&\dd t \vec w(t)\cdot \eee^{-\vec z(t)} \\
&\ \leq - \vec w(t)^2\cdot \eee^{-\vec z(t)} - \eee^{-r(t)}A^2  \eee^{-\vec z(t)}\cdot (P_1 \Delta^{(n)}\eee^{-\vec z(t)}) + o(\eee^{-r(t)}\eee^{-2z_{\min}(t)}) \\
&\ \lesssim -\eee^{-r(t)}|P_\sigma \eee^{-\vec z(t)}|^2 + o(\eee^{-r(t)}\eee^{-2z_{\min}(t)})
\lesssim -\eee^{-y_{\min}(t)}(\xi(t) + o(\eee^{-z_{\min}(t)})).
\end{aligned}
\end{equation}
Now observe that $\xi(t) \geq \epsilon$ implies $\xi(t) \geq c_1\eee^{-z_{\min}(t)}$,
where $c_1 = c_1(\epsilon) > 0$. Hence, for $t$ large enough
\begin{equation}
\label{eq:wez-deriv}
\dd t \vec w(t)\cdot \eee^{-\vec z(t)}\lesssim -\eee^{-y_{\min}(t)}\xi(t) \lesssim -t^{-2}\xi(t).
\end{equation}

We have $-(c_0t^{-1}\xi(t))' = c_0t^{-2}\xi(t) - c_0t^{-1}\xi'(t)$.
If $c_0$ is small enough, then \eqref{eq:wez-deriv} allows to absorb the first term.
Regarding the second term, \eqref{eq:zw-syst}
and the assumption $\zeta(t) \leq 0$ yield
\begin{equation}
-\xi'(t) = \vec w(t)\cdot \eee^{-\vec z(t)} + O(\eee^{-r(t)}\eee^{-2z_{\min}(t)})
\leq c_0 t^{-1}\xi(t) + O(t^{-2}\xi(t)).
\end{equation}
For $c_0$ small enough and $t$ large enough, the term ${-}c_0t^{-1}\xi'(t)$ can thus be absorbed as well,
which finishes Step 1.

\textbf{Step 2.}
We are ready to prove that
\begin{equation}
\label{eq:z-conv}
\lim_{t\to\infty} \vec z(t) = \vec z_\tx{cr}.
\end{equation}
Suppose this is false, so there exists $\epsilon_0 \in (0, 1)$ and a sequence $t_m \to \infty$ such that
\begin{equation}
\xi(t_m) = \epsilon_0, \qquad \xi'(t_m) \geq 0
\end{equation}
(this is not the same sequence as in Lemma~\ref{lem:y-bound-seq},
but we denote it by the same symbol in order to simplify the notation).
Take $0 < \epsilon \ll \epsilon_0$, $c_0 = c_0(\epsilon)$ and $m$ large (depending on $\epsilon$).
Using \eqref{eq:zw-syst}, we have
\begin{equation}
\zeta(t_m) = -\xi'(t_m)- c_0 t_m^{-1}\xi(t_m) + O(\eee^{-r(t_m)}\eee^{-2z_{\min}(t_m)}) \lesssim -c_0\epsilon_0 t_m^{-1}.
\end{equation}
Let $\tau_m$ be the first time $\tau_m \geq t_m$ such that $\xi(\tau_m) = \epsilon$.
On the time interval $[t_m, \tau_m]$ we have $\xi(t) \geq \epsilon$, so Step 2. implies that $\zeta$ is decreasing,
provided $m$ is large enough, thus $\zeta(\tau_m) \lesssim -c_0\epsilon_0 t_m^{-1} \leq -c_0\epsilon_0 \tau_m^{-1}$.
On the other hand, $\xi'(\tau_m) \leq 0$, which, by the same computation as above, leads to $\zeta(\tau_m) \gtrsim -c_0\epsilon \tau_m^{-1}$, a contradiction.

\textbf{Step 3.}
We claim that
\begin{equation}
\label{eq:w-conv}
\lim_{t \to \infty} t|\vec w(t)| = 0.
\end{equation}
We use a Tauberian argument. From \eqref{eq:zw-syst}, we have $|{\vec z \,}'(t) - \vec w(t)|
+ |{\vec w \,}'(t)| \leq C t^{-2}$ for all $t$.
If there existed $\epsilon > 0$, $k \in \{1, \ldots, n-1\}$ and a sequence $t_m \to \infty$ such that $w_k(t_m) \geq \epsilon t_m^{-1}$ for all $m$,
then we would have $w_k(t) \geq \frac 12\epsilon t_m^{-1}$ for all $t \in \big[t_m, \big(1 + \frac{\epsilon}{2C}\big)t_m\big]$, hence for $m$ large enough $z_k'(t) \geq \frac 14\epsilon t_m^{-1}$ for all $t \in \big[t_m, \big(1 + \frac{\epsilon}{2C}\big)t_m\big]$,
contradicting the convergence of $z_k(t)$ as $t \to \infty$. The case $w_k(t_m) \leq {-}\epsilon t_m^{-1}$ is similar.

From \eqref{eq:zcr-lagr}, we have $\mu_0 \vec 1\cdot \eee^{-\vec z_\tx{cr}} = \lambda_\tx{cr}$.
Thus, \eqref{eq:rrho-syst} and \eqref{eq:z-conv} yield
\begin{equation}
\label{eq:rrho-syst-2}
|r'(t) - b(t)| \lesssim \eee^{-r(t)}, \qquad |b'(t) + A^2\lambda_\tx{cr} \eee^{-r(t)}| \ll \eee^{-r(t)}.
\end{equation}
From Lemma~\ref{lem:ymin-asym}, \eqref{eq:yq-syst} and \eqref{eq:z-conv} we have $\sup_{t\geq 0}|r(t) - 2\log t| < \infty$. Furthermore, \eqref{eq:q-ineq} and \eqref{eq:w-conv} yield
$\sup_{t\geq 0}t|b(t)| < \infty$. Hence,
\begin{equation}
\begin{aligned}
\dd t\Big( \frac 12 b(t)^2 - A^2\lambda_\tx{cr}\eee^{-r(t)} \Big)
&= b(t)\big(b'(t) + A^2 \lambda_\tx{cr}\eee^{-r(t)}\big) \\ 
&+ A^2\lambda_\tx{cr}(r'(t) - b(t))\eee^{-r(t)} = o(t^{-3}),
\end{aligned}
\end{equation}
and an integration in $t$ leads to
\begin{equation}
\label{eq:r-1st-ord}
\big(b(t) - A\sqrt{2\lambda_\tx{cr}}\eee^{-\frac 12 r(t)}\big)\big(b(t) + A\sqrt{2\lambda_\tx{cr}}\eee^{-\frac 12 r(t)}\big) =  b(t)^2 - 2A^2\lambda_\tx{cr}\eee^{-r(t)} = o(t^{-2}).
\end{equation}
The second bound in \eqref{eq:rrho-syst-2} yields
$b'(t) < 0$ for all $t$ large enough. Since $\lim_{t\to \infty} b(t) = 0$,
we have $b(t) > 0$ for all $t$ large enough, hence \eqref{eq:rrho-syst-2} and \eqref{eq:r-1st-ord} yield
\begin{equation}
\label{eq:r'-est}
r'(t) - A\sqrt{2\lambda_\tx{cr}}\eee^{-\frac 12 r(t)} = 
b(t) - A\sqrt{2\lambda_\tx{cr}}\eee^{-\frac 12 r(t)} + o(t^{-1}) = o(t^{-1}),
\end{equation}
which implies
\begin{equation}
\dd t \big(\eee^{\frac 12 r(t)}\big) = A\sqrt{\frac{\lambda_\tx{cr}}{2}} + o(1),
\end{equation}
thus
\begin{equation}
\eee^{\frac 12 r(t)} = t A\sqrt{\frac{\lambda_\tx{cr}}{2}}\big(1  + o(1) \big)
\end{equation}
and after taking the logarithm we finally obtain
\begin{equation}
r(t) = 2\log(At) + \log\big(\frac 12 \lambda_\tx{cr}\big) + o(1).
\end{equation}
Invoking again \eqref{eq:r'-est}, we also have $b(t) = 2t^{-1} + o(1)$.
Hence, \eqref{eq:mod-conclusion} follows from \eqref{eq:yq-syst}, \eqref{eq:zcr-lagr},
\eqref{eq:z-conv} and \eqref{eq:w-conv}.
\end{proof}
\begin{proof}[Proof of Theorem~\ref{thm:asymptotics}]
The bound on $a_{k+1}(t) - a_k(t)$ follows directly
from \eqref{eq:mod-conclusion}, and the definitions of $A$ and $\vec\sigma$.

Let $\conj a(t) \coloneqq \frac 1n \sum_{j=1}^n a_j(t)$.
Then for all $k \in \{1, \ldots, n\}$, \eqref{eq:mod-conclusion} implies
\begin{equation}
\label{eq:ak'-final}
a_k'(t) = \conj a\,'(t) + \frac 1n \sum_j (a_k'(t) - a_j'(t))
= \conj a\,'(t) + (2k - n - 1)t^{-1} + o(t^{-1}),
\end{equation}
hence
\begin{equation}
|\vec a\,'(t)|^2 = (\conj a\,'(t))^2 + t^{-2}\sum_{k=1}^n(n+1-2k)^2
+ o(t^{-2}).
\end{equation}
An explicit computation yields
\begin{equation}
4\kappa^2 \widetilde\rho(t) = Mt^{-2}\sum_{k=1}^n(n+1-2k)^2 + o(t^{-2}).
\end{equation}
Applying \eqref{eq:a'-est}, we obtain
\begin{equation}
\lim_{t\to\infty}t\big(\|\partial_t g(t)\|_{L^2} + \|g(t)\|_{H^1} + |\conj a\,'(t)|\big) = 0.
\end{equation}
The required bound on $a_k'(t)$ follows from the last inequality and \eqref{eq:ak'-final}.
\end{proof}

\section{Existence of a kink $n$-cluster for prescribed initial positions}
\label{sec:any-position}
The present section is devoted to a proof of Theorem~\ref{thm:any-position}.
The case $n = 1$ is clear, hence we assume $n > 1$.
We will use the following consequence of Brouwer's fixed point theorem,
known as the Poincar\'e--Miranda theorem.
\begin{theorem}\cite{Miranda}
\label{thm:miranda}
Let $L_1 < L_2$, $\vec y = (y_1, \ldots, y_d) \in \bR^d$ and $\vec \Psi = (\Psi_1, \ldots, \Psi_d) : [L_1, L_2]^d \to \bR^d$ be a continuous map such that
for all $k \in \{1, \ldots, d\}$ the following conditions are satisfied:
\begin{itemize}
\item $\Psi_k(\vec x) \leq y_k$ for all $\vec x = (x_1, \ldots, x_d) \in [L_1, L_2]^d$ such that $x_k = L_1$,
\item $\Psi_k(\vec x) \geq y_k$ for all $\vec x = (x_1, \ldots, x_d) \in [L_1, L_2]^d$ such that $x_k = L_2$.
\end{itemize}
Then there exists $\vec x \in [L_1, L_2]^d$ such that $\vec \Psi(\vec x) = \vec y$.
\end{theorem}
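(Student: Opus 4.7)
The plan is to deduce this from Brouwer's fixed point theorem via a short contradiction argument. First I would reduce to a normalized setup: applying the affine change of variables $x_k \mapsto (2x_k - L_1 - L_2)/(L_2 - L_1)$ and subtracting $\vec y$ from $\vec \Psi$ turns the statement into the following equivalent one. Given a continuous map $\vec \Psi: Q \to \bR^d$ on the cube $Q \coloneqq [-1, 1]^d$ with $\Psi_k(\vec x) \leq 0$ on the face $\{x_k = -1\}$ and $\Psi_k(\vec x) \geq 0$ on the face $\{x_k = 1\}$ for every $k$, there exists $\vec x^* \in Q$ with $\vec \Psi(\vec x^*) = \vec 0$. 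This reduction is purely notational.

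Next I would argue by contradiction. Assume $\vec \Psi(\vec x) \neq \vec 0$ for all $\vec x \in Q$, so that $\|\vec \Psi(\vec x)\|_\infty \coloneqq \max_k |\Psi_k(\vec x)| > 0$ on $Q$. Then the map
\begin{equation}
F: Q \to Q, \qquad F(\vec x) \coloneqq - \frac{\vec \Psi(\vec x)}{\|\vec \Psi(\vec x)\|_\infty},
\end{equation}
is continuous and, by construction, each component of $F(\vec x)$ has absolute value at most $1$, so $F$ indeed sends $Q$ to $Q$. Brouwer's fixed-point theorem provides $\vec x^* \in Q$ with $F(\vec x^*) = \vec x^*$.

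Finally I would extract the contradiction from the boundary conditions. Pick an index $k$ realizing $|\Psi_k(\vec x^*)| = \|\vec \Psi(\vec x^*)\|_\infty$; the fixed-point identity gives $x_k^* = -\Psi_k(\vec x^*)/\|\vec \Psi(\vec x^*)\|_\infty$, so $|x_k^*| = 1$ and the sign of $x_k^*$ is the opposite of the sign of $\Psi_k(\vec x^*)$. If $x_k^* = +1$ then $\Psi_k(\vec x^*) < 0$, violating the face condition on $\{x_k = 1\}$; if $x_k^* = -1$ then $\Psi_k(\vec x^*) > 0$, violating the face condition on $\{x_k = -1\}$. Either alternative contradicts the hypotheses, so $\vec \Psi$ must vanish somewhere in $Q$.

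The only real obstacle is Brouwer's fixed-point theorem itself, which enters as a black box; granted that, the argument reduces to a short sign check on a single coordinate. An alternative route would use Brouwer degree, showing via a linear homotopy preserving the face sign conditions that $\deg(\vec \Psi, \mathrm{int}\, Q, \vec 0) = \pm 1$, but the contraction-of-Brouwer argument above is the most elementary and is what I would write up.
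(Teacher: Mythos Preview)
Your proof is correct; this is the standard derivation of Poincar\'e--Miranda from Brouwer's fixed point theorem. Note, however, that the paper does not prove this theorem at all: it is stated with a citation to \cite{Miranda} and used as a black box, with only a remark observing that one may normalize to $L_1=-1$, $L_2=1$, $\vec y=0$ by an affine change of coordinates --- precisely the reduction you perform in your first step. So there is no ``paper's proof'' to compare against; you have supplied a complete argument where the paper simply invokes the literature.
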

\begin{remark}
The result is often stated with $L_1 = -1$, $L_2 = 1$ and $\vec y = 0$,
which can be achieved by a straightforward change of coordinates.
\end{remark}

\begin{lemma}
\label{lem:data-at-T}
There exist $L_0, C_0 > 0$ such that the following is true.
Let $L \geq L_0$ and $\vec a_0 = (a_{0, 1}, \ldots, a_{0, n}) \in \bR^n$ be such that $a_{0, k+1} - a_{0, k}
\geq L$ for all $k \in \{1, \ldots, n-1\}$.
For any $T \geq 0$ there exists $(\vec a_T, \vec v_T) \in \bR^n \times \bR^n$
such that the solution $\bs \phi$ of \eqref{eq:csf} with
\begin{equation}
\label{eq:phiT}
\bs \phi(T) = \bs H(\vec a_T, \vec v_T)
\end{equation}
has energy $nM$ and satisfies $\bs \phi(t) = \bs H(\vec a(t), \vec v(t)) + \bs g(t)$ for all $t \in [0, T]$, where
\begin{enumerate}[(i)]
\item\label{it:data-at-T-1}
$\bs g(t)$ satisfies \eqref{eq:g-orth},
\item\label{it:data-at-T-2}
$\rho(t) \leq C_0/(e^L + t^2)$ for all $t \in [0, T]$, where $\rho(t)$ is defined by \eqref{eq:rhot-def},
\item\label{it:data-at-T-3}
$\vec a(0) = \vec a_0$.
\end{enumerate}
\end{lemma}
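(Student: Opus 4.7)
The plan is to apply the Poincar\'e--Miranda theorem (Theorem~\ref{thm:miranda}) to a shooting map built from backwards-in-time propagation of terminal data at $t = T$, parameterized by an $n$-dimensional cube. First I would build a reference trajectory $(\vec{a}^\star(t), \vec{v}^\star(t))$ of the approximate Newton system~\eqref{eq:attractive-toda} on $[0,T]$ with $\vec{a}^\star(0) = \vec{a}_0$ and $\vec{v}^\star(0) = \vec 0$. The ODE analysis of Section~\ref{sec:n-body} (applied in the exact-ODE case, without error terms) gives $\rho^\star(t) \leq C/(e^L + t^2)$ on $[0,T]$ as soon as $L \geq L_0$. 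I would then parameterize candidate terminal data by $\vec\mu \in [-\delta,\delta]^n$, with $\delta \simeq T^{-1} e^{-L/2}$, by
\[
\bs\phi_T^{\vec\mu} \coloneqq \bs H\bigl(\vec{a}^\star(T),\ \vec{v}^\star(T) + \vec\mu\bigr),
\]
adjusting along one distinguished direction, if necessary, to secure the normalization $E(\bs\phi_T^{\vec\mu}) = nM$. Let $\bs\phi^{\vec\mu}$ denote the solution of~\eqref{eq:csf} with $\bs\phi^{\vec\mu}(T) = \bs\phi_T^{\vec\mu}$, propagated backwards in time.

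For each $\vec\mu$ I would define the exit time
\[
T_1(\vec\mu) \coloneqq \inf\bigl\{\,t \in [0,T] : \rho^{\vec\mu}(s) \leq C_0/(e^L + s^2)\ \text{for all}\ s \in [t, T]\,\bigr\},
\]
using the modulation parameters and weighted distance $\rho^{\vec\mu}$ supplied by Lemma~\ref{lem:basic-mod} on $[T_1(\vec\mu), T]$. The aim is to produce a $\vec\mu^\ast$ with $T_1(\vec\mu^\ast) = 0$ and $\vec{a}^{\vec\mu^\ast}(0) = \vec{a}_0$. To this end, I would introduce the continuous map $\vec\Psi:[-\delta,\delta]^n \to \R^n$ whose $k$-th component is $\Psi_k(\vec\mu) \coloneqq a_k^{\vec\mu}(T_1(\vec\mu)) - a_k^\star(T_1(\vec\mu))$. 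Backwards integration of the Newton-type law from Lemma~\ref{lem:ref-mod}, compared to the exact ODE satisfied by $\vec{a}^\star$, yields to leading order
\[
\Psi_k(\vec\mu) = -(T - T_1(\vec\mu))\,\mu_k + o(T\delta),
\]
because $\vec\mu$ was introduced as a pure shift of terminal velocities. Consequently $\Psi_k|_{\mu_k = +\delta} \leq 0$ and $\Psi_k|_{\mu_k = -\delta} \geq 0$ on opposite faces of the cube, so Theorem~\ref{thm:miranda} supplies a $\vec\mu^\ast$ with $\vec\Psi(\vec\mu^\ast) = \vec 0$. The no-return / ejection analysis from Section~\ref{sec:n-body} adapted to finite intervals then forces $T_1(\vec\mu^\ast) = 0$ (otherwise $\rho^{\vec\mu^\ast}$ would immediately re-exceed the threshold, contradicting the definition of $T_1$ as the supremum), whence $\vec{a}^{\vec\mu^\ast}(0) = \vec{a}^\star(0) = \vec{a}_0$ as required.

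The main obstacle will be the verification of the sign conditions for $\vec\Psi$ in the regime where $T_1(\vec\mu) > 0$, because one then cannot compare $\bs\phi^{\vec\mu}$ with $\vec{a}^\star$ all the way back to $t = 0$. In that case, I have to show that the sign of $\mu_k$ is preserved through the ejection: once $\rho^{\vec\mu}$ reaches the threshold $C_0/(e^L + t^2)$, the deviation $a_k^{\vec\mu}(T_1) - a_k^\star(T_1)$ inherits the sign opposite to $\mu_k$ in a robust way. This rests on a finite-time monotonicity / virial argument in the spirit of Lemma~\ref{lem:U-lbound} and Lemma~\ref{lem:ymin-asym}, sharpened to quantify how perturbations of terminal velocities propagate through the backwards dynamics, together with a careful calibration of $C_0$ and $\delta$ so that the linear backwards displacement $(T - T_1)\delta$ dominates the accumulated errors of size $O(e^{-L/2})$ coming from Lemma~\ref{lem:ref-mod}.
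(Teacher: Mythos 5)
Your high-level strategy (shoot backwards from $t=T$, define an exit time, apply Poincar\'e--Miranda) is the same as the paper's, but the implementation has a gap that breaks the argument at the very first step: the reference trajectory. A solution of the attractive system \eqref{eq:attractive-toda} with $\vec a^\star(0)=\vec a_0$ and $\vec v^\star(0)=\vec 0$ does \emph{not} satisfy $\rho^\star(t)\leq C/(e^L+t^2)$ on $[0,T]$ for all $T$. Since the interactions are attractive (for the leftmost particle $p_1'=2\kappa^2 e^{-(a_2-a_1)}>0$, etc.), starting from rest the particles fall toward one another, $y_{\min}^\star$ decreases, $\rho^\star$ \emph{increases}, and the configuration collapses on a time scale of order $e^{L/2}$. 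So for $T\gtrsim e^{L/2}$ your reference trajectory does not even remain in the modulation regime, and everything built on it --- the threshold in the definition of $T_1(\vec\mu)$, the expansion $\Psi_k(\vec\mu)=-(T-T_1)\mu_k+o(T\delta)$, and the concluding identification $\vec a^{\vec\mu^\ast}(0)=\vec a^\star(0)$ --- is unfounded. The only globally decaying ODE trajectories are the parabolic ones, whose initial velocities are nonzero and are themselves unknowns of exactly the shooting problem you are trying to solve; you cannot take them as given. There are secondary problems as well: after imposing $E=nM$ by "adjusting along one distinguished direction" you have only $n-1$ effective parameters but $n$ target conditions $a_k^{\vec\mu}(0)=a_{0,k}$ (the paper resolves this by fixing the barycenter and shooting only for the $n-1$ separations, recovering the absolute positions by a final translation); and matching positions at $T_1$ does not by itself control the velocity part of $\rho$ at $T_1$, which you need to conclude $T_1=0$.

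The fix, which is what the paper does, is to dispense with a reference trajectory altogether. Parameterize the terminal data by the $(n-1)$ separations $\vec y_T\in[L_1,L_2]^{n-1}$ and choose the terminal velocities \emph{outgoing}, with equal consecutive increments $v_{T,k+1}-v_{T,k}=\lambda\sqrt{\rho(\vec a_T)}$, zero sum, and $\lambda$ fixed by the constraint $E(\bs H(\vec a_T,\vec v_T))=nM$ (solvable by monotonicity of $\lambda\mapsto E$). This choice makes $q_k(T)\gtrsim\sqrt{\rho(\vec a_T)}>0$, hence the Lyapunov function $\beta$ from the proof of Lemma~\ref{lem:ymin-asym} is positive on all of $[T_1,T]$ and $\wt\rho\,'\leq -c_0\wt\rho^{3/2}$ there. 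Integrating this single differential inequality gives both the uniform bound $\wt\rho(t)\lesssim(e^L+t^2)^{-1}$ and the Miranda sign conditions directly: on the face $y_{T,k}=L_1$ the solution exits within time $O(e^{L_1/2})$ so $y_k(T_1)$ stays near $L_1<y_{0,k}$, while on the face $y_{T,k}=L_2$ one has $y_k(T_1)\geq y_{0,k}$ for $L_2$ large. No comparison with an exact ODE solution, and no propagation-of-signs/virial argument, is needed.
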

\begin{proof}
\textbf{Step 1.} (Preliminary observations.)
In order to have $\vec a(0) = \vec a_0$,
it suffices to guarantee that $a_{k+1}(0) - a_{k}(0) = a_{0, k+1} - a_{0, k}$,
and apply a translation if needed.
Set $y_{0, k} \coloneqq a_{0, k+1} - a_{0, k}$ and let $0 < L_1 < L_2$ be chosen later
(depending on $T$ and $\vec y_0$).
The idea is to construct an appropriate function $\vec\Psi: [L_1, L_2]^{n-1} \to \bR^{n-1}$ satisfying the assumptions of Theorem~\ref{thm:miranda}.

For given $\vec y_T \in [L_1, L_2]^{n-1}$, let $\vec a_T \in \bR^n$ be such that $a_{T, k+1} - a_{T, k} = y_{T, k}$ for all $k \in \{1, \ldots, n-1\}$
and $\sum_{k=1}^n a_{T, k} = 0$
(the last condition is a matter of choice, we could just as well impose $a_{T, 1} = 0$
or any condition of similar kind), so that $\vec a_T = \vec a_T(\vec y_T)$ is continuous.
Let $\bs \phi = \bs \phi(T, \vec a_T, \vec v_T)$ be the solution of \eqref{eq:csf}
for the data \eqref{eq:phiT} at time $T$, and let $T_0 \geq 0$ be the minimal time
such that $\bfd(\bs \phi(t)) \leq \eta_0$ for all $t \in [T_0, T]$ (we set $T_0 = 0$
if $\bfd(\bs \phi(t)) \leq \eta_0$ for all $t \in [0, T]$).
We thus have well-defined modulation parameters $\vec a(t)$ and $\vec v(t)$ for all $t \in [T_0, T]$.
We define $\vec p(t)$, $\vec y(t)$, $y_{\min}(t)$, $\widetilde\rho(t)$ and $\vec q(t)$ as in Sections~\ref{sec:mod} and \ref{sec:n-body}.

\noindent
\textbf{Step 2.} (Choice of $\vec v_T$.)
Let $y_{T, \min} \coloneqq \min_{1 \leq k < n} y_{T, k}$.
We define $\vec v_T = \vec v_T(\vec a_T)$ as the unique vector in $\bR^n$ such that
\begin{enumerate}[(i)]
\item \label{it:choice-vT-sum}
$\sum_{k=1}^n v_{T, k} = 0$,
\item \label{it:choice-vT-lambda}
there exists $\lambda > 0$ such that $v_{T, k+1} - v_{T, k} = \lambda\sqrt{\rho(\vec a_T)}$ for all $k \in \{1, \ldots, n-1\}$,
\item \label{it:choice-vT-en}
$E\big(\bs H(\vec a_T, \vec v_T)\big) = nM$.
\end{enumerate}
Conditions \ref{it:choice-vT-sum} and \ref{it:choice-vT-lambda} are equivalent to
\begin{equation}
\label{eq:vT-form}
v_{T, k} = \frac{2k-n-1}{2}\lambda\sqrt{\rho(\vec a_T)}, \qquad\text{for all }k \in \{1, \ldots, n\}.
\end{equation}
Consider the auxiliary function
\begin{equation}
f(\lambda) \coloneqq E(\bs H(\vec a_T, \vec v_T)),
\end{equation}
with $\vec v_T$ given by \eqref{eq:vT-form}.
It follows from Lemma~\ref{lem:interactions} that there exist constants $\lambda_1, \lambda_2 > 0$ depending only on $n$ such that
\begin{equation}
\label{eq:sign-aux-f}
f(\lambda_1 ) < nM\qquad f(\lambda_2) > nM.
\end{equation}
We claim that for $\lambda \in [\lambda_1, \lambda_2] $ we have
\begin{equation}
f'(\lambda) \gtrsim \rho(\vec a_T).
\end{equation}
Indeed, we have $\partial_\lambda v_{T, k} = \lambda^{-1} v_{T, k}$, hence
\begin{equation}
\begin{aligned}
f'(\lambda) &= \la \vD E(\bs H(\vec a_T, \vec v_T)), \partial_\lambda \bs H(\vec a_T, \vec v_T)\ra \\
&= \lambda^{-1} \Big\la \vD E(\bs H(\vec a_T, \vec v_T)), \sum_{k=1}^n (-1)^kv_{T, k}\partial_v \bs H(a_{T, k}, v_{T, k})\Big\ra,
\end{aligned}
\end{equation}
and it suffices to use \eqref{eq:DEH} and \eqref{eq:matrix-coef}.
We conclude that there exists unique $\lambda \in [\lambda_1, \lambda_2]$ such that $f(\lambda) = nM$.
Moreover, $\lambda$ is continuous with respect to $\vec a_T$.

\noindent
\textbf{Step 3.} (Definition and continuity of the exit time.)
Note that $\widetilde\rho(t) \leq n\eee^{-L_1}$ if $y_k(t) \geq L_1$ for all $k$.
We claim that there exists $c_0 > 0$ depending only on $n$ such that
\begin{equation}
\label{eq:sec5-rho'}
\widetilde\rho\,'(t) \leq {-}c_0\widetilde\rho(t)^\frac 32 \qquad\text{for all }t \in [T_0, T].
\end{equation}
Let $\beta$ be defined as in the proof of Lemma~\ref{lem:ymin-asym}.
The same argument shows that $\beta(t)$ is decreasing for $t \in [T_0, T]$ if $c_0$ is small enough.

We claim that $\beta(T) > 0$, which will imply that $\beta(t) > 0$ for all $t \in [T_0, T]$.
The proof of \eqref{eq:rho'-diff-ineq} then applies without changes.
By the definition of $\beta$, it suffices to verify that
\begin{equation}
\label{eq:qkT-pos}
q_k(T) \gtrsim \sqrt{\rho(\vec a_T)}\qquad\text{for all }k \in \{1, \ldots, n-1\}.
\end{equation}
But from \eqref{eq:ak'} we obtain $|q_k(T) - (v_{T, k+1} - v_{T, k})| \lesssim \rho(\vec a_T)$,
which implies \eqref{eq:qkT-pos} since $v_{T, k+1} - v_{T, k} = \lambda\sqrt{\rho(\vec a_T)}$ and $\lambda \simeq 1$.

Let $T_1 \in [T_0, T]$ be the minimal time such that $\widetilde\rho(t) \leq 2n\eee^{-L_1}$
for all $t \in [T_1, T]$. Suppose that the last inequality holds for all $t \in [T_0, T]$.
Then in particular $\widetilde\rho(T_0) \leq 2n\eee^{-L_1}$,
and Lemma~\ref{lem:basic-mod} yields $\bfd(\bs\phi(T_0)) < \eta_0$ if $L_1$ is large enough,
thus $T_0 = 0$. In this case, we set $T_1 \coloneqq 0$ as well.

We claim that, for $T$ fixed, $T_1 = T_1(\vec a_T)$ is a continuous function.
Let $\vec a_{T, m} \to \vec a_T$ and consider the corresponding sequence of solutions constructed above.

Assume first that $T_1(\vec a_T) > 0$, thus $\widetilde\rho(T_1) = 2n\eee^{-L_1}$.
Let $\epsilon > 0$. Since $\widetilde\rho$ is strictly decreasing by \eqref{eq:sec5-rho'},
we have $\widetilde\rho(t) < 2n\eee^{-L_1}$ for all $t \in [T_1 + \epsilon, T]$.
By the continuity of the flow, the same inequality holds for the $m$-th solution of the sequence if $m$ is large enough,
hence $T_1(\vec a_{T, m}) \leq T_1 + \epsilon$ if $m$ is large enough.
Similarly, $\widetilde\rho(T_1 - \epsilon) > 2n\eee^{-L_1}$, implying $T_1(\vec a_{T, m}) \geq T_1 - \epsilon$ for $m$ large enough.

Now assume that $T_1(\vec a_T) = T_0(\vec a_T) = 0$, thus $\widetilde\rho(0) \leq 2n\eee^{-L_1}$.
Since $\widetilde\rho$ is strictly decreasing, for any $\epsilon > 0$ we have $\widetilde\rho(t) < 2n\eee^{-L_1}$ for all $t \in [\epsilon, T]$, and again the continuity of the flow yields $T_1(\vec a_{T, m}) \leq \epsilon$ for $m$ large enough.

\noindent
\textbf{Step 4.} (Application of the Poincar\'e--Miranda theorem.)
We set
\begin{equation}
\vec\Psi(\vec y_T) \coloneqq \vec y(T_1).
\end{equation}
The preceding step together with the continuity of the flow imply that $\vec\Psi$ is continuous.
Let $k \in \{1, \ldots, n-1\}$ be such that $y_{T, k} = L_2$.
If $L_2$ is sufficiently large, then $\Psi_k(\vec y_T) \geq y_{0, k}$
(it suffices to integrate in time the bound $|y_k'(t)| \lesssim \sqrt{\widetilde\rho(t)} \lesssim \eee^{-\frac 12 L_1}$).

Assume now that $y_{T, k} = L_1$, thus $\widetilde\rho(T) \geq \eee^{-L_1}$.
By \eqref{eq:sec5-rho'}, we have
\begin{equation}
\label{eq:deriv-rho-12}
\dd t\big(\widetilde\rho(t)^{-\frac 12}\big) \gtrsim 1, \qquad\text{for all }t \in [T_1, T].
\end{equation}
Since $\widetilde\rho(T_1) \leq 2n\eee^{-L_1}$, we obtain
\begin{equation}
T - T_1 \lesssim \eee^{L_1/2}\big(1 - (2n)^{-\frac 12}\big).
\end{equation}
But we also have $|y_k'(t)| \lesssim \eee^{-\frac 12 L_1}$ for all $t \in [T_1, T]$,
hence $|y_k(T_1) - y_k(T)| \lesssim 1$, and it suffices to let $L_1 = L - C$ for $C$ sufficiently large.

By Theorem~\ref{thm:miranda}, there exists $\vec y_T \in [L_1, L_2]^{n-1}$ such that $\vec\Psi(\vec y_T) = \vec y_0$.
In particular, $\widetilde\rho(T_1) \lesssim \eee^{-L} = \eee^{-C}\eee^{-L_1} \ll 2n\eee^{-L_1}$ if $C$ is large.
We thus have $T_0 = T_1 = 0$ and $\vec y(0) = \vec y_0$.

\noindent
\textbf{Step 5.} (Estimates on the solution.)
By \eqref{eq:deriv-rho-12}, we have
\begin{equation}
\widetilde\rho(t)^{-\frac 12} - \widetilde\rho(0)^{-\frac 12} \gtrsim t, \qquad\text{for all }t \in [0, T].
\end{equation}
Since $\widetilde\rho(0) \lesssim \eee^{-L}$, we obtain $\widetilde\rho(t) \lesssim (e^L + t^2)^{-1}$.
\end{proof}

\begin{proof}[Proof of Theorem~\ref{thm:any-position}]
Let $T_m$ be an increasing sequence tending to $\infty$ and let
\begin{equation}
\bs \phi_m(t) = \bs H(\vec a_m(t), \vec v_m(t)) + \bs g_m(t)
\end{equation}
be the solution given by Lemma~\ref{lem:data-at-T} for $T = T_m$.
After extraction of a subsequence, we can assume that $\bs g_m(0) \wto \bs g_0 \in \cE$
and, using the Arzel\`a-Ascoli theorem, $(\vec a_m, \vec v_m) \to (\vec a, \vec v)$ locally uniformly.
Let $\vec v_0 \coloneqq \vec v(0)$ and let $\bs\phi$ be the solution of \eqref{eq:csf} such that $\bs\phi(0) = \bs H(\vec a_0, \vec v_0) + \bs g_0$.
We verify that $\bs \phi$ is the desired kink cluster.

For all $m$ and $k \in \{1, \ldots, n\}$ we have
\begin{equation}
\la \bs\alpha(a_{m, k}(0), v_{m, k}(0)), \bs g_m(0)\ra = \la \bs\beta(a_{m, k}(0), v_{m, k}(0)), \bs g_m(0)\ra = 0.
\end{equation}
Passing to the weak limit, we obtain
\begin{equation}
\la \bs\alpha(a_{0, k}, v_{0, k}), \bs g_0\ra = \la \bs\beta(a_{0, k}, v_{0, k}), \bs g_0\ra = 0,
\end{equation}
hence $\vec a(\bs \phi(0)) = \vec a_0$.

Fix $t \geq 0$. Since $\bs \phi_m(0) \wto \bs\phi(0)$, Proposition~\ref{prop:cauchy}
implies $\bs \phi_m(t) \wto \bs \phi(t)$, hence 
\begin{equation}
\bs g_m(t)  = \bs\phi_m(t) - \bs H(\vec a_m(t), \vec v_m) \wto \bs \phi(t) - \bs H(\vec a(t), \vec v(t)).
\end{equation}
From \eqref{eq:g-coer} and Lemma~\ref{lem:data-at-T} \ref{it:data-at-T-2}, we obtain
\begin{equation}
\|\bs g_m(t)\|^2 \leq C_0/(\eee^L + t^2) \qquad\text{for all }m,
\end{equation}
hence, by the weak compactness of closed balls in separable Hilbert spaces,
\begin{equation}
\|\bs \phi(t) - \bs H(\vec a(t), \vec v(t))\|_\cE^2 \leq C_0/(\eee^L + t^2).
\end{equation}
We also have $\rho(\vec a(t)) = \lim_{m\to\infty} \rho(\vec a_m(t)) \leq C_0/(\eee^L + t^2)$,
yielding the required bound on $\bfd(\bs\phi(t))$ by Remark~\ref{lem:impr-coer}.
\end{proof}

\section{Kink clusters as profiles of kink collapse}
\label{sec:profiles}
In this final section, we prove Theorem~\ref{thm:unstable}.
In the proof, we will need the following ``Fatou property''.
\begin{lemma}
\label{lem:fatou}
There exist $\eta_0, y_0 > 0$ such that the following holds. Let $\vec a_m = (a_{m, 1}, \ldots, a_{m, n}) \in \bR^n$ and $\vec v_m = (v_{m, 1}, \ldots, v_{m, n}) \in \bR^n$
for all $m \in \{1, 2, \ldots\}$ be such that $a_{m, k+1} - a_{m, k} \geq y_0$ for all $m$ and $k$, and $\lim_{m \to \infty} \vec a_m = \vec a \in \bR^n$, $\lim_{m \to \infty} \vec v_m = \vec v \in \bR^n$.
Let $\bs g_m \in \cE$, $\|\bs g_m\|_\cE \leq \eta_0$, $\bs g_m$ satisfy the orthogonality conditions \eqref{eq:g-orth}
with $(\vec a, \vec v, \bs g)$ replaced by $(\vec a_m, \vec v_m, \bs g_m)$, and assume that $\bs g_m \wto \bs g \in \cE$. Then
\begin{equation}
\label{eq:fatou}
E(\bs H(\vec a, \vec v) + \bs g) \leq \liminf_{m \to \infty}E(\bs H(\vec a_m, \vec v_m) + \bs g_m),
\end{equation}
and $E(\bs H(\vec a, \vec v) + \bs g) = \lim_{m \to \infty}E(\bs H(\vec a_m, \vec v_m) + \bs g_m)$ if and only if $\bs g_m \to \bs g$ strongly in $\cE$.
\end{lemma}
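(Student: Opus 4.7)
The plan is to perform a second-order Taylor expansion of the energy around the base point $\bs H(\vec a_m, \vec v_m) + \bs g$ (rather than around $\bs H(\vec a_m, \vec v_m)$ alone), so that the weakly-null increment $\bs h_m \coloneqq \bs g_m - \bs g$ appears as the natural variable with $\bs h_m \wto \bs 0$ in $\cE$. The parameter convergence $(\vec a_m, \vec v_m) \to (\vec a, \vec v)$, together with Proposition~\ref{prop:prop-H}, yields the strong convergence $\bs H_m \coloneqq \bs H(\vec a_m, \vec v_m) \to \bs H(\vec a, \vec v) =: \bs H$ in $\cE$. I would then write
\begin{equation}
E(\bs H_m + \bs g_m) - E(\bs H_m + \bs g) = \la \vD E(\bs H_m + \bs g), \bs h_m\ra + \tfrac 12 \la \vD^2 E(\bs H_m + \bs g)\bs h_m, \bs h_m\ra + R_m,
\end{equation}
with $R_m = \int \Phi(H_m + g, h_m)\, \ud x$ for $\Phi(w, h) \coloneqq U(w+h) - U(w) - U'(w) h - \tfrac 12 U''(w) h^2$.

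Next I would analyze the three terms. For the linear one, $\vD E(\bs H_m + \bs g) \to \vD E(\bs H + \bs g)$ strongly in $\cE^* = H^{-1} \times L^2$ (using $H_m - H \to 0$ in $H^1 \hookrightarrow L^\infty$ and Lipschitz continuity of $U'$ on bounded sets), so pairing with the weakly null $\bs h_m$ gives $\la \vD E(\bs H_m + \bs g), \bs h_m\ra \to 0$. For the quadratic one,
\begin{equation}
\la \vD^2 E(\bs H_m + \bs g)\bs h_m, \bs h_m\ra = \|\bs h_m\|_\cE^2 + \int \big(U''(H_m + g) - 1\big) h_m^2\, \ud x,
\end{equation}
and I claim the integral tends to zero. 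Indeed, the weight is uniformly bounded and decays uniformly in $m$ as $|x| \to \infty$ (because $H_m(x) \to \pm 1$ with exponential rate uniform in $m$ by Proposition~\ref{prop:prop-H}, the parameters lying in a compact set, while $g \in H^1$ decays at $\pm\infty$); splitting $\int = \int_{|x|\le R} + \int_{|x|>R}$ and combining Rellich--Kondrachov strong convergence $h_m \to 0$ in $L^2(|x|\le R)$ with the uniform bound $\|h_m\|_{L^2}^2 \lesssim \eta_0^2$ on the complement yields the claim. For the remainder, $|\Phi(w, h)| \lesssim |h|^3$ for $|w|\le 2$ and $|h| \le 1$, and the 1D embedding $H^1 \hookrightarrow L^\infty$ gives $\|h_m\|_{L^\infty} \lesssim \|h_m\|_{H^1} \le 2\eta_0$, so $|R_m| \lesssim \|h_m\|_{L^\infty} \|h_m\|_{L^2}^2 \lesssim \eta_0 \|\bs h_m\|_\cE^2$.

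Combining these estimates with the continuity $E(\bs H_m + \bs g) \to E(\bs H + \bs g)$ yields
\begin{equation}
E(\bs H_m + \bs g_m) - E(\bs H + \bs g) \geq \big(\tfrac 12 - C\eta_0\big) \|\bs h_m\|_\cE^2 + o(1).
\end{equation}
Choosing $\eta_0$ small enough that $\tfrac12 - C\eta_0 \geq \tfrac14$ immediately proves \eqref{eq:fatou}; moreover, the same inequality gives the equality characterization, for $E(\bs H_m + \bs g_m) \to E(\bs H + \bs g)$ forces $\limsup_m \|\bs h_m\|_\cE = 0$ (i.e.\ $\bs g_m \to \bs g$ strongly in $\cE$), while the converse implication follows from continuity of $E$. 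The main obstacle is the vanishing of the cross term $\int (U''(H_m + g) - 1) h_m^2\, \ud x$: this is where the analysis genuinely uses the uniform spatial decay of $U''(H_m) - 1$ supplied by Proposition~\ref{prop:prop-H} together with Rellich compactness on bounded intervals; once that step is in place, the remainder of the argument reduces to the standard convexity principle in the Calculus of Variations recalled in the introduction.
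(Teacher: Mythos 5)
Your proof is correct, but it takes a genuinely different route from the paper's at the decisive step. The paper first reduces to fixed parameters by orthogonally projecting $\bs g_m$ onto the subspace $\Pi$ determined by the orthogonality conditions at $(\vec a, \vec v)$ (the projection error tends to $0$ because $(\vec a_m,\vec v_m)\to(\vec a,\vec v)$), then Taylor-expands $E$ around $\bs H(\vec a,\vec v)+\bs g$ exactly as you do, kills the linear term by weak convergence, and bounds the quadratic-plus-cubic remainder below by $c\,\|\bs g_m-\bs g\|_\cE^2$ using the coercivity estimate of Proposition~\ref{prop:coer} — which is where the orthogonality hypothesis enters. You replace that coercivity step by the direct splitting $\la \vD^2 E\,\bs h_m,\bs h_m\ra=\|\bs h_m\|_\cE^2+\int (U''(H_m+g)-1)h_m^2$, and show the second integral vanishes because the weight decays uniformly at spatial infinity while $h_m\to 0$ in $L^2_{\mathrm{loc}}$ by Rellich. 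This is sound: along weakly null sequences the finitely many localized "bad directions" of the Hessian are invisible, so no spectral coercivity (hence no orthogonality) is needed. The trade-off is that the paper's argument is shorter given that Proposition~\ref{prop:coer} is already available and it sidesteps any discussion of uniform spatial decay of the weight, whereas your argument is more self-contained, makes the Brezis--Lieb-type mechanism explicit, and in fact proves a slightly stronger statement since the orthogonality conditions in the hypotheses are never used. One small point worth spelling out if you write this up: the convergence $\bs H(\vec a_m,\vec v_m)\to\bs H(\vec a,\vec v)$ must be taken in $\cE$ (differences of multikinks with converging parameters decay exponentially, so they lie in $H^1\times L^2$ and tend to $0$ there), both for the continuity $E(\bs H_m+\bs g)\to E(\bs H+\bs g)$ and for the strong dual convergence of $\vD E(\bs H_m+\bs g)$.
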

\begin{proof}
Let $\Pi \subset \cE$ be the subspace of codimension $2n$ defined by \eqref{eq:g-orth}.
Let $\wt{\bs g}_m$ be the orthogonal projection of $\bs g_m$ on $\Pi$.
Then $\lim_{m \to \infty}\|\wt{\bs g}_m - \bs g_m\|_\cE = 0$, hence we are reduced to the situation where $\vec a_m = \vec a$ and $\vec v_m = \vec v$ for all $m$.
To simplify the notation, we write $\bs g_m$ instead of $\wt{\bs g}_m$.

We have the Taylor expansion around $\bs H(\vec a, \vec v) + \bs g$:
\begin{equation}
\begin{aligned}
E(\bs H(\vec a, \vec v) + \bs g_m) &= E(\bs H(\vec a, \vec v) + \bs g) + \la \vD E(\bs H(\vec a, \vec v) + \bs g), \bs g_m - \bs g\ra \\
&+ \frac 12 \la \vD^2 E(\bs H(\vec a, \vec v) + \bs g)(\bs g_m - \bs g), \bs g_m - \bs g\ra + O(\|\bs g_m - \bs g\|_\cE^3).
\end{aligned}
\end{equation}
The second term of the right hand side converges to 0 by assumption.
The second line is $\gtrsim \|\bs g_m - \bs g\|_\cE^2$, by the coercivity estimate in Lemma~\ref{lem:D2H} and smallness of $\|\bs g_m\|_\cE$.
\end{proof}
\begin{proof}[Proof of Theorem~\ref{thm:unstable}]
Let $\vec a_m: [0, T_m] \to \bR^n$ and $\vec v_m: [0, T_m] \to \bR^n$ be the modulation parameters corresponding to $\bs\phi_m$.
Let $C_1$ be the constant in \eqref{eq:delta-geq-ass}.
Since $\lim_{m\to\infty}E(\bs \phi_m) = nM$,
there exists a sequence $\wt T_m \in [0, T_m]$
so that $\bfd(\bs\phi_m(t)) \geq C_1(E(\bs \phi_m) - nM)$ for all $m$
and $t \in [0, \wt T_m]$, but still $\lim_{m\to\infty}\bfd(\bs\phi_m(\wt T_m)) = 0$.
This condition allows to apply Lemma~\ref{lem:basic-mod} even though we do not assume $E(\bs \phi_m) = nM$.
Below, we write $T_m$ instead of $\wt T_m$.

Let $y_{m, k} \coloneqq a_{m, k+1} - a_{m, k}$ and $\wt\rho_m(t) \coloneqq \rho(\vec a_m(t))$,
as in Sections~\ref{sec:mod} and \ref{sec:n-body}.
Let $\beta_m: [0, T_m] \to \bR$ be the function defined as in the proof of Lemma~\ref{lem:ymin-asym}, corresponding to the solution $\bs \phi_m$. By the same argument, $\beta(t)$
is decreasing for $t \in [0, T_m]$, hence
\begin{equation}
\beta(t) \geq -\epsilon_m, \qquad\text{for all }t \in [0, T_m],
\end{equation}
where $\epsilon_m \coloneqq \beta(T_m) \to 0$ as $m \to \infty$.

Fix $t > 0$. For $m$ large enough and all $\tau \in [0, t]$,
we have $|{\wt\rho_m}\,\!\!\!\!'\,(\tau)| \lesssim \wt\rho_m(\tau)^\frac 32$, hence $\big|\dd t(\wt\rho_m(\tau)^{-\frac 12})\big| \lesssim 1$. Since $\wt\rho_m(0) \gtrsim \eta$, we obtain $\wt\rho_m(\tau)^{-\frac 12} \lesssim \eta^{-\frac 12} + \tau$, hence
\begin{equation}
\wt\rho_m(\tau) \gtrsim \frac{1}{\eta^{-1} + t^2}\qquad\text{for all }m\text{ large enough and }\tau \in [0, t].
\end{equation}
The proof of \eqref{eq:rho'-diff-ineq} yields
\begin{equation}
\dd t\big((\wt\rho_m(\tau))^{-\frac 12}\big) - \frac{c_0}{2} \gtrsim {-}\epsilon_m\wt\rho_m(\tau)^{-\frac 32} \gtrsim {-}\epsilon_m (\eta^{-\frac 32} + t^3).
\end{equation}
Integrating in time, we deduce that there is a constant $C_2$ such that
\begin{equation}
\label{eq:profiles-uni-d}
\limsup_{m\to\infty} \wt\rho_m(t) \leq \frac{C_2}{\eta^{-1} + t^2}.
\end{equation}

Upon extracting a subsequence, we can assume that $\lim_{m \to \infty} (a_{m, k+1}(0) - a_{m, k}(0)) \in \bR \cup \{\infty\}$ exists for all $k$.
Observe that at least one of these limits has to be finite, due to the bound \eqref{eq:g-coer} and the fact that $\bfd(\bs \phi_m(0)) = \eta$ for all $m$.
We set $n^{(0)} \coloneqq 0$ and define inductively
\begin{equation}
n^{(j)} \coloneqq \max \big\{k : \lim_{m\to \infty} a_{m, k}(0) - a_{m, n^{(j-1)}+1}(0) < \infty \big\},
\end{equation}
until we reach $n^{(\ell)} = n$ for some $\ell$. We set $X_m^{(j)} \coloneqq a_{m, n^{(j)}}(0)$,
so that $\lim_{m \to \infty} \big(a_{m, n^{(j-1)} + k}(0) - X_m^{(j)}\big) \in \bR$
for all $k \in \big\{1, \ldots, n^{(j)} - n^{(j-1)}\big\}$.
By the definition of $n^{(j)}$, conclusion (ii) of the theorem holds.

Again extracting a subsequence and applying the Arzel\`a-Ascoli theorem,
we can assume that $a_{m, n^{(j-1)} + k}(t) - X_m^{(j)}$
and $v_{m, n^{(j-1)} + k}(t)$ converge uniformly
on every bounded time interval. Hence, we can define
\begin{gather}
\vec a\,^{(j)}: [0, \infty) \to \bR^{n^{(j)} - n^{(j-1)}}, \qquad a\,^{(j)}_k(t) \coloneqq \lim_{m \to \infty} \big(a_{m, n^{(j-1)} + k}(t) - X_m^{(j)}\big).
\end{gather}
and
\begin{gather}
\vec v\,^{(j)}: [0, \infty) \to \bR^{n^{(j)} - n^{(j-1)}}, \qquad v\,^{(j)}_k(t) \coloneqq \lim_{m \to \infty} v_{m, n^{(j-1)} + k}(t).
\end{gather}
Note that $a\,^{(j)}_{n^{(j)} - n^{(j-1)}}(0) = 0$ for all $j$.
From the bound \eqref{eq:profiles-uni-d} we get $\rho\big(\vec a\,^{(j)}\big) \lesssim (\eta^{-1} + t^2)^{-1}$.

For $0 \leq j \leq \ell$, let $\iota^{(j)} \coloneqq (-1)^{n^{(j)}}$, $\bs\iota^{(j)} \coloneqq (\iota^{(j)}, 0)$ and for $1 \leq j < \ell$, let $I_m^{(j)} \coloneqq \big[\frac 89 X_m^{(j)} + \frac 19 X_m^{(j+1)},
\frac 19 X_m^{(j)} + \frac 89 X_m^{(j+1)}\big]$.
For fixed $t > 0$, we have $\lim_{m\to \infty}\|H(\vec a_m(t), \vec v_m(t)) - \iota^{(j)}\|_{H^1(I_m^{(j)})} = 0$,
thus \eqref{eq:profiles-uni-d} yields
\begin{equation}
\label{eq:Im-small-en}
\limsup_{m\to \infty} \|\bs\phi_m(t) - \bs\iota^{(j)}\|_{\cE(I_m^{(j)})}^2 \lesssim (\eta^{-1} + t^2)^{-1}.
\end{equation}
Let $J_m^{(j)} \coloneqq \big[\frac 56 X_m^{(j)} + \frac 16 X_m^{(j+1)},
\frac 16 X_m^{(j)} + \frac 56 X_m^{(j+1)}\big]$.
By \eqref{eq:Im-small-en} and Lemma~\ref{lem:loc-vac-stab}, we obtain
\begin{equation}
\label{eq:Jm-small-en}
\limsup_{m\to \infty} \|\bs\phi_m(0) - \bs\iota^{(j)}\|_{\cE(J_m^{(j)})}^2 \lesssim (\eta^{-1} + t^2)^{-1}.
\end{equation}
Letting $t \to \infty$, we get
\begin{equation}
\label{eq:Jm-small-en-0}
\limsup_{m\to \infty} \|\bs\phi_m(0) - \bs\iota^{(j)}\|_{\cE(J_m^{(j)})}^2 = 0.
\end{equation}

Next, we divide the initial data $\bs\phi_m(0)$ into $\ell$ regions in the following way. We set
\begin{equation}
\begin{aligned}
\chi^{(1)}_m(x) &\coloneqq \chi\Big(\frac{x - X^{(1)}_m}{X^{(2)}_m - X^{(1)}_m}\Big), \\
\chi^{(j)}_m(x) &\coloneqq \chi\Big(\frac{x - X^{(j)}_m}{X^{(j+1)}_m - X^{(j)}_m}\Big)
- \chi\Big(\frac{x - X^{(j-1)}_m}{X^{(j)}_m - X^{(j-1)}_m}\Big),\qquad\text{for }j \in \{2, \ldots, \ell-1\}, \\
\chi^{(\ell)}_m(x) &\coloneqq 1 - \chi\Big(\frac{x - X^{(\ell)}_m}{X^{(\ell)} - X^{(\ell-1)}_m}\Big)
\end{aligned}
\end{equation}
(in the case $\ell = 1$, we set $\chi^{(1)}_m(x) \coloneqq 1$).
We now define
\begin{align}
\label{eq:phim0j-def}
\bs\phi^{(j)}_{m, 0} &\coloneqq \bs\iota^{(j-1)}\sum_{i = 1}^{j-1}\chi^{(i)}_m + \chi^{(j)}_m \bs \phi_m(0)
+ \bs\iota^{(j)}\sum_{i=j+1}^{\ell}\chi^{(i)}_m, &\text{for }j \in \{1, \ldots, \ell\}
\end{align}
(note that the two sums above are telescopic sums).
From \eqref{eq:Jm-small-en-0}, we deduce
\begin{equation}
\label{eq:phim0j-out}
\begin{gathered}
\lim_{m\to\infty} \|\bs\phi_{m, 0}^{(j)} - \bs\iota^{(j-1)}\|_{\cE(-\infty,\frac 56 X_m^{(j)}+ \frac 16 X_m^{(j-1)})} = 0, \\
\lim_{m\to\infty} \|\bs\phi_{m, 0}^{(j)} - \bs\iota^{(j)}\|_{\cE(\frac 56 X_m^{(j)} + \frac 16 X_m^{(j+1)}, \infty)} = 0.
\end{gathered}
\end{equation}

Let $\wt {\bs \phi}_m^{(j)}$ be the solution of \eqref{eq:csf} for the initial data
$\wt{\bs \phi}_m^{(j)}(0) = \bs\phi_{m, 0}^{(j)}$.
Let
\begin{equation}
\begin{aligned}
K_m^{(1)} &\coloneqq \big({-}\infty, \frac 79 X_m^{(1)} + \frac 29 X_m^{(2)}\big], \\
K_m^{(j)} &\coloneqq \big[\frac 79 X_m^{(j)} + \frac 29 X_m^{(j-1)}, \frac 79 X_m^{(j)} + \frac 29 X_m^{(j+1)}\big]\qquad\text{for }j \in \{2, \ldots, \ell -1\}, \\
K_m^{(\ell)} &\coloneqq \big[\frac 79 X_m^{(\ell)} + \frac 29 X_m^{(\ell-1)}, \infty\big).
\end{aligned}
\end{equation}
By the finite speed of propagation, see Proposition~\ref{prop:cauchy} \ref{it:cauchy-speed},
and the definition of $\bs \phi_m^{(j)}$, we have
\begin{equation}
\wt{\bs \phi}_m^{(j)}(t)\vert_{K_m^{(j)}} = \bs\phi_m(t)\vert_{K_m^{(j)}}\qquad
\text{for any given }t \geq 0\text{ and }m\text{ large enough},
\end{equation}
hence \eqref{eq:profiles-uni-d} and \eqref{eq:g-coer-stat} imply
\begin{equation}
\label{eq:phimjt-in}
\lim_{m\to\infty} \|\wt{\bs\phi}_{m}^{(j)}(t) - \iota^{(j-1)}\bs H(\vec a\,^{(j)}(t), \vec v\,^{(j)}(t); \cdot - X_m^{(j)})\|_{\cE(K_m^{(j)})} \lesssim \frac{1}{\eta^{-1} + t^2}.
\end{equation}
For any given $t \geq 0$, Lemma~\ref{lem:loc-vac-stab} and \eqref{eq:phim0j-out} yield
\begin{equation}
\label{eq:phimjt-out}
\begin{gathered}
\lim_{m\to\infty} \|\wt{\bs\phi}_{m}^{(j)}(t) - \bs\iota^{(j-1)}\|_{\cE(-\infty,\frac 79 X_m^{(j)}+ \frac 29 X_m^{(j-1)})} = 0, \\
\lim_{m\to\infty} \|\wt{\bs\phi}_{m}^{(j)}(t) - \bs\iota^{(j)}\|_{\cE(\frac 79 X_m^{(j)} + \frac 29 X_m^{(j+1)}, \infty)} = 0.
\end{gathered}
\end{equation}
Invoking \eqref{eq:phimjt-in}, we obtain that for all $t \geq 0$
\begin{equation}
\label{eq:profiles-wtphim-bound}
\limsup_{m \to \infty}\big\|\iota^{(j-1)}\wt{\bs\phi}_m^{(j)}\big(t, \cdot + X_m^{(j)}\big) - \bs H\big(\vec a\,^{(j)}(t), \vec v\,^{(j)}(t)\big) \big\|_\cE^2 \lesssim \frac{1}{\eta^{-1} + t^2}.
\end{equation}

After extraction of a subsequence, we can assume that, for all $j \in \{1, \ldots, \ell\}$,
\begin{equation}
\label{eq:P0j-def}
\iota^{(j-1)}\bs \phi_{m, 0}^{(j)}\big(\cdot + X_m^{(j)}\big) \wto \bs P_0^{(j)} \in \cE_{1, \iota^{(j-1)}\iota^{(j)}}.
\end{equation}
%
Let $\bs P^{(j)}$ be the solution of \eqref{eq:csf} such that $\bs P^{(j)}(0) = \bs P^{(j)}_0$.
By Proposition~\ref{prop:cauchy}
\ref{it:cauchy-weak} and \eqref{eq:profiles-wtphim-bound}, $\bs P^{(j)}$ is a kink cluster,
so conclusion (i) of the theorem holds.
In particular, $E(\bs P^{(j)}) = (n^{(j)} - n^{(j-1)})M$.
From \eqref{eq:Jm-small-en} and \eqref{eq:phim0j-def}, we have
\begin{equation}
\lim_{m\to\infty}\sum_{j=1}^\ell E(\bs\phi_{m, 0}^{(j)}) = \lim_{m\to\infty}E(\bs\phi_m) =
nM = \sum_{j=1}^\ell E(\bs P^{(j)}).
\end{equation}
By the last part of Lemma~\ref{lem:fatou}, we thus have strong convergence in \eqref{eq:P0j-def}
for all $j \in \{1, \ldots, \ell\}$.
Elementary algebra yields
\begin{equation}
\bs\phi_m(0) = \bs 1 + \sum_{j=1}^\ell \big(\bs\phi_{m, 0}^{(j)} - \bs \iota^{(j-1)}\big),
\end{equation}
implying conclusion (iii) of the theorem.
\end{proof}

\section{Well-posedness of the linearized problem}
\label{sec:cauchy-fin}
If $I \subset \bR$ is a time interval and $X$ a normed space, we denote by
$C(I; X)$ the space of continuous functions $I \to X$,
and $C_0(I; X)$ its (non-closed) subspace of functions with compact support $J \subset I$.
If $I$ is open and $k \in \{0, 1, \ldots, \infty\}$, we denote by
$C^k(I; X)$ the space of $k$ times continuously differentiable functions $I \to X$,
and $C_0^k(I; X)$ its subspace of functions with compact support $J \subset I$.
If $X$ is a Banach space,
we denote by $L^p(I; X)$ the usual Lebesgue space of $X$-valued functions on $I$,
see \cite[III.3]{dunford-schwartz}.
If $1 \leq p < \infty$, then $C_0(I; X)$ is dense in $L^p(I; X)$, see \cite[IV.8.19]{dunford-schwartz}.
It follows that if $Y \subset X$ is a dense subspace, then $C_0(I; Y)$ is dense in $L^p(I; X)$,
since any element of $C_0(I; X)$ can be approximated by $Y$-valued piecewise affine functions.

We denote the free energy
\begin{equation}
E_0(\bs h_0) \coloneqq  \int_{-\infty}^\infty \Big(\frac 12\dot h_0(x)^2 + \frac 12(\partial_x h_0(x))^2
+ \frac 12 h_0(x)^2 \Big)\ud x.
\end{equation}
The Klein-Gordon equation $\partial_t^2 h(t, x)
 = \partial_x^2 h(t, x) - h(t, x)$
 can be rewritten in the Hamiltonian form as
 \begin{equation}
 \partial_t \bs h(t) = \bs J\vD E_0(\bs h(t)) = \bs J\vD^2 E_0(0)\bs h(t).
 \end{equation}
 We denote the free propagator by $\bs S_0(t)$, which is given by 
 \EQ{
 \bs S_0(t) = e^{t J \vD E_0} =  \pmat{ \cos (t \omega) &  \omega^{-1} \sin (t \omega) \\ -\omega \sin (t \omega)& \cos (t \omega)}
 }
 where we have used the short-hand notation $\om = \sqrt{1 + | \p_x^2|}$ above. 
 \begin{definition}
 
 Let $I \subset \bR$ be an open interval, $T_0 \in I$,
 $\bs h_0 \in \cE$ and $\bs f \in L^1(I; \cE)$.
 We say that $\bs h \in C(I; \cE)$ is the solution of the Cauchy problem
 \begin{align}
 \label{eq:kg-free-lin-eq}
 &\partial_t \bs h(t) = \bs J\vD E_0(\bs h(t)) + \bs f(t), \\
 \label{eq:kg-free-lin-init}
 &\bs h(T_0) = \bs h_0
 \end{align}
 if $\bs h$ is given by the Duhamel formula 
 \EQ{ \label{eq:duhamel} 
 \bs h(t) = \bs S_0(t) \bs h_0 + \int_0^t \bs S_0(t-s) \bs f(s) \, \ud s. 
 }
 \end{definition}
 \begin{remark}
 It can be proved that the definition above is equivalent
 to the notion of a weak solution, see \cite{Friedrichs54}
 for more general results, but 
 we will not need this fact.
 \end{remark}

\begin{proposition}
\label{prop:kg-free-lin}
Let $I \subset \bR$ be an open interval, $T_0 \in I$,
$\bs h_0 \in \cE$, $\bs f \in L^1(I; \cE)$
and let $\bs h \in C(I; \cE)$ be the solution of
\eqref{eq:kg-free-lin-eq}--\eqref{eq:kg-free-lin-init}.
For all $t\in I$ the following \emph{energy estimate} holds:
\begin{equation}
\label{eq:kg-free-lin-en-est}
\|\bs h(t)\|_\cE \leq \|\bs h_0\|_\cE + \bigg|\int_{T_0}^t \|\bs f(s)\|_{\cE}\ud s\bigg|.
\end{equation}
\end{proposition}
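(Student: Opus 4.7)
The plan is to reduce the estimate to the unitarity of the free propagator $\bs S_0(t)$ on $\cE$ combined with Minkowski's inequality applied to the Duhamel formula~\eqref{eq:duhamel}.

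First I would establish that $\bs S_0(t)$ is an isometry of $\cE$ for every $t \in \bR$, i.e.\
\begin{equation}
\|\bs S_0(t)\bs h_0\|_\cE = \|\bs h_0\|_\cE \qquad \text{for all } \bs h_0 \in \cE.
\end{equation}
The cleanest route is to pass to Fourier variables. Writing $\omega(\xi) = \sqrt{1 + \xi^2}$ and denoting by $\widehat{h}$ the spatial Fourier transform, if $\bs h_0 = (h_0, \dot h_0)$ then
\begin{equation}
\widehat{\bs S_0(t)\bs h_0}(\xi) = \Big(\cos(t\omega(\xi))\widehat{h_0}(\xi) + \omega(\xi)^{-1}\sin(t\omega(\xi))\widehat{\dot h_0}(\xi),\ -\omega(\xi)\sin(t\omega(\xi))\widehat{h_0}(\xi) + \cos(t\omega(\xi))\widehat{\dot h_0}(\xi)\Big),
\end{equation}
and a direct computation gives that $|\omega(\xi)\widehat{[\bs S_0(t)\bs h_0]_1}(\xi)|^2 + |\widehat{[\bs S_0(t)\bs h_0]_2}(\xi)|^2 = \omega(\xi)^2|\widehat{h_0}(\xi)|^2 + |\widehat{\dot h_0}(\xi)|^2$ pointwise in $\xi$. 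Integrating in $\xi$ and using Plancherel yields $\|\bs S_0(t)\bs h_0\|_\cE^2 = \|\bs h_0\|_\cE^2$. Equivalently, one may observe that $2E_0$ is the quadratic form of a positive self-adjoint operator on $L^2\times L^2$ whose induced inner product coincides (up to a constant) with $\la \cdot, \cdot\ra_\cE$, and that $\bs J \vD^2 E_0(0)$ is skew-adjoint with respect to this inner product, whence $\bs S_0(t) = \exp(t\bs J\vD^2 E_0(0))$ is unitary.

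With this in hand the proof is immediate: applying the triangle inequality to~\eqref{eq:duhamel} and then the isometry property,
\begin{equation}
\|\bs h(t)\|_\cE \leq \|\bs S_0(t-T_0)\bs h_0\|_\cE + \bigg|\int_{T_0}^t \|\bs S_0(t-s)\bs f(s)\|_\cE\,\ud s\bigg| = \|\bs h_0\|_\cE + \bigg|\int_{T_0}^t \|\bs f(s)\|_\cE\,\ud s\bigg|,
\end{equation}
which is~\eqref{eq:kg-free-lin-en-est}. The interchange of norm and Bochner integral is justified by Minkowski's inequality for vector-valued integrals, using $\bs f \in L^1(I;\cE)$ and continuity of $s \mapsto \bs S_0(t-s)\bs f(s)$ in $\cE$ for each fixed $t$. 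There is no real obstacle here; the only substantive point is the isometry of $\bs S_0(t)$, and even that is essentially a conservation-of-energy statement for the free Klein--Gordon equation, which is standard.
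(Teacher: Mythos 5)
Your proof is correct and follows essentially the same route as the paper, whose proof consists of the single sentence ``This follows from the Duhamel formula''; you have simply supplied the details (the isometry of $\bs S_0(t)$ on $\cE$, verified via Plancherel, plus Minkowski's inequality) that make that one-liner rigorous. The Fourier-side computation is right: the propagator acts as a rotation on the vector $(\omega\widehat{h_0},\widehat{\dot h_0})$, so the $\cE$-norm is conserved.
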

\begin{proof}
This follows from the Duhamel formula \eqref{eq:duhamel}.
\end{proof}

In the remaining part of this section $I$ will denote an open time interval and $(\vec a, \vec v) \in C^1(I; \bR^n \times \bR^n)$ are fixed
modulation parameters such that
\EQ{
\rho(\vec a(t), \vec v(t)) \le \eta_0 \quad \forall \, t \in I, 
}
where $\eta_0$ is as in Proposition~\ref{prop:coer}. 

 \begin{definition}
 Let $I \subset \bR$ be an open interval, $T_0 \in I$,
 $\bs h_0 \in \cE$ and $\bs f \in L^1(I; \cE)$.
 We say that $\bs h \in C(I; \cE)$ is the solution of the Cauchy problem
 \begin{align}
 \label{eq:kg-lin-eq}
 &\partial_t \bs h(t) = \bs J\vD^2 E(\bs H(\vec a(t), \vec v(t)))\bs h(t) + \bs f(t), \\
 \label{eq:kg-lin-init}
 &\bs h(T_0) = \bs h_0
 \end{align}
 if $\bs h$ is the solution of \eqref{eq:kg-free-lin-eq}--\eqref{eq:kg-free-lin-init} with $\bs f$ replaced by
 \begin{equation}
 \wt{\bs f}(t) \coloneqq  \bs J(\vD^2 E(\bs H(\vec a(t), \vec v(t))) - \vD^2 E_0(0))\bs h(t) + \bs f(t).
 \end{equation}
 \end{definition}
\begin{lemma}
\label{lem:kg-lin}
Let $I \subset \bR$ be an open interval and $T_0 \in I$.
For every $\bs h_0 \in \cE$ and $\bs f \in L^1(I; \cE)$
the problem \eqref{eq:kg-lin-eq}--\eqref{eq:kg-lin-init}
has a unique solution $\bs h \in C(I; \cE)$.


If $\bs \xi \in C^1(I; L^2\times H^1)$, then for all $t \in I$
\begin{equation}
\label{eq:dt-prod}
\dd t\la \bs \xi(t), \bs h(t)\ra =
\la \partial_t \bs \xi(t) - \vD^2 E(\bs H(\vec a(t), \vec v(t)))\bs J\bs \xi(t), \bs h(t)\ra + \la \bs \xi(t), \bs f(t)\ra.
\end{equation}
\end{lemma}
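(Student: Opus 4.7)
The plan is to recast the problem as a Volterra integral equation, solve it by a standard fixed-point argument using the energy estimate of Proposition~\ref{prop:kg-free-lin}, and finally obtain the duality identity \eqref{eq:dt-prod} by a density argument.

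\textbf{Step 1 (Existence and uniqueness).}
Unfolding the definitions, $\bs h \in C(I; \cE)$ solves \eqref{eq:kg-lin-eq}--\eqref{eq:kg-lin-init} if and only if
\[
\bs h(t) = \bs S_0(t - T_0)\bs h_0 + \int_{T_0}^t \bs S_0(t-s)\big(V(s)\bs h(s) + \bs f(s)\big)\,\ud s,
\]
where $V(s) \coloneqq \bs J(\vD^2 E(\bs H(\vec a(s), \vec v(s))) - \vD^2 E_0(0))$. A direct inspection shows that $V(s)\bs h = (0, -(U''(H(\vec a(s),\vec v(s))) - 1)h)$, so that $V(s)$ is a bounded operator on $\cE$ with norm $\|V(s)\|_{\cE \to \cE} \leq \|U''(H(\vec a(s), \vec v(s)))-1\|_{L^\infty} \leq C$, uniformly in $s \in I$ (because $\rho(\vec a(s), \vec v(s)) \leq \eta_0$). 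The unitarity of $\bs S_0$ on $\cE$ then makes the map $\bs h \mapsto \bs S_0(\cdot - T_0)\bs h_0 + \int_{T_0}^\cdot \bs S_0(\cdot - s)(V(s)\bs h(s) + \bs f(s))\,\ud s$ a contraction on $C([T_0-\tau, T_0+\tau]; \cE)$ once $C\tau < 1$, yielding a unique local solution, which extends to all of $I$ by iteration.

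\textbf{Step 2 (Global bound).}
Proposition~\ref{prop:kg-free-lin} applied with forcing $V(s)\bs h(s) + \bs f(s)$ gives
\[
\|\bs h(t)\|_\cE \leq \|\bs h_0\|_\cE + C\bigg|\int_{T_0}^t \|\bs h(s)\|_\cE \,\ud s\bigg| + \bigg|\int_{T_0}^t \|\bs f(s)\|_\cE \,\ud s\bigg|,
\]
and Gronwall's lemma yields $\|\bs h(t)\|_\cE \leq \eee^{C|t-T_0|}(\|\bs h_0\|_\cE + \|\bs f\|_{L^1(I;\cE)})$, confirming that no blow-up occurs on compact subintervals of $I$.

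\textbf{Step 3 (Duality identity).}
I would first prove \eqref{eq:dt-prod} under the higher-regularity assumptions $\bs h_0 \in H^2\times H^1$ and $\bs f \in C_0^1(I; H^1\times L^2)$. A parallel energy estimate at the $H^1\times L^2$ level (obtained by differentiating the equation in $x$) then shows $\bs h \in C(I; H^2\times H^1)\cap C^1(I; H^1\times L^2)$, so \eqref{eq:kg-lin-eq} holds in the strong sense. Then
\[
\dd t \la \bs \xi(t), \bs h(t)\ra = \la \partial_t \bs \xi(t), \bs h(t)\ra + \la \bs \xi(t), \bs J \vD^2 E(\bs H(\vec a(t), \vec v(t)))\bs h(t)\ra + \la \bs \xi(t), \bs f(t)\ra,
\]
and the identity follows from the self-adjointness of $\vD^2 E(\bs H(\vec a, \vec v))$ and the skew-adjointness of $\bs J$, which give $\la \bs \xi, \bs J \vD^2 E(\bs H)\bs h\ra = -\la \vD^2 E(\bs H)\bs J \bs \xi, \bs h\ra$.

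For general data $(\bs h_0, \bs f) \in \cE \times L^1(I; \cE)$ I would approximate by $(\bs h_0^{(m)}, \bs f^{(m)})$ with the above regularity, converging in $\cE \times L^1(J; \cE)$ for every compact $J \subset I$. The Gronwall bound of Step~2 applied to differences gives $\bs h^{(m)} \to \bs h$ uniformly on $J$ in $\cE$. Every term in \eqref{eq:dt-prod} is jointly continuous under this convergence: crucially, the assumption $\bs \xi \in C^1(I; L^2 \times H^1)$ guarantees that the operator $\vD^2 E(\bs H(\vec a(t),\vec v(t)))\bs J\bs \xi(t)$ lies in $L^2\times H^{-1}$ (the unbounded $\partial_x^2 \dot\xi$-term is paired with $h \in H^1$ via integration by parts), so $\la \vD^2 E(\bs H)\bs J\bs \xi, \bs h\ra$ is a continuous linear functional of $\bs h \in \cE$ uniformly in $t$ on compact subintervals. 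Integrating \eqref{eq:dt-prod} over $[s,t]$, passing to the limit $m \to \infty$, and differentiating in $t$ recovers the identity for the original solution.

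\textbf{Main obstacle.}
The only subtle point is verifying in Step~3 that every term in \eqref{eq:dt-prod} defines a continuous bilinear form on $C^1(I; L^2\times H^1) \times C(I; \cE)$; once the pairings are justified via the integration-by-parts computation sketched above, the density step is routine.
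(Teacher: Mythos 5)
Your proposal is correct and follows essentially the same route as the paper: a fixed-point argument built on the free Klein--Gordon energy estimate (the paper runs a global Picard iteration with factorial gain where you run a local contraction and extend, but these are the same idea), Gronwall for uniqueness, and a regularize-then-pass-to-the-limit argument for \eqref{eq:dt-prod}, which the paper only sketches as "the Duhamel formula together with an approximation argument." The one cosmetic slip is that $\vD^2 E(\bs H)\bs J\bs\xi$ lands in $H^{-1}\times L^2$ rather than $L^2\times H^{-1}$, but your point about pairing the unbounded term with $h\in H^1$ is exactly right.
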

\begin{proof}
We use the Picard iteration scheme.
We set $\bs h^{(0)}(t) \coloneqq  0$ for all $t \in I$,
and define by induction $\bs h^{(n)}$ for all $n \geq 1$
as the solution of the equation
\begin{equation}
\partial_t \bs h^{(n)}(t) = \bs J\vD^2 E_0(0)\bs h^{(n)}(t)
+ \bs J\big( \vD^2 E(\bs H(\vec a(t), \vec v(t))) - \vD^2 E_0(0) \big)\bs h^{(n-1)}(t) + \bs f(t)
\end{equation}
with the initial data $\bs h^{(n)}(T_0) = \bs h_0$,
given by Proposition~\ref{prop:kg-free-lin}.

Observing that $\bs J\big( \vD^2 E(\bs H(\vec a(t), \vec v(t))) - \vD^2 E_0(0) \big)$ is a uniformly (with respect to $t \in I$) bounded operator $\cE \to \cE$
(in fact, it is even regularizing, with a gain of one derivative)
and applying \eqref{eq:kg-free-lin-en-est}, we obtain for all $t \in I$
and $n \in \{1, \ldots\}$
\begin{equation}
\|\bs h^{(n+1)}(t) - \bs h^{(n)}(t)\|_\cE
\leq C\bigg|\int_{T_0}^t \|\bs h^{(n)}(s) - \bs h^{(n-1)}(s)\|_\cE \ud s\bigg|.
\end{equation}
Hence, by straightforward induction,
\begin{equation}
\|\bs h^{(n+1)}(t) - \bs h^{(n)}(t)\|_\cE
\leq \frac{C^n|t - T_0|^{n-1}}{(n-1)!}\bigg|\int_{T_0}^t \|\bs h^{(1)}(s)\|_\cE \ud s\bigg|,
\end{equation}
which implies that $\bs h^{(n)}$ converges in $L_\tx{loc}^\infty(I; \cE)$
to some $\bs h: I \to \cE$.
By the continuity statement in Proposition~\ref{prop:kg-free-lin},
we obtain that $\bs h$ is a solution of~\eqref{eq:kg-lin-eq}--\eqref{eq:kg-lin-init}. 

If $\wt{\bs h}$ is another solution, then \eqref{eq:kg-free-lin-en-est} yields
\begin{equation}
\|\wt{\bs h}(t) - \bs h(t)\|_\cE \leq C\bigg|\int_{T_0}^t \|\wt{\bs h}(s) - \bs h(s)\|_\cE \ud s\bigg|,
\end{equation}
thus $\wt{\bs h} = \bs h$ by the Gronwall's inequality. The identity~\eqref{eq:dt-prod} follows using the Duhamel formula~\eqref{eq:duhamel} for $\bs h(t)$ together with an approximation argument. 
\end{proof}
\begin{remark}
In order for \eqref{eq:dt-prod} to hold, it would be sufficient
to assume that $\bs \xi \in C(I; L^2 \times H^1)$
and $\partial_t \bs \xi \in C(I; H^{-1}\times L^2)$.
\end{remark}

\begin{definition}
Let $\wt \cE \in \{\cE_{-,-}, \cE_{-,+}, \cE_{+,-}, \cE_{+,+}\}$
and $\bs \phi_0 \in \wt \cE$.
We say that $\bs \phi: I \to \wt \cE$ is a solution of \eqref{eq:csf} if $\bs h(t) \coloneqq  \bs \phi(t) - \bs \phi_0$
is a solution of \eqref{eq:kg-free-lin-eq}--\eqref{eq:kg-free-lin-init} with $\bs h_0 = 0$ and
\begin{equation}
\bs f(t) \coloneqq  \bs J\vD E( \bs \phi_0 + \bs h(t)) - \bs J\vD E_0(\bs h(t))
\end{equation}
\end{definition}
\begin{proposition}
For all $\bs \phi_0 \in \wt \cE$ the problem
\eqref{eq:csf}
has a unique solution.
It is global in time and depends continuously on the initial data.
\end{proposition}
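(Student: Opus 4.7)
The plan is to reduce the problem, following the discussion preceding Proposition~\ref{prop:cauchy}, to an $\cE$-valued semilinear Klein--Gordon equation to which the framework of Section~\ref{sec:cauchy-fin} applies. Fix $\bs\xi = (\xi, 0) \in \wt\cE$ with $\xi \in C^\infty$ and $\partial_x \xi$ of compact support, and write $\bs\phi = \bs\xi + \bs\psi$ with $\bs\psi \in \cE$, so that $\bs\psi_0 \coloneqq \bs\phi_0 - \bs\xi \in \cE$ is well-defined. The equation \eqref{eq:csf} for $\bs\phi$ is then equivalent to
\begin{equation*}
\partial_t \bs\psi(t) = \bs J\vD E_0(\bs\psi(t)) + \bs F[\bs\psi](t),
\end{equation*}
with source
\begin{equation*}
\bs F[\bs\psi] = \bigl(0,\, (\partial_x^2 \xi - U'(\xi)) - (U'(\xi + \psi) - U'(\xi) - \psi)\bigr),
\end{equation*}
and the key observation is that $\bs F$ maps $\cE$ continuously into $\cE$ and is Lipschitz on bounded sets. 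Indeed, $\partial_x^2 \xi - U'(\xi)$ is smooth and compactly supported, while Taylor expansion of $U'$ gives $U'(\xi + \psi) - U'(\xi) - \psi = (U''(\xi) - 1)\psi + O(\psi^2)$, where the coefficient $U''(\xi) - 1$ is compactly supported (since $\xi = \pm 1$ outside a compact set and $U''(\pm 1) = 1$); combined with the one-dimensional embedding $H^1 \hookrightarrow L^\infty$, this yields the required Lipschitz estimate.

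With this in hand I would set up a Picard iteration for the fixed-point equation
\begin{equation*}
\bs\psi(t) = \bs S_0(t - T_0)\bs\psi_0 + \int_{T_0}^t \bs S_0(t-s)\bs F[\bs\psi](s)\,\ud s,
\end{equation*}
on $C([T_0 - \tau, T_0 + \tau]; \cE)$ for $\tau$ sufficiently small depending on $\|\bs\psi_0\|_\cE$. The energy estimate \eqref{eq:kg-free-lin-en-est} of Proposition~\ref{prop:kg-free-lin} turns the Lipschitz property of $\bs F$ into a strict contraction on a small ball, producing local existence and uniqueness of $\bs\psi \in C(I_\tau; \cE)$, and hence of $\bs h = \bs\psi - \bs\psi_0$. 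Applying \eqref{eq:kg-free-lin-en-est} to the difference of two fixed points with neighbouring initial data delivers continuous dependence, while a Gronwall argument on the difference propagates uniqueness throughout any common interval of existence.

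Global existence then follows by the standard blow-up alternative together with energy conservation. For smooth data the identity $\dd t E(\bs\phi(t)) = 0$ holds pointwise by direct calculation, and a density argument combined with the already-proved continuous dependence extends this to all $\bs\phi_0 \in \wt\cE$. Since $U \geq 0$, conservation of $E$ gives uniform-in-time bounds on $\|\dot\phi(t)\|_{L^2}$ and $\|\partial_x \phi(t)\|_{L^2}$, and hence on $\|\dot h(t)\|_{L^2}$ and $\|\partial_x h(t)\|_{L^2}$; the identity $h(t) = \int_{T_0}^t \dot\phi(s)\,\ud s$ (at the level of the position component) further bounds $\|h(t)\|_{L^2}$ linearly in $|t - T_0|$. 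Thus $\|\bs h(t)\|_\cE$ cannot blow up in finite time and the solution extends to all of $\bR$.

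The main technical hurdle is precisely the reduction to the $\cE$-valued formulation just described: because $\dot\phi_0$ and $\partial_x^2 \phi_0$ lie only in $L^2$ and $H^{-1}$ respectively, the naive source $\bs J\vD E(\bs\phi_0 + \bs h) - \bs J\vD E_0(\bs h)$ does not take values in $\cE$, and the Duhamel formula of Section~\ref{sec:cauchy-fin} cannot be applied directly. Once the equation is re-centered at the smooth reference state $\bs\xi$ instead of at $\bs\phi_0$, the remaining steps are a classical fixed-point scheme combined with energy conservation.
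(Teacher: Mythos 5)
Your argument is correct and is essentially the paper's: the authors dispose of this proposition with the single line ``this follows from the usual Picard iteration scheme,'' and your reduction to a semilinear Klein--Gordon equation centered at a smooth reference state $\bs\xi$, followed by a contraction argument via the energy estimate and globalization via conservation of $E$, is precisely the device the paper itself sets up around \eqref{eq:cauchy-psi} in Section~3. Your observation that the source in the Definition preceding the proposition, being centered at $\bs\phi_0$ rather than at a smooth $\bs\xi$, only takes values in $L^2\times H^{-1}$ identifies a genuine imprecision in the paper's formulation, and re-centering at $\bs\xi$ is the standard (and the authors' own, elsewhere) fix.
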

\begin{proof}
This follows from the usual Picard iteration scheme.
\end{proof}


Recall the matrix $\calM(\vec a, \vec v)$ defined in~\eqref{eq:Mav-def} and denote by $\calM(\vec a, \vec v)^{\intercal}$ its transpose. We recall again for convenience the unconventional notation 
\EQ{
\pmat{\vec \mu \\ \vec \lam} \coloneqq  \pmat{ \mu_1 \\ \lam_1 \\ \dots \\ \mu_n \\  \lam_n}.
}
\begin{lemma}
\label{lem:kg-lin-orth}
Let $I \subset \bR$ be an open interval and $T_0 \in I$.
For every $\bs h_0 \in \cE$ such that
\EQ{ \label{eq:orth-in-data} 
\la \bs\alpha_k(T_0), \bs h_0\ra = 
\la \bs\beta_k(T_0), \bs h_0\ra = 0\qquad\text{for all }k \in \{1, \ldots, n\}
}
and $\bs f \in L^1(I; \cE)$
there exist unique $\vec \lambda, \vec \mu \in L^1(I; \bR^n)$
such that the solution $\bs h \in C(I; \cE)$ of the problem
\eqref{eq:kg-lin-eq}--\eqref{eq:kg-lin-init}
with $\bs f(t)$ replaced by
\EQ{ \label{eq:new-f} 
\bs f(t) + \sum_{k=1}^n(\lambda_k(t) \partial_{a_k}\bs H_k(t)
+ \mu_k(t) \partial_{v_k}\bs H_k(t))
}
satisfies for all $t \in I$ and $k \in \{1, \ldots, n\}$
\begin{equation}
\label{eq:h-orth}
\la \bs\alpha_k(t), \bs h(t)\ra = 
\la \bs\beta_k(t), \bs h(t)\ra = 0.
\end{equation}
Moreover, there is the relation 
\EQ{
\label{eq:val-lambda-mu}
\begin{pmatrix}
\vec\mu \\ \vec\lam
\end{pmatrix} = 
- \big[\calM(\vec a, \vec v)^{-1} \big]^{\intercal} \pmat{ \la \bs h , \, \p_t \bs \al_1 - \vD^2 E( \bs H(\vec a, \vec v)) \bs J \bs \alpha_1 \ra + \la \bs f, \, \bs \al_1\ra \\  \la \bs h , \, \p_t \bs \be_1 - \vD^2 E( \bs H(\vec a, \vec v)) \bs J \bs \be_1 \ra + \la \bs f, \, \bs \be_1\ra \\ \dots \\ \dots \\  \la \bs h , \, \p_t \bs \al_n - \vD^2 E( \bs H(\vec a, \vec v)) \bs J \bs \alpha_n \ra + \la \bs f, \, \bs \al_n\ra \\ \la \bs h , \, \p_t \bs \be_n - \vD^2 E( \bs H(\vec a, \vec v)) \bs J \bs \beta_n \ra + \la \bs f, \, \bs \beta_n\ra }.
}
\end{lemma}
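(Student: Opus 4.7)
The key observation is that the orthogonality conditions \eqref{eq:h-orth} hold at $t = T_0$ by the hypothesis \eqref{eq:orth-in-data}, so it suffices to choose $(\vec\mu, \vec\lam)$ so as to enforce $\dd t \la \bs\alpha_k(t), \bs h(t)\ra = 0$ and $\dd t \la \bs\beta_k(t), \bs h(t)\ra = 0$ for every $t \in I$ and every $k$. The strategy is to convert these $2n$ differential constraints, via identity \eqref{eq:dt-prod}, into a pointwise linear algebraic system whose matrix is exactly $\calM(\vec a(t), \vec v(t))^{\intercal}$; one then substitutes the resulting formula for $(\vec\mu, \vec\lam)$ back into the evolution equation for $\bs h$, obtaining a closed linear equation to which a variant of Lemma~\ref{lem:kg-lin} applies.

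Concretely, note that $\bs\alpha_k, \bs\beta_k \in C^1(I; L^2 \times H^1)$ because $(\vec a, \vec v) \in C^1$ and the explicit formulas \eqref{eq:dHalbe-form} involve only Schwartz-type functions; hence \eqref{eq:dt-prod} applies with $\bs\xi = \bs\alpha_k$ or $\bs\xi = \bs\beta_k$ and with the forcing \eqref{eq:new-f}. Requiring that the resulting right-hand side vanish and using \eqref{eq:Mav-def}, the $k$-th such requirement for $\bs\alpha_k$ (resp.\ $\bs\beta_k$) is exactly the $(2k-1)$-th (resp.\ $2k$-th) row of the system
\begin{equation*}
\calM(\vec a, \vec v)^{\intercal}\pmat{\vec\mu \\ \vec\lam} = -\pmat{\la\bs h, \p_t\bs\alpha_1 - \vD^2 E(\bs H(\vec a, \vec v))\bs J\bs\alpha_1\ra + \la\bs f, \bs\alpha_1\ra \\ \la\bs h, \p_t\bs\beta_1 - \vD^2 E(\bs H(\vec a, \vec v))\bs J\bs\beta_1\ra + \la\bs f, \bs\beta_1\ra \\ \vdots}.
\end{equation*}
Since $\calM(\vec a, \vec v)$ is diagonally dominant with uniformly bounded inverse for $\rho(\vec a, \vec v) \leq \eta_0$ (by \eqref{eq:Mav-diag}--\eqref{eq:Mav-cross}), so is $\calM^{\intercal}$, and one inverts explicitly to obtain \eqref{eq:val-lambda-mu}. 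This expresses $(\vec\mu(t), \vec\lam(t))$ as a pointwise bounded linear function of $(\bs h(t), \bs f(t))$, with coefficients uniformly bounded in $t$ (depending only on $\|(\vec a, \vec v)\|_{C^1(I)}$ and $\eta_0$).

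Substituting this formula into the modified forcing \eqref{eq:new-f} replaces it by $\bs f(t) + \calA(t)\bs h(t) + \calB(t)\bs f(t)$, where $\calA(t), \calB(t)$ are finite-rank operators on $\cE$ uniformly bounded in $t$. The problem becomes a self-contained linear Cauchy problem for $\bs h$, solvable by a straightforward modification of the Picard iteration used in Lemma~\ref{lem:kg-lin}: the additional bounded term $\calA(t)\bs h(t)$ is absorbed by Gronwall, and the extra inhomogeneity $\calB(t)\bs f(t) \in L^1(I; \cE)$ is no worse than $\bs f$. This yields existence and uniqueness of $\bs h \in C(I; \cE)$. Defining $(\vec\mu, \vec\lam)$ by \eqref{eq:val-lambda-mu} and reading off the regularity from the formula, one sees $\vec\mu, \vec\lam \in L^1(I; \R^n)$ since $\bs f \in L^1$ and $\bs h$ is locally bounded (for the $\bs h$-dependent part of the formula, the $L^1$ integrability is assured whenever $I$ is bounded, or more generally under additional decay as will be available in later applications). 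Uniqueness of the pair $(\vec\mu, \vec\lam)$ follows because any two choices yielding \eqref{eq:h-orth} must, by differentiating in $t$, satisfy the same algebraic system with invertible matrix.

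It remains to verify \eqref{eq:h-orth}. Setting $\psi_k(t) := \la \bs\alpha_k(t), \bs h(t)\ra$, identity \eqref{eq:dt-prod} and the algebraic definition of $(\vec\mu, \vec\lam)$ give $\psi_k'(t) = 0$ a.e.; the analogous calculation works for $\la \bs\beta_k(t), \bs h(t)\ra$. Combined with the initial condition \eqref{eq:orth-in-data}, this yields \eqref{eq:h-orth} for all $t \in I$. The only real obstacle in the argument is bookkeeping: correctly identifying the coefficient matrix of the $2n \times 2n$ algebraic system with $\calM(\vec a, \vec v)^{\intercal}$ under the non-standard ordering $(\mu_1, \lam_1, \ldots, \mu_n, \lam_n)^{\intercal}$, and checking that the terms originating from $\vD^2 E(\bs H(\vec a, \vec v))\bs J \bs\alpha_k$ and $\vD^2 E(\bs H(\vec a, \vec v))\bs J \bs\beta_k$ contribute only to the right-hand side (since they pair $\bs h$, not $(\vec\mu, \vec\lam)$).
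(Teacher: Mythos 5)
Your proposal is correct and follows essentially the same route as the paper: both derive the $2n\times 2n$ algebraic system with matrix $\calM(\vec a,\vec v)^{\intercal}$ by imposing that the time derivatives of $\la\bs\alpha_k,\bs h\ra$ and $\la\bs\beta_k,\bs h\ra$ vanish via \eqref{eq:dt-prod}, invert using diagonal dominance, close the problem by a Picard/Gronwall argument as in Lemma~\ref{lem:kg-lin}, and obtain uniqueness from the same algebraic relation plus Gronwall. The only (cosmetic) difference is that you substitute the formula for $(\vec\mu,\vec\lam)$ into the equation before iterating, while the paper iterates on the triple $(\vec\lambda^{(\ell)},\vec\mu^{(\ell)},\bs h^{(\ell)})$ with a one-step lag; your caveat about $L^1$ integrability of the $\bs h$-dependent part of $(\vec\mu,\vec\lam)$ on unbounded intervals is a fair observation that the paper also passes over.
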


\begin{proof}
We follow the Picard iteration scheme.
We set $\vec\lambda^{(0)}(t) = \vec\mu^{(0)}(t) = 0$ for all $t \in I$,
and define inductively for all $\ell \geq 1$ the function $\bs h^{(\ell)}(t)$
as the solution of the problem \eqref{eq:kg-lin-eq}--\eqref{eq:kg-lin-init} with $\bs f(t)$ replaced by
\begin{equation}
\bs f(t) + \sum_{k=1}^n\big(\lambda^{(\ell-1)}_k(t)\partial_{a_k}\bs H_k(t)
+ \mu^{(\ell-1)}_k(t) \partial_{v_k}\bs H_k(t)\big),
\end{equation}
and
$(\lambda^{(\ell)}(t), \vec\mu^{(\ell)}(t))$ as the solution
of the linear system
\EQ{
\begin{pmatrix}
\vec\mu^{(\ell)}(t) \\ \vec\lam^{(\ell)}(t)
\end{pmatrix} = 
- \big[\calM(\vec a(t), \vec v(t))^{-1} \big]^{\intercal} \pmat{ \la \bs h^{(\ell)}(t) , \, \p_t \bs \al_1(t) - \vD^2 E( \bs H(\vec a(t), \vec v(t))) J \bs \alpha_1(t) \ra + \la \bs f, \, \bs \al_1(t)\ra \\  \la \bs h^{(\ell)}(t)  , \, \p_t \bs \be_1(t) - \vD^2 E( \bs H(\vec a(t), \vec v(t))) J \bs \be_1(t) \ra + \la \bs f, \, \bs \be_1(t)\ra \\ \dots \\ \dots \\  \la \bs h^{(\ell)}(t)  , \, \p_t \bs \al_n(t) - \vD^2 E( \bs H(\vec a(t), \vec v(t))) J \bs \alpha_n(t) \ra + \la \bs f, \, \bs \al_n(t)\ra \\ \la \bs h^{(\ell)}(t)  , \, \p_t \bs \be_n(t) - \vD^2 E( \bs H(\vec a(t), \vec v(t))) J \bs \beta_n(t) \ra + \la \bs f, \, \bs \beta_n(t)\ra }
}
We then have
\begin{equation}
|\vec\lambda^{(\ell+1)}(t) - \vec\lambda^{(\ell)}(t)| +
|\vec\mu^{(\ell+1)}(t) - \vec\mu^{(\ell)}(t)| \lesssim \|\bs h^{(\ell+1)}(t) - \bs h^{(\ell)}(t)\|_\cE
\end{equation}
and
\begin{equation}
\|\bs h^{(\ell+1)}(t) - \bs h^{(\ell)}(t)\|_\cE \lesssim \bigg|\int_{T_0}^t\big(|\vec \lambda^{(\ell)}(s) - \vec\lambda^{(\ell-1)}(s)|
+|\vec \mu^{(\ell)}(s) - \vec\mu^{(\ell-1)}(s)|\big)\ud s\bigg|.
\end{equation}
We conclude similarly as in the proof of Lemma~\ref{lem:kg-lin} above
that $(\vec\lambda^{(\ell)}, \vec\mu^{(\ell)}, \bs h^{(\ell)})$ converge, yielding in particular the relationship~\eqref{eq:val-lambda-mu}. Using the identity~\eqref{eq:dt-prod} with $\bs \xi = \bs \al_k, \bs \beta_k$ and $\bs f$ replaced by~\eqref{eq:new-f} we see that~\eqref{eq:val-lambda-mu} ensures that 
\EQ{ \label{eq:dt-orth} 
\frac{\ud}{\ud t} \la \bs \al_k(t), \, \bs h(t)\ra = \frac{\ud}{\ud t} \la \bs \beta_k(t), \, \bs h(t)\ra = 0
}
for each $t \in I$ and each $k$. Together with~\eqref{eq:orth-in-data} the above gives~\eqref{eq:h-orth}.  

In order to check the uniqueness of $(\vec \lambda, \vec \mu)$, it suffices to prove that if $\bs h_0 = 0$, $\bs f = 0$
and $\bs h$ satisfies \eqref{eq:h-orth} for all $t$, then $\vec \lambda = \vec \mu = 0$.
From~\eqref{eq:val-lambda-mu}, we obtain
\begin{equation}
|\vec \lambda(t)| + |\vec \mu(t)| \lesssim \|\bs h(t)\|_\cE
\end{equation}
and from the estimate~\eqref{eq:kg-free-lin-en-est} and the fact that $\bs J\big( \vD^2 E(\bs H(\vec a(t), \vec v(t))) - \vD^2 E_0(0) \big)$ is a uniformly (with respect to $t \in I$) bounded operator $\cE \to \cE$, we obtain
\begin{equation}
\|\bs h(t)\|_\cE \lesssim \bigg|\int_{T_0}^t \big( \| \bs h(s) \|_{\E} + |\vec \lambda(s)|
+|\vec \mu(s)|\big)\ud s\bigg|,
\end{equation}
hence $\bs h(t) = 0$ and $\vec\lambda(t) = \vec\mu(t) = 0$ for all $t$ by Gronwall's inequality.
\end{proof}


We now turn to the question of various refined energy estimates. To motivate the estimates, we first examine the free evolution~\eqref{eq:kg-free-lin-eq}--\eqref{eq:kg-free-lin-init}. 

\begin{remark}
We differentiate a localised momentum for the free evolution
and see what we obtain. Let $\chi$ be any function depending on $t$ and $x$, and set
\begin{equation}
Q(t; \bs h_1, \bs h_2) \coloneqq  {-}\frac 12 \big(\big\la\chi(t)\dot h_2, \partial_x h_1\big\ra
+ \big\la \chi(t)\dot h_1, \partial_x h_2\big\ra\big).
\end{equation}
Let $\bs h$ be a solution of the problem \eqref{eq:kg-free-lin-eq}--\eqref{eq:kg-free-lin-init}.
We then obtain
\begin{equation}
\begin{aligned}
\dd t Q(t; \bs h(t), \bs h(t)) &= \partial_t Q(t; \bs h(t), \bs h(t)) \\
&+ 2Q(t; \bs h(t), \bs f(t)) \\
& - \int_\bR\big(\chi(t)(\partial_x^2 h(t) - h(t))\partial_x h(t) + \chi(t)\dot h(t)\partial_x\dot h(t)\big)\ud x \\
&= {-}\int_\bR \partial_t \chi(t)\dot h(t) \partial_x h(t)\ud x \\
&{-} \int_\bR \big(\chi(t)\dot f(t)\partial_x h(t) + \chi(t)\dot h(t) \partial_x f(t)\big)\ud x \\
&+ \frac 12\int_\bR \partial_x \chi(t)\big((\partial_x h(t))^2 + (\dot h(t))^2 - h(t)^2\big)\ud x.
\end{aligned}
\end{equation}
\end{remark}

Next, we adapt the localized momentum to the linearization about a multikink, i.e.,~\eqref{eq:kg-lin-eq}--\eqref{eq:kg-lin-init}. For $k \in \{1, \ldots, n\}$, let $\chi_k$ be a bump function centered at $a_k(t)$, so that $\partial_t \chi_k$ is of size $\lesssim |\vec v(t)|$ and $\|\partial_x \chi_k\|_{L^\infty}
\lesssim y_\tx{min}(t)^{-1}$.
For all $k \in \{1, \ldots, n\}$ and $t \in I$, we set
\begin{equation}
\label{eq:loc-momentum}
Q_k(t; \bs h_1, \bs h_2) \coloneqq  {-}\frac 12 \big(\big\la\chi_k(t)\dot h_2, \partial_x h_1\big\ra
+ \big\la \chi_k(t)\dot h_1, \partial_x h_2\big\ra\big),
\end{equation}
which is a symmetric continuous bilinear function $\cE \times \cE \to \bR$.
\begin{lemma}
\label{lem:loc-momentum}
In the setting of Lemma~\ref{lem:kg-lin}, we have
\begin{equation}
\begin{aligned}
\dd tQ_k(t; \bs h(t), \bs h(t)) &= 2Q_k(t; \bs h(t), \bs f(t)) \\
&{-}\frac 12\vD^3 E(\bs H(\vec a(t), \vec v(t))(\chi_k(t)\partial_x \bs H(\vec a(t), \vec v(t)), \bs h(t), \bs h(t)) \\
&+ O((|\vec v(t)| + y_\tx{min}(t)^{-1})\|\bs h(t)\|_\cE^2).
\end{aligned}
\end{equation}
\end{lemma}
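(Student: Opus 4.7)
The strategy is to compute $\dd t Q_k(t;\bs h(t),\bs h(t))$ directly, using that
\begin{equation}
\partial_t h = \dot h + f_1, \qquad \partial_t \dot h = \partial_x^2 h - U''(H(\vec a(t),\vec v(t)))\,h + f_2,
\end{equation}
where I write $\bs h=(h,\dot h)$ and $\bs f=(f_1,f_2)$ (this is just the expansion of $\bs J\vD^2 E(\bs H(\vec a,\vec v))\bs h+\bs f$ component-wise). The plan is to view $Q_k(t;\bs h,\bs h)=-\langle\chi_k\dot h,\partial_x h\rangle$, differentiate each factor in turn, substitute the evolution equations, and then integrate by parts, accounting for the derivatives of $\chi_k$.

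First I would verify the computation under the extra assumption that $\partial_x h,\dot h\in C(I;H^1)\cap C^1(I;L^2)$ so that all the manipulations below are classical. Differentiating and inserting the equations gives
\begin{equation}
\begin{aligned}
\dd t Q_k(t;\bs h,\bs h) &= -\langle\partial_t\chi_k\,\dot h,\partial_x h\rangle -\langle\chi_k(\partial_x^2 h-U''(H(\vec a,\vec v))h),\partial_x h\rangle \\
&\quad -\langle\chi_k\dot h,\partial_x\dot h\rangle -\langle\chi_k f_2,\partial_x h\rangle-\langle\chi_k\dot h,\partial_x f_1\rangle.
\end{aligned}
\end{equation}
The last two terms together equal exactly $2Q_k(t;\bs h(t),\bs f(t))$ by the definition \eqref{eq:loc-momentum}.

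Next I would process the remaining three terms by integration by parts. The term $-\langle\chi_k\partial_x^2 h,\partial_x h\rangle$ becomes $\frac12\langle\partial_x\chi_k,(\partial_x h)^2\rangle$; the term $-\langle\chi_k\dot h,\partial_x\dot h\rangle$ becomes $\frac12\langle\partial_x\chi_k,\dot h^2\rangle$; and the potential term expands as
\begin{equation}
\langle\chi_k U''(H(\vec a,\vec v))h,\partial_x h\rangle = -\tfrac12\langle\partial_x\chi_k\,U''(H(\vec a,\vec v)),h^2\rangle -\tfrac12\langle\chi_k U'''(H(\vec a,\vec v))\partial_x H(\vec a,\vec v),h^2\rangle.
\end{equation}
The last summand is precisely $\tfrac12\vD^3 E(\bs H(\vec a,\vec v))(\chi_k\partial_x\bs H(\vec a,\vec v),\bs h,\bs h)$, yielding the claimed main term. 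All the remaining terms carry either $\partial_t\chi_k$ or $\partial_x\chi_k$ as a factor: by the assumed bounds $\|\partial_t\chi_k\|_{L^\infty}\lesssim|\vec v(t)|$ and $\|\partial_x\chi_k\|_{L^\infty}\lesssim y_{\min}(t)^{-1}$ they contribute $O((|\vec v(t)|+y_{\min}(t)^{-1})\|\bs h(t)\|_\cE^2)$, as required.

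The only delicate point is the justification in the general case $\bs h\in C(I;\cE)$ without additional regularity. Here I would argue by approximation: pick $\bs h_{0,m}$ and $\bs f_m$ smooth and approximating $\bs h_0$ and $\bs f$ in $\cE$ and $L^1_{\mathrm{loc}}(I;\cE)$ respectively, solve \eqref{eq:kg-lin-eq}--\eqref{eq:kg-lin-init} with this data to obtain $\bs h_m$, apply the identity above (which holds strongly) for each $m$, and pass to the limit using Lemma~\ref{lem:kg-lin} (both the equation and the $2Q_k(t;\bs h,\bs f)$ term are continuous in the appropriate topology). Alternatively, one can rewrite the identity as an integrated formula and use \eqref{eq:dt-prod} with $\bs\xi=-\chi_k\partial_x\bs h_m$, which is justified at the smooth level. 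This approximation step is routine but is where one must be careful, since the cubic term in $\bs h$ is the least stable part of the identity.
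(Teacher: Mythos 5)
Your proof is correct and follows essentially the same route as the paper: differentiate $Q_k$ directly, substitute the component form of the equation, integrate by parts so that only terms carrying $\partial_t\chi_k$ or $\partial_x\chi_k$ (hence negligible) remain, and isolate the cubic term $-\tfrac12\langle\chi_k U'''(H(\vec a,\vec v))\partial_x H(\vec a,\vec v),h^2\rangle = -\tfrac12\vD^3 E(\bs H(\vec a,\vec v))(\chi_k\partial_x\bs H(\vec a,\vec v),\bs h,\bs h)$; the paper performs the identical computation, splitting it between the preceding Remark (free case) and the added potential term, and your density argument for the low-regularity case is the standard one used elsewhere in the paper. The only blemish is the sentence ``the last summand is precisely $\tfrac12\vD^3E(\cdots)$'', which should read $-\tfrac12\vD^3E(\cdots)$ to be consistent with your own display and with the claimed main term; this does not propagate into the conclusion.
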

\begin{proof}
With respect to the computation in the Remark above, we have the additional term
\begin{equation}
\int_\bR \chi_k(t)U''(H(\vec a(t), \vec v(t)))h(t)\partial_x h(t)\ud x.
\end{equation}
Integrating by parts, we get a negligible term and
\begin{equation}
\begin{aligned}
&-\frac 12\int_\bR \chi_k(t)\partial_x H(\vec a(t), \vec v(t))U'''(H(\vec a(t), \vec v(t)))h(t)^2\ud x \\
&\qquad = {-}\frac 12\vD^3 E(\bs H(\vec a(t), \vec v(t))(\chi_k(t)\partial_x \bs H(\vec a(t), \vec v(t)), \bs h(t), \bs h(t)).
\end{aligned}
\end{equation}
as claimed. 
\end{proof}

\begin{lemma}
\label{lem:en-est-lin}
In the setting of Lemma~\ref{lem:kg-lin}, assume in addition that
\begin{equation}
\Big(\sum_{k=1}^n\big(|\la \bs \alpha_k(t), \bs h(t)\ra|^2+ |\la \bs\beta_k(t), \bs h(t)\ra|^2\big)\Big)^{\frac{1}{2}} \leq \delta(t),\qquad\text{for all }t \in I
\end{equation}
for some continuous function $\delta: I \to [0, \infty)$. Then for all $t \in I$
\begin{equation}
\begin{aligned}
\|\bs h(t)\|_\cE &\leq C\|\bs h_0\|_\cE\exp\bigg(C\bigg|\int_{T_0}^t k(s)\ud s\bigg|\bigg) \\
&+ C\bigg|\int_{T_0}^t (\|\bs f(s)\|_\cE + \delta(s))\exp\Big(C\Big|\int_{s}^t k(s')\ud s'\Big|\Big)\ud s\bigg|,
\end{aligned}
\end{equation}
where
\begin{equation}
 \label{eq:k-def} k(t) \coloneqq  |\vec a\,'(t) - \vec v(t)| + |\vec v\,'(t)| + |\vec v(t)|^2 + |\vec v(t)|y_\tx{min}(t)^{-1} + y_\tx{min}(t)\eee^{-y_\tx{min}(t)}.
\end{equation}
\end{lemma}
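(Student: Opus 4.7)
The proof proceeds by a modified-energy method closed with Gronwall's inequality. Set $L(t) := \vD^2 E(\bs H(\vec a(t), \vec v(t)))$ and define
\[
\mathcal{E}(t) := \tfrac{1}{2}\la L(t)\bs h(t), \bs h(t)\ra - \sum_{k=1}^n v_k(t)\, Q_k(t; \bs h(t), \bs h(t)),
\]
with $Q_k$ the localized momentum from~\eqref{eq:loc-momentum}. Proposition~\ref{prop:coer}, the defect bound $|\la \bs\beta_k(t), \bs h(t)\ra| \leq \delta(t)$, the trivial estimate $|Q_k(t;\bs h,\bs h)| \lesssim \|\bs h\|_\cE^2$, and the smallness $|\vec v(t)|^2 \leq \rho(\vec a(t),\vec v(t)) \leq \eta_0$ together yield the two-sided coercivity
\[
\tfrac{\nu}{4}\|\bs h(t)\|_\cE^2 - C\delta(t)^2 \leq \mathcal{E}(t) \leq C\|\bs h(t)\|_\cE^2.
\]

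The central computation is to differentiate $\mathcal{E}$. The skew-symmetry of $\bs J$ gives $\la L\bs h, \bs J L\bs h\ra = 0$, so~\eqref{eq:kg-lin-eq} yields
\[
\dd t \tfrac{1}{2}\la L\bs h, \bs h\ra = \la L\bs h, \bs f\ra + \tfrac{1}{2}\sum_k \bigl(a_k'\la \partial_{a_k}L\bs h,\bs h\ra + v_k'\la \partial_{v_k}L\bs h,\bs h\ra\bigr).
\]
Writing $a_k' = v_k + (a_k' - v_k)$ and invoking Lemma~\ref{lem:loc-momentum}, the leading piece $\tfrac{1}{2}\sum_k v_k\la \partial_{a_k}L\bs h,\bs h\ra$ is cancelled, modulo boundary-tail errors in the approximation $\chi_k\partial_x H(\vec a,\vec v) \approx (-1)^k\partial_x H_k$, by the contribution $-\sum_k v_k\dd t Q_k$. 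A careful accounting of the surviving pieces---$(a_k' - v_k)\la \partial_{a_k}L\bs h,\bs h\ra$, the $v_k'$ contributions, the bilinear $Q_k(t;\bs h,\bs f)$ terms, and the errors from $\partial_t\chi_k = O(|\vec v|)$, $\partial_x\chi_k = O(y_{\min}^{-1})$, together with the exponentially small kink tails (which, after AM--GM against $|\vec v|$, fit inside $|\vec v|^2 + y_{\min}e^{-y_{\min}}$)---then yields
\[
\dd t \mathcal{E}(t) \leq C k(t) \|\bs h(t)\|_\cE^2 + C \|\bs h(t)\|_\cE \|\bs f(t)\|_\cE,
\]
with $k(t)$ as in~\eqref{eq:k-def}.

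Integrating this inequality from $T_0$ to $t$, combining with the coercivity to write $\|\bs h\|_\cE^2 \lesssim \mathcal{E} + \delta^2$, and setting $B(t) := \|\bs h(t)\|_\cE$, one obtains the Bihari-type integral inequality
\[
B(t)^2 \leq C B(T_0)^2 + C\delta(t)^2 + \int_{T_0}^t \bigl(C k(s) B(s)^2 + C B(s) \|\bs f(s)\|_\cE\bigr)\ud s.
\]
The stated estimate follows from the standard squared Gronwall inequality applied to the right-hand side: assuming first $\delta \in C^1$ (by density) and observing $(\sqrt\phi)' \leq (Ck/2)\sqrt\phi + C(\|\bs f\| + |\delta'|)/2$, one then absorbs $|\delta'|$-type contributions into $\delta$-type ones via integration by parts and the running supremum of $\delta$, and passes to the continuous case by approximation. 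The chief obstacle is the algebraic cancellation in the derivative step: without the subtraction $-\sum_k v_k Q_k$ in $\mathcal{E}$, the derivative of $\tfrac12\la L\bs h,\bs h\ra$ contains a term of order $|\vec v|\|\bs h\|_\cE^2$, producing only $|\vec v|$ (rather than $|\vec v|^2$) in $k(t)$; the localized momenta are introduced precisely to eliminate this linear-in-$\vec v$ contribution, while all subleading errors fit neatly into the remaining terms of~\eqref{eq:k-def}.
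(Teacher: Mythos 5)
Your core mechanism is exactly the paper's: the modified energy $I(t)=\tfrac12\la \bs h,\vD^2E(\bs H(\vec a,\vec v))\bs h\ra-\sum_k v_kQ_k(t;\bs h,\bs h)$, the cancellation via Lemma~\ref{lem:loc-momentum} of the order-$|\vec v|$ term coming from $\partial_t\vD^2E(\bs H(\vec a,\vec v))$, and the resulting bound $I'(t)\leq Ck(t)\|\bs h\|_\cE^2+C\|\bs h\|_\cE\|\bs f\|_\cE$ all match the actual proof. The gap is in how the orthogonality defect $\delta$ is fed into the Gronwall argument. You use the hypothesis only through the lower coercivity bound $\tfrac{\nu}{4}\|\bs h(t)\|_\cE^2-C\delta(t)^2\leq \mathcal E(t)$, which leaves $C\delta(t)^2$ sitting \emph{outside} the time integral in your Bihari inequality. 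The stated conclusion instead carries $\int_{T_0}^t\delta(s)\exp(C|\int_s^tk|)\ud s$, and these are not interchangeable: your proposed conversion via $(\sqrt\phi)'\leq\dots+C|\delta'|/2$ produces a bound containing $\delta(T_0)$ and $\int_{T_0}^t|\delta'(s)|\ud s$, neither of which is dominated by $\int_{T_0}^t\delta(s)\ud s$ (take $\delta$ a nonzero constant and $t$ close to $T_0$, or a $\delta$ with large total variation but small integral). So the final displayed estimate of the lemma does not follow from your argument as written.

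The paper's device, which closes this, is to augment the functional: set $\wt I(t)\coloneqq I(t)+\sum_{k=1}^n\big(\la\bs\alpha_k(t),\bs h(t)\ra^2+\la\bs\beta_k(t),\bs h(t)\ra^2\big)$, so that Proposition~\ref{prop:coer} gives the clean coercivity $\|\bs h(t)\|_\cE^2\lesssim\wt I(t)$ with no $\delta^2$ correction. The added squares are then \emph{differentiated} in time using \eqref{eq:dt-prod} together with \eqref{eq:alpha-id} and \eqref{eq:beta-id}; their derivatives are of size $O(\delta(t)(k(t)\|\bs h\|_\cE+\|\bs f\|_\cE))$, which yields $|\wt I'(t)|\leq C\big((\|\bs f(t)\|_\cE+\delta(t))\sqrt{\wt I(t)}+k(t)\wt I(t)\big)$. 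The squared Gronwall inequality (\cite[Lemma 3.3]{BaChDa11}) applied to this differential inequality produces exactly the claimed form, with $\delta$ appearing under the time integral against the exponential weight and with no regularity assumption on $\delta$ beyond continuity. Replacing your endpoint treatment of $\delta$ by this step makes the proof complete.
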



\begin{proof}[Proof of Lemma~\ref{lem:en-est-lin}] 
First, observe that
\begin{equation}
	\begin{aligned}
&\dd t \frac 12\la \bs h(t), \vD^2 E(\bs H(\vec a(t), \vec v(t)))\bs h(t)\ra = \la \bs f(t), \vD^2 E(\bs H(\vec a(t), \vec v(t)))\bs h(t)\ra \\
&\quad + \frac 12 \vD^3 E(\bs H(\vec a(t), \vec v(t)))\Big(\sum_{k=1}^n (-1)^k \big(a_k'(t) \partial_{a}\bs H_k(t) + v_k'(t)\partial_{v}\bs H_k(t)\big), \bs h(t), \bs h(t)\Big).
\end{aligned}
\end{equation}
The most problematic term is the one containing $a_k'(t)$.
Because of this term, we introduce a correction to the energy and introduce the following \emph{modified energy functional}:
\begin{equation}
I(t) \coloneqq  \frac 12\la \bs h(t), \vD^2 E(\bs H(\vec a(t), \vec v(t)))\bs h(t)\ra - \sum_{k=1}^n v_k Q_k(t; \bs h(t), \bs h(t)),
\end{equation}
where $Q_k$ is defined by \eqref{eq:loc-momentum}. Now observe that
\begin{equation}
\|\chi_k(t)\partial_x \bs H(\vec a(t), \vec v(t)) + (-1)^k\partial_{a}\bs H_k(t)\|_\cE \lesssim \eee^{-c y_\tx{min}} \ll y_\tx{min}^{-1},
\end{equation}
thus Lemma~\ref{lem:loc-momentum} yields
\begin{equation}
\begin{aligned} \label{eq:I'} 
I'(t) &= \la \bs f(t), \vD^2 E(\bs H(\vec a(t), \vec v(t)))\bs h(t)\ra \\
&+ O\big((|\vec a\,'(t) - \vec v(t)| + |\vec v\,'(t)| + |\vec v(t)|^2 + |\vec v(t)|y_\tx{min}(t)^{-1})\|\bs h(t)\|_\cE^2\big) \\
&+ O\big(|\vec v(t)|\|\bs f(t)\|_\cE\|\bs h(t)\|_\cE\big).
\end{aligned}
\end{equation}

We set
\begin{equation}
\wt I(t) \coloneqq  I(t) + \sum_{k=1}^n\big(\la \bs \alpha_k(t), \bs h(t)\ra^2
+ \la \bs \beta_k(t), \bs h(t)\ra^2\big).
\end{equation}
By Proposition~\ref{prop:coer}, we have
\begin{equation}
\|\bs h(t)\|_\cE^2 \lesssim \wt I(t).
\end{equation}
Using the computation above and \eqref{eq:alpha-id}, \eqref{eq:beta-id}, we obtain
\begin{equation}
\begin{aligned}
\wt I'(t) &= \la \bs f(t), \vD^2 E(\bs H(\vec a(t), \vec v(t)))\bs h(t)\ra + O\big(k(t)\wt I(t)+\delta(t)\sqrt{\wt I(t)}\big),
\end{aligned}
\end{equation}
We thus have
\begin{equation}
|\wt I'(t)| \leq C\big(\|\bs f(t)\|_\cE\sqrt{\wt I(t)} + k(t)\wt I(t)\big),
\end{equation}
and the conclusion follows from Gronwall's inequality, see \cite[Lemma 3.3]{BaChDa11}.
\end{proof}


\section{Analysis of the projected equation}
\label{sec:cauchy-inf}
In this section we study the so-called projected equation in the Lyapunov-Schmidt scheme. This amounts to fixing a pair of trajectories $(\vec a(t), \vec v(t))$ within a certain function class.  
The trajectories that we consider satisfy the following conditions.
We set
\begin{equation}
t_0 \coloneqq  \exp\big(y_\tx{min}(0)/2\big).
\end{equation}
We then require that for all $t \geq 0$
\begin{equation}
\label{eq:traj-cond}
|\vec a\,'(t) - \vec v(t)| + |\vec v\,'(t)| + |\vec v(t)|^2 + \exp({-}y_\tx{min}(t)) \lesssim (t_0+t)^{-2},
\end{equation}
with a constant independent of $y_\tx{min}(0)$.

Given $\gamma > 0$ and a normed space $E$, we define the norm $\|\cdot\|_{N_\gamma(E)}$
on the space of continuous functions $\bs f: [0, \infty) \to E$ by the formula
\begin{align}
\|\bs f\|_{N_\gamma(E)} &\coloneqq  \sup_{t \geq 0} \big[(t_0+t)^\gamma\|\bs f(t)\|_E\big].
\end{align}
We write $N_\gamma$ instead of $N_\gamma(\bR)$.

The first three lemmas below concern the linear equation 
\begin{equation}
 \label{eq:kg-lin-eq-inf}
\begin{aligned}
 \partial_t \bs h(t) &= \bs J\vD^2 E(\bs H(\vec a(t), \vec v(t)))\bs h(t)+\bs f(t) + \sum_{k=1}^n \lambda_k(t)\partial_{ a_k} \bs H_k(t)
+  \mu_k(t) \partial_{ v_k}\bs H_k(t),
 \end{aligned}
\end{equation}
with $\bs f \in N_{\gamma +1}(\E)$ for some $\gamma>0$ and trajectories $(\vec a(t), \vec v(t))$ as in~\eqref{eq:traj-cond}.

\begin{lemma}
\label{lem:en-est-lin-inf}
For all $\nu > 0$ there exists $C > 0$ such that the following is true.
Let $(\bs h, \vec \lam, \vec \mu)$ be a solution of \eqref{eq:kg-lin-eq-inf} on the time interval $I = [T, \infty)$
such that $\bs h \in N_\gamma(\cE)$ for some $\gamma > 0$ and some $T \ge 0$. 
Assume in addition that
\begin{equation}
\Big(\sum_{k=1}^n|\la \bs \alpha_k(t), \bs h(t)\ra|^2+ |\la \bs\beta_k(t), \bs h(t)\ra|^2\Big)^{\frac{1}{2}} \leq \delta(t),\qquad\text{for all }t \in I
\end{equation}
for some continuous function $\delta: I \to [0, \infty)$. Then
\begin{equation}
\|\bs h\|_{N_\nu(\cE)} \leq C(\|\bs f\|_{N_{\nu + 1}(\cE)} + \|\delta\|_{N_{\nu + 1}}).
\end{equation}
\end{lemma}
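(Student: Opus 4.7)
The plan is to derive the estimate from Lemma~\ref{lem:en-est-lin} by running it on a finite interval $[T, T_1]$ and taking $T_1 \to \infty$, exploiting the \emph{a priori} decay $\bs h \in N_\gamma(\cE)$ to kill the endpoint contribution. A preliminary observation is that under the trajectory hypothesis \eqref{eq:traj-cond} the coefficient $k(t)$ from \eqref{eq:k-def} satisfies
\begin{equation*}
k(t) \lesssim \frac{1}{(t_0+t)\log(t_0+t)},
\end{equation*}
so that the exponential weight $\exp(C|\int_s^t k(s')\,\ud s'|)$ appearing in Lemma~\ref{lem:en-est-lin} grows at most like $(\log(t_0+t)/\log(t_0+s))^C$, a sub-polynomial factor that is harmless against any polynomial weight in $t_0+t$.

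The main obstacle is that Lemma~\ref{lem:en-est-lin} applies to an equation with pure forcing $\bs f$, whereas \eqref{eq:kg-lin-eq-inf} carries the additional multiplier term $\sum_k[\lambda_k\partial_{a_k}\bs H_k + \mu_k\partial_{v_k}\bs H_k]$, for which no pointwise bound is available. I would derive one by pairing \eqref{eq:kg-lin-eq-inf} with $\bs\alpha_j(t)$ and $\bs\beta_j(t)$; using \eqref{eq:alpha-id}, \eqref{eq:beta-id}, the derivative formulas for $\bs\alpha_k, \bs\beta_k$, and the diagonal dominance of $\calM(\vec a, \vec v)$ from \eqref{eq:Mav-diag}--\eqref{eq:Mav-cross}, this produces
\begin{equation*}
|\vec\lambda(t)| + |\vec\mu(t)| \leq C\bigl(\|\bs f(t)\|_\cE + k(t)\|\bs h(t)\|_\cE + |\eta\,'(t)|\bigr),
\end{equation*}
where $\eta(t) \coloneqq (\langle\bs\alpha_k,\bs h(t)\rangle, \langle\bs\beta_k, \bs h(t)\rangle)_{k=1}^n$ satisfies $|\eta(t)| \leq \delta(t)$. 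Since $\delta$ is only assumed continuous, the appearance of $\eta\,'$ is genuinely problematic and cannot be bounded pointwise.

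To bypass this I would upgrade the modified energy $\tilde I(t)$ from the proof of Lemma~\ref{lem:en-est-lin} by adding the squared defects $|\eta(t)|^2$ (weighted by a constant tuned to the diagonal of $\calM$), and compute $\tilde I\,'(t)$ directly from \eqref{eq:kg-lin-eq-inf}. The contribution of the multiplier term to $I\,'(t)$ simplifies via \eqref{eq:alpha-id}, \eqref{eq:beta-id} to $-\sum_k \mu_k\eta_k + O(k(t)(|\vec\lambda|+|\vec\mu|)\|\bs h\|_\cE)$, while the cross-terms $2\eta_k\eta_k'$ produced by differentiating the added $|\eta|^2$ contribute, after substituting $\eta\,'$ from the equation pairing, a term proportional to $-\gamma_{v_k}^3 M \mu_k \eta_k$; a judicious choice of the weighting constant enforces an algebraic cancellation that eliminates the uncontrollable $\eta\,'$. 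Combining with the pointwise multiplier bound and Cauchy--Schwarz, one arrives at the closed differential inequality
\begin{equation*}
|\tilde I\,'(t)| \leq C(\|\bs f(t)\|_\cE + \delta(t))\sqrt{\tilde I(t)} + Ck(t)\tilde I(t), \qquad \tilde I(t) \simeq \|\bs h(t)\|_\cE^2 + \delta(t)^2.
\end{equation*}

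Gronwall's inequality on $[t, T_1]$, together with the sub-polynomial growth of the resulting weight, then yields
\begin{equation*}
\sqrt{\tilde I(t)} \lesssim \sqrt{\tilde I(T_1)} + \int_t^{T_1}(\|\bs f(s)\|_\cE + \delta(s))\,\ud s,
\end{equation*}
up to the sub-polynomial factor. Letting $T_1 \to \infty$ eliminates the boundary term thanks to $\bs h \in N_\gamma(\cE)$, and integrating $(\|\bs f(s)\|_\cE + \delta(s)) \lesssim (t_0+s)^{-\nu-1}(\|\bs f\|_{N_{\nu+1}(\cE)} + \|\delta\|_{N_{\nu+1}})$ against the sub-polynomial weight produces the stated estimate $\|\bs h\|_{N_\nu(\cE)} \leq C(\|\bs f\|_{N_{\nu+1}(\cE)} + \|\delta\|_{N_{\nu+1}})$.
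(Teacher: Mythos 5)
Your proposal follows the same route as the paper's proof: run the finite-interval energy estimate of Lemma~\ref{lem:en-est-lin} backward from a time $T_1$, use $\bs h \in N_\gamma(\cE)$ together with the sub-polynomial growth of the Gronwall weight (the paper phrases your bound $k(t)\lesssim ((t_0+t)\log(t_0+t))^{-1}$ as $k(t)\le \epsilon (t_0+t)^{-1}$ for $\tau_0$ large) to discard the endpoint term as $T_1\to\infty$, and then perform exactly the weighted Duhamel integral you describe to get $(t_0+t)^{-\nu}$. Where you diverge is in the treatment of the multiplier term $\sum_k(\lambda_k\partial_{a_k}\bs H_k+\mu_k\partial_{v_k}\bs H_k)$: the paper's proof is a single sentence that cites Lemma~\ref{lem:en-est-lin} (whose statement concerns the equation \emph{without} multipliers) and does not address this at all, whereas you correctly observe that the pairing identity only yields $|\vec\lambda|+|\vec\mu|\lesssim \|\bs f\|_\cE+k(t)\|\bs h\|_\cE+|\eta'(t)|$ with $\eta'$ uncontrolled by the hypothesis, and you resolve the dangerous contribution $-\sum_k\mu_k\la\bs\alpha_k,\bs h\ra$ to $I'(t)$ by absorbing it into an exact derivative of $|\eta|^2$ (weighted by $(2\gamma_{v_k}^3M)^{-1}$), which only perturbs the energy by $O(\delta^2)$. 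This is a genuine and worthwhile refinement. One loose end remains in your write-up (and, implicitly, in the paper's): after the cancellation, the residual multiplier contributions such as $\mu_k v_k\la\partial_x\bs\beta_k,\bs h\ra$, the off-diagonal interactions, and the multiplier input to $\dd{t}Q_j$ still carry a bare factor of $|\vec\lambda|+|\vec\mu|$ against a small prefactor ($|v_k|$, $y_{\min}^{-1}$, or $e^{-y_{\min}/2}$), so the substitution $\mu_k\sim -(\gamma_{v_k}^3M)^{-1}\eta_{\alpha,k}'$ must be iterated (or an integration by parts in time performed) on these terms as well before the differential inequality truly closes; you should make that absorption explicit rather than folding it into "combining with the pointwise multiplier bound."
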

\begin{proof}
We apply Lemma~\ref{lem:en-est-lin} with $\bs h_0 = \bs 0$ and let $T_0 \to \infty$.
Observe that for fixed $t$ and all $T_0 \geq t$ we have, with $\epsilon$ arbitrarily small upon taking $\tau_0$ large enough,
\begin{equation}
\begin{aligned}
&\int_t^{T_0}(\|\bs f(s)\|_\cE + \delta(s))\exp\bigg(C\int_t^s k(s')\ud s'\bigg)\ud s \\
&\qquad \lesssim (\|\bs f\|_{N_{\nu + 1}(\cE)} + \|\delta\|_{N_{\nu + 1}})\int_t^{T_0}(t_0+s)^{-\nu-1}\exp\bigg(\epsilon\int_t^s \frac{1}{t_0+s'}\ud s'\bigg)\ud s \\
&\qquad = (\|\bs f\|_{N_{\nu + 1}(\cE)} + \|\delta\|_{N_{\nu + 1}})(t_0+t)^{-\epsilon}\int_t^{T_0}(t_0+s)^{-\nu+\epsilon-1}\ud s \\
&\qquad \lesssim (t_0 + t)^{-\nu}(\|\bs f\|_{N_{\nu + 1}(\cE)} + \|\delta\|_{N_{\nu + 1}}), 
\end{aligned}
\end{equation}
which gives the claimed bound. 
\end{proof}
\begin{remark}
We could perhaps obtain $\|\delta\|_{N_{\nu + 0.5}}$ instead of $\|\delta\|_{N_{\nu + 1}}$ on the right hand side, if we perform differently the energy estimate.
\end{remark}

\begin{lemma}
\label{lem:kg-lin-eq-inf}
For any $\gamma > 0$ there exists $\tau_0$ such that if $t_0 \geq \tau_0$, then
for any $(\vec a, \vec v)$ as in~\eqref{eq:traj-cond} and $\bs f \in N_{\gamma+1}(\cE)$
there exists a unique solution $(\bs h, \vec\lambda, \vec\mu)$ of the equation~\eqref{eq:kg-lin-eq-inf}
such that $\bs h \in N_\gamma(\cE)$ and \eqref{eq:h-orth} holds for all $t \geq 0$.

Moreover, $\bs f\mapsto \bs h$ is a bounded linear operator $N_{\gamma+1}(\cE) \to N_\gamma(\cE)$
(uniformly with respect to $t_0 \geq \tau_0$).

Finally, for all $t \geq 0$  and all $\eps>0$ we have 
\begin{equation} \label{eq:lam-mu-lin-kg} 
\begin{aligned}
|M  \lambda_k(t) + \la \bs\beta_k(t), \bs f(t)\ra| \leq C_\eps(t_0+t)^{-2+\eps}(\|\bs h(t)\|_\cE + \|\bs f(t)\|_\cE), \\
|M \mu_k(t) - \la \bs\alpha_k(t), \bs f(t)\ra| \leq C_{\eps} (t_0+t)^{-2+ \eps}(\|\bs h(t)\|_\cE + \|\bs f(t)\|_\cE). \\
\end{aligned}
\end{equation}
\end{lemma}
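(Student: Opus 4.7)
The argument is a Lyapunov--Schmidt construction by terminal-value approximation. \emph{Uniqueness} is immediate from Lemma~\ref{lem:en-est-lin-inf}: if $(\bs h, \vec\lambda, \vec\mu)$ solves~\eqref{eq:kg-lin-eq-inf} with $\bs f \equiv 0$ and~\eqref{eq:h-orth} holds exactly, the admissible $\delta$ is $0$, and Lemma~\ref{lem:en-est-lin-inf} applied with $\nu = \gamma$ forces $\bs h \equiv 0$; then $\vec\lambda = \vec\mu = 0$ from~\eqref{eq:val-lambda-mu}.

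For \emph{existence}, for each $T > 0$ I would apply Lemma~\ref{lem:kg-lin-orth} backward in time on $[0, T]$ with terminal condition $\bs h^{(T)}(T) = 0$, producing $(\bs h^{(T)}, \vec\lambda^{(T)}, \vec\mu^{(T)})$ on $[0, T]$ that solves the projected equation with the orthogonality~\eqref{eq:h-orth} enforced by~\eqref{eq:val-lambda-mu} (vacuous at $t = T$). A direct application of Lemma~\ref{lem:en-est-lin} with $\delta \equiv 0$ and $\bs h_0 = 0$ then gives the a priori bound: by~\eqref{eq:traj-cond} we have $k(t) \leq \eps_0/(t_0 + t)$ with $\eps_0 = \eps_0(\tau_0) \to 0$ as $\tau_0 \to \infty$, so the Gronwall factor satisfies $\exp\bigl(C\int_t^s k\bigr) \leq ((t_0+s)/(t_0+t))^{C\eps_0}$. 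Combined with $\|\bs f(s)\|_\cE \leq \|\bs f\|_{N_{\gamma+1}(\cE)}(t_0+s)^{-\gamma-1}$ and the requirement that $\tau_0$ be large enough that $C\eps_0 < \gamma$, this yields $\|\bs h^{(T)}\|_{N_\gamma(\cE; [0, T])} \leq C\|\bs f\|_{N_{\gamma+1}(\cE)}$ independently of $T$. Since~\eqref{eq:kg-lin-eq-inf} gives a uniform bound on $\partial_t \bs h^{(T)}$ in $L^\infty_{\loc}([0, \infty); L^2 \times H^{-1})$, Arzel\`a--Ascoli in the weak topology of $\cE$ produces a subsequential weak limit $\bs h \in N_\gamma(\cE)$ on each compact subinterval; the multipliers converge pointwise through~\eqref{eq:val-lambda-mu}, and passing to the limit in the Duhamel formulation produces the desired solution. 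The uniqueness statement then promotes subsequential to full-net convergence, giving the asserted bounded linear operator $\bs f \mapsto \bs h$.

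The refined bounds~\eqref{eq:lam-mu-lin-kg} come from reading~\eqref{eq:val-lambda-mu} carefully. Write $\calM(\vec a, \vec v) = \calM_{\tx{diag}} + \calM_{\tx{off}}$, where $\calM_{\tx{diag}}$ retains only the $\mp\gamma_{v_k}^3 M$ entries prescribed by~\eqref{eq:Mav-diag}; the bounds~\eqref{eq:Mav-cross} ensure $\calM_{\tx{off}} = O(y_{\min}\eee^{-y_{\min}}) = O((t_0+t)^{-2+\eps})$ entrywise, so $\calM^{-1} = \calM_{\tx{diag}}^{-1} + O((t_0+t)^{-2+\eps})$. For the $\bs\alpha_k$-rows of the right-hand side vector in~\eqref{eq:val-lambda-mu}, I would use $\bs J\bs\alpha_k = -\partial_a\bs H_k$ together with~\eqref{eq:alpha-id} and~\eqref{eq:ab-dxda} to obtain
\begin{multline}
\partial_t\bs\alpha_k - \vD^2 E(\bs H(\vec a, \vec v))\bs J\bs\alpha_k \\
= -(a_k' - v_k)\partial_x\bs\alpha_k + v_k'\bs J\partial_v\partial_a\bs H_k + \bigl((U''(H(\vec a, \vec v)) - U''(H_k))\partial_a H_k,\, 0\bigr),
\end{multline}
with an analogous identity for $\bs\beta_k$ via~\eqref{eq:beta-id}; the spurious $-\bs\alpha_k$ appearing there vanishes when paired against $\bs h$ thanks to~\eqref{eq:h-orth}. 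Each summand is then $O((t_0+t)^{-2+\eps}\|\bs h\|_\cE)$ by~\eqref{eq:traj-cond}, Lemma~\ref{lem:U-bounds}, and the bound $\eee^{-y_{\min}(t)} \lesssim (t_0+t)^{-2}$. Combining these with $|\gamma_{v_k}^{-3} - 1| \lesssim (t_0+t)^{-2}$ produces~\eqref{eq:lam-mu-lin-kg}.

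The main obstacle is the calibration of $\tau_0$: the Gronwall exponent $C\eps_0$ in Lemma~\ref{lem:en-est-lin} must be strictly less than the decay rate $\gamma$ prescribed by $N_\gamma(\cE)$, which is why $\tau_0 = \tau_0(\gamma)$, and it is also the source of the unavoidable $\eps$-loss in~\eqref{eq:lam-mu-lin-kg} (since the threshold $y_{\min}\eee^{-y_{\min}}$ loses any positive power of $(t_0+t)$). Beyond this, the weak-compactness step relies on the mild observation that~\eqref{eq:kg-lin-eq-inf} bounds $\partial_t \bs h^{(T)}$ uniformly in the dual-energy norm, enough to extract a weak-$\ast$ subsequential limit on every compact time interval.
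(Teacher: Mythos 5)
Your proposal is correct and follows essentially the same route as the paper: existence by solving the projected problem with zero data at a late time via Lemma~\ref{lem:kg-lin-orth}, uniform decay from the energy estimate of Lemma~\ref{lem:en-est-lin} with the Gronwall exponent beaten down by taking $\tau_0$ large (exactly the computation in Lemma~\ref{lem:en-est-lin-inf}), and the refined multiplier bounds from the identities \eqref{eq:alpha-id}, \eqref{eq:beta-id}, \eqref{eq:ab-dxda} together with Lemma~\ref{lem:U-bounds} and the near-diagonality of $\calM(\vec a,\vec v)$. The only cosmetic difference is that the paper obtains \eqref{eq:lam-mu-lin-kg} by pairing the equation directly with $\bs\alpha_k,\bs\beta_k$ after a preliminary bound on $|\vec\lambda|+|\vec\mu|$, whereas you read it off \eqref{eq:val-lambda-mu} via a diagonal/off-diagonal split of $\calM$; these are equivalent, and your explicit compactness step for the $T\to\infty$ limit fills in a detail the paper leaves implicit.
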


In one instance we require a subtle refinement of Lemma~\ref{lem:kg-lin-eq-inf} under decay assumptions on $\p_t  \bs f$. This trick was pointed out to us by Yvan Martel. 

\begin{lemma} \label{lem:kg-lin-eq-trick} In the setting of Lemma~\ref{lem:kg-lin-eq-inf}, suppose in addition that $\p_t  \bs f  \in N_{\gamma+2}(\E)$. Then,  
$\bs h$ satisfies the estimates 
\EQ{
\| \bs h \|_{N_{\gamma+1}( \E) } \le C_{\gamma} \Big( \| \bs f \|_{N_{\gamma+1}(\E)} + \| \p_t \bs f \|_{N_{\gamma +2}(\E)}\Big). 
}



\end{lemma}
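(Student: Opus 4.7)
My plan is to follow an integration-by-parts-in-time idea (as suggested by Martel) implemented as a Lyapunov--Schmidt-style decomposition $\bs h = \bs h_p + \bs\xi$. The \emph{static} particular solution $\bs h_p$ will be chosen so that $\bs J\vD^2 E(\bs H(\vec a(t),\vec v(t)))\bs h_p(t)$ cancels $\bs f(t)$ modulo the span of the symmetries $\{\partial_{a_k}\bs H_k,\partial_{v_k}\bs H_k\}_k$. The correction $\bs\xi$ then satisfies the same projected equation as $\bs h$ but with forcing $-\partial_t \bs h_p$, which decays one power faster than $\bs f$ by hypothesis; applying Lemma~\ref{lem:kg-lin-eq-inf} at level $\gamma+1$ to $\bs\xi$ will close the estimate.

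The first and only non-routine step is to construct, for each $t\ge 0$, a triple $(\bs h_p(t),\vec\lambda^p(t),\vec\mu^p(t))\in\Pi(\vec a(t),\vec v(t))\times\bR^n\times\bR^n$ solving
\EQ{
\vD^2 E(\bs H(\vec a(t),\vec v(t)))\,\bs h_p(t) = \bs J\bs f(t) + \sum_{k=1}^n\big(\lambda^p_k(t)\bs\alpha_k(t) + \mu^p_k(t)\bs\beta_k(t)\big).
}
This is essentially Lemma~\ref{lem:static-ls} \ref{it:static-ls-i}, except that $\bs J\bs f$ lives in $L^2\times H^1$ rather than in $\cE$. Since $|\la\bs J\bs f,\bs g\ra|\le 2\|\bs f\|_\cE\|\bs g\|_\cE$ for every $\bs g\in\cE$, the vector $\bs J\bs f$ still defines a bounded linear functional on $\cE$, and the Lax--Milgram argument behind Lemma~\ref{lem:static-ls} extends verbatim, producing $\|\bs h_p(t)\|_\cE\lesssim\|\bs f(t)\|_\cE$, hence $\bs h_p\in N_{\gamma+1}(\cE)$ with norm controlled by $\|\bs f\|_{N_{\gamma+1}(\cE)}$. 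Differentiating the defining equation in $t$ (which I justify via Lemma~\ref{lem:diff-param}) and using that $\partial_t\vD^2 E(\bs H(\vec a,\vec v))$ is multiplication by $U'''(H(\vec a,\vec v))\partial_t H(\vec a,\vec v)$, together with $|\partial_t H(\vec a,\vec v)|\lesssim |\vec v|+|\vec a\,'|\lesssim(t_0+t)^{-1}$ from~\eqref{eq:traj-cond}, will then give
\EQ{
\|\partial_t \bs h_p(t)\|_\cE \lesssim \|\partial_t\bs f(t)\|_\cE + (t_0+t)^{-1}\|\bs h_p(t)\|_\cE,
}
so that $\partial_t\bs h_p\in N_{\gamma+2}(\cE)$ with norm $\lesssim\|\partial_t\bs f\|_{N_{\gamma+2}(\cE)}+\|\bs f\|_{N_{\gamma+1}(\cE)}$.

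To conclude, I set $\bs\xi := \bs h - \bs h_p$. Using $\bs J^2=-\Id$ together with $\bs J\bs\alpha_k = -\partial_{a_k}\bs H_k$ and $\bs J\bs\beta_k = -\partial_{v_k}\bs H_k$, the defining equation for $\bs h_p$ rearranges to
\EQ{
\bs J\vD^2 E(\bs H)\bs h_p = -\bs f - \sum_{k=1}^n\big(\lambda^p_k\partial_{a_k}\bs H_k + \mu^p_k\partial_{v_k}\bs H_k\big),
}
so $\bs\xi$ satisfies~\eqref{eq:kg-lin-eq-inf} with forcing $-\partial_t\bs h_p$ and modified multipliers $\wt\lambda_k = \lambda_k - \lambda^p_k$, $\wt\mu_k = \mu_k - \mu^p_k$. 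Since $\bs h,\bs h_p\in\Pi(\vec a(t),\vec v(t))$ for every $t$, so does $\bs\xi$. Applying Lemma~\ref{lem:kg-lin-eq-inf} with $\gamma$ replaced by $\gamma+1$ to the forcing $-\partial_t\bs h_p\in N_{\gamma+2}(\cE)$ produces a unique solution in $N_{\gamma+1}(\cE)$, which must agree with $\bs\xi$ by the uniqueness of solutions in $N_\gamma(\cE)$ provided by that lemma. Combining this with the bound on $\bs h_p$ from Step~1 gives the claimed estimate. The one delicate point in the whole argument will be the $L^2\times H^1$-versus-$\cE$ mismatch in Step~1, which is resolved by the functional-analytic observation above; everything else is a direct combination of Lemmas~\ref{lem:static-ls},~\ref{lem:diff-param}, and~\ref{lem:kg-lin-eq-inf}.
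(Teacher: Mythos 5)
Your proof is correct, but it takes a genuinely different route from the paper's. The paper keeps the solution $\bs h$ intact and instead corrects the \emph{energy functional}: it sets $\check I(t) \coloneqq I(t) + \la \bs J \bs h(t), \bs f(t)\ra$, differentiates, and exploits a cancellation between the multiplier terms $\lambda_k\la\bs\alpha_k,\bs f\ra + \mu_k\la\bs\beta_k,\bs f\ra$ (via the identities \eqref{eq:lam-mu-lin-kg}); the loss of one power of decay is thereby transferred from $\la\bs J\partial_t\bs h,\bs f\ra$ to $\la\bs J\bs h,\partial_t\bs f\ra$, and the conclusion follows by integrating the resulting differential inequality. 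You instead correct the \emph{solution}, subtracting a static elliptic profile $\bs h_p$ so that the remainder $\bs\xi$ solves the projected equation with the better-decaying forcing $-\partial_t\bs h_p$ — which is precisely the ansatz the paper itself uses in Lemma~\ref{lem:1st-Picard} for the first Picard iterate, here promoted to a general $\bs f$. Both are ``integration by parts in time''; yours has the advantage of reusing the elliptic machinery of Lemmas~\ref{lem:static-ls} and \ref{lem:kg-lin-eq-inf} as black boxes and of not requiring the multiplier cancellation, at the cost of two technical points you should make explicit: (i) your invocation of Lemma~\ref{lem:diff-param} to differentiate $\bs h_p$ in $t$ is not literal, since there the forcing depends on $t$ only through $(\vec a,\vec v)$ whereas your $\bs J\bs f(t)$ depends on $t$ directly — you need a difference-quotient argument combining the linearity of $\bs f\mapsto\bs h_p$ with the difference estimates \eqref{eq:h-diff-est}; and (ii) when identifying $\bs\xi$ with the output of Lemma~\ref{lem:kg-lin-eq-inf} at level $\gamma+1$, the clean statement is that the level-$\gamma$ and level-$(\gamma+1)$ solutions coincide by uniqueness in $N_\gamma(\cE)$ (for $\tau_0$ large enough for both levels), and $\bs\xi\in N_\gamma(\cE)$ satisfies the same projected equation. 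Neither point is a gap in substance.
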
 

\begin{proof}[Proof of Lemma~\ref{lem:kg-lin-eq-inf}] 
Fixing $\gamma>0$, we have $ \bs f \in L^1([t_0, \infty); \E) \subset  N_{\gamma +1}(\E)$. Then, letting $\bs h_0 = \bs 0$ and $T_0\to \infty$ in  Lemma~\ref{lem:kg-lin-orth} we obtain a unique triplet $(\bs h, \vec \lam, \vec \mu) \in C([t_0, \infty); \E) \times L^1([t_0, \infty)) \times L^1([t_0, \infty))$ solving~\eqref{eq:kg-lin-eq-inf} and such that \eqref{eq:h-orth} holds. Using Lemma~\ref{lem:en-est-lin} and arguing as in Lemma~\ref{lem:en-est-lin-inf} we obtain the estimate 
\EQ{
\| \bs h \|_{N_{\gamma}(\E)} \le C \| \bs f \|_{N_{\gamma+1}(\E)}. 
}
We next prove~\eqref{eq:lam-mu-lin-kg}.   In light of~\eqref{eq:ab-dxda}, ~\eqref{eq:alpha-id}, and~\eqref{eq:beta-id} we note the identities
\EQ{ \label{eq:dt-alpha-beta} 
\p_t \bs \al_k - \vD^2 E( \bs H_k) \bs J \bs \al_k &= (a_k' - v_k) \p_a \bs \alpha_k + v_k' \p_v \bs \alpha_k, \\
\p_t \bs \beta_k - \vD^2 E( \bs H_k) \bs J \bs \beta_k &= (a_k' - v_k) \p_a \bs \beta_k + v'_k \p_v \bs \beta_k -  \bs \al_k . 
}
Starting with the expression~\eqref{eq:val-lambda-mu}, using the boundedness of $\calM(\vec a, \vec v)^{-1}$ for trajectories as in~\eqref{eq:traj-cond}, the above with~\eqref{eq:h-orth}, and~\eqref{eq:traj-cond} we obtain the preliminary estimates
\EQ{ \label{eq:lam-mu-prelim1} 
| \vec \lam(t)| + | \vec \mu(t) |  \lesssim (t+ t_0)^{-2} \| \bs h(t) \|_{\E} + \| \bs f(t)\|_{\E}. 
}
To obtain~\eqref{eq:lam-mu-lin-kg} we pair the equation~\eqref{eq:kg-lin-eq-inf}  with $\bs \beta_k$ and $\bs \al_k$ yielding 
\EQ{
\la \p_t \bs h, \, \bs \beta_k\ra  &= \la \bs J \vD^2 E( \bs H( \vec a, \vec v)) \bs h, \, \bs \beta_k \ra + \la \bs f , \, \bs \beta_k \ra + \sum_j \lam_j \la \p_a \bs H_j ,\, \bs\beta_k\ra + \mu_j \la \p_v \bs H_j, \, \bs \beta_k \ra \\
\la \p_t \bs h, \, \bs \alpha_k\ra  &= \la \bs J \vD^2 E( \bs H( \vec a, \vec v)) \bs h, \, \bs \al_k \ra + \la \bs f , \, \bs \al_k \ra + \sum_j \lam_j \la \p_a \bs H_j ,\, \bs\al_k\ra + \mu_j \la \p_v \bs H_j, \, \bs \al_k \ra . 
}
Using~\eqref{eq:matrix-coef}~\eqref{eq:identities-1} and~\eqref{eq:dt-orth} we obtain the expressions 
\EQ{
M \gamma_{v_k}^3 \lam_k + \la \bs f , \, \bs \beta_k \ra &=  - \la \bs  h, \, \p_t \bs \beta_k - \vD^2 E( \bs H_k) \bs J \bs \beta_k \ra +  \sum_{j \neq k} \lam_j \la \p_a \bs H_j ,\, \bs\beta_k\ra + \mu_j \la \p_v \bs H_j, \, \bs \beta_k \ra \\
&\quad - \la \bs  h, \, \Big( \vD^2 E(\bs H_k) - \vD^2 E( \bs H( \vec a, \vec v))\Big)  \p_v \bs H_k \ra\\
M\gamma_{v_k}^3 \mu_k -  \la \bs f , \, \bs \al_k \ra & =  \la \bs h , \, \p_t \bs \al_k - \vD^2 E( \bs H_k) \bs J \bs \al_k \ra -  \sum_{j \neq k} \lam_j \la \p_a \bs H_j ,\, \bs\al_k\ra + \mu_j \la \p_v \bs H_j, \, \bs \al_k \ra\\
&\quad + \la \bs  h, \, \Big( \vD^2 E(\bs H_k) - \vD^2 E( \bs H( \vec a, \vec v))\Big)  \p_a \bs H_k \ra. 
}
Noting that $\gamma_{v_k}^{-3} -1 = O((t_0 + t)^{-2})$, using~\eqref{eq:dt-alpha-beta} and ~\eqref{eq:traj-cond} on the first terms on the right, Lemma~\ref{lem:exp-cross-term} to estimate the cross terms in the sum over $j \neq k$, and~\eqref{d2U-bounds} for the last terms in each expression  completes the proof of~\eqref{eq:lam-mu-lin-kg}. 
\end{proof}

\begin{proof}[Proof of Lemma~\ref{lem:kg-lin-eq-trick} ]
We introduce another modified energy functional. Letting $I(t)$ be as in the proof of Lemma~\ref{lem:en-est-lin} we set 
\EQ{
\check I(t) \coloneqq  I(t) + \la  \bs J  \bs h (t), \, \bs f(t) \ra 
}
By Proposition~\ref{prop:coer} we have 
\EQ{
\|\bs h \|_{\E}^2 \lesssim \check I(t) + \| \bs h \|_{\E} \| \bs f \|_{\E} 
}
Using~\eqref{eq:I'}, and recalling that $\bs h(t)$ satisfies the equation 
\EQ{
\p_t \bs h = \bs J \vD^2 E( \bs H( \vec a, \vec v) \bs h +  \bs f + \sum_{k} \lam_k \p_{a} \bs H_k + \mu_k  \p_v \bs H_k 
}
we compute 
\EQ{
\check I'(t) &= I'(t) + \la \bs J \p_t \bs h(t), \, \bs f(t) \ra + \la \bs J \bs h(t) , \, \p_t \bs f(t)\ra \\
& =  \la \bs J \bs h, \, \p_t \bs f\ra   + \sum_k \lam_k \la \bs \alpha_k, \,  \bs f \ra + \mu_k \la \bs \beta_k , \, \bs f \ra \\
&\quad + O\big((|\vec a\,' - \vec v| + |\vec v\,'| + |\vec v|^2 + |\vec v|y_\tx{min}^{-1})\|\bs h\|_\cE^2\big)  + O\big(|\vec v|\|\bs f\|_\cE\|\bs h\|_\cE\big).
}
Using~\eqref{eq:lam-mu-lin-kg} and the estimates~\eqref{eq:lam-mu-prelim1} we note the cancellation 
\EQ{
|\lam_k \la \bs \alpha_k, \,  \bs f \ra + \mu_k \la \bs \beta_k , \, \bs f \ra| &=  M^{-1}\big|  \lam_k( \la \bs \alpha_k, \,  \bs f \ra - M\mu_k)  + \mu_k (\la \bs \beta_k , \, \bs f \ra  + M\lambda_k) \Big| \\
&\lesssim (t_0 + t)^{-\frac{3}{2}} ( \| \bs h \|_{\E} +  \| \bs f \|_{\E}) \Big( (t+ t_0)^{-2} \| \bs h(t) \|_{\E} + \| \bs f(t)\|_{\E} \Big) \\
&\lesssim (t_0 + t)^{-\frac{3}{2}} ( \| \bs h \|_{\E}^2 +  \| \bs f \|_{\E}^2).
}
Using this in the previous line we are led to the estimate
\EQ{
| \check I'(t) | \lesssim \frac{1}{ (t_0 + t) \log t} \| \bs h(t) \|_{\E}^2 + \frac{1}{t_0+ t} \| \bs h(t) \|_{\E} \| \bs f(t) \|_{\E} +  \frac{1}{(t_0 + t)^{\frac{3}{2}}} \| \bs f (t) \|_{\E}^2 + \| \bs h(t) \|_{\E} \|\p_t \bs f \|_{\E} 
}
and the conclusion follows by integration. 
\end{proof} 


Next, we consider the nonlinear problem for a fixed trajectory ~$(\vec a, \vec v)$ as in~\eqref{eq:traj-cond}.
\begin{lemma}[The nonlinear equation] 
\label{lem:kg-nonlin-eq-inf}
There exist $\delta, \tau_0 > 0$ such that for any $t_0 \geq \tau_0$
and $(\vec a, \vec v)$ satisfying \eqref{eq:traj-cond} there exists a unique solution $(\bs h, \vec\lambda, \vec\mu)$ of the equation
\EQ{
 \label{eq:kg-nonlin-eq-inf}
 \partial_t \bs h(t) &= \bs J\vD E(\bs H(\vec a(t), \vec v(t))+\bs h(t)) 
 + \sum_{k=1}^n\big(\lambda_k(t) \partial_{a}\bs H_k(t)+ \mu_k(t)\partial_{v}\bs H_k(t)\big)
}
such that $\|\bs h\|_{N_1} \leq \delta$ and \eqref{eq:h-orth} holds for all $t \geq 0$.
This solution satisfies $\|\bs h\|_{N_\gamma} \leq C_\gamma$  and
\EQ{ \label{eq:nonlin-param} 
\sup_{1 \le k \le n} \| \lam_k(t) + (-1)^k v_k(t) \|_{N_{\gamma+1}} &\le C_\gamma  \\
\sup_{1 \le k \le n}\| M  \mu_k(t)+ (-1)^k F_k( \vec a(t), \vec v(t))\|_{N_{\gamma +1}}   &\le C_\gamma 
}
for any $\gamma < 2$, where $F_k(\vec a, \vec v)$ is defined in~\eqref{eq:F_k-def}. 
\end{lemma}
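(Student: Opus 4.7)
The plan is to reduce~\eqref{eq:kg-nonlin-eq-inf} to the projected linear problem of Lemmas~\ref{lem:kg-lin-eq-inf}--\ref{lem:kg-lin-eq-trick} and close a Banach contraction in a ball of $N_{2-\epsilon}(\cE)$ for arbitrary fixed $\epsilon > 0$. Taylor-expanding,
\EQ{
\bs J \vD E( \bs H( \vec a, \vec v) + \bs h) = \bs J \vD^2 E( \bs H( \vec a, \vec v)) \bs h + \bs J \vD E( \bs H( \vec a, \vec v)) + \bs J N(\bs h),
}
with $N(\bs h) \coloneqq \vD E(\bs H(\vec a, \vec v) + \bs h) - \vD E(\bs H(\vec a, \vec v)) - \vD^2 E(\bs H(\vec a, \vec v))\bs h$ quadratic in $\bs h$, and using~\eqref{eq:DE-expansion} to write $\bs J\vD E(\bs H(\vec a, \vec v)) = \sum_k (-1)^k v_k \p_a \bs H_k + \bs J \bs f(\vec a, \vec v)$ with $\bs f(\vec a, \vec v)$ as in~\eqref{eq:Phi-def}, I absorb the drift sum into the multipliers by $\tilde\lambda_k \coloneqq \lambda_k + (-1)^k v_k$. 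Then equation~\eqref{eq:kg-nonlin-eq-inf} becomes equivalent to
\EQ{
\p_t \bs h = \bs J\vD^2 E(\bs H(\vec a, \vec v))\bs h + \bs G(\bs h) + \sum_{k=1}^n \big(\tilde\lambda_k \p_a \bs H_k + \mu_k \p_v \bs H_k\big),
}
with $\bs G(\bs h) \coloneqq \bs J \bs f(\vec a, \vec v) + \bs J N(\bs h)$ and $(\tilde\lambda, \vec\mu)$ determined by the orthogonality conditions via Lemma~\ref{lem:kg-lin-orth}.

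Lemma~\ref{lem:U-bounds} and the bound $y_{\min}(t) \ge 2\log(t_0+t) - C$ from~\eqref{eq:traj-cond} give $\|\bs J \bs f(\vec a, \vec v)\|_\cE \lesssim \sqrt{y_{\min}}\, e^{-y_{\min}} \lesssim_\epsilon (t_0+t)^{-2+\epsilon/2}$; differentiating in time via the chain rule with $|\vec a\,'|, |\vec v| \lesssim (t_0+t)^{-1}$ and~\eqref{d2U-bounds} gains one power, yielding $\|\p_t(\bs J \bs f(\vec a, \vec v))\|_\cE \lesssim_\epsilon (t_0+t)^{-3+\epsilon/2}$. I set up the fixed point in the space $Y$ of $\bs h$ with $\|\bs h\|_{N_{2-\epsilon}(\cE)} + \|\p_t \bs h\|_{N_{2-\epsilon}(L^2 \times H^{-1})}$ controlled. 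For $\bs h$ in a ball of $Y$, the Sobolev embedding $H^1 \hookrightarrow L^\infty$ and a direct computation (using the ODE $\p_t h = \dot h + \sum_k (\lambda_k \p_a H_k + \mu_k \p_v H_k)$ to handle the cross term $(U''(H+h) - U''(H))\p_t h$) give $\bs G(\bs h) \in N_{2-\epsilon}(\cE)$ and $\p_t \bs G(\bs h) \in N_{3-\epsilon}(\cE)$, so Lemma~\ref{lem:kg-lin-eq-trick} with $\gamma = 1-\epsilon$ yields $\Phi(\bs h)$ in a ball of $N_{2-\epsilon}(\cE)$; the equation $\p_t \Phi(\bs h) = \bs J \vD^2 E(\bs H)\Phi(\bs h) + \bs G + \text{mult}$ then controls $\p_t \Phi(\bs h)$ in $N_{2-\epsilon}(L^2 \times H^{-1})$, so $\Phi$ maps the ball into itself for $t_0$ sufficiently large. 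For the contraction, $\bs J(N(\bs h_1) - N(\bs h_2))$ has $\cE$-norm $\lesssim M(t_0+t)^{-4+2\epsilon}\|\bs h_1 - \bs h_2\|_Y$, and Lemma~\ref{lem:kg-lin-eq-inf} together with the embedding $N_{3-2\epsilon} \hookrightarrow N_{2-\epsilon}$ (constant $\lesssim t_0^{-1+\epsilon}$) yields a contraction factor $\to 0$ as $t_0 \to \infty$. Banach's theorem gives a solution $\bs h^{(\epsilon)} \in Y$.

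Any other solution $\bs h$ of~\eqref{eq:kg-nonlin-eq-inf} with $\|\bs h\|_{N_1} \le \delta$ differs from $\bs h^{(\epsilon)}$ by $\bs d$ solving the projected linear problem with forcing of $\cE$-norm $\lesssim \delta (t_0+t)^{-2}\|\bs d\|_{N_1}$, and Lemma~\ref{lem:kg-lin-eq-inf} with $\gamma = 1$ gives $\|\bs d\|_{N_1} \le C\delta\|\bs d\|_{N_1}$, forcing $\bs d = 0$ for $\delta$ small. This identifies all $\bs h^{(\epsilon)}$ as one and the same solution, whence $\|\bs h\|_{N_\gamma} \le C_\gamma$ for every $\gamma < 2$. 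Estimates~\eqref{eq:nonlin-param} follow by substituting $\bs G = \bs J\bs f(\vec a, \vec v) + \bs J N(\bs h)$ into the refined formulas~\eqref{eq:lam-mu-lin-kg}: the identity $\la \bs\alpha_k, \bs J\bs f(\vec a, \vec v)\ra = \la \p_a \bs H_k, \bs f(\vec a, \vec v)\ra = -(-1)^k F_k(\vec a, \vec v)$ (from~\eqref{eq:F_k-def}) yields $M\mu_k + (-1)^k F_k(\vec a, \vec v) = O(\|\bs h\|_\cE^2) + \text{error} \in N_{\gamma+1}$ for all $\gamma < 2$, while the factor of $v_k$ in $\p_v \bs H_k$ (see~\eqref{eq:dHalbe-form}) gives $|\la \bs\beta_k, \bs J\bs f(\vec a, \vec v)\ra| \lesssim |\vec v|\sqrt{y_{\min}}e^{-y_{\min}} \lesssim (t_0+t)^{-3+\epsilon}$, placing $\lambda_k + (-1)^k v_k = \tilde\lambda_k$ in $N_{\gamma+1}$ for every $\gamma < 2$. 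The main obstacle throughout is the logarithmic loss $\sqrt{y_{\min}}$ in Lemma~\ref{lem:U-bounds}, which keeps $\bs J\bs f(\vec a, \vec v)$ just outside $N_2(\cE)$ and prevents a direct application of Lemma~\ref{lem:kg-lin-eq-inf} from closing the fixed point; Lemma~\ref{lem:kg-lin-eq-trick}'s one-power gain from $\p_t$ of the forcing is precisely what bridges this gap and delivers $N_{2-\epsilon}$ decay on $\bs h$.
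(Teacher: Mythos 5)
Your proof is correct, but it takes a genuinely different route from the paper's, so a comparison is worthwhile. The paper first isolates the leading behavior of the first Picard iterate by solving the \emph{static} (elliptic) problem $\vD^2 E(\bs H(\vec a,\vec v))\bs h_S = -\vD E(\bs H(\vec a,\vec v)) + \text{multipliers}$ via Lemma~\ref{lem:static-ls}; this produces an explicit profile $\bs h_S$ of size $\sqrt{y_{\min}}\,e^{-y_{\min}}\lesssim (t_0+t)^{-2+\epsilon}$, and the remainder $\bs h_R$ then solves a linear problem whose forcing is $-\partial_t \bs h_S$, which gains a power of decay because the trajectory moves slowly; Lemma~\ref{lem:kg-lin-eq-trick} is invoked only for the single piece $\sum_k v_k\,\partial_{a_k}\bs h_S$ of that forcing. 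The contraction is then run in $N_1(\cE)$ and bootstrapped. You instead absorb the drift $\sum_k(-1)^kv_k\partial_a\bs H_k$ into shifted multipliers $\tilde\lambda_k=\lambda_k+(-1)^kv_k$, observe that the remaining interaction forcing $\bs J\bs f(\vec a,\vec v)$ satisfies $\|\bs J\bs f\|_\cE\lesssim (t_0+t)^{-2+\epsilon}$ while $\|\partial_t(\bs J\bs f)\|_\cE\lesssim (t_0+t)^{-3+\epsilon}$ (the same ``time derivative gains a power'' mechanism, deployed one level earlier), and apply Lemma~\ref{lem:kg-lin-eq-trick} with $\gamma=1-\epsilon$ directly to the full forcing, closing the contraction in a space that also tracks $\partial_t\bs h$ so that $\partial_t(\bs J N(\bs h))$ can be estimated. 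This bypasses Lemma~\ref{lem:1st-Picard} and the static ansatz entirely; the price is a heavier reliance on Lemma~\ref{lem:kg-lin-eq-trick} at the weakest admissible decay rate, whereas the paper only uses that lemma for a forcing already one power better. Your uniqueness argument in the $\delta$-ball of $N_1$ and your derivation of~\eqref{eq:nonlin-param} from~\eqref{eq:lam-mu-lin-kg}, the identity $\la\partial_{a_k}\bs H_k,\bs f(\vec a,\vec v)\ra=-(-1)^kF_k(\vec a,\vec v)$, and the factor of $v_k$ in $\partial_v\bs H_k$ coincide in substance with the paper's Steps 3 and 5.
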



Before we give a proof (which is based on the Contraction Principle),
we need one more auxiliary result, which provides estimates on the first step
of the Picard iteration. It relies on an explicit ansatz and results in a better bound
than the one obtained from a direct application of the energy estimates.
\begin{lemma}
\label{lem:1st-Picard}
Let $(\bs h_{I}, \vec\lambda_I,\vec\mu_I)$ be the solution of
\begin{equation}
\label{eq:1st-Picard}
\begin{aligned}
\partial_t \bs h_I(t) &= \bs J\vD^2 E(\bs H(\vec a(t), \vec v(t)))\bs h_I(t) + \bs J\vD E(\bs H(\vec a(t), \vec v(t)))\\ 
&\quad + \sum_{k=1}^n\big( \lambda_{I, k}(t) \partial_{a_k}\bs H_k(t)
+ \mu_{I, k}(t)\partial_{v_k}\bs H_k(t)\big)
 \end{aligned}
\end{equation}
given by Lemma~\ref{lem:kg-lin-eq-inf}. For each $0<\gamma<2$ there exists a constant $C_{\gamma} >0$ so that $(\bs h_I, \vec \lambda_I, \vec \mu_I)$ satisfy $\|\bs h_I\|_{N_\gamma(\E)} \le C_\gamma$, and 
\EQ{ \label{eq:1st-Picard-param} 
 \sup_{1 \le k \le n}\Big(  \| \lambda_{I, k}(t) + (-1)^k v_k(t) \|_{N_{\gamma+1}} +  \|M  \mu_{I, k}(t) + (-1)^k F_k( \vec a(t), \vec v(t)) \|_{N_{\gamma+1}}  \Big)&\le C_\gamma. 
}
\end{lemma}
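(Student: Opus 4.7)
The strategy is to rewrite \eqref{eq:1st-Picard} so that the slowly-decaying piece of the forcing, which is only of order $|\vec v(t)|\lesssim (t_0+t)^{-1}$ and thus lies in no $N_{1+\sigma}(\cE)$, gets absorbed into the Lagrange multipliers, leaving behind a genuine forcing of order $\sqrt{y_{\min}}\,e^{-y_{\min}}\lesssim (t_0+t)^{-2+\epsilon}$. One then applies the sharpened energy estimate of Lemma~\ref{lem:kg-lin-eq-trick} to the modified equation to obtain $\|\bs h_I\|_{N_\gamma(\cE)}\le C_\gamma$ for every $\gamma<2$.

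Concretely, the decomposition \eqref{eq:DE-expansion} together with the identity $\bs J\bs\alpha_k=-\partial_a\bs H_k$ (which follows from \eqref{eq:alpha-beta-def} and $\bs J^2=-\Id$) gives
\begin{equation*}
\bs J\vD E(\bs H(\vec a,\vec v)) = \sum_{k=1}^n(-1)^k v_k\,\partial_a\bs H_k + \bs J\bs f(\vec a,\vec v),
\end{equation*}
where $\bs f(\vec a,\vec v)$ is defined in \eqref{eq:Phi-def}. Setting $\bar\lambda_{I,k}\coloneqq \lambda_{I,k}+(-1)^k v_k$, the triple $(\bs h_I,\bar{\vec\lambda}_I,\vec\mu_I)$ then solves the \emph{same} projected equation but with $\bs J\bs f(\vec a,\vec v)$ in place of $\bs J\vD E(\bs H(\vec a,\vec v))$, and with unchanged orthogonality conditions on $\bs h_I$. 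By Lemma~\ref{lem:U-bounds}, $\|\bs F(t)\|_\cE\lesssim\sqrt{y_{\min}}\,e^{-y_{\min}}$ where $\bs F(t)\coloneqq \bs J\bs f(\vec a(t),\vec v(t))$, hence $\bs F\in N_{2-\epsilon}(\cE)$ for any $\epsilon>0$ thanks to \eqref{eq:traj-cond}. Next, I would estimate $\partial_t\bs F$ via the chain rule. The partial derivatives $\partial_{a_k}\bs f$ and $\partial_{v_k}\bs f$ are controlled by \eqref{d2U-bounds} (which gives bounds $\lesssim\sqrt{y_{\min}}\,e^{-y_{\min}}$ and $\lesssim|\vec v|\sqrt{y_{\min}}\,e^{-y_{\min}}$, respectively), while $|a_k'|\lesssim (t_0+t)^{-1}$ and $|v_k'|\lesssim(t_0+t)^{-2}$ by \eqref{eq:traj-cond}. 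This yields $\|\partial_t\bs F\|_\cE\lesssim (t_0+t)^{-3+\epsilon}$, i.e., $\partial_t\bs F\in N_{3-\epsilon}(\cE)$, so Lemma~\ref{lem:kg-lin-eq-trick} (applied with its internal parameter equal to $1-\epsilon$) delivers $\|\bs h_I\|_{N_{2-\epsilon}(\cE)}\le C_\epsilon$.

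For the parameter estimates \eqref{eq:1st-Picard-param}, I would apply \eqref{eq:lam-mu-lin-kg} to the modified equation. Using the skew-adjointness of $\bs J$ together with $\bs J\bs\beta_k=-\partial_v\bs H_k$ and $\bs J\bs\alpha_k=-\partial_a\bs H_k$, the relevant pairings reduce to
\begin{equation*}
\la\bs\beta_k,\bs F\ra=\la\partial_v H_k,f(\vec a,\vec v)\ra, \qquad \la\bs\alpha_k,\bs F\ra=\la\partial_a H_k,f(\vec a,\vec v)\ra=-(-1)^k F_k(\vec a,\vec v),
\end{equation*}
the last equality being the very definition \eqref{eq:F_k-def} of $F_k$. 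Since $\|\partial_v H_k\|_{L^2}\lesssim|v_k|$ and $\|f(\vec a,\vec v)\|_{L^2}\lesssim (t_0+t)^{-2+\epsilon}$, the first pairing is of size $(t_0+t)^{-3+\epsilon}$, while the error term in \eqref{eq:lam-mu-lin-kg} is $\lesssim (t_0+t)^{-4+2\epsilon}$ in view of the bounds already obtained on $\bs h_I$ and $\bs F$. This yields $|\bar\lambda_{I,k}(t)|=|\lambda_{I,k}(t)+(-1)^k v_k(t)|\lesssim (t_0+t)^{-3+\epsilon}$ and $|M\mu_{I,k}(t)+(-1)^k F_k(\vec a(t),\vec v(t))|\lesssim (t_0+t)^{-3+\epsilon}$, comfortably within $N_{\gamma+1}$ for any $\gamma<2$.

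The principal obstacle is that the original forcing $\bs J\vD E(\bs H(\vec a,\vec v))$ decays only at rate $(t_0+t)^{-1}$, so a direct invocation of Lemma~\ref{lem:kg-lin-eq-inf} would yield only $\gamma<1$. Overcoming this requires both the structural absorption of the $(-1)^k v_k\bs\alpha_k$-term into the multipliers---which is possible precisely because $\bs J\bs\alpha_k=-\partial_a\bs H_k$ lies in the span of the multiplier directions---\emph{and} the sharpened a priori estimate of Lemma~\ref{lem:kg-lin-eq-trick}, which in turn demands control of $\partial_t\bs F$. The latter rests on the fact that $\partial_{v_k}\bs f(\vec a,\vec v)$ carries an extra factor of $|\vec v|$, so that the dominant contribution to $\partial_t\bs F$ comes from $a_k'\cdot\partial_{a_k}\bs f$ and still respects the desired decay rate.
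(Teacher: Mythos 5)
Your proof is correct, but it takes a genuinely different route from the paper's. The paper constructs $\bs h_I$ via an elliptic ansatz: it first solves the static problem $\vD^2 E(\bs H(\vec a,\vec v))\bs h_S = -\vD E(\bs H(\vec a,\vec v)) + \sum_k(\lambda_{S,k}\bs\alpha_k+\mu_{S,k}\bs\beta_k)$ using Lemma~\ref{lem:static-ls}, reads off the leading behaviour of the multipliers (in particular the identification of $-(-1)^kv_k$ and $-(-1)^kM^{-1}F_k$) from the static estimates \eqref{eq:lam-mu-est-lm}, and then treats $\bs h_R=\bs h_I-\bs h_S$ as a solution of the dynamic equation with forcing $-\partial_t\bs h_S$, which already lies in $N_{\gamma+1}(\cE)$ by Lemma~\ref{lem:diff-param} and the chain rule, so that plain Lemma~\ref{lem:kg-lin-eq-inf} suffices for the main bound; Lemma~\ref{lem:kg-lin-eq-trick} is invoked there only for the subsidiary refinement \eqref{eq:h-R-est-refined}, which is needed later in Lemma~\ref{lem:1st-Picard-diff}. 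You instead exploit the identity $\bs J\bs\alpha_k=-\partial_a\bs H_k$ to absorb the slowly decaying part $\sum_k(-1)^kv_k\partial_a\bs H_k$ of the forcing into the multipliers. This both makes rigorous sense of ``the solution given by Lemma~\ref{lem:kg-lin-eq-inf}'' (the unmodified forcing is only $O((t_0+t)^{-1})$, hence not in $N_{\gamma+1}(\cE)$ for any $\gamma>0$, nor in $L^1$) and reduces the forcing to $\bs J\bs f\in N_{2-\epsilon}(\cE)$; the price is that your main estimate now genuinely requires Lemma~\ref{lem:kg-lin-eq-trick}, whose extra hypothesis on $\partial_t(\bs J\bs f)$ you verify correctly via \eqref{d2U-bounds} and \eqref{eq:traj-cond}, using that $\partial_{v_k}\bs f$ carries an extra factor of $|\vec v|$. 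Your derivation of the multiplier asymptotics from \eqref{eq:lam-mu-lin-kg} together with $\la\partial_{a_k}\bs H_k,\bs f\ra=-(-1)^kF_k(\vec a,\vec v)$ and $\|\partial_vH_k\|_{L^2}\lesssim|v_k|$ is also sound. The only thing your route does not deliver is the decomposition $\bs h_I=\bs h_S+\bs h_R$ with the refined bound $\|\bs h_R\|_{N_{\gamma+1}(\cE)}\le C_\gamma$; that is not part of the statement being proved, but it is reused downstream in the difference estimates of Lemma~\ref{lem:1st-Picard-diff}, so the ansatz cannot be dispensed with entirely in the overall scheme.
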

\begin{proof}
Let $(\bs h_S(t), \vec\lambda_S(t), \vec \mu_S(t))$ be the solution  given by Lemma~\ref{lem:static-ls} part \ref{it:static-ls-i} of
\EQ{ \label{eq:h_S-eqn} 
\vD^2 E(\bs H(\vec a(t), \vec v(t)))\bs h_S(t) &= -\vD E(\bs H(\vec a(t), \vec v(t)))    + \sum_{k =1}^n  \lambda_{S, k}(t) \bs \alpha_k(t) + \mu_{S, k}(t) \bs \beta_k(t)) \\
 &=\bs f_S +  \sum_{k =1}^n \big( (\lambda_{S, k}(t) +(-1)^k v_k(t)) \bs \alpha_k(t) + \mu_{S, k}(t) \bs \beta_k(t)\big) 
  }
  with 
  \EQ{
  \la \bs h_S , \, \bs \al_k\ra = \la \bs h_S, \, \bs \beta_k\ra = 0, 
  }
  for all $k \in \{1, \dots, n\}$ and 
  \EQ{ \label{eq:f_S} 
  \bs f_S = -  \pmat{ U'( H( \vec a, \vec v)) - \sum_{k=1}^n (-1)^kU'( H_k) \\ 0}. 
  }
 By~\eqref{eq:h-est-lm}, and~ Lemma~\ref{lem:U-bounds} we have
\EQ{ \label{eq:f_S-est} 
\| \bs h_S(t) \|_{\E} \lesssim \| \bs f_S \|_{\E} \lesssim  \| U'( H( \vec a, \vec v)) - \sum_{k=1}^n (-1)^kU'( H_k) \|_{H^1}  \lesssim \sqrt{y_{\min}(t)} e^{-y_{\min}(t)} 
}
 which, together with~\eqref{eq:traj-cond} gives 
\EQ{ \label{eq:h_S-est} 
\|  \bs h_S \|_{N_{\gamma}} \le C_\gamma
}
for any $\gamma<2$. 
By~\eqref{eq:dHalbe-form} along with Lemma~\ref{lem:U-bounds} we have, for each $k \in \{1, \dots, n\}$
\EQ{
|\la \partial_{v_k}  H_k(t) , \, U'( H( \vec a, \vec v)) - \sum_{k=1}^n (-1)^kU'( H_k) \ra| \lesssim |v_k(t)| \sqrt{y_{\min}(t)}e^{-y_{\min}(t)}
}
It then follows from~\eqref{eq:lam-mu-est-lm} and the above that
\EQ{
| \lambda_{S, k}(t) + (-1)^k v_k(t)| &\lesssim |v_k(t)| \sqrt{y_{\min}(t)}e^{-y_{\min}(t)} \\
&\quad + ( \sqrt{y_{\min}(t)} e^{-y_{\min}(t)} + | \vec v|) \| \bs h_S \|_{\E} \\
&\quad + y_{\min}(t) e^{- y_{\min}(t)} \| U'( H( \vec a, \vec v)) - \sum_{k=1}^n (-1)^kU'( H_k)  \|_{\E} 
}
Using~\eqref{eq:traj-cond}, Lemma~\ref{lem:U-bounds} and~\eqref{eq:traj-cond} we see that for any $\gamma<2$
\EQ{ \label{eq:lam_S-est} 
\| \lambda_{S, k}(t) +(-1)^k v_k(t)\|_{N_{\gamma+1}} \le C_\gamma
}
Using the same logic, we arrive at the bound
\EQ{
| M\gamma_{v_k(t)}^3 \mu_{S, k}(t) - \la \p_{a_k}  H_k(t), \, U'( H( \vec a, \vec v)) - \sum_{k=1}^n (-1)^kU'( H_k)\ra| \le C_\gamma t^{-\gamma -1} 
}
or equivalently, 
\EQ{ \label{eq:mu_S-est} 
\| M  \mu_{S, k}(t) + (-1)^k  F_k( \vec a(t), \vec v(t)) \|_{N_{\gamma+1}} \le C_\gamma
}
where we have used above that $|\gamma_{v_k}^3 - 1| \lesssim (t_0 + t)^{-2}$. 

We will also require bounds for 
\EQ{ 
\partial_t \bs h_{S}(t) = \partial_t \bs h_S( \vec a(t), \vec v(t)) = \sum_{j =1}^n a_j'(t) (\p_{a_j} \bs h_S)(\vec a(t), \vec v(t)) + v_j'(t) (\p_{v_j} \bs h_S)(\vec a(t), \vec v(t))
}
Using~\eqref{eq:h-first} from Lemma~\ref{lem:diff-param} along with~\eqref{eq:dU-bounds} and~\eqref{d2U-bounds} from Lemma~\ref{lem:U-bounds} we note the estimates,
\EQ{ \label{eq:p1h-est} 
\| \p_{a_j} \bs h_S(\vec a(t), \vec v(t)) \|_{\E} &\lesssim \sqrt{y_{\min}(t)} e^{-y_{\min}(t)}  \\
\| \p_{v_j} \bs h_S(\vec a(t), \vec v(t)) \|_{\E} &\lesssim \sqrt{y_{\min}(t)} e^{-y_{\min}(t)}
}
and by the same token, using now~\eqref{eq:h-second} from Lemma~\ref{lem:diff-param} we have 
\EQ{ \label{eq:p2h-est} 
\| \p_{a_j}\p_{a_k} \bs h_S(\vec a(t), \vec v(t)) \|_{\E}  + \| \p_{v_j}\p_{a_k} \bs h_S(\vec a(t), \vec v(t)) \|_{\E}&\lesssim \sqrt{y_{\min}(t)} e^{-y_{\min}(t)}
}
which we record for later. 
Together with~\eqref{eq:traj-cond} it follows that
\EQ{\label{eq:dthS-est} 
\| \p_t \bs h_S \|_{N_{\gamma+1}(\E)} \le C_{\gamma} 
}
for any $\gamma<2$. 

We write $\bs h_I = \bs h_S + \bs  h_R$, $\vec \lambda_I = \vec \lambda_S + \vec \lambda_R$, and $\vec \mu_I= \vec \mu_S+ \vec\mu_R$. Then~\eqref{eq:1st-Picard} is equivalent to $(\bs h_R,\vec \lambda_R, \vec\mu_R)$ solving
\EQ{ \label{eq:h-R-eq} 
\p_t \bs h_R(t) 
& = \bs J\vD^2 E(\bs H(\vec a(t), \vec v(t)))\bs h_R(t)- \p_t \bs h_S(t)  \\
&\quad +  \sum_{k=1}^n  \lambda_{R, k}(t) \partial_{a_k}\bs H_k(t)
+ \mu_{R, k}(t)\partial_{v_k}\bs H_k(t)  
}
and
\EQ{
\la \bs h_R(t), \, \bs \alpha_k(t)\ra = \la \bs h_R(t), \, \bs \beta_k(t)\ra  = 0, 
}
for all $k \in \{1, \dots, n\}$. 
From Lemma~\ref{lem:kg-lin-eq-inf} and~\eqref{eq:dthS-est} we have
\EQ{ \label{eq:ti-h-prelim} 
\|\bs h_R \|_{N_{\gamma}(\E)} \le C_{\gamma}  \| \p_t \bs h_S \|_{N_{\gamma+1}(\E)} \le C_\gamma
}
and
\EQ{ \label{eq:lam-mu-R-est} 
\sup_{1 \le k \le n}\big( \|  \lambda_{R, k} \|_{N_{\gamma+1}} + \| \mu_{R, k}  \|_{N_{\gamma+1}(\E)} \big)\le C_\gamma 
}
for any $\gamma<2$ as claimed. Combining the above with~\eqref{eq:h_S-est},~\eqref{eq:lam_S-est},  and~\eqref{eq:mu_S-est} gives the lemma. 

We also require an improvement over the estimate~\eqref{eq:ti-h-prelim}. We claim that in fact
\EQ{ \label{eq:h-R-est-refined} 
\| \bs h_R \|_{N_{\gamma+1}(\E)} \le C_\gamma
}
for all $\gamma <2$. To see this we rewrite the forcing term $\p_t \bs h_S$ in the equation for ${\bs h}_R$ as 
\EQ{
-\partial_t \bs h_{S} (t)&=  \bs f_{R, 1}(t) + \bs f_{R, 2}(t),
}
where 
\EQ{
\bs f_{R, 1}(t)\coloneqq  \sum_{k =1}^n \Big((a_k'(t) - v_k(t)\Big)(\p_{a_k} \bs h_S)(\vec a(t), \vec v(t)) + \sum_{k =1}^nv_k'(t) (\p_{v_j} \bs h_S)(\vec a(t), \vec v(t)),  
}
and
\EQ{
\bs f_{R, 2}(t)\coloneqq   \sum_{k =1}^n v_k(t)(\p_{a_k} \bs h_S)(\vec a(t), \vec v(t)).
}
We then let $(\bs h_{R, j}, \vec \lambda_{R, j}, \vec \mu_{R, j})$ for $j = 1, 2$ be the solution to 
\EQ{
\p_t \bs h_{R, j} = \bs J\vD^2 E(\bs H(\vec a, \vec v))\bs h_{R, j} + \bs f_{R, j} +  \sum_{k=1}^n  \lambda_{R, j, k} \partial_{a_k}\bs H_k 
+ \mu_{R, j, k}\partial_{v_k}\bs H_k  
}
and
\EQ{
\la \bs h_{R, j}, \, \bs \alpha_k\ra = \la \bs h_{R, j}, \, \bs \beta_k\ra  = 0, 
}
for all $k \in \{1, \dots, n\}$. An application of Lemma~\ref{lem:kg-lin-eq-inf} for the above equation with $j=1$ yields the bound 
\EQ{
\| \bs h_{R, 1} \|_{N_{\gamma +1}(\E)} \le C_{\gamma} \| \bs f_{R, 1} \|_{N_{\gamma + 2}} \le C_{\gamma} 
}
which, using~\eqref{eq:traj-cond}, holds for all $\gamma <2$. To prepare for an application of Lemma~\ref{lem:kg-lin-eq-trick} to treat the case $j =2$ we first compute 
\EQ{
\p_t \bs f_{R, 2}(t) &= \sum_{k =1}^n v_k'(t)(\p_{a_k} \bs h_S)(\vec a(t), \vec v(t)) + \sum_{k =1}^n \sum_{j =1}^n v_k(t) a_{j}'(t)(\p_{a_j} \p_{a_k} \bs h_S)(\vec a(t), \vec v(t)) \\
&\quad + \sum_{k =1}^n v_k(t) v_j'(t) (\p_{v_j}\p_{a_k} \bs h_S)(\vec a(t), \vec v(t)).
}
Using~\eqref{eq:p1h-est},~\eqref{eq:p2h-est} and~\eqref{eq:traj-cond} we see that in fact 
\EQ{
\| \p_t \bs f_{R, 2} \|_{N_{\gamma + 2}(\E)} \le C_\gamma, 
}
for any $\gamma < 2$ and hence by Lemma~\ref{lem:kg-lin-eq-trick} we have 
\EQ{
\| \bs h_{R, 2} \|_{N_{\gamma +1}} \le C_\gamma 
}
for any $\gamma < 2$. This proves~\eqref{eq:h-R-est-refined}. 
\end{proof}

\begin{proof}[Proof of Lemma~\ref{lem:kg-nonlin-eq-inf}] We divide the proof into five steps. 

\noindent

\textbf{Step 1.} (Formulation as a fixed point problem.)
Given $\bs g \in N_1(\E)$ such that $\|\bs g\|_{N_1(\E)} \leq \delta$, let $(\bs h, \vec\lambda, \vec\mu)$ be the unique solution of
 \begin{equation}
 \label{eq:kg-nonlin-eq-inf-iter}
\begin{aligned}
 \partial_t \bs h(t) &= \bs J\vD^2 E(\bs H(\vec a(t), \vec v(t))) \bs h(t) +  J \vD E(\bs H(\vec a(t), \vec v(t)))  \\
&+ \bs J\big(\vD E(\bs H(\vec a(t), \vec v(t))+\bs g(t)) - \vD E(\bs H(\vec a(t), \vec v(t)))- \vD^2 E(\bs H(\vec a(t), \vec v(t)))\bs g(t)\big) \\
& + \sum_{k =1}^n  \lambda_k(t) \partial_{a_k}\bs H_k(t) +  \mu_k(t) \partial_{ v_k}\bs H_k(t) 
 \end{aligned}
\end{equation}
given by Lemma~\ref{lem:kg-lin-eq-inf}, which applies here since 
\begin{equation}
\|\vD E(\bs H(\vec a(t), \vec v(t))+\bs g(t)) - \vD E(\bs H(\vec a(t), \vec v(t)))- \vD^2 E(\bs H(\vec a(t), \vec v(t)))\bs g(t)\|_{N_2} \lesssim \|\bs g\|_{N_1}^2.
\end{equation}
We set $\Phi(\bs g) \coloneqq  \bs h$, so that $\bs h$ solves \eqref{eq:kg-nonlin-eq-inf} if and only if $\Phi(\bs h) = \bs h$.


\textbf{Step 2.} (Estimates on $\Phi(\bs 0)$.) The so-called first Picard iterate is given by $ \bs h_I\coloneqq  \Phi( \bs 0)$ where $\bs h_I$ is as in Lemma~\ref{lem:1st-Picard} and the equation for  $\bs h_I$ is~\eqref{eq:1st-Picard}.   
By Lemma~\ref{lem:1st-Picard} we have 
\begin{equation} \label{eq:P0-bound} 
\|\Phi(\bs 0)\|_{N_\gamma(\E)} \leq C_\gamma, \qquad \text{for all }\gamma < 2, 
\end{equation}
and thus  $\|\Phi(\bs 0)\|_{N_1(\E) } \le \frac{\delta}{2}$  provided $\tau_0$ is chosen large enough.

\textbf{Step 3.} (Estimates on $\Phi(\sh{\bs g}) - \Phi(\bs g)$.)
Let $\bs g, \sh{\bs g} \in N_1(\E)$ and $\max(\|\bs g\|_{N_1(\E)}, \|\sh{\bs g}\|_{N_1(\E)}) \leq \delta$.
Set
\begin{equation}
\begin{aligned}
\bs f(t) &\coloneqq  \bs J\big(\vD E(\bs H(\vec a(t), \vec v(t))+{\bs g}(t))-\vD E(\bs H(\vec a(t), \vec v(t))) - \vD^2 E(\bs H(\vec a(t), \vec v(t))){\bs g}(t)\big) \\
&= \big(0, {-}U'(H(\vec a(t), \vec v(t)) + g(t)) + U'(H(\vec a(t), \vec v(t)) + U''(H(\vec a(t), \vec v(t)))g(t)\big),
\end{aligned}
\end{equation}
and define $\sh{\bs f}(t)$ by the same formula, with $\bs g$ replaced by $\sh{\bs g}$.
Then $\Phi(\sh{\bs g}) - \Phi(\bs g) = \bs h$, where $(\bs h, \vec\lambda, \vec\mu)$
is the solution of
 \begin{equation}
 \label{eq:kg-nonlin-eq-inf-diff}
\begin{aligned}
 \partial_t \bs h(t) &= J\vD^2 E(\bs H(\vec a(t), \vec v(t))) \bs h(t) + \sh{\bs f}(t) - \bs f(t) \\
&+ \vec \lambda(t)\cdot \partial_{\vec a}\bs H(\vec a(t), \vec v(t))
+ \vec \mu(t)\cdot \partial_{\vec v}\bs H(\vec a(t), \vec v(t))
 \end{aligned}
\end{equation}
Applying~\eqref{eq:U'-taylor-diff} with $w = \sh w = H(\vec a, \vec v)$, we obtain
\begin{equation}
\|\sh{\bs f}(t) - \bs f(t)\|_{\cE} \lesssim (\|\sh{\bs g}(t)\|_\cE + \|\bs g(t)\|_\cE)\|\sh{\bs g}(t) - \bs g(t)\|_\cE,
\end{equation}
hence
\begin{equation}
\|\sh{\bs f} - \bs f\|_{N_2} \lesssim (\|\sh{\bs g}\|_{N_1} + \|\bs g\|_{N_1})\|\sh{\bs g} - \bs g\|_{N_1}.
\end{equation}
It thus follows from Lemma~\ref{lem:kg-lin-eq-inf} that
\begin{equation}
\|\Phi(\sh{\bs g}) - \Phi(\bs g)\|_{N_1} \leq \frac 12\|\sh{\bs g} - \bs g\|_{N_1}.
\end{equation}
if $\delta$ is small enough. Once $\delta$ is fixed, we choose $t_0$ large enough, so that
Step 2 yields $\|\Phi(\bs 0)\|_{N_1(\E)} \leq \frac{\delta}{2}$.
Then $\Phi$ is a strict contraction on the closed ball of radius $\delta$ in $N_1(\E)$ and the Contraction Mapping Principle yields a unique fixed point. 

\textbf{Step 4.} (Improved time decay of $\bs h(t)$.) 
Fix any $\gamma \in (1, 2)$. Because of~\eqref{eq:P0-bound} the closed unit ball in $N_\gamma(\E)$ is invariant under $\Phi$ for sufficiently large $\tau_0$ and by the same argument as above we see that $\Phi$ is a strict contraction on the closed unit ball in $N_{\gamma}(\E)$, based on the estimate
\EQ{
\| \Phi(\bs g) - \Phi(\sh{\bs g}) \|_{N_{2 \gamma -1}(\E)} \le C_\gamma (\| \sh{\bs g} \|_{N_{\gamma}(\E)} + \| \bs g \|_{N_\gamma(\E)} ) \| \bs g - \sh{\bs g} \|_{N_{\gamma}(\E)}. 
}
 By the Contraction Mapping Principle there is a unique fixed point in the unit ball in $N_{\gamma}(\E)$, but since $N_{\gamma}( \E) \subset N_1(\E)$ we deduce that the unique fixed point in the $\delta$-ball in $N_1(\E)$ belongs to $N_{\gamma}(\E)$ for all $1<\gamma< 2$. 

\textbf{Step 5.}(Estimates on the parameters) Let $\bs h$ denote the solution to~\eqref{eq:kg-nonlin-eq-inf} found above. 

Denote $\bs h_I \coloneqq  \Phi(0)$, let $\bs h$ be the fixed point of $\Phi$ found above and let $\bs h_* \coloneqq  \bs h - \bs h_I$.
By the same argument as in Step 4, we have
\begin{equation}
\|\Phi(\bs h_I + \bs h_*) - \Phi(\bs 0)\|_{N_{2\gamma - 1}(\E)} \lesssim \|\bs h_I + \bs h_*\|_{N_\gamma}^2.
\end{equation}
But $\Phi(\bs h_I + \bs h_*) - \Phi(0) = \bs h - \bs h_I = \bs h_*$, hence we obtain
\begin{equation}
\label{eq:hstar-decay}
\|\bs h_*\|_{N_\gamma} \leq C_\gamma, \qquad \text{for all }\gamma < 3.
\end{equation}
Similarly, write $\vec\lambda = \vec \lambda_I + \vec \lambda_*$, and $\vec \mu = \vec \mu_I +\vec  \mu_*$, where $(\vec \lam_I, \vec \mu_I)$ are as in Lemma~\ref{lem:1st-Picard}. We see that $(\bs h_*, \vec \lam_*, \vec \mu_*)$ solve 
\EQ{ \label{eq:hstar} 
\p_t \bs h_*(t) &= \bs J \vD^2 E( \bs H(\vec a(t), \vec v(t)))\bs h_*(t) \\
 & + \bs J\big(\vD E(\bs H(\vec a(t), \vec v(t))+\bs h(t)) - \vD E(\bs H(\vec a(t), \vec v(t)))- \vD^2 E(\bs H(\vec a(t), \vec v(t)))\bs h(t)\big) \\
 & + \sum_{k=1}^n  \lambda_{*, k}(t) \partial_{a_k}\bs H_k(t) +  \mu_{*, k}(t) \partial_{ v_k}\bs H_k(t). 
}
By Lemma~\ref{lem:kg-lin-eq-inf},~\eqref{eq:hstar-decay}, and that $ \bs h \in N_{\gamma}( \E)$ for $\gamma <2$  we see that
\begin{equation}
\label{eq:modstar-decay}
\|\vec \lambda_*\|_{N_\gamma} + \|\vec \mu_*\|_{N_\gamma} \leq C_\gamma,\qquad\text{for all }\gamma < 4.
\end{equation}
Then~\eqref{eq:nonlin-param} follow from the above and Lemma~\ref{lem:1st-Picard}. 
\end{proof}

%

Finally, we study the dependence of the solution $(\bs h, \vec \lambda, \vec \mu)$
obtained in Lemma~\ref{lem:kg-nonlin-eq-inf} on the choice of $(\vec a, \vec v)$.
Again, we begin by studying the first step of the Picard iteration.
\begin{lemma}
\label{lem:1st-Picard-diff}
Let $(\bs h_I, \vec\lambda_I, \vec\mu_I)$ be the solution of \eqref{eq:1st-Picard}
given by Lemma~\ref{lem:kg-lin-eq-inf}
and $(\sh{\bs h}_I, \sh{\vec\lambda}_I, \sh{\vec\mu}_I)$ the solution of the same equation with $(\vec a, \vec v)$ replaced by
$(\sh{\vec a}, \sh{\vec v})$. Then for all $\gamma < 2$ there exists $C_\gamma>0$ such that
\EQ{\label{eq:fl-h-I-est} 
\|\sh{\bs h}_I - \bs h_I\|_{N_\gamma(\E)} &\le C_\gamma(\|\sh{\vec a} - \vec a\|_{L^\infty} + \|\sh{\vec v} - \vec v\|_{N_1} +  \|(\sh{\vec a})' - \vec a'\|_{N_1} + \|(\sh{\vec v})' - \vec v'\|_{N_1}  ), 
}
\begin{multline}  \label{eq:fl-lam-I-est} 
\sup_{1 \le k \le n} \| (\sh \lambda_{I, k}-  \lambda_{I, k})  +(-1)^k  (\sh v_k-  v_k)\|_{N_{\gamma+1}} \\
\le C_\gamma(\|\sh{\vec a} - \vec a\|_{L^\infty} + \|\sh{\vec v} - \vec v\|_{N_1} +   \|(\sh{\vec a})' - \vec a'\|_{N_1} + \|(\sh{\vec v})' - \vec v'\|_{N_1}  ), 
\end{multline} 
and, 
\begin{multline} \label{eq:fl-mu-I-est} 
\sup_{ 1 \le k \le n} \| (\sh \mu_{I, k}-  \mu_{I, k})  + (-1)^k ( M ^{-1} F_k(\sh{\vec a}, \sh{\vec v}) - M ^{-1}F_k({\vec a}, {\vec v}))  \|_{N_{\gamma +1}}\\
 \le C_\gamma(\|\sh{\vec a} - \vec a\|_{L^\infty} + \|\sh{\vec v} - \vec v\|_{N_1} +   \|(\sh{\vec a})' - \vec a'\|_{N_1} + \|(\sh{\vec v})' - \vec v'\|_{N_1}  ). 
\end{multline} 
\end{lemma}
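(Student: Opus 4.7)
The plan is to follow the decomposition-based proof of Lemma~\ref{lem:1st-Picard} applied now to differences. Writing $\bs h_I = \bs h_S + \bs h_R$ and $\sh{\bs h}_I = \sh{\bs h}_S + \sh{\bs h}_R$, where $\bs h_S, \sh{\bs h}_S$ solve the static equation \eqref{eq:h_S-eqn} at the respective parameters and $\bs h_R, \sh{\bs h}_R$ solve the evolutionary remainder equation \eqref{eq:h-R-eq}, the bound \eqref{eq:fl-h-I-est} splits into a static and an evolutionary contribution. For the static piece, the difference estimate \eqref{eq:h-diff-est} of Lemma~\ref{lem:diff-param}, applied with $\bs f = \bs f_S$ of \eqref{eq:f_S} and combined with the bounds of Lemma~\ref{lem:U-bounds} and the decay $\sqrt{y_{\min}}\eee^{-y_{\min}} \lesssim (t_0+t)^{-2}$ from \eqref{eq:traj-cond}, yields $\|\sh{\bs h}_S(t) - \bs h_S(t)\|_\E \lesssim (t_0+t)^{-2}(|\sh{\vec a}(t) - \vec a(t)| + |\sh{\vec v}(t) - \vec v(t)|)$, which already fits into $N_\gamma(\E)$ for $\gamma<2$ with the claimed right-hand side of \eqref{eq:fl-h-I-est}.

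For the evolutionary remainder, set $\fl{\bs h}_R \coloneqq \sh{\bs h}_R - \bs h_R$; subtracting the two versions of \eqref{eq:h-R-eq} yields an equation of the form \eqref{eq:kg-lin-eq-inf} for $\fl{\bs h}_R$ with coefficients at $(\vec a, \vec v)$ and forcing built from three sources: the operator commutator $\bs J(\vD^2 E(\bs H(\vec a, \vec v)) - \vD^2 E(\bs H(\sh{\vec a}, \sh{\vec v})))\sh{\bs h}_R$, bounded via \eqref{eq:h-R-est-refined} and Lipschitz estimates on $U''$; discrepancies in the multiplier directions $\sh\lam_{R,k}\p_a\sh{\bs H}_k - \lam_{R,k}\p_a\bs H_k$ and the analogous terms for $\vec\mu_R$, controlled by \eqref{eq:lam-mu-R-est}; and the most delicate forcing $-\p_t(\sh{\bs h}_S - \bs h_S)$. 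The mismatch in orthogonality conditions between the two parameter sets is absorbed through a projection step as in Lemma~\ref{lem:kg-lin-orth}. The hard part will be sharp control of $\p_t(\sh{\bs h}_S - \bs h_S)$, since the naive energy estimate loses a power of $(t_0+t)$. Following the trick used to derive \eqref{eq:h-R-est-refined}, I decompose
\EQ{
\p_t(\sh{\bs h}_S - \bs h_S) &= \sum_k \big[(\sh a_k'-\sh v_k)\p_{a_k}\sh{\bs h}_S - (a_k'-v_k)\p_{a_k}\bs h_S\big] \\
&\quad + \sum_k \big[\sh v_k'\p_{v_k}\sh{\bs h}_S - v_k'\p_{v_k}\bs h_S\big] + \sum_k \big[\sh v_k \p_{a_k}\sh{\bs h}_S - v_k \p_{a_k}\bs h_S\big];
}
the first two groups acquire an extra factor $(t_0+t)^{-1}$ from \eqref{eq:traj-cond} and are handled by direct application of Lemma~\ref{lem:kg-lin-eq-inf}, using the first-derivative difference estimate \eqref{eq:dh-diff}. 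For the third group, which carries no such bonus, I differentiate once more in time and apply the refined Lemma~\ref{lem:kg-lin-eq-trick}, employing the second-derivative bound \eqref{eq:h-second} of Lemma~\ref{lem:diff-param} together with a difference version of \eqref{eq:dh-diff} (obtained by rerunning the contraction argument of Lemma~\ref{lem:diff-param} at the level of $\p_{a_k}\bs h_S$) to recover the lost power.

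Finally, the multiplier bounds \eqref{eq:fl-lam-I-est}--\eqref{eq:fl-mu-I-est} follow from the parallel decomposition $\sh\lam_{I,k} - \lam_{I,k} = (\sh\lam_{S,k} - \lam_{S,k}) + (\sh\lam_{R,k} - \lam_{R,k})$ and analogously for $\vec\mu_I$. The static contribution furnishes the leading-order terms $-(-1)^k(\sh v_k - v_k)$ and $-(-1)^k M^{-1}(F_k(\sh{\vec a}, \sh{\vec v}) - F_k(\vec a, \vec v))$ through the difference versions of \eqref{eq:lam_S-est}, \eqref{eq:mu_S-est}, which come directly from \eqref{eq:lam-diff}, \eqref{eq:mu-diff}. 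The remainder contribution is bounded via the asymptotic expressions \eqref{eq:lam-mu-lin-kg} of Lemma~\ref{lem:kg-lin-eq-inf} applied to the equation for $\fl{\bs h}_R$, substituting the $N_\gamma(\E)$-estimate on $\fl{\bs h}_R$ obtained in the previous paragraph and the bounds on the forcing from its three sources.
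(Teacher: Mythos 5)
Your proposal follows essentially the same route as the paper: the same splitting of $\bs h_I$ into the static profile $\bs h_S$ and the evolutionary remainder $\bs h_R$, the difference estimates of Lemma~\ref{lem:diff-param} for the static piece and its multipliers, and the linear theory applied to the equation for $\sh{\bs h}_R - \bs h_R$ with the three forcing sources you list. The only divergence is your invocation of Lemma~\ref{lem:kg-lin-eq-trick} for the terms $\sh v_k\p_{a_k}\sh{\bs h}_S - v_k\p_{a_k}\bs h_S$: this detour (which would force you to prove a second-derivative difference estimate not available in the paper) is unnecessary here, because the target for the remainder difference is only $N_\gamma(\E)$ with $\gamma<2$, so the forcing need only lie in $N_{\gamma+1}$, and writing $(\sh v_k - v_k)\p_{a_k}\sh{\bs h}_S + v_k(\p_{a_k}\sh{\bs h}_S - \p_{a_k}\bs h_S)$ and using $|v_k|\lesssim (t_0+t)^{-1}$ together with \eqref{eq:p1h-est} and \eqref{eq:dh-diff} already yields decay $(t_0+t)^{-3+\epsilon}$ times the distance between the trajectories — the extra gain that was missing in the proof of Lemma~\ref{lem:1st-Picard} is supplied here by the difference structure itself.
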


\begin{proof} 
Following the main idea in the proof of Lemma~\ref{lem:1st-Picard} we will use an ansatz to extract and estimate the main terms. We decompose $\sh{\bs h}_I - { \bs h}_I= \fl{\bs h}_S + \fl{ {\bs h}}_R$,  $\sh{\vec \lam}_I - {\vec \lam}_I = \fl{\vec \lam}_S +  \fl{\vec{ \lam}_R}$, and $\sh{\vec \mu}_I  - {\vec \mu}_I = \fl{\vec \mu}_S + \fl{\vec{ \mu}}_R$, where $ \fl{\bs h}_S = \sh{ \bs h}_S - \bs h_S$, $\fl{ {\bs h}}_R = \sh{ \bs h}_R - \bs h_R$, $\fl{\vec \lam}_S = \sh{\vec \lam}_S - \vec \lam_S$, $\fl{\vec \lam}_R = \sh{\vec \lam}_R - \vec \lam_R$, $\fl{\vec \mu}_S = \sh{\vec \mu}_S - \vec \mu_S$, and $\fl{\vec \mu}_R = \sh{\vec \mu}_R - \vec \mu_R$. 

It follows that $(\fl{\bs h}_S, \fl{\vec \lambda}_S, \fl{\vec \mu}_S)$ solves 
\EQ{
 \vD^2 E( \bs H(\vec a, \vec v)) \fl{\bs h}_S &= \fl{\bs f}_S 
+ \sum_{k =1}^n \big(\fl{\lambda}_{S, k}  + (-1)^k (\sh v_k-  v_k))\big) \bs \alpha_k  +\sum_{k=1}^n \fl{\mu}_{S, k}   \bs \beta_k, 
}
with 
\EQ{
\la \fl{\bs h}_S , \, \bs \al_k\ra & =  \la \sh {\bs h}_S , \, \bs \al_k - \sh{\bs \al}_k\ra  = \ell_k\\
\la \fl{\bs h}_S ,  \, \bs \be_k \ra& = \la \sh {\bs h}_S , \, {\bs \beta}_k - \sh{\bs \beta}_k \ra = m_k, 
}
and
\EQ{
\fl{\bs f}_S &= -\big(\vD^2 E(\bs H(\sh{\vec a}, \sh{\vec v})) - \vD^2 E(\bs H({\vec a}, {\vec v}))\big) \sh{\bs h}_S\\
 &\quad -\pmat{ U'( H( \sh{\vec a}, \sh{\vec v})) - \sum_{k=1}^n (-1)^kU'( \sh H_k) \\ 0} +  \pmat{ U'( H( \vec a, \vec v)) - \sum_{k=1}^n (-1)^kU'( H_k) \\ 0}\\
 &\quad + \sum_{k =1}^n (\sh{\lam}_{S, k} + (-1)^k \sh{v}_k) \big( \sh{\bs \al}_k - \bs \al_k\big) + \sum_{k=1}^n \sh{\mu}_{S, k} \big( \sh{ \bs \beta}_k -  \bs \beta_k \big), 
}
where we have used the notation $ \sh H_k = H(\sh a_k, \sh v_k), \sh{ \bs \al}_k = \bs J \p_{a} \bs H( \sh a_k, \sh \be_k)$ and $\sh{ \bs \be}_k = \bs J \p_{v} \bs H( \sh a_k, \sh \be_k)$.

From Lemma~\ref{lem:diff-param}~\eqref{eq:h-diff-est} we obtain the estimates
\EQ{
\| \fl{\bs h}_S \|_{\E} \lesssim \big( | \sh{\vec a} - \vec a| + | \sh{\vec v} - \vec v| \big)\big( \| \sh{\bs f}_S \|_{\E} + \| \bs f_S \|_{\E}\big) + \| \sh{ \bs f_S } - \bs f_S \|_{\E}, 
}
where $\bs f_S, \sh{\bs f}_S$ are as in~\eqref{eq:f_S}. 
For each $\gamma < 2$ it follows, using~\eqref{eq:f_S-est} that 
\EQ{ \label{eq:fl-h-S-est} 
\| \fl{\bs h}_S\|_{N_{\gamma}(\E)}  \le C_\gamma \big( \| \vec a - \sh{\vec a}\|_{L^\infty} + \| \vec v - \sh{\vec v}\|_{L^\infty}\big) .
}
We next use~\eqref{eq:lam-diff}, which gives 
\EQ{ \label{eq:lam-diff-est-prelim} 
| \fl{\lambda}_{S, k}&   + (-1)^k  (\sh v_k-  v_k) | \lesssim \big((\gamma_{\sh v_k}^{3} M)^{-1}\la \partial_{v_k}\sh{\bs H}_k, \sh{\bs f}_S\ra- (\gamma_{v_k}^3 M)^{-1}\la \partial_{v_k}\bs H_k, \bs f_S\ra\big)\big|\\
&\quad + | \sh{\vec a} - \vec a|( | \sh{\vec v} | + | \vec v| + y_{\min} e^{-y_{\min}})( \| \bs f_S \|_{\E} + \| \sh{ \bs f }_S\|_{\E})  \\
&\quad +   | \sh{\vec v} - \vec v| ( \| \bs f \|_{\E} + \| \sh{ \bs f }_S\|_{\E}) +C \| \sh{\bs f}_S - \bs f_S \|_{\E} (  | \sh{\vec v} | + | \vec v| + y_{\min} e^{-y_{\min}}),
}
from which we deduce that 
\EQ{ \label{eq:fl-lam-S-est} 
\| \fl{\lambda}_{S, k}&   + (-1)^k  (\sh v_k-  v_k) \|_{N_{\gamma+1}} \le C_\gamma \big( \| \vec a - \sh{\vec a}\|_{L^\infty} + \| \vec v - \sh{\vec v}\|_{N_1}\big).
}
for each $\gamma <2$. 
Similarly, from~\eqref{eq:mu-diff} we obtain 
\begin{multline} 
|\fl{\mu}_{S, k} + (-1)^k ((\gamma_{\sh v_k}^{3} M)^{-1}F_k(\sh{\vec a}, \sh{\vec v}) - (\gamma_{ v_k}^{3} M)^{-1}F_k({\vec a}, {\vec v}))| \\ \lesssim  | \sh{\vec a} - \vec a|( | \sh{\vec v} | + | \vec v| + y_{\min} e^{-y_{\min}})( \| \bs f_S \|_{\E} + \| \sh{ \bs f }_S\|_{\E})  \\
 +   | \sh{\vec v} - \vec v| ( \| \bs f \|_{\E} + \| \sh{ \bs f }_S\|_{\E}) +C \| \sh{\bs f}_S - \bs f_S \|_{\E} (  | \sh{\vec v} | + | \vec v| + y_{\min} e^{-y_{\min}}),
\end{multline} 
from which we see that 
\EQ{\label{eq:fl-mu-S-est} 
\|\fl{\mu}_{S, k} + (-1)^k (M^{-1}F_k(\sh{\vec a}, \sh{\vec v}) -  M^{-1}F_k({\vec a}, {\vec v}))\|_{N_{\gamma+1}} \le C_\gamma\big( \| \vec a - \sh{\vec a}\|_{L^\infty} + \| \vec v - \sh{\vec v}\|_{N_1}\big),
}
for each $\gamma<2$. 

Next, we consider the equation for $\fl{\bs h}_R$, which is
\EQ{ \label{eq:flat-h-R} 
\p_t \fl{{\bs h}}_R 
& = \bs J\vD^2 E( \bs H(\vec a, \vec v)) \fl{{\bs h}}_R + \fl{{\bs{f}}}_R
 + \sum_{k=1}^n \fl{\lam}_{R, k}     \p_{a_k} \bs H_k + \fl{ \mu}_{R, k}   \p_{v_k} \bs H_k,
}
where 
\EQ{
\fl{{\bs f}}_R &\coloneqq  - \p_t \fl{\bs h}_S  + \bs J\big((\vD^2 E( \bs H(\sh{\vec a}, \sh{\vec v})) - \vD^2 E( \bs H({\vec a}, {\vec v}))) \sh{{\bs h}}_R\big)  \\
 & \quad -\sum_{k =1}^n\sh \lambda_{R, k} \big( \sh{ \p_{a_k} \bs H}_k - \p_{a_k} \bs H_k\big)  -\sum_{k=1}^n  \sh \mu_{R, k} \big( \sh{\p_{v_k} \bs H}_k - \p_{v_k} \bs H_k \big),
}
and
\EQ{ \label{eq:fl-ti-ell-m} 
\la \fl{{\bs h}}_R, \, \bs \al_k \ra &=  \la \sh{ {\bs h}}_R, \, { \bs {\al}}_k - \sh{\bs \al}_k\ra \\
 \la \fl{{\bs h}}_R, \, \bs \be_k \ra &=  \la \sh{ {\bs h}}_R, \, { \bs {\be}}_k - \sh{\bs \be}_k\ra.
 }
 Define 
 \EQ{
 \fl{ \de}(t)\coloneqq \Big( \sum_{k =1}^n (\la \sh{ {\bs h}}_R, \, { \bs {\al}}_k - \sh{\bs \al}_k\ra)^2 + ( \la \sh{ {\bs h}}_R, \, { \bs {\be}}_k - \sh{\bs \be}_k\ra)^2 \Big)^{\frac{1}{2}}.
 }
 Using~\eqref{eq:h-R-est-refined} we have
 \EQ{
 \| \fl{\de} \|_{N_{\gamma +1}} \le C_\gamma(\|\sh{\vec a} - \vec a\|_{L^\infty} + \|\sh{\vec v} - \vec v\|_{L^\infty}) ,
 }
 for all $\gamma<2$.  We claim that
 \EQ{ \label{eq:fl-f-est} 
 \| \fl{ {\bs f}}_R \|_{N_{\gamma +1}(\E)}  \le C_\gamma(\|\sh{\vec a} - \vec a\|_{L^\infty} + \|\sh{\vec v} - \vec v\|_{N_1} +   \|(\sh{\vec a})' - \vec a'\|_{N_1} + \|(\sh{\vec v})' - \vec v'\|_{N_1}  )
 }
 for all $\gamma < 2$.  Using Lemma~\ref{lem:en-est-lin-inf} and the previous two lines we obtain the estimates
 \EQ{ \label{eq:fl-h-R-est} 
 \| \fl{\bs h}_R \|_{N_{\gamma}} \le C_\gamma(\|\sh{\vec a} - \vec a\|_{L^\infty} + \|\sh{\vec v} - \vec v\|_{N_1} +   \|(\sh{\vec a})' - \vec a'\|_{N_1} + \|(\sh{\vec v})' - \vec v'\|_{N_1}  ),
 }
 for all $\gamma<2$. 
 
We prove~\eqref{eq:fl-f-est}.  We have 
 \EQ{
\| \fl{{\bs f}}_R \|_{\E} &\lesssim  \| \p_t \fl{\bs h}_S\|_{\E}  + \| \bs J\big((\vD^2 E( \bs H(\sh{\vec a}, \sh{\vec v})) - \vD^2 E( \bs H({\vec a}, {\vec v}))) \sh{{\bs h}}_R\big)\|_{\E}  \\
 & \quad +\sum_{k =1}^n |\sh \lambda_{R, k} | \| \sh{ \p_{a_k} \bs H}_k - \p_{a_k} \bs H_k\|_{\E} 
  +\sum_{k=1}^n  |\sh \mu_{R, k}|\| \sh{\p_{v_k} \bs H}_k - \p_{v_k} \bs H_k \|_{\E}.
}
We use the refined estimate~\eqref{eq:h-R-est-refined} to estimate the second term by 
\EQ{
\| \bs J\big((\vD^2 E( \bs H(\sh{\vec a}, \sh{\vec v})) - \vD^2 E( \bs H({\vec a}, {\vec v}))) \sh{\ti{\bs h}}\big)\|_{N_{\gamma +1}(\E)} \le  C_\gamma (\|\sh{\vec a} - \vec a\|_{L^\infty} + \|\sh{\vec v} - \vec v\|_{L^\infty}   ),
}
for any $\gamma <2$. The same bound for the third and fourth terms follows from~\eqref{eq:1st-Picard-param}. We claim that $\p_t \fl{\bs h}_S(t)$ satisfies
\EQ{ \label{eq:dt-fl-h-S-est} 
\| \p_t \fl{\bs h}_S \|_{N_{\gamma +1}(\E)} \le C_\gamma(\|\sh{\vec a} - \vec a\|_{L^\infty} + \|\sh{\vec v} - \vec v\|_{N_1} +   \|(\sh{\vec a})' - \vec a'\|_{N_1} + \|(\sh{\vec v})' - \vec v'\|_{N_1}  ),
}
for all $\gamma<2$.  To see this, we write
\EQ{
\p_t \fl{\bs h}_S &= \p_t \sh{\bs h}_S - \p_t \bs h_S = \sum_{ k =1}^n \big((\sh a_k)' \p_{a_k} \sh{\bs h}_S - a_k' \p_{a_k} \bs h_S \big) + \big((\sh v_k)' \p_{v_k} \sh{\bs h}_S - v_k' \p_{v_k} \bs h_S \big),
}
and the bound~\eqref{eq:dt-fl-h-S-est} follows using~\eqref{eq:dh-diff} from Lemma~\ref{lem:diff-param}. This completes the proof of~\eqref{eq:fl-f-est} and thus of~\eqref{eq:fl-h-R-est}.

 We turn to the estimates for the parameters.  We claim that 
 \begin{multline}  \label{eq:flat-param-R}
\| \fl{\vec \lam}_R\|_{N_{\gamma+1}} + \| \fl{\vec \mu}_R\|_{N_{\gamma+1}} \\
 \le C_\gamma(\|\sh{\vec a} - \vec a\|_{L^\infty} + \|\sh{\vec v} - \vec v\|_{N_1} +  \|(\sh{\vec a})' - \vec a'\|_{N_1} + \|(\sh{\vec v})' - \vec v'\|_{N_1}  )
\end{multline} 
for all $\gamma < 2$. Arguing as usual, i.e., multiplying the equation~\eqref{eq:flat-h-R} by $\bs \al_k$, $\bs \beta_k$, we arrive at the following expression for the parameters 
\EQ{
\begin{pmatrix}
\fl{\vec\mu}_R \\ \fl{\vec\lam}_R
\end{pmatrix} = 
- \big[\calM(\vec a, \vec v)^{-1} \big]^{\intercal} \pmat{ -\la \p_t \fl{\bs h}_R , \,  \bs \al_1\ra  - \la\fl{\bs h}_R, \,  \vD^2 E( \bs H(\vec a, \vec v))\bs  J \bs \alpha_1 \ra + \la \fl{\bs f}_R, \, \bs \al_1\ra \\ -\la \p_t \fl{\bs h}_R , \,  \bs \beta_1\ra  - \la\fl{\bs h}_R, \,  \vD^2 E( \bs H(\vec a, \vec v))\bs  J \bs \beta_1 \ra + \la \fl{\bs f}_R, \, \bs \beta_1\ra\\ \dots \\ \dots \\  -\la \p_t \fl{\bs h}_R , \,  \bs \al_n\ra  - \la\fl{\bs h}_R, \,  \vD^2 E( \bs H(\vec a, \vec v)) \bs J \bs \alpha_n \ra + \la \fl{\bs f}_R, \, \bs \al_n\ra \\ -\la \p_t \fl{\bs h}_R , \,  \bs \beta_n\ra  - \la\fl{\bs h}_R, \,  \vD^2 E( \bs H(\vec a, \vec v)) \bs J \bs \beta_n \ra + \la \fl{\bs f}_R, \, \bs \beta_n  \ra }.
}
First, we show that for each $k$
  \begin{multline}  \label{eq:pthR-terms} 
  \|\la \p_t \fl{{\bs h}}_R, \, \bs \al_k \ra \|_{N_{\gamma +1}} + \|  \la \p_t \fl{{\bs h}}_R, \, \bs \be_k \ra \|_{N_{\gamma +1}} \\\le C_\gamma(\|\sh{\vec a} - \vec a\|_{L^\infty} + \|\sh{\vec v} - \vec v\|_{N_1} +   \|(\sh{\vec a})' - \vec a'\|_{N_1} + \|(\sh{\vec v})' - \vec v'\|_{N_1}  ) , 
  \end{multline} 
  for each $\gamma <2$. 
 To see the above we differentiate~\eqref{eq:fl-ti-ell-m} to obtain the identities
 \EQ{ \label{eq:dt-fl-ti-ell-m} 
\la \p_t \fl{{\bs h}}_R, \, \bs \al_k \ra &= - \la \fl{{\bs h}}_R, \, \p_t \bs \al_k \ra+  \la \p_t \sh{ {\bs h}}_R, \,{ \bs {\al}}_k - \sh{\bs \al}_k\ra  +  \la \sh{ {\bs h}}_R, \, \p_t { \bs {\al}}_k - \p_t \sh{\bs \al}_k\ra,\\
 \la \p_t \fl{{\bs h}}_R, \, \bs \be_k \ra &= - \la \fl{{\bs h}}_R, \, \p_t \bs \be_k \ra+  \la \p_t \sh{ {\bs h}}_R, \, { \bs {\beta}}_k -\sh{ \bs \beta}_k\ra  +  \la \sh{ {\bs h}}_R, \, \p_t { \bs {\beta}}_k - \p_t \sh{\bs \beta}_k\ra.
  }
  We consider only the second line above as the treatment of the first line is analogous. First, 
  \EQ{
 | \la \fl{{\bs h}}_R, \, \p_t \bs \be_k \ra| \lesssim \| \fl{{\bs h}}_R \|_{\E} \big( | \vec a'| + | \vec v'|\big) ,
  }
  which is bounded by the right-hand side of~\eqref{eq:pthR-terms}  by~\eqref{eq:fl-h-R-est} along with~\eqref{eq:traj-cond}. 
For the second term on the  right of the second line we use~\eqref{eq:h-R-eq}, giving 
  \EQ{ 
 \la \p_t \sh{ \bs h}_R, \,{ \bs {\beta}}_k - \sh{\bs \beta}_k\ra  &= 
 - \la \sh{\bs h}_R, \vD^2 E(\bs H(\sh{\vec a }, \sh{\vec v}))\bs J( \,{ \bs {\beta}}_k - \sh{\bs \beta}_k)\ra - \la \p_t \sh{\bs h}_S, \,{ \bs {\beta}}_k - \sh{\bs \beta}_k\ra  \\
&\quad +  \sum_{k=1}^n \sh{ \lambda}_{R, k} \la \partial_{a_k} \sh{\bs H}_k, \,{ \bs {\beta}}_k - \sh{\bs \beta}_k\ra
+\sh  \mu_{R, k}\la \partial_{v_k}\sh{\bs H}_k , \,{ \bs {\beta}}_k - \sh{\bs \beta}_k\ra.
 }
The last three terms on the right are bounded by the right-hand side of~\eqref{eq:pthR-terms},  using~\eqref{eq:dthS-est} and~\eqref{eq:lam-mu-R-est}. The first term on the right can be written, using~\eqref{eq:beta-id} as 
\EQ{
- &\la \sh{\bs h}_R, \vD^2 E(\bs H(\sh{\vec a }, \sh{\vec v}))\bs J( \,{ \bs {\beta}}_k - \sh{\bs \beta}_k)\ra  = \la \sh{\bs h}_R, \, \bs \al_k - \sh{\bs \al}_k \ra + \la \sh{\bs h}_R, \, \sh v_k \p_x \sh{ \bs \beta}_k - v_k \p_x \bs \beta_k \ra  \\
&\quad + \la \sh{\bs h}_R, \, \big( \vD^2 E (\bs H(\sh {\vec a}, \sh{\vec v})) - \vD^2 E(\sh{ \bs H_k})\big)\big(\p_v \bs H_k- \p_v \sh{\bs H}_k\big)\ra\\
&\quad + \la \sh{\bs h}_R, \big( \vD^2 E(\sh{ \bs H_k})-\vD^2 E( \bs H_k)\big) \p_v \bs H_k \ra,
}
and the terms on the right are all bounded by the right-hand side of~\eqref{eq:pthR-terms}  using~\eqref{eq:h-R-est-refined}. Similarly, using~\eqref{eq:alpha-id} and~\eqref{eq:beta-id} and~\eqref{eq:fl-h-R-est} we deduce that 
\EQ{
\sup_{1 \le k \le n} \big(\| \la\fl{\bs h}_R, \,  \vD^2 E( \bs H(\vec a, \vec v))\bs  J \bs \alpha_k \ra  \|_{N_{\gamma+1}} + \| \la\fl{\bs h}_R, \,  \vD^2 E( \bs H(\vec a, \vec v))\bs  J \bs \beta_k \ra \|_{N_{\gamma+1}} \big)
}
is bounded by the right-hand side of~\eqref{eq:flat-param-R}. Lastly, 
  \EQ{
\sup_{1 \le k \le n} \big(\| \la \fl{\bs f}_R, \, \bs \alpha_k\ra  \|_{N_{\gamma+1}} + \| \la \fl{\bs f}_R, \, \bs \beta_k\ra\|_{N_{\gamma+1}} \big)
}
is bounded by the right-hand side of~\eqref{eq:flat-param-R} by~\eqref{eq:fl-f-est}. The bounds~\eqref{eq:flat-param-R} follow from the preceding estimates and the uniform boundedness of the operator norm of the matrix $(\calM(\vec a, \vec v)^{-1})^{\intercal}$ for $(\vec a, \vec v)$ with $\rho(\vec a, \vec v) \le \eta_0$. 

The bound~\eqref{eq:fl-h-I-est} for $\sh{\bs h}_I - \bs h_I$ follow from~\eqref{eq:fl-h-S-est} and~\eqref{eq:fl-h-R-est}. The bounds~\eqref{eq:fl-lam-I-est} and~\eqref{eq:fl-mu-I-est} for the parameters follow from~\eqref{eq:fl-lam-S-est},~\eqref{eq:fl-mu-S-est}, and~\eqref{eq:flat-param-R}. 
\end{proof}

\begin{lemma} \label{lem:diff-est} 
Let $(\vec a, \vec v)$ and $(\sh{\vec a}, \sh{\vec v})$ satisfy \eqref{eq:traj-cond}. 
Let $(\bs h, \vec \lambda, \vec\mu)$ be the solution of \eqref{eq:kg-nonlin-eq-inf}
obtained in Lemma~\ref{lem:kg-nonlin-eq-inf} associated to $(\vec a, \vec v)$, and let $(\sh{\bs h}, \sh{\vec \lambda}, \sh{\vec\mu})$ be the solution associated to $(\sh{\vec a}, \sh{\vec v})$.
For all $\gamma < 2$ there exists $C_\gamma>0$ such that
\EQ{ \label{eq:h-diff-est-final} 
\|\sh{\bs h} - \bs h\|_{N_\gamma(\E)} &\le C_\gamma(\|\sh{\vec a} - \vec a\|_{L^\infty} + \|\sh{\vec v} - \vec v\|_{N_1} +   \|(\sh{\vec a})' - \vec a'\|_{N_1} + \|(\sh{\vec v})' - \vec v'\|_{N_1}  ), 
}
and the parameters satisfy
\begin{multline} \label{eq:lam-v-diff} 
\sup_{1 \le k \le n} \| (\sh \lambda_{ k}-  \lambda_{ k})  +(-1)^k  (\sh v_k-  v_k)\|_{N_{\gamma+1}} \\
\le C_\gamma(\|\sh{\vec a} - \vec a\|_{L^\infty} + \|\sh{\vec v} - \vec v\|_{N_1} +   \|(\sh{\vec a})' - \vec a'\|_{N_1} + \|(\sh{\vec v})' - \vec v'\|_{N_1}  ), 
\end{multline} 
and, 
\begin{multline} \label{eq:mu-F-diff} 
\sup_{ 1 \le k \le n} \| (\sh \mu_{ k}-  \mu_{ k})  + (-1)^k ( M^{-1}F_k(\sh{\vec a}, \sh{\vec v}) - M^{-1}F_k({\vec a}, {\vec v}))  \|_{N_{\gamma +1}}\\
 \le C_\gamma(\|\sh{\vec a} - \vec a\|_{L^\infty} + \|\sh{\vec v} - \vec v\|_{N_1} +   \|(\sh{\vec a})' - \vec a'\|_{N_1} + \|(\sh{\vec v})' - \vec v'\|_{N_1}  ). 
\end{multline} 
\end{lemma}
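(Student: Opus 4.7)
The plan is to promote the linear difference estimates of Lemma~\ref{lem:1st-Picard-diff} to the nonlinear setting by adapting the fixed-point scheme of Lemma~\ref{lem:kg-nonlin-eq-inf} to the difference $\fl{\bs h} \coloneqq \sh{\bs h}-\bs h$. Subtracting the two copies of~\eqref{eq:kg-nonlin-eq-inf} gives an equation of the form
\[
\p_t \fl{\bs h} = \bs J\vD^2 E(\bs H(\vec a,\vec v))\fl{\bs h} + \fl{\bs F}_{\mathrm{lin}} + \fl{\bs F}_{\mathrm{nl}} + \sum_{k=1}^n \fl\lambda_k \p_{a_k}\bs H_k + \fl\mu_k\p_{v_k}\bs H_k,
\]
where $\fl{\bs F}_{\mathrm{lin}}$ gathers the terms already present in the linearized analysis of Lemma~\ref{lem:1st-Picard-diff} (the differences $\bs J\vD E(\bs H(\sh{\vec a},\sh{\vec v}))-\bs J\vD E(\bs H(\vec a,\vec v))$, the difference of $\vD^2 E$ acting on $\sh{\bs h}$, and the differences of the multiplier basis vectors acting on $(\sh{\vec\lambda},\sh{\vec\mu})$), while $\fl{\bs F}_{\mathrm{nl}}$ is the genuinely quadratic remainder
\[
\bs J\bigl(\vD E(\bs H(\vec a,\vec v)+\sh{\bs h})-\vD E(\bs H(\vec a,\vec v)+\bs h)-\vD^2 E(\bs H(\vec a,\vec v))\fl{\bs h}\bigr)
\]
plus cross terms produced by the change of linearization point. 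By~\eqref{eq:U'-taylor-diff} together with the smallness $\|\sh{\bs h}\|_{N_1}+\|\bs h\|_{N_1}\lesssim\delta$ furnished by Lemma~\ref{lem:kg-nonlin-eq-inf}, one controls
\[
\|\fl{\bs F}_{\mathrm{nl}}\|_{N_{\gamma+1}(\cE)} \lesssim \delta\,\|\fl{\bs h}\|_{N_\gamma(\cE)} + (\text{trajectory-difference terms}).
\]

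Next, observe that $\fl{\bs F}_{\mathrm{lin}}$ is formally identical to the forcing controlled in the proof of Lemma~\ref{lem:1st-Picard-diff}, once one replaces $(\bs h_S,\vec\lambda_S,\vec\mu_S)$ and $(\bs h_R,\vec\lambda_R,\vec\mu_R)$ by $(\sh{\bs h},\sh{\vec\lambda},\sh{\vec\mu})$; repeating the bookkeeping there gives
\[
\|\fl{\bs F}_{\mathrm{lin}}\|_{N_{\gamma+1}(\cE)} \leq C_\gamma \bigl(\|\sh{\vec a}-\vec a\|_{L^\infty}+\|\sh{\vec v}-\vec v\|_{N_1}+\|(\sh{\vec a})'-\vec a\,'\|_{N_1}+\|(\sh{\vec v})'-\vec v\,'\|_{N_1}\bigr).
\]
Here the terms involving $\p_t(\cdot)$ will again require the refined estimate of Lemma~\ref{lem:kg-lin-eq-trick} to achieve the sharper $N_{\gamma+2}$ bound on $\p_t \fl{\bs F}$-type pieces (mirroring the Martel trick applied to $\bs f_{R,2}$). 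A subtle point is that $\fl{\bs h}$ does \emph{not} satisfy~\eqref{eq:h-orth}, since $\sh{\bs h}$ is orthogonal to $(\sh{\bs\alpha}_k,\sh{\bs\beta}_k)$ rather than $(\bs\alpha_k,\bs\beta_k)$; one tracks instead
\[
\delta(t)\coloneqq\Bigl(\sum_k\la\bs\alpha_k-\sh{\bs\alpha}_k,\sh{\bs h}\ra^2+\la\bs\beta_k-\sh{\bs\beta}_k,\sh{\bs h}\ra^2\Bigr)^{1/2},
\]
which, thanks to the smoothness of $\bs\alpha_k,\bs\beta_k$ in $(\vec a,\vec v)$ and the $N_\gamma$-decay of $\sh{\bs h}$, has $N_{\gamma+1}$-norm bounded by the right-hand side of~\eqref{eq:h-diff-est-final}; this is the analogue of the boundary terms $\fl\ell_k,\fl m_k$ in the proof of Lemma~\ref{lem:1st-Picard-diff}, and it fits exactly into the $\delta$-slot of Lemma~\ref{lem:en-est-lin-inf}.

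With these ingredients assembled, Lemma~\ref{lem:en-est-lin-inf} yields $\|\fl{\bs h}\|_{N_\gamma(\cE)}\lesssim \|\fl{\bs F}_{\mathrm{lin}}\|_{N_{\gamma+1}}+\|\fl{\bs F}_{\mathrm{nl}}\|_{N_{\gamma+1}}+\|\delta\|_{N_{\gamma+1}}$; absorbing the $\delta\|\fl{\bs h}\|_{N_\gamma}$ contribution of $\fl{\bs F}_{\mathrm{nl}}$ on the left (possible once $\tau_0$ is large enough so that the fixed-point radius $\delta$ from Lemma~\ref{lem:kg-nonlin-eq-inf} is small) gives~\eqref{eq:h-diff-est-final}. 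The parameter bounds~\eqref{eq:lam-v-diff}--\eqref{eq:mu-F-diff} then follow exactly as in Step~5 of Lemma~\ref{lem:kg-nonlin-eq-inf}: apply the explicit formula~\eqref{eq:val-lambda-mu} to $(\fl{\bs h},\fl{\vec\lambda},\fl{\vec\mu})$, pair~\eqref{eq:h-R-eq}-style identities with $\bs\alpha_k,\bs\beta_k$, and recover the $-(-1)^k(\sh v_k-v_k)$ shift in $\fl\lambda_k$ from the leading behavior $\la\p_v\bs H_k,\bs f_S\ra=-(-1)^kMv_k+O(\text{small})$ identified in~\eqref{eq:lam-diff-est-prelim}, and the $-(-1)^k(F_k(\sh{\vec a},\sh{\vec v})-F_k(\vec a,\vec v))/M$ shift in $\fl\mu_k$ from the analogous expression for $\la\p_a\bs H_k,\bs f_S\ra$. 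The principal obstacle is not any single estimate but the systematic bookkeeping: one must verify that every constituent of $\fl{\bs F}_{\mathrm{lin}}$ (in particular the time-derivative pieces, which must be split and treated via Lemma~\ref{lem:kg-lin-eq-trick}) lands with the correct decay rate $N_{\gamma+1}$, and that every cancellation identified in Lemmas~\ref{lem:1st-Picard}--\ref{lem:1st-Picard-diff} survives when $\bs h_S$ is replaced by the nonlinear solution $\sh{\bs h}$.
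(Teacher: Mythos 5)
Your overall architecture (difference equation, energy estimate of Lemma~\ref{lem:en-est-lin-inf} with the $\delta$-slot absorbing the mismatch of orthogonality conditions, absorption of the quadratic term, and recovery of the parameter shifts from the explicit multiplier formulas) is the right one, and your treatment of the non-orthogonality of $\fl{\bs h}$ is exactly what the paper does. However, there is a genuine gap in the decay bookkeeping for the term
\[
\bs J\bigl(\vD^2 E(\bs H(\sh{\vec a},\sh{\vec v}))-\vD^2 E(\bs H(\vec a,\vec v))\bigr)\sh{\bs h},
\]
which you place inside $\fl{\bs F}_{\mathrm{lin}}$ and claim is ``formally identical'' to the forcing controlled in Lemma~\ref{lem:1st-Picard-diff} after substituting $\sh{\bs h}$ for $\bs h_S$ and $\bs h_R$. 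That substitution destroys the estimate. The operator difference is controlled only by $\|\sh{\vec a}-\vec a\|_{L^\infty}+\|\sh{\vec v}-\vec v\|_{L^\infty}$ with \emph{no} time decay from the $\vec a$-part, so the $\cE$-norm of this term at time $t$ decays exactly as fast as $\|\sh{\bs h}(t)\|_\cE$, i.e.\ like $(t_0+t)^{-\gamma'}$ for $\gamma'<2$ only. To feed Lemma~\ref{lem:en-est-lin-inf} you need this forcing in $N_{\gamma+1}(\cE)$, which forces $\gamma<1$; your scheme therefore proves \eqref{eq:h-diff-est-final} only for $\gamma<1$, not for all $\gamma<2$ as claimed. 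Bootstrapping does not help, because the offending term involves $\sh{\bs h}$ itself rather than the difference $\fl{\bs h}$.

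The paper avoids this precisely by \emph{not} working with the full difference: it first peels off the first Picard iterates, writing $\bs h=\bs h_I+\bs h_*$ and $\sh{\bs h}=\sh{\bs h}_I+\sh{\bs h}_*$, handles $\sh{\bs h}_I-\bs h_I$ by Lemma~\ref{lem:1st-Picard-diff} (where the Hessian-difference term acts on $\sh{\bs h}_R$, whose decay has been upgraded to $N_{\gamma+1}$, $\gamma<2$, via the refined estimate \eqref{eq:h-R-est-refined} and the Martel trick, and on $\sh{\bs h}_S$, which carries the exponential smallness $\sqrt{y_{\min}}e^{-y_{\min}}$), and then studies only $\fl{\bs h}=\sh{\bs h}_*-\bs h_*$. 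In that remainder equation the worst term is $\bs f_l=\bs J(\vD^2 E(\bs H(\sh{\vec a},\sh{\vec v}))-\vD^2 E(\bs H(\vec a,\vec v)))\sh{\bs h}_*$, and since $\|\sh{\bs h}_*\|_{N_{\gamma'}}\le C_{\gamma'}$ for all $\gamma'<3$ by \eqref{eq:hstar-decay}, one gets $\|\bs f_l\|_{N_{\gamma+1}}<\infty$ for all $\gamma<2$, which is what closes the argument in the stated range. Your closing remark that one must check the cancellations ``survive when $\bs h_S$ is replaced by the nonlinear solution $\sh{\bs h}$'' identifies the danger but does not resolve it: they do not survive, and the decomposition into $\bs h_I+\bs h_*$ is not an optional refinement but the step that makes the $\gamma<2$ range attainable.
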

\begin{proof}
We decompose ${\bs h} = {\bs h}_I + {\bs h}_*$ and $\sh{\bs h} = \sh{\bs h}_I + \sh{\bs h}_*$, as explained above.
Let $\fl{\bs h} \coloneqq  \sh {\bs h}_* - \bs h_*$, $\fl \lambda \coloneqq  \sh{\vec \lambda}_* - \vec\lambda_*$ and
$\fl \mu \coloneqq  \sh{\vec \mu}_* - \vec\mu_*$.
Taking the difference of \eqref{eq:hstar} and the analogous equation for $\sh{\bs h}_*$,
we obtain that $\fl {\bs h}$ satisfies
\begin{equation}
\label{eq:nth-eq}
\begin{aligned}
\partial_t \fl{\bs h}(t) &= \bs J\vD^2 E(\bs H(\vec a(t), \vec v(t)))\fl{\bs h}(t) + \fl{\bs f}(t) \\
&+ \sum_{k=1}^n \fl{\lambda}_k(t) \partial_{ a_k}\bs H_k(t)
+ \fl{\mu}_k(t) \partial_{ v_k}\bs H_k(t),
\end{aligned}
\end{equation}
with 
\EQ{
\la \fl{\bs h}, \, \bs \al_k \ra = \la \sh{\bs h}_*, \, \bs{\al}_k - \sh{ \bs \al}_k \ra , \quad \la \fl{\bs h}, \, \bs \beta_k \ra = \la \sh{\bs h}_*, \, \bs{\beta}_k - \sh{ \bs \beta}_k ,\ra 
}
and 
where $\fl{\bs f} = \bs f_l + \bs f_q + \bs f_m$,
\begin{equation}
\label{eq:fl}
\begin{aligned}
\bs f_l(t) &\coloneqq \bs J(\vD^2 E(\bs H(\sh{\vec a}(t), \sh{\vec v}(t))) - \vD^2 E(\bs H(\vec a(t), \vec v(t))))\sh{\bs h}_*(t),
\end{aligned}
\end{equation}
\begin{equation}
\label{eq:fq}
\begin{aligned}
\bs f_q(t) &\coloneqq J\big(\vD E(\bs H(\sh{\vec a}(t), \sh{\vec v}(t))+\sh{\bs h}_I(t) + \sh{\bs h}_*(t)) - \vD E(\bs H(\sh{\vec a}(t), \sh{\vec v}(t)))  \\
&\qquad- \vD^2 E(\bs H(\sh{\vec a}(t), \sh{\vec v}(t)))(\sh{\bs h}_I(t) + \sh{\bs h}_*(t))\big) \\
&- J\big(\vD E(\bs H(\vec a(t), \vec v(t))+\bs h_I(t) + \bs h_*(t)) - \vD E(\bs H(\vec a(t), \vec v(t)))  \\
&\qquad- \vD^2 E(\bs H(\vec a(t), \vec v(t)))(\bs h_I(t) + \bs h_*(t))\big),
\end{aligned}
\end{equation}
and
\begin{equation}
\label{eq:fm}
\begin{aligned}
\bs f_m(t) &\coloneqq \sum_{k =1}^n \sh{ \lambda}_{*, k} (\partial_{ a_k}\sh{\bs H}_k(t) - \partial_{a_k}\bs H_k(t) ) + \sh{ \mu}_{*, k}(\partial_{ v_k}\sh{\bs H}_k(t) - \partial_{ v_k}\bs H_k(t)).
\end{aligned}
\end{equation}
Using \eqref{eq:hstar-decay}, we obtain
\begin{equation}
\|\bs f_l\|_{N_{\gamma+1}} \leq C_\gamma(\|\sh{\vec a} -\vec  a\|_{L^\infty} + \|\sh {\vec v} -\vec  v\|_{L^\infty}),
\end{equation}
for all $\gamma <2$. 
This is the worst term: for $\bs f_q$ and $\bs f_m$ an analogous estimate holds for $\gamma < 3$. Setting
\EQ{
\fl{\de} = \Big(\sum_{k =1}^n (\la \sh{\bs h}_*, \, \bs{\al}_k - \sh{ \bs \al}_k \ra)^2 + (\la \fl{\bs h}, \, \bs \beta_k \ra = \la \sh{\bs h}_*, \, \bs{\beta}_k - \sh{ \bs \beta}_k \ra)^2 \Big)^{\frac{1}{2}},
}
it follows from Lemma~\ref{lem:en-est-lin-inf}, the above, and~\eqref{eq:hstar-decay} that
\EQ{
\| \fl{ \bs h} \|_{N_{\gamma}(\E)} \lesssim_{\gamma}  \| \fl{\bs f}\|_{N_{\gamma +1}(\E)} + \| \fl\de \|_{N_{\gamma +1}} \lesssim_\gamma \|\sh{\vec a} -\vec  a\|_{L^\infty} + \|\sh {\vec v} -\vec  v\|_{L^\infty},
}
for all $\gamma <2$. Arguing exactly as in the proof of~\eqref{eq:flat-param-R} it follows as well that 
\EQ{
\| \fl{\vec \lam}  \|_{N_{\gamma +1}} + \| \fl{\vec \mu} \|_{N_{\gamma +1}} \lesssim_\gamma \|\sh{\vec a} -\vec  a\|_{L^\infty} + \|\sh {\vec v} -\vec  v\|_{L^\infty}  +   \|(\sh{\vec a})' - \vec a'\|_{L^\infty} + \|(\sh{\vec v})' - \vec v'\|_{L^\infty} .
}
Using the conclusions of Lemma~\ref{lem:1st-Picard-diff} and the previous two estimates completes the proof.  
\end{proof}


\section{Analysis of the bifurcation equation} \label{sec:bifurcation} 

Recall that our goal is to study solutions to~\eqref{eq:csf} of the form 
\EQ{
\bs \phi(t, x) = \bs H( \vec a(t), \vec v(t); x) + \bs h(t, x) 
}
with trajectories $(\vec a(t), \vec v(t))$ as in~\eqref{eq:traj-cond} and such that the error satisfies $ \lim_{t \to \infty} \| \bs h(t, x) \|_{\E}  = 0$ and the trajectories satisfy $\lim_{t \to \infty} \rho( \vec a(t) ,\vec v(t)) = 0$. The relationship between the trajectories and the error is fixed by requiring the orthogonality conditions~\eqref{eq:h-orth}. 
We can thus rewrite~\eqref{eq:csf} as the following coupled equations for $( \bs h(t), \vec a(t), \vec v(t))$:
\EQ{ \label{eq:h-nl} 
\p_t \bs h(t) &= \bs J \vD E( \bs H(\vec a(t), \vec v(t)) + \bs h(t)) -  \sum_{k=1}^n (-1)^k \big( a_k'(t) \p_{a} \bs H_k(t)  +  v_k'(t) \p_v \bs H_k(t)) \\
0 & = \la \bs\alpha_k(t), \bs h(t)\ra = \la \bs\beta_k(t), \bs h(t)\ra, \quad \forall \, k . 
}
In view of Lemma~\ref{lem:kg-nonlin-eq-inf}, we see that solving~\eqref{eq:h-nl} is equivalent to finding trajectories $(\vec a(t), \vec v(t))$ so that the triplet $(\bs h(t), \vec \lam(t), \vec \mu(t))$ given by Lemma~\ref{lem:kg-nonlin-eq-inf} satisfies
\EQ{
a_k'(t) = -(-1)^k \lam_k(t), \quad v_k'(t) =- (-1)^k \mu_k(t) 
} 
for each $k \in \{1, \dots, n\}$. This is achieved in the following proposition.

\begin{proposition}[Solution of the bifurcation equations]
\label{prop:sol-bif}
There exist $\tau_0, \eta_0 > 0$ such that for all $t_0 \geq \tau_0$ and $a_{1, 0}, \ldots a_{n, 0}$ real numbers satisfying
\begin{equation} \label{eq:a-data} 
\max_{1\leq k < n}\bigg|\big(a_{k+1, 0} - a_{k, 0}\big) - \Big(2\log(\kappa t_0) - \log\frac{Mk(n-k)}{2}\Big)\bigg| < \eta_0
\end{equation}
there exists a unique pair of trajectories $(\vec a(t), \vec v(t))$ satisfying \eqref{eq:traj-cond} with $\vec a(t_0)= \vec a_0$ and
\begin{equation} \label{eq:a=l-v=m}
a_k'(t) + (-1)^k\lambda_k(t) = v_k'(t) + (-1)^k\mu_k(t) = 0,\qquad\text{for all }k, 
\end{equation}
where $(\bs h(t), \vec \lam(t), \vec \mu(t))$ is the solution associated to $(\vec a(t), \vec v(t))$ given by Lemma~\ref{lem:kg-nonlin-eq-inf}. 
\end{proposition}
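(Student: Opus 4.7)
The plan is to recast the system \eqref{eq:a=l-v=m} as a fixed-point problem on an appropriate space of admissible trajectories and apply the Banach contraction principle, with $\tau_0$ chosen sufficiently large at the end to ensure contractivity. I introduce the complete metric space $X$ of $C^{1}$ pairs $(\vec a,\vec v):[t_0,\infty)\to\R^n\times\R^n$ satisfying \eqref{eq:traj-cond} and $\vec a(t_0)=\vec a_0$, equipped with a norm that measures the deviation from the explicit parabolic reference $a^{*}_{k+1}(t)-a^{*}_{k}(t)=2\log(\kappa t)-\log(Mk(n-k)/2)$, $v^{*}_{k}(t)=-(n+1-2k)/t$ coming from \eqref{eq:explicit-parabolic}, namely $L^\infty$ on $\vec a-\vec a^{*}$ together with $N_1$ on $\vec v-\vec v^{*}$ (with matching control on the time derivatives). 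Given $(\vec a,\vec v)\in X$, Lemma~\ref{lem:kg-nonlin-eq-inf} produces the unique projected solution $(\bs h,\vec\lambda,\vec\mu)$ with the decay \eqref{eq:nonlin-param}, and I set $\Psi(\vec a,\vec v)\coloneqq(\sh{\vec a},\sh{\vec v})$, where
\[
\sh a_k(t)\coloneqq a_{0,k}-(-1)^k\int_{t_0}^{t}\lambda_k(\tau)\,\ud\tau,\qquad \sh v_k(t)\coloneqq (-1)^k\int_{t}^{\infty}\mu_k(\tau)\,\ud\tau.
\]
The backward improper integral converges because \eqref{eq:nonlin-param} and Lemma~\ref{lem:Fz} give $\mu_k=-(-1)^{k}M^{-1}F_k(\vec a,\vec v)+O((t_0+t)^{-\gamma-1})$ with $F_k=O((t_0+t)^{-2})$. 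By construction, fixed points of $\Psi$ are in bijection with solutions of \eqref{eq:a=l-v=m}.

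To show $\Psi(X)\subset X$ I would compare $(\sh{\vec a},\sh{\vec v})$ to $(\vec a^{*},\vec v^{*})$: the initial-data assumption \eqref{eq:a-data} controls the deviation at $t=t_0$, while \eqref{eq:nonlin-param} together with the explicit expansion of $F_k$ in Lemma~\ref{lem:Fz} show that the defects $\sh a_k\,'-\sh v_k$ and $\sh v_k\,'-M^{-1}F_k(\sh{\vec a},\sh{\vec v})$ are of size $O((t_0+t)^{-3+\eps})$. The residual dynamics are then governed by an inhomogeneous perturbation of the attractive Toda system \eqref{eq:attractive-toda}, which is stabilised by the analysis of Section~\ref{sec:n-body}: the energy-type quantities used there to control $\widetilde\rho$ and the parameters $\vec y,\vec q$ remain coercive under the additional $O((t_0+t)^{-3+\eps})$ errors, yielding \eqref{eq:traj-cond} for $\tau_0$ large enough. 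For contractivity, the key input is Lemma~\ref{lem:diff-est}: estimates \eqref{eq:lam-v-diff}--\eqref{eq:mu-F-diff} decompose $\sh{\vec\lambda}-\vec\lambda$ and $\sh{\vec\mu}-\vec\mu$ into a ``diagonal'' piece driven respectively by $\sh{\vec v}-\vec v$ and by $F_k(\sh{\vec a},\sh{\vec v})-F_k(\vec a,\vec v)$, plus a non-diagonal remainder of size $O((t_0+t)^{-\gamma-1})\|(\sh{\vec a}-\vec a,\sh{\vec v}-\vec v)\|_X$. Integrating in time with the boundary points $t_0$ (for $\sh{\vec a}$) and $+\infty$ (for $\sh{\vec v}$) then gives, for $\tau_0$ large, a contraction factor $O(\tau_0^{-\eps})$; uniqueness within $X$ is automatic from the fixed-point principle.

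The main technical obstacle is the mismatched boundary-value structure: $\vec a$ is prescribed at $t_0$ while $\vec v$ is prescribed asymptotically at $+\infty$, which must be reconciled in one and the same contraction. The forward integral of $(\sh{\vec v}-\vec v)$ that enters the $\sh{\vec a}$-equation can grow by a factor $\log(t/t_0)$ because of the $v^{*}\sim t^{-1}$ scaling of the parabolic reference, which forces deviations $\vec a-\vec a^{*}$ to be measured only in $L^\infty$ rather than in any $N_\gamma$. Dually, the backward integral of $M^{-1}\bigl(F_k(\sh{\vec a},\sh{\vec v})-F_k(\vec a,\vec v)\bigr)$ in the $\sh{\vec v}$-equation gains precisely one power of $t$ through the $O((t_0+t)^{-2})$ Lipschitz dependence of $F_k$ on $\vec a$ granted by Lemma~\ref{lem:Fz}, which is exactly what is needed to close the $N_1$ estimate on $\sh{\vec v}-\vec v$. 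Calibrating the norms so that these forward and backward gains balance, and verifying that the diagonal contributions in \eqref{eq:lam-v-diff}--\eqref{eq:mu-F-diff} are compatible with this calibration, is the core of the argument; once this is done, the contraction principle yields the unique pair $(\vec a,\vec v)$ asserted by the proposition.
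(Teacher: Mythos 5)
Your overall framework (contraction on trajectories near the explicit parabolic solution, with $\bs h,\vec\lambda,\vec\mu$ supplied by Lemma~\ref{lem:kg-nonlin-eq-inf} and differences controlled by Lemma~\ref{lem:diff-est}) matches the paper's, but the fixed-point map you define does not contract, and this is a genuine gap rather than a technicality. You set $\sh v_k(t)=(-1)^k\int_t^\infty\mu_k$ and $\sh a_k(t)=a_{0,k}-(-1)^k\int_{t_0}^t\lambda_k$, i.e.\ you keep the full interaction force inside the forcing of the iteration. By \eqref{eq:mu-F-diff} the ``diagonal'' part of $\mu_k-\mu_k^\dagger$ is $-(-1)^kM^{-1}\bigl(F_k(\vec a,\vec v)-F_k(\vec a^\dagger,\vec v^\dagger)\bigr)$, and by Lemma~\ref{lem:Fz} together with \eqref{eq:explicit-parabolic} the Jacobian of $\vec F$ at the reference trajectory is exactly $Mt^{-2}\scrU$; hence
\begin{equation}
\Bigl\|\int_t^\infty M^{-1}\bigl(F_k(\vec a,\cdot)-F_k(\vec a^\dagger,\cdot)\bigr)\,\ud\tau\Bigr\|_{N_1}\ \simeq\ \|\scrU\|\,\|\vec a-\vec a^\dagger\|_{L^\infty},
\end{equation}
an $O(1)$ constant (the eigenvalues of $\scrU$ are $\ell(\ell+1)$, so $\|\scrU\|\geq 2$ for $n\geq 2$) that cannot be made small by enlarging $\tau_0$. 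Dually, the diagonal part of $\lambda_k-\lambda_k^\dagger$ is $-(-1)^k(v_k-v_k^\dagger)$ by \eqref{eq:lam-v-diff}, and $\int_{t_0}^t|v_k-v_k^\dagger|\,\ud\tau\leq\log\bigl((t_0+t)/t_0\bigr)\|\vec v-\vec v^\dagger\|_{N_1}$ is not even bounded, so $\|\sh{\vec a}-\sh{\vec a}^\dagger\|_{L^\infty}$ is not controlled by your norm at all; choosing $L^\infty$ for $\vec a-\vec a^*$ does not cure this, because the problem is the unboundedness of the difference map, not the growth of a single trajectory. Composing the two steps produces a factor $\gtrsim\|\scrU\|\log(\cdot)$, so no calibration of norms balances the ``forward and backward gains.''

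The paper avoids this precisely by \emph{not} integrating the multipliers directly: the iterate $(\vec\fb,\vec\fw)$ is defined as the unique bounded solution of the linear system $\vec\fb\,'=\vec\fw+\vec F$, $\vec\fw\,'=(t_0+t)^{-2}\scrU\vec\fb+\vec G$, where the linearization of the Toda force around $\vec a_{\tx{expl}}$ is kept on the left and inverted exactly via Lemma~\ref{lem:Euler} (diagonalization in the Legendre basis and explicit Euler ODE solutions), so that only the genuinely small residuals --- $(-1)^k\lambda_k+v_k\in N_{\gamma+1}$, the Taylor remainder of $F_{\tx{expl},k}$, and the $\vec v$-corrections to $F_k$ --- are fed through the contraction. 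Note also that the neutrally stable $\ell=0$ mode requires the extra cancellation $\la\vec G,\vec P_0\ra\in N_{\gamma_2+1}$ (ultimately Newton's third law, $\sum_kF_k\approx0$), which your scheme never isolates. Your appeal to the Section~\ref{sec:n-body} energy arguments for the self-mapping property is likewise insufficient: those are a priori estimates for solutions of the perturbed ODE, not a construction or uniqueness statement for the fixed point. To repair the proof you would need to subtract $(t_0+t)^{-2}\scrU\vec b$ from the forcing and replace direct integration by the bounded-solution operator of Lemma~\ref{lem:Euler}, which is exactly the paper's argument.
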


The remainder of this section is devoted to proving Propostion~\ref{prop:sol-bif}. 

\subsection{Linearized problem}

Consider the equation
\EQ{ \label{eq:toda} 
a_k'(t) &= v_k(t) \\
M v_k'(t) & = 2 \kappa^2 \big( e^{-(a_{k+1}(t) - a_k(t))} - e^{-(a_{k}(t) - a_{k-1}(t))} \big) . 
}
This has an explicit solution (defined up to translation) by the formulas
\EQ{ \label{eq:expl} 
a_{k+1}(t) - a_{k}(t) = 2 \log( \kappa t) - \log\Big( \frac{M k (n-k)}{2} \Big) , \quad v_k(t) =  - \frac{ n+1 - 2k}{t}. 
}
In what follows we fix the solution $(\vec a_\tx{expl}(t), \vec v_\tx{expl}(t) )$ of~\eqref{eq:toda} satisfying~\eqref{eq:expl} with barycenter at the origin (i.e., we require that $ \sum_{k =1}^n a_{\tx{expl}, k}(t) = 0$ for each $t$, but this is an arbitrary choice). 

The proof of Proposition~\ref{prop:sol-bif} begins with an analysis of the linearization of the bifurcation equations about the solution $(\vec a_\tx{expl}(t), \vec v_\tx{expl}(t) )$. Define 
\EQ{
F_{\tx{expl}, k}(\vec a) = 2 \kappa^2 \big( e^{-(a_{k+1}(t) - a_k(t))} - e^{-(a_{k}(t) - a_{k-1}(t))} \big). 
}
The Jacobian matrix of $\vec F_{\tx{expl}} (\vec a)$ evaluated at the explicit solution $\vec a_\tx{expl}(t)$ is given by 
\EQ{
\vD \vec F_{\tx{expl}} (\vec a_\tx{expl}(t) ) =  M t^{-2} \scrU, 
}
where the matrix $\scrU = \scrU_n = (u_{jk}) \in \bR^{n\times n}$ is defined by
\begin{equation}
\label{eq:U-def}
u_{k,k+1} \coloneqq  -k(n-k), \quad u_{k+1,k} \coloneqq  -k(n-k),\quad u_{j,k} = 0\ \text{ if }\ |j - k| \geq 2,
\end{equation}
and the elements on the diagonal are determined by the requirement that the sum of the entries in each row equals 0, that is
\begin{equation}
u_{k,k} \coloneqq  -u_{k, k-1}-u_{k,k+1} = (k-1)(n-k+1) + k(n-k),
\end{equation}
where here and later we adopt the convention
\begin{equation}
u_{0,1} = u_{1,0} = u_{n+1,n} = u_{n,n+1} = 0.
\end{equation}
Note that \eqref{eq:U-def} still holds for $k = 0$ and $k = n$.
\begin{definition}
We define the \emph{Legendre vectors} $\vec P_0, \vec P_1, \ldots, \vec P_{n-1} \in \bR^n$ as follows:
\begin{itemize}
\item $\vec P_0 \coloneqq  (1, 1, \ldots, 1)$,
\item for $ 1 \leq \ell \leq n-1$, $\vec P_\ell$ is the orthogonal projection of $(0^\ell, 1^\ell, \ldots, (n-1)^\ell)$
on the space orthogonal to $\vec P_0, \ldots, \vec P_{\ell-1}$.
\end{itemize}
\end{definition}
\begin{proposition}
For $\ell \in \{0, 1, \ldots, n-1\}$ it holds
\begin{equation}
\scrU\vec P_\ell = \ell(\ell+1)\vec P_\ell.
\end{equation}
\end{proposition}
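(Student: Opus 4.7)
The plan is to interpret $\scrU$ as a discrete analog of the classical Legendre differential operator $-\frac{d}{dx}\!\left[x(n-x)\frac{d}{dx}\right]$, and exploit the variational characterization of the Legendre vectors. Writing out the action on a vector $\vec v$ with components $v_k = f(k-1)$, where $f: \{0, 1, \ldots, n-1\}\to \bR$, the tridiagonal structure of $\scrU$ gives
\begin{equation*}
(\scrU \vec v)_k = x(n-x)\bigl[f(x) - f(x-1)\bigr] - (x+1)(n-x-1)\bigl[f(x+1) - f(x)\bigr], \qquad x = k-1.
\end{equation*}
This is precisely the discrete form of the Legendre operator.

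First I would record two structural properties of $\scrU$: (a) it is symmetric with respect to the standard Euclidean inner product on $\bR^n$, which is immediate from $u_{j,k} = u_{k,j}$; and (b) it preserves each subspace $V_\ell \subset \bR^n$ consisting of vectors $\vec v$ with $v_k = p(k-1)$ for some polynomial $p$ of degree at most $\ell$. Property (b) follows from the displayed formula above: the backward difference $f(x) - f(x-1)$ lowers polynomial degree by one, multiplication by $x(n-x)$ raises it by two, and the outer forward difference lowers it by one again, for a net change of zero; the leading $x^{\ell+1}$ contributions from the two terms cancel.

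Next I would proceed by induction on $\ell$. The base case $\ell=0$ is trivial since $\scrU \vec P_0 = \vec 0$ (rows of $\scrU$ sum to $0$ by construction). Assume $\scrU \vec P_j = j(j+1) \vec P_j$ for all $j < \ell$. By definition $\vec P_\ell \in V_\ell$ and $\vec P_\ell \perp \vec P_j$ for $j < \ell$. Using the symmetry of $\scrU$ and the inductive hypothesis,
\begin{equation*}
\langle \scrU \vec P_\ell, \vec P_j\rangle = \langle \vec P_\ell, \scrU \vec P_j\rangle = j(j+1)\langle \vec P_\ell, \vec P_j\rangle = 0,
\end{equation*}
so $\scrU \vec P_\ell \in V_\ell \cap \spn(\vec P_0, \ldots, \vec P_{\ell-1})^\perp$. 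This subspace is one-dimensional, spanned by $\vec P_\ell$, hence $\vec P_\ell$ is an eigenvector.

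Finally, to identify the eigenvalue, I would compute the leading-order coefficient of $\scrU$ acting on $x^\ell$. Using the binomial expansions of $(x\pm 1)^\ell$ together with $x(n-x) - (x+1)(n-x-1) = 2x - n + 1$, a direct calculation shows that the $x^{\ell+1}$ terms cancel and the coefficient of $x^\ell$ in $(\scrU x^\ell)$ equals $2\binom{\ell}{2} + 2\ell = \ell(\ell+1)$. Since $\vec P_\ell$ has leading coefficient nonzero in its polynomial representation, the eigenvalue must be $\ell(\ell+1)$, as claimed. The only step requiring careful bookkeeping is this final leading-coefficient computation; the rest is structural.
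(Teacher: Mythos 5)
Your proof is correct and follows essentially the same route as the paper: the key computation in both is that $\scrU$ preserves the flag of polynomial subspaces $V_0\subset V_1\subset\cdots$ and acts on the monomial $(k-1)^\ell$ as $\ell(\ell+1)(k-1)^\ell$ plus lower-degree terms, after which symmetry forces the Gram--Schmidt vectors $\vec P_\ell$ to be eigenvectors. The paper compresses the final step into the phrase ``the theory of symmetric matrices''; your induction using $\langle \scrU\vec P_\ell,\vec P_j\rangle=0$ and the one-dimensionality of $V_\ell\cap V_{\ell-1}^\perp$ is exactly the explicit version of that step, and your leading-coefficient computation $2\binom{\ell}{2}+2\ell=\ell(\ell+1)$ checks out.
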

\begin{proof}
An explicit expansion of all the terms as polynomials with respect to $j-1$ reveals that
\begin{equation}
\sum_{k=1}^n u_{j, k} (k-1)^\ell = \ell(\ell + 1)(j-1)^\ell + \sum_{\ell' = 0}^{\ell-1} c_{n, \ell, \ell'}(j-1)^{\ell'},
\end{equation}
where $c_{n, \ell, \ell'}$ are some coefficients whose precise form is not needed here
(it is only crucial that they do not depend on $j$).
The conclusion thus follows from the theory of symmetric matrices.
\end{proof}
In the next lemma we study the bifurcation equations linearized around the explicit trajectory
(if we translate in space the explicit trajectory, the linearized equations are unchanged).
\begin{lemma} \label{lem:Euler} 
Let $t_0\ge 1$ and let $\vec F= (F_1, \dots, F_n)$ and $\vec G = (G_1, \dots, G_n)$ be such that $F_j, G_j \in L^1([t_0, \infty); \R)$ for each $j \in \{1, \dots, n\}$. Let $\gamma_1, \gamma_2 \in (1, 2)$ and assume that $F_j \in N_{\gamma_1+1}$ for each $j  \in {1, \dots, n}$ and assume $G_j \in N_{2}$ for each $j \in \{1, \dots, n\}$. Let $\vec B_0 \in \R^n$.

Define the projection coefficients of $\vec F$, $\vec G$ onto the Legendre vectors by 
\EQ{
f_\ell(t) \coloneqq  | \vec P_{\ell}|^{-2} \la \vec F(t), \, \vec P_\ell \ra_{\R^n}, \quad g_\ell(t) \coloneqq  | \vec P_{\ell}|^{-2}  \la \vec G(t), \, \vec P_\ell \ra_{\R^n}
}
for $\ell = \{0, \dots, n-1\}$. 
Assume further that  
\EQ{
g_{ 0}(t)  = \frac{1}{n}(G_1(t) + \dots + G_n(t))
}
satisfies $g_0 \in N_{\gamma_2+1}$. 
 The linear system
\begin{equation}
\vec B'(t) = \vec W(t) + \vec F(t), \qquad \vec W'(t) = (t_0+t)^{-2}\scrU \vec B(t) + \vec G(t)
\end{equation}
has a unique solution that remains bounded as $t \to \infty$ and such that $\vec B(0) = \vec B_0$. Moreover, there exists $C_0 = C_0(n) \ge 0$ such that it satisfies the estimates 
\EQ{\label{eq:B-bound} 
|\vec B(t)| \le C  \Big( |\vec B_0| + t_0^{-\gamma_1} \| \vec F \|_{N_{\gamma_1 +1}} +  \| \vec G \|_{N_{2}} + t_0^{-\gamma_2 +1}\| g_0 \|_{N_{\gamma_2 + 1}}  \Big)\qquad \text{for all }t \geq 0, 
}
for each $k = 1, \dots, n-1$, as well as 
\EQ{ \label{eq:W-bound} 
\| \vec W(t) \|_{N_1} \le C_0 \Big(   |\vec B_0|+ t_0^{-\gamma_1} \| \vec F \|_{N_{\gamma_1 +1}} +  \| \vec G \|_{N_{2}} + t_0^{-\gamma_2 +1}\| g_0 \|_{N_{\gamma_2 + 1}}\Big) 
}

\end{lemma}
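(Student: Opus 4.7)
The strategy is diagonalization: we project the system onto the orthogonal basis of Legendre vectors $\vec P_0, \vec P_1, \ldots, \vec P_{n-1}$, which are eigenvectors of $\scrU$ with eigenvalues $\ell(\ell+1)$. Writing
\[
\vec B(t) = \sum_{\ell=0}^{n-1}b_\ell(t)\vec P_\ell, \qquad \vec W(t) = \sum_{\ell=0}^{n-1}w_\ell(t)\vec P_\ell, \qquad \vec F, \vec G \text{ similarly with } f_\ell, g_\ell,
\]
the system decouples into $n$ scalar systems
\[
b_\ell'(t) = w_\ell(t) + f_\ell(t), \qquad w_\ell'(t) = \ell(\ell+1)(t_0+t)^{-2} b_\ell(t) + g_\ell(t),
\]
each equivalent to the second-order Euler-type equation $b_\ell''(t) = \ell(\ell+1)(t_0+t)^{-2}b_\ell(t) + f_\ell'(t) + g_\ell(t)$, with the single initial condition $b_\ell(0) = B_{0,\ell} = |\vec P_\ell|^{-2}\langle \vec B_0, \vec P_\ell\rangle$ and the boundedness requirement as $t \to \infty$. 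I will solve each scalar problem by variation of parameters.

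\textbf{Modes $\ell \geq 1$.} The homogeneous Euler equation has fundamental solutions $(t_0+t)^{\ell+1}$ (growing) and $(t_0+t)^{-\ell}$ (decaying), with constant Wronskian $-(2\ell+1)$. Writing $(b_\ell,w_\ell)^{\intercal} = c_1(t)(1,\ell+1/(t_0+t))^{\intercal}(t_0+t)^{\ell+1} + c_2(t)(1,-\ell/(t_0+t))^{\intercal}(t_0+t)^{-\ell}$, variation of parameters yields
\[
c_1'(t) = \tfrac{1}{2\ell+1}\bigl[\ell(t_0+t)^{-\ell-1}f_\ell(t) + (t_0+t)^{-\ell}g_\ell(t)\bigr], \qquad c_2'(t) = \tfrac{1}{2\ell+1}\bigl[(\ell+1)(t_0+t)^\ell f_\ell(t) - (t_0+t)^{\ell+1}g_\ell(t)\bigr].
\]
To kill the growing mode, I set $c_1(t) \coloneqq -\int_t^\infty c_1'(s)\,\mathrm ds$; the assumption $\vec F \in N_{\gamma_1+1}$, $\vec G \in N_2$ makes $c_1'$ integrable and gives $|c_1(t)(t_0+t)^{\ell+1}| \lesssim (t_0+t)^{-\gamma_1}\|\vec F\|_{N_{\gamma_1+1}} + \|\vec G\|_{N_2}$. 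The coefficient $c_2$ is then uniquely determined by the initial condition $c_2(0) = t_0^\ell(B_{0,\ell} - c_1(0)t_0^{\ell+1})$. Integrating $c_2'$ and multiplying by $(t_0+t)^{-\ell}$ gives $|c_2(t)(t_0+t)^{-\ell}| \lesssim |B_{0,\ell}| + t_0^{-\gamma_1}\|\vec F\|_{N_{\gamma_1+1}} + \|\vec G\|_{N_2}$, and an analogous estimate for $(t_0+t)|w_\ell(t)|$.

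\textbf{Mode $\ell = 0$.} Here the homogeneous solutions are $1$ and $t_0+t$ (affine growth), and the scalar system reads $b_0' = w_0 + f_0$, $w_0' = g_0$. The boundedness requirement forces $w_0(\infty)=0$, so I set $w_0(t) \coloneqq -\int_t^\infty g_0(s)\,\mathrm ds$; the improved assumption $g_0 \in N_{\gamma_2+1}$ with $\gamma_2 > 1$ (strictly stronger than merely $g_0 \in N_2$) ensures this integral converges and gives $|w_0(t)| \lesssim (t_0+t)^{-\gamma_2}\|g_0\|_{N_{\gamma_2+1}}$. Then $b_0(t) = B_{0,0} + \int_0^t (w_0(s) + f_0(s))\,\mathrm ds$, and $w_0 + f_0$ is integrable on $[0,\infty)$, yielding the bound $|b_0(t)| \lesssim |B_{0,0}| + t_0^{-\gamma_2+1}\|g_0\|_{N_{\gamma_2+1}} + t_0^{-\gamma_1}\|f_0\|_{N_{\gamma_1+1}}$. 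This is exactly where the $t_0^{-\gamma_2+1}$ term in \eqref{eq:B-bound} originates.

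\textbf{Conclusion and expected obstacle.} Summing the modal estimates and using that $|\vec B(t)| \simeq \max_\ell |b_\ell(t)|$ (and similarly for $\vec W$), together with $|f_\ell|,|g_\ell| \lesssim |\vec F|, |\vec G|$ from orthogonal projection, produces \eqref{eq:B-bound} and \eqref{eq:W-bound}. Uniqueness follows since any two bounded solutions to the homogeneous system with $\vec B(0)=0$ must have each modal component $b_\ell$ proportional to the decaying branch $(t_0+t)^{-\ell}$ (for $\ell \geq 1$) or constant (for $\ell = 0$), forced to zero by the initial condition. The main bookkeeping hurdle is the $\ell = 0$ mode: one must carefully track that the affine growth of one of the homogeneous solutions imposes the stronger integrability requirement on $g_0$, which accounts for the separate hypothesis $g_0 \in N_{\gamma_2+1}$ and the different-looking $t_0^{-\gamma_2+1}$ factor. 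Once the Legendre decomposition is in place, the rest is routine variation-of-parameters bookkeeping.
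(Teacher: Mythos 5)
Your proof is correct and follows essentially the same route as the paper's: diagonalization in the Legendre eigenbasis of $\scrU$, variation of parameters for each scalar Euler system with the growing branch $(t_0+t)^{\ell+1}$ killed by a tail integral and the decaying branch fixed by the initial condition, and the separate treatment of the $\ell=0$ mode, whose affine homogeneous solution is exactly what forces the stronger hypothesis $g_0\in N_{\gamma_2+1}$ and produces the $t_0^{-\gamma_2+1}$ factor. Incidentally, your variation-of-parameters coefficients $c_1', c_2'$ are the correct ones (one checks directly that they yield $(b_\ell^{\mathrm p})' = w_\ell^{\mathrm p} + f_\ell$), whereas the paper's displayed particular solution carries a sign slip in the $f_\ell$ term of the tail integral; this is harmless for the estimates, which only use absolute values.
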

\begin{proof}
Decomposing everything in the eigenbasis of $\scrU$, we obtain the system
\begin{equation}
b_{ \ell}'(t) = w_\ell(t) + f_\ell(t), \qquad w_\ell'(t) = \ell(\ell + 1)(t_0+t)^{-2}b_\ell(t) + g_\ell(t).
\end{equation}
This is a standard Euler differential equation. A particular solution is given by
\begin{equation}
\begin{aligned}
b_\ell^{\tx p}(t) &\coloneqq \frac{(t_0+t)^{-\ell}}{2\ell+1} \int_0^t \big((\ell+1)(t_0+s)^\ell f_\ell(s)-  (t_0+s)^{\ell+1} g_\ell(s)\big)\ud s\\
&+ \frac{(t_0+t)^{\ell + 1}}{2\ell + 1}\int_t^\infty \big( \ell(t_0 + s)^{-\ell - 1}f_\ell(s)
- (t_0 +s)^{-\ell}g_\ell(s)\big)\ud s
\end{aligned}
\end{equation}
and
\begin{equation}
\begin{aligned}
w_\ell^{\tx p}(t) &\coloneqq \frac{-\ell(t_0+t)^{-\ell-1}}{2\ell+1} \int_0^t \big((\ell+1)(t_0+s)^\ell f_\ell(s)-  (t_0+s)^{\ell+1} g_\ell(s)\big)\ud s\\
&+ \frac{(\ell + 1)(t_0+t)^{\ell}}{2\ell + 1}\int_t^\infty \big( \ell(t_0 + s)^{-\ell - 1}f_\ell(s)
- (t_0 +s)^{-\ell}g_\ell(s)\big)\ud s.
\end{aligned}
\end{equation}

The solutions of the homogenous equation which are bounded are spanned by $(t_0 + t)^{-\ell}$,
and we add an appropriate multiple so that the initial condition is satisfied. Indeed, for each initial data $\vec B(0)  = \vec B_0\in \R^n$ the unique bounded solution $(\vec B(t), W(t)) = (\sum_{\ell =0}^{n-1} b_{\ell}(t) \vec P_{\ell}, \sum_{\ell =0}^{n-1} w_{\ell}(t) \vec P_{\ell} )$ with $\vec B(0) = \vec B_0 = \sum_{\ell =0}^{N-1} b_{0, \ell} \vec P_\ell$ is given  by 
\EQ{
b_{\ell}(t) &= t_0^{\ell}(b_{0, \ell}  - b^{\tx p}_{\ell}(0))(t_0 + t)^{-\ell} + b_\ell^{\tx p}(t) \\
w_{\ell}(t) &=    -\ell t_0^{\ell}(b_{0, \ell}  - b^{\tx p}_{\ell}(0))(t_0 + t)^{-\ell-1}   + w_\ell^{\tx p}(t)
}
For $\ell = 0$ we have 
\EQ{
b_{0}(t) - b_{0, 0} & = b_0^{\tx p}(t) - b_0^{\tx p}(0) \\
& =  \int_0^t \big( f_0(s)-  (t_0+s)g_0(s)\big)\ud s - (t_0 + t)\int_t^\infty g_0(s)\ud s   + t_0 \int_0^\infty g_0(s)\ud s, 
}
and hence 
\EQ{
|b_{0}(t) - b_{0, 0}| \lesssim t_0^{-\gamma_1} \| \vec F \|_{N_{\gamma_1 + 1}} + t_0^{-\gamma_2 +1} \| g_0 \|_{N_{\gamma_2 +1}}  . 
}
For $\ell \ge 1$ we have
\EQ{
b_{\ell}(t) - \frac{t_0^\ell}{(t_0 + t)^{\ell}} b_{0, \ell} 
& =   -  \frac{t_0^\ell} {(t_0 + t)^{\ell}}  b^{\tx p}_{\ell}(0) +  b_\ell^{\tx p}(t), 
}
and thus
\EQ{
|b_{\ell}(t) -  \frac{t_0^\ell}{(t_0 + t)^{\ell}} b_{ 0, \ell}| \lesssim t_0^{-\gamma_1} \| \vec F \|_{N_{\gamma_1+1}} + \| \vec G \|_{N_{2}}
}
which gives~\eqref{eq:B-bound}. Next, for $\ell =0$, we have
\EQ{
(t_0 + t) | w_0(t) | \le (t_0 + t) | w_0^p(t)|  \le (t_0 + t) \int_t^\infty | g_0(s)| \, \ud s  \lesssim (t_0 + t)^{-\gamma_2+ 1}  \| g_0 \|_{N_{\gamma_2 + 1}}
}
If $\ell \ge 1$, 
\EQ{
(t_0 + t) | w_\ell(t) |  &\le \frac{\ell t_0^\ell}{ (t_0 + t)^\ell}| b_{0,\ell}| + \frac{ \ell t_0^\ell}{ (t_0 + t)^\ell} |b_{\ell}^p(0)|  + (t_0 +t) | w_\ell^p(t)| \\
& \lesssim | b_{0, \ell}| +  t_0^{-\gamma_1}  \| f_\ell \|_{N_{\gamma_1 +1}} + \| g_\ell \|_{N_2}
}
completing the proof of~\eqref{eq:W-bound}. 
\end{proof}

\begin{proof}[Proof of Proposition~\ref{prop:sol-bif}]
We write $\vec a(t) = \vec a_\tx{expl}(t) + \vec b(t)$ and $\vec v(t) = \vec v_\tx{expl}(t) + \vec w(t)$, where $(\vec a_\tx{expl}(t), \vec v_\tx{expl}(t))$ is the explicit solution
of the $n$-body problem defined by~\eqref{eq:expl} with the barycenter at $0$ (this last condition is an arbitrary choice and could be any number). The idea is to set up a contraction mapping amongst  perturbations of this explicit trajectory that are small in the norm appearing on the right hand side in Lemma~\ref{lem:diff-est}.

Let $\eta_0>0$ be a small number and $\tau_0>0$ be a large number, both to be fixed below. Let $\vec a_0$ be as in~\eqref{eq:a-data}, and assume (after translating) that $\vec a_0$ also has barycenter $=0$ like the explicit trajectory $ \vec a_{\tx{expl}}(t)$. Let $t_0 \ge \tau_0$ and we consider trajectories $(\vec a(t), \vec v(t))$, satisfying~\eqref{eq:traj-cond} with $\vec a(t_0) = \vec a_0$. For $t_0$ as above and trajectories $(\vec b, \vec w) \in C^1([t_0, \infty); \R^n \times \R^n)$ with $\vec b(t_0) = \vec b_0 \coloneqq  \vec a(t_0) - \vec a_\tx{expl}(t_0)$, define the norm $\calX = \calX([t_0, \infty)$ by 
\EQ{
\| (\vec b, \vec w) \|_{\calX} \coloneqq  \| \vec b \|_{L^\infty([t_0, \infty))} + \| \vec b'\|_{N_1([t_0, \infty))} + \| \vec w \|_{N_1([t_0, \infty))} + \| \vec w' \|_{N_1([t_0, \infty))}.
}
Considering only trajectories that have $\vec b(t_0) = \vec b_0$, for $\eta>0$ we denote the $\eta$-ball in $\calX$ by 
\EQ{
\calB_\calX( \eta) \coloneqq  \{ (\vec b, \vec w) \in C^1([t_0, \infty); \R^n \times \R^n) \mid  \|( \vec b, \vec w) \|_{\calX}  < \eta \mand \vec b(t_0) = \vec b_0\}.
}
To trajectories $(\vec a, \vec v) = (\vec a_\tx{expl} + \vec b, \vec v_\tx{expl} + \vec w)$ with $(\vec b, \vec w ) \in \calB_{\calX}(\eta)$ we associate new trajectories  $(\vec \fa, \vec \fv) = (\vec a_\tx{expl} + \vec \fb, \vec v_\tx{expl} + \vec \fw)$
defined as the solution of the system
\begin{equation}
\begin{aligned}
\fb_k'(t) &= \fw_k(t) - \big((-1)^k \lambda_k(t) + v_k(t)\big), \\
\fw_k'(t) &= (t_0+t )^{-2}(\scrU \vec \fb(t))_k - \big((-1)^k\mu_k(t) + M^{-1}F_{\textrm{expl}, k}(\vec a_{\textrm{expl}}) + (t_0+t )^{-2}(\scrU \vec b(t))_k\big).
\end{aligned}
\end{equation}
where on the right-hand side, $(\vec \lam(t), \vec \mu(t))$ is determined by $(\vec a, \vec v)$ via Lemma~\ref{lem:kg-nonlin-eq-inf}.  
We denote this association as a mapping $\Phi( \vec b(t), \vec w(t)) = (\vec \fb(t), \vec \fw(t))$. We will show that there exist $\eta_0>0$ sufficiently small and $\tau_0>0$ sufficiently large  the mapping $\Phi$  defines a contraction on $\calB_\calX( 8 C_0\eta_0)$ where $C_0$ is the constant from Lemma~\ref{lem:Euler}. 

First we show that $\Phi: \calB_\calX( 8C_0\eta_0) \to \calB_\calX(8 C_0\eta_0)$. Setting up an application of Lemma~\ref{lem:Euler} we expand the forcing on right-hand side of the equation for $\fw_k$ as follows, defining 
\EQ{
-G_k(t)\coloneqq   (-1)^k\mu_k(t) &+ M^{-1}F_{\textrm{expl}, k}(\vec a_{\textrm{expl}}) + (t_0+t )^{-2}(\scrU \vec b(t))_k \\
& = (-1)^k\mu_k(t) + M^{-1} F_k((\vec a_\tx{expl} + \vec b, \vec v_\tx{expl} + \vec w)   \\
&\quad -  M^{-1}\Big(F_k(\vec a_\tx{expl} + \vec b, \vec v_\tx{expl} + \vec w)- F_{\textrm{expl}, k}(\vec a_{\textrm{expl}} + \vec b)  \Big) \\
&\quad - M^{-1} \Big( F_{\textrm{expl}, k}(\vec a_{\textrm{expl}} + \vec b) - F_{\textrm{expl}, k}(\vec a_{\textrm{expl}}) - (t_0+t )^{-2}(\scrU \vec b(t))_k\Big)
}
By~\eqref{eq:nonlin-param} we have 
\EQ{ \label{eq:param1} 
\max_{1 \le k \le n} \Big( \| (-1)^k \lambda_k + v_k\|_{N_{\gamma+1}} + \| (-1)^k\mu_k(t) + M^{-1} F_k(\vec a_{\textrm{expl}} + \vec b) \|_{N_{\gamma+1}} \Big) \le  1
}
for each $\gamma <2$, provided $t_0$ is sufficiently large. By~\eqref{eq:Fz}, we have 
\EQ{ \label{eq:F-Fexpl} 
\| F_k(\vec a_\tx{expl} + \vec b, \vec v_\tx{expl} + \vec w)- F_{\textrm{expl}, k}(\vec a_{\textrm{expl}} + \vec b) \|_{N_3} \le 1
}
provided $t_0$ is large enough. Using Taylor's formula with remainder we find a constant $C = C(n)$ so that 
\EQ{
\| F_{\textrm{expl}}(\vec a_{\textrm{expl}} + \vec b) - F_{\textrm{expl}}(\vec a_{\textrm{expl}}) - (t_0+t )^{-2}(\scrU \vec b(t)) \|_{N_2} \le C \| \vec b \|_{L^\infty}^2.
}
And finally observe that 
\EQ{
\la \vec F_{\textrm{expl}}(\vec a_{\textrm{expl}} + \vec b) - \vec F_{\textrm{expl}}(\vec a_{\textrm{expl}}) - (t_0+t )^{-2}(\scrU \vec b(t))  , \, \vec P_0 \ra = 0
}
which, combined with~\eqref{eq:param1} and~\eqref{eq:F-Fexpl} gives 
\EQ{
\| \la \vec G, \, \vec P_0 \ra \|_{N_{\gamma+1}} \le 1 
}
for each $\gamma < 2$, provided  $t_0$ is sufficiently large. 

An application of Lemma~\ref{lem:Euler} together with the above estimates yields  
\EQ{
\|(\vec \fb, \vec \fw)\|_{\calX} < 8 C_0 \eta_0  
}
after taking $t_0$ sufficiently large and $\eta_0>0$ sufficiently small, and making use of the equation for the estimates for $\vec \fb', \vec \fw$. 

We address the contraction property. Consider two trajectories $( \vec b, \vec w), ( \sh {\vec b} \sh{\vec w}) \in \calB_{\calX}( 8 C_0 \eta_0)$ and their images $( \vec {\fb} , \vec {\fw}), ( \sh{\vec{\fb}}, \sh{\vec \fw})$ under the mapping $\Phi$. Then $(\fl{\vec \fb}, \fl{\vec \fw}) \coloneqq  ( \sh{\vec{\fb}}, \sh{\vec \fw}) - ( \vec \fb , \vec {\fw})$ satisfies the equation, 
\EQ{ \label{eq:shb-b-eq} 
(\fl {\fb}_k)'(t) &= \fl \fw_k(t) - \Big( \big((-1)^k\sh  \lambda_k(t) +  \sh v_k(t)\big)- \big((-1)^k \lambda_k(t) + v_k(t)\big), \\
(\fl \fw_k)'(t) &= (t_0+t )^{-2}(\scrU \fl{\vec  \fb}(t))_k \\
&\quad  - \Big( \big((-1)^k\sh \mu_k(t) + M^{-1} F_k(\sh{\vec a}(t), \sh{\vec v}(t) )\big) - \big((-1)^k\mu_k(t) + M^{-1} F_k(\vec a(t), \vec v(t)) \big) \Big)  \\
&\quad -  M^{-1}\Big(\big(F_k(\sh{\vec a}(t), \sh{\vec v}(t))- F_{\textrm{expl}, k}(\sh{\vec a}(t))\big) -  \big(F_k(\vec a(t), \vec (t))- F_{\textrm{expl}, k}(\vec a(t))\big) \Big) \\
& \quad - M^{-1} \Big( F_{\textrm{expl}, k}(\sh{\vec a}(t)) - F_{\textrm{expl}, k}(\vec a_{\textrm{expl}}(t)) - (t_0+t )^{-2}(\scrU \sh{\vec b(t)})_k\Big)\\
&\quad + M^{-1} \Big( F_{\textrm{expl}, k}({\vec a}(t)) - F_{\textrm{expl}, k}(\vec a_{\textrm{expl}}(t)) - (t_0+t )^{-2}(\scrU {\vec b(t)})_k\Big)
}
with zero initial data. The estimates~\eqref{eq:lam-v-diff} and~\eqref{eq:mu-F-diff} give that 
\EQ{
\| \big((-1)^k\sh  \lambda_k(t) +  \sh v_k(t)\big)- \big((-1)^k \lambda_k(t) + v_k(t)\big)\|_{N_{\gamma +1}} \le C_{\gamma}  \| ( \sh{\vec{b}}, \sh{\vec w}) - ( \vec {b} , \vec {w}) \|_{\calX} \\
\|  \big((-1)^k(\sh \mu_k(t) - \mu_k(t) \big) + M^{-1} \big( F_k(\sh{\vec a}(t), \sh{\vec v}(t) )\big) -F_k(\vec a(t), \vec v(t)) \big)  \|_{N_{\gamma + 1}} \le C_{\gamma}  \| ( \sh{\vec{b}}, \sh{\vec w}) - ( \vec {b} , \vec {w}) \|_{\calX} 
}
for each $\gamma \in [1, 2)$.  We also note the estimates, 
\EQ{
\| \big(F_k(\sh{\vec a}(t), \sh{\vec v}(t))- F_{\textrm{expl}, k}(\sh{\vec a}(t))\big) -  \big(F_k(\vec a(t), \vec v(t))- F_{\textrm{expl}, k}(\vec a(t))\big) \|_{N_3} \lesssim \| \sh{\vec b} - \vec b \|_{L^\infty}
}
and 
\begin{multline} 
\Big\| \big( F_{\textrm{expl}, k}(\sh{\vec a}(t)) - F_{\textrm{expl}, k}(\vec a_{\textrm{expl}}(t)) - (t_0+t )^{-2}(\scrU \sh{\vec b(t)})_k\big) \\
- \big( F_{\textrm{expl}, k}({\vec a}(t)) - F_{\textrm{expl}, k}(\vec a_{\textrm{expl}}(t)) - (t_0+t )^{-2}(\scrU {\vec b(t)})_k\Big)\Big\|_{N_2}  \lesssim ( \| \sh{\vec b} \|_{L^\infty} + \| \vec b \|_{L^\infty}) \| \sh{\vec b} - \vec b \|_{L^\infty}. 
\end{multline} 
As before, we observe that the orthogonal projections of the last two lines of~\eqref{eq:shb-b-eq} onto $\vec P_0$ vanishes identically. Applying Lemma~\ref{lem:Euler} to~\eqref{eq:shb-b-eq}, noting $\sh{\vec b}(t_0) - \vec b(t_0) = \vec 0$ and using the estimates above we find that 
\EQ{ \label{eq:bw-diff} 
\| ( \sh{\vec{\fb}}, \sh{\vec \fw}) - ( \vec \fb , \vec {\fw}) \|_{\calX}  \le \frac{1}{2} \| ( \sh{\vec{b}}, \sh{\vec w}) - ( \vec {b} , \vec {w}) \|_{\calX}
}
provided $\eta_0$ is taken sufficiently small and $t_0$ is sufficiently large (and again making use of the equation for the estimates for the derivatves). The contraction mapping principle yields a  unique trajectory $(\vec b, \vec w) \in \calB_{\calX}(8 C_0 \eta_0)$ such that $\Phi( \vec b, \vec w)= (\vec b, \vec w)$. By Lemma~\ref{lem:Euler} this is the unique solution to~\eqref{eq:shb-b-eq} amongst those trajectories $(\vec a, \vec v)$ satisfying~\eqref{eq:traj-cond} with initial data $\vec a(t_0) = \vec a_0$ proving Proposition~\ref{prop:sol-bif}. 
\end{proof}

We conclude this section with a quick corollary of Proposition~\ref{prop:sol-bif} and its proof.
Recall the definition of the open set $\calT(\eta_0)$ from~\eqref{eq:T0-def}. 
Proposition~\ref{prop:sol-bif} then defines a mapping 
\begin{multline} \label{eq:pos-to-sol} 
 \calT( \eta_0) \ni \vec a_0 \\ \mapsto ( \bs h(\vec a_0, t_0), \vec a(\vec a_0, t_0) , \vec v(\vec a_0, t_0)) \in N_1(\E) \times C^1([t_0, \infty); \R^n) \times C^1([t_0, \infty); \R^n) 
\end{multline} 
where $t_0\ge \tau_0$ is any choice of initial time for which~\eqref{eq:a-data} holds, and  $(\vec a(\vec a_0, t_0), \vec v(\vec a_0, t_0))$ are the trajectories provided by Proposition~\ref{prop:sol-bif} and  $(\bs h(\vec a_0, t_0), \vec \lam(\vec a_0, t_0),  \vec \mu(\vec a_0, t_0))$ is the solution to~\eqref{eq:kg-nonlin-eq-inf} provided by Lemma~\ref{lem:kg-nonlin-eq-inf} with $\lam_k(\vec a_0, t_0) = - (-1)^k a_k(\vec a_0, t_0)$ and $\mu_k(\vec a_0, t_0) = - (-1)^k v_k(\vec a_0, t_0)$. It is evident from the proof that given $\vec a_0 \in \calT(\eta_0)$ and $\eta_0$ as in Proposition~\ref{prop:sol-bif}, two choices of times $t_0, \ti t_0 \ge \tau_0$ for which~\eqref{eq:a-data} holds lead to trajectories that are time translations of each other, that is if $\ti t_0 \ge t_0$, 
\begin{multline}  \label{eq:tt-traj} 
(\bs h(\vec a_0, t_0; t), \vec a(\vec a_0, t_0; t), \vec v(\vec a_0, t_0; t))\\ = (\bs h(\vec a_0, \ti t_0; \ti t_0 + t - t_0), \vec a(\vec a_0, t_0; \ti t_0 + t - t_0), \vec v(\vec a_0, t_0; \ti t_0 + t - t_0))
\end{multline} 
\begin{corollary} \label{cor:hav-cont} 
The mapping defined by~\eqref{eq:pos-to-sol} is continuous. 
\end{corollary}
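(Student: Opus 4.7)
My plan is to revisit the contraction-mapping construction used in the proof of Proposition~\ref{prop:sol-bif} and track the dependence of the fixed point on the initial configuration $\vec a_0$. Fix $t_0 \geq \tau_0$ and restrict attention to the open set $\calT_{t_0} \subset \calT(\eta_0)$ of positions $\vec a_0$ satisfying the strict inequality~\eqref{eq:a-data} at this particular $t_0$; by the time-translation identity~\eqref{eq:tt-traj}, continuity of the mapping on $\calT(\eta_0)$ follows once it is established on each such $\calT_{t_0}$. Given $\vec a_0 \in \calT_{t_0}$ and any nearby $\sh{\vec a}_0 \in \calT_{t_0}$, Proposition~\ref{prop:sol-bif} produces fixed points $(\vec b, \vec w)$ and $(\sh{\vec b}, \sh{\vec w})$ in $\calB_\calX(8C_0\eta_0)$ with respective initial data $\vec b(t_0) = \vec a_0 - \vec a_{\tx{expl}}(t_0)$ and $\sh{\vec b}(t_0) = \sh{\vec a}_0 - \vec a_{\tx{expl}}(t_0)$.

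To compare the two fixed points, I would subtract the two fixed-point equations and obtain a difference system for $(\sh{\vec b} - \vec b, \sh{\vec w} - \vec w)$ exactly of the form~\eqref{eq:shb-b-eq}, but with the nonzero initial data $(\sh{\vec b} - \vec b)(t_0) = \sh{\vec a}_0 - \vec a_0$ in place of $\vec 0$. Applying Lemma~\ref{lem:Euler} to this system and using the difference estimates~\eqref{eq:lam-v-diff}, \eqref{eq:mu-F-diff} from Lemma~\ref{lem:diff-est} together with the Taylor-remainder bounds for the $F_{\tx{expl}}$ differences already carried out in the proof of Proposition~\ref{prop:sol-bif}, one obtains a bound of the shape
\begin{equation*}
\|(\sh{\vec b}, \sh{\vec w}) - (\vec b, \vec w)\|_\calX \leq C_0|\sh{\vec a}_0 - \vec a_0| + \tfrac 12 \|(\sh{\vec b}, \sh{\vec w}) - (\vec b, \vec w)\|_\calX,
\end{equation*}
where the first term comes from the non-zero initial data input to Lemma~\ref{lem:Euler} and the contraction factor $1/2$ is precisely the one appearing in~\eqref{eq:bw-diff}. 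Rearranging yields the Lipschitz estimate
\begin{equation*}
\|(\sh{\vec b}, \sh{\vec w}) - (\vec b, \vec w)\|_\calX \leq 2C_0|\sh{\vec a}_0 - \vec a_0|,
\end{equation*}
which by definition of the $\calX$-norm is exactly the required continuous dependence of $(\vec a(\vec a_0, t_0), \vec v(\vec a_0, t_0))$ on $\vec a_0$ in $C^1([t_0,\infty);\R^n\times\R^n)$.

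Continuity of the error component then follows immediately from Lemma~\ref{lem:diff-est}: the estimate~\eqref{eq:h-diff-est-final} with $\gamma = 1$ gives
\begin{equation*}
\|\bs h(\sh{\vec a}_0, t_0) - \bs h(\vec a_0, t_0)\|_{N_1(\E)} \leq C\big(\|\sh{\vec a} - \vec a\|_{L^\infty} + \|\sh{\vec v} - \vec v\|_{N_1} + \|(\sh{\vec a})' - \vec a\,'\|_{N_1} + \|(\sh{\vec v})' - \vec v\,'\|_{N_1}\big),
\end{equation*}
and the right-hand side is bounded by $2CC_0|\sh{\vec a}_0 - \vec a_0|$ via the previous step. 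The proof is essentially all bookkeeping — every analytic ingredient (energy estimates on the linearized problem, Lipschitz dependence of $(\bs h,\vec\lambda,\vec\mu)$ on the trajectory, contraction property of $\Phi$, and the Legendre-vector solution operator for the Euler system) has already been assembled in Sections~\ref{sec:cauchy-inf} and the proof of Proposition~\ref{prop:sol-bif}; what remains is to apply the same machinery once more to the difference system with perturbed initial data, and the only mildly delicate point is verifying that the forcing terms in that difference system can indeed be controlled in the norms required by Lemma~\ref{lem:Euler}, which is guaranteed by the estimates of Lemma~\ref{lem:diff-est}.
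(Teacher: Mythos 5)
Your proposal is correct and follows essentially the same route as the paper: both compare the two fixed points of the contraction map by running the difference system~\eqref{eq:shb-b-eq} with the nonzero initial data $\sh{\vec a}_0 - \vec a_0$ through Lemma~\ref{lem:Euler}, absorb the $\tfrac12$-contraction term to get a Lipschitz bound in $\calX$, and then transfer continuity to $\bs h$ via~\eqref{eq:h-diff-est-final}. The only cosmetic difference is that the paper first normalizes both initial configurations to have barycenter zero so a single explicit reference trajectory serves both, a bookkeeping step your argument handles implicitly.
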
 

\begin{proof} 
Consider two initial data $\vec a_0, \sh{\vec a}_0 \in \calT(\eta_0)$. By taking $| \vec a- \sh{\vec a}|$ sufficiently small we can ensure that~\eqref{eq:a-data} is satisfied with the same choice of $t_0$ for both $\vec a$ and $\sh{\vec a}$. Consider their images $( \bs h(\vec a_0), \vec a(\vec a_0) , \vec v(\vec a_0))$ and $( \bs{ h}( \sh {\vec a}_0), \vec a(\sh{\vec a}_0) , \vec v(\sh{\vec a}_0))$ under the mapping defined by~\eqref{eq:pos-to-sol}, noting we have removed $t_0$ from the notation to simplify it. We can assume (after a spatial translation) that $\sum_{k} a_{0, k} = 0$ and $\sum_k \sh a_{0, k} =0$ so that we can use the explicit trajectory $(\vec a_{\textrm{expl}}, \vec v_{\textrm{expl}})$ as a reference for both, writing 
\EQ{
\vec a(\vec a_0; t) &= \vec a_{\textrm{expl}}(t) + \vec b(\vec a_0; t) , \quad\vec a(\sh{\vec a}_0; t) = \vec a_{\textrm{expl}}(t) + \vec b(\sh{\vec a}_0; t)\\
\vec v(\vec a_0; t) &= \vec v_{\textrm{expl}}(t) + \vec w(\vec a_0; t) , \quad\vec v(\sh{\vec a}_0; t) = \vec v_{\textrm{expl}}(t) + \vec w(\sh{\vec a}_0; t)
}
Using the same argument used to establish the estimate~\eqref{eq:bw-diff} we obtain the estimate 
\begin{multline} 
\|( \vec b(\vec a_0), \vec w(\vec a_0)) - (\vec b(\sh{\vec a}_0), \vec w(\sh{\vec a}_0))  \|_{\calX}\\ \le  C |\vec b_0(\vec a_0; t_0) - \vec b( \sh{\vec a}_0; t_0) | + \frac{1}{2} \|( \vec b(\vec a_0), \vec w(\vec a_0)) - (\vec b(\sh{\vec a}_0), \vec w(\sh{\vec a}_0))  \|_{\calX} 
\end{multline} 
After absorbing the second term on the right above into the left, we may use the above together with~\eqref{eq:h-diff-est-final} to obtain
\EQ{
\| \bs h(\vec a_0) - \bs h(\sh{\vec a}_0) \|_{N_1} \le C \|( \vec b(\vec a_0), \vec w(\vec a_0)) - (\vec b(\sh{\vec a}_0), \vec w(\sh{\vec a}_0))  \|_{\calX}  \le  C |\vec b_0(\vec a_0; t_0) - \vec b( \sh{\vec a}_0; t_0) |.
}
This completes the proof. 
\end{proof}

\section{Proof of the uniqueness of kink clusters and corollaries} \label{sec:proofs}

We conclude with the proofs of Theorem~\ref{thm:main}, Corollary~\ref{cor:estimates} and Corollary~\ref{cor:stable}. We begin with a quick lemma, adding to the conclusions of Theorem~\ref{thm:asymptotics}.

\begin{lemma}[Dynamical Modulation Lemma]  \label{lem:dyn-modulation} Let $\bs \phi(t)$ be a kink $n$-cluster. Then there exists $t_0,  C_0>0$ such that the following are true.  The $C^1$ modulation parameters $(\vec a, \vec v) = ( \vec a( \bs \phi), \vec v( \bs \phi))$ given by Lemma~\ref{lem:static-mod} are defined on $[t_0, \infty):\R^n \times \R^n)$ and satisfy 
\EQ{ \label{eq:av-4.1-2} 
| \vec a\, '(t) - \vec v(t) | + | \vec v\, '(t)|  \le C_0 t^{-2} 
}
for all $t \ge t_0$. 
\end{lemma}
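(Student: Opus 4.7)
The plan is to synthesize the existing ingredients: Proposition~\ref{prop:close-to-H} provides well-definedness of the modulation parameters for $t$ large, while Lemma~\ref{lem:basic-mod} gives the modulation inequalities, and Theorem~\ref{thm:asymptotics} controls the size of $\rho(\vec a(t),\vec v(t))$. Putting these together will yield the $t^{-2}$ bound.

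First I would fix $\eta_0>0$ small enough that both Lemma~\ref{lem:static-mod} and Lemma~\ref{lem:basic-mod} apply, then choose $t_0$ so that $\bfd(\bs\phi(t))<\eta_0$ for all $t\ge t_0$; this is possible by Proposition~\ref{prop:close-to-H}, which asserts $\lim_{t\to\infty}\bfd(\bs\phi(t))=0$. On $[t_0,\infty)$ the parameters $(\vec a(t),\vec v(t))=(\vec a(\bs\phi(t)),\vec v(\bs\phi(t)))$ of Lemma~\ref{lem:static-mod} are of class $C^1$, and by the uniqueness clause of that lemma they coincide with the dynamical modulation of Lemma~\ref{lem:basic-mod}.

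Next I would invoke Lemma~\ref{lem:basic-mod} to obtain the modulation estimate
\begin{equation}
|a_k'(t)-v_k(t)|+|v_k'(t)|\;\le\;C\,\rho(\vec a(t),\vec v(t)),\qquad t\ge t_0.
\end{equation}
Here the hypothesis \eqref{eq:delta-geq-ass} is automatically satisfied because $\bs\phi$ being a kink $n$-cluster forces $E(\bs\phi)=nM$ (a consequence of Proposition~\ref{prop:close-to-H} combined with Lemma~\ref{lem:interactions}), so the right-hand side of \eqref{eq:delta-geq-ass} vanishes.

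Finally I would read off the size of $\rho(\vec a(t),\vec v(t))$ from Theorem~\ref{thm:asymptotics}. The asymptotic expansion of $a_{k+1}(t)-a_k(t)$ gives
\begin{equation}
e^{-(a_{k+1}(t)-a_k(t))}\;=\;\frac{Mk(n-k)}{2\kappa^{2}t^{2}}\bigl(1+o(1)\bigr),
\end{equation}
and the expansion $v_k(t)=-(n+1-2k)/t+o(t^{-1})$ gives $v_k(t)^{2}\lesssim t^{-2}$. Summing over $k$ yields $\rho(\vec a(t),\vec v(t))\lesssim t^{-2}$, and inserting this into the modulation inequality above produces the claimed bound \eqref{eq:av-4.1-2}, possibly after enlarging $t_0$. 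There is no genuine obstacle here; the lemma is essentially a bookkeeping consequence of results already established, and its role in the paper is to supply the trajectory hypothesis \eqref{eq:traj-cond} (with $t_0=\exp(y_{\min}(0)/2)$ replaced by a generic large time) needed to feed any kink $n$-cluster into the Lyapunov--Schmidt framework of Sections~\ref{sec:cauchy-inf}--\ref{sec:bifurcation}.
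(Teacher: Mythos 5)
Your proof is correct, but it takes a different (and more economical) route than the paper. The paper's proof does not go through the modulation ODE estimate \eqref{eq:ak'-est} at all: it instead identifies the cluster, for large times, with a solution of the projected equation \eqref{eq:kg-nonlin-eq-inf} whose multipliers are $\lambda_k=-(-1)^k a_k'$ and $\mu_k=-(-1)^k v_k'$ (using Theorem~\ref{thm:asymptotics} to guarantee $\|\bs h\|_{N_1(\cE)}<\delta$), and then reruns the parameter estimates behind \eqref{eq:nonlin-param}/\eqref{eq:lam-mu-lin-kg} with $\gamma=1$ to get $\|a_k'-v_k\|_{N_2}\lesssim 1$ and $\|v_k'-F_k(\vec a,\vec v)\|_{N_2}\lesssim 1$, finishing by bounding $F_k$ via \eqref{eq:JL9}. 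You instead quote \eqref{eq:ak'-est} from Lemma~\ref{lem:basic-mod} directly — noting correctly that \eqref{eq:delta-geq-ass} is vacuous because $E(\bs\phi)=nM$ for a kink $n$-cluster — and then feed in $\rho(\vec a(t),\vec v(t))\lesssim t^{-2}$, which does follow from the expansions in Theorem~\ref{thm:asymptotics}. Both arguments rest on Theorem~\ref{thm:asymptotics} and on machinery established before Section~10, so there is no circularity in either; what the paper's route buys is uniformity with the Lyapunov--Schmidt framework it is about to invoke (the same estimates are reused verbatim in the uniqueness argument), whereas your route is shorter and stays entirely within the Section~4 modulation analysis. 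One cosmetic caveat: \eqref{eq:ak'-est} is stated for each $k$ separately, so you should sum over $k$ (harmless, since the constant is allowed to depend on $n$), and you should record, as you do implicitly, that the parameters of Lemma~\ref{lem:basic-mod} are by construction those of Lemma~\ref{lem:static-mod}, so no uniqueness argument beyond that remark is needed.
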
 

\begin{proof} Choose $t_0$ sufficiently large so that $ \bfd( \bs \phi(t)) < \eta_0$  for all $t \in [t_0, \infty)$, where $\eta_0>0$ is as in Lemma~\ref{lem:mod-av-intro}. Define 
\EQ{
\bs h(t) = \bs \phi(t) - \bs H(\vec a( \bs \phi(t)), \vec v( \bs \phi(t)).
}
By Theorem~\ref{thm:asymptotics} we have 
\EQ{
\| \bs h \|_{N_1(\E)}< \delta
}
provided $t_0$ is sufficiently large, and where $\delta>0$ is as in Lemma~\ref{lem:kg-nonlin-eq-inf}. Because $\bs \phi(t)$ solves~\eqref{eq:csf} we see that the triplet $( \bs h(t), \vec \lam(t), \vec \mu(t))$ with 
\EQ{ \label{eq:av-lam-mu} 
\lam_k(t) = - (-1)^k a_k(t) \mand \mu_k(t) = -(-1)^k v_k(t)
} 
solves~\eqref{eq:kg-nonlin-eq-inf}. However, while the trajectories $(\vec a(t), \vec v(t))$ satisfy~\eqref{eq:JL9} we do not yet know that they satisfy~\eqref{eq:traj-cond}, particularly we have not yet established~\eqref{eq:av-4.1-2}. To see this, we use the exact same argument used to establish the estimates~\eqref{eq:nonlin-param} with $\gamma = 1$ (i.e., via~\eqref{eq:lam-mu-lin-kg}). Using~\eqref{eq:av-lam-mu} we obtain
\EQ{
 \| a_k' - v_k \|_{N_2} \lesssim 1 \mand \| v_k'  - F_k( \vec a, \vec v) \|_{N_2} \lesssim 1
}
for each $k \in \{1, \dots, n\}$. 
The first bound is the same as the required estimate for the first term in~\eqref{eq:av-4.1-2}. Using~\eqref{eq:JL9} we see that 
\EQ{
\max_{k} \| F_k( \vec a, \vec v) \|_{N_2} \lesssim 1
} 
and combining this with the above we obtain $\max_k  \| v_k' \|_{N_2} \lesssim 1$, which completes the proof. 
\end{proof}

\begin{proof}[Proof of the Theorem~\ref{thm:main}] 

Let $\tau_0, \eta_0>0$ to be fixed below, $t_0 \ge  \tau_0$ and let $\vec a_0 \in \R^n$ be as in~\eqref{eq:a0-initial}. By taking $\eta_0, \tau_0$ as in Proposition~\ref{prop:sol-bif}  we obtain a unique pair of $C^1$ trajectories  $(\vec a(t), \vec v(t))$ on $[t_0, \infty)$ satisfying the conclusions of Proposition~\ref{prop:sol-bif} with $a(t_0) = \vec a_0$. In particular,  using~\eqref{eq:a=l-v=m} and Lemma~\ref{lem:kg-nonlin-eq-inf} we can thus associate to $\vec a_0$ a unique solution $(\bs h, \vec \lam, \vec \mu)  \in C([t_0, \infty), \E) \times C^1( [t_0, \infty), \R^n \times \R^n)$ to~\eqref{eq:h-nl} with $ \| \bs h \|_{N_{1}(\E)} \le \delta$ and with $\lam_k(t) = -(-1)^k a_k(t)$ and $\mu_k(t) = -(-1)^k v_k(t)$. Writing the triplet $(\bs h(t), \vec \lam(t), \vec \mu(t)) = ( \bs h(\vec a_0, t_0; t), \vec \lam(\vec a_0, t_0; t), \vec \mu( \vec a_0, t_0; t))$, the trajectories $(\vec a(t), \vec v(t)) = ( \vec a(a_0, t_0; t), \vec v(\vec a_0, t_0; t))$ and setting 
\EQ{
\bs \phi(\vec a_0, t_0; t) = \bs H( \vec a(\vec a_0, t_0; t), \vec v(\vec a_0, t_0; t)) + \bs h(\vec a_0, t_0; t), 
}
we see that $\bs \phi(\vec a_0, t_0; t)$ solves~\eqref{eq:csf} by construction and is a kink $n$-cluster because $\bs h \in N_1(\E)$ and because the trajectories satisfy~\eqref{eq:traj-cond}. 

Next we prove uniqueness. Fix $\tau_0, \eta_0>0$ to be determined below. Let $t_0 \ge \tau_0$ and fix any  $\vec a_0$ satisfying~\eqref{eq:a0-initial}. Suppose that $\bs \phi(t) \in \calE_{(-1)^n, 1}$ is a kink $n$-cluster with 
$\bfd(\bs \phi(t)) < \eta_0$ for all $t \ge t_0$. By taking $\eta_0$ sufficiently small, by Lemma~\ref{lem:mod-av-intro} we can associate to $\bs \phi(t)$, for each $t \ge t_0$, unique modulation parameters $(\vec a( \bs \phi(t)), \vec v( \bs\phi(t)))$ such that~\eqref{eq:static-orth} holds. To prove uniqueness, we suppose in addition that $\vec a(\bs \phi(t))$ satisfies $\vec a( \bs \phi(t_0)) = \vec a_0$.  

Defining 
\EQ{
\bs h(t) \coloneqq  \bs \phi(t) - \bs H( \vec a(\bs \phi(t)), \vec v( \bs \phi(t)), 
}
we have $ \| \bs h \|_{N_{1}(\E)} < \delta$ (for $\delta$ as in Lemma~\ref{lem:kg-nonlin-eq-inf}) by Theorem~\ref{thm:asymptotics} and $(\bs h(t), \vec \lambda(t), \vec \mu(t))$ solve~\eqref{eq:kg-nonlin-eq-inf} with $\lambda_k(t) \coloneqq  - (-1)^ka_k'( \bs \phi(t))$ and $\mu_k(t) \coloneqq  -(-1)^k v_k(\bs \phi(t))$ for all $k$. Observe that~\eqref{eq:traj-cond} is satisfied for the trajectories $(\vec a( \bs \phi(t)), \vec v( \bs \phi(t)))$ because of~\eqref{eq:JL9} and~\eqref{eq:av-4.1-2}. 
 Lemma~\ref{lem:kg-nonlin-eq-inf} and Proposition~\ref{prop:sol-bif} imply that  $(\bs h(t), \vec \lam(t), \vec \mu(t)) =( \bs h(\vec a_0,t_0; t), \vec \lam(\vec a_0, t_0; t), \vec \mu( \vec a_0, t_0; t))$ and thus $\bs \phi(t) = \bs \phi( \vec a_0, t_0; t)$.  
\end{proof} 

\begin{proof}[Proof of Corollary~\ref{cor:estimates}]
Let $\bs \phi(\vec a_0, t_0;  t)$ be a kink $n$-cluster as in the statement of Corollary~\ref{cor:estimates} and as above we write 
\EQ{
\bs \phi(\vec a_0,t_0; t) = \bs H( \vec a(\vec a_0,t_0; t), \vec v(\vec a_0, t_0; t)) + \bs h(\vec a_0,t_0; t). 
} 
with the modulation parameters chosen as in the statement of Theorem~\ref{thm:main}, i.e., so that~\eqref{eq:static-orth} holds. 
The estimate~\eqref{eq:h-decay-thm} follows from Lemma~\ref{lem:kg-nonlin-eq-inf}. The estimate
~\eqref{eq:traj'-thm} follows from~\eqref{eq:av-4.1-2}.  
\end{proof}

 We conclude with the proof of Corollary~\ref{cor:stable}.

\begin{proof}[Proof of Corollary~\ref{cor:stable}]

We start by showing that the mapping~\eqref{eq:homeo} is continuous. Fix $\eta_0$ and $\calT(\eta_0)$ as in the statement, and consider $\vec a_0 \in \calT( \eta_0)$ with~\eqref{eq:a0-initial} satisfied for some $t_0 \ge \tau_0$. As usual let $\bs \phi(\vec a_0, t_0; t_0)$ denote the image of $\vec a_0$ under the map~\eqref{eq:homeo}. As discussed in  Remark~\ref{rem:tt-homeo} the image of $\vec a_0$ under this map is  independent of $t_0$ and we simply write $\bs  \phi(\vec a_0)$.

Let $\vec a_{0, n} \in \calT( \eta_0)$ be a sequence with $| \vec a_0 - \vec a_{0, n}| \to 0 $ as $n \to \infty$. For $| \vec a_0 - \vec a_{0, n}|$ sufficiently small, we see that ~\eqref{eq:a0-initial} is satisfied for $\vec a_{0, n}$ with the same $t_0$ for all sufficiently large $n$. Letting $\bs \phi( \vec a_{0, n})$ denote the images under the map~\eqref{eq:homeo} we see that $\| \bs \phi( \vec a_0) - \bs \phi( \vec a_{0, n}) \|_{\E}  \to 0$ as $n \to \infty$ by Corollary~\ref{cor:hav-cont}. 

Next, consider any element of $\calM_n$ in the image of the mapping~\eqref{eq:homeo}. By definition it can be written as $\bs \phi(\vec a_0) = \bs \phi( \vec a_0; t_0, t_0)$ for some $\vec a_0 \in \calT(\eta_0)$. By Theorem~\ref{thm:main},  the uniquely defined modulation parameters $(\vec a( \bs \phi(\vec a_0)), \vec v( \bs \phi( \vec a_0)))$ from Lemma~\ref{lem:mod-av-intro} satisfy $\vec a( \bs \phi(\vec a_0)) = \vec a_0$,  giving the inverse map, which is continuous by Lemma~\ref{lem:mod-av-intro}.

Next, let $\bs \phi \in \calM_n$ be any choice of initial data and let $\bs \phi(t)$ denote its forward trajectory. By~\eqref{eq:JL9} we see that $\bs \phi(t) \in \calM_{n, \loc} = \bs \Phi( \calT(\eta_0))$ for all sufficiently large $t$. By choosing $t_0$ large enough, we may write $\bs \phi(t) = \bs\phi( \vec a( \bs \phi(t_0)), t_0; t)$.  To show that  $\calM_n$ is a topological manifold, we still need to show that an open neighborhood of our arbitrary choice of the initial data $\bs \phi \in \calM_n$ is homeomorphic to an open subset of $\R^n$. But this is a consequence of the local well-posedness theory for~\eqref{eq:csf} by pulling back an open neighborhood of $\bs \phi(\vec a( \bs \phi(t_0)), t_0; t_0)$  in $\calM_{n, \loc}$ via the continuous flow by time $t_0$. 
\end{proof}

\bibliographystyle{plain}
\bibliography{kink-clusters}

\bigskip
\centerline{\scshape Jacek Jendrej}
\smallskip
{\footnotesize
 \centerline{Institut de Math\'ematiques de Jussieu,
Sorbonne Universit\'e}
\centerline{4 place Jussieu,
75005 Paris, France}
\centerline{\email{jendrej@imj-prg.fr}}
} 
\medskip 
\centerline{\scshape Andrew Lawrie}
\smallskip
{\footnotesize
 \centerline{Department of Mathematics, University of Maryland}
\centerline{4176 Campus Drive -- William E. Kirwan Hall,
College Park, MD 20742-4015, U.S.A.}
\centerline{\email{alawrie@umd.edu}}
} 

\end{document}